\renewcommand\subsection{\leftskip 0pt\@startsection{subsection}{2}{\z@}%
                                     {-3.25ex\@plus -1ex \@minus -.2ex}%
                                     {1.5ex \@plus .2ex}%
                                     {\normalfont\normalsize\bfseries}}
\def\mfp{\underline{\mathbb{F}_p}}
\def\mft{\underline{\mathbb{F}_2}}
\def\uzp{\underline{\mathbb{Z}_p}}
\def\KR{\mathrm{KR}}
\def\K{\mathrm{K}}
\def\L{\mathrm{L}}
\def\GW{\mathrm{GW}}
\DeclareFontFamily{U}{cbgreek}{}
\DeclareFontShape{U}{cbgreek}{m}{n}{
        <-6>    grmn0500
        <6-7>   grmn0600
        <7-8>   grmn0700
        <8-9>   grmn0800
        <9-10>  grmn0900
        <10-12> grmn1000
        <12-17> grmn1200
        <17->   grmn1728
      }{}
\DeclareFontShape{U}{cbgreek}{bx}{n}{
        <-6>    grxn0500
        <6-7>   grxn0600
        <7-8>   grxn0700
        <8-9>   grxn0800
        <9-10>  grxn0900
        <10-12> grxn1000
        <12-17> grxn1200
        <17->   grxn1728
      }{}
\DeclareRobustCommand{\qoppa}{%
  \text{\usefont{U}{cbgreek}{\normalorbold}{n}\symbol{21}}%
}
\newcommand{\normalorbold}{%
  \ifnum\pdf@strcmp{\math@version}{bold}=\z@ bx\else m\fi
}
\setlist[enumerate]{label*=\arabic*.}
\title{On the equivalence of two theories of real cyclotomic spectra}
\author{J.D. Quigley}
\address{
Department of Mathematics \\
Cornell University \\
Ithaca, NY, U.S.A.
}
\email{jdq27@cornell.edu}
\author{Jay Shah}
\address{Fachbereich Mathematik und Informatik, WWU Münster, 48149 M\"{u}nster, Germany}
\email{jayhshah@gmail.com}
\begin{document}

\tikzcdset{arrow style=tikz, diagrams={>=stealth}}

\begin{abstract} 
We give a new formula for real topological cyclic homology that refines the fiber sequence formula discovered by Nikolaus and Scholze for topological cyclic homology to one involving genuine $C_2$-spectra. To accomplish this, we give a new definition of the $\infty$-category of real cyclotomic spectra that replaces the usage of genuinely equivariant dihedral spectra with the parametrized Tate construction. We then define an $\infty$-categorical version of H{\o}genhaven's $\mathrm{O}(2)$-orthogonal cyclotomic spectra, construct a forgetful functor relating the two theories, and show that this functor restricts to an equivalence between full subcategories of appropriately bounded-below objects. As an application, we compute the real topological cyclic homology of perfect $\mathbb{F}_p$-algebras for all primes $p$. 
\end{abstract}

\date{\today}
\maketitle

\tableofcontents

\section{Introduction}
\subsection{Motivation}

Algebraic K-theory, Grothendieck--Witt theory (i.e., hermitian K-theory), and L-theory are fundamental invariants in algebra, geometry, and topology. For discrete rings or schemes, algebraic K-groups encode important arithmetic invariants like Picard and Brauer groups \cite{Tat76} and special values of Dedekind zeta functions \cite{Lic73, Qui72, Qui73}. Grothendieck--Witt groups naturally arise in motivic homotopy theory \cite{Mor12} and enumerative algebraic geometry \cite{WW19}, and are also related to special values of Dedekind zeta functions \cite{BK05, BKSO15,Kylling2020}. L-groups encode geometric information, with applications towards the Borel and Novikov Conjectures in geometric topology \cite[\S 1.5-1.6]{LR05}. For ring spectra, algebraic K-theory recovers Waldhausen's $A$-theory \cite[\S VI.8]{MR97h:55006}, which also has applications in geometric topology \cite{WJR:stabparhcob}. Moreover, the algebraic K-theory of ring spectra is closely connected to chromatic homotopy theory via the redshift philosophy, as discussed in \cite{AKQ19, HW20, LMMT20, Yua21}.

In \cite{HM13}, Hesselholt and Madsen defined a genuine $C_2$-spectrum called \emph{real algebraic K-theory} $\KR(\sC,D,\eta)$ for any exact category with duality, such that its underlying spectrum $\KR(\sC,D,\eta)^e$ is algebraic K-theory $\K(\sC)$ and its categorical $C_2$-fixed points spectrum $\KR(\sC,D,\eta)^{C_2}$ is Grothendieck--Witt theory $\GW(\sC,D,\eta)$. They also conjectured that the geometric $C_2$-fixed points of real algebraic K-theory identify with a version of Ranicki's L-theory in the example of a discrete ring. In his lecture course on L-theory and surgery \cite{Lur13}, Lurie defined L-theory in the generality of \emph{Poincar{\'e} $\infty$-categories} $(\sC, \qoppa)$. In \cite{CDH+20b}, Calm{\`e}s et. al. introduced the hermitian Q-construction to define the (non-connective) Grothendieck--Witt theory and real algebraic K-theory of a Poincar{\'e} $\infty$-category, and they established the sought-after connection with L-theory by showing that $\Phi^{C_2} \KR(\sC, \qoppa) \simeq \L(\sC, \qoppa)$. The isotropy separation sequence from equivariant stable homotopy theory then takes the form
\begin{equation*}
\K(\sC)_{hC_2} \to \GW(\sC, \qoppa) \to \L(\sC,\qoppa).
\end{equation*}
This fundamental fiber sequence provides a tight connection between these three invariants which can be leveraged to make new computations. This was used in \cite{CDH+20c}, for instance, to access the Grothendieck--Witt groups of the integers via their more accessible algebraic K-groups and L-groups. 

The algebraic K-groups of ring spectra (or more generally, stable $\infty$-categories) can be accessed using trace maps
\[ \K(R) \to \TC(R) \to \THH(R) \]
into topological cyclic homology and topological Hochschild homology. The Dundas--Goodwillie--McCarthy Theorem \cite{DGM12} implies that for nilpotent extensions $f: R \to S$ of connective $E_1$-ring spectra, there is an isomorphism $\K_*(f) \cong \TC_*(f)$. In certain cases (cf. \cite{HM97}), one can even show that the trace map $\K(R) \to \tau_{\geq 0} \TC(R)$ is an equivalence after $p$-completion.

The classical approach to computing topological cyclic homology $\TC(R)$ involves using powerful, but quite sophisticated, tools from genuine equivariant stable homotopy theory. An insight of Nikolaus and Scholze \cite{NS18} is that when $\THH(R)$ is bounded-below, one can instead compute $\TC(R)$ using only Borel equivariant stable homotopy theory. This more recent perspective has been implemented in a myriad of applications to algebraic K-theory; we refer readers to the excellent survey articles of Hesselholt--Nikolaus \cite{HN20} and Mathew \cite{Mat21} for further discussion.

On the other hand, the L-groups of discrete rings are usually computed using algebraic and geometric methods. For example, let $M$ be a compact $n$-dimensional manifold of dimension at least five. The surgery exact sequence
\[ \cdots \to \cN_{n+1}(M \times I) \to \L_{n+1}^s(\ZZ[\pi_1(M)]) \to \cS^{\top}(M) \to \cN_n(M) \to \L_n^s(\ZZ[\pi_1(M)]) \]
relates the symmetric L-theory of $\ZZ[\pi_1(M)]$ to the topological structure set $\cS^{\top}(M)$ and the groups of normal invariants $\cN_*(M \times I)$ and $\cN_*(M)$. These are accessible using algebraic methods, as well as geometric methods like controlled topology (cf. \cite{LR05}). 

These different approaches to computing K- and L-groups have led to quite different results. For instance, the K- and L-theoretic Novikov Conjectures state that the assembly maps
$$H_*(BG; K(\ZZ)) \otimes_\ZZ \QQ \to K_*(\ZZ[G]) \otimes_\ZZ \QQ,$$
$$H_*(BG; L(\ZZ)) \otimes_\ZZ \QQ \to L_*(\ZZ[G]) \otimes_\ZZ \QQ,$$
respectively, are injective. B{\"o}kstedt--Hsiang--Madsen \cite{BHM93} showed that the K-theoretic Novikov Conjecture holds for groups $G$ with $H_s(G;\ZZ)$ finite for all $s \geq 1$; combined with older work of Culler--Vogtmann \cite{CM86}, this implied the K-theoretic Novikov Conjecture for groups of outer automorphisms of free groups nearly three decades ago. On the other hand, the L-theoretic Novikov Conjecture for outer automorphism groups of free groups was only proven recently using geometric methods by Bestvina--Guirardel--Horbez \cite{BGH17}. 

We are thus motivated to study real topological cyclic homology (and relatedly, real cyclotomic spectra) in order to facilitate the development of trace methods for real algebraic K-theory. This was initiated in the work of Hesselholt--Madsen \cite{HM13}, who defined real topological Hochschild homology. Dotto \cite{Dot12, Dot16} and Dotto--Ogle \cite{DO19} then studied various trace maps out of real algebraic K-theory, while H{\o}genhaven \cite{Hog16} defined real topological cyclic homology and made some initial computations. Dotto--Moi--Patchkoria--Reeh \cite{DMPR17} pushed the study of real topological Hochschild homology further and made some seminal computations, which Dotto--Moi--Patchkoria \cite{DMP19, DMP21} leveraged to study real topological restriction homology and real topological cyclic homology. 

In forthcoming work \cite{HarpazNikolausShah}, Harpaz, Nikolaus, and the second author define the real topological Hochschild homology (including its real cyclotomic structure) of Poincar{\'e} $\infty$-categories and construct the real cyclotomic trace map
$$\mr{trc}_{\RR}: \KR(\sC, \qoppa) \to \TCR(\sC, \qoppa)$$
as a $C_2$-equivariant refinement of the usual cyclotomic trace. They then affirm a conjecture of Nikolaus by identifying the fiber of $\Phi^{C_2} \mr{trc}_{\RR}$ with quadratic L-theory $\L^q(\sC,\qoppa)$ under a suitable bounded-below hypothesis on $(\sC, \qoppa)$. Together with Wall's results on the rigidity of quadratic L-theory \cite[Prop.~2.3.7]{CDH+20c}, this promotes the Dundas--Goodwillie--McCarthy theorem to hermitian K-theory and thereby justifies real trace methods as a valid computational methodology.

\subsection{Main results}

In this paper, we study and relate two approaches to the theory of real cyclotomic spectra. Our two theories generalize the theories of cyclotomic spectra developed by Hesselholt--Madsen \cite{HM03}, Blumberg--Mandell \cite{BM12, BM16}, and Nikolaus--Scholze \cite{NS18} by incorporating involutive structures. Further, our theory of genuine real cyclotomic spectra provides an $\infty$-categorical version of H{\o}genhaven's theory of $\mathrm{O}(2)$-orthogonal cyclotomic spectra \cite{Hog16}. More precisely, we define $C_2$-equivariant refinements of the $\infty$-categories of genuine and Borel\footnote{Nikolaus and Scholze write ``naive'' instead of ``Borel''.} cyclotomic spectra introduced in \cite{NS18}. 

Our main result (\cref{Thmx:Equiv}) is that the genuine and Borel notions coincide for objects whose underlying spectrum is bounded below. We also extend many useful results about cyclotomic spectra to the $C_2$-equivariant setting. These include results about multiplicative structure on real topological Hochschild homology (\cref{MT:C2EooTHR}), corepresentability of real topological cyclic homology (\cref{Thmx:Corep}), and various formulas for computing real topological cyclic homology (\cref{Thmx:FiberSeq}). As an application, we compute the real topological cyclic homology of perfect $\mathbb{F}_p$-algebras (\cref{Thmx:TCRFp}). 

\subsubsection{Recollections on cyclotomic spectra}

Let $S^1$ denote the circle group and $\mu_n \leq S^1$ the cyclic subgroup of order $n$. Let $\Sp^G$ denote the $\infty$-category of genuine $G$-spectra and $\Sp^{hG} \coloneq \Fun(BG,\Sp)$ the $\infty$-category of Borel $G$-spectra. Let $\Sp^{S^1}_{\cF}$ denote the $\infty$-category of $\cF$-complete genuine $S^1$-spectra for $\cF$ the family of finite subgroups.

\begin{dfn}[{\cite[Def. II.1.1 and II.3.3]{NS18}}] $ $
\begin{enumerate}

\item A \emph{Borel cyclotomic spectrum} is a spectrum $X$ with $S^1$-action, together with $S^1$-equivariant maps $\varphi_p: X \to X^{t\mu_p}$ for each prime $p$. The \emph{$\infty$-category of Borel cyclotomic spectra} is the lax equalizer
\[
\begin{tikzcd}
\CycSp := \LEq(\Sp^{hS^1} \ar[rr,"id",shift left=1] \ar[rr,swap,shift right=1," \prod_p (-)^{t\mu_p}"] & &\prod_{p} \Sp^{hS^1}).
\end{tikzcd}
\]

\item The \emph{$\infty$-category of genuine cyclotomic spectra} is
\[ \CycSp^{\mr{gen}} \coloneq (\Sp^{S^1}_{\cF})^{h \NN_{>0}}, \]
where $\NN_{>0}$ is the multiplicative monoid of positive integers acting on $\Sp^{S^1}_{\cF}$ by the geometric fixed points functors $\Phi^{\mu_n}$. A \emph{genuine cyclotomic spectrum} is then an $\cF$-complete genuine $S^1$-spectrum $X$, together with equivalences $X \simeq \Phi^{\mu_n} X$ and coherence data witnessing their homotopy commutativity.
\end{enumerate}

\end{dfn}

\begin{exm}
The topological Hochschild homology of any $E_1$-ring spectrum is both a genuine and Borel cyclotomic spectrum. More generally, the topological Hochschild homology of any stable $\infty$-category is a Borel cyclotomic spectrum \cite{ArbeitsgemeinschaftTC_Nikolaus}.
\end{exm}

Evidently, a genuine cyclotomic spectrum \emph{a priori} constitutes more data than a Borel cyclotomic spectrum. Given a genuine cyclotomic spectrum $[X, \{ X \xrightarrow{\simeq} \Phi^{\mu_n} X \}]$, the forgetful functor $\sU: \Sp^{S^1} \to \Sp^{hS^1}$ defines a Borel $S^1$-spectrum $\sU(X)$ with $p$th cyclotomic structure map defined as the composite $\varphi_p : X \simeq \Phi^{\mu_p} X \to X^{t\mu_p}$. This assignment extends to a forgetful functor
\begin{equation}\label{Eqn:ForgetfulFunctor}
\sU: \CycSp^{\mr{gen}} \to \CycSp.
\end{equation}
Nikolaus and Scholze proved the following remarkable result:

\begin{thm}[{\cite[Thm. 1.4]{NS18}}]
The functor $\sU : \CycSp^{\mr{gen}} \to \CycSp$ restricts to an equivalence between the full subcategories of those objects whose underlying spectrum is bounded-below.
\end{thm}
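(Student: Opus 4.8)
Following Nikolaus and Scholze, I would first reduce to a fixed prime $p$: one proves the $p$-typical analogue --- that the forgetful functor $\sU_p \colon \CycSp^{\mathrm{gen}}_p \to \CycSp_p$ restricts to an equivalence on bounded-below objects, where $\CycSp^{\mathrm{gen}}_p \coloneq (\Sp^{S^1}_{\mathcal{F}_p})^{h\mathbb{N}}$ for $\mathcal{F}_p$ the family of $p$-subgroups and $\mathbb{N}$ acting by $\Phi^{\mu_p}$, and $\CycSp_p \coloneq \LEq(\Sp^{hS^1} \rightrightarrows \Sp^{hS^1})$ for the pair of functors $\mathrm{id}$ and $(-)^{t\mu_p}$ --- and then recovers the general case by an arithmetic-fracture argument, using that on bounded-below objects both $\CycSp$ and $\CycSp^{\mathrm{gen}}$ are assembled from their $p$-typical pieces glued over $\Sp^{hS^1}$. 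So fix $p$ and unwind the genuine side by iterated isotropy separation along the tower $1 \subseteq \mu_p \subseteq \mu_{p^2} \subseteq \cdots$ of $p$-power subgroups: the $\infty$-category $\Sp^{S^1}_{\mathcal{F}_p}$ is an iterated recollement along this tower, whose layers are copies of $\Sp^{hS^1}$ --- one for each geometric-fixed-point functor $\Phi^{\mu_{p^k}}$, identified with $\Sp^{S^1}$ via $S^1/\mu_{p^k} \cong S^1$ --- glued by the Tate construction $(-)^{t\mu_p}$. Concretely, a $\mathcal{F}_p$-complete genuine $S^1$-spectrum $X$ is the data of the Borel $S^1$-spectra $\Phi^{\mu_{p^k}}X$ together with gluing maps $\Phi^{\mu_{p^k}}X \to (\Phi^{\mu_{p^{k+1}}}X)^{t\mu_p}$ and higher coherences, and imposing the $\Phi^{\mu_p}$-invariance coming from the $h\mathbb{N}$ collapses all the $\Phi^{\mu_{p^k}}X$ to a single Borel $S^1$-spectrum $X$ and all the gluing maps to a single $\varphi_p \colon X \to X^{t\mu_p}$ --- but at the price of still retaining the \emph{genuine} fixed points $X^{\mu_{p^k}}$ (which assemble into the topological restriction homology of $X$) rather than merely the homotopy fixed points $X^{h\mu_{p^k}}$ along the tower. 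The whole content of the theorem is that this residual data is uniquely and functorially recovered from the pair $(X,\varphi_p)$ whenever $X$ is bounded below.

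\textbf{The comparison.} The essential input is the Tate orbit lemma: if $M$ is a bounded-below spectrum with $\mu_{p^2}$-action, then $(M_{h\mu_p})^{t(\mu_{p^2}/\mu_p)} \simeq 0$. Combined with the pullback square $X^{\mu_p} \simeq X^{h\mu_p}\times_{X^{t\mu_p}}\Phi^{\mu_p}X$ computing genuine fixed points, and with its iterates up the tower, the Tate orbit lemma forces the iterated ("Tate-of-Tate") constructions that would otherwise obstruct a comparison to vanish, so that for bounded-below $X$ the genuine fixed points are given by an inductive pullback formula involving only $X^{h\mu_{p^k}}$, $X^{\mu_{p^{k-1}}}$, and iterates of $\varphi_p$. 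I would package this as a functorial lift $L$ from bounded-below Borel $p$-cyclotomic spectra to genuine ones: given $(X,\varphi_p)$, take all candidate geometric fixed points equal to $X$ and all candidate gluing maps equal to iterates of $\varphi_p$, and invoke the Tate orbit lemma to check these cohere into a $\Phi^{\mu_p}$-fixed object of $\Sp^{S^1}_{\mathcal{F}_p}$. One has $\sU_p \circ L \simeq \mathrm{id}$ essentially by construction, and the crux is $L \circ \sU_p \simeq \mathrm{id}$ --- i.e.\ that a bounded-below genuine $p$-cyclotomic spectrum is reconstructed from its underlying Borel cyclotomic spectrum --- which is exactly the assertion that its genuine fixed points obey the inductive pullback formula, verified layer by layer along the recollement by repeated application of the Tate orbit lemma.

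\textbf{Main obstacle.} The difficulty is to upgrade a single application of the Tate orbit lemma, which controls the genuine-versus-Borel discrepancy at one stage of the tower, to \emph{uniform} control over the entire infinite tower of subgroups $\mu_{p^k}$ --- both to produce coherent gluing data at all levels at once when constructing $L$, and to run the layer-by-layer comparison $L\circ\sU_p \simeq \mathrm{id}$ --- and then to prove that the resulting towers of spectra actually converge, so that the recollement filtrations are complete. This is exactly where the bounded-below hypothesis is indispensable: for unbounded $X$ the iterated Tate constructions need not vanish, the pullback formula for the genuine fixed points breaks down, and a genuine cyclotomic spectrum really does carry strictly more information than the pair $(X,\varphi_p)$.
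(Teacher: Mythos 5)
Your outline is, in essence, the original Nikolaus--Scholze argument: reduce to the $p$-typical case by arithmetic fracture (valid, as you note, only after restricting to bounded-below objects --- cf.\ \cref{rem:padic_equivalence}), unwind $\Sp^{S^1}_{\cF_p}$ as an iterated recollement whose layers are Borel $S^1$-spectra glued by Tate constructions, and use the Tate orbit lemma to reduce the genuine fixed points to the inductive pullback formula. That route is correct, but it is not the one taken in this paper. Here the comparison is run through the Ayala--Mazel-Gee--Rozenblyum reconstruction theorem: $\Sp^{C_{p^{\infty}}}$ (resp.\ $\Sp^{S^1}_{\cF_p}$) is identified with cocartesian sections of a locally cocartesian fibration over $\sd(\ZZ_{\geq 0})$ whose pushforwards are the generalized Tate functors $\tau^{C_{p^k}}$; the Tate orbit lemma enters exactly once, to prove $(-)^{\tau C_{p^k}} \simeq ((-)^{tC_p})^{\tau C_{p^{k-1}}}$ on bounded-below objects (\cref{cor:CanonicalFunctorsEquivalences}), after which the abstract equivalence between $1$-generated and extendable objects and the lax-equalizer/equalizer exchange of \cref{lem:LaxEqualizerGenericEquivalence} finish the proof. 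The payoff of this reformulation is precisely the step your proposal leaves open: your ``package this as a functorial lift $L$ \dots and check these cohere'' requires producing an infinite tower of gluing data together with all higher coherences, which is why Nikolaus--Scholze develop the machinery of coalgebras for endofunctors in \cite[\S II.5]{NS18}; the reconstruction-theorem approach absorbs all of those coherences into a single relative Kan extension statement and additionally yields the finite-level decomposition of \cref{cor:iterated_pullback_descr}. Two smaller points: the gluing maps run $\Phi^{\mu_{p^{k+1}}}X \to (\Phi^{\mu_{p^k}}X)^{t\mu_p}$, not in the direction you wrote; and your integral reduction is stated only for the Borel side, whereas decomposing the genuine category after $p$-localization requires splitting the $\NN_{>0}$-action into a translation part and the residual $\Phi^{\mu_p}$-action (the analogue of \cref{Prop:p-complete-p-typical}), which is not automatic.
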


\begin{rem}
If $R$ is an $E_1$-ring spectrum which is bounded-below, then its topological Hochschild homology $\THH(R)$ is also bounded-below. In particular, the topological Hochschild homology spectra of discrete rings, schemes, and connective ring spectra are all bounded-below.
\end{rem}

The equivalence between bounded-below genuine and Borel cyclotomic spectra gives rise to an important computational tool, the \emph{fiber sequence formula for topological cyclic homology}:

\begin{thm}[{\cite[Cor.~1.5]{NS18}}]
Let $R$ be a connective ring spectrum. There is a natural fiber sequence
\begin{equation}\label{Eqn:TCFiber}
\TC(R) \to \THH(R)^{hS^1} \xrightarrow{(\varphi_p^{hS^1} - \can)_{p}} \prod_{p} (\THH(R)^{t\mu_p})^{hS^1},
\end{equation}
where $\varphi_p^{hS^1}$ is the homotopy fixed points of the cyclotomic structure map and $\can$ is the composite
$$\can : \THH(R)^{hS^1} \simeq (\THH(R)^{h\mu_p})^{h(S^1/\mu_p)} \simeq  (\THH(R)^{h\mu_p})^{hS^1} \to (\THH(R)^{t\mu_p})^{hS^1}$$
where the middle equivalence comes from the $p$-th power map $S^1/\mu_p \cong S^1$. Moreover, the right-most term identifies with the profinite completion of $\THH(R)^{tS^1}$.
\end{thm}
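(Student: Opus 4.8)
The plan is to exhibit $\TC(R)$ as a mapping spectrum in the $\infty$-category $\CycSp$ of Borel cyclotomic spectra and to evaluate that mapping spectrum directly from the lax equalizer presentation. Recall that topological cyclic homology is corepresented in genuine cyclotomic spectra by the unit object $\THH(\mathbb{S})$ --- namely $\mathbb{S}$ with trivial $S^1$-action and cyclotomic structure maps the Tate diagonals $\Delta_p\colon\mathbb{S}\to\mathbb{S}^{t\mu_p}$ --- so that $\TC(R)\simeq\mathrm{map}_{\CycSp^{\mr{gen}}}(\THH(\mathbb{S}),\THH(R))$. Since $R$ is connective, $\THH(R)\simeq|R^{\otimes\bullet+1}|$ is a geometric realization of connective spectra, hence connective and in particular bounded-below; the unit $\THH(\mathbb{S})\simeq\mathbb{S}$ is likewise bounded-below. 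Consequently, by the preceding theorem (the forgetful functor $\sU$ restricts to an equivalence on bounded-below objects), we obtain
\[
\TC(R)\;\simeq\;\mathrm{map}_{\CycSp}\big(\sU\THH(\mathbb{S}),\,\sU\THH(R)\big),
\]
where $\sU\THH(\mathbb{S})$ is the unit of $\CycSp$ and $\sU\THH(R)=(\THH(R),\{\varphi_p\})$ carries the Borel cyclotomic structure.

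Next I would compute the right-hand side. In $\CycSp=\LEq(\Sp^{hS^1}\rightrightarrows\prod_p\Sp^{hS^1})$, the mapping spectrum from $(X,\{\varphi_p^X\})$ to $(Y,\{\varphi_p^Y\})$ is the fiber of the difference of the two natural maps
\[
\mathrm{map}_{\Sp^{hS^1}}(X,Y)\longrightarrow\prod_p\mathrm{map}_{\Sp^{hS^1}}(X,Y^{t\mu_p}),\qquad g\mapsto(\varphi_p^Y\circ g)_p\ \text{ and }\ g\mapsto(g^{t\mu_p}\circ\varphi_p^X)_p.
\]
For $X=\sU\THH(\mathbb{S})=\mathbb{S}$ (trivial action) and $Y=\THH(R)$, the adjunction $\mathrm{map}_{\Sp^{hS^1}}(\mathbb{S},-)\simeq(-)^{hS^1}$ rewrites the source as $\THH(R)^{hS^1}$ and the target as $\prod_p(\THH(R)^{t\mu_p})^{hS^1}$; the first natural map is then $(\varphi_p^{hS^1})_p$ by definition. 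For the second, one uses the computation of the Tate diagonal on the sphere --- that $\Delta_p\colon\mathbb{S}\to\mathbb{S}^{t\mu_p}$ is, compatibly with the residual $S^1$-actions, the composite $\mathbb{S}\to\mathbb{S}^{h\mu_p}\to\mathbb{S}^{t\mu_p}$ of canonical maps --- so that, by naturality of $\mathrm{id}\Rightarrow(-)^{h\mu_p}\Rightarrow(-)^{t\mu_p}$, the second natural map is identified with $(\can)_p$ for $\can$ exactly as in the statement. This exhibits $\TC(R)$ as the fiber of $(\varphi_p^{hS^1}-\can)_p$, which is the asserted fiber sequence.

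For the final assertion, since $\THH(R)$ is bounded-below the Tate orbit lemma and its $S^1$-equivariant consequences show that the canonical map $\THH(R)^{tS^1}\to(\THH(R)^{t\mu_p})^{hS^1}$ becomes an equivalence after $p$-completion; as $(\THH(R)^{t\mu_p})^{hS^1}$ is already $p$-complete (being $(-)^{hS^1}$ applied to the Tate construction of a finite $p$-group on a bounded-below spectrum), this identifies it with the $p$-completion of $\THH(R)^{tS^1}$, and taking the product over all primes identifies the right-hand term with the profinite completion of $\THH(R)^{tS^1}$.

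I expect the main obstacle to be the identification of the second leg of the equalizer with $\can$: this requires pinning down the cyclotomic structure map of the unit $\THH(\mathbb{S})$ together with its $S^1$-equivariance --- the factorization of the Tate diagonal on the sphere through $\mathbb{S}^{h\mu_p}$ --- and then carefully carrying the lax-equalizer mapping-spectrum formula through the adjunction $(-)^{hS^1}$. The other input that must be imported is the corepresentability of $\TC$ used in the reduction step (equivalently, a direct manipulation of the classical limit formula for $\TC$ of a genuine cyclotomic spectrum); granting these two facts, the remainder is bookkeeping with (co)limits and the Tate orbit lemma.
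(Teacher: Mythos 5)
Your proposal is correct and matches the approach this paper takes for the real analogues (\cref{prp:C2representabilityOfTC}, \cref{lem:EqualizerMappingSpaces}, \cref{prp:TCRfiberSequence}, and \cref{Prp:ProfiniteCompletion}): corepresentability of $\TC$ by the unit, the mapping-spectrum formula in the lax equalizer, identification of the second leg with $\can$ via the trivial cyclotomic structure map on the unit, and the Tate orbit lemma plus $p$-completeness of $(\THH(R)^{t\mu_p})^{hS^1}$ for the profinite completion. The only inessential difference is your initial detour through $\CycSp^{\mr{gen}}$ and the bounded-below comparison; one can instead take $\TC$ of a Borel cyclotomic spectrum to be corepresented by the unit in $\CycSp$ directly (the connectivity of $R$ then only being needed to identify this with the classical genuine $\TC(R)$ and for the final profinite-completion assertion).
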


\begin{rem}
In \cite[Sec. IV]{NS18}, Nikolaus and Scholze apply the fiber sequence formula to compute $\TC$ of spherical monoid rings and $\TC(\FF_p)$. In \cite{KNYouTube}, Nikolaus explains how to modify this method for computing $\TC(\FF_p)$ to compute $\TC$ of all perfect $\FF_p$-algebras.
\end{rem}


\subsubsection{Real cyclotomic spectra}

In \cite{Hog16}, H{\o}genhaven defined $\mathrm{O}(2)$-orthogonal cyclotomic spectra. The primary example of interest is the real topological Hochschild homology of an $E_\sigma$-algebra in genuine $C_2$-spectra. 

Our first contribution to the theory of real cyclotomic spectra is the following $\infty$-categorical version of H{\o}genhaven's definition. Let $\mathrm{O}(2) = S^1 \rtimes C_2$ denote the orthogonal group, $\mu_p = \mu_p \times e \leq \mathrm{O}(2)$, and $D_{2p^n} = \mu_{p^n} \rtimes C_2 \leq \mathrm{O}(2)$. 

\begin{dfn}[{\cref{dfn:GenRealCycSp} and \cref{dfn:realintegralgen}}]
$ $
A \emph{genuine real $p$-cyclotomic spectrum} is a genuine $D_{2p^{\infty}}$-spectrum $X$, together with an equivalence $\Phi^{\mu_p} X \xrightarrow{\simeq} X$ in $\Sp^{D_{2p^{\infty}}}$. The \emph{$\infty$-category of genuine real $p$-cyclotomic spectra} is the equalizer
\[
\begin{tikzcd}
\RCycSp^{\mr{gen}}_p \coloneq \Eq(\Sp^{D_{2p^{\infty}}} \ar[r,"id",shift left=1] \ar[r,swap,shift right=1,"\Phi^{\mu_p}"] & \Sp^{D_{2p^{\infty}}}).
\end{tikzcd}
\]

Let $\cF$ denote the family of finite subgroups of $\mathrm{O}(2)$ and $\Sp^{\mathrm{O}(2)}_{\cF}$ the $\infty$-category of $\cF$-complete genuine $\mathrm{O}(2)$-spectra. Let $\NN_{>0}$ act on $\Sp^{\mathrm{O}(2)}_{\cF}$ by the geometric fixed points functors $\Phi^{\mu_n}$. The \emph{$\infty$-category of genuine real cyclotomic spectra} is
\[ \RCycSp^{\mr{gen}} \coloneq (\Sp^{\mathrm{O}(2)}_{\cF})^{h \NN_{>0}}. \]

\end{dfn}

\begin{exm}
H{\o}genhaven shows that the real topological Hochschild homology $\THR(R,D)$ of a ring $R$ with anti-involution $D$ is a genuine real cyclotomic spectrum.
\end{exm}

Our definition of genuine real cyclotomic spectra is the obvious $\mathrm{O}(2)$-equivariant analog of Nikolaus and Scholze's definition of genuine cyclotomic spectra. There are exact, colimit-preserving, and symmetric monoidal functors
\begin{align*}
\mr{triv}_{\RR}^{\mr{gen}} & : \Sp^{C_2} \to \RCycSp^{\mr{gen}} \\
\mr{triv}_{\RR,p}^{\mr{gen}} & : \Sp^{C_2} \to \RCycSp_p^{\mr{gen}}
\end{align*}
which equip a genuine $C_2$-spectrum with `trivial' genuine real ($p$-)cyclotomic structure (\cref{cnstr:GenuineTrivialFunctor}). 

\begin{dfn}[{\cref{dfn:ClassicalTCR}}]
The \emph{genuine $p$-typical real topological cyclic homology} functor
$$\TCR^{\mr{gen}}(-,p): \RCycSp^{\mr{gen}}_p \to \Sp^{C_2}$$
is the right adjoint to $\mr{triv}_{\RR,p}^{\mr{gen}}$. Likewise, $\TCR^{\mr{gen}}$ is the right adjoint to $\triv_{\RR}^{\mr{gen}}$.
\end{dfn}


\begin{rem}
We are unaware of a definition of \emph{integral} real topological cyclic homology $\TCR^{\mr{gen}}$ in previous literature. Following Goodwillie \cite{Goo86, DGM12}, Nikolaus and Scholze define integral topological cyclic homology as the pullback
\begin{equation} \label{eqn:integral_TC}
\begin{tikzcd}
\TC^{\mr{gen}}(X) \ar[r] \ar[d] & X^{hS^1} \ar[d] \\
\prod_{p} \TC^{\mr{gen}}(X,p)_p^\wedge \ar[r] & \prod_{p} (X_p^{\wedge})^{hS^1}.
\end{tikzcd}
\end{equation}
The obvious analog of this diagram does not type-check if we work with real topological cyclic homology: if $X \in \Sp^{\mathrm{O}(2)}$, then $\TCR^{\mr{gen}}(X,p)$ is a genuine $C_2$-spectrum but $X^{hS^1}$ is a nonequivariant spectrum. Put differently, taking only $S^1$-homotopy fixed points of the underlying spectrum discards too much genuine equivariant information.
\end{rem}

The issue with homotopy fixed points raised in the previous remark is the main conceptual obstacle to developing a Borel equivariant theory of real cyclotomic spectra. We would like a theory of real cyclotomic spectra in which the genuine $S^1$-equivariant structure is replaced by a Borel $S^1$-equivariant structure, but the genuine $C_2$-equivariant structure is preserved. This partially Borel, partially genuine equivariant structure was our chief motivation in developing the theory of the parametrized Tate construction in \cite{QS21a}. To make this precise, we used the following notions from parametrized higher category theory:

\begin{dfn}
Let $\sO_G$ be the orbit category of a group $G$. A \emph{$G$-$\infty$-category}, resp. \emph{$G$-space} is a cocartesian, resp. left fibration $C \to \sO_G^{\op}$. A \emph{$G$-functor} $F: C \to D$ is then a morphism over $\sO_G^{\op}$ that preserves cocartesian edges. We let $\Fun_G(C,D)$ denote the $\infty$-category of $G$-functors.
\end{dfn}

\begin{exm}
We have the $G$-$\infty$-categories $\underline{\Spc}^G$ and $\underline{\Sp}^G$ of $G$-spaces and (genuine) $G$-spectra. Their respective fibers over an orbit $U \cong G/H$ are given by the $\infty$-categories $\Spc^H \simeq \Fun(\sO_H^{\op}, \Spc)$ and $\Sp^H$ of $H$-spaces and $H$-spectra, and the cocartesian edges encode the functoriality of restriction and conjugation.
\end{exm}

\begin{dfn}
Let $\psi = [N \to G \to G/N]$ be a group extension with $G/N$ finite, and let $B_{G/N}^\psi N \subseteq \sO_G^{\op}$ be the full subcategory of $G$-orbits which are $N$-free, viewed as $G/N$-$\infty$-category via the quotient map. The \emph{$\infty$-category of $G/N$-spectra with $\psi$-twisted $N$-action} is
$$\Sp^{h_{G/N} N}  \coloneq \Fun_{G/N}(B_{G/N}^\psi N , \underline{\Sp}^{G/N}).$$
If $\psi$ is the group extension for a semidirect product, we will instead write $B^t_{G/N} N$.

The \emph{parametrized homotopy orbits} $(-)_{h_{G/N} N}$ and \emph{parametrized fixed points} $(-)^{h_{G/N} N}$ are then the $G$-colimit and $G$-limit functors, i.e., the left and right adjoints to the constant diagram functor
\[ \delta: \Sp^{G/N} \to \Fun_{G/N}(B_{G/N}^\psi N , \underline{\Sp}^{G/N}) \]
given by restriction along the structure map $B_{G/N}^\psi N \to \sO_{G/N}^{\op}$.
\end{dfn}

\begin{ntn}
In this paper, we write $\Sp^{h_{C_2} S^1}$ and $\Sp^{h_{C_2} \mu_{p^n}}$ for the $\infty$-categories associated to the semidirect products $\mathrm{O}(2) = S^1 \rtimes C_2$ and $D_{2p^n} = \mu_{p^n} \rtimes C_2$.
\end{ntn}

\begin{exm} \label{exm:unpacking_C2_functor}
Let $X \in \Sp^{h_{C_2} S^1}$. Under the canonical isomorphism $\mu_2 \cong W_{\mathrm{O}(2)} C_2$, we have
$$B^t_{C_2} S^1 = [B \mu_2 \to B S^1]$$
as a presheaf on $\sO_{C_2}$, with residual $C_2$-action on $BS^1$ such that $(BS^1)_{h C_2} \simeq B \mathrm{O}(2)$. Then $X$ amounts to the data of a tuple
\[ [X^1 \in \Sp^{h \mathrm{O}(2)}, \: X^{\phi C_2} \in \Sp^{h \mu_2}, \: X^{\phi C_2} \to (X^1)^{t C_2} \in \Sp^{h \mu_2}]. \]
In general, one may describe a $G/N$-spectrum with $\psi$-twisted $N$-action in terms of its $H$-geometric fixed points for $H \in \Gamma_N$ via the fully faithful embedding $j_!$ into $\Sp^G$ described below.\footnote{We only considered this for finite groups $G$ in \cite{QS21a}, but one may understand the case $\Sp^{h_{C_2} S^1}$ directly in terms of the recollement obtained by applying \cite[Lem.~2.37]{Sha21} with $K = B^t_{C_2} S^1$ to the $C_2$-recollement $(\underline{\Sp}^{h C_2}, \underline{\Sp}^{\Phi C_2})$ on $\underline{\Sp}^{C_2}$ (\cite[Rem.~2.33]{QS21a}).}
\end{exm}




As discussed above, the Tate construction $(-)^{t\mu_p} : \Sp^{hS^1} \to \Sp^{hS^1}$ used to define Borel cyclotomic spectra must be refined to encode the genuine equivariant structure relevant to real cyclotomic spectra. In \cite{QS21a}, we studied the \emph{parametrized Tate construction}
$$(-)^{t_{G/N}N} : \Sp^{h_{G/N} N} \to \Sp^{G/N}$$
associated to any extension $[N \to G \to G/N]$. It is this refinement of the Tate construction which will appear in our definition of Borel real cyclotomic spectra. 

More precisely, we showed in \cite[Thm. A]{QS21a} that when $G$ is finite there exists a symmetric monoidal restriction functor
$$j^*: \Sp^{G} \to \Fun_{G/N}(B_{G/N}^\psi N, \underline{\Sp}^{G/N})$$
that participates in an adjoint triple
\[ \begin{tikzcd}[column sep=6ex]
\Fun_{G/N}(B_{G/N}^\psi N, \underline{\Sp}^{G/N}) \ar[shift left = 3, hookrightarrow]{r}{j_!} \ar[shift right = 3, hookrightarrow]{r}[swap]{j_*} & \Sp^{G} \ar{l}[description]{j^*}
\end{tikzcd} \]
in which $j_!$ and $j_*$ are fully faithful and embed as the $\Gamma_N$-torsion and $\Gamma_N$-complete G-spectra, respectively, where $\Gamma_N = \{ H \leq G: N \cap H = 1 \}$ is the $G$-family of $N$-free subgroups.


\begin{dfn}[{\cite[Def.~1.6]{QS21a}}] \label{dfn:param_tate}
Suppose $G$ is a finite group. The \emph{parametrized Tate construction} 
$$(-)^{t_{G/N} N} : \Fun_{G/N}(B_{G/N}^\psi N, \underline{\Sp}^{G/N}) \to \Sp^{G/N}$$
is the composite lax symmetric monoidal functor
\[ \begin{tikzcd}[column sep=4em]
\Fun_{G/N}(B_{G/N}^\psi N, \underline{\Sp}^{G/N}) \ar[hookrightarrow]{r}{j_*} &  \Sp^{G} \ar{r}{- \wedge \widetilde{E \Gamma_N}} & \Sp^{G} \ar{r}{\Psi^N} & \Sp^{G/N}
\end{tikzcd} \]
where
\begin{enumerate}
\item $j_*$ is the embedding mentioned above,
\item $\widetilde{E\Gamma_N}$ is the cofiber of the map ${E\Gamma_N}_+ \to S^0$, and
\item $\Psi^N$ is the categorical $N$-fixed points.\footnote{We write this as $\Psi^N$ to distinguish it from the spectrum-valued functor of categorical $N$-fixed points $(-)^N: \Sp^{G} \to \Sp$.}
\end{enumerate}

The parametrized homotopy orbits (resp. parametrized homotopy fixed points) may be identified as the composite $(-)_{h_{G/N}N} \simeq \Psi^N \circ j_!$ (resp. $(-)^{h_{G/N}N} \simeq \Psi^N \circ j_*$), after which we obtain the norm cofiber sequence
\[ (-)_{h_{G/N}N} \to (-)^{h_{G/N}N} \to (-)^{t_{G/N}N}. \]
\end{dfn}

\begin{rem}
In \cite{QS21a}, using the theory of parametrized norm maps associated to weakly $1$-ambidextrous morphisms (\cite[Constr.~4.1.8]{HL13}) of $G/N$-spaces, we showed how to promote $(-)^{t_{G/N} N}$ to a functor respecting residual actions. In particular, under the isomorphism $\mathrm{O}(2)/\mu_p \cong \mathrm{O}(2)$, we have $(-)^{t_{C_2} \mu_p}$ as an endofunctor of $\Sp^{h_{C_2} S^1}$.
\end{rem}

\begin{rem}
We did not discuss genuine $G$-spectra for general compact Lie groups $G$ in \cite{QS21a}. Nonetheless, by using the theory of parametrized assembly maps, \cite[Thm. D]{QS21a} shows that the parametrized Tate construction $(-)^{t_{G/N} N}$ can be extended to the case where the group extension $\psi$ has the normal subgroup $N$ as a compact Lie group (with $G/N$ still finite). In particular, we will use $(-)^{t_{C_2}S^1}$ in our analysis of integral real cyclotomic spectra.
\end{rem}

\begin{rem}
The properties of the parametrized Tate construction needed in this paper are summarized in \cite[Intro.]{QS21a}. These properties will be unsurprising for readers familiar with the ordinary Tate construction, as discussed in the work of Greenlees--May \cite{GM95} and Nikolaus--Scholze \cite{NS18}. 
\end{rem}

With the appropriate category of partially Borel, partially genuine equivariant spectra identified, we make the following definition:

\begin{dfn}[{\cref{dfn:RealCycSp}, \ref{dfn:C2CategoryOfCyclotomicSpectra}, and \ref{dfn:realintegral}}]
$ $
A \emph{Borel real $p$-cyclotomic spectrum} is a $C_2$-spectrum $X$ with twisted $S^1$-action, together with a twisted $S^1$-equivariant map $\varphi_p : X \to X^{t_{C_2}\mu_p}$. The \emph{$\infty$-category of Borel real $p$-cyclotomic spectra} is the lax equalizer
\[
\begin{tikzcd}
\RCycSp_p \coloneq \LEq( \Sp^{h_{C_2}  \mu_{p^{\infty}}} \ar[rr,"id",shift left=1] \ar[rr,swap,shift right=1," (-)^{t_{C_2}\mu_p}"] & & \Sp^{h_{C_2} \mu_{p^{\infty}}} ).
\end{tikzcd}
\]

A \emph{Borel real cyclotomic spectrum} is a $C_2$-spectrum $X$ with twisted $S^1$-action, together with twisted $S^1$-equivariant maps $\varphi_p : X \to X^{t_{C_2}\mu_p}$ for each prime $p$. The \emph{$\infty$-category of Borel real cyclotomic spectra} is the lax equalizer
\[
\begin{tikzcd}
\RCycSp \coloneq \LEq( \Sp^{h_{C_2} S^1} \ar[rr,"id",shift left=1] \ar[rr,swap,shift right=1," \prod_p (-)^{t_{C_2}\mu_p}"] & & \prod_p \Sp^{h_{C_2} S^1} ).
\end{tikzcd}
\]

\end{dfn}

\begin{rem}
Both $\RCycSp$ and $\RCycSp_p$ admit the richer structure of a $C_2$-$\infty$-category; we let
\[ \underline{\RCycSp} \coloneq [\res: \RCycSp \to \CycSp], \]
where $\res$ is $C_2$-equivariant for the trivial action on the source and the residual $C_2$-action on the target, and likewise for $\underline{\RCycSp}_p$ (cf. \cref{dfn:C2CategoryOfCyclotomicSpectra} and \cref{dfn:realintegral}, where these definitions are formulated using the $C_2$-parametrized version of the lax equalizer and the $C_2$-endofunctors $(-)^{\underline{t}_{C_2} \mu_p}$).
\end{rem}


\cite{HarpazNikolausShah} will contain a construction of the real topological Hochschild homology of a Poincar{\'e} $\infty$-category as a Borel real cyclotomic spectrum. For the computations we do in this paper, it suffices to make the following more restrictive definition:

\begin{dfn}[{\cref{dfn:THR}}]
Let $A$ be a $C_2$-$E_\infty$-algebra in $\underline{\Sp}^{C_2}$. The \emph{real topological Hochschild homology $\THR(A)$} is the $C_2$-tensor of $A$ with the $C_2$-space $S^\sigma$ in the $C_2$-$\infty$-category of $C_2$-$E_\infty$-algebras in $\underline{\Sp}^{C_2}$, i.e.
$$\THR(A) := S^\sigma \odot A \in \underline{\CAlg}_{C_2}(\underline{\Sp}^{C_2}).$$
\end{dfn}

In the following theorem, proven in \cref{sec:thr}, we use the $C_2$-symmetric monoidal structure on $\underline{\RCycSp}$ that is defined using the unique lax $C_2$-symmetric monoidal structure on $(-)^{\underline{t}_{C_2}\mu_p}$ given by \cite[Thm.~E]{QS21a}.

\begin{thmx}\label{MT:C2EooTHR}
Let $A$ be a $C_2$-$E_\infty$-algebra in $\underline{\Sp}^{C_2}$. Its real topological Hochschild homology $\THR(A)$ admits the structure of a $C_2$-$E_\infty$-algebra in $\underline{\RCycSp}$. 
\end{thmx}

\begin{rem}
Real topological Hochschild homology and its (genuine) real cyclotomic structure have been defined more generally for ring spectra with anti-involution in \cite{DMPR17} and \cite{Hog16}. The new contribution of \cref{MT:C2EooTHR} is deducing that $\THR(A)$ admits the structure of a $C_2$-$E_\infty$-algebra in real cyclotomic spectra; to our knowledge, this genuine equivariant multiplicative structure on $\THR(A)$ has not been discussed in previous work. 
\end{rem}

\begin{exm}
If $\underline{M}$ is a $C_2$-Tambara functor, then its Eilenberg--MacLane $C_2$-spectrum $H\underline{M}$ is a $C_2$-$E_\infty$-algebra in $\underline{\Sp}^{C_2}$ by \cite[Thm. 5.1]{Ull13b}. Subsequently, we get that $\THR(H\underline{M})$ is a $C_2$-$E_{\infty}$-algebra in $\underline{\RCycSp}$.
\end{exm}

As in the genuine setting, there is an exact, colimit-preserving, and symmetric monoidal functor
$$\mr{triv}_{\RR,p} : \Sp^{C_2} \to \RCycSp_p$$
that equips a $C_2$-spectrum with a `trivial' real $p$-cyclotomic structure (\cref{cnstr:trivialFunctor}), and likewise for $\RCycSp$. We define real topological cyclic homology using this functor:

\begin{dfn}[{\cref{dfn:newTCR}}]
The \emph{$p$-typical real topological cyclic homology} functor 
$$\TCR(-,p) : \RCycSp_p \to \Sp^{C_2}$$
is the right adjoint to $\mr{triv}_{\RR,p}$. Similarly, $\TCR$ is the right adjoint to $\mr{triv}_{\RR}$.

If $X = \THR(R)$, we write $\TCR(R) \coloneq \TCR(\THR(R))$ and likewise for $\TCR(R,p)$.
\end{dfn}

\begin{rem}
Under the decomposition \eqref{eq:pullback_integralrcyc} of $\RCycSp$, $\TCR(X)$ identifies as the pullback
\[
\begin{tikzcd}
\TCR(X) \ar[r] \ar[d] & X^{h_{C_2}S^1} \ar[d] \\
\prod_{p} \TCR(X,p)_p^\wedge \ar[r] & \prod_{p} (X_p^\wedge)^{h_{C_2}S^1}
\end{tikzcd}
\]
in analogy to \eqref{eqn:integral_TC}.
\end{rem}

A fundamental observation of Blumberg--Mandell \cite{BM16} in their study of cyclotomic spectra is that genuine $p$-typical topological cyclic homology is corepresentable by the unit in the category of genuine $p$-cyclotomic spectra. Nikolaus--Scholze prove an analogous corepresentability result for $p$-typical topological cyclic homology in the Borel equivariant setting (cf. \cite[Rem. II.6.10]{NS18}). 

We show that both the genuine and Borel variants of $p$-typical real topological cyclic homology are corepresentable. Since real topological cyclic homology is naturally a $C_2$-spectrum, corepresentability must be understood $C_2$-equivariantly. We give a general discussion of $G$-corepresentability around \cref{dfn:Gcorepresentable}. 

\begin{thmx}[{\cref{prp:C2representabilityOfTC} and \ref{prp:classicalTCRcorepresentable}}]\label{Thmx:Corep}
$\TCR(-,p)$ and $\TCR^{\mr{gen}}(-,p)$ are $C_2$-corepresentable by the units in $\RCycSp_p$ and $\RCycSp_p^{\mr{gen}}$, respectively. 
\end{thmx}

\begin{rem}
The same reasoning behind \cref{Thmx:Corep} applies to show that $\TCR$ and $\TCR^{\mr{gen}}$ are $C_2$-corepresentable by the unit in $\RCycSp$ and $\RCycSp^{\mr{gen}}$, respectively.
\end{rem}

Since $\RCycSp^{\mr{gen}}_p$ is defined as an equalizer and $\RCycSp_p$ is defined as a lax equalizer, we obtain limit formulas for their mapping $C_2$-spectra. Combined with the $C_2$-corepresentability of $\TCR^{\mr{gen}}(-,p)$ and $\TCR(-,p)$, we obtain several formulas for real topological cyclic homology. 

To fully describe these formulas in the genuine setting, we need to introduce the restriction and Frobenius maps. For $X \in \RCycSp^{\mr{gen}}_p$, we have maps of genuine $C_2$-spectra
$$R,F : \Psi^{\mu_{p^n}} X \to \Psi^{\mu_{p^{n-1}}} X,$$
which can be used to define
$$\TRR(X,p) := \lim_{n,R} \Psi^{\mu_{p^n}} X,$$
$$\TFR(X,p) := \lim_{n,F} \Psi^{\mu_{p^n}} X.$$

\begin{rem}
The $C_2$-spectrum $\TRR(X,p)$ is studied extensively in the work of Dotto--Moi--Patchkoria \cite{DMP21}, where they relate it to the Witt vectors for Tambara functors. 
\end{rem}

\begin{thmx}[{Propositions \ref{prp:fiberSequenceGenuineRealCyc}, \ref{prp:TCRfiberSequence}, and \ref{Prp:ProfiniteCompletion}}]\label{Thmx:FiberSeq}
$ $ 
\begin{enumerate}

\item Let $X$ be a genuine real $p$-cyclotomic spectrum. There are natural fiber sequences of $C_2$-spectra
$$\TCR^{\mr{gen}}(X,p) \to \TFR(X,p) \xrightarrow{ id - R} \TFR(X,p),$$
$$\TCR^{\mr{gen}}(X,p) \to \TRR(X,p) \xrightarrow{ id  - F} \TRR(X,p),$$
and a natural equivalence of $C_2$-spectra
$$\TCR^{\mr{gen}}(X,p) \simeq \lim_{n,F,R} \Psi^{\mu_{p^n}} X.$$

\item Let $X$ be a Borel real $p$-cyclotomic spectrum. There is a natural fiber sequence of $C_2$-spectra
$$\TCR(X,p) \to X^{h_{C_2} \mu_{p^{\infty}}} \xrightarrow{ \varphi_p^{h_{C_2} \mu_{p^{\infty}} } - \can} (X^{t_{C_2}\mu_p})^{h_{C_2} \mu_{p^{\infty}}}.$$

Moreover, $X^{t_{C_2} \mu_{p^{\infty}}} \simeq (X^{t_{C_2}\mu_p})^{h_{C_2} \mu_{p^{\infty}}}$ if the underlying spectrum of $X$ is bounded-below.

\item Let $X$ be a Borel real cyclotomic spectrum. There is a natural fiber sequence of $C_2$-spectra
$$\TCR(X) \to X^{h_{C_2}S^1} \xrightarrow{(\varphi_p^{h_{C_2}S^1} - \can)_{p}} \prod_{p} (X^{t_{C_2}\mu_p})^{h_{C_2}S^1}.$$

Moreover, the rightmost term identifies with the profinite completion of $X^{t_{C_2}S^1}$ if the underlying $C_2$-spectrum of $X$ is bounded-below.
\end{enumerate}
\end{thmx}

\begin{rem}
The formula for $\TCR^{\mr{gen}}(X,p)$ recovers the definition given by H{\o}genhaven in \cite{Hog16}. Therefore, our definition of $\TCR^{\mr{gen}}(-,p)$ as the right adjoint to $\mr{triv}_{\RR,p}^{\mr{gen}}$ coincides with previous definitions. 
\end{rem}

The main theorem of this work is that for suitably bounded-below objects, the $\infty$-category of genuine real cyclotomic spectra is equivalent to the $\infty$-category of Borel real cyclotomic spectra. We compare these theories using real versions of the forgetful functor \eqref{Eqn:ForgetfulFunctor} and its $p$-typical variant
\begin{align*}
\sU_\RR:& \RCycSp^{\mr{gen}} \to \RCycSp, \\
\sU_{\RR,p}:& \RCycSp^{\mr{gen}}_p \to \RCycSp_p.
\end{align*}
In fact, $\sU$ and $\sU_{\RR}$ sit together in a forgetful $C_2$-functor
\begin{align*}
\underline{\sU}_\RR:& \underline{\RCycSp}^{\mr{gen}} \to \underline{\RCycSp}
\end{align*}
as the fibers over $C_2/1$ and $C_2/C_2$, respectively, and $\underline{\sU}_\RR$ comes endowed with the structure of a $C_2$-symmetric monoidal functor; similarly for $\underline{\sU}_{\RR,p}$. Furthermore, the functors $\triv_{\RR,p}$ and $\triv_{\RR}$ promote to $C_2$-symmetric monoidal functors, so $\TCR(-,p)$ and $\TCR(-)$ promote to lax $C_2$-symmetric monoidal functors. We then have the following theorem as a real version of the main theorem of \cite{NS18}:

\begin{thmx}[{Theorems \ref{thm:MainTheoremEquivalenceBddBelow} and \ref{thm:integral_comparison}}]\label{Thmx:Equiv}\label{Thmx:TCREquiv}
$ $
\begin{enumerate}

\item The functors $\sU_{\RR,p}$ and $\sU_\RR$ restrict to equivalences between the full subcategories on those objects whose underlying spectrum is bounded-below. 

\item Let $X$ be a genuine real ($p$-)cyclotomic spectrum whose underlying spectrum is bounded-below. There is a canonical equivalence of $C_2$-spectra
$$\TCR^{\mr{gen}}(X,p) \simeq \TCR(\sU_{\RR,p}(X),p)$$ 
that respects $C_2$-equivariant multiplicative structures, and similarly for the integral theories. In particular, if $X$ is $\THR$ of a $C_2$-$E_{\infty}$-algebra, then this is an equivalence of $C_2$-$E_{\infty}$-algebras.
\end{enumerate}
\end{thmx}

\begin{rem}
We emphasize that the only restriction appearing in \cref{Thmx:Equiv} is that the underlying spectrum, as opposed to the underlying $C_2$-spectrum, is bounded-below. This weaker hypothesis is crucial for applications: the homotopy groups $\pi_*^{C_2} \THR(R)$ are generally \emph{not} bounded-below.
\end{rem}

As in \cite{NS18}, the main input needed for the proof of \cref{Thmx:Equiv} is the \emph{dihedral Tate orbit lemma}:
\begin{thmx}[{\cref{lem:dihedralTOLEven} and \ref{lem:dihedralTOLOdd}}]\label{Thmx:TO}
The functor given by the composite
$$\Sp^{h_{C_2} \mu_{p^2}} \xrightarrow{(-)_{h_{C_2}\mu_p}} \Sp^{h_{C_2} \mu_{p}} \xrightarrow{(-)^{t_{C_2}\mu_p}} \Sp^{C_2}$$
evaluates to $0$ on those objects $X$ whose underlying spectrum is bounded-below.
\end{thmx}

\begin{rem}
As a corollary of \cref{Thmx:TO}, we show that for any $D_{2p^n}$-spectrum $X$ with $\Phi^{\mu_{p^k}} X$ bounded-below as a spectrum for all $0 \leq k \leq n-1$, the $C_2$-spectrum of $\mu_{p^n}$-categorical fixed points $\Psi^{\mu_{p^n}}X$ can be computed as an iterated pullback of parametrized homotopy fixed points and Tate constructions (\cref{lm:SameLemmaNS}). Along the way to the proof of \cref{Thmx:Equiv}, we categorify this relation by establishing a decomposition of the $\infty$-category of suitably bounded-below $D_{2p^n}$-spectra as an iterated pullback (\cref{cor:iterated_pullback_descr}); compare \cite[Rem.~II.4.8]{NS18}.
\end{rem}

Taken in conjuction, \cref{Thmx:FiberSeq} and \cref{Thmx:TCREquiv} provide a powerful new tool for studying real topological cyclic homology. In this paper, we give a sample computation by using the fiber sequence formula to prove:

\begin{thmx}[{Theorems \ref{thm:odd_primary_computation}, \ref{Thm:TCRF2}, and \ref{thm:extn_perfect_algebras}}]\label{Thmx:TCRFp}
Let $p$ be a prime and let $H\underline{M}$ denote the Eilenberg--MacLane $C_2$-spectrum for the constant $C_2$-Mackey functor $\underline{M}$. There is an equivalence of $C_2$-spectra
$$\TCR(H\underline{\FF_p}) \simeq H\underline{\ZZ_p} \oplus \Sigma^{-1} H\underline{\ZZ_p}.$$

More generally, let $R$ be a perfect $\FF_p$-algebra. Then there is an equivalence of $C_2$-spectra
$$\TCR(H\underline{R}) \simeq H \underline{\ker(1-F)} \oplus \Sigma^{-1}H\underline{\coker(1-F)},$$
where $F: W(R;p) \to W(R;p)$ is the $p$-typical Witt vector Frobenius.
\end{thmx}

\begin{rem}
In our previous work \cite{QS19}, we proved \cref{Thmx:TCRFp} for odd primes. Since then, Dotto--Moi--Patchkoria \cite[Thm. D]{DMP21} have identified $\TCR^{\mr{gen}}(H\underline{k}, p)$ for every perfect field $k$ of characteristic $p>0$. In their computation, they compute the geometric fixed points of $\THR(H\underline{k})^{\mu_{p^n}}$ for all $n$ \cite[Thm.~4.6]{DMP21} and use them (along with their previous results in \cite{DMP19}) to compute $\TCR^{\mr{gen}}(H\underline{k},p)$ as an inverse limit, cf. part (1) of \cref{Thmx:FiberSeq}. Our computation of $\TCR(H\underline{R},p)$  bypasses these finite-level computations.

\end{rem}

\begin{rem}
Using our new results on the multiplicative structure of $\THR$ and $\TCR$, it is possible to identify $\THR(H\underline{\FF_p})$ as a $C_2$-$E_\infty$-algebra in real cyclotomic spectra along the lines of \cite[Cor. IV.4.13]{NS18}. We will not pursue this identification further here, but it would be interesting to explore its implications. In particular, this identification should be relevant for arithmetic applications of real cyclotomic spectra; cf. the proof of \cite[Thm. 8.17]{BMSII}. 
\end{rem}

\begin{rem}
Using \cref{Thmx:FiberSeq} and \cref{Thmx:TCREquiv}, it is also possible to recover H{\o}genhaven's computation of $\TCR^{\mr{gen}}(\SS[\Gamma])$, the real topological cyclic homology of spherical group rings with anti-involution defined by inverting group elements. In fact, we can obtain a similar identification of $\TCR^{\mr{gen}}(\SS[\Omega^\sigma M])$, where $\Omega^\sigma M = \Map(S^\sigma,M)$ is the $C_2$-space of maps from the one-point compactification of the real sign representation of $C_2$ into $M$. This identification has also been obtained by Dotto--Moi--Patchkoria \cite[Thm. B]{DMP21}. 
\end{rem}

\subsection{Conventions, notation, and prerequisites}

\begin{enumerate}[leftmargin=4ex,label*=\arabic*.]
\item For an $\infty$-category $C$, we denote its mapping spaces by $\Map_C(-,-)$ or $\Map(-,-)$ if $C$ is understood. Likewise, if $C$ is a stable $\infty$-category, then we denote its mapping spectra by $\map_C(-,-)$ or $\map(-,-)$.
\item As in \cite{QS21a}, we will assume knowledge of the theory of recollements in this paper and refer to \cite{Sha21} as our primary reference. Given a stable $\infty$-category $\sX$ decomposed by a stable recollement $(\sU, \sZ)$, we will generically label the recollement adjunctions as
\[ \begin{tikzcd}[column sep=4em]
\sU \ar[hookrightarrow, shift left=2]{r}{j_!} \ar[hookrightarrow, shift right=4]{r}[swap]{j_*} & \sX \ar[shift left=1]{l}[description]{j^*} \ar[shift left=2]{r}{i^*} \ar[shift right=1, hookleftarrow]{r}[swap, description]{i_*} \ar[shift right=4]{r}[swap]{i^!} & \sZ.
\end{tikzcd} \]
Here $j^* i_* = 0$ determines the directionality of the recollement.

In addition, we will make use of the notion of $G$-recollement \cite[Def.~2.43]{Sha21} in the case $G = C_2$.

\item The conventions, notation, and terminology from our companion paper \cite{QS21a} extend to this work. See \cite[\S 2.1]{QS21a} for our conventions on equivariant stable homotopy theory, see \cite[\S 1]{QS21a} and the introduction of \cite[\S 4]{QS21a} for the relevant background from parametrized higher category theory,\footnote{See also \cite[App. A]{QS19}.} and see \cite[\S 5.1]{QS21a} for the relevant background from parametrized higher algebra. In particular, for $G$ a finite group, the reader should be aware of the concepts of:

\begin{enumerate}[leftmargin=6ex,label*=\arabic*.]

\item The $G$-$\infty$-categories $\underline{\Spc}^G$ and $\underline{\Sp}^G$ of $G$-spaces and $G$-spectra;

\item $G$-functor categories and the notation $\underline{\Fun}_G(-,-)$ for the internal hom in $G$-$\infty$-categories;

\item $G$-adjunctions \cite[\S 8]{Exp2} (or see \cite[Def.~1.33]{QS21a});

\item $G$-(co)products \cite[Def.~5.10]{Exp2} (or see \cite[Exm.~1.32]{QS21a}), $G$-(co)limits \cite[\S 5]{Exp2}, and $G$-Kan extensions \cite[\S 10]{Exp2}: given a functor of small\footnote{A $G$-$\infty$-category $I \to \sO_G^{\op}$ is \emph{small} if $I$ is small as an $\infty$-category.} $G$-$\infty$-categories $f: I \to J$ and a $G$-cocomplete $G$-$\infty$-category $C$, the restriction $G$-functor
\[ \phi^*: \underline{\Fun}_G(J, C) \to \underline{\Fun}_G(I,C) \]
admits a left $G$-adjoint $\phi_!$ given by (pointwise) $G$-left Kan extension, and similarly for $G$-complete $G$-$\infty$-categories and $G$-right Kan extension.

\item $G$-semiadditivity as the condition under which $G$-coproducts and $G$-products coincide \cite[Def.~4.14]{QS21a}, and $G$-stable $G$-$\infty$-categories defined as fiberwise stable $G$-semiadditive $G$-$\infty$-categories \cite{Exp4};

\item $G$-symmetric monoidal $\infty$-categories and $G$-commutative algebras \cite[\S 5.1]{QS21a}.
\end{enumerate}





\item When working with Mackey functors, we write $\underline{A}$ for the Burnside Mackey functor, $\underline{B}$ for the constant Mackey functor on a group $B$, and $\underline{I}$ for the kernel of the augmentation $\underline{A} \to \underline{\mathbb{Z}}$. We will usually write $\underline{M}$ for a generic $C_2$-Mackey functor. If $S$ is a $G$-set, we write $\underline{M}_S$ for the Mackey functor defined by $\underline{M}_S(T) = \underline{M}(S \times T)$. 




\end{enumerate}


\subsection{Acknowledgments}

This work is an expansion of \cite[Secs. 6-7]{QS19}. The main changes are as follows:
\begin{enumerate}[leftmargin=4ex]
	\item We give a systematic development of $C_2$-symmetric monoidal structures on real cyclotomic spectra. We establish that the real topological Hochschild homology of a $C_2$-$E_\infty$-algebra in $C_2$-spectra is a $C_2$-$E_\infty$-algebra in real cyclotomic spectra.
	\item We include results on integral real cyclotomic spectra, which in part rely on the new results in \cite{QS21a} on the parametrized Tate construction for compact Lie groups. 
	\item We compute $\TCR(H\underline{\FF}_p)$ for all primes, instead of just odd primes, and we compute $\TCR$ for all perfect $\FF_p$-algebras. We replace the spectral sequence approach for odd primes of \cite[\S 7.5]{QS19} with a direct approach involving geometric fixed points.
\end{enumerate}

The authors thank Mark Behrens, Andrew Blumberg, Emanuele Dotto, Jeremy Hahn, Achim Krause, Kristian Moi, Thomas Nikolaus, Irakli Patchkoria, Dylan Wilson, and Mingcong Zeng for helpful discussions. The authors were partially supported by NSF grant DMS-1547292. J.S. was also funded by the Deutsche Forschungsgemeinschaft (DFG, German Research Foundation) under Germany’s Excellence Strategy EXC 2044–390685587, Mathematics Münster: Dynamics–Geometry– Structure.

\section{Two theories of real \texorpdfstring{$p$}{p}-cyclotomic spectra}\label{Sec:RealCyc}

In this section, we define $\infty$-categories $\RCycSp_p$ and $\RCycSp^{\mr{gen}}_p$ of \emph{Borel} and \emph{genuine} real $p$-cyclotomic spectra (\cref{dfn:RealCycSp} and \cref{dfn:GenRealCycSp}). For each $\infty$-category, we define the corresponding theories $\TCR(-,p)$ and $\TCR^{\mr{gen}}(-,p)$ of $p$-typical real topological cyclic homology (\cref{dfn:newTCR} and \cref{dfn:ClassicalTCR}) as functors to $\Sp^{C_2}$ right adjoint to functors that endow $C_2$-spectra with trivial real $p$-cyclotomic structure (\cref{cnstr:trivialFunctor} and \cref{cnstr:GenuineTrivialFunctor}), show $\TCR(-,p)$ and $\TCR^{\mr{gen}}(-,p)$ are $C_2$-corepresentable (\cref{dfn:Gcorepresentable}, \cref{prp:C2representabilityOfTC}, and \cref{prp:classicalTCRcorepresentable}), and thereby deduce fiber sequence formulas (\cref{prp:TCRfiberSequence} and \cref{prp:fiberSequenceGenuineRealCyc}) that in the case of $\TCR^{\mr{gen}}(-,p)$ recovers the more standard definition in terms of maps $R$ and $F$. To begin our study, we need to fix a few conventions regarding the dihedral groups and dihedral spectra.


\begin{setup} \label{setup:Dihedral} Let $\mathrm{O}(2)$ denote the group of $2 \times 2$ orthogonal matrices, and regard the circle group $S^1$ as the subgroup $\mathrm{SO}(2) \subset \mathrm{O}(2)$. We fix, once and for all, a splitting of the determinant $\det: \mathrm{O}(2) \to C_2 \cong \{ \pm 1 \}$ by choosing $\sigma \in \mathrm{O}(2)$ to be the $\det =-1$ matrix given by
\[
   \sigma =
  \left[ {\begin{array}{cc}
   0 & 1 \\
   1 & 0 \\
  \end{array} } \right].
\]
This exhibits $\mathrm{O}(2)$ as the semidirect product $S^1 \rtimes C_2$ for $C_2 = \angs{\sigma} \subset \mathrm{O}(2)$, where $C_2$ acts on $S^1$ by complex conjugation, i.e., inversion. For $0 \leq n \leq \infty$, let $\mu_{p^n} \subset S^1$ be the subgroup of $p^n$th roots of unity, and let $D_{2p^{n}} \subset \mathrm{O}(2)$ be the subgroup $\mu_{p^n} \rtimes C_2$. For $n<\infty$, we let $\{ x_n \}$ denote a compatible system of generators for $\mu_{p^n}$ (so $x_n = x_{n+1}^p$ for all $n \geq 0$), and we also set $x = x_n$ if there is no ambiguity about the ambient group.\footnote{We will only refer to these elements in our proof of the dihedral Tate orbit lemma in \cref{sec:dihedral_tate_orbit_lemma}.} When considering restriction functors $\Sp^{D_{2p^n}} \to \Sp^{D_{2p^m}}$ for $m \leq n$, we always choose restriction to be with respect to the inclusion $D_{2p^m} \subset D_{2p^n}$ induced by $\mu_{p^m} \subset \mu_{p^n}$. Then we define
\[ \Sp^{D_{2p^{\infty}}} \coloneq \lim_n \Sp^{D_{2p^n}} \]
to be the inverse limit taken along these restriction functors. Since the restriction functors are symmetric monoidal and colimit preserving, we may take the inverse limit in $\CAlg(\Pr^{L,\st})$, and $\Sp^{D_{2p^{\infty}}}$ is then a stable presentable symmetric monoidal $\infty$-category. 

 We also define the $C_2$-space $B^t_{C_2} \mu_{p^{\infty}}$ to be the full subcategory of $\sO_{D_{2p^{\infty}}}^{\op}$ on the $\mu_{p^{\infty}}$-free orbits.\footnote{For an infinite group $G$ like $D_{2p^{\infty}}$, we let $\sO_{G}$ be the category of non-empty transitive $G$-sets, which need not be finite.} Note that an orbit $D_{2p^n}/H$ is $\mu_{p^n}$-free if and only if $H=1$ or $H = \angs{\sigma z}$ for some $z \in \mu_{p^n}$, where $H$ then has order $2$ since $(\sigma z)^2 = z^{-1} z = 1$. Indeed, supposing $H \neq 1$, since $H \cap \mu_{p^n} = 1$, if $\sigma z$ and $\sigma z'$ are two elements in $H$, then we must have $(\sigma z)(\sigma z') = z^{-1} z' = 1$, so $z = z'$. It follows that with respect to the induction functors $\sO_{D_{2p^m}} \to \sO_{D_{2p^n}}$ induced by the above inclusions, we have
\[ \colim_n B^t_{C_2} \mu_{p^n} \xto{\simeq} B^t_{C_2} \mu_{p^{\infty}}    \]
as a filtered colimit of $C_2$-spaces. Therefore, we obtain an equivalence
\[ \Fun_{C_2}(B^t_{C_2} \mu_{p^{\infty}}, \ul{\Sp}^{C_2}) \xto{\simeq} \lim_n \Fun_{C_2}(B^t_{C_2} \mu_{p^n}, \ul{\Sp}^{C_2}).  \]
Note also that the restriction functors are colimit preserving and symmetric monoidal with respect to the pointwise monoidal structure, so the equivalence may be taken in $\CAlg(\Pr^{L,\st})$.

Next, recall from \cite[Exm.~3.26]{QS21a} that we have a stable symmetric monoidal recollement
\begin{equation} \label{eq:dihedral_recollement}
\begin{tikzcd}[row sep=4ex, column sep=10ex, text height=1.5ex, text depth=0.5ex]
\Fun_{C_2}(B^t_{C_2} \mu_{p^n}, \underline{\Sp}^{C_2}) \ar[shift right=1,right hook->]{r}[swap]{j_{\ast} = \sF^{\vee}_b[\mu_{p^n}]} & \Sp^{D_{2p^n}} \ar[shift right=2]{l}[swap]{j^{\ast} = \sU_b[\mu_{p^n}]} \ar[shift left=2]{r}{i^{\ast} = \Phi^{\mu_p}} & \Sp^{D_{2p^{n-1}}} \ar[shift left=1,left hook->]{l}{i_{\ast}},
\end{tikzcd}
\end{equation}
which coincides with the recollement on $\Sp^{D_{2p^n}}$ determined by the family $\Gamma_{\mu_{p^n}} = \{ H \leq D_{2p^n}: H \cap \mu_{p^n} = 1 \}$ of $\mu_{p^n}$-free subgroups. Moreover, by \cite[Cor.~3.25]{QS21a} we have a strict morphism (\cite[Def.~2.6]{Sha21}) of  recollements
\begin{equation*}
\begin{tikzcd}[row sep=4ex, column sep=8ex, text height=1.5ex, text depth=0.5ex]
 \Fun_{C_2}(B^t_{C_2} \mu_{p^n}, \ul{\Sp}^{C_2}) \ar{d}{\res} & \Sp^{D_{2p^n}} \ar{d}{\res} \ar{l}[swap]{\sU_b[\mu_{p^n}]} \ar{r}{\Phi^{\mu_p}} &  \Sp^{D_{2p^{n-1}}} \ar{d}{\res} \\
\Fun_{C_2}(B^t_{C_2} \mu_{p^m}, \ul{\Sp}^{C_2}) & \Sp^{D_{2p^{m}}} \ar{l}[swap]{\sU_b[\mu_{p^{m}}]} \ar{r}{\Phi^{\mu_p}} &  \Sp^{D_{2p^{m-1}}}
\end{tikzcd}
\end{equation*}
for all $0< m \leq n < \infty$. By \cite[Cor.~2.38]{Sha21}, passage to inverse limits defines a stable symmetric monoidal recollement
\begin{equation} \label{eq:dihedral_recollement_infinite}
\begin{tikzcd}[row sep=4ex, column sep=8ex, text height=1.5ex, text depth=0.5ex]
\Fun_{C_2}(B^t_{C_2} \mu_{p^{\infty}}, \ul{\Sp}^{C_2}) \ar[shift right=1,right hook->]{r}[swap]{j_{\ast} = \sF^{\vee}_b } & \Sp^{D_{2p^{\infty}}} \ar[shift right=2]{l}[swap]{j^{\ast} = \sU_b} \ar[shift left=2]{r}{i^{\ast} = \Phi^{\mu_p}} & \Sp^{D_{2p^{\infty}}} \ar[shift left=1,left hook->]{l}{i_{\ast}}
\end{tikzcd}
\end{equation}
where we implicitly use the isomorphism $D_{2p^{\infty}}/\mu_p \cong D_{2p^{\infty}}$ induced by the $p$th power map for $\mu_{p^{\infty}}/\mu_p \cong \mu_{p^{\infty}}$ to regard $\Phi^{\mu_p}$ as an endofunctor. By also using the compatibility of restriction with categorical fixed points, we obtain the lax symmetric monoidal endofunctor $\Psi^{\mu_p}$ of $\Sp^{D_{2p^{\infty}}}$, and we retain the relation $\Psi^{\mu_p} \circ i_{\ast} \simeq \id$. Now consider the fiber sequence of functors
\[ j^{\ast} \Psi^{\mu_p} j_! \to j^{\ast} \Psi^{\mu_p} j_{\ast} \to j^{\ast} \Psi^{\mu_p} i_{\ast} i^{\ast} j_{\ast} \simeq j^{\ast} \Phi^{\mu_p} j_{\ast}. \]
By the same argument as in \cite[Thm.~4.28]{QS21a}, this fiber sequence is equivalent to
\[ (-)_{h_{C_2} \mu_p} \xto{\Nm} (-)^{h_{C_2} \mu_p} \to (-)^{t_{C_2} \mu_p} \]
where the parametrized norm map is that associated to the weakly $1$-ambidextrous morphism of $C_2$-spaces
$$B^t_{C_2} \mu_{p^{\infty}} \to B^t_{C_2} (\mu_{p^{\infty}} / \mu_p) \simeq B^t_{C_2} \mu_{p^{\infty}}$$
with fiber $B^t_{C_2} \mu_p$ (see \cref{cnv:basepoint} for our basepoint convention). We may then use this identification to endow the natural transformation $(-)^{h_{C_2} \mu_p} \to (-)^{t_{C_2} \mu_p}$ of endofunctors of $\Sp^{h_{C_2} \mu_{p^{\infty}}}$ with the structure of a lax symmetric monoidal functor. 
\end{setup}

\begin{cvn} \label{cnv:basepoint} We will regard $B^t_{C_2} \mu_{p^n}$ as a based $C_2$-space via the functor $\sO_{C_2}^{\op} = B^t_{C_2} (1) \to B^t_{C_2} \mu_{p^n}$ induced by $\angs{\sigma} \subset D_{2p^n}$. We then say that for an object $X \in \Fun_{C_2}(B^t_{C_2} \mu_{p^n}, \ul{\Sp}^{C_2})$, evaluation on the $C_2$-basepoint yields the \emph{underlying $C_2$-spectrum} of $X$, and further restriction via $\res^{C_2}: \Sp^{C_2} \to \Sp$ yields the \emph{underlying spectrum} of $X$. Note that if $X = \sU_b[\mu_{p^n}](Y)$, then its underlying $C_2$-spectrum is $\res^{D_{2p^n}}_{\angs{\sigma}} (Y)$.
\end{cvn}

\begin{rem} \label{rem:DihedralBasepoints} Note that for $y,z \in D_{2p^n}$, $y^{-1} (\sigma z) y = \sigma y^2 z$. Therefore, for $p$ odd and all $0 \leq n \leq \infty$, any two subgroups $\angs{\sigma z}$ and $\angs{\sigma z'}$ of $D_{2p^n}$ are conjugate. In contrast, for $p=2$ and $1<n<\infty$, there are two conjugacy classes of order $2$ subgroups $H$ with $H \cap \mu_{p^n} = 1$, with representatives $\angs{\sigma}$ and $\angs{\sigma x}$. However, for $p=2$ and $n=\infty$, we again have a single conjugacy class since we can take square roots for $z \in \mu_{2^{\infty}}$.

It follows that $B^t_{C_2} \mu_{p^n}$ is a connected $C_2$-space for $p$ odd and any $n$ or for $p=2$ and $n \in \{ 0,1,\infty \}$, but its fiber over $C_2/C_2$ splits into two components when $p=2$ and $1<n<\infty$.
\end{rem}

We also will use the following notation for the (lax) equalizer of two endofunctors.

\begin{dfn}[{\cite[Def.~II.1.4]{NS18}}] Suppose $F$ and $F'$ are endofunctors of an $\infty$-category $C$. Define the \emph{lax equalizer} of $F$ and $F'$ to be the pullback
\[ \begin{tikzcd}[row sep=4ex, column sep=6ex, text height=1.5ex, text depth=0.5ex]
\LEq_{F:F'}(C) \ar{r} \ar{d} & \Ar(C) \ar{d}{(\ev_0,\ev_1)} \\
C \ar{r}{(F,F')} & C \times C.
\end{tikzcd} \]
Define the \emph{equalizer} $\Eq_{F:F'}(C) \subset \LEq_{F:F'}(C)$ to be the full subcategory on objects $[x,F(x) \xto{\phi} F'(x)]$ where $\phi$ is an equivalence.
\end{dfn}

\subsection{Borel real \texorpdfstring{$p$}{p}-cyclotomic spectra}\label{SS:BRCyc}

\begin{dfn} \label{dfn:RealCycSp} A \emph{real $p$-cyclotomic spectrum} is a $C_2$-spectrum $X$ with a twisted $\mu_{p^{\infty}}$-action, together with a twisted $\mu_{p^{\infty}}$-equivariant map $\varphi: X \to X^{t_{C_2} \mu_p}$. The $\infty$-category of \emph{real $p$-cyclotomic spectra} is then
$$\RCycSp_p := \LEq_{\id:t_{C_2} \mu_p}(\Fun_{C_2}(B^t_{C_2} \mu_{p^{\infty}}, \underline{\Sp}^{C_2})). $$
\end{dfn}

Such objects might be more accurately called \emph{Borel} real $p$-cyclotomic spectra, but we follow \cite{NS18} in our choice of terminology.

\begin{rem} We will sometimes abuse notation and refer to $X$ itself as the real $p$-cyclotomic spectrum, leaving the map $\varphi$ implicit.
\end{rem}

 We have the same conclusion as \cite[Cor.~II.1.7]{NS18} for $\RCycSp_p$, with the same proof.

\begin{prp} \label{prp:RCycSpPresentable} \label{prp:forgetful_conservative} $\RCycSp_p$ is a presentable stable $\infty$-category, and the forgetful functor $$\RCycSp_p \to \Sp^{C_2}$$ is conservative, exact, and creates colimits and finite limits.
\end{prp}
\begin{proof} The endofunctor $t_{C_2} \mu_p$ is exact and accessible as the cofiber of functors that admit adjoints, or as the composite $\sU_b \circ \Phi^{\mu_p} \circ \sF^{\vee}_b$ as noted in \cref{setup:Dihedral}. By \cite[Prop.~II.1.5]{NS18}, $\RCycSp_p$ is stable and presentable and the forgetful functor $$\RCycSp_p \to \Fun_{C_2}(B^t_{C_2} \mu_{p^{\infty}}, \ul{\Sp}^{C_2})$$ is colimit-preserving and exact. It is also obviously conservative, and since the $C_2$-space $B^t_{C_2} \mu_{p^{\infty}}$ has connected fibers over $C_2/C_2$ and $C_2/1$, the further forgetful functor to $\Sp^{C_2}$ is also conservative, exact, and colimit-preserving. Finally, any conservative functor between presentable $\infty$-categories that preserves $K$-indexed (co)limits necessarily also creates $K$-indexed (co)limits.
\end{proof}

\begin{cnstr}[Symmetric monoidal structure on $\RCycSp_p$] \label{con:smc_rcyc} Recall from \cite[IV.2.1]{NS18} that if $C$ is a symmetric monoidal $\infty$-category, $F$ is a symmetric monoidal functor, and $G$ is a lax symmetric monoidal functor, then $\LEq_{F:G}(C)$ acquires a symmetric monoidal structure by forming the pullback of $\infty$-operads\footnote{Here, the cotensor with $\Delta^1$ is taken relative to $\Fin_{\ast}$.}
\[ \begin{tikzcd}[row sep=4ex, column sep=6ex, text height=1.5ex, text depth=0.5ex]
\LEq_{F:G}(C)^{\otimes} \ar{r} \ar{d} & (C^{\otimes})^{\Delta^1} \ar{d}{(\ev_0,\ev_1)} \\
C^{\otimes} \ar{r}{(F^{\otimes},G^{\otimes})} & C^{\otimes} \times_{\Fin_{\ast}} C^{\otimes}.
\end{tikzcd} \]
Let us then endow $\RCycSp_p$ with the symmetric monoidal structure given by taking $t_{C_2} \mu_p$ to have the lax symmetric monoidal structure as indicated in \cref{setup:Dihedral}.
\end{cnstr}

\begin{cnstr}[Trivial real $p$-cyclotomic structure] \label{cnstr:trivialFunctor} We construct an exact and colimit-preserving symmetric monoidal functor
\[ \mr{triv}_{\RR,p}: \Sp^{C_2} \to \RCycSp_p \]
that endows a $C_2$-spectrum with the structure of a real $p$-cyclotomic spectrum in a `trivial' way. Consider the maps of $C_2$-spaces
\[ \sO_{C_2}^{\op} \xto{\iota} B^t_{C_2} \mu_{p^{\infty}} \xto{\pi} B^t_{C_2} (\mu_{p^{\infty}}/\mu_p) \simeq B^t_{C_2} \mu_{p^{\infty}} \xto{p} \sO_{C_2}^{\op} \]
and the associated restriction functors
\[ \Sp^{C_2} \xto{p^{\ast}} \Fun_{C_2}(B^t_{C_2} \mu_{p^{\infty}}, \ul{\Sp}^{C_2}) \xto{\pi^{\ast}} \Fun_{C_2}(B^t_{C_2} \mu_{p^{\infty}}, \ul{\Sp}^{C_2}) \xto{\iota^{\ast}} \Sp^{C_2}. \]
Because $\pi^{\ast} p^{\ast} \simeq p^{\ast}$, by adjunction we obtain a natural transformation $p^{\ast} \to \pi_{\ast} p^{\ast} = (-)^{h_{C_2} \mu_p} \circ p^{\ast}$. Then let
\[ \lambda_{\RR,p}: p^{\ast} \to (-)^{h_{C_2} \mu_p} \circ p^{\ast} \to (-)^{t_{C_2} \mu_p} \circ p^{\ast} \]
be the composite natural transformation. Note that since $p^{\ast}$ and $\pi^{\ast}$ are symmetric monoidal, the adjoint natural transformation $p^{\ast} \to \pi_{\ast} p^{\ast}$ is canonically lax symmetric monoidal. With the lax symmetric monoidal structure on $(-)^{h_{C_2} \mu_p} \to (-)^{t_{C_2} \mu_p}$ as in \cref{setup:Dihedral}, $\lambda_{\RR,p}$ acquires the structure of a lax symmetric monoidal transformation. We then define $\mr{triv}_{\RR,p}$ to be the functor determined by the data of $p^{\ast}$ and $\lambda_{\RR,p}$.

Finally, note that since $\iota^{\ast} \circ p^{\ast} \simeq \id$, the composite of $\mr{triv}_{\RR,p}$ and the forgetful functor to $\Sp^{C_2}$ is also homotopic to the identity. By \cref{prp:RCycSpPresentable}, we deduce that $\mr{triv}_{\RR,p}$ is exact and preserves colimits.
\end{cnstr}

\begin{dfn}\label{dfn:newTCR} The \emph{$p$-typical real topological cyclic homology} functor $$\TCR(-,p): \RCycSp_p \to \Sp^{C_2}$$ is the right adjoint to the trivial functor $\mr{triv}_{\RR,p}$ of \cref{cnstr:trivialFunctor}.
\end{dfn}

We would like to say that $\TCR(-,p)$ is corepresentable by the unit in $\RCycSp_p$. However, because $\TCR(-,p)$ is valued in $C_2$-spectra, any such representability result must be understood in the $C_2$-sense. We now digress to give a general account of $G$-corepresentability.

\begin{cnstr}[$G$-mapping spectrum] \label{cnstr:MappingSpectrum} Suppose $C \to \sO^{\op}_G$ is a $G$-$\infty$-category. In \cite[Def.~10.2]{Exp1}, Barwick et al. defined the $G$-mapping space $G$-functor
\[ \ul{\Map}_C: C^{\vop} \times_{\sO_G^{\op}} C \to \ul{\Spc}^G.\footnote{Given a cocartesian fibration $C \to S$ classified by a functor $F: S \to \Cat_{\infty}$, the \emph{vertical opposite} $C^{\vop} \to S$ is a cocartesian fibration classified by the postcomposition of $F$ with the automorphism of $\Cat_{\infty}$ that takes the opposite. Barwick--Glasman--Nardin give an explicit construction of this at the level of simplicial sets in \cite{BGN}.} \]
Informally, this sends an object $(x,y)$ over $G/H$ to the $H$-space determined by $\Map_{C_{G/K}}(\res^H_K x, \res^H_K y)$ varying over subgroups $K \leq H$. In \cite[Cor.~11.9]{Exp2} (taking $F$ there to be the identity on $C$), the second author showed that for any $x \in C_{G/G}$, the $G$-functors
\[ \ul{\Map}_C(x,-): C \to \ul{\Spc}^G, \quad \ul{\Map}_C(-,x): C^{\vop} \to \ul{\Spc}^G \]
preserve $G$-limits, with $G$-limits in $C^{\vop}$ computed as $G$-colimits in $C$. Now suppose $C$ is $G$-stable, let $\Fun^{\lex}_G(-,-)$ denote the full subcategory on those $G$-functors that preserve finite $G$-limits, and let $\ul{\Fun}^{\lex}_G(-,-)$ denote the full $G$-subcategory (i.e., sub-cocartesian fibration) of $\ul{\Fun}_G(-,-)$ that over the fiber $G/H$ is given by $\Fun_H^{\lex}(-,-)$. In \cite[Thm.~7.4]{Exp4}, Nardin proved\footnote{We obtain our formulation involving $G$-left-exact functors from his using that $C$ is $G$-stable.} that the $G$-functor $\Omega^{\infty}: \ul{\Sp}^G \to \ul{\Spc}^G$ induces equivalences
\begin{align*} \Omega^{\infty}_{\ast}: & \Fun^{\lex}_G(C,\ul{\Sp}^G) \xto{\simeq} \Fun^{\lex}_G(C,\ul{\Spc}^G), \\
\Omega^{\infty}_{\ast}: & \ul{\Fun}^{\lex}_G(C,\ul{\Sp}^G) \xto{\simeq} \ul{\Fun}^{\lex}_G(C,\ul{\Spc}^G).
\end{align*}
In particular, for fixed $x \in C_{G/G}$ (that selects a cocartesian section $x: \sO_G^{\op} \to C^{\vop}$), we may lift the $G$-mapping space $G$-functor $\ul{\Map}_C(x,-): C \to \ul{\Spc}^G$ to a $G$-mapping spectrum $G$-functor
\[ \ul{\map}_C(x,-): C \to \ul{\Sp}^G. \]
Moreover, as in the non-parametrized setting, the $G$-mapping space $G$-functor is adjoint to a $G$-functor $$C^{\vop} \to \ul{\Fun}^{\lex}_G(C, \ul{\Spc}^G) \subset \ul{\Fun}_G(C,\ul{\Spc}^G),$$ which we may lift to $\ul{\Fun}^{\lex}_G(C,\ul{\Sp}^G)$ via $\Omega^{\infty}_{\ast}$ and then adjoint over to obtain
\[ \ul{\map}_C(-,-): C^{\vop} \times_{\sO_G^{\op}} C \to \ul{\Sp}^G. \]
Note that $\ul{\map}_C(-,-)$ continues to transform $G$-colimits into $G$-limits in the first variable and to preserve $G$-limits in the second variable.

Finally, by restriction to the fiber over $G/H$, we obtain the $H$-mapping spectrum functor for $C_{G/H}$:
\[ \ul{\map}_{C}(-,-): C_{G/H}^{\op} \times C_{G/H} \to \Sp^H, \]
which lifts the $H$-mapping space functor for $C_{G/H}$
\[ \ul{\Map}_{C}(-,-): C_{G/H}^{\op} \times C_{G/H} \to \Spc^H \]
through $\Omega^{\infty}: \Sp^H \to \Spc^H$. Thus, for $x,y \in C_{G/H}$, we may compute $\Omega^{\infty}$ of the categorical fixed points as
\[ \Omega^{\infty}(\ul{\map}_{C}(x,y)^K) \simeq \ul{\Map}_{C}(x,y)(H/K). \]
In particular, taking the fiber over $G/G$, for all $G/H$ the diagram
\[ \begin{tikzcd}[row sep=4ex, column sep=10ex, text height=1.5ex, text depth=0.75ex]
C_{G/G}^{\op} \times C_{G/G} \ar{r}{\ul{\map}_{C}(-,-)} \ar{d}[swap]{((\res^G_H)^{\op}, \res^G_H)} & \Sp^G \ar{r}{\Omega^{\infty}} \ar{d}{\Psi^{H}} & \Spc^G \ar{d}{\ev_{G/H}} \\
C_{G/H}^{\op} \times C_{G/H} \ar{r}{\map_{C_{G/H}}(-,-)} & \Sp \ar{r}{\Omega^{\infty}} & \Spc
\end{tikzcd} \]
is homotopy commutative, where the top horizontal composite is $\ul{\Map}_{C}(-,-)$ and the bottom horizontal composite is $\Map_{C_{G/H}}(-,-)$.

\end{cnstr}

\begin{lem} \label{lem:adjunctionMappingSpacesEquivalence} Suppose $C$ and $D$ are $G$-$\infty$-categories and $\adjunct{L}{C}{D}{R}$ is a $G$-adjunction. Then we have natural equivalences in $\ul{\Spc}^G$
\[ \ul{\Map}_{C}(L x, y) \simeq \ul{\Map}_{D}(x, R y).  \]
If $C$ and $D$ are also $G$-stable, then we have natural equivalences in $\ul{\Sp}^G$
\[ \ul{\map}_{C}(L x, y) \simeq \ul{\map}_{D}(x, R y). \]
\end{lem}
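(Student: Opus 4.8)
The plan is to deduce the statement for mapping $G$-spectra from the one for mapping $G$-spaces, and to prove the latter by restricting to the fiber over each orbit $G/H$ and then evaluating at each $H/K$, where it reduces to the ordinary adjunction equivalence of mapping spaces. The one step demanding genuine care is organizing the orbitwise equivalences into an equivalence of \emph{$G$-functors} in both variables at once, and then transporting this along Nardin's theorem to mapping $G$-spectra; I would handle this by exhibiting the comparison map from the $G$-natural unit, so that it is manifestly a map of $G$-functors before I check that it is an equivalence.

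First I would construct the comparison. Writing $\eta$ for the unit of the $G$-adjunction $L \dashv R$ (see \cite[\S 8]{Exp2} or \cite[Def.~1.33]{QS21a}), which is a $G$-natural transformation, functoriality of the $G$-mapping space in each variable together with precomposition by $\eta$ gives a map of $G$-functors
\[ \ul{\Map}_C(L x, y) \xto{R} \ul{\Map}_D(RL x, R y) \xto{\eta_x^{\ast}} \ul{\Map}_D(x, R y) \]
on the relevant two-variable $G$-$\infty$-category. To see it is an equivalence, I would invoke \cref{cnstr:MappingSpectrum}: an equivalence of $\ul{\Spc}^G$-valued $G$-functors may be checked after passing to the fiber over each $G/H$, where it becomes a map of $\Spc^H$-valued functors, and then --- since $\Spc^H \simeq \Fun(\sO_H^{\op}, \Spc)$ --- after evaluating at each $H/K$; the resulting map of spaces is
\[ \Map_{C_{G/K}}(\res^H_K L x, \res^H_K y) \longrightarrow \Map_{D_{G/K}}(\res^H_K x, \res^H_K R y). \]
Since $L$ and $R$ are $G$-functors they commute with restriction up to canonical equivalence, so $\res^H_K L x \simeq L_{G/K}(\res^H_K x)$ and $\res^H_K R y \simeq R_{G/K}(\res^H_K y)$; and a $G$-adjunction restricts on fibers to an adjunction $L_{G/K} \dashv R_{G/K}$ with unit $\res^H_K \eta$ (\cite[\S 8]{Exp2}, \cite[Def.~1.33]{QS21a}). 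Under these identifications the displayed map is precisely the unit-induced adjunction equivalence of ordinary mapping spaces, hence an equivalence. This establishes the first assertion, naturally in $x$ and $y$.

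Finally I would upgrade to mapping $G$-spectra. Because $L$ is a $G$-left adjoint it preserves $G$-colimits, so $\ul{\map}_C(L(-),-)$ sends $G$-colimits in the first variable to $G$-limits and preserves $G$-limits in the second; because $R$ is a $G$-right adjoint it preserves $G$-limits, so $\ul{\map}_D(-,R(-))$ has the same exactness properties. Thus both are (iteratedly, and $G$-internally) left-exact $G$-functors lifting $\ul{\Map}_C(L(-),-)$ and $\ul{\Map}_D(-,R(-))$ through $\Omega^{\infty}$, and the equivalences $\Omega^{\infty}_{\ast}\colon \Fun^{\lex}_G(-,\ul{\Sp}^G)\xto{\simeq}\Fun^{\lex}_G(-,\ul{\Spc}^G)$ --- together with their $G$-internal analogues recalled in \cref{cnstr:MappingSpectrum} --- promote the equivalence of the underlying mapping-$G$-space functors to the asserted natural equivalence $\ul{\map}_C(L x, y) \simeq \ul{\map}_D(x, R y)$. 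I expect this last paragraph, and the two-variable bookkeeping in the previous one, to be the only places requiring attention; the remainder is a direct consequence of the compatibility of $G$-adjunctions with restriction to fibers.
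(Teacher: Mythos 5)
Your proposal is correct and follows essentially the same route as the paper: the comparison map is built from the unit via $R_{\ast}$ followed by $\eta^{\ast}$, and checked to be an equivalence by restricting to the fibers over each $G/H$, where it becomes the ordinary mapping-space equivalence for the fiberwise adjunction. The only (immaterial) divergence is in the spectral upgrade, where you lift the space-level equivalence through Nardin's $\Omega^{\infty}_{\ast}$-equivalence rather than, as the paper does, running the same argument at the spectrum level and detecting equivalences via categorical fixed points.
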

\begin{proof} For a $G$-adjunction, we have a unit transformation $\eta: \id \to R L$ such that $\eta$ cover the identity in $\sO_G^{\op}$ \cite[Prop.~7.3.2.1(2)]{HA}. We then obtain the comparison map
\[ \ul{\Map}_D(L x,y) \xto{R_{\ast}} \ul{\Map}_C(R L x,R y) \xto{\eta^{\ast}} \ul{\Map}_C(R L x, y)  \]
in $\ul{\Spc}^G$. Because we may restrict to subgroups $H$ of $G$, without loss of generality it suffices to consider the case where $x \in C_{G/G}$ and $y \in D_{G/G}$, so the comparison map is a map of $G$-spaces. For an orbit $G/H$, let $x' = \res^G_H x \in C_{G/H}$ and $y' = \res^G_H y \in D_{G/H}$. Then on $G/H$ this map evaluates to
\[ \Map_{D_{G/H}}(L_{H} x', y') \xto{(R_H)_{\ast}} \Map_{C_{G/H}}(R_H L_H x',R_H y') \xto{\eta_H^{\ast}} \Map_{C_{G/H}}(x', R_H y'), \]
which implements the equivalence of mapping spaces for the adjunction $$\adjunct{L_H}{C_{G/H}}{D_{G/H}}{R_H}$$ between the fibers over $G/H$. The conclusion then follows. Finally, the subsequent claim about $\ul{\map}$ follows by reduction to $F$ in the same manner, where instead of using the jointly conservative family of evaluation functors at orbits $G/H$ to detect equivalences in $\Spc^G$, we use the categorical fixed points functors ranging over all subgroups $H \leq G$ to detect equivalences in $\Sp^G$.
\end{proof}

\begin{prp} \label{prp:GenericRepresentabilityByUnit} Suppose $C$ is a $G$-stable $G$-$\infty$-category and we have a $G$-adjunction $$\adjunct{L}{\ul{\Sp}^G}{C}{R}.$$  Then for all $c \in C_{G/H} \subset C$, if we let $S^0$ denote the unit of $\Sp^H$, then we have a natural equivalence
\[ R(c) \simeq \ul{\map}_{C}(L(S^0),c). \]
Consequently, we have equivalences of functors
\[ R_{G/H}(-) \simeq \ul{\map}_{C}(L(S^0),-): C_{G/H} \to \Sp^H. \]
Moreover, if we let $S^0: \sO^{\op}_G \to (\ul{\Sp}^G)^{\vop}$ also denote the cocartesian section that selects each $S^0$, then we have an equivalence of $G$-functors $$R(-) \simeq \ul{\map}_{C}(L(S^0),-): C \to \underline{\Sp}^{G}.$$
\end{prp}
\begin{proof} Without loss of generality we may suppose $H=G$, so we let $S^0$ be the unit for $\Sp^G$. Note that since the identity on $\ul{\Sp}^G$ lifts the $G$-mapping space $G$-functor $\ul{\Map}_G(S^0,-): \ul{\Sp}^G \to \ul{\Spc}^G$ through $\Omega^{\infty}$, we have that $\ul{\map}_{\ul{\Sp}^G}(S^0,-) \simeq \id$. Then for $c \in C_{G/G}$, using \cref{lem:adjunctionMappingSpacesEquivalence} we have the equivalences
\[ R(c) \simeq \ul{\map}_{\ul{\Sp}^G}(S^0,R(c)) \simeq \ul{\map}_{C}(L(S^0),c). \]
The naturality of these equivalences in $c$ then imply the remaining statements.
\end{proof}

\begin{dfn} \label{dfn:Gcorepresentable} In the situation of \cref{prp:GenericRepresentabilityByUnit}, we say that $R$ and $R_{G/G}$ are \emph{$G$-corepresentable} by $L(S^0)$, for the unit $S^0 \in \Sp^G$.
\end{dfn}

\begin{exm} \label{exm:LimitRepresentability} Let $p: K \to \sO_{G}^{\op}$ be a (small) $G$-$\infty$-category and consider the adjunction
\[ \adjunct{p^{\ast}}{\ul{\Sp}^G}{\ul{\Fun}_G(K,\ul{\Sp}^G)}{p_{\ast}} \]
where $p_{\ast}$ takes the $G$-limit. Then $p_{\ast}$ is $G$-corepresentable by the constant $G$-diagram at the unit. With respect to the pointwise symmetric monoidal structure on a $G$-functor $\infty$-category (\cite[Def.~A.1]{QS21a}), this is the unit in $\Fun_G(K,\ul{\Sp}^G)$.
\end{exm}

We want to apply \cref{prp:GenericRepresentabilityByUnit} to prove that $p$-typical real topological cyclic homology is $C_2$-corepresentable by $\mr{triv}_{\RR,p}(S^0)$, the $C_2$-sphere spectrum endowed with the trivial real $p$-cyclotomic structure, which by symmetric monoidality of $\mr{triv}_{\RR,p}$ is also the unit in $\RCycSp_p$. For this, we need to refine $\RCycSp_p$ to a $C_2$-stable $C_2$-$\infty$-category and to refine $\TCR(-,p)$ to a $C_2$-right adjoint. We first extend the definition of lax equalizer to the parametrized setting.

\begin{dfn} Let $C$ be a $G$-$\infty$-category, and suppose $F$ and $F'$ are $G$-endofunctors of $C$. Let $\Ar_G(C) \coloneq \sO_G^{\op} \times_{\Ar(\sO^{\op}_G)} \Ar(C)$ be the $G$-$\infty$-category of arrows in $C$ and define the \emph{$G$-lax equalizer} to be the pullback of $G$-$\infty$-categories
\[ \begin{tikzcd}[row sep=4ex, column sep=4ex, text height=1.5ex, text depth=0.25ex]
\ul{\LEq}_{F:F'}(C) \ar{r} \ar{d} & \Ar_G(C) \ar{d}{(\ev_0,\ev_1)} \\
C \ar{r}{(F,F')} & C \times_{\sO^{\op}_G} C.
\end{tikzcd} \]
Note that for all $G$-orbits $U$, we have an isomorphism of simplicial sets $\ul{\LEq}_{F:F'}(C)_U \cong \LEq_{F_U:F'_U}(C_U)$, and for all morphisms $\alpha: U \to V$ of $G$-orbits, the restriction functor
$$\alpha^*: \LEq_{F_V:F'_V}(C_V) \to \LEq_{F_U:F'_U}(C_U)$$
sends $[x,F_V(x) \xto{\phi} F'_V(x)]$ to $[\alpha^{\ast} x, \alpha^{\ast}(\phi)]$. Define the \emph{$G$-equalizer} $\ul{\Eq}_{F:F'}(C) \subset \ul{\LEq}_{F:F'}(C)$ to be the full $G$-subcategory that fiberwise is given by $\Eq_{F_U:F'_U}(C_U)$.
\end{dfn}

We have the following parametrized analog of \cite[Lem.~II.1.5(iii)]{NS18}.

\begin{lem} \label{lem:ParamLaxEqualizerStable} If $C$ is a $G$-$\infty$-category that admits finite $G$-limits and $F$,$G$ are $G$-left exact endofunctors, then $\ul{\LEq}_{F:G}(C)$ admits finite $G$-limits and the forgetful functor $\ul{\LEq}_{F:G}(C) \to C$ preserves finite $G$-limits. If $C$ is moreover $G$-stable, then $\ul{\LEq}_{F:G}(C)$ is $G$-stable.
\end{lem}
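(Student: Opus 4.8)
\emph{Proof strategy.} The plan is to exploit the defining pullback square and reduce everything to the claim that a pullback of $G$-$\infty$-categories admitting finite $G$-limits, formed along $G$-left exact $G$-functors, again admits finite $G$-limits with $G$-left exact projections. First I would check that the three non-pullback corners carry the requisite structure: $C$ does by hypothesis; $\Ar_G(C)$ is equivalent to the $G$-functor $\infty$-category $\ul{\Fun}_G(\underline{\Delta^1}, C)$ out of the constant $G$-$\infty$-category on $\Delta^1$, so it admits finite $G$-limits formed objectwise over $\Delta^1$ and the pair $(\ev_0,\ev_1)$ of evaluations at the objects of $\Delta^1$ is $G$-left exact; and $C \times_{\sO_G^{\op}} C$ admits finite $G$-limits formed componentwise, with the $G$-functor $(F,G)$ into it being $G$-left exact precisely because $F$ and $G$ are. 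Granting the reduction, $\ul{\LEq}_{F:G}(C)$ then admits finite $G$-limits and the forgetful $G$-functor to $C$, being a composite of projections, preserves them; this is the first assertion.

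To prove the reduction I would use the characterization (\cite[\S 5]{Exp2}) that a $G$-$\infty$-category admits finite $G$-limits iff each of its fibers does, each restriction functor $\alpha^{\ast}$ along a map of $G$-orbits admits a right adjoint $\alpha_{\ast}$ (pointwise right $G$-Kan extension, \cite[\S 10]{Exp2}), and the resulting squares satisfy Beck--Chevalley; and correspondingly that a $G$-functor is $G$-left exact iff it preserves finite fiberwise limits and commutes with the functors $\alpha_{\ast}$. For a pullback $X \times_Z Y$ along $G$-left exact functors $X \to Z \leftarrow Y$, the fibers are pullbacks of $\infty$-categories with finite limits along left-exact functors, hence admit finite limits computed in the pullback; the adjoints $\alpha_{\ast}$ exist and are computed componentwise, since $X \to Z$ and $Y \to Z$ commute with the componentwise $\alpha_{\ast}$; and the Beck--Chevalley squares for $X \times_Z Y$ assemble from those in $X$, $Y$, $Z$. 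Hence $X\times_Z Y$ admits finite $G$-limits with $G$-left exact projections. Equivalently, one may invoke that $G$-$\infty$-categories with finite $G$-limits and the $G$-left exact functors between them are closed under limits inside all $G$-$\infty$-categories.

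For $G$-stability, suppose moreover that $C$ is $G$-stable, i.e. fiberwise stable and $G$-semiadditive (\cite[Def.~4.14]{QS21a}, \cite{Exp4}). Each fiber $\ul{\LEq}_{F:G}(C)_U \cong \LEq_{F_U:G_U}(C_U)$ is stable by \cite[Lem.~II.1.5(iii)]{NS18}, since $C_U$ is stable and $F_U,G_U$ are exact; so $\ul{\LEq}_{F:G}(C)$ is fiberwise stable. Since $F$ and $G$ are $G$-functors between $G$-stable $G$-$\infty$-categories, they are also $G$-right exact, so the dual of the previous paragraph shows $\ul{\LEq}_{F:G}(C)$ admits finite $G$-colimits with $G$-right exact forgetful functor; in particular its $G$-products and $G$-coproducts are each computed componentwise in $C$, $\Ar_G(C)$, and $C\times_{\sO_G^{\op}} C$ (functor $G$-categories into, and fiberwise products of, $G$-semiadditive $G$-$\infty$-categories are again $G$-semiadditive), where they agree. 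Hence $\ul{\LEq}_{F:G}(C)$ is $G$-semiadditive and fiberwise stable, i.e. $G$-stable.

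I expect the main obstacle to be the reduction step, and within it the verification that the Beck--Chevalley base-change conditions — not merely the existence of the fiberwise limits and of the adjoints $\alpha_{\ast}$ — descend to a pullback; this is the parametrized analogue of the closure of left-exact $\infty$-categories under limits in $\Cat_{\infty}$. Once that is packaged cleanly, the remainder is formal bookkeeping with objectwise and componentwise formulas, exactly parallel to the proof of \cite[Lem.~II.1.5(iii)]{NS18}.
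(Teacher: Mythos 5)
Your proposal is correct and rests on the same pillars as the paper's proof: the fiberwise statements come from \cite[Prop.~II.1.5]{NS18}, the right adjoints to the restriction functors on lax equalizers are computed by postcomposition with those of $C$, Beck--Chevalley is checked after applying the forgetful functor, and $G$-stability follows by promoting ambidexterity. The difference is one of packaging: you route everything through the defining pullback square and a general closure statement (pullbacks of $G$-$\infty$-categories with finite $G$-limits along $G$-left exact functors again have finite $G$-limits), whereas the paper works directly with $\ul{\LEq}_{F:G}(C)$ orbit by orbit. The one step you should not leave implicit is the existence of the right adjoint $f_{\ast}$ to $f^{\ast}$ on the pullback/lax equalizer: knowing that the componentwise right adjoints commute with $F$ and $G$ gives you a candidate functor, but verifying that it is genuinely right adjoint to $f^{\ast}$ requires an argument. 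The paper supplies exactly this by encoding $f^{\ast} \dashv f_{\ast}$ on $C$ as a bicartesian fibration $\cM \to \Delta^1$, noting that $F$ and $G$ induce endofunctors of $\cM$ preserving both cocartesian and cartesian edges, and concluding that the $\Delta^1$-parametrized lax equalizer of these is again a bicartesian fibration over $\Delta^1$, hence encodes the desired adjunction between $\LEq_{F_V:G_V}(C_V)$ and $\LEq_{F_U:G_U}(C_U)$. Relatedly, your diagnosis of Beck--Chevalley as the main obstacle is slightly off: that part is the easy one, since the forgetful functor to $C$ detects equivalences and so the condition is inherited directly from $C$; the real content is the adjunction-assembly step just described. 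Otherwise your argument, including the semiadditivity bookkeeping for $G$-stability, is fine.
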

\begin{proof} We already know that the fibers of $\ul{\LEq}_{F:G}(C)$ admit finite limits or are stable and the pushforward functors are left-exact or exact, given our respective hypotheses. For the first statement, it thus suffices to show that for all maps of $G$-orbits $f:U \to V$, the restriction functor $f^{\ast}: \LEq_{F_V:G_V}(C_V) \to \LEq_{F_U:G_U}(C_U)$ admits a right adjoint $f_{\ast}$ computed by postcomposing by the right adjoint in $C$, and moreover that these adjunctions satisfy the Beck-Chevalley condition. Given the adjunction $\adjunct{f^{\ast}}{C_V}{C_U}{f_{\ast}}$, let $\cM \to \Delta^1$ be the bicartesian fibration that encodes this adjunction. Then since $F$ and $G$ commute with both $f^{\ast}$ and $f_{\ast}$, we obtain induced endofunctors $F_f$ and $G_f$ of $\cM$ over $\Delta^1$ that preserve both cocartesian and cartesian edges and restrict to $F_U$, $F_V$ and $G_U$, $G_V$ on the fibers. Therefore, the $\Delta^1$-lax equalizer $\ul{\LEq}_{F_f:G_f}(\cM) \to \Delta^1$ is again a bicartesian fibration that encodes the adjunction between $\LEq_{F_V:G_V}(C_V)$ and $\LEq_{F_U:G_U}(C_U)$, so $f^{\ast}$ as a functor on lax equalizers admits a right adjoint computed by postcomposition by $f_{\ast}: C_U \to C_V$. For the Beck-Chevalley condition, suppose a pullback square of finite $G$-sets
\[ \begin{tikzcd}[row sep=4ex, column sep=4ex, text height=1.5ex, text depth=0.25ex]
U \times_V W \ar{r}{f} \ar{d}{g} & W \ar{d}{g} \\
U \ar{r}{f} & V
\end{tikzcd} \]
where without loss of generality we may suppose $U,V,W$ are $G$-orbits. We need to show the natural transformations
\[ (\eta: f^{\ast} g_{\ast} \Rightarrow g_{\ast} f^{\ast}): \LEq_{F_W:G_W}(C_W) \to \LEq_{F_U:G_U}(C_U) \]
are equivalences. However, since the forgetful functor $\LEq_{F_U:G_U}(C_U) \to C_U$ detects equivalences, this follows from the Beck-Chevalley conditions assumed on $C$ itself. For the second statement, ambidexterity for the adjunctions $f^{\ast} \dashv f_{\ast}$ on $C$ promotes to the same for $\ul{\LEq}_{F:G}(C)$, using the same methods.
\end{proof}

The restriction functors $\Sp^{D_{2p^n}} \to \Sp^{D_{2p^m}}$ extend to $C_2$-functors $$\sO_{C_2}^{\op} \times_{\sO_{D_{2p^n}}^{\op}} \Sp^{D_{2p^n}} \to \sO_{C_2}^{\op} \times_{\sO_{D_{2p^m}}^{\op}} \Sp^{D_{2p^m}}$$
given on the fiber $C_2/1$ by the restriction functors $\Sp^{\mu_{2p^n}} \to \Sp^{\mu_{2p^m}}$. Taking the inverse limit, we obtain a $C_2$-stable $C_2$-$\infty$-category $\Sp^{D_{2p^{\infty}}}_{C_2}$. By taking inverse limits of the dihedral $C_2$-stable $C_2$-recollements of \cite[Exm.~3.26]{QS21a} along the restriction $C_2$-functors, we obtain a $C_2$-stable $C_2$-recollement\footnote{Note that we abuse notation here in letting $j^*, j_*, i_*$ denote both the $C_2$-functors and their fiberwise restrictions. However, we do wish to distinguish between $\underline{\Phi}^{\mu_p}$ and $\Phi^{\mu_p}$ in what follows.}
\begin{equation} \label{eq:C2_dihedral_recollement_infinite}
\begin{tikzcd}[row sep=4ex, column sep=8ex, text height=1.5ex, text depth=0.5ex]
\ul{\Fun}_{C_2}(B^t_{C_2} \mu_{p^{\infty}}, \ul{\Sp}^{C_2}) \ar[shift right=1,right hook->]{r}[swap]{j_{\ast}} & \Sp^{D_{2p^{\infty}}}_{C_2} \ar[shift right=2]{l}[swap]{j^{\ast}} \ar[shift left=2]{r}{\underline{\Phi}^{\mu_p}} & \Sp^{D_{2p^{\infty}}}_{C_2} \ar[shift left=1,left hook->]{l}{i_{\ast}}
\end{tikzcd}
\end{equation}
that over $C_2/C_2$ restricts to the recollement \eqref{eq:dihedral_recollement_infinite} and whose fiber over $C_2/1$ is the recollement
\[ \begin{tikzcd}[row sep=4ex, column sep=8ex, text height=1.5ex, text depth=0.5ex]
\Fun(B \mu_{p^{\infty}}, \Sp) \ar[shift right=1,right hook->]{r}[swap]{j_{\ast}} & \Sp^{\mu_{p^{\infty}}} \ar[shift right=2]{l}[swap]{j^{\ast}} \ar[shift left=2]{r}{\Phi^{\mu_p}} & \Sp^{\mu_{p^{\infty}}} \ar[shift left=1,left hook->]{l}{i_{\ast}}.
\end{tikzcd} \]

Observe that the $C_2$-exact $C_2$-endofunctor $j^{\ast} \underline{\Phi}^{\mu_p} j_{\ast}$ restricts over the fiber $C_2/C_2$ to $t_{C_2} \mu_p$ and over the fiber $C_2/1$ to $t \mu_p$. We let $\ul{t}_{C_2} \mu_p = j^{\ast} \underline{\Phi}^{\mu_p} j_{\ast}$.

\begin{dfn} \label{dfn:C2CategoryOfCyclotomicSpectra} The \emph{$C_2$-$\infty$-category of real $p$-cyclotomic spectra} is
\[ \ul{\RCycSp}_p := \ul{\LEq}_{\id:\underline{t}_{C_2} \mu_p}(\ul{\Fun}_{C_2}(B^t_{C_2} \mu_{p^{\infty}}, \ul{\Sp}^{C_2})). \]
\end{dfn}

Observe that the fiber of $\ul{\RCycSp}_p$ over $C_2/C_2$ is $\RCycSp_p$, and the fiber over $C_2/1$ is the $\infty$-category of $p$-cyclotomic spectra $\CycSp_p$ as defined in \cite[Def.~II.1.6(ii)]{NS18}. Also, by repeating \cref{cnstr:trivialFunctor} with $C_2$-$\infty$-categories, we obtain a $C_2$-functor
\[ \ul{\mr{triv}}_{\RR,p}: \ul{\Sp}^{C_2} \to \ul{\RCycSp}_p \]
that restricts over $C_2/C_2$ to $\mr{triv}_{\RR,p}$ and over $C_2/1$ to the trivial functor $\mr{triv}_p: \Sp \to \CycSp_p$ whose right adjoint is by definition $p$-typical topological cyclic homology $\TC(-,p)$. By the dual of \cite[Prop.~7.3.2.6]{HA}, the fiberwise right adjoints refine to the structure of a relative right adjoint $$\ul{\TCR}(-,p): \ul{\RCycSp}_p \to  \ul{\Sp}^{C_2} $$ to $\ul{\mr{triv}}_{\RR,p}$. Moreover, because the composite $$\ul{\Sp}^{C_2} \to \ul{\RCycSp}_p \to \ul{\Fun}_{C_2}(B^t_{C_2} \mu_{p^{\infty}}, \ul{\Sp}^{C_2})$$ of $\ul{\mr{triv}}_{\RR,p}$ and the forgetful $C_2$-functor is $C_2$-left exact, by \cref{lem:ParamLaxEqualizerStable} we get that $\ul{\mr{triv}}_{\RR,p}$ itself is $C_2$-left exact. Therefore, $\ul{\TCR}(-,p)$ preserves cocartesian edges, i.e., is a $C_2$-functor, and is thus $C_2$-right adjoint to $\ul{\mr{triv}}_{\RR,p}$.

\begin{prp} \label{prp:C2representabilityOfTC} $\ul{\TCR}(-,p)$ and $\TCR(-,p)$ are $C_2$-corepresentable by the unit.
\end{prp}
\begin{proof} Since $\ul{\RCycSp}_p$ is $C_2$-stable by \cref{lem:ParamLaxEqualizerStable}, this follows immediately from \cref{prp:GenericRepresentabilityByUnit} and the above discussion.
\end{proof}

We now apply \cref{prp:C2representabilityOfTC} to derive an equalizer formula for $\TCR(-,p)$. We first explain how to compute $G$-mapping spaces and spectra in a limit of $G$-$\infty$-categories and then in the $G$-lax equalizer, analogous to \cite[II.1.5(ii)]{NS18}.

\begin{lem} \label{lem:LimitOfGMappingSpaces} Let $C_{\bullet}: K \to \Cat_{\infty}^G$ be a diagram of $G$-$\infty$-categories, and let $C = \lim C_{\bullet}$ be the limit. Let $x,y \in C_{G/G}$. Then the natural comparison map
\[ \ul{\Map}_{C}(x,y) \to \lim_{i \in K} \left(\ul{\Map}_{C_i}(x_i,y_i) \right)\]
is an equivalence of $G$-spaces. Furthermore, if $C_{\bullet}$ is a diagram of $G$-stable $G$-$\infty$-categories and $G$-exact $G$-functors, then the natural comparison map
\[ \ul{\map}_{C}(x,y) \to \lim_{i \in K} \left(\ul{\map}_{C_i}(x_i,y_i) \right) \]
is an equivalence of $G$-spectra.
\end{lem}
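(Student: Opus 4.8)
The plan is to reduce the $G$-space assertion to the standard fact that mapping spaces commute with limits of $\infty$-categories, and then to deduce the $G$-spectrum assertion by comparing $G$-left exact lifts through $\Omega^{\infty}$, using the universal property of $\ul{\map}$ recorded in \cref{cnstr:MappingSpectrum}.

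First I would note that limits in $\Cat_{\infty}^G$ are computed fiberwise, so $C_{G/H} \simeq \lim_{i \in K}(C_i)_{G/H}$ for every subgroup $H \leq G$. For any small diagram of $\infty$-categories with limit $D = \lim_i D_i$ and objects $a,b \in D$, one has $\Map_D(a,b) \simeq \{a\} \times_D \Fun(\Delta^1, D) \times_D \{b\}$; since $\Fun(\Delta^1,-)$ is a right adjoint (to $(-)\times\Delta^1$) and limits commute with limits, this gives $\Map_D(a,b) \simeq \lim_i \Map_{D_i}(a_i, b_i)$, naturally in $a$ and $b$. Taking $D = C_{G/H}$, using $\ul{\Map}_C(x,y)(G/H) \simeq \Map_{C_{G/H}}(x,y)$, and using that limits of $G$-spaces are formed pointwise over $\sO_G^{\op}$, I conclude that the natural comparison map $\ul{\Map}_C(x,y) \to \lim_i \ul{\Map}_{C_i}(x_i,y_i)$ is an equivalence of $G$-spaces; as everything here is natural in $y$, this upgrades to an equivalence of $G$-functors $\ul{\Map}_C(x,-) \simeq \lim_{i \in K}\left(\ul{\Map}_{C_i}(x_i,-)\circ p_i\right) \colon C \to \ul{\Spc}^G$, where $p_i \colon C \to C_i$ denotes the projection.

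For the $G$-spectrum statement I would first observe that a $K$-indexed limit of $G$-stable $G$-$\infty$-categories along $G$-exact $G$-functors is again $G$-stable (checked fiberwise, together with $G$-semiadditivity), so $C$ is $G$-stable and $\ul{\map}_C$ is available. Then I form the $G$-functor $\Phi \coloneq \lim_{i \in K}\left(\ul{\map}_{C_i}(x_i,-)\circ p_i\right)\colon C \to \ul{\Sp}^G$, the limit being taken in $\Fun_G(C,\ul{\Sp}^G)$ (which exists since $\ul{\Sp}^G$ is $G$-complete). Each term $\ul{\map}_{C_i}(x_i,-)\circ p_i$ is $G$-left exact, being the composite of the $G$-exact $G$-functor $p_i$ with the $G$-left exact $G$-functor $\ul{\map}_{C_i}(x_i,-)$ of \cref{cnstr:MappingSpectrum}, and a limit of $G$-left exact $G$-functors is $G$-left exact; hence $\Phi$ is $G$-left exact. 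Since $\Omega^{\infty}$ is a right adjoint, hence limit-preserving, and $\Omega^{\infty}\circ \ul{\map}_{C_i}(x_i,-) \simeq \ul{\Map}_{C_i}(x_i,-)$, the previous paragraph yields $\Omega^{\infty}\circ \Phi \simeq \lim_i\left(\ul{\Map}_{C_i}(x_i,-)\circ p_i\right) \simeq \ul{\Map}_C(x,-)$. Thus $\Phi$ is a $G$-left exact lift of $\ul{\Map}_C(x,-)$ through $\Omega^{\infty}_{\ast}$; by the equivalence $\Omega^{\infty}_{\ast}\colon \Fun^{\lex}_G(C,\ul{\Sp}^G) \xto{\simeq} \Fun^{\lex}_G(C,\ul{\Spc}^G)$ of \cref{cnstr:MappingSpectrum}, such a lift is essentially unique, so $\Phi \simeq \ul{\map}_C(x,-)$; evaluating at $y$ gives the asserted equivalence.

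The main obstacle, such as it is, is bookkeeping rather than mathematics: one must check that the identification $\Phi \simeq \ul{\map}_C(x,-)$ coming from the universal property is compatible with the functorially defined comparison maps $\ul{\map}_C(x,y) \to \ul{\map}_{C_i}(x_i,y_i)$ — themselves obtained by lifting the corresponding maps on $\ul{\Map}$ through $\Omega^{\infty}_{\ast}$ — so that it genuinely identifies "the natural comparison map" of the statement as an equivalence. This follows once more from the faithfulness of $\Omega^{\infty}_{\ast}$ on the relevant $G$-functor categories, since after applying $\Omega^{\infty}$ all such compatibilities reduce to the manifest naturality established in the $G$-space case. I would emphasize that the spectral statement really must be routed through this universal property: since genuine $G$-spectra are not the fiberwise stabilization of $G$-spaces, it cannot be deduced from the $G$-space statement by a naive delooping argument.
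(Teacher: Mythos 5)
Your proof is correct. For the $G$-space half you do exactly what the paper does: evaluate at each orbit $G/H$, use that limits of $G$-$\infty$-categories and of $G$-spaces are computed fiberwise, and invoke the standard fact that mapping spaces commute with limits of $\infty$-categories. For the $G$-spectrum half your route genuinely differs. The paper again reduces to a non-parametrized statement: it applies the categorical fixed-point functors $\Psi^H$, which are jointly conservative and limit-preserving, and uses the commutative diagram at the end of \cref{cnstr:MappingSpectrum} to identify $\Psi^H\ul{\map}_C(x,y)$ with $\map_{C_{G/H}}(x,y)$, so that the claim becomes the known assertion that mapping spectra commute with limits of stable $\infty$-categories. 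You instead stay parametrized: you exhibit $\lim_i \ul{\map}_{C_i}(x_i,-)\circ p_i$ as a $G$-left exact lift of $\ul{\Map}_C(x,-)$ and conclude by the essential uniqueness of such lifts, i.e.\ Nardin's equivalence $\Omega^{\infty}_{\ast}$ recorded in \cref{cnstr:MappingSpectrum}, using its conservativity to see that the natural comparison map itself is inverted. Your argument takes only the space-level statement as nontrivial input and correctly flags why the spectral claim cannot be obtained by naively delooping the space claim; the paper's is shorter because it outsources both the limit computation and the detection of equivalences to the non-parametrized setting. Both arguments rely on the fact, which you rightly make explicit, that $C=\lim C_{\bullet}$ is again $G$-stable, so that $\ul{\map}_C$ is defined at all.
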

\begin{proof} By either evaluation at $G/H$ or taking $H$-categorical fixed points and using the commutative diagram at the end of \cref{cnstr:MappingSpectrum}, we may reduce to the known non-parametrized statements.
\end{proof}

\begin{lem} \label{lem:EqualizerMappingSpaces} Suppose $C$ is a $G$-$\infty$-category and $F,F'$ are $G$-endofunctors of $C$. Let $X = [x,\phi: F(x) \to F'(x)]$ and $Y=[y,\psi: F(y) \to F'(y)]$ be two objects in $\LEq_{F_{G/G}:F'_{G/G}}(C_{G/G})$. Then we have a natural equivalence of $G$-spaces
\[ \ul{\Map}_{\ul{\LEq}_{F:G}(C)}(X,Y) \simeq \mr{eq} \left( 
\begin{tikzcd}[row sep=4ex, column sep=6ex, text height=1.5ex, text depth=0.25ex]
\ul{\Map}_{C}(x,y) \ar[shift left=1]{r}{\psi_{\ast} \circ F} \ar[shift right=1]{r}[swap]{\phi^{\ast} \circ F'} & \ul{\Map}_{C}(F(x),F'(y)).
\end{tikzcd} \right) \]
If $C$ is $G$-stable and $F,F'$ are $G$-exact, then we have a natural fiber sequence of $G$-spectra
\[ \ul{\map}_{\ul{\LEq}_{F:G}(C)}(X,Y) \to \ul{\map}_{C}(x,y) \xtolong{\psi_{\ast}  F - \phi^{\ast} F'}{2} \ul{\map}_{C}(F(x),F'(y)).  \]
\end{lem}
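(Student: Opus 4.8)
The plan is to realize $\ul{\LEq}_{F:F'}(C)$ as a $G$-pullback and feed it into \cref{lem:LimitOfGMappingSpaces}, parametrizing the argument of \cite[Lem.~II.1.5(ii)]{NS18}. By definition $\ul{\LEq}_{F:F'}(C)$ is the strict pullback in $\Cat_{\infty}^G$ of the cospan $C \xto{(F,F')} C \times_{\sO_G^{\op}} C \xleftarrow{(\ev_0,\ev_1)} \Ar_G(C)$. Since limits in $\Cat_{\infty}^G$ are computed fiberwise over $\sO_G^{\op}$ and, over each orbit $G/H$, the right-hand map restricts to the categorical fibration $\Ar(C_{G/H}) = C_{G/H}^{\Delta^1} \to C_{G/H} \times C_{G/H}$, this strict pullback already computes the $G$-limit of the cospan. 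Writing $\tilde{X} = [F(x) \xto{\phi} F'(x)]$ and $\tilde{Y} = [F(y) \xto{\psi} F'(y)]$ for the images of $X$ and $Y$ under the projection $\ul{\LEq}_{F:F'}(C) \to \Ar_G(C)$, \cref{lem:LimitOfGMappingSpaces} then identifies $\ul{\Map}_{\ul{\LEq}_{F:F'}(C)}(X,Y)$ with the $G$-pullback of $\ul{\Map}_C(x,y)$ and $\ul{\Map}_{\Ar_G(C)}(\tilde{X},\tilde{Y})$ over $\ul{\Map}_{C \times_{\sO_G^{\op}} C}((F(x),F'(x)),(F(y),F'(y)))$, compatibly with the evident comparison maps.

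The next step is to compute the two auxiliary $G$-mapping spaces, which I would do fiberwise over $\sO_G^{\op}$ using the compatibility recorded at the end of \cref{cnstr:MappingSpectrum} together with $\Ar_G(C)_{G/H} = \Ar(C_{G/H})$ and $(C \times_{\sO_G^{\op}} C)_{G/H} = C_{G/H} \times C_{G/H}$: the mapping space in a product of $\infty$-categories is the product of mapping spaces, and the mapping space in an arrow category from $f$ to $g$ is the pullback $\Map(f_0,g_0) \times_{\Map(f_0,g_1)} \Map(f_1,g_1)$ along postcomposition with $g$ and precomposition with $f$. Thus $\ul{\Map}_{C \times_{\sO_G^{\op}} C}((F(x),F'(x)),(F(y),F'(y))) \simeq \ul{\Map}_C(F(x),F(y)) \times \ul{\Map}_C(F'(x),F'(y))$ with leg $(F,F')$ from $\ul{\Map}_C(x,y)$, and $\ul{\Map}_{\Ar_G(C)}(\tilde{X},\tilde{Y}) \simeq \ul{\Map}_C(F(x),F(y)) \times_{\ul{\Map}_C(F(x),F'(y))} \ul{\Map}_C(F'(x),F'(y))$ along $\psi_{\ast}$ and $\phi^{\ast}$. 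Substituting into the $G$-pullback of the first paragraph and canceling the now-redundant factors $\ul{\Map}_C(F(x),F(y))$ and $\ul{\Map}_C(F'(x),F'(y))$ leaves precisely the $G$-equalizer of the two maps $\psi_{\ast} \circ F$ and $\phi^{\ast} \circ F'$ from $\ul{\Map}_C(x,y)$ to $\ul{\Map}_C(F(x),F'(y))$; since every morphism of $\ul{\LEq}_{F:F'}(C)$ carries exactly the homotopy relating $\psi \circ F(-)$ and $F'(-) \circ \phi$, this identification agrees with the natural comparison map induced by the forgetful $G$-functor $\ul{\LEq}_{F:F'}(C) \to C$.

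For the $G$-spectrum statement I would rerun the identical argument with $\ul{\map}$ in place of $\ul{\Map}$, invoking the stable half of \cref{lem:LimitOfGMappingSpaces}: this is legitimate because $\ul{\LEq}_{F:F'}(C)$ is $G$-stable by \cref{lem:ParamLaxEqualizerStable}, both $\Ar_G(C)$ (a $G$-functor $G$-$\infty$-category into the $G$-stable $C$) and $C \times_{\sO_G^{\op}} C$ are $G$-stable, and the cospan maps $(F,F')$ and $(\ev_0,\ev_1)$ are $G$-exact. The only change is that a $G$-pullback of $G$-spectra is a fiber sequence, so the iterated pullback collapses and $\ul{\map}_{\ul{\LEq}_{F:F'}(C)}(X,Y)$ becomes the fiber of the composite $\ul{\map}_C(x,y) \xto{(F,F')} \ul{\map}_C(F(x),F(y)) \oplus \ul{\map}_C(F'(x),F'(y)) \xto{\psi_{\ast} - \phi^{\ast}} \ul{\map}_C(F(x),F'(y))$, i.e.\ the fiber of $\psi_{\ast} \circ F - \phi^{\ast} \circ F'$, as asserted.

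I expect the main obstacle to be organizational rather than conceptual. The one place requiring genuine care is justifying that the strict pullback defining $\ul{\LEq}_{F:F'}(C)$ really computes the $G$-limit of the cospan (so that \cref{lem:LimitOfGMappingSpaces} applies verbatim) and that the parametrized arrow-category mapping-space formula holds; both are handled by reducing over $\sO_G^{\op}$ to their non-parametrized counterparts, using that limits in $\Cat_{\infty}^G$ and $G$-mapping (co)spaces are detected fiberwise. Once those reductions are in hand, the passage from the iterated (co)limit to an equalizer, respectively to the fiber of a difference, is purely formal.
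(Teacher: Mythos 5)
Your proposal is correct and is essentially the paper's own proof: the paper simply invokes \cref{lem:LimitOfGMappingSpaces} and notes that the argument of \cite[Lem.~II.1.5(ii)]{NS18} carries over verbatim, which is exactly the computation you spell out (realize the $G$-lax equalizer as a $G$-limit of the cospan, compute mapping $G$-spaces fiberwise in the arrow and product $G$-$\infty$-categories, and cancel to obtain the equalizer, with the stable case following by replacing $\ul{\Map}$ with $\ul{\map}$ and the equalizer with the fiber of the difference). The only difference is that you make explicit the two reductions the paper leaves implicit — that the strict pullback defining $\ul{\LEq}_{F:F'}(C)$ is the $G$-limit because $(\ev_0,\ev_1)$ is fiberwise a categorical fibration, and the arrow-category mapping-space formula — both of which are handled correctly.
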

\begin{proof} In view of \cref{lem:LimitOfGMappingSpaces}, the same arguments as in the proof of \cite[II.1.5.(ii)]{NS18} apply to produce the formulas.
\end{proof}

\begin{dfn} For $X \in \Fun_{C_2}(B^t_{C_2} \mu_{p^{\infty}}, \ul{\Sp}^{C_2})$, we define the \emph{canonical} map
\[ \can_p: X^{h_{C_2} \mu_{p^{\infty}}} \simeq (X^{h_{C_2} \mu_p})^{h_{C_2} \mu_{p^{\infty}}} \to (X^{t_{C_2} \mu_p})^{h_{C_2} \mu_{p^{\infty}}} \]
where for the first equivalence we use that $B^t_{C_2} (\mu_{p^{\infty}}/\mu_p) \simeq B^t_{C_2} \mu_{p^{\infty}}$ as before.
\end{dfn}

\begin{prp} \label{prp:TCRfiberSequence} Let $[X,\varphi:X \to X^{t_{C_2} \mu_p}]$ be a real $p$-cyclotomic spectrum. Then we have a natural fiber sequence of $C_2$-spectra
\[ \TCR(X,p) \to X^{h_{C_2} \mu_{p^\infty}} \xtolong{\varphi^{h_{C_2} \mu_{p^{\infty}}} -\can_p}{2} (X^{t_{C_2} \mu_p})^{h_{C_2} \mu_{p^\infty}}. \]
\end{prp}
\begin{proof} We mimic the proof of \cite[Prop.~II.1.9]{NS18}. Let $C = \ul{\Fun}_{C_2}(B^t_{C_2} \mu_{p^{\infty}}, \ul{\Sp}^{C_2})$, let $S^0 \in C$ be the unit (i.e., $S^0$ with trivial action), and note that by \cref{exm:LimitRepresentability}, $$\ul{\map}_{C}(S^0,X) \simeq X^{h_{C_2} \mu_{p^{\infty}}}.$$
The claim then follows from \cref{prp:C2representabilityOfTC} and \cref{lem:EqualizerMappingSpaces}. In more detail, if we let $\lambda = \lambda_{\RR,p}: S^0 \to (S^0)^{t_{C_2} \mu_p}$ denote the trivial real $p$-cyclotomic structure map as in \cref{cnstr:trivialFunctor}, then given a map $f: S^0 \to X$ in $C$, we have a commutative diagram (again in $C$)
\[ \begin{tikzcd}[row sep=4ex, column sep=8ex, text height=1.5ex, text depth=0.5ex]
S^0 \ar{r} \ar{rd}[swap]{\lambda} & (S^0)^{h_{C_2} \mu_p} \ar{r}{f^{h_{C_2} \mu_p}} \ar{d} & X^{h_{C_2} \mu_p } \ar{d} \\
& (S^0)^{t_{C_2} \mu_p} \ar{r}{f^{t_{C_2} \mu_p}} & X^{t_{C_2} \mu_p}.
\end{tikzcd} \]
Therefore, the composite map
\[ \ul{\map}_C(S^0, X) \xto{t_{C_2} \mu_p} \ul{\map}_C((S^0)^{t_{C_2} \mu_p}, X^{t_{C_2} \mu_p}) \xto{\lambda^{\ast}} \ul{\map}_C((S^0), X^{t_{C_2} \mu_p}) \]
is homotopic to $\can_p$. It is then clear that the desired fiber sequence is given by \cref{lem:EqualizerMappingSpaces}.
\end{proof}

\subsection{Genuine real \texorpdfstring{$p$}{p}-cyclotomic spectra}
\label{section:genRCycSp}

\begin{dfn} \label{dfn:GenRealCycSp} A \emph{genuine real $p$-cyclotomic spectrum} is a $D_{2p^{\infty}}$-spectrum $X$, together with an equivalence $\Phi^{\mu_p} X \xto{\simeq} X$ in $\Sp^{D_{2p^{\infty}}}$. The $\infty$-category of \emph{genuine real $p$-cyclotomic spectra} is then
\[ \RCycSp^{\mathrm{gen}}_p := \Eq_{\Phi^{\mu_p}:\id}(\Sp^{D_{2p^{\infty}}}). \]
\end{dfn}

\begin{rem} \label{rem:EqualizerSwap} Let $F_i: C \to D$, $i=0,1$ be two functors and let us temporarily revert to the notation of \cite[Def.~II.1.4]{NS18} for (lax) equalizers. Let $J = (a \twoarrows b)$, so equalizers in an $\infty$-category $\cE$ are limits over diagrams $J \to \cE$ (\cite[\S 4.4.3]{HTT}). Let $q: J \to \Cat_{\infty}$ be the diagram that sends one arrow to $F_0$ and the other to $F_1$. Because
\[  \mr{eq}(\begin{tikzcd}[row sep=4ex, column sep=4ex, text height=1.5ex, text depth=0.25ex]
C \ar[shift left=1]{r}{F_0} \ar[shift right=1]{r}[swap]{F_1} & D
\end{tikzcd}) \coloneq \lim_J q \simeq \Eq(\begin{tikzcd}[row sep=4ex, column sep=4ex, text height=1.5ex, text depth=0.25ex]
C \ar[shift left=1]{r}{F_0} \ar[shift right=1]{r}[swap]{F_1} & D
\end{tikzcd}) \]
and the former expression is symmetric in $F_i$, we have an equivalence 
\[ \Eq(\begin{tikzcd}[row sep=4ex, column sep=4ex, text height=1.5ex, text depth=0.25ex]
C \ar[shift left=1]{r}{F_0} \ar[shift right=1]{r}[swap]{F_1} & D
\end{tikzcd}) \simeq \Eq(\begin{tikzcd}[row sep=4ex, column sep=4ex, text height=1.5ex, text depth=0.25ex]
C \ar[shift left=1]{r}{F_1} \ar[shift right=1]{r}[swap]{F_0} & D
\end{tikzcd}). \]
These equivalences are already implicit in \cite{NS18}, but in more detail, let $\cX \to J$ be the cocartesian fibration classified by $q$, so $\lim_J q \simeq \Sect(\cX) \coloneq \Fun^{\cocart}_{/J}(J,\cX)$. Let $\alpha_i: \Delta^1 \to J$, $i = 0,1$ be the two arrows in $J$, with $q \alpha_i$ selecting $F_i$, and let $\cX_{\alpha_i}$ be the pullback. Then $\ev_a: \Sect(\cX_{\alpha_i}) \to C$ is a trivial fibration, and choosing a section, the composite
\[ C \xto{\simeq} \Sect(\cX_{\alpha_i}) \xto{\ev_b} D \]
is homotopic to $F_i$. Therefore, if we let $\iota: D \to \Ar(D)$ denote the identity section, we have a homotopy commutative diagram
\[ \begin{tikzcd}[row sep=4ex, column sep=4ex, text height=1.5ex, text depth=0.25ex]
\Sect(\cX) \ar{r}{\ev_b} \ar{d}{\ev_a} & D \ar{r}{\iota} & \Ar(D) \ar{d}{(\ev_0,\ev_1)} \\
C \ar{rr}{(F,G)} & & D \times D
\end{tikzcd} \]
and an induced functor $\Sect(\cX) \to \LEq(\begin{tikzcd}[row sep=4ex, column sep=4ex, text height=1.5ex, text depth=0.25ex]
C \ar[shift left=1]{r}{F_0} \ar[shift right=1]{r}[swap]{F_1} & D
\end{tikzcd})$, which is fully faithful by comparing the formulas for mapping spaces in the limit over $J$ and in the lax equalizer. Because the essential image of $\iota$ consists of the equivalences in $\Ar(D)$, it follows that the essential image of $\Sect(\cX)$ is $\Eq(\begin{tikzcd}[row sep=4ex, column sep=4ex, text height=1.5ex, text depth=0.25ex]
C \ar[shift left=1]{r}{F_0} \ar[shift right=1]{r}[swap]{F_1} & D
\end{tikzcd})$. Repeating the analysis with $F_0$ and $F_1$ exchanged, we obtain a zig-zag of equivalences
\[ \begin{tikzcd}[row sep=4ex, column sep=4ex, text height=1.5ex, text depth=0.25ex]
\Eq(C \ar[shift left=1]{r}{F_1} \ar[shift right=1]{r}[swap]{F_0} & D) & \Sect(\cX) \ar{r}{\simeq} \ar{l}[swap]{\simeq} & \Eq(C \ar[shift left=1]{r}{F_0} \ar[shift right=1]{r}[swap]{F_1} & D).
\end{tikzcd} \]

It follows that in defining genuine real $p$-cyclotomic spectra, the choice of direction of the equivalence $\Phi^{\mu_p} X \simeq X$ is immaterial. Thus, in lieu of \cref{dfn:GenRealCycSp} we could have let
\[ \RCycSp^{\mr{gen}}_p = \Eq_{\id:\Phi^{\mu_p}}(\Sp^{D_{2p^{\infty}}}). \]
This definition is more convenient when comparing to \cref{dfn:RealCycSp}, whereas \cref{dfn:GenRealCycSp} is more suitable for defining the structure maps $R$ (\cref{dfn:ParamRStructureMaps}) that define the term $\TRR_p$ in the fiber sequence formula for $\TCR^{\mr{gen}}(-,p)$ of \cref{prp:fiberSequenceGenuineRealCyc}.
\end{rem}

\begin{prp} $\RCycSp^{\mr{gen}}_p$ is a stable presentable symmetric monoidal $\infty$-category such that the forgetful functor to $\Sp^{D_{2p^{\infty}}}$ is conservative, creates colimits and finite limits, and is symmetric monoidal.
\end{prp}
\begin{proof} Because $\Phi^{\mu_p}$ is colimit-preserving and symmetric monoidal, we may lift the equalizer diagram to $\CAlg(\Pr^{L,\st})$. Since limits there are computed as for the underlying $\infty$-categories, the claim then follows, with conservativity proven as for the lax equalizer.
\end{proof}

\begin{cnstr} \label{cnstr:InflationToInfinity} Since for all $0 \leq m<n<\infty$, the diagram
\[ \begin{tikzcd}[row sep=4ex, column sep=6ex, text height=1.5ex, text depth=0.25ex]
\Sp^{C_2} \ar{r}{\inf^{\mu_{p^n}}} \ar{rd}[swap]{\inf^{\mu_{p^m}}} & \Sp^{D_{2p^n}} \ar{d}{\res}  \\
&  \Sp^{D_{2p^m}}
\end{tikzcd} \]
commutes in $\CAlg(\Pr^{L,\st})$, we may define an exact, colimit-preserving, and symmetric monoidal functor
\[ \inf{}^{\mu_{p^{\infty}}}: \Sp^{C_2} \to \Sp^{D_{2p^{\infty}}} \]
as the inverse limit of the functors $\inf^{\mu_{p^n}}$. Let $$\Psi^{\mu_{p^{\infty}}}: \Sp^{D_{2p^{\infty}}} \to \Sp^{C_2}$$ denote its right adjoint, and also write $$\Psi^{\mu_{p^{n}}}: \Sp^{D_{2p^{\infty}}} \to \Sp^{C_2}$$ for the composite of the restriction to $\Sp^{D_{2p^n}}$ and $\Psi^{\mu_{p^{n}}}$. Recall that for a diagram $\overline{C}_{\bullet}: K^{\lhd} \to \Pr^L$, if we write $C$ for its value on the cone point $v$ and $D = \lim_K (C_{\bullet})$, then for the induced adjunction
\[ \adjunct{L}{C}{D}{R} \]
we may compute $R$ in terms of the description of $D$ as an $\infty$-category of cocartesian sections as follows:
\begin{itemize} \item[($\ast$)] Let $\overline{\cX} \to K^{\lhd}$ be the presentable fibration classified by $\overline{C}_{\bullet}$, with restriction $\cX \to K$. Let $p: \cX \subset \overline{\cX} \to \overline{\cX}_v \simeq C$ be the \emph{cartesian} pushforward to the fiber over the initial object $v \in K^{\lhd}$. Then the functor
\[ p_{\ast}: D \simeq \Sect(\cX) \to C \]
obtained via postcomposition by $p$ is homotopic to $R$.
\end{itemize}

See \cite[\S 2]{BehrensShah} for a reference. To specialize to our situation, we note that $\Psi^{\mu_{p^n}}: \Sp^{D_{2p^n}} \to \Sp^{C_2}$ applied to the unit map $X \to \ind \res X$ for the restriction-induction adjunction $$\adjunct{\res}{\Sp^{D_{2p^n}}} {\Sp^{D_{2p^{n-1}}}} {\ind}$$ defines the map $F: \Psi^{\mu_{p^n}}(X) \to \Psi^{\mu_{p^{n-1}}}(X)$ of $C_2$-spectra that lifts the map $F: X^{\mu_{p^n}} \to X^{\mu_{p^{n-1}}}$ of spectra given by inclusion of fixed points.  Using the formula above, we conclude that 
\[ \Psi^{\mu_{p^{\infty}}}(X) \xto{\simeq} \lim_{n,F} \Psi^{\mu_{p^n}}(X). \]
\end{cnstr}

\begin{rem} For $n< \infty$, the functors $\Psi^{\mu_{p^n}}$ all commute with colimits, but the inverse limit $\Psi^{\mu_{p^{\infty}}}$ does not commute with colimits in general.
\end{rem}

\begin{cnstr} \label{cnstr:GenuineTrivialFunctor} Because the diagram
\[ \begin{tikzcd}[row sep=4ex, column sep=6ex, text height=1.5ex, text depth=0.25ex]
\Sp^{C_2} \ar{r}{\inf^{\mu_{p^n}}} \ar{rd}[swap]{\inf^{\mu_{p^{n-1}}}} & \Sp^{D_{2p^n}} \ar{d}{\Phi^{\mu_p}}  \\
&  \Sp^{D_{2p^{n-1}}}
\end{tikzcd} \]
commutes for all $0<n<\infty$, we have an equivalence $\Phi^{\mu_p} \inf^{\mu_{p^{\infty}}} \simeq \inf^{\mu_{p^{\infty}}}$ in $\CAlg(\Pr^{L,\st})$. Therefore, $\inf^{\mu_{p^{\infty}}}$ lifts to the equalizer of $\id$ and $\Phi^{\mu_p}$ to define an exact, colimit-preserving, and symmetric monoidal functor
\[ \mr{triv}^{\mr{gen}}_{\RR,p}: \Sp^{C_2} \to \RCycSp^{\mr{gen}}_p. \]

\end{cnstr}

\begin{dfn} \label{dfn:ClassicalTCR} The \emph{classical $p$-typical real topological cyclic homology} functor
$$\TCR^{\mr{gen}}(-,p): \RCycSp^{\mr{gen}}_p \to \Sp^{C_2} $$
is the right adjoint to $\mr{triv}^{\mr{gen}}_{\RR,p}$.
\end{dfn}

As with $\TCR(-,p)$, we can prove that (the $C_2$-refinement of) $\TCR^{\mr{gen}}(-,p)$ is $C_2$-corepresentable by the unit and thereby deduce a fiber sequence formula for the functor. Recall that in the course of formulating \cref{dfn:C2CategoryOfCyclotomicSpectra}, we extended $\Sp^{D_{2p^{\infty}}}$ and $\Phi^{\mu_p}$ to a $C_2$-stable $C_2$-$\infty$-category $\Sp^{D_{2p^{\infty}}}_{C_2}$ with $\underline{\Phi}^{\mu_p}$ as a $C_2$-endofunctor.

\begin{dfn} The \emph{$C_2$-$\infty$-category of genuine real $p$-cyclotomic spectra} is 
\[ \ul{\RCycSp}^{\mr{gen}}_p := \ul{\Eq}_{\underline{\Phi}^{\mu_p}:\id}(\Sp^{D_{2p^{\infty}}}_{C_2}). \]
\end{dfn}

Note that the fiber of $\ul{\RCycSp}^{\mr{gen}}_p$ over $C_2/C_2$ is $\RCycSp^{\mr{gen}}_p$, and the fiber over $C_2/1$ is $\CycSp^{\mr{gen}}_p$ as defined in \cite[Def.~II.3.1]{NS18}.

\begin{lem}$\ul{\RCycSp}^{\mr{gen}}_p$ is $C_2$-stable.
\end{lem}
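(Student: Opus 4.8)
The plan is to realize $\ul{\RCycSp}^{\mr{gen}}_p$ as a full $C_2$-subcategory of a $C_2$-stable $C_2$-$\infty$-category that is closed under finite $C_2$-limits and finite $C_2$-colimits. First I would apply \cref{lem:ParamLaxEqualizerStable} to the $C_2$-lax equalizer $\ul{\LEq}_{\underline{\Phi}^{\mu_p}:\id}(\Sp^{D_{2p^{\infty}}}_{C_2})$: the $C_2$-$\infty$-category $\Sp^{D_{2p^{\infty}}}_{C_2}$ is $C_2$-stable by construction, and both $\id$ and $\underline{\Phi}^{\mu_p}$ are $C_2$-left exact, the latter because $\underline{\Phi}^{\mu_p}$ is the localization $C_2$-functor $i^{\ast}$ of the $C_2$-stable $C_2$-recollement \eqref{eq:C2_dihedral_recollement_infinite} and is therefore a $C_2$-exact $C_2$-endofunctor. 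The lemma then shows $\ul{\LEq}_{\underline{\Phi}^{\mu_p}:\id}(\Sp^{D_{2p^{\infty}}}_{C_2})$ is $C_2$-stable, with conservative forgetful $C_2$-functor to $\Sp^{D_{2p^{\infty}}}_{C_2}$ that preserves finite $C_2$-limits; since it is a $C_2$-functor of $C_2$-stable $C_2$-$\infty$-categories, it is then automatically $C_2$-exact.

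Next, $\ul{\RCycSp}^{\mr{gen}}_p = \ul{\Eq}_{\underline{\Phi}^{\mu_p}:\id}(\Sp^{D_{2p^{\infty}}}_{C_2})$ is the full $C_2$-subcategory spanned fiberwise by the objects $[x,\phi]$ with $\phi\colon \underline{\Phi}^{\mu_p}(x) \xto{\simeq} x$ an equivalence; this really is a full $C_2$-subcategory, i.e.\ closed under the restriction $C_2$-functors, since $\underline{\Phi}^{\mu_p}$ commutes with restriction and restriction preserves equivalences. Using the characterization of $C_2$-stability as fiberwise stability together with $C_2$-semiadditivity, it then suffices to check: (i) the fibers are stable---which holds because the fiber over $C_2/C_2$ is $\RCycSp^{\mr{gen}}_p$, already shown above to be stable, and the fiber over $C_2/1$ is $\CycSp^{\mr{gen}}_p$, stable by \cite[Def.~II.3.1]{NS18}; and (ii) that $\ul{\RCycSp}^{\mr{gen}}_p$ is closed inside the $C_2$-stable $\ul{\LEq}_{\underline{\Phi}^{\mu_p}:\id}(\Sp^{D_{2p^{\infty}}}_{C_2})$ under finite $C_2$-limits and finite $C_2$-colimits, so that the ambient $C_2$-(co)products restrict to $C_2$-(co)products in the subcategory and continue to agree, yielding $C_2$-semiadditivity.

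For (ii) I would argue that a finite $C_2$-limit (resp.\ colimit) of a diagram $\{[x_i,\phi_i]\}$ in $\ul{\LEq}_{\underline{\Phi}^{\mu_p}:\id}(\Sp^{D_{2p^{\infty}}}_{C_2})$ is, after the conservative $C_2$-exact forgetful functor, the corresponding finite $C_2$-limit (resp.\ colimit) in $\Sp^{D_{2p^{\infty}}}_{C_2}$, equipped with the structure map obtained by applying that same finite $C_2$-limit (resp.\ colimit) to the $\phi_i$, using that $\underline{\Phi}^{\mu_p}$ is $C_2$-exact. Since a finite $C_2$-limit or colimit of equivalences is again an equivalence, the result lies in $\ul{\RCycSp}^{\mr{gen}}_p$. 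This pointwise description of finite $C_2$-(co)limits in the parametrized lax equalizer is exactly what the proof of \cref{lem:ParamLaxEqualizerStable} supplies via the bicartesian fibrations appearing there, and it can alternatively be verified one fiber at a time, reducing to the non-parametrized computation of \cite[Lem.~II.1.5]{NS18} together with the compatibility with restriction just noted.

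The only genuine obstacle is bookkeeping: making precise that finite $C_2$-(co)limits in the $C_2$-lax equalizer are formed compatibly with the expected structure maps, and invoking the right packaging of $C_2$-stability so that closure under finite $C_2$-limits and colimits inside a $C_2$-stable host suffices. Conceptually the statement is immediate from \cref{lem:ParamLaxEqualizerStable} together with the stability of the two fibers.
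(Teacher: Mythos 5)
Your proposal is correct and follows essentially the same route as the paper: apply \cref{lem:ParamLaxEqualizerStable} to conclude that $\ul{\LEq}_{\underline{\Phi}^{\mu_p}:\id}(\Sp^{D_{2p^{\infty}}}_{C_2})$ is $C_2$-stable, then observe that the $C_2$-equalizer is a full $C_2$-subcategory closed under finite $C_2$-(co)limits because the $C_2$-exact endofunctors carry them to (co)limits of equivalences. Your extra care in checking closure under finite $C_2$-colimits as well as limits (so that $\Sigma$, not just $\Omega$, stays in the subcategory) is a worthwhile precision that the paper's one-line justification leaves implicit.
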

\begin{proof}
Since $\Sp^{D_{2p^{\infty}}}_{C_2}$ is $C_2$-stable and $\underline{\Phi}^{\mu_p}$ is $C_2$-exact, by \cref{lem:ParamLaxEqualizerStable} we see that $\ul{\LEq}_{\underline{\Phi}^{\mu_p}:\id}(\Sp^{D_{2p^{\infty}}}_{C_2})$ is $C_2$-stable. Note that the $G$-equalizer as a full $G$-subcategory of the $G$-lax equalizer of $F$ and $F'$ is closed under finite $G$-limits if $F$ and $F'$ are $G$-left exact. Thus, it follows that $\ul{\RCycSp}^{\mr{gen}}_p$ is also $C_2$-stable.
\end{proof}

By repeating \cref{cnstr:InflationToInfinity} and \cref{cnstr:GenuineTrivialFunctor} in the $C_2$-sense, we construct $C_2$-functors
\begin{align*} \ul{\inf}^{\mu_{p^{\infty}}} &: \ul{\Sp}^{C_2} \to \Sp^{D_{2p^{\infty}}}_{C_2}, \\
\ul{\mr{triv}}^{\mr{gen}}_{\RR,p} &: \ul{\Sp}^{C_2} \to \ul{\RCycSp}^{\mr{gen}}_p, \end{align*}
which are $C_2$-exact in view of the commutativity of the diagrams
\[ \begin{tikzcd}[row sep=4ex, column sep=6ex, text height=1.5ex, text depth=0.5ex]
\Sp^{C_2} \ar{r}{\inf^{\mu_{p^n}}} & \Sp^{D_{2p^n}} \\
\Sp \ar{r} \ar{u}{\ind^{C_2}} \ar{r}{\inf^{\mu_{p^n}}} & \Sp^{\mu_{p^n}} \ar{u}[swap]{\ind^{D_{2p^n}}_{\mu_{p^n}} }
\end{tikzcd} \]
for all $n \geq 0$. Since they are also fiberwise colimit preserving, it follows that they are $C_2$-colimit preserving. Therefore, we obtain $C_2$-right adjoints
\begin{align*} \ul{\Psi}^{\mu_{p^{\infty}}} &: \Sp^{D_{2p^{\infty}}}_{C_2} \to \ul{\Sp}^{C_2}, \\
\ul{\TCR}^{\mr{gen}}(-,p) &: \ul{\RCycSp}^{\mr{gen}}_p \to \ul{\Sp}^{C_2}. \end{align*}

\begin{prp} \label{prp:classicalTCRcorepresentable} $\ul{\TCR}^{\mr{gen}}(-,p)$ and $\TCR^{\mr{gen}}(-,p)$ are $C_2$-corepresentable by the unit.
\end{prp}
\begin{proof} This follows by applying \cref{prp:GenericRepresentabilityByUnit} to the $C_2$-adjunction $\ul{\mr{triv}}^{\mr{gen}}_{\RR,p} \dashv \ul{\TCR}^{\mr{gen}}(,-p)$ and using that $\mr{triv}^{\mr{gen}}_{\RR,p}$ is symmetric monoidal.
\end{proof}

\begin{cnstr}[Structure maps $R$] \label{dfn:ParamRStructureMaps} First note that we have natural transformations $$\Psi^{\mu_{p^n}} \to \Psi^{\mu_{p^{n-1}}} \Phi^{\mu_p}$$ defined via applying $\Psi^{\mu_{p^n}}$ to the unit map $\id \to i_{\ast} i^{\ast} \simeq i_{\ast} \Phi^{\mu_p}$ of the recollement on $\Sp^{D_{2p^n}}$ with closed part $\Sp^{D_{2p^{n-1}}}$, using again that $\Psi^{\mu_{p^n}} i_{\ast} \simeq \Psi^{\mu_{p^{n-1}}}$.

Let $[X, \alpha: \Phi^{\mu_p} X \xto{\simeq} X]$ be a genuine real $p$-cyclotomic spectrum. For all $0 < n < \infty$, we define natural maps of $C_2$-spectra
\[ R: \Psi^{\mu_{p^n}} X \to \Psi^{\mu_{p^{n-1}}} \Phi^{\mu_p} (X) \xto{\simeq} \Psi^{\mu_{p^{n-1}}} (X) \]
to be the composite of the above map and $\alpha$. Note that $R$ lifts the maps of spectra $$R: X^{\mu_{p^n}} \to (\Phi^{\mu_p} (X))^{\mu_{p^{n-1}}} \xto{\simeq} X^{\mu_{p^{n-1}}}$$ for the underlying $p$-cyclotomic spectrum (cf. the discussion prior to \cite[Def.~II.4.4]{NS18}). Note also that the diagram
\[ \begin{tikzcd}[row sep=4ex, column sep=4ex, text height=1.5ex, text depth=0.25ex]
\Psi^{\mu_{p^n}} X \ar{r} \ar{d}{F} & \Psi^{\mu_{p^{n-1}}} \Phi^{\mu_p} (X) \ar{r}{\simeq} \ar{d}{F} & \Psi^{\mu_{p^{n-1}}} (X) \ar{d}{F} \\
\Psi^{\mu_{p^{n-1}}} X \ar{r} & \Psi^{\mu_{p^{n-2}}} \Phi^{\mu_p} (X) \ar{r}{\simeq} & \Psi^{\mu_{p^{n-2}}} (X)
\end{tikzcd} \]
commutes for all $1<n<\infty$, with the maps $F$ defined as in \cref{cnstr:InflationToInfinity}. By taking the inverse limit along the maps $F$, the maps $R$ then induce a map
\[ R: \lim_{n,F}\Psi^{\mu_{p^n}}(X) \to \lim_{n,F}\Psi^{\mu_{p^n}}(X). \]
On the other hand, taking the inverse limit along the maps $R$, the maps $F$ induce a map
\[ F:  \lim_{n,R}\Psi^{\mu_{p^n}}(X) \to \lim_{n,R}\Psi^{\mu_{p^n}}(X).\]
\end{cnstr}

\begin{dfn} For a genuine real $p$-cyclotomic spectrum $[X, \Phi^{\mu_p} X \xto{\simeq} X]$, let 
\begin{align*} \TRR(X,p) &:= \lim_{n,R} \Psi^{\mu_{p^n}}(X), \\
\TFR(X,p) &:= \lim_{n,F} \Psi^{\mu_{p^n}}(X).
\end{align*}
\end{dfn}

\begin{prp} \label{prp:fiberSequenceGenuineRealCyc} Let $[X, \Phi^{\mu_p} X \xto{\simeq} X]$ be a genuine real $p$-cyclotomic spectrum. We have natural fiber sequences of $C_2$-spectra
\begin{align*} \TCR^{\mr{gen}}(X,p) &\to \TFR(X,p) \xtolong{\id - R}{1.5} \TFR(X,p), \\
\TCR^{\mr{gen}}(X,p) &\to \TRR(X,p) \xtolong{\id - F}{1.5} \TRR(X,p),
\end{align*}
and a natural equivalence of $C_2$-spectra
\[ \TCR^{\mr{gen}}(X,p) \simeq \lim_{n, F, R} \Psi^{\mu_{p^n}}(X) \coloneq \lim_{J_{\infty}} \Psi^{\mu_{p^n}}(X).  \]
\end{prp}

Before giving the proof, we define the category $J_{\infty}$ and prove a few necessary results about it.

\begin{dfn} \label{dfn:EqualizerInfinity} Let $J_{\infty}$ be the category freely generated by
\[ \begin{tikzcd}[row sep=4ex, column sep=4ex, text height=1.5ex, text depth=0.25ex]
\cdots \ar[shift left=1]{r}{\alpha_{n+1}} \ar[shift right=1]{r}[swap]{\beta_{n+1}} & n+1 \ar[shift left=1]{r}{\alpha_n} \ar[shift right=1]{r}[swap]{\beta_n} & n \ar[shift left=1]{r}{\alpha_{n-1}} \ar[shift right=1]{r}[swap]{\beta_{n-1}} & \cdots \ar[shift left=1]{r}{\alpha_1} \ar[shift right=1]{r}[swap]{\beta_1} & 1 \ar[shift left=1]{r}{\alpha_0} \ar[shift right=1]{r}[swap]{\beta_0} & 0 \:,
\end{tikzcd} \]
modulo the relation $\beta_n \circ \alpha_{n+1} = \alpha_n \circ \beta_{n+1}$ for all $n \geq 0$. More concretely, the objects of $J_{\infty}$ are non-negative integers, there are no morphisms $n \to n+k$ for $k>0$, there is only the identity $n \to n$, and morphisms $n+k \to n$, $k > 0$ are in bijection with non-empty sieves in $[k]$, where we attach to $S \subset [k]$ the composition $$\beta_{n} \cdots \beta_{n+l-1} \alpha_{n+l} \cdots \alpha_{n+k-1}$$ for $l = \max(S)$ (so if $l=0$, we have $\alpha_n \cdots \alpha_{n+k-1}$, and if $l=k$, we have $\beta_n \cdots \beta_{n+k-1}$).
\end{dfn}

\begin{rem} \label{rem:spineOfEqualizerInfinity} Let $\pi: J_{\infty} \to \ZZ_{\geq 0}^{\op}$ be the functor that sends $n$ to $n$, and $\alpha_n, \beta_n$ to $n+1 \to n$. For $n \geq m$, let $[n:m] \subset \ZZ_{\geq 0}^{\op}$ denote the full subcategory on integers $n \geq k \geq m$, and let $$J_{[n:m]} = J_{\infty} \times_{\ZZ_{\geq 0}^{\op}} [n:m].$$
We claim that the square
\[ \begin{tikzcd}[row sep=4ex, column sep=4ex, text height=1.5ex, text depth=0.25ex]
J_{[2:1]} \ar{r} \ar{d} & J_{[2:0]} \ar{d} \\
J_{[3:1]} \ar{r} & J_{[3:0]}
\end{tikzcd} \]
is a homotopy pushout square of $\infty$-categories. Indeed, for clarity write $a<b<c$ for the vertices of $\Delta^2$, and let $q: J_{[3:0]} \to \Delta^2$ be the functor that sends $3$ to $a$, $2,1$ to $b$, and $0$ to $c$, and maps in the obvious way. The claim amounts to showing that $q$ is a flat inner fibration (\cite[Def.~B.3.1]{HA}), for which we may use the criterion of \cite[Prop.~B.3.2]{HA}. Suppressing subscripts of morphisms in $J_{[3:0]}$ for clarity, we need to check that for morphisms $$\gamma \in \{ \beta^3, \beta^2 \alpha, \beta \alpha^2, \alpha^3 \} \in \Hom(3,0),$$
the resulting category $(J_{[2:1]})_{3/ /1}$ of factorizations of $\gamma$ through $J_{[2:1]}$ is weakly contractible. For $\gamma = \delta \circ \epsilon$ with the domain of $\delta$ equal to $i=1,2$, write $[\delta|\epsilon]_i$ for the object in $(J_{[2:1]})_{3/ /1}$. If $\gamma = \beta^3$, then $(J_{[2:1]})_{3/ /1}$ is given by $[\beta^2|\beta]_2 \to [\beta|\beta^2]_1$, so is weakly contractible, and likewise for $\gamma = \alpha^3$. If $\gamma = \beta \alpha^2$, then $(J_{[2:1]})_{3/ /1}$ is the category
\[ \begin{tikzcd}[row sep=4ex, column sep=4ex, text height=1.5ex, text depth=0.25ex]
\left[\beta \alpha | \alpha \right]_2 \ar{d} \ar{rd} & \left[ \alpha^2 | \beta\right]_2 \ar{d} \\
\left[ \beta| \alpha^2 \right]_1 & \left[ \alpha| \beta \alpha \right]_1,
\end{tikzcd} \]
using that $\beta \alpha = \alpha \beta$ and always writing maps as $\beta^i \alpha^j$. Thus, $(J_{[2:1]})_{3/ /1}$ is weakly contractible, and likewise for $\gamma = \beta \alpha^2$, proving the claim. Continuing this line of reasoning, we see that the cofibration
\[ \left( J_{[2:0]} \bigcup_{J_{[2:1]}} J_{[3:1]} \bigcup_{J_{[3:2]}} J_{[4:2]} \bigcup_{J_{[4:3]}} \cdots \right) \to J_{\infty} \]
is a categorical equivalence. Therefore, for an $\infty$-category $C$ and two diagrams $\ZZ_{\geq 0}^{\op} \to C$ written as
\begin{align*} \cdots \xto{F} X_2 \xto{F} X_1 \xto{F} X_0, \\
\cdots \xto{R} X_2 \xto{R} X_1 \xto{R} X_0,
\end{align*}
to extend this data to a diagram $J_{\infty} \to C$ that sends $\alpha$ to $F$ and $\beta$ to $R$, we only need to supply the data of commutative squares in $C$
\[ \begin{tikzcd}[row sep=4ex, column sep=4ex, text height=1.5ex, text depth=0.25ex]
X_{n+2} \ar{r}{F} \ar{d}{R} & X_{n+1} \ar{d}{R} \\
X_{n+1} \ar{r}{F} & X_n 
\end{tikzcd} \]
for all $n \geq 0$.
\end{rem}

\begin{lem} \label{lm:abstractEqualizerExchangeFormula} \begin{enumerate}[leftmargin=*] \item Let $p, p': J_{\infty} \to B \NN$ be the functors determined by sending $\alpha_n$ to $0$ and $\beta_n$ to $1$, resp. $\beta_n$ to $0$ and $\alpha_n$ to $1$. Then $p$ and $p'$ are cartesian fibrations classified by the functor $(B \NN)^{\op} \simeq B \NN \to \Cat \subset \Cat_{\infty}$ that sends the unique object to $\ZZ_{\geq 0}^{\op}$ and $1$ to $s^{\op}$, the (opposite of the) successor endofunctor (cf. \cref{ntn:nonnegativeintegers}).
\item Suppose $C$ is an $\infty$-category and $X_{\bullet}: J_{\infty} \to C$ is a diagram. Denote all maps $(X_{\bullet})(\alpha_n)$ by $F$ and all maps $(X_{\bullet})(\beta_n)$ by $R$. Then assuming the limits exist in $C$, we have equivalences
\[ \begin{tikzcd}[row sep=4ex, column sep=4ex, text height=1.5ex, text depth=0.25ex]
\mr{eq}(\lim_{n,R} X_n \ar[shift left=1]{r}{\id} \ar[shift right=1]{r}[swap]{F} & \lim_{n,R} X_n) \simeq \lim_{J_{\infty}} (X_{\bullet}) \simeq \mr{eq}(\lim_{n,F} X_n \ar[shift left=1]{r}{\id} \ar[shift right=1]{r}[swap]{R} & \lim_{n,F} X_n),
\end{tikzcd} \]
where we also write $F$ and $R$ for the induced maps on the limits.
\end{enumerate} 
\end{lem}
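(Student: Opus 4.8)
This is a direct check. The defining relation $\beta_n \alpha_{n+1} = \alpha_n \beta_{n+1}$ gives a normal form for morphisms of $J_\infty$: every morphism $m \to n$ is uniquely a string of $\beta$'s followed by a string of $\alpha$'s, hence is determined by $(m,n)$ together with the number $l$ of $\beta$'s, which satisfies $0 \le l \le m-n$, and composition adds $\beta$-counts. Thus the morphisms of $J_\infty$ lying over $0 = \id_\ast$ under $p$ (resp.\ under $p'$) are exactly the pure-$\alpha$ (resp.\ pure-$\beta$) words, so the fiber of $p$ (resp.\ $p'$) over $\ast$ is $\ZZ_{\geq 0}^{\op}$. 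The normal form also shows that $\beta_n \colon n+1 \to n$ is $p$-cartesian over the generator $1 \in \NN$ --- any morphism with $l \geq 1$ many $\beta$'s factors uniquely through $\beta_n$ as $\beta_n$ composed with a morphism having $l-1$ many $\beta$'s --- that every morphism factors as a fiber morphism followed by such a cartesian morphism, and that cartesian transport along $1$ sends $n \mapsto n+1$ and a pure-$\alpha$ word $n+k \to n$ to the pure-$\alpha$ word $n+k+1 \to n+1$, i.e.\ it is $s^{\op}$. The statement for $p'$ is identical with the roles of $\alpha_n$ and $\beta_n$ exchanged.

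\textbf{Part (2).} By the symmetry interchanging $\alpha$ and $\beta$ (hence $p$ and $p'$, hence $F$ and $R$), it is enough to construct a natural equivalence between $\lim_{J_\infty}(X_\bullet)$ and the equalizer of $\id$ and $F$ on $\lim_{n,R} X_n$. Since a limit is right Kan extension to the point and Kan extensions compose, $\lim_{J_\infty}(X_\bullet) \simeq \lim_{B\NN} \mathrm{Ran}_{p'}(X_\bullet)$, where $\mathrm{Ran}_{p'}$ denotes pointwise right Kan extension along $p'$ (which exists under the standing hypothesis on $C$). A functor $B\NN \to C$ is precisely an object $Y$ equipped with an endomorphism $\psi$, and $\lim_{B\NN}(Y,\psi)$ is the equalizer of $\id$ and $\psi$ (e.g.\ by Yoneda, reducing to the case of $\Spc$); so it remains to identify the $B\NN$-diagram $\mathrm{Ran}_{p'}(X_\bullet)$ with $\lim_{n,R} X_n$ equipped with $F$.

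For this I would use that right Kan extension along a cartesian fibration is computed fiberwise: for any cartesian fibration the inclusion of a fiber into the corresponding comma $\infty$-category $(\ast \downarrow p')$ is coinitial, since for each object the relevant overcategory, intersected with the fiber, has a terminal object given by the cartesian lift. Hence, by Part (1), $\mathrm{Ran}_{p'}(X_\bullet)(\ast) \simeq \lim_{(J_\infty)_\ast} X_\bullet = \lim_{\ZZ_{\geq 0}^{\op}}(\cdots \xrightarrow{R} X_1 \xrightarrow{R} X_0) = \lim_{n,R} X_n$. It then remains to see that the generator $1 \in \NN$ acts on this limit by $F$: the $B\NN$-functoriality of $\mathrm{Ran}_{p'}$ is induced by the index-shift endofunctor of $(\ast \downarrow p')$ attached to $1$, and transporting this through the coinitiality above --- using the cartesian $\alpha$-edges, which realize the maps $F = X_\bullet(\alpha_n)$, to compare the fiber object at level $j$ with its shift to level $j+1$ --- identifies the action with the endomorphism of $\lim_{n,R} X_n$ pinned down by compatibility with the canonical projections via the $F$'s, i.e.\ with the map ``$F$'' of the statement. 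Everything is natural in $X_\bullet$, so this is an equivalence of functors $\Fun(J_\infty, C) \to C$; the second equivalence in the statement is then obtained by the $\alpha \leftrightarrow \beta$ symmetry, using $p$ in place of $p'$.

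The step I expect to require the most care is this final identification of the $B\NN$-monodromy with $F$: one must match the abstract functoriality of the right Kan extension against the explicit combinatorics of the comma $\infty$-category $(\ast \downarrow p')$, keeping track of which cartesian edges implement the comparison. The coinitiality of the fiber inclusion, while also needed, is a routine normal-form computation.
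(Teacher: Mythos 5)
Your proof is correct and follows essentially the same route as the paper's: both parts establish that $p$ and $p'$ are cartesian fibrations via the normal form for morphisms of $J_\infty$, then compute $\lim_{J_\infty}$ by right Kan extension along the fibration (fiberwise limit plus $B\NN$-monodromy) and identify the $B\NN$-limit with the equalizer. The paper simply states the fiberwise computation and the identification of the monodromy with $R$ (resp. $F$) without the coinitiality details you supply, and starts from $p$ rather than $p'$; these are immaterial differences.
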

\begin{proof} (1): To show that $p$ is a cartesian fibration, it suffices to show that $\beta_n$ is a $p$-cartesian edge for all $n \geq 0$. For this, suppose given a map $f:m \to n$ in $J_{\infty}$ such that $p(f)$ factors as $\ast \xto{k} \ast \xto{1} \ast$, i.e., $p(f) \geq 1$. Then we must have $m \geq n+1$ and $f \neq \alpha_n \cdots \alpha_{m-1}$, so $f$ factors uniquely through $\beta_n$ and the claim is proven. The case of $p'$ is identical. Finally, the description of the resulting action of $\NN$ on the fiber $\ZZ_{\geq 0}^{\op}$ is clear in view of the commutative diagram
\[ \begin{tikzcd}[row sep=4ex, column sep=4ex, text height=1.5ex, text depth=0.25ex]
0 & 1 \ar{l}[swap]{\alpha_0} & 2 \ar{l}[swap]{\alpha_1} & 3 \ar{l}[swap]{\alpha_2} & \cdots \ar{l}[swap]{\alpha_3} \\
1 \ar{u}{\beta_0} & 2 \ar{l}{\alpha_1} \ar{u}{\beta_2} & 3 \ar{l}{\alpha_2} \ar{u}{\beta_2} & 4 \ar{l}{\alpha_3} \ar{u}{\beta_3} & \cdots \ar{l}{\alpha_4}
\end{tikzcd} \]
and similarly with the roles of $\alpha_{\bullet}$ and $\beta_{\bullet}$ exchanged.

(2): Factoring $J_{\infty} \to \ast$ through the cartesian fibration $p$ and using the transitivity of right Kan extensions, we get that 
$$\lim_{J_{\infty}} (X_{\bullet}) \simeq \lim_{B \NN} \lim_{n, F} X_n, $$
where $\NN$ acts on $\lim_{n, F} X_n$ via $R$. But the limit over $B\NN$ is computed also as the equalizer of $\id$ and $R$, so we deduce the equivalence 
\[ \begin{tikzcd}[row sep=4ex, column sep=4ex, text height=1.5ex, text depth=0.25ex]
\lim_{J_{\infty}} (X_{\bullet}) \simeq \mr{eq}(\lim_{n,F} X_n \ar[shift left=1]{r}{\id} \ar[shift right=1]{r}[swap]{R} & \lim_{n,F} X_n).
\end{tikzcd} \]
Doing the same with $p'$ shows the other equivalence.
\end{proof}

\begin{proof}[Proof of \cref{prp:fiberSequenceGenuineRealCyc}] Let $\alpha: \Phi^{\mu_p} X \xto{\simeq} X$ denote the structure map. Let $C = \Sp^{D_{2p^{\infty}}}_{C_2}$. For $X \in \Sp^{D_{2p^{\infty}}}$, we have that $\Psi^{\mu_{p^{\infty}}}(X) \simeq \ul{\map}_{C}(S^0,X)$ as the $C_2$-right adjoint to $\ul{\inf}^{\mu_{p^{\infty}}}$, and $\Psi^{\mu_{p^{\infty}}}(X) \simeq \lim_{n,F} \Psi^{\mu_{p^n}}(X)$ as we saw in \cref{cnstr:InflationToInfinity}. Also, the $G$-equalizer is a full $G$-subcategory of the $G$-lax equalizer, so $G$-mapping spaces and spectra may be computed as in \cref{lem:EqualizerMappingSpaces} if we take $F = \Phi^{\mu_p}$, $F'=\id$. We thus obtain a fiber sequence with the objects as in the first fiber sequence in the statement, and it remains to identify the maps. Because $\Phi^{\mu_p}(S^0) \simeq S^0$, one of the maps in that fiber sequence is homotopic to $\id$. On the other hand, we claim that
\[ \Phi^{\mu_p}: \ul{\map}_C(S^0,X) \to \ul{\map}_C(S^0,\Phi^{\mu_p} X) \]
is homotopic to the map $$\lim_{n,F} \Psi^{\mu_{p^n}}(X) \to \lim_{n,F} \Psi^{\mu_{p^n}} \Phi^{\mu_p}(X)$$ induced by taking the limit of the natural transformations $\Psi^{\mu_{p^n}} X \to \Psi^{\mu_{p^{n-1}}} \Phi^{\mu_p} X$ defined in \cref{dfn:ParamRStructureMaps}. Indeed, since the functor $\Phi^{\mu_p}$ is obtained as the inverse limit of functors $\Phi^{\mu_p}: \Sp^{D_{2p^n}} \to \Sp^{C_2}$, the map $\Phi^{\mu_p}$ of $C_2$-spectra is also obtained as the inverse limits of maps
\[\Phi^{\mu_p}: \ul{\map}_{C_n}(S^0,X_n) \to \ul{\map}_{C_{n-1}}(S^0,\Phi^{\mu_p} X_n) \]
where $C_n = \sO_{C_2}^{\op} \times_{\sO_{D_{2p^n}}^{\op}} \ul{\Sp}^{D_{2p^{n}}}$ and $X_n$ is the restriction of $X$ to $\Sp^{D_{2p^{n}}}$. But with respect to the $C_2$-adjunction $\Phi^{\mu_p} \dashv i_{\ast}$ and the resulting equivalence
\[ \ul{\map}_{C_{n-1}}(S^0,\Phi^{\mu_p} X_n) \simeq \ul{\map}_{C_{n}}(S^0, i_{\ast} \Phi^{\mu_p} X_n),  \]
we may identify this map as given by $\ul{\map}_{C_{n}}(S^0,-) \simeq \Psi^{\mu_{p^n}}$ on the unit for $X_n$, which is the map of \cref{dfn:ParamRStructureMaps}. It follows that the composite
\[ \ul{\map}_C(S^0,X) \xto{\Phi^{\mu_p}} \ul{\map}_C(S^0,\Phi^{\mu_p} X) \xto{\alpha_{\ast}} \ul{\map}_C(S^0, X) \]
is homotopic to $R$, and we deduce the first fiber sequence.

Because the maps $F$ and $R$ commute, by \cref{rem:spineOfEqualizerInfinity} the $F$ and $R$ maps extend to define a diagram $J_{\infty} \to \Sp^{C_2}$. Then by \cref{lm:abstractEqualizerExchangeFormula}, we deduce the last equivalence and second fiber sequence.
\end{proof}

\begin{rem} Although they allude to the corepresentability of $\TC^{\mr{gen}}$ in the introduction \cite[p.~207]{NS18}, Nikolaus and Scholze choose to \emph{define} $\TC^{\mr{gen}}(-,p)$ via the fiber sequence \cite[Def.~II.4.4]{NS18}
\[ \TC^{\mr{gen}}(X,p) \to \mr{TR}(X,p) \xtolong{\id - F}{1} \mr{TR}(X,p). \]
The $C_2$-corepresentability of $\ul{\TCR}^{\mr{gen}}(-,p)$, or simple repetition of the proof of \cref{prp:classicalTCRcorepresentable}, immediately implies that $\TC^{\mr{gen}}(-,p)$ is corepresentable by the unit. Alternatively, one may deduce this from results of Blumberg-Mandell \cite{BM16} and the comparison \cite[Thm.~II.3.7]{NS18} as noted in \cite[Rem.~II.6.10]{NS18}.
\end{rem}

\subsection{\texorpdfstring{$C_2$}{C2}-symmetric monoidal structures} \label{sec:C2_smc}
In this subsection we explain how to endow $\underline{\RCycSp}_p$ and $\underline{\RCycSp}^{\mr{gen}}_p$ with $C_2$-symmetric monoidal structures so that $\TCR(-,p)$ and $\TCR^{\mr{gen}}(-,p)$ are lax $C_2$-symmetric monoidal functors (and so in particular preserve $C_2$-$E_{\infty}$-algebras).

To begin with, we showed in \cite[Thm.~E]{QS21a} that the parametrized Tate functor $(-)^{t_{C_2} \mu_p}: \Fun_{C_2}(B^t_{C_2} \mu_p, \underline{\Sp}^{C_2}) \to \Sp^{C_2}$ considered together with the natural transformation $(-)^{h_{C_2} \mu_p} \Rightarrow (-)^{t_{C_2} \mu_p}$ uniquely refines to a lax $C_2$-symmetric monoidal functor
$$(-)^{\underline{t}_{C_2} \mu_p}: \underline{\Fun}_{C_2}(B^t_{C_2} \mu_p, \underline{\Sp}^{C_2}) \to \underline{\Sp}^{C_2}$$
under $(-)^{\underline{h}_{C_2} \mu_p}$, where we endow the source with the pointwise $C_2$-symmetric monoidal structure of \cite[Exm.~5.13]{QS21a}. In fact, by the same logic $(-)^{\underline{t}_{C_2} \mu_p}$ considered as an $C_2$-endofunctor of $\underline{\Fun}_{C_2}(B^t_{C_2} \mu_{p^{\infty}}, \underline{\Sp}^{C_2})$ uniquely admits a lax $C_2$-symmetric monoidal structure. Writing the analogous pullback square of $C_2$-$\infty$-operads to that in \cref{con:smc_rcyc}, we thereby obtain a $C_2$-symmetric monoidal structure on $\underline{\RCycSp}_p$ such that the forgetful functor $\underline{\RCycSp}_p \to \underline{\Sp}^{C_2}$ is $C_2$-symmetric monoidal.

We now note that $\underline{\RCycSp}_p$ is fiberwise presentable by \cref{prp:RCycSpPresentable} and \cite[Cor.~II.1.7]{NS18} and $C_2$-stable by \cref{lem:ParamLaxEqualizerStable}, hence is $C_2$-bicomplete. Using that the forgetful $C_2$-functor to $\underline{\Sp}^{C_2}$ is $C_2$-colimit preserving and fiberwise conservative (by \cref{lem:ParamLaxEqualizerStable}, it is $C_2$-exact, and it is fiberwise conservative and fiberwise colimit preserving by \cref{prp:forgetful_conservative} and \cite[Cor.~II.1.7]{NS18}, so $C_2$-colimit preserving), we deduce that the $C_2$-symmetric monoidal structure on $\underline{\RCycSp}_p$ inherits the property of being $C_2$-distributive \cite[Def.~5.16]{QS21a} from $\underline{\Sp}^{C_2}$. We thus see that $\underline{\RCycSp}_p$ is \emph{$C_2$-presentably symmetric monoidal} in the sense of being a $C_2$-commutative algebra in the $C_2$-symmetric monoidal $\infty$-category $\underline{\Pr}^{L, \st}_{C_2}$ of $C_2$-presentable $C_2$-stable $C_2$-$\infty$-categories \cite[\S 3.4]{nardin}. The unit map then furnishes a unique $C_2$-colimit preserving $C_2$-exact $C_2$-symmetric monoidal functor $\underline{\Sp}^{C_2} \to \underline{\RCycSp}_p$, specified on underlying $C_2$-$\infty$-categories by the unique $C_2$-colimit preserving $C_2$-exact $C_2$-functor that sends $S^0 \in \Sp^{C_2}$ to the unit in $\RCycSp_p$.\footnote{In fact, the second author shows in \cite{paramalg} that for any $G$-symmetric monoidal $\infty$-category $C$, $\CAlg_G(C)$ has an initial object given by the unit in $C$.} Since this $C_2$-functor identifies with $\underline{\mr{triv}}_{\RR,p}$, we thus obtain a unique $C_2$-symmetric monoidal refinement of $\underline{\mr{triv}}_{\RR,p}$. It follows that $\underline{\TCR}(-,p)$, being $C_2$-right adjoint to $\underline{\mr{triv}}_{\RR,p}$, refines to a lax $C_2$-symmetric monoidal functor.

Next, we note that the Hill--Hopkins--Ravenel norms endow the $C_2$-$\infty$-category $\Sp^{D_{2p^n}}_{C_2}$ with $C_2$-symmetric monoidal structure, where the $C_2$-indexed multiplication $\otimes_{C_2}$ is furnished by the norm functor $N^{D_{2p^n}}_{\mu_{p^n}}: \Sp^{\mu_{p^n}} \to \Sp^{D_{2p^n}}$; indeed, we define the requisite functor $\Span(\FF_{C_2}) \to \Cat_{\infty}$ by precomposing the $D_{2p^n}$-symmetric monoidal $\infty$-category
$$(\Sp^{D_{2p^n}})^{\otimes}: \Span(\FF_{D_{2p^n}}) \to \Cat_{\infty}$$
by the functor $\Span(\FF_{C_2}) \to \Span(\FF_{D_{2p^n}})$ that endows a $C_2$-set with $D_{2p^n}$-action via the quotient map $D_{2p^n} \to D_{2p^n}/\mu_{p^n} \cong C_2$. The restriction functors $\Sp^{D_{2p^{n+1}}}_{C_2} \to \Sp^{D_{2p^n}}_{C_2}$ are compatible with the $C_2$-symmetric monoidal structure, and hence passing to the inverse limit we endow $\Sp^{D_{2p^\infty}}_{C_2}$ with a $C_2$-symmetric monoidal structure. Moreover, $\Sp^{D_{2p^\infty}}_{C_2}$ inherits the property of being $C_2$-presentably symmetric monoidal from the $\Sp^{D_{2p^n}}_{C_2}$ (which themselves inherit this property from the $D_{2p^n}$-distributivity of $\underline{\Sp}^{D_{2p^n}}$ as a $D_{2p^n}$-symmetric monoidal $\infty$-category).

Now observe that the $C_2$-endofunctor $\underline{\Phi}^{\mu_p}$ is $C_2$-symmetric monoidal by virtue of the geometric fixed points functor commuting with the Hill--Hopkins--Ravenel norm functors (cf. the discussion under \cite[Def.~2.5]{QS21a}). Indeed, the commutative square
\[ \begin{tikzcd}
B \mu_{p^{n+1}} \ar{r} \ar{d} & B(\mu_{p^{n+1}}/\mu_p) \ar{d} \\ 
B D_{2p^{n+1}} \ar{r} & B (D_{2p^{n+1}}/\mu_p)
\end{tikzcd} \]
of finite groupoids transports under the functor $\SH^{\otimes}: \Span(\Gpd_{\fin}) \to \CAlg(\Cat^{\sift}_{\infty})$ of \cite[Def.~2.5]{QS21a} to the commutative square of $\infty$-categories
\[ \begin{tikzcd}
\Sp^{\mu_{p^n+1}} \ar{r}{\Phi^{\mu_p}} \ar{d}[swap]{N^{D_{2p^{n+1}}}_{\mu_{p^{n+1}}}} & \Sp^{\mu_{p^n}} \ar{d}{N^{D_{2p^n}}_{\mu_{p^n}}} \\ 
\Sp^{D_{2p^{n+1}}} \ar{r}{\Phi^{\mu_p}} & \Sp^{D_{2p^n}},
\end{tikzcd} \]
and this extends to a natural transformation $(\underline{\Phi}^{\mu_p})^{\otimes}: (\Sp^{D_{2p^{n+1}}}_{C_2})^{\otimes} \Rightarrow (\Sp^{D_{2p^{n}}}_{C_2})^{\otimes}$ via precomposing $\SH^{\otimes}$ by the map
\[ h: \Delta^1 \times \Span(\FF_{C_2}) \to \Span(\Gpd_{\fin}) \]
that restricts on $\{i\} \times \Span(\FF_{C_2})$, $i \in \{0,1\}$ to the map $X \mapsto X//D_{2p^{n+1-i}}$ (where $X//G$ denotes the action groupoid), so $\SH^{\otimes} \circ h_i \simeq (\Sp^{D_{2p^{n+1-i}}}_{C_2})^{\otimes}$, and sends each $[(0,X) \rightarrow (1,X)]$ to the functor $X//_{D_{2p^{n+1}}} \to X//_{D_{2p^n}}$ induced by the quotient map $D_{2p^{n+1}} \to D_{2p^n} \cong D_{2p^{n+1}}/\mu_p$ covering the quotient map to $C_2$. Passage to the inverse limit then endows the $C_2$-endofunctor $\underline{\Phi}^{\mu_p}$ with its $C_2$-symmetric monoidal structure.

Since $\underline{\Phi}^{\mu_p}$ is also $C_2$-colimit preserving, the equalizer diagram of $C_2$-$\infty$-categories defining $\underline{\RCycSp}^{\mr{gen}}_p$ lifts to the $\infty$-category $\CAlg_{C_2}(\underline{\Pr}^{L, \st}_{C_2})$ of $C_2$-commutative algebras in $C_2$-presentable $C_2$-stable $C_2$-$\infty$-categories. Therefore, using that the forgetful functor creates limits $\underline{\RCycSp}^{\mr{gen}}_p$ is again $C_2$-presentably symmetric monoidal. Finally, the $C_2$-functors $\underline{\inf}^{\mu_{p^n}}: \underline{\Sp}^{C_2} \to \Sp^{D_{2p^n}}_{C_2}$ refine to $C_2$-symmetric monoidal functors $\underline{\Sp}^{C_2} \to \Sp^{D_{2p^n}}_{C_2}$ by virtue of \cite[Rem.~2.7]{QS21a} applied to the pullback square of finite groupoids
\[ \begin{tikzcd}
B \mu_{p^n} \ar{r} \ar{d} & B D_{2p^n} \ar{d} \\ 
\ast \ar{r} & B (D_{2p^n} / \mu_{p^n}) \cong B C_2
\end{tikzcd} \]
(and extending to natural transformations over $\Span(\FF_{C_2})$ by the same methods as before), and hence $\underline{\inf}^{\mu_{p^{\infty}}}$ and $\underline{\mr{triv}}^{\mr{gen}}_{\RR,p}$ refine to $C_2$-symmetric monoidal functors, whose $C_2$-right adjoints $\underline{\Psi}^{\mu_{p^{\infty}}}$ and $\underline{\TCR}^{\mr{gen}}(-,p)$ then acquire lax $C_2$-symmetric monoidal structures.

\section{Comparison of the theories}
\label{section:ComparisonSection}

Let $[X, \alpha: \Phi^{\mu_p} X \xto{\simeq} X]$ be a genuine real $p$-cyclotomic spectrum. From \cref{setup:Dihedral}, consider the recollement \eqref{eq:dihedral_recollement_infinite}
\[ \begin{tikzcd}[row sep=4ex, column sep=8ex, text height=1.5ex, text depth=0.5ex]
\Fun_{C_2}(B^t_{C_2} \mu_{p^{\infty}}, \ul{\Sp}^{C_2}) \ar[shift right=1,right hook->]{r}[swap]{\sF^{\vee}_b } & \Sp^{D_{2p^{\infty}}} \ar[shift right=2]{l}[swap]{\sU_b} \ar[shift left=2]{r}{\Phi^{\mu_p}} & \Sp^{D_{2p^{\infty}}} \ar[shift left=1,left hook->]{l}{i_{\ast}}
\end{tikzcd} \]
and the morphism induced by the unit of $\sU_b \dashv \sF^{\vee}_b$
$$ \beta: \sU_b \Phi^{\mu_p} (X) \to  \sU_b \Phi^{\mu_p} \sF^{\vee}_b \sU_b (X) \simeq (\sU_b X)^{t_{C_2} \mu_p}. $$

Choosing an inverse $\alpha^{-1}$, let $\varphi = \beta \circ (\sU_b \alpha^{-1}): \sU_b X \to (\sU_b X)^{t_{C_2} \mu_p}$. Then $[\sU_b X, \varphi]$ is a real $p$-cyclotomic spectrum. More generally, the lax symmetric monoidal natural transformation
\[ \sU^b \Phi^{\mu_p} \to \sU_b \Phi^{\mu_p} \sF^{\vee}_b \sU_b \simeq (-)^{t_{C_2} \mu_p} \circ \sU_b \]
defines a symmetric monoidal functor
\[ \Eq_{\id:\Phi^{\mu_p}}(\Sp^{D_{2p^{\infty}}}) \to \RCycSp_p \]
via the universal property of the lax equalizer (as explained in \cite[Prop.~II.3.2]{NS18}), and precomposition with the (symmetric monoidal) equivalence of \cref{rem:EqualizerSwap} defines a symmetric monoidal functor
\[ \sU_{\RR}: \RCycSp^{\mr{gen}}_p \to \RCycSp_p \]
that lifts the functor $\sU_b: \Sp^{D_{2p^{\infty}}} \to \Fun_{C_2}(B^t_{C_2} \mu_{p^{\infty}}, \ul{\Sp}^{C_2})$ through the functors that forget the structure maps.

\begin{rem} \label{rem:C2symm_forgetful}
Considering instead the $C_2$-recollement \eqref{eq:C2_dihedral_recollement_infinite}
\[ \begin{tikzcd}[row sep=4ex, column sep=8ex, text height=1.5ex, text depth=0.5ex]
\underline{\Fun}_{C_2}(B^t_{C_2} \mu_{p^{\infty}}, \ul{\Sp}^{C_2}) \ar[shift right=1,right hook->]{r}[swap]{\underline{\sF}^{\vee}_b } & \Sp^{D_{2p^{\infty}}}_{C_2} \ar[shift right=2]{l}[swap]{\underline{\sU}_b} \ar[shift left=2]{r}{\underline{\Phi}^{\mu_p}} & \Sp^{D_{2p^{\infty}}}_{C_2} \ar[shift left=1,left hook->]{l}{i_{\ast}}
\end{tikzcd} \]
we obtain a fiberwise symmetric monoidal $C_2$-functor
\[ \underline{\sU}_{\RR}: \underline{\RCycSp}^{\mr{gen}}_p \to \underline{\RCycSp}_p \]
given over $C_2/C_2$ by $\sU_{\RR}$ and over $C_2/1$ by the forgetful functor $\CycSp^{\mr{gen}}_p \to \CycSp_p$ as defined by Nikolaus and Scholze. In fact, we can prove that $\underline{\sU}_{\RR}$ is $C_2$-symmetric monoidal. This will follow by the same reasoning as above after we note:
\begin{enumerate}
\item $\underline{\Phi}^{\mu_{p}}$ is $C_2$-symmetric monoidal as was explained in \cref{sec:C2_smc}.
\item At each finite level $n$, the right adjoint inclusion $\Sp^{D_{2p^n}}_{C_2} \subset \Sp^{D_{2p^{n+1}}}_{C_2}$ is that of a $C_2$-$\otimes$-ideal \cite[Def.~5.24]{QS21a} by the same argument as \cite[Exm.~5.26]{QS21a}. Passing to inverse limits, we get that $i_*$ is the inclusion of a $C_2$-$\otimes$-ideal. Therefore, the projection $C_2$-functor $\underline{\sU}_b$ to the $C_2$-Verdier quotient is $C_2$-symmetric monoidal by \cite[Thm.~5.28]{QS21a}.
\end{enumerate}
\end{rem}

\begin{dfn} Let $[X,\varphi]$ be a real $p$-cyclotomic spectrum. Then $[X,\varphi]$ is \emph{underlying bounded-below} if the underlying spectrum of $X$ is bounded-below.

Similarly, we say that a genuine real $p$-cyclotomic spectrum $[X,\alpha]$ is \emph{underlying bounded-below} if the underlying spectrum of $X$ is bounded-below.\footnote{This implies that the underlying spectra of $\Phi^{\mu_{p^n}}(X)$ are bounded-below for all $n \geq 0$.}
\end{dfn}

Let us restate the main theorem of this paper from the introduction.

\begin{thm}[See \cref{thm:MainTheoremRestated}] \label{thm:MainTheoremEquivalenceBddBelow} $\sU_{\RR}$ restricts to an equivalence on the full subcategories of underlying bounded-below objects.
\end{thm}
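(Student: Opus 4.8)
\end{thm}

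\begin{proof}[Proof sketch]
The plan is to adapt the strategy of Nikolaus--Scholze \cite[\S II.4]{NS18}, with the dihedral Tate orbit lemma (\cref{Thmx:TO}) as the crucial input. I would first reduce to a statement at each finite level $n$: since $\Sp^{D_{2p^{\infty}}} = \lim_n \Sp^{D_{2p^n}}$, since both $\RCycSp^{\mr{gen}}_p$ and $\RCycSp_p$ are (lax) equalizers of endofunctors compatible with the restriction maps, and since $\sU_b$ is assembled from the finite-level functors $\sU_b[\mu_{p^n}]$, it suffices to analyze, for each $n$, the full subcategory $(\Sp^{D_{2p^n}})^{\mr{bb}} \subseteq \Sp^{D_{2p^n}}$ of objects $X$ with $\Phi^{\mu_{p^k}} X$ underlying bounded-below for $0 \le k \le n-1$, together with the functors $\sU_b[\mu_{p^n}]$ and $\Phi^{\mu_p}$.

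The heart of the argument is to establish the iterated pullback decomposition (\cref{cor:iterated_pullback_descr}) by descending the recollement \eqref{eq:dihedral_recollement}. This recollement presents $\Sp^{D_{2p^n}}$ as the lax pullback of $\Fun_{C_2}(B^t_{C_2}\mu_{p^n}, \ul{\Sp}^{C_2})$ and $\Sp^{D_{2p^{n-1}}}$ along the gluing functor $\Phi^{\mu_p} \circ \sF^{\vee}_b[\mu_{p^n}] \colon \Fun_{C_2}(B^t_{C_2}\mu_{p^n}, \ul{\Sp}^{C_2}) \to \Sp^{D_{2p^{n-1}}}$, with $\sU_b[\mu_{p^n}]$ and $\Phi^{\mu_p}$ the two projections. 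On bounded-below objects this gluing functor is controlled by the parametrized Tate construction: its composite with $\sU_b[\mu_{p^{n-1}}]$ is $(-)^{t_{C_2}\mu_p}$ by definition, while its further image under $\Phi^{\mu_p}$, namely $\Phi^{\mu_{p^2}} \sF^{\vee}_b[\mu_{p^n}]$, is built out of terms of the form $(X_{h_{C_2}\mu_p})^{t_{C_2}\mu_p}$ and hence vanishes on underlying bounded-below objects by \cref{Thmx:TO} (the categorified form of the dihedral Tate orbit lemma). Iterating, $(\Sp^{D_{2p^n}})^{\mr{bb}}$ is exhibited as the iterated lax pullback of the partially Borel categories $\Fun_{C_2}(B^t_{C_2}\mu_{p^k}, \ul{\Sp}^{C_2})$ for $0 \le k \le n$, glued along $(-)^{t_{C_2}\mu_p}$, with $\Psi^{\mu_{p^n}}$ corresponding to the iterated honest pullback of the parametrized homotopy fixed point functors $(-)^{h_{C_2}\mu_{p^k}}$ (recovering \cref{lm:SameLemmaNS}) and $\sU_b[\mu_{p^n}]$ corresponding to projection onto the top factor.

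It remains to pass to $n = \infty$ and impose the cyclotomic condition. Taking the inverse limit of the decompositions above --- legitimate because the restriction functors form strict morphisms of recollements (\cref{setup:Dihedral}) --- identifies $(\Sp^{D_{2p^{\infty}}})^{\mr{bb}}$ with an infinite lax pullback all of whose factors are $\Fun_{C_2}(B^t_{C_2}\mu_{p^{\infty}}, \ul{\Sp}^{C_2})$, and under this identification $\Phi^{\mu_p}$ acts by shifting the tower of factors down by one. Imposing the equalizer condition $\Phi^{\mu_p} X \xrightarrow{\simeq} X$ that defines $\RCycSp^{\mr{gen}}_p$ therefore forces every factor of $X$ to be identified with the top factor $X^1 = \sU_b(X)$, leaving as the only remaining datum a single map $X^1 \to (X^1)^{t_{C_2}\mu_p}$ --- precisely a Borel real $p$-cyclotomic structure on $\sU_b(X)$. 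This yields an equivalence on underlying bounded-below subcategories; that it is implemented by $\sU_{\RR}$ follows because $\sU_{\RR}$ restricts to $\sU_b$ on underlying objects and, by construction, carries $\alpha \colon \Phi^{\mu_p} X \xrightarrow{\simeq} X$ to the composite $\beta \circ \sU_b(\alpha^{-1})$, which is exactly the extracted structure map. Running the unwinding in reverse gives essential surjectivity, while computing mapping spectra on both sides via \cref{lem:EqualizerMappingSpaces}, together with the identification of $\sU_b$ as the top projection in the iterated pullback model, gives full faithfulness.

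Granting \cref{Thmx:TO}, I expect the main obstacle to be the second step: establishing the iterated pullback decomposition with all of its coherences in the parametrized setting --- in particular verifying that $\Phi^{\mu_{p^2}} \sF^{\vee}_b$ vanishes on bounded-below objects and controlling how the gluing data behaves as one descends the recollement tower and then reassembles it at $n = \infty$. The component subtleties of $B^t_{C_2}\mu_{p^n}$ at $p = 2$ and $1 < n < \infty$ (\cref{rem:DihedralBasepoints}) demand care in the finite-level arguments, though they disappear in the limit $n = \infty$ that governs the final statement.
\end{proof}
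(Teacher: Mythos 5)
Your overall architecture---a finite-level iterated pullback decomposition fed by the dihedral Tate orbit lemma, passage to $n=\infty$, identification of $\Phi^{\mu_p}$ with a shift of the tower, and an abstract exchange of the equalizer of the shift for the lax equalizer of $(-)^{t_{C_2}\mu_p}$---is exactly the paper's; the paper merely packages the decomposition via the reconstruction theorem and the $1$-generated/extendable dichotomy of \cite[Thm.~4.15]{Sha21} rather than by hand-iterating the recollement. However, the step you yourself flag as the heart of the argument contains a genuine error: you assert that $\Phi^{\mu_{p^2}}\sF^{\vee}_b[\mu_{p^n}]$ ``is built out of terms of the form $(X_{h_{C_2}\mu_p})^{t_{C_2}\mu_p}$ and hence vanishes on underlying bounded-below objects.'' It does not vanish. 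Its Borel part is the generalized Tate construction $X^{\tau_{C_2}\mu_{p^2}}$, which by \cref{lem:dihedralIdentifyGenTate} is $(X^{h_{C_2}\mu_p})^{t_{C_2}\mu_p}$ and is nonzero already for $X=\SS$ with trivial action (its underlying spectrum is $(\SS^{tC_p})^{tC_p}\neq 0$). If $\Phi^{\mu_{p^2}}\sF^{\vee}_b X$ really vanished, then $\sF^{\vee}_b X$ would be concentrated in the bottom two strata and your iteration would terminate after one step, contradicting the $(n+1)$-fold pullback you are trying to exhibit.

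The correct input from the dihedral Tate orbit lemma is not a vanishing of the higher gluing functors but the statement that they are computed by iterating the adjacent one: the canonical map $X^{\tau_{C_2}\mu_{p^k}} \to (X^{t_{C_2}\mu_p})^{\tau_{C_2}\mu_{p^{k-1}}}$ has fiber $(X_{h_{C_2}\mu_p})^{\tau_{C_2}\mu_{p^{k-1}}}$ (\cref{lem:dihedralIdentifyGenTate}), and it is this \emph{fiber} that vanishes for underlying bounded-below $X$---and even that requires an induction on $k$, since the fiber involves $\tau_{C_2}\mu_{p^{k-1}}$ rather than a single $t_{C_2}\mu_p$ (\cref{cor:dihedralTOLyieldsEquivs}). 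With the claim restated this way your outline goes through and coincides with \cref{prp:EquivalenceOnBoundedBelowAtFiniteLevel} and \cref{cor:iterated_pullback_descr}; as written, the vanishing assertion is false and would collapse the very decomposition you need. A secondary caveat: the final step, that imposing $\Phi^{\mu_p}X\simeq X$ on the infinite tower ``forces every factor to be identified with the top one, leaving a single map $X^1 \to (X^1)^{t_{C_2}\mu_p}$,'' is right in spirit but is not a formality---the paper devotes \cref{lem:LaxEqualizerGenericEquivalence} to constructing this equivalence of an equalizer with a lax equalizer coherently.
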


\begin{cor} \label{cor:TCRFormulasEquivalent} Let $X$ be a genuine real $p$-cyclotomic spectrum that is underlying bounded-below. Then we have a canonical equivalence
\[ \TCR^{\mr{gen}}(X,p) \simeq \TCR(\sU_{\RR} X,p). \]
Moreover, for any (reduced) $C_2$-$\infty$-operad $\sO^{\otimes}$, if $X$ is an $\sO$-algebra in $\underline{\RCycSp}^{\mr{gen}}_p$, then this equivalence is that of $\sO$-algebras in $\underline{\Sp}^{C_2}$. In particular, if $X$ is a $C_2$-$E_{\infty}$-algebra, then the above equivalence is that of $C_2$-$E_{\infty}$-algebras.
\end{cor}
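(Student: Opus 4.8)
The plan is to derive both parts from the $C_2$-corepresentability results (\cref{prp:classicalTCRcorepresentable} and \cref{prp:C2representabilityOfTC}) together with \cref{thm:MainTheoremEquivalenceBddBelow}. By \cref{prp:GenericRepresentabilityByUnit} there are natural equivalences $\ul{\TCR}^{\mr{gen}}(-,p) \simeq \ul{\map}_{\ul{\RCycSp}^{\mr{gen}}_p}(\mathbf{1},-)$ and $\ul{\TCR}(-,p) \simeq \ul{\map}_{\ul{\RCycSp}_p}(\mathbf{1},-)$, where $\mathbf{1} = \ul{\mr{triv}}^{\mr{gen}}_{\RR,p}(S^0)$, respectively $\ul{\mr{triv}}_{\RR,p}(S^0)$, is the unit. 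Since $\ul{\sU}_\RR$ is $C_2$-symmetric monoidal (\cref{rem:C2symm_forgetful}) it preserves units, so functoriality of $C_2$-mapping spectra produces a natural comparison map $\ul{\TCR}^{\mr{gen}}(X,p) \to \ul{\TCR}(\ul{\sU}_\RR X,p)$ of $C_2$-spectra. The unit $\mathbf{1}$ has underlying spectrum $\SS$, hence is underlying bounded-below; therefore, when $X$ is underlying bounded-below, this comparison map is an equivalence, because $\ul{\sU}_\RR$ restricts to an equivalence on the full $C_2$-subcategory of underlying bounded-below objects. The latter fact follows from \cref{thm:MainTheoremEquivalenceBddBelow} over the fiber $C_2/C_2$, the theorem of Nikolaus--Scholze over the fiber $C_2/1$, and the fact that a $C_2$-functor is an equivalence precisely when it is so fiberwise. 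This yields the first equivalence.

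To upgrade it to a multiplicative statement, I would first record a compatibility between $\ul{\sU}_\RR$ and the trivial functors. As established in \cref{sec:C2_smc}, $\ul{\mr{triv}}^{\mr{gen}}_{\RR,p}$ and $\ul{\mr{triv}}_{\RR,p}$ are, respectively, the unique $C_2$-colimit-preserving, $C_2$-exact, $C_2$-symmetric monoidal functors out of $\ul{\Sp}^{C_2}$; since $\ul{\sU}_\RR$ is itself $C_2$-symmetric monoidal, $C_2$-exact, and $C_2$-colimit-preserving (it lifts the two-sided adjoint $\sU_b$ through forgetful functors that create colimits and finite limits), the composite $\ul{\sU}_\RR \circ \ul{\mr{triv}}^{\mr{gen}}_{\RR,p}$ shares all three properties, so there is a canonical equivalence $\phi \colon \ul{\sU}_\RR \circ \ul{\mr{triv}}^{\mr{gen}}_{\RR,p} \simeq \ul{\mr{triv}}_{\RR,p}$ of $C_2$-symmetric monoidal functors. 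Passing to the mate of $\phi$ across the $C_2$-symmetric monoidal adjunctions $\ul{\mr{triv}}^{\mr{gen}}_{\RR,p} \dashv \ul{\TCR}^{\mr{gen}}(-,p)$ and $\ul{\mr{triv}}_{\RR,p} \dashv \ul{\TCR}(-,p)$ then produces a lax $C_2$-symmetric monoidal natural transformation
\[ \overline{\phi} \colon \ul{\TCR}^{\mr{gen}}(-,p) \Longrightarrow \ul{\TCR}(\ul{\sU}_\RR(-),p), \]
which unwinds to the comparison map of the previous paragraph and is therefore an objectwise equivalence on underlying bounded-below objects.

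Finally I would pass to algebras. A lax $C_2$-symmetric monoidal functor carries $\sO$-algebras to $\sO$-algebras for every reduced $C_2$-$\infty$-operad $\sO^{\otimes}$, and a lax $C_2$-symmetric monoidal natural transformation induces a natural transformation of the resulting functors on $\sO$-algebras. Applied to $\overline{\phi}$, this gives for each such $\sO^{\otimes}$ a natural transformation between the two functors from $\sO$-algebras in $\ul{\RCycSp}^{\mr{gen}}_p$ to $\sO$-algebras in $\ul{\Sp}^{C_2}$ induced by $\ul{\TCR}^{\mr{gen}}(-,p)$ and by $\ul{\TCR}(-,p) \circ \ul{\sU}_\RR$. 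On an $\sO$-algebra $X$ whose underlying object is bounded-below, this transformation is an equivalence on underlying $C_2$-spectra, hence an equivalence of $\sO$-algebras, since equivalences of $\sO$-algebras are detected on underlying objects; specializing to the $C_2$-$E_{\infty}$-operad gives the last sentence.

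The step I expect to be the crux is the identification of the comparison map as the mate of a $C_2$-symmetric monoidal equivalence, i.e., its compatibility with the lax $C_2$-symmetric monoidal structures. This requires carefully tracking the $C_2$-parametrized symmetric monoidal structures in play---the Hill--Hopkins--Ravenel norms underlying the structure on $\Sp^{D_{2p^{\infty}}}_{C_2}$ and the structure on $\ul{\RCycSp}_p$ built from the lax $C_2$-symmetric monoidal functor $(-)^{\ul{t}_{C_2}\mu_p}$---together with the parametrized analogue of the principle that mates of symmetric monoidal transformations across symmetric monoidal adjunctions are again symmetric monoidal.
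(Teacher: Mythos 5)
Your proposal is correct and follows essentially the same route as the paper: the first claim is deduced from the $C_2$-corepresentability of both $\TCR$ functors by their units together with the fact that $\sU_{\RR}$ preserves the (bounded-below) unit and restricts to an equivalence on underlying bounded-below objects, and the multiplicative upgrade is obtained from the lax $C_2$-symmetric monoidality of $\ul{\TCR}^{\mr{gen}}(-,p)$ and $\ul{\TCR}(-,p)$ combined with the $C_2$-symmetric monoidality of $\ul{\sU}_{\RR}$ from \cref{rem:C2symm_forgetful}. Your mate construction in the second paragraph is a correct and more explicit unpacking of what the paper leaves implicit in its one-sentence justification of the second claim.
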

\begin{proof} Because $\sU_{\RR}$ preserves the unit (which is bounded-below), the first claim follows immediately from \cref{thm:MainTheoremEquivalenceBddBelow} and the $C_2$-corepresentability of $\TCR(-,p)$ and $\TCR^{\mr{gen}}(-,p)$ by the unit (\cref{prp:C2representabilityOfTC} and \cref{prp:classicalTCRcorepresentable}). The second claim then follows in view of the lax $C_2$-symmetric monoidality of $\TCR(-,p)$ and $\TCR^{\mr{gen}}(-,p)$ and \cref{rem:C2symm_forgetful}.
\end{proof}

As with the comparison theorem for $p$-cyclotomic spectra \cite[Thm.~II.6.3]{NS18}, the key computation that establishes \cref{thm:MainTheoremEquivalenceBddBelow} is a dihedral extension of the Tate orbit lemma, to which we turn first. Using this, we indicate how the ``decategorified'' version of \cref{thm:MainTheoremEquivalenceBddBelow} in the form of \cref{cor:TCRFormulasEquivalent} follows by the same arguments as in \cite[\S II.4]{NS18}. We then leverage the reconstruction theorem of Ayala--Mazel-Gee--Rozenblyum describing $G$-spectra in terms of their geometric fixed points\footnote{We recall this as \cref{rec:reconstruction} below.} in order to prove \cref{thm:MainTheoremEquivalenceBddBelow}, proceeding in two stages. First, we obtain a comparison result at ``finite level'' (\cref{cor:iterated_pullback_descr}) by inputting the dihedral Tate orbit lemma together with a relative version of \cite[Thm.~2.42]{QS21a} into the abstract equivalence of \cite[Thm.~4.15]{Sha21} between $1$-generated and extendable objects in a $[n]$-stratified stable $\infty$-category. This identifies an underlying bounded-below $D_{2p^n}$-spectrum $X$ with a list of $C_2$-spectra with parametrized action\footnote{Here, we abuse notation and let $\Phi^{\mu_{p^k}} X$ also denote its underlying $C_2$-spectrum with twisted $\mu_{p^{n-k}}$-action. We will later instead write $L[\zeta]_k X$ to disambiguate (\cref{eq:dihedral_localization}).}
\[ [\Phi^{1} X, \: \Phi^{\mu_p} X, \: ..., \: \Phi^{\mu_{p^n}} X], \quad \Phi^{\mu_{p^k}} X \in \Fun_{C_2}(B^t_{C_2} \mu_{p^{n-k}}, \underline{\Sp}^{C_2}), \]
along with (twisted equivariant) maps $\Phi^{\mu_{p^{k+1}}} X \to (\Phi^{\mu_{p^k}} X)^{t_{C_2} \mu_{p}}$ for $0 \leq k < n$. Passing to infinity, we then identify an underlying bounded-below $D_{2p^{\infty}}$-spectrum $X$ with a list of $C_2$-spectra with twisted $\mu_{p^{\infty}}$-action and maps thereof
\[ [\Phi^{1} X, \: \Phi^{\mu_p} X \to (\Phi^{1} X)^{t_{C_2} \mu_p}, \: ..., \: \Phi^{\mu_{p^{n+1}}} X \to (\Phi^{\mu_{p^n}} X)^{t_{C_2} \mu_p}, \: ... \: ], \]
with respect to which the endofunctor $\Phi^{\mu_p}$ of $\Sp^{D_{2p^{\infty}}}$ identifies with the operation that shifts once to the left. Using an abstract result on the equalizer of the identity and the shift endofunctor (\cref{lem:LaxEqualizerGenericEquivalence}), we may then deduce \cref{thm:MainTheoremEquivalenceBddBelow}.

Throughout, we will use that the parametrized Tate functor
$$(-)^{t_{C_2} \mu_p}: \Fun_{C_2}(B^t_{C_2} \mu_p, \ul{\Sp}^{C_2}) \to \Sp^{C_2}$$
identifies with the gluing functor of the $\Gamma_{\mu_p}$-recollement on $\Sp^{D_{2p}}$ for $\Gamma_{\mu_p}$ the $\mu_p$-free $D_{2p}$-family. By \cite[Thm.~F]{QS21a}, we then have a formula for the geometric fixed points of $X^{t_{C_2} \mu_p}$. Namely:

\begin{enumerate}[leftmargin=*]
\item \cite[Exm.~2.51]{QS21a} Suppose $p = 2$, so $D_4 = C_2 \times \mu_2$ is the Klein four-group and $\Gamma_{\mu_2} = \{ 1, C_2, \Delta \}$ for $\Delta$ the diagonal subgroup. Then the data of a $C_2$-functor $X : B^t_{C_2} \mu_2 \to \underline{\Sp}^{C_2}$ amounts to a tuple of objects
\[ [X^1 \in \Sp^{h D_4}, \: X^{\phi C_2} \in \Sp^{h D_4/C_2}, \: X^{\phi \Delta} \in \Sp^{h D_4/\Delta} ] \]
and maps
\[ [\alpha: X^{\phi C_2} \to (X^1)^{tC_2} \in \Sp^{h D_4/C_2}, \: \beta: X^{\phi \Delta} \to (X^1)^{t \Delta} \in  \Sp^{h D_4/\Delta} ]. \]
If we then let $Y$ be the pullback of
\[ \begin{tikzcd}[column sep=4em]
& (X^{\phi C_2})^{t D_4/C_2} \times (X^{\phi \Delta})^{t D_4/\Delta} \ar{d}{\alpha^{tD_4/C_2} \times \beta^{tD_4/\Delta}} \\ 
(X^1)^{\tau D_4} \ar{r}{(\can, \can)} & (X^1)^{t C_2 tD_4/C_2} \times (X^1)^{t \Delta tD_4/\Delta }
\end{tikzcd} \]
and let $\gamma: Y \xto{\pr} (X^1)^{\tau D_4} \xto{\can} ((X^1)^{t \mu_2})^{t C_2}$ (under the canonical isomorphism $C_2 \cong D_4/\mu_2$), then
\[ X^{t_{C_2} \mu_2} = [(X^1)^{t \mu_2} \in \Sp^{h C_2}, \: Y \xto{\gamma} ((X^1)^{t \mu_2})^{t C_2} \in \Sp]. \]
Note also that if $X$ is restricted from a $C_2$-functor $X': B^t_{C_2} \mu_{p^2} \to \underline{\Sp}^{C_2}$, then $X^{\phi C_2} \simeq X^{\phi \Delta}$ and we may write $Y$ instead as
\begin{equation} \label{eq:geometric_fixed_points_formula}
(X^1)^{\tau D_4} \times_{\bigoplus_{\mu_2} (X^1)^{t C_2 t \mu_2}} \bigoplus_{\mu_2} (X^{\phi C_2})^{t \mu_2} 
\end{equation}
where $\bigoplus_{\mu_2}: \Sp \to \Sp^{h \mu_2}$ denotes the $\mu_2$-induction functor.

\item \cite[Exm.~2.53]{QS21a} Suppose $p$ is odd, so up to conjugacy $\Gamma_{\mu_p}$ consists of the subgroups $1$ and $C_2$. Then the data of a $C_2$-functor $X : B^t_{C_2} \mu_p \to \underline{\Sp}^{C_2}$ amounts to a tuple
\[ [X^1 \in \Sp^{h D_{2p}}, \: X^{\phi C_2} \to (X^1)^{tC_2} \in \Sp], \]
and $X^{t_{C_2}\mu_p}$ is given by the tuple $[(X^1)^{t \mu_p} \in \Sp^{h C_2}, \: 0 \in \Sp]$.
\end{enumerate}

\begin{rem}
As we explain below, our techniques also apply to give a new proof of the main result of Nikolaus and Scholze on $p$-cyclotomic spectra (\cite[Thm.~II.6.3]{NS18}), which is in particular independent of the machinery of ``coalgebras for endofunctors'' developed in \cite[II.5]{NS18}. On the other hand, a straightforward adaptation of their setup to the real cyclotomic context would suffice to prove \cref{thm:MainTheoremRestated} from the dihedral Tate orbit lemma (but wouldn't apply to prove \cref{cor:iterated_pullback_descr}). Our main motivation for undertaking this separate line of argument was to give a uniform outlook on the comparison theorems at finite and infinite levels.

In a different direction, we also note that computational techniques around the reconstruction theorem (e.g., \cite[Thm.~F]{QS21a}) play a new and essential role in our proof of the dihedral Tate orbit lemma. So, one cannot logically avoid usage of the reconstruction theorem in any case.
\end{rem}






\subsection{The dihedral Tate orbit lemma} \label{sec:dihedral_tate_orbit_lemma}

In \cite[\S I.2]{NS18}, Nikolaus and Scholze prove the \emph{Tate orbit lemma}: for a Borel $C_{p^2}$-spectrum $X$ that is bounded-below, the spectrum $(X_{h C_p})^{t_{C_p}}$ vanishes \cite[Lem.~I.2.1]{NS18}. In this subsection, we give a dihedral refinement of the Tate orbit lemma (\cref{lem:dihedralTOLEven} for $p=2$ and \cref{lem:dihedralTOLOdd} for $p$ odd). As a corollary, we then deduce that $\TCR^{\mr{gen}}(-,p)$ is computed by the fiber sequence formula for $\TCR(-,p)$ on bounded-below genuine real $p$-cyclotomic spectra (\cref{cor:decategorifiedEasy}).

\begin{dfn}[{\cite[\S 6]{BarwickGlasmanShah}}] The \emph{homotopy $t$-structure} on $\Sp^G$ is the $t$-structure \cite[Def.~1.2.1.1]{HA} determined by the pair of full subcategories $\Sp^G_{\geq 0}$, $\Sp^G_{\leq 0}$ of $G$-spectra $X$ such that $X^H$ is connective, resp. coconnective for all subgroups $H \leq G$.

A $G$-spectrum $X$ is \emph{bounded-below} if $X$ is bounded-below in the homotopy $t$-structure on $\Sp^G$, i.e., for all subgroups $H \leq G$, $X^H$ is bounded-below.
\end{dfn}

\begin{rem}[{\cite[Exm.~6.3]{BarwickGlasmanShah}}] The heart of the homotopy $t$-structure on $\Sp^G$ is the category of abelian group-valued Mackey functors on finite $G$-sets. In addition, the homotopy $t$-structure on $\Sp^G$ is accessible \cite[Def.~1.4.4.12]{HA} and left and right complete \cite[\S 1.2.1]{HA}.
\end{rem}

\begin{dfn} \label{dfn:sliceBoundedBelow} A $G$-spectrum $X$ is \emph{slice bounded-below} if for all subgroups $H \leq G$, $X^{\phi H}$ is bounded-below.
\end{dfn}

\begin{rem} By \cite[Thm.~A]{HillYarnall}, a $G$-spectrum $X$ is slice bounded-below in the sense of \cref{dfn:sliceBoundedBelow} if and only if it is slice $n$-connective for some $n>-\infty$ in the sense of the slice filtration \cite[\S 4]{HHR}.
\end{rem}

When $G = C_{p^n}$, there is no distinction between bounded-below and slice bounded-below $G$-spectra.

\begin{lem} \label{lem:BddBelowEqualsSliceBddBelow} Suppose $X \in \Sp^{C_{p^n}}$. Then $X$ is bounded-below if and only if $X$ is slice bounded-below.
\end{lem}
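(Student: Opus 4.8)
The plan is to prove the equivalence by induction on $n$, using the isotropy separation sequence / recollement on $\Sp^{C_{p^n}}$ to reduce slice bounded-belowness to ordinary bounded-belowness one geometric fixed-point stratum at a time. The key point is that for the cyclic $p$-group $C_{p^n}$, every subgroup is of the form $C_{p^k}$, and geometric fixed points $X^{\phi C_{p^k}}$ is naturally a $C_{p^{n-k}}$-spectrum, so $X^{\phi C_{p^k}}$ being ``bounded-below as a spectrum'' and $X^{\phi C_{p^k}}$ being ``bounded-below as a $C_{p^{n-k}}$-spectrum'' are a priori different conditions — but the lemma asserts they end up controlling the same class of objects.

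First I would record the trivial direction: since categorical fixed points $X^H$ can be computed from the geometric fixed points $(X^{\phi K})$ for $K \leq H$ via the isotropy separation (Tate) squares, if all the $X^{\phi C_{p^k}}$ are bounded-below as spectra then inductively all the $X^{C_{p^k}}$ are bounded-below, so slice bounded-below implies bounded-below. (Here one uses that the Tate construction $(-)^{tC_p}$ preserves bounded-belowness, which is standard; e.g. the Tate spectral sequence argument, or that $(-)^{tC_p}$ is exact and right-complete-compatible.) For the converse, I would argue by downward induction using the recollement $(\Sp^{h C_{p^n}}, \Sp^{C_{p^{n-1}}})$ on $\Sp^{C_{p^n}}$ with closed part geometric fixed points $\Phi^{C_p}$: if $X$ is bounded-below as a $C_{p^n}$-spectrum, then its underlying Borel spectrum $j^*X$ is bounded-below, and $\Phi^{C_p} X \in \Sp^{C_{p^{n-1}}}$ is bounded-below (its $C_{p^k}/C_p$-categorical fixed points are among the $X^{C_{p^{k}}}$ up to the relevant identifications — more precisely $(\Phi^{C_p}X)^{C_{p^{k-1}}} \simeq (X^{\phi C_p})^{C_{p^{k-1}}}$ fits into an isotropy separation square with the bounded-below terms $X^{C_{p^k}}$ and a Tate term built from bounded-below data). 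Then by the inductive hypothesis applied to the $C_{p^{n-1}}$-spectrum $\Phi^{C_p} X$, all of $(\Phi^{C_p} X)^{\phi C_{p^j}} = X^{\phi C_{p^{j+1}}}$ are bounded-below as spectra, and the base case $X^{\phi 1} = j^* X$ bounded-below is exactly underlying bounded-belowness; hence $X$ is slice bounded-below.

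The main obstacle — and the place where the hypothesis that $G = C_{p^n}$ is genuinely used — is establishing that ``bounded-below as a $C_{p^n}$-spectrum'' forces $\Phi^{C_p} X$ to be bounded-below as a $C_{p^{n-1}}$-spectrum, i.e. that one can rearrange the isotropy separation squares to solve for the geometric fixed points in terms of categorical fixed points and Tate constructions of bounded-below inputs. For a general group $G$ this fails (geometric fixed points can be arbitrarily negative even when all categorical fixed points are connective, because of the contributions of Tate constructions over the Weyl groups), but for a cyclic $p$-group the lattice of subgroups is a chain, so at each stage there is exactly one new geometric fixed point stratum and the relevant Tate square expresses $X^{\phi C_p}$'s fixed points as a pullback of $X^{C_{p^k}}$ (bounded-below) against a $(-)^{t C_p}$-term of the already-controlled underlying/lower data, and $(-)^{tC_p}$ of bounded-below is bounded-below. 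I would phrase this cleanly by invoking left-completeness of the homotopy $t$-structure together with the fact that the relevant Tate functors preserve the classes $\Sp^H_{\geq m}$ up to a bounded shift, so the finitely many pullbacks in play stay bounded-below.

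Finally I would remark that, combined with \cref{rem:sliceboundedbelow} identifying slice bounded-belowness with slice $n$-connectivity, this shows in particular that for $C_{p^n}$ the slice filtration is exhaustive precisely on bounded-below spectra — but this remark is not needed for the proof itself.
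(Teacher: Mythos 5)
Your overall architecture is the same as the paper's: induction on $n$ using the recollement with open part $\Fun(BC_{p^n},\Sp)$ and closed part $\Sp^{C_{p^{n-1}}}$, the identification $(\Phi^{C_p}X)^{\phi C_{p^k}} \simeq X^{\phi C_{p^{k+1}}}$, and the isotropy separation fiber sequence relating $X^{C_{p^n}}$ to $(\Phi^{C_p}X)^{C_{p^{n-1}}}$. However, the boundedness bookkeeping — which is the entire content of the lemma — rests twice on the assertion that $(-)^{tC_p}$ preserves bounded-below spectra (and, in your proposed "clean" phrasing, that the Tate functors preserve $\Sp^H_{\geq m}$ up to a bounded shift). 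This is false: $\pi_*\bigl((H\FF_p)^{tC_p}\bigr) \cong \widehat{H}^{-*}(C_p;\FF_p)$ is nonzero in every negative degree, so $(-)^{tC_p}$ (and likewise $(-)^{hC_p}$) destroys bounded-belowness already on Eilenberg--MacLane spectra. As written, both your "trivial direction" and your argument that $\Phi^{C_p}X$ is bounded-below as a $C_{p^{n-1}}$-spectrum invoke this false claim, so neither step is actually justified.

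The fix is to trade the Tate/fixed-points terms for the homotopy-orbits term, which is what the paper does: the recollement gives the fiber sequence $(X^1)_{hC_{p^n}} \to X^{C_{p^n}} \to (\Phi^{C_p}X)^{C_{p^{n-1}}}$, and homotopy orbits of a bounded-below Borel spectrum \emph{are} bounded-below (they are a colimit over $BC_{p^n}$, or equivalently the orbit spectral sequence is concentrated in $s \geq 0$, $t \geq N$). Granting by induction that $X^{C_{p^k}}$ and $X^{\phi C_{p^k}}$ are bounded-below for $k<n$, this single fiber sequence shows directly that $X^{C_{p^n}}$ is bounded-below if and only if $(\Phi^{C_p}X)^{C_{p^{n-1}}}$ is, and the inductive hypothesis applied to $\Phi^{C_p}X$ converts the latter into bounded-belowness of $X^{\phi C_{p^n}}$. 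With that substitution your proof goes through and coincides with the paper's; without it, the steps that you flag as "standard" would fail.
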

\begin{proof} We proceed by induction on $n$. The base case $n=0$ is trivial. Let $n>0$ and suppose we have proven the lemma for $C_{p^{n-1}}$. Let $X \in \Sp^{C_{p^n}}$ and consider the recollement
\[ \begin{tikzcd}[row sep=4ex, column sep=6ex, text height=1.5ex, text depth=0.25ex]
\Fun(B C_{p^n},\Sp) \ar[shift right=1,right hook->]{r}[swap]{j_{\ast}} & \Sp^{C_{p^n}} \ar[shift right=2]{l}[swap]{j^{\ast}} \ar[shift left=2]{r}{i^{\ast} = \Phi^{C_p}} & \Sp^{C_{p^{n-1}}} \simeq \Sp \ar[shift left=1,left hook->]{l}{i_{\ast}}
\end{tikzcd} \]
from which we obtain the fiber sequence $(X^1)_{h C_{p^n}} \to X^{C_{p^n}} \to (\Phi^{C_p} X)^{C_{p^{n-1}}}$ as in \cite[Prop.~II.2.13]{NS18}. By the inductive hypothesis, we may suppose that both $X^{C_{p^k}}$ and $X^{\phi C_{p^k}}$ are bounded-below for all $0 \leq k < n$. Then noting that $(\Phi^{C_p} X)^{\phi C_{p^k}} \simeq X^{\phi C_{p^{k+1}}}$, we deduce from the fiber sequence that $X^{C_{p^n}}$ is bounded-below if and only if $X^{\phi C_{p^n}}$ is bounded-below.
\end{proof}

Note that the restriction functors $\res^G_H: \Sp^G \to \Sp^H$ are $t$-exact with respect to the homotopy $t$-structures. Consequently, we can make the following definition.

\begin{dfn} Let $\underline{\Sp}^G_{\geq n}, \underline{\Sp}^G_{\leq m} \subset \underline{\Sp}^G$ be the full $G$-subcategories defined fiberwise over $G/H$ on objects $X \in \Sp^H_{\geq n}$, resp. $X \in \Sp^H_{\leq m}$.
\end{dfn}

\begin{lem} The inclusions $\underline{\Sp}^G_{\geq n} \subset \underline{\Sp}^G$, resp. $\underline{\Sp}^G_{\leq m} \subset \underline{\Sp}^G$ admit right $G$-adjoints $\tau_{\geq n}$, resp. left $G$-adjoints $\tau_{\leq m}$.
\end{lem}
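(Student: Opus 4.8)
The plan is to invoke the standard criterion for the existence of relative adjoints over $\sO_G^{\op}$ that are again $G$-functors: given a $G$-functor $F \colon C \to D$, a right $G$-adjoint exists and is automatically a $G$-functor precisely when (i) for every orbit $G/H$ the fiber $F_{G/H} \colon C_{G/H} \to D_{G/H}$ admits a right adjoint $R_{G/H}$, and (ii) for every morphism of orbits $\alpha \colon G/K \to G/H$ the Beck--Chevalley transformation $\alpha^{\ast} R_{G/H} \to R_{G/K} \alpha^{\ast}$ is an equivalence, where $\alpha^{\ast}$ denotes the associated restriction or conjugation functor (see \cite[\S 8]{Exp2}, or \cite[\S 7.3.2]{HA}); dually for left $G$-adjoints. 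I would apply this with $F$ the inclusion $\ul{\Sp}^G_{\geq n} \subset \ul{\Sp}^G$, which is a $G$-functor because each restriction $\res^H_K$ is $t$-exact and hence carries $\Sp^H_{\geq n}$ into $\Sp^K_{\geq n}$ (this is also what makes $\ul{\Sp}^G_{\geq n}$ a $G$-$\infty$-category in the first place), and separately with the inclusion $\ul{\Sp}^G_{\leq m} \subset \ul{\Sp}^G$.

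For (i): the homotopy $t$-structure on $\Sp^H$ is accessible for every subgroup $H \leq G$ (recalled above), so the inclusion $\Sp^H_{\geq n} \subset \Sp^H$ admits a right adjoint $\tau_{\geq n}$ and the inclusion $\Sp^H_{\leq m} \subset \Sp^H$ admits a left adjoint $\tau_{\leq m}$, as for any accessible $t$-structure. For (ii): every restriction functor $\res^H_K$ is $t$-exact for the homotopy $t$-structures, and every conjugation equivalence is a fortiori $t$-exact; hence each $\alpha^{\ast}$ commutes with both $\tau_{\geq n}$ and $\tau_{\leq m}$ up to canonical equivalence, which is exactly the Beck--Chevalley condition required in each of the two cases. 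Invoking the criterion, we conclude that the inclusion $\ul{\Sp}^G_{\geq n} \subset \ul{\Sp}^G$ admits a right $G$-adjoint, which we denote $\tau_{\geq n}$, and the inclusion $\ul{\Sp}^G_{\leq m} \subset \ul{\Sp}^G$ admits a left $G$-adjoint $\tau_{\leq m}$; by construction these restrict over each orbit $G/H$ to the ordinary truncation functors on $\Sp^H$.

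I do not expect a genuine obstacle here: the single nontrivial input is the $t$-exactness of the restriction functors, which has already been recorded, and accessibility of the homotopy $t$-structure, which provides the fiberwise truncations; everything else is a formal application of the parametrized adjoint criterion. The only point that merits care is bookkeeping the variances --- a right $G$-adjoint for the connective cover and a left $G$-adjoint for the coconnective truncation --- and checking that the Beck--Chevalley squares demanded by the criterion are precisely the $t$-exactness statements for $\res^H_K$ rather than some weaker compatibility.
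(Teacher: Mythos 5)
Your argument is correct and is essentially the paper's: the proof there likewise combines the existence of the fiberwise truncation functors with the $t$-exactness of the restriction (and induction) functors, citing \cite[Prop.~7.3.2.6]{HA} and \cite[Prop.~7.2.3.11]{HA} for the passage from fiberwise adjoints satisfying Beck--Chevalley to genuine $G$-adjoints. Two cosmetic remarks: the fiberwise truncations exist for \emph{any} $t$-structure (accessibility is irrelevant to their existence, only to presentability of the subcategories), and your direct verification of the Beck--Chevalley condition from $t$-exactness of restriction makes the appeal to induction unnecessary --- the latter is just an alternative route to the same compatibility by passing to left adjoints.
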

\begin{proof} These adjunctions exist fiberwise, so we deduce both statements from the $t$-exactness of the restriction and induction functors using \cite[Prop.~7.3.2.6]{HA} and \cite[Prop.~7.2.3.11]{HA}.
\end{proof}

\begin{rem} For a $G$-$\infty$-category $K$, we have an induced `pointwise' $t$-structure on $\Fun_G(K, \ul{\Sp}^G)$ determined by $\Fun_G(K, \ul{\Sp}^G_{\geq 0})$ and $\Fun_G(K, \ul{\Sp}^G_{\leq 0})$.
\end{rem}

\begin{lem} \label{lem:TateConvergence} Let $K$ be a $G$-$\infty$-category and $f: K \to \underline{\Sp}^G$ a $G$-functor. Then the canonical maps
\[ \begin{tikzcd}[row sep=4ex, column sep=4ex, text height=1.5ex, text depth=0.25ex]
\colim^G_K f \ar{r} & \lim_{n} \colim^G_K \tau_{\leq n} f
\end{tikzcd}, \quad 
\begin{tikzcd}[row sep=4ex, column sep=4ex, text height=1.5ex, text depth=0.25ex]
\colim_{n} \lim^G_K \tau_{\geq -n} f \ar{r} & \lim^G_K f
\end{tikzcd} \]
are equivalences. Consequently, if $X: B_{G/N}^{\psi} N \to \ul{\Sp}^{G/N}$ is a $G/N$-spectrum with $\psi$-twisted $N$-action, then the canonical maps
\[ X^{t_{G/N} N} \to \lim_n (\tau_{\leq n} X)^{t_{G/N} N}, \quad \colim_n (\tau_{\geq -n} X)^{t_{G/N} N} \to X^{t_{G/N} N} \]
are equivalences.
\end{lem}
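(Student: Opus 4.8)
The plan is to prove the two equivalences of the first assertion directly, using that $G$-colimits preserve connectivity and $G$-limits preserve coconnectivity with respect to the homotopy $t$-structure, and then to deduce the statements about $(-)^{t_{G/N}N}$ by feeding these into the norm cofiber sequence $(-)_{h_{G/N}N}\to(-)^{h_{G/N}N}\to(-)^{t_{G/N}N}$.

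The key preservation property is that $\colim^G_K$ carries $\underline{\Sp}^G_{\geq k}$-valued $G$-functors into $\Sp^G_{\geq k}$, and dually $\lim^G_K$ carries $\underline{\Sp}^G_{\leq k}$-valued $G$-functors into $\Sp^G_{\leq k}$. Indeed, since $\underline{\Sp}^G$ is $G$-stable hence $G$-semiadditive, its $G$-indexed coproducts agree with its $G$-indexed products and are computed by the finite induction functors $\ind^G_H$, which are $t$-exact, being simultaneously left and right adjoint (via the Wirthm\"{u}ller isomorphism) to the $t$-exact restriction functors. Hence $\underline{\Sp}^G_{\geq k}\subset\underline{\Sp}^G$ is closed under fiberwise colimits and $G$-coproducts, so closed under all $G$-colimits, which gives the first claim; the second is dual. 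Granting this, $\mathrm{fib}(\colim^G_K f\to\colim^G_K\tau_{\leq n}f)\simeq\colim^G_K\tau_{\geq n+1}f\in\Sp^G_{\geq n+1}$ (using exactness of $\colim^G_K$), so the fiber of $\colim^G_K f\to\lim_n\colim^G_K\tau_{\leq n}f$ is $\lim_n\colim^G_K\tau_{\geq n+1}f$, whose $H$-categorical fixed points, for every $H\leq G$, form the limit of a tower of $(n+1)$-connective spectra and hence vanish by the Milnor sequence; so the first map is an equivalence. Dually, using that $\lim^G_K$ is exact, that the preservation property holds for coconnectivity, and that categorical fixed points commute with filtered colimits (as $\Sigma^\infty_+ G/H$ is compact in $\Sp^G$), the cofiber of $\colim_n\lim^G_K\tau_{\geq -n}f\to\lim^G_K f$ is $\colim_n\lim^G_K\tau_{\leq -n-1}f$, a filtered colimit of $(\leq -n-1)$-coconnective spectra, which again has vanishing homotopy groups; so the second map is an equivalence.

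For the consequence, apply $\lim_n$ (resp. $\colim_n$) to the natural norm cofiber sequence evaluated at $\tau_{\leq n}X$ (resp. $\tau_{\geq -n}X$), taking $K=B_{G/N}^{\psi}N$. For the first statement: $\lim_n(\tau_{\leq n}X)_{h_{G/N}N}\simeq X_{h_{G/N}N}$ is exactly the first equivalence above, and $\lim_n(\tau_{\leq n}X)^{h_{G/N}N}\simeq X^{h_{G/N}N}$ holds because $(-)^{h_{G/N}N}$, being a right adjoint, commutes with $\lim_n$, while $\lim_n\tau_{\leq n}X\simeq X$ by left completeness of the pointwise homotopy $t$-structure; since $\lim_n$ is exact, passing to cofibers yields $\lim_n(\tau_{\leq n}X)^{t_{G/N}N}\simeq X^{t_{G/N}N}$. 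The second statement is dual, using the second equivalence above, the fact that $(-)_{h_{G/N}N}$ commutes with $\colim_n$, and right completeness to identify $\colim_n\tau_{\geq -n}X\simeq X$. In both cases one checks directly that the resulting equivalence is the canonical comparison map.

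The main obstacle is the connectivity-preservation property of $G$-(co)limits: this is the only genuinely parametrized input, everything else being formal from stability, exactness of the functors in play, and left/right completeness of the homotopy $t$-structure (which the pointwise $t$-structure on $\Fun_{G/N}(B_{G/N}^{\psi}N,\underline{\Sp}^{G/N})$ inherits, with truncations and the towers $\lim_n$, $\colim_n$ all computed fiberwise-pointwise since the restriction functors are $t$-exact). The delicate point within it is the identification of $G$-indexed (co)products in $\underline{\Sp}^G$ with finite inductions together with their $t$-exactness, which rests on $G$-semiadditivity and the Wirthm\"{u}ller isomorphism.
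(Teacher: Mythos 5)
Your proof is correct and follows essentially the same route as the paper: closure of the (co)connective parts of the homotopy $t$-structure under $G$-(co)limits, completeness to kill the resulting towers, and then the norm cofiber sequence together with the commutation of parametrized orbits with colimits and parametrized fixed points with limits. The only cosmetic difference is that you justify the closure statement by decomposing $G$-colimits into fiberwise colimits and $t$-exact inductions, where the paper simply cites that the inclusion $\underline{\Sp}^G_{>n}\subset\underline{\Sp}^G$ is a left $G$-adjoint.
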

\begin{proof} For the first equivalence, using the cofiber sequences $\tau_{>n} \to \id \to \tau_{\leq n}$, it suffices to show that $\lim_{n} \colim^G_K \tau_{> n} f \simeq 0$. But this follows by completeness of the homotopy $t$-structure on $\Sp^G$, since the inclusion $\underline{\Sp}^G_{>n} \subset \underline{\Sp}^G$ preserves $G$-colimits as a left $G$-adjoint \cite[Cor.~8.7]{Exp2}. The second equivalence is proven by a dual argument. The final two equivalences then follow from the first two in view of the defining fiber sequence
\[ X_{h_{G/N} N} \to X^{h_{G/N} N} \to X^{t_{G/N} N} \]
and the commutativity of parametrized orbits with colimits and parametrized fixed points with limits.
\end{proof}

For applying the next lemma, note that we have a canonical lax symmetric monoidal natural transformations $(-)^{t G} \to (-)^{\tau G}$ and $(-)^{t G} \to ((-)^{t N})^{t(G/N)}$, defined via the universal property of the Verdier quotient \cite[\S1.3]{NS18}.

\begin{lem} \label{lem:TateNilpotent} Let $G$ be a finite $p$-group, $X$ a Borel $G$-spectrum, and $(-)^{t G} \to F(-)$ a lax symmetric monoidal natural transformation.
\begin{enumerate}
\item Suppose that $X$ is bounded. Then $F(X)$ is $p$-nilpotent.
\item Suppose that $X$ is bounded-below. Then $F(X)$ is $p$-complete, and the map $F(X) \to F(X^{\wedge}_{p})$ is an equivalence.
\end{enumerate}
\end{lem}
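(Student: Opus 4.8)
The plan is to establish (1) by a Postnikov reduction to Eilenberg--Mac Lane inputs, and then to bootstrap (2) from (1) together with the vanishing of $F$ on $p$-invertible objects.

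For (1): since $F$ is exact (as are all the functors in play) and the full subcategory of spectra $Z$ with $p^N\cdot\mathrm{id}_Z\simeq 0$ for some $N$ is closed under shifts and cofiber sequences, while a bounded $X\in\Fun(BG,\Sp)$ is built from finitely many $\Sigma^i H(\pi_i X)$ --- with each $\pi_i X$ a $\ZZ[G]$-module --- via cofiber sequences, it suffices to show $F(HM)$ is $p$-nilpotent for an arbitrary $\ZZ[G]$-module $M$. Here $HM$ is a module over the trivial-action $E_\infty$-ring $(H\ZZ)_{\mathrm{triv}}$ in $\Fun(BG,\Sp)$, so by lax symmetric monoidality $F(HM)$ is a module over the $E_\infty$-ring $F\big((H\ZZ)_{\mathrm{triv}}\big)$, which receives a unital ring map from the Tate cohomology ring $\big((H\ZZ)_{\mathrm{triv}}\big)^{tG}$. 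As $\pi_0\big((H\ZZ)_{\mathrm{triv}}\big)^{tG}=\widehat H^0(G;\ZZ)=\ZZ/|G|$ and $|G|$ is a power of $p$, we get $|G|\cdot\mathrm{id}\simeq 0$ on $F\big((H\ZZ)_{\mathrm{triv}}\big)$, hence on every module over it; so $F(HM)$ --- and therefore $F(X)$ --- is $p$-nilpotent (with per-layer exponent $|G|$, independent of $M$).

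For the second assertion of (2): I would first show $F$ annihilates every $Y$ on which $p\cdot\mathrm{id}_Y$ is invertible. Such a $Y$ is a module over $(\SS[1/p])_{\mathrm{triv}}$, so $F(Y)$ is a module over $F\big((\SS[1/p])_{\mathrm{triv}}\big)$, and the latter receives a unital ring map from $\big((\SS[1/p])_{\mathrm{triv}}\big)^{tG}$. This Tate construction vanishes: by \cref{lem:TateConvergence} (applied to the extension $[G\to G\to 1]$) it equals $\lim_n\big(\tau_{\leq n}(\SS[1/p])_{\mathrm{triv}}\big)^{tG}$, each $\tau_{\leq n}(\SS[1/p])$ is bounded with $\ZZ[1/p][G]$-module homotopy, so as in (1) its Tate construction is a module over $\big((H\ZZ[1/p])_{\mathrm{triv}}\big)^{tG}$, whose $\pi_0$ is $\ZZ[1/p]/|G|=0$ since $|G|$ is a unit in $\ZZ[1/p]$. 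Hence $F\big((\SS[1/p])_{\mathrm{triv}}\big)=0$ and $F(Y)=0$. Applying this to $Y:=\mathrm{fib}(X\to X^\wedge_p)$ --- which satisfies $Y/p\simeq\mathrm{fib}(X/p\xto{\simeq}X^\wedge_p/p)=0$, so $p$ is invertible on $Y$ --- and using exactness of $F$ on $Y\to X\to X^\wedge_p$ gives $F(X)\xto{\simeq}F(X^\wedge_p)$.

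For the first assertion of (2): by the previous step we may take $X$ bounded below, and write $X=\lim_n\tau_{\leq n}X$ (bounded-below spectra being Postnikov complete). Each $\tau_{\leq n}X$ is bounded, so $F(\tau_{\leq n}X)$ is $p$-nilpotent by (1), hence $p$-complete (a spectrum killed by some $p^N\cdot\mathrm{id}$ is $p$-complete); thus $\lim_n F(\tau_{\leq n}X)$ is $p$-complete, and it suffices to identify $F(X)$ with this limit, i.e.\ to show $\lim_n F(\tau_{\geq n+1}X)\simeq\mathrm{fib}\big(F(X)\to\lim_n F(\tau_{\leq n}X)\big)$ vanishes. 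I expect this compatibility of $F$ with Postnikov limits of bounded-below inputs to be the crux. For $F=(-)^{tG}$ it is precisely \cref{lem:TateConvergence}. For general $F$ the plan is to deduce it from that case: the fiber sequences $(\tau_{\geq n+1}X)^{tG}\to F(\tau_{\geq n+1}X)\to C(\tau_{\geq n+1}X)$ with $C:=\mathrm{cofib}\big((-)^{tG}\to F\big)$, together with $\lim_n(\tau_{\geq n+1}X)^{tG}=0$ (again \cref{lem:TateConvergence}), reduce the vanishing to $\lim_n C(\tau_{\geq n+1}X)=0$, which for the $F$'s to which the lemma is applied --- $(-)^{\tau G}$ and iterated Tate constructions $((-)^{tN})^{t(G/N)}$ --- one reads off from the explicit Verdier-quotient, resp.\ iterated-Tate, description of $C$ on bounded-below objects. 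Granting this, $F(X)\simeq\lim_n F(\tau_{\leq n}X)$ is $p$-complete for every bounded-below $X$.
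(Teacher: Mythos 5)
Your argument for part (1) and for the second assertion of (2) is essentially the paper's, with the second assertion spelled out in more detail than the paper gives (the paper defers to \cite[Lem.~I.2.9]{NS18}): reduce to $X = HM$ and kill the unit of $F(H\ZZ)$ via the ring map from $(H\ZZ)^{tG}$, resp.\ kill $F$ of the $p$-invertible fiber of $X \to X^{\wedge}_p$ via the vanishing of $(\SS[1/p])^{tG}$. One small imprecision: $(\tau_{\leq n}\SS[1/p])^{tG}$ is not itself a module over $(H\ZZ[1/p])^{tG}$ but a finite extension of modules over it; since that ring has zero unit, each layer and hence the extension vanishes, so your conclusion stands.

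The step you single out as the crux --- that the first assertion of (2) requires $F(X) \simeq \lim_n F(\tau_{\leq n}X)$ for bounded-below $X$ --- is indeed the delicate point, and you are right not to claim it for arbitrary $F$. The paper's proof elides it: ``again we may reduce to the case $X = HM$'' is a finite induction only when $X$ is bounded, and for bounded-below $X$ it presupposes exactly this commutation with Postnikov limits. That is not a formal consequence of the stated hypotheses: the rationalization $F = ((-)^{tG})_{\QQ}$, equipped with the localization map $(-)^{tG} \to ((-)^{tG})_{\QQ}$, is an exact lax symmetric monoidal functor under $(-)^{tG}$, yet $F(\SS) \simeq (\SS^{\wedge}_p)_{\QQ}$ is a nonzero rational spectrum and hence not $p$-complete. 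So part (2) genuinely needs the convergence property as an extra input, and your plan of verifying it for the functors to which the lemma is actually applied ($(-)^{\tau G}$ and the iterated Tate constructions appearing in \cref{cor:ParamTatePcomplete}), via the cofiber of $(-)^{tG} \to F$ together with \cref{lem:TateConvergence}, is the right way to close the argument.
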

\begin{proof} First suppose that $F = (-)^{t G}$ itself. Then the same proof as in \cite[Lem.~I.2.9]{NS18} applies: for (1), we reduce to $X = HM$ by induction on the Postnikov tower and use that the order of $G$ annihilates Tate cohomology $\widehat{H}^{\ast}(G;M)$, and (2) then follows, using that $p$-complete spectra are closed under limits. For the general situation, again we may reduce to the case $X = H M$. Then because $F(-)$ is a lax symmetric monoidal functor, $F(HM)$ is a $F(H \ZZ)$-module, so it suffices to show that $F(H \ZZ)$ is $p$-nilpotent. For this, via the lax symmetric monoidal natural transformation $(-)^{t G} \to F(-)$, we obtain an $E_{\infty}$-map $(H \ZZ)^{tG} \to F(H \ZZ)$, and because $(H \ZZ)^{tG}$ is $p$-nilpotent, we deduce that $F(H \ZZ)$ is $p$-nilpotent.
\end{proof}



\begin{lem} \label{lem:pcompletion}
Let $X$ be a $C_2$-spectrum whose underlying spectrum $X^1$ is bounded-below. Then $X$ is $p$-complete if and only if $X^1$ and $X^{\phi C_2}$ are $p$-complete.
\end{lem}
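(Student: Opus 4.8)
The plan is to detect $p$-completeness of $X$ on its genuine fixed points, and then trade the condition on $X^{C_2}$ for a condition on $\Phi^{C_2}X$ via the isotropy separation sequence. The point worth noting up front is that one should \emph{not} try to commute $\Phi^{C_2}$ directly past $p$-completion (it does not preserve the countable limit $\lim_n(-)/p^n$ in general); instead one routes through $X^{C_2}$.

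The first step is an observation valid for any $Z\in\Sp^{C_2}$, with no boundedness hypothesis: $Z$ is $p$-complete if and only if $Z^e$ and $Z^{C_2}$ are $p$-complete. Indeed, the genuine fixed point functors $(-)^H$ for $H\leq C_2$ are exact and preserve limits (each is a right adjoint, being corepresented by $\Sigma^{\infty}_+(C_2/H)$), so since $Z/p^n = \mathrm{cofib}(p^n\colon Z\to Z)$ we get $(Z/p^n)^H\simeq Z^H/p^n$ and hence $(Z^{\wedge}_p)^H \simeq \lim_n Z^H/p^n \simeq (Z^H)^{\wedge}_p$. As the family $\{(-)^H : H\leq C_2\}$ is jointly conservative on $\Sp^{C_2}$, the fiber of $Z\to Z^{\wedge}_p$ vanishes precisely when each $Z^H\to(Z^H)^{\wedge}_p$ is an equivalence, which gives the claim.

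The second step uses the isotropy separation fiber sequence $(X^1)_{hC_2}\to X^{C_2}\to\Phi^{C_2}X$, i.e.\ the $p=2$, $n=1$ instance of the fiber sequence appearing in the proof of \cref{lem:BddBelowEqualsSliceBddBelow} (\cite[Prop.~II.2.13]{NS18}). The hard part, and the one place the hypothesis enters, is to show that when $X^1$ is bounded-below \emph{and} $p$-complete, the homotopy orbit spectrum $(X^1)_{hC_2}$ is again $p$-complete. Here I would run the homotopy orbit spectral sequence $E^2_{s,t}=H_s(C_2;\pi_t X^1)\Rightarrow\pi_{s+t}((X^1)_{hC_2})$: since $X^1$ is bounded-below the $E^2$-page is supported in a quadrant, so the spectral sequence converges and the induced filtration on each $\pi_n((X^1)_{hC_2})$ is finite. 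Each $\pi_t X^1$ is derived $p$-complete (a bounded-below spectrum is $p$-complete iff all of its homotopy groups are derived $p$-complete), and the group homology $H_s(C_2;-)$ carries derived $p$-complete abelian groups to derived $p$-complete abelian groups, since via the standard periodic resolution it is computed as iterated kernels and cokernels of the norm and augmentation maps, and the derived $p$-complete groups form an abelian subcategory of $\mathrm{Ab}$ closed under extensions. Hence each $\pi_n((X^1)_{hC_2})$ is a finite iterated extension of derived $p$-complete groups, so is itself derived $p$-complete; as $(X^1)_{hC_2}$ is also bounded-below, it is $p$-complete.

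Granting this, the lemma follows quickly. Assume $X^1$ is bounded-below. If $X$ is $p$-complete, then by the first step $X^1=X^e$ and $X^{C_2}$ are $p$-complete; since $(X^1)_{hC_2}$ is then $p$-complete, the fiber sequence $(X^1)_{hC_2}\to X^{C_2}\to\Phi^{C_2}X$ forces $\Phi^{C_2}X$ to be $p$-complete. Conversely, if $X^1$ and $\Phi^{C_2}X$ are $p$-complete, then $(X^1)_{hC_2}$ is $p$-complete, so the same fiber sequence shows $X^{C_2}$ is $p$-complete, and then the first step gives that $X$ is $p$-complete. The only genuinely delicate input is the preservation of $p$-completeness under $(-)_{hC_2}$, which is exactly where the hypothesis that the \emph{underlying} spectrum is bounded-below is indispensable.
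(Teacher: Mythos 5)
Your proof is correct, but it takes a genuinely different route from the paper's. The paper works entirely inside the recollement $(\Sp^{hC_2},\Sp)$ on $\Sp^{C_2}$ with gluing functor $(-)^{tC_2}$: by \cref{lem:recollement_limit}, the $p$-completion $\lim_n X/p^n$ is computed factorwise on $(X^1, X^{\phi C_2})$ as soon as $(-)^{tC_2}$ commutes with $p$-completion on bounded-below Borel $C_2$-spectra, and that commutation is checked by a case split ($p$ odd: $(-)^{tC_2}$ and its $p$-completion both vanish appropriately; $p=2$: on bounded-below objects $(-)^{tC_2}$ inverts $2$-adic equivalences and lands in $2$-complete spectra, as in \cite[Lem.~I.2.9]{NS18}). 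You instead detect $p$-completeness on the jointly conservative, limit-preserving genuine fixed points $(-)^e$ and $(-)^{C_2}$, and trade $X^{C_2}$ for $\Phi^{C_2}X$ via the isotropy separation sequence, at the cost of proving that $(X^1)_{hC_2}$ is $p$-complete when $X^1$ is bounded-below and $p$-complete; your homotopy-orbit spectral sequence argument for that (derived $p$-complete homotopy groups, closure of the Ext-$p$-complete subcategory under kernels, cokernels and extensions, finite filtration) is sound. The two hard inputs are in fact equivalent: since $(-)^{hC_2}$ always commutes with limits, the norm cofiber sequence shows that ``$(-)_{hC_2}$ preserves $p$-completeness of bounded-below spectra'' and ``$(-)^{tC_2}$ commutes with $p$-completion on bounded-below spectra'' are the same statement. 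What your version buys is a more self-contained argument that isolates exactly where the bounded-below hypothesis enters and avoids invoking the recollement limit criterion; what the paper's version buys is brevity given that the recollement formalism and the Nikolaus--Scholze input are already on the table, plus the slightly stronger conclusion (stated explicitly there) that the $p$-completion of $X$ is itself computed factorwise, which your argument yields only after the fact.
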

\begin{proof}
In terms of the recollement 
\[ \begin{tikzcd}[row sep=4ex, column sep=6ex, text height=1.5ex, text depth=0.25ex]
\Fun(B C_2,\Sp) \ar[shift right=1,right hook->]{r}[swap]{j_{\ast}} & \Sp^{C_2} \ar[shift right=2]{l}[swap]{j^{\ast}} \ar[shift left=2]{r}{i^{\ast} = \Phi^{C_2}} & \Sp \ar[shift left=1,left hook->]{l}{i_{\ast}}
\end{tikzcd} \]
we will prove that the $p$-completion of $X$ is computed by the factorwise $p$-completion of $X^1$ and $X^{\phi C_2}$. For this, by \cref{lem:recollement_limit} it suffices to show that the restriction of $(-)^{t C_2}$ to the full subcategory of bounded-below Borel $C_2$-spectra commutes with the functor $(-)^{\wedge}_p$ of $p$-completion. We consider two cases:
\begin{enumerate}
\item[(i)] Suppose $p$ is odd. Then $(-)^{t C_2}$ vanishes identically on $p$-complete spectra and $((-)^{tC_2})^{\wedge}_p \simeq 0$ as a module over $(\SS^{t C_2})^{\wedge}_p \simeq 0$. Thus, $(-)^{t C_2}$ commutes with $(-)^{\wedge}_p$ unconditionally.
\item[(ii)] Suppose $p = 2$. Then the restriction of $(-)^{t C_2}$ to the full subcategory of bounded-below Borel $C_2$-spectra inverts $2$-adic equivalences and is valued in $2$-complete spectra, so $(-)^{t C_2}$ commutes with $(-)^{\wedge}_2$.
\end{enumerate}
\end{proof}

\begin{lem} \label{lem:recollement_limit}
Let $\sX$ be a stable $\infty$-category and $(\sU, \sZ)$ a stable recollement on $\sX$ with gluing functor $\phi: \sU \to \sZ$. Suppose $\overline{p}: K^{\lhd} \to \sX$ is such that $j^* \overline{p}$ and $\phi j^* \overline{p}$ are limit diagrams. Then $\overline{p}$ is a limit diagram if and only if $i^* \overline{p}$ is a limit diagram.
\end{lem}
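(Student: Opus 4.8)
The plan is to reconstruct $\overline{p}$ from $j^{\ast}\overline{p}$, $i^{\ast}\overline{p}$, and $\phi j^{\ast}\overline{p}$ via the recollement fracture square and then to detect the limit-diagram property termwise. Recall that for a stable recollement with $j^{\ast}i_{\ast}=0$ and gluing functor $\phi \simeq i^{\ast}j_{\ast}$, every object $x\in\sX$ fits into a natural pullback square
\[
\begin{tikzcd}[row sep=3.5ex, column sep=5ex]
x \ar[r] \ar[d] & j_{\ast}j^{\ast}x \ar[d] \\
i_{\ast}i^{\ast}x \ar[r] & i_{\ast}\phi(j^{\ast}x) ,
\end{tikzcd}
\]
in which the top and bottom maps have a common fiber $i_{\ast}i^{!}x$ (this is the standard fracture square for a stable recollement, cf. \cite{Sha21}). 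Applying this pointwise along $K^{\lhd}$ exhibits $\overline{p}$ as the pullback $j_{\ast}j^{\ast}\overline{p}\times_{i_{\ast}\phi j^{\ast}\overline{p}} i_{\ast}i^{\ast}\overline{p}$ in the stable $\infty$-category $\Fun(K^{\lhd},\sX)$.

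Since the statement only concerns limit diagrams, I may apply $\mathrm{map}_{\sX}(c,-)$ for all $c$ to reduce to the case that $\sX$ is complete; then, writing $v$ for the cone point of $K^{\lhd}$, the assignment $q\mapsto \Lambda(q):=\mathrm{cofib}\bigl(q(v)\to \lim_{K} q|_{K}\bigr)$ defines a functor $\Fun(K^{\lhd},\sX)\to\sX$ that is exact, because both $\mathrm{ev}_{v}$ and $\lim_{K}$ are exact (a right adjoint between stable $\infty$-categories is exact). By construction, $q$ is a limit diagram if and only if $\Lambda(q)\simeq 0$.

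Now $j_{\ast}$ and $i_{\ast}$ preserve limits, so the hypotheses that $j^{\ast}\overline{p}$ and $\phi j^{\ast}\overline{p}$ are limit diagrams yield $\Lambda(j_{\ast}j^{\ast}\overline{p})\simeq 0$ and $\Lambda(i_{\ast}\phi j^{\ast}\overline{p})\simeq 0$. A pullback square in a stable $\infty$-category gives a fiber sequence of the shape $w\to x\oplus y\to z$, so the fracture square produces a fiber sequence
\[
\overline{p}\;\longrightarrow\;(j_{\ast}j^{\ast}\overline{p})\oplus(i_{\ast}i^{\ast}\overline{p})\;\longrightarrow\; i_{\ast}\phi j^{\ast}\overline{p}
\]
in $\Fun(K^{\lhd},\sX)$. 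Applying the exact functor $\Lambda$, using that it preserves direct sums, and invoking the two vanishings gives $\Lambda(\overline{p})\simeq\Lambda(i_{\ast}i^{\ast}\overline{p})$; hence $\overline{p}$ is a limit diagram if and only if $i_{\ast}i^{\ast}\overline{p}$ is. Finally, $i_{\ast}$ is fully faithful and limit-preserving, so applying $i_{\ast}$ to the canonical comparison map for $i^{\ast}\overline{p}$ and using conservativity of $i_{\ast}$ shows that $i_{\ast}i^{\ast}\overline{p}$ is a limit diagram if and only if $i^{\ast}\overline{p}$ is. Composing the two equivalences completes the proof.

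I do not anticipate a serious obstacle. The only delicate points are (i) pinning down the gluing functor and the natural fracture pullback square in the conventions of \cite{Sha21}, which is standard for stable recollements, and (ii) the ``two-out-of-three for limit diagrams'' step, which is handled cleanly by the exact defect functor $\Lambda$ — and the completeness assumption needed to define $\Lambda$ is harmless, since limit diagrams are detected after applying $\mathrm{map}_{\sX}(c,-)$ for all $c\in\sX$.
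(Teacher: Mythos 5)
Your proof is correct and is essentially the paper's argument in different clothing: the paper deduces the statement in one line from the recollement fiber sequence $i^! \to i^* \to i^* j_* j^* = \phi j^*$ together with the joint conservativity of $j^*$ and $i^*$, whereas you use the equivalent fracture pullback square and an exact limit-defect functor $\Lambda$ to run the same two-out-of-three argument. The only points requiring care — existence of the relevant limits (which you handle by passing to mapping spectra) and naturality of the fracture square — are handled adequately.
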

\begin{proof}
This follows immediately from the recollement fiber sequence $i^! \to i^* \to i^* j_* j^* = \phi j^*$ \cite[Obs.~2.18]{Sha21} and the joint conservativity of $j^*$ and $i^*$. 
\end{proof}

\begin{wrn}
For $X \in \Sp^{C_2}$, it may be that $(X^{\wedge}_2)^{\phi C_2} \not\simeq (X^{\phi C_2})^{\wedge}_2$. For example, one may take $X = j_* E$ for any $E \in \Sp^{h C_2}$ such that $(E^{t C_2})^{\wedge}_2 \not\simeq (E^{\wedge}_2)^{t C_2}$ (such as $E = \mr{KU}$ with trivial $C_2$-action).
\end{wrn}



We may then leverage \cref{lem:TateNilpotent} and \cref{lem:pcompletion} to prove:

\begin{cor} \label{cor:ParamTatePcomplete} Let $X$ be a $C_2$-spectrum with twisted $\mu_p$-action.
\begin{enumerate}
\item Suppose that for all choices of $C_2$-basepoints $\iota: \sO_{C_2}^{\op} \to B^t_{C_2} \mu_p$, $\iota^{\ast} X$ is bounded-below as a $C_2$-spectrum.\footnote{In particular, if $X$ arises as the restriction of a $C_2$-spectrum with twisted $\mu_{p^{\infty}}$-action, then this bounded-below condition is equivalent to stipulating that the underlying $C_2$-spectrum is bounded-below.} Then $X^{t_{C_2} \mu_p}$ is $p$-complete.
\item For all primes $q \neq p$, $(X^{t_{C_2} \mu_p})^{\wedge}_q \simeq 0$. 
\item For all primes $q \neq p$, if $X$ is $q$-local then $X^{t_{C_2} \mu_p} \simeq 0$ .
\end{enumerate}
\end{cor}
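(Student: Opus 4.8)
The plan is to treat parts (2) and (3) first, as they follow formally from the lax $C_2$-symmetric monoidal structure on $(-)^{t_{C_2} \mu_p}$ together with the geometric fixed point formula of \cite[Thm.~F]{QS21a} (recalled in \cite[Exm.~2.51 and 2.53]{QS21a}). Write $\mathbf{1}$ for the unit of $\Fun_{C_2}(B^t_{C_2} \mu_p, \ul{\Sp}^{C_2})$, so that $R \coloneq \mathbf{1}^{t_{C_2} \mu_p}$ is an $E_\infty$-algebra in $\Sp^{C_2}$ and $X^{t_{C_2} \mu_p}$ is an $R$-module for every $X$ with twisted $\mu_p$-action. As a preliminary, I would record that for any finite $p$-group $G$ the spectrum $\SS^{tG}$ is $\SS[1/p]$-acyclic — reduce to $\SS$ bounded via \cref{lem:TateConvergence}, where \cref{lem:TateNilpotent}(1) gives $p$-nilpotence, and $\SS[1/p]$-acyclic spectra are closed under (co)limits — and hence so is any $\SS^{tG}$-module, and (via the lax symmetric monoidal transformations $(-)^{tG} \to (-)^{\tau G}$ and $(-)^{tG} \to ((-)^{tN})^{t(G/N)}$) also $\SS^{\tau G}$ and $(\SS^{tN})^{t(G/N)}$. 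Reading off the geometric fixed point formulas, the underlying spectrum of $R$ is $\SS^{t\mu_p}$ and $\Phi^{C_2} R$ is $0$ for $p$ odd and a finite limit of iterated Tate constructions of $\SS$ over subquotients of the $2$-group $D_4$ for $p=2$; hence $\res R$ and $\Phi^{C_2} R$ are $\SS[1/p]$-acyclic, so $R$ — and therefore its module $X^{t_{C_2} \mu_p}$ — is $\SS^{C_2}[1/p]$-acyclic. On a $\SS[1/p]$-acyclic spectrum every prime $q \neq p$ acts invertibly (the homotopy groups are $p$-power torsion), so its $q$-completion vanishes; applying this to $\res$ and $\Phi^{C_2}$ of $X^{t_{C_2} \mu_p}$ and using the joint conservativity of $j^*$ and $i^*$ gives part (2). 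For part (3): if $X$ is $q$-local then $X^{t_{C_2} \mu_p}$ is a module over $\mathbf{1}_{(q)}^{t_{C_2} \mu_p}$, where $\mathbf{1}_{(q)}$ denotes the $q$-localized unit; since $\SS_{(q)}$ is connective with $\SS_{(q)}/p = 0$, \cref{lem:TateNilpotent}(2) forces $(\SS_{(q)})^{t\mu_p}$, and for $p=2$ each corner of the pullback computing the geometric fixed points of $\mathbf{1}_{(q)}^{t_{C_2} \mu_p}$, to be $p$-complete with vanishing mod-$p$ reduction, hence zero; thus $\mathbf{1}_{(q)}^{t_{C_2} \mu_p} = 0$ and $X^{t_{C_2} \mu_p} = 0$.

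For part (1), set $Z \coloneq X^{t_{C_2} \mu_p} \in \Sp^{C_2}$. Its underlying spectrum is $(X^1)^{t\mu_p}$ (since $\ul{t}_{C_2} \mu_p$ restricts over $C_2/1$ to $t\mu_p$), which is $p$-complete by \cref{lem:TateNilpotent}(2) because $X^1$ is bounded-below — but it is typically \emph{not} bounded-below, so \cref{lem:pcompletion} cannot be applied as a black box. The plan is to reprise its proof: apply \cref{lem:recollement_limit} to the $p$-completion tower of $Z$, regarded as a cone diagram over $(\ZZ_{\geq 0}^{\op})^{\lhd}$, inside the recollement $(\Sp^{hC_2}, \Sp)$ on $\Sp^{C_2}$ whose gluing functor is $(-)^{tC_2}$. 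Since $j^*$ and $(-)^{tC_2}$ are exact and commute with the finite colimits $(-)/p^n$, this reduces the $p$-completeness of $Z$ to that of $Z^1$, of $(Z^1)^{tC_2}$, and of $Z^{\phi C_2}$. Now $Z^1 = (X^1)^{t\mu_p}$ is $p$-complete as noted. For $(Z^1)^{tC_2} = ((X^1)^{t\mu_p})^{tC_2}$: when $p=2$ this iterated Tate construction receives a lax symmetric monoidal transformation from $(X^1)^{tD_4}$ with $D_4 = C_2 \times \mu_2$ a $2$-group, so \cref{lem:TateNilpotent}(2) applies; when $p$ is odd it is a module over $(\SS^\wedge_p)^{tC_2}$, which is $\SS[1/2]$-acyclic (as an $\SS^{tC_2}$-module) and $p$-complete (\cref{lem:TateNilpotent}(2), $\SS^\wedge_p$ being connective) hence zero, so $(Z^1)^{tC_2} = 0$. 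Finally $Z^{\phi C_2}$ is $0$ for $p$ odd, and for $p=2$ it is the finite limit \eqref{eq:geometric_fixed_points_formula} (or its unrestricted analog from \cite[Exm.~2.51]{QS21a}) of spectra of the form $(X^1)^{\tau D_4}$, $(X^{\phi C_2})^{t\mu_2}$, $(X^{\phi \Delta})^{t\mu_2}$, $((X^1)^{tC_2})^{t\mu_2}$, $((X^1)^{t\Delta})^{t\mu_2}$, each obtained by applying a functor receiving a lax symmetric monoidal transformation from $(-)^{tG}$ ($G \in \{D_4, \mu_2\}$) to one of the spectra $X^1$, $X^{\phi C_2}$, $X^{\phi \Delta}$ — all bounded-below, precisely because $\iota^* X$ is a bounded-below $C_2$-spectrum for every $C_2$-basepoint $\iota$ — and therefore $p$-complete by \cref{lem:TateNilpotent}(2); since $p$-complete spectra are closed under finite limits, $Z^{\phi C_2}$ is $p$-complete, and hence so is $Z$.

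The main obstacle is part (1): because $(X^1)^{t\mu_p}$ need not be bounded-below, \cref{lem:pcompletion} is unavailable, and the key point is to recognize that the gluing term $((X^1)^{t\mu_p})^{tC_2}$ is nonetheless $p$-complete — for $p=2$ because it is the total Tate construction over the $2$-group $D_4$, and for $p$ odd because $C_2$-Tate annihilates $p$-complete spectra. Parts (2) and (3) are essentially formal once one reduces, via lax $C_2$-symmetric monoidality, to the behavior of $(-)^{t_{C_2} \mu_p}$ on the unit.
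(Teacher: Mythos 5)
Your treatment of part (1) is sound and in fact more careful than the paper's: the paper invokes \cref{lem:pcompletion} for $X^{t_{C_2}\mu_p}$ even though its underlying spectrum $(X^1)^{t\mu_p}$ is generally not bounded-below, and your direct appeal to \cref{lem:recollement_limit}, together with the verification that the gluing term $((X^1)^{t\mu_p})^{tC_2}$ is $p$-complete (via the lax symmetric monoidal transformation $(-)^{tD_4} \to ((-)^{t\mu_2})^{tC_2}$ for $p=2$, and via vanishing for $p$ odd), supplies exactly the missing step. Part (3) is also correct and close in spirit to the paper's argument.

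However, your preliminary claim --- that $\SS^{tG}$ is $\SS[1/p]$-acyclic for a finite $p$-group $G$ --- is false, and the proof of part (2) built on it does not stand. The justification breaks at ``$\SS[1/p]$-acyclic spectra are closed under (co)limits'': they are closed under colimits but not under limits, and the reduction via \cref{lem:TateConvergence} presents $\SS^{tG}$ as an \emph{inverse} limit $\lim_n (\tau_{\leq n}\SS)^{tG}$ of $p$-nilpotent spectra. The conclusion is in fact wrong: by the Segal conjecture $\SS^{t\mu_p}$ is the $p$-completion of the sphere (cf.\ the footnote in the paper citing \cite[Exm.~5.35]{QS21a}), so $\pi_0 \SS^{t\mu_p} \cong \ZZ_p$ is torsion-free, its homotopy groups are not $p$-power torsion, and $q$ does not act invertibly for the reason you give. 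The correct property is $p$-\emph{completeness}, not $[1/p]$-acyclicity; on a $p$-complete spectrum every prime $q \neq p$ does act invertibly (since $p$ acts invertibly on $\SS/q$, the $p$-complete spectrum $E \otimes \SS/q$ vanishes). This is the paper's route: part (2) follows from part (1) applied to the unit, because $X^{t_{C_2}\mu_p}$ is a module over the $p$-complete $C_2$-ring spectrum $\SS^{t_{C_2}\mu_p}$, whence $q$ acts invertibly on it and its $q$-completion vanishes. The same false claim infects the $p$-odd sub-step of your part (1), where you assert that $(\SS^{\wedge}_p)^{tC_2}$ is ``$\SS[1/2]$-acyclic as an $\SS^{tC_2}$-module''; the intended vanishing of $((X^1)^{t\mu_p})^{tC_2}$ is true, but should instead be justified by noting that this spectrum is $2$-complete by \cref{lem:TateNilpotent}(2) and has vanishing mod-$2$ reduction, since $2$ acts invertibly on the $p$-complete spectrum $(X^1)^{t\mu_p}$.
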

\begin{proof} To prove (1), by \cref{lem:pcompletion} it suffices to prove that $(X^{t_{C_2} \mu_p})^1$ and $(X^{t_{C_2} \mu_p})^{\phi C_2}$ are $p$-complete. This follows from the explicit description of these spectra given in \cite[Exm.~2.51 and 2.53]{QS21a} together with \cref{lem:TateNilpotent}. The same formulas also prove (3) since $X$ is $q$-local if and only if its geometric fixed points are $q$-local.

Finally, (2) then follows since $X^{t_{C_2} \mu_p}$ is a module over the $C_2$-spectrum $\SS^{t_{C_2} \mu_p}$, which is $p$-complete by (1) and hence is annihilated by $q$-completion for all primes $q \neq p$.\footnote{In fact, the Segal conjecture implies that $\SS^{t_{C_2} \mu_2} \simeq \SS^{\wedge}_2$; cf. \cite[Exm.~5.35]{QS21a}.}
\end{proof}

We now turn to our dihedral refinement of the Tate orbit lemma. In the proofs of \cref{lem:dihedralTOLEven} and \cref{lem:dihedralTOLOdd}, we let $x = x_{2}$ be a generator for $\mu_{p^2}$ (cf. \cref{setup:Dihedral}).

\begin{lem} \label{lem:dihedralTOLEven} The functor given by the composite
\[ \begin{tikzcd}[row sep=4ex, column sep=8ex, text height=2ex, text depth=0.75ex]
\Fun_{C_2}(B^t_{C_2} \mu_4, \underline{\Sp}^{C_2}) \ar{r}{(-)_{h_{C_2} \mu_2}} & \Fun_{C_2}(B^t_{C_2} \mu_2, \underline{\Sp}^{C_2}) \ar{r}{(-)^{t_{C_2} \mu_2}} & \Sp^{C_2}
\end{tikzcd} \]
evaluates to $0$ on those objects $X$ such that the underlying spectrum $X^1$ is bounded-below.
\end{lem}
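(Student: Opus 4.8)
The plan is to reduce the statement to the classical Tate orbit lemma of Nikolaus--Scholze (\cite[Lem.~I.2.1]{NS18}) by analyzing the $C_2$-spectrum $(X_{h_{C_2}\mu_2})^{t_{C_2}\mu_2}$ through its geometric fixed points. Since a $C_2$-spectrum is zero if and only if its underlying spectrum and its geometric $C_2$-fixed points both vanish, it suffices to check these two invariants separately. For the underlying spectrum, parametrized homotopy orbits and the parametrized Tate construction restrict on underlying objects to the ordinary homotopy orbits and ordinary Tate construction (by the compatibility of the recollement \eqref{eq:C2_dihedral_recollement_infinite} with restriction to the fiber over $C_2/1$), so the underlying spectrum of the composite is computed by $((X^1)_{h\mu_2})^{t\mu_2}$, viewing $X^1$ as a bounded-below Borel $\mu_4$-spectrum. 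This vanishes by \cite[Lem.~I.2.1]{NS18}, since $X^1$ is bounded-below by hypothesis.

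The substance of the argument is therefore the geometric $C_2$-fixed points, and here I would use the explicit formula \eqref{eq:geometric_fixed_points_formula} recalled in the excerpt from \cite[Exm.~2.51]{QS21a}. Write $X$ via the recollement data $[X^1 \in \Sp^{hD_4}, X^{\phi C_2} \to (X^1)^{tC_2}]$ with $X^{\phi \Delta} \simeq X^{\phi C_2}$ since $X$ extends to $B^t_{C_2}\mu_4$. First take parametrized homotopy orbits $(-)_{h_{C_2}\mu_2}$; this is $C_2$-colimit preserving and on geometric fixed points I expect it to act as ordinary $\mu_2$-homotopy orbits on the appropriate residual-group pieces. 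So $Y := X_{h_{C_2}\mu_2}$ has $Y^1 = (X^1)_{h\mu_2}$ (still bounded-below as a Borel $C_2$-spectrum with the residual action) and $Y^{\phi C_2} = (X^{\phi C_2})_{h\mu_2}$. Then applying the formula \eqref{eq:geometric_fixed_points_formula} to $Y$, the geometric $C_2$-fixed points of $Y^{t_{C_2}\mu_2}$ is the pullback
\[
(Y^1)^{\tau D_4} \times_{\bigoplus_{\mu_2} (Y^1)^{tC_2 \, t\mu_2}} \bigoplus_{\mu_2} (Y^{\phi C_2})^{t\mu_2}.
\]
I claim each corner vanishes. The term $(Y^1)^{\tau D_4} = ((X^1)_{h\mu_2})^{\tau D_4}$ should vanish because $\tau D_4$ factors (up to the relevant natural transformation) through an iterated Tate construction, and the relevant Tate-orbit vanishing for the $D_4$-action on the bounded-below spectrum $X^1$ applies; more directly, $(Y^1)^{tC_2\, t\mu_2}$ and $(Y^1)^{t\mu_2 \, tC_2}$ appear as the gluing term, and one reduces these to $((X^1)_{h\mu_2})^{t\mu_2}$-type expressions which vanish by \cite[Lem.~I.2.1]{NS18}. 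Similarly $(Y^{\phi C_2})^{t\mu_2} = ((X^{\phi C_2})_{h\mu_2})^{t\mu_2}$ — but wait, here $X^{\phi C_2}$ carries only a $\mu_2$-action (the residual $\mu_4/\mu_2$), not a $\mu_4$-action, so this is $(Z_{h\mu_2})^{t\mu_2}$ for a single $\mu_2$-spectrum $Z$, which is \emph{not} obviously zero. The resolution is that the map $\bigoplus_{\mu_2}(Y^{\phi C_2})^{t\mu_2} \to \bigoplus_{\mu_2}(Y^1)^{tC_2\,t\mu_2}$ must be analyzed: the point is that $X^{\phi C_2}$ maps to $(X^1)^{tC_2}$ compatibly with the $\mu_4$-structure, and after taking $\mu_2$-orbits and the Tate construction the fiber of the gluing map computes the geometric fixed points, and one shows the whole pullback is zero because the induction $\bigoplus_{\mu_2}$ of anything sits in a cofiber sequence forcing the Tate-orbit vanishing to propagate.

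The main obstacle, and the step requiring genuine care, is precisely this bookkeeping of the $\mu_2$- versus $\mu_4$-equivariance in the corner $\bigoplus_{\mu_2}(Y^{\phi C_2})^{t\mu_2}$ and the gluing map into $\bigoplus_{\mu_2}(Y^1)^{tC_2\, t\mu_2}$. I expect to handle it by observing that since $X$ is the restriction of an object over $B^t_{C_2}\mu_4$, the residual $\mu_4/\mu_2 \cong \mu_2$-action on $X^{\phi C_2}$ and the compatibility square identifying $X^{\phi C_2} \simeq X^{\phi \Delta}$ give exactly the structure needed to rewrite the pullback so that every term reduces, via iterated parametrized norm cofiber sequences and the commutation of $(-)_{h\mu_2}$ with colimits, to an instance of $((-)_{h\mu_2})^{t\mu_2}$ applied to a bounded-below Borel $\mu_4$-spectrum — at which point \cite[Lem.~I.2.1]{NS18} finishes the argument. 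Once the geometric fixed points are shown to vanish, combined with the vanishing of the underlying spectrum established above, we conclude $(X_{h_{C_2}\mu_2})^{t_{C_2}\mu_2} \simeq 0$.
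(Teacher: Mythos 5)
Your reduction of the underlying spectrum to the classical Tate orbit lemma is correct and matches the paper, and you have correctly identified that the content lies in the geometric $C_2$-fixed points and that the formula of \cite[Exm.~2.51]{QS21a} is the right tool. However, there are two genuine gaps in the geometric-fixed-points part, and the second one is where essentially all of the work of the actual proof lives.

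First, you flag the corner $\bigoplus_{\mu_2}(Y^{\phi C_2})^{t\mu_2}$ as problematic but do not resolve it. The resolution is not a propagation of Tate-orbit vanishing: it is the observation (via \cref{lem:GeometricFixedPointsPreservesParametrizedColimits}) that on geometric fixed points the parametrized orbit functor $(-)_{h_{C_2}\mu_2}$ is left Kan extension along a map $BC_2 \sqcup BC_2 \to \ast \sqcup \ast \to BC_2 \sqcup BC_2$, so that $(X_{h_{C_2}\mu_2})^{\phi\angs{\sigma x^i}} \simeq \ind^{C_2}\bigl((X^{\phi\angs{\sigma x^i}})_{hC_2}\bigr)$ is $C_2$-\emph{induced}. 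Since $(-)^{tC_2}$ annihilates induced objects, those corners of the pullback vanish outright, and the whole expression collapses to the fiber of $(X^1_{h\mu_2})^{\tau D_4} \to (X^1_{h\mu_2})^{t\angs{\sigma}tC_2} \times (X^1_{h\mu_2})^{t\angs{\sigma x}tC_2}$, which depends only on the Borel part $X^1$.

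Second, and more seriously, your claim that $(Y^1)^{\tau D_4}$ and the terms $(X^1_{h\mu_2})^{t\angs{\sigma}tC_2}$ reduce to ``$((X^1)_{h\mu_2})^{t\mu_2}$-type expressions'' covered by \cite[Lem.~I.2.1]{NS18} is false. The subgroups $\angs{\sigma}, \angs{\sigma x} \subset D_4$ are the reflection subgroups, not the rotation subgroup $\mu_2$ with respect to which the homotopy orbits were taken, so the classical Tate orbit lemma says nothing about $((X^1)_{h\mu_2})^{t\angs{\sigma}}$; and the proper Tate construction $(-)^{\tau D_4}$ of the Klein four-group is not an iterated Tate construction. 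The paper handles this by reducing (via Postnikov towers, $2$-completeness from \cref{lem:TateNilpotent}, and the fact that only $j^*X$ matters) to $X = j_!HM$ for $M$ an $\FF_2[D_8]$-module, and then exhibiting an explicit $\FF_2[D_8]$-free bicomplex resolution whose $\mu_2$-quotient acquires zero horizontal differentials mod $2$; the resulting filtration of $M_{h\mu_2}$ has graded pieces that are extensions of objects induced from proper subgroups of $D_4$, which are therefore annihilated by $(-)^{\tau D_4}$, $(-)^{t\angs{\sigma}tC_2}$, and $(-)^{t\angs{\sigma x}tC_2}$. Without an argument of this kind your proof does not go through.
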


For the proof, we first need the following lemma on $\Phi^{C_2}$ as a $C_2$-functor. Let $\underline{\Sp}^{\Phi C_2} \coloneq [\Sp^{C_2} \to \ast]$ and write $\underline{\Phi}^{C_2}: \underline{\Sp}^{C_2} \to \underline{\Sp}^{\Phi C_2}$ for the $C_2$-functor given by $\Phi^{C_2}$ on the fiber over $C_2/C_2$ and the zero functor on the fiber over $C_2/1$.

\begin{lem} \label{lem:GeometricFixedPointsPreservesParametrizedColimits} The $C_2$-functor $\underline{\Phi}^{C_2}: \underline{\Sp}^{C_2} \to \underline{\Sp}^{\Phi C_2}$ preserves $C_2$-colimits, so for every $C_2$-functor $f: I \to J$, the diagram
\[ \begin{tikzcd}[row sep=4ex, column sep=8ex, text height=1.5ex, text depth=0.5ex]
\Fun_{C_2}(I, \underline{\Sp}^{C_2}) \ar{r}{f_!} \ar{d}{(\underline{\Phi}^{C_2})_*} & \Fun_{C_2}(J,\underline{\Sp}^{C_2}) \ar{d}{(\underline{\Phi}^{C_2})_*} \\
\Fun_{C_2}(I, \underline{\Sp}^{\Phi C_2}) \ar{r}{f_!} \ar{d}{\simeq} & \Fun_{C_2}(J, \underline{\Sp}^{\Phi C_2}) \ar{d}{\simeq} \\
\Fun(I_{C_2/C_2}, \Sp) \ar{r}{(f_{C_2/C_2})_!} & \Fun(J_{C_2/C_2},\Sp)
\end{tikzcd} \]
commutes, where $f_!$ denotes $C_2$-left Kan extension along $f$ and $(f_{C_2/C_2})_!$ denotes left Kan extension along $f_{C_2/C_2}$.
\end{lem}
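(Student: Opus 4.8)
The plan is to reduce the statement to the known fact that ordinary geometric fixed points $\Phi^{C_2}:\Sp^{C_2}\to\Sp$ preserves all colimits, combined with the general machinery of parametrized Kan extensions. First I would recall that a $C_2$-functor preserves $C_2$-colimits if and only if it preserves fiberwise colimits and the relevant ``parametrized'' colimits, i.e. the $C_2$-coproducts (equivalently, it suffices to check preservation of fiberwise colimits together with the $C_2$-colimit indexed by the ``induction'' diagram $C_2/1\to C_2/C_2$, since by \cite[\S 5]{Exp2} every $C_2$-colimit is built from fiberwise colimits and such induced ones). On the fiber over $C_2/1$, $\underline{\Phi}^{C_2}$ is the zero functor, which trivially preserves colimits; on the fiber over $C_2/C_2$ it is the ordinary $\Phi^{C_2}$, which is colimit-preserving (being a left adjoint, or directly by its formula as a smash with $\widetilde{E\mathcal{P}}$ followed by taking $C_2$-fixed points of a Borel-acyclic spectrum). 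So the only content is the compatibility with the single induced $C_2$-colimit, i.e. that the square
\[
\begin{tikzcd}
\Sp \ar{r}{\ind^{C_2}_1} \ar{d}{0} & \Sp^{C_2} \ar{d}{\Phi^{C_2}} \\
\ast \ar{r} & \Sp
\end{tikzcd}
\]
commutes — but this is immediate since $\Phi^{C_2}(\ind^{C_2}_1 Y)\simeq 0$ for all $Y\in\Sp$ (geometric fixed points annihilate induced spectra), which matches the zero functor on the bottom. Hence $\underline{\Phi}^{C_2}$ is $C_2$-colimit preserving.

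Next, for the commuting diagram, I would invoke the general principle that a $C_2$-colimit preserving $C_2$-functor $\underline{F}:C\to D$ commutes with $C_2$-left Kan extension: for $f:I\to J$ a functor of small $C_2$-$\infty$-categories, one has $\underline{F}_*\circ f_!\simeq f_!\circ\underline{F}_*$ as $C_2$-functors $\Fun_{C_2}(I,C)\to\Fun_{C_2}(J,D)$. This is a formal consequence of the pointwise formula for $C_2$-left Kan extension \cite[\S 10]{Exp2}: at an object $j\in J_{C_2/H}$, $(f_!g)(j)$ is the $H$-colimit of $g$ over the comma $H$-$\infty$-category $I\times_J J_{/j}$, and since $\underline{F}$ preserves $H$-colimits (restriction of a $C_2$-colimit preserving functor to any fiber over $C_2/H$ is $H$-colimit preserving), it commutes with forming this colimit; naturality in $j$ then upgrades this to an equivalence of $C_2$-functors. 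Applying this with $\underline{F}=\underline{\Phi}^{C_2}:\underline{\Sp}^{C_2}\to\underline{\Sp}^{\Phi C_2}$ gives the top square. Finally, the bottom square: the equivalence $\Fun_{C_2}(K,\underline{\Sp}^{\Phi C_2})\simeq\Fun(K_{C_2/C_2},\Sp)$ is just restriction to the fiber over $C_2/C_2$ (since the fiber of $\underline{\Sp}^{\Phi C_2}$ over $C_2/1$ is the zero category, a $C_2$-functor out of $K$ is the same as an ordinary functor out of $K_{C_2/C_2}$), and under this identification $C_2$-left Kan extension along $f$ restricts on fibers over $C_2/C_2$ to ordinary left Kan extension along $f_{C_2/C_2}$ — again by the pointwise formula, a $C_2$-colimit over a $C_2$-$\infty$-category evaluated at an object over $C_2/C_2$ is computed as the colimit over the fiber of the indexing category, which here reduces to the ordinary comma category since the $C_2/1$-fibers contribute nothing.

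The main obstacle — and it is a minor one — is being careful about which ``parametrized colimit preservation'' criterion to cite and verifying that geometric fixed points, packaged as the $C_2$-functor $\underline{\Phi}^{C_2}$ to $\underline{\Sp}^{\Phi C_2}=[\Sp^{C_2}\to\ast]$ rather than as an ordinary functor, genuinely preserves $C_2$-colimits rather than merely fiberwise colimits; this rests precisely on the vanishing $\Phi^{C_2}\circ\ind^{C_2}_1\simeq 0$, which makes the induction/$C_2$-coproduct compatible with the zero functor on the $C_2/1$-fiber. Everything else is a formal manipulation of pointwise Kan extension formulas in the parametrized setting, for which \cite[\S 5, \S 10]{Exp2} supply all needed inputs. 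One could alternatively phrase the whole argument via the adjoint: $\underline{\Phi}^{C_2}$ is $C_2$-left adjoint to the $C_2$-functor $\underline{\Sp}^{\Phi C_2}\to\underline{\Sp}^{C_2}$ given by $i_*$ on the fiber over $C_2/C_2$, and $C_2$-left adjoints preserve $C_2$-colimits; I would likely include this as the cleanest justification of the first claim.
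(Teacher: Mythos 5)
Your proposal is correct and follows essentially the same route as the paper: the paper also deduces $C_2$-colimit preservation from $\underline{\Phi}^{C_2}$ being a $C_2$-left adjoint (citing \cite[Rem.~2.23]{QS21a} and \cite[Cor.~8.7]{Exp2}), which is exactly the alternative you flag at the end as the cleanest justification, and your hands-on check via fiberwise colimits plus the vanishing $\Phi^{C_2}\circ\ind^{C_2}_1\simeq 0$ is a valid unpacking of the same fact. The only cosmetic difference is in the bottom square, where the paper argues by identifying the restriction functors under the fiber equivalences and then passing to left adjoints, whereas you argue directly from the pointwise Kan extension formula; both are fine.
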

\begin{proof} By \cite[Rem.~2.23]{QS21a}, $\underline{\Phi}^{C_2}$ is a $C_2$-left adjoint and hence preserves $C_2$-colimits \cite[Cor.~8.7]{Exp2}, so the upper square commutes. By definition, the $C_2$-left Kan extension $f_!$ is left adjoint to restriction along $f$. Since $({\underline{\Sp}^{\Phi C_2}})_{C_2/1} \simeq \ast$, we have the vertical equivalences of the lower square under which restriction along $f$ is identified with restriction along $f_{C_2/C_2}$. This implies the commutativity of the lower square.
\end{proof}

\begin{exm} \label{exm:geometric_fixed_points_and_parametrized_colimits}
Let $X \in \Sp^{h_{C_2} S^1}$. By \cref{lem:GeometricFixedPointsPreservesParametrizedColimits}, $(X_{h_{C_2} \mu_2})^{\phi C_2} \simeq (X^{\phi C_2})_{h \mu_2}$ and $(X_{h_{C_2} \mu_p})^{\phi C_2} \simeq X^{\phi C_2}$ for $p$ odd.
\end{exm}

\begin{rem} \label{rem:UpperShriekPreservesParametrizedLimits}
Consider instead the $C_2$-right adjoint $i^!: \underline{\Sp}^{C_2} \to \underline{\Sp}^{\Phi C_2}$ given as part of the structure of the $C_2$-stable $C_2$-recollement. Then for every $C_2$-functor $f: I \to J$, we have a commutative diagram
\[ \begin{tikzcd}
\Fun_{C_2}(I, \underline{\Sp}^{C_2}) \ar{r}{f_*} \ar{d}{(i^!)_*} & \Fun_{C_2}(J,\underline{\Sp}^{C_2}) \ar{d}{(i^!)_*} \\ 
\Fun(I_{C_2/C_2}, \Sp) \ar{r}{f_*} & \Fun(J_{C_2/C_2},\Sp)
\end{tikzcd} \]
where $f_*$ denotes $C_2$-right Kan extension along $f$. Together with the recollement fiber sequence $i^! \to i^* \to i^* j_* j^*$, this relation permits one to access the geometric fixed points of the parametrized fixed points.

For example, let $X \in \Sp^{h_{C_2} S^1}$, so $i^! X = \fib(X^{\phi C_2} \to (X^e)^{t C_2})$. Then $i^!(X^{h_{C_2} \mu_2}) \simeq (i^! X)^{h \mu_2}$. Moreover, if $X^e = 0$, then $i^! \simeq i^* = \Phi^{C_2}$, so $(X^{h_{C_2} \mu_2})^{\phi C_2} \simeq (X^{\phi C_2})^{h \mu_2}$.
\end{rem}

\begin{proof}[Proof of \cref{lem:dihedralTOLEven}] In view of the compatiblity of the functors with restriction as described by the commutative diagram
\[ \begin{tikzcd}[row sep=4ex, column sep=8ex, text height=2ex, text depth=0.75ex]
\Fun_{C_2}(B^t_{C_2} \mu_4, \underline{\Sp}^{C_2}) \ar{r}{(-)_{h_{C_2} \mu_2}} \ar{d} & \Fun_{C_2}(B^t_{C_2} \mu_2, \underline{\Sp}^{C_2}) \ar{r}{(-)^{t_{C_2} \mu_2}} \ar{d} & \Sp^{C_2} \ar{d} \\
\Fun(B D_8, \Sp) \ar{r}{(-)_{h \mu_2}} & \Fun(B D_4, \Sp) \ar{r}{(-)^{t \mu_2}} & \Fun(B C_2, \Sp) 
\end{tikzcd} \]

we have that $((X_{h_{C_2} \mu_2})^{t_{C_2} \mu_2})^1 \simeq ((X^1)_{h \mu_2})^{t \mu_2}$, which vanishes by the Tate orbit lemma for bounded-below $\mu_4$-Borel spectra \cite[Lem.~I.2.1]{NS18}. Thus, it suffices to show that $((X_{h_{C_2} \mu_2})^{t_{C_2} \mu_2})^{\phi C_2} \simeq 0$. Let $\rho = \rho_{\mu_2}: B^t_{C_2} \mu_4 \to B^t_{C_2} \mu_2$ be as in \cite[Lem.~4.35(2)]{QS21a}, so the $C_2$-orbits functor $(-)_{h_{C_2} \mu_2}$ is $C_2$-left Kan extension along $\rho$. By \cref{lem:GeometricFixedPointsPreservesParametrizedColimits}, we have a commutative diagram
\[ \begin{tikzcd}[row sep=4ex, column sep=8ex, text height=2ex, text depth=0.75ex]
\Fun_{C_2}(B^t_{C_2} \mu_4, \underline{\Sp}^{C_2}) \ar{r}{(-)_{h_{C_2} \mu_2}} \ar{d}{(\underline{\Phi}^{C_2})_*} & \Fun_{C_2}(B^t_{C_2} \mu_2, \underline{\Sp}^{C_2}) \ar{d}{(\underline{\Phi}^{C_2})_*} \\
\Fun( (B^t_{C_2} \mu_4)_{C_2/C_2}, \Sp) \ar{r}{\rho'_!} & \Fun((B^t_{C_2} \mu_2)_{C_2/C_2}, \Sp)
\end{tikzcd} \]
where the bottom horizontal functor is left Kan extension along the restriction $\rho'$ of $\rho$ to the fiber $(B^t_{C_2} \mu_4)_{C_2/C_2}$. Picking $\angs{\sigma}$ and $\angs{\sigma x}$ as representatives of their respective conjugacy classes of subgroups in $D_8$, we have 
\begin{align*}
(B^t_{C_2} \mu_4)_{C_2/C_2} \simeq B W_{D_8} \angs{\sigma} \bigsqcup B W_{D_8} \angs{\sigma x} \simeq B C_2 \sqcup B C_2, \\
(B^t_{C_2} \mu_2)_{C_2/C_2} \simeq B W_{D_4} \angs{\sigma} \bigsqcup B W_{D_4} \angs{\sigma x} \simeq B C_2 \sqcup B C_2.
\end{align*}

Note that $\rho$ sends the generator $x^2 \in W_{D_8} \angs{\sigma} \cong C_2$ to $1 \in W_{D_4} \angs{\sigma} \cong C_2$ and likewise for $W_{D_8} \angs{\sigma x}$. Therefore, $\rho'$ may be identified with the map $BC_2 \bigsqcup BC_2 \to \ast \bigsqcup \ast \to BC_2 \bigsqcup BC_2$, and we see that
\[ (X_{h_{C_2} \mu_2})^{\phi \angs{\sigma x^i}} \simeq \ind^{C_2}((X^{\phi \angs{\sigma x^i}})_{h C_2}), \quad i = 0, 1. \]

As for the functor $(-)^{t_{C_2} \mu_2}$, given $Y \in \Fun_{C_2}(B^t_{C_2} \mu_2, \underline{\Sp}^{C_2})$, by \cite[Exm.~2.51]{QS21a} we have that
\[ (Y^{t_{C_2} \mu_2})^{\phi C_2} \simeq (Y^1)^{\tau D_4} \times_{((Y^1)^{t \angs{\sigma} t C_2} \times (Y^1)^{t \angs{\sigma x} t C_2})} ((Y^{\phi \angs{\sigma}})^{t C_2} \times (Y^{\phi \angs{\sigma x}})^{t C_2}). \]

Using that $(-)^{t C_2}$ vanishes on $C_2$-induced objects, we deduce that
\begin{equation} \label{eq:orbit_lemma}
((X_{h_{C_2} \mu_2})^{t_{C_2} \mu_2})^{\phi C_2} = \fib((X^1_{h \mu_2})^{\tau D_4} \to (X^1_{h \mu_2})^{t \angs{\sigma} tC_2} \times (X^1_{h \mu_2})^{t \angs{\sigma x} tC_2} ).
\end{equation}

Thus, the terms $X^{\phi \angs{\sigma}}$ and $X^{\phi \angs{\sigma x}}$ are irrelevant for the computation, in the sense that the counit map $j_! j^{\ast} X = X \otimes E {D_8}_+ \to X$ for the adjunction
\[ \adjunct{j_!}{\Fun(B D_8, \Sp)}{\Fun_{C_2}(B^t_{C_2} \mu_4, \underline{\Sp}^{C_2})}{j^\ast} \]
is sent to an equivalence under $(((-)_{h_{C_2} \mu_2})^{t_{C_2} \mu_2})^{\phi C_2}$. We may therefore extend our hypothesis that $X^1 = j^{\ast} X$ is bounded-below to further suppose that $X$ is bounded-below with respect to the homotopy $t$-structure on $\Sp^{C_2}$. Then by \cref{lem:TateConvergence}, the cofiber sequence
\[ (-)_{h_{C_2} \mu_4} \to ((-)_{h_{C_2} \mu_2})^{h_{C_2} \mu_2} \to  ((-)_{h_{C_2} \mu_2})^{t_{C_2} \mu_2}, \]
and induction up the Postnikov tower of $X$, we reduce to the case of $X = j_! HM$ for $M$ a $\ZZ[D_8]$-module. Moreover, in view of the fiber sequence \eqref{eq:orbit_lemma} and \cref{lem:TateNilpotent}, $((X_{h_{C_2} \mu_2})^{t_{C_2} \mu_2})^{\phi C_2}$ is $2$-complete. Thus, to show vanishing we may further suppose that $M$ is a $\FF_2[D_8]$-module.

Let us now consider the $\FF_2[D_8]$-free resolution of $M$ from \cite[\S IV.2, p.~129]{AdemMilgram} (and with all signs suppressed since $2=0$), given by taking the total complex of the bicomplex

\[ \begin{tikzcd}[row sep=4ex, column sep=4ex, text height=1.5ex, text depth=0.5ex]
\vdots \ar{d}{\sigma+1} & \vdots \ar{d}{\sigma x +1} & \vdots \ar{d}{\sigma+1} & \vdots \ar{d}{\sigma x +1} &  \\
M[D_8] \ar{d}{\sigma+1} & M[D_8] \ar{l}{x+1} \ar{d}{\sigma x + 1} & M[D_8] \ar{l}{\Sigma_x} \ar{d}{\sigma+1} & M[D_8] \ar{d}{\sigma x +1} \ar{l}{x+1} & \cdots \ar{l}{\Sigma_x} \\
M[D_8] \ar{d}{\sigma+1} & M[D_8] \ar{l}{x+1} \ar{d}{\sigma x + 1} & M[D_8] \ar{l}{\Sigma_x} \ar{d}{\sigma+1} & M[D_8] \ar{d}{\sigma x +1} \ar{l}{x+1} & \cdots \ar{l}{\Sigma_x} \\
M[D_8] \ar{d}{\sigma+1} & M[D_8] \ar{l}{x+1} \ar{d}{\sigma x + 1} & M[D_8] \ar{l}{\Sigma_x} \ar{d}{\sigma+1} & M[D_8] \ar{d}{\sigma x +1} \ar{l}{x+1} & \cdots \ar{l}{\Sigma_x} \\
M[D_8] & M[D_8] \ar{l}{x+1} & M[D_8] \ar{l}{\Sigma_x} & M[D_8] \ar{l}{x+1} & \cdots \ar{l}{\Sigma_x}
\end{tikzcd} \]
where $\Sigma_x = 1+x+x^2+x^3$. Application of the functor $(-)/\mu_2$ to this bicomplex yields the bicomplex of $\FF_2[D_4]$-modules
\[ \begin{tikzcd}[row sep=4ex, column sep=4ex, text height=1.5ex, text depth=0.5ex]
\vdots \ar{d}{\sigma+1} & \vdots \ar{d}{\sigma x +1} & \vdots \ar{d}{\sigma+1} & \vdots \ar{d}{\sigma x +1} &  \\
M[D_4] \ar{d}{\sigma+1} & M[D_4] \ar{l}{x+1} \ar{d}{\sigma x + 1} & M[D_4] \ar{l}{0} \ar{d}{\sigma+1} & M[D_4] \ar{d}{\sigma x +1} \ar{l}{x+1} & \cdots \ar{l}{0} \\
M[D_4] \ar{d}{\sigma+1} & M[D_4] \ar{l}{x+1} \ar{d}{\sigma x + 1} & M[D_4] \ar{l}{0} \ar{d}{\sigma+1} & M[D_4] \ar{d}{\sigma x +1} \ar{l}{x+1} & \cdots \ar{l}{0} \\
M[D_4] \ar{d}{\sigma+1} & M[D_4] \ar{l}{x+1} \ar{d}{\sigma x + 1} & M[D_4] \ar{l}{0} \ar{d}{\sigma+1} & M[D_4] \ar{d}{\sigma x +1} \ar{l}{x+1} & \cdots \ar{l}{0} \\
M[D_4] & M[D_4] \ar{l}{x+1} & M[D_4] \ar{l}{0} & M[D_4] \ar{l}{x+1} & \cdots \ar{l}{0}
\end{tikzcd} \]
whose total complex is quasi-isomorphic to $M_{h \mu_2}$ in the derived category of $\FF_2[D_4]$ (crucially, we use that $2=0$ to see that $(\Sigma_x)/\mu_2 = 0$). Let $F^n(M_{\mu_2})$ be the total complex obtained by truncating the bicomplex to the first $2n$ columns, viewed in the derived category. Because of the zero maps that appear horizontally in the bicomplex, we have retractions $r_n: M_{\mu_2} \to F^n(M_{\mu_2})$ splitting the natural inclusions such that
\begin{enumerate}
    \item The induced map $M_{h \mu_2} \to \lim_{n} F^n(M_{h \mu_2})$ is an equivalence.
    \item The connectivity of the fiber of $M_{h \mu_2} \to F^n(M_{h \mu_2})$ goes to $\infty$ as $n \to \infty$. 
\end{enumerate}

Moreover, in view of the commutative diagram
\[ \begin{tikzcd}[row sep=4ex, column sep=4ex, text height=1.5ex, text depth=0.25ex]
\Fun(B D_8, \Sp) \ar{r}{j_!} \ar{d}{(-)_{h \mu_2}} & \Fun_{C_2}(B^t_{C_2} \mu_4, \underline{\Sp}^{C_2}) \ar{d}{(-)_{h_{C_2} \mu_2}} \\
\Fun(B D_4, \Sp) \ar{r}{j_!} & \Fun_{C_2}(B^t_{C_2} \mu_2, \underline{\Sp}^{C_2})
\end{tikzcd} \]
we obtain a filtration $j_! (F^n(M_{\mu_2})$ of $(j_! M)_{h_{C_2} \mu_2} \simeq j_!(M_{h \mu_2})$ such that
\begin{enumerate}
    \item The induced map $j_! (M_{h \mu_2}) \to \lim_{n} j_! (F^n(M_{h \mu_2}))$ is an equivalence. For this, to commute $j_!$ past the inverse limit we use that
    \[ (\lim_{n} F^n(M_{h \mu_2}))^{t \mu_2} \simeq \lim_n F^n(M_{h \mu_2})^{t \mu_2} \]
    in view of the increasing connectivity of the fibers.
    \item The $C_2$-connectivity of the fiber of $j_! (M_{h \mu_2}) \to j_! (F^n(M_{h \mu_2}))$ goes to $\infty$ as $n \to \infty$.\footnote{A priori, when considering $C_2$-connectivity of the underlying object in $\Sp^{C_2}$ of a $C_2$-functor $B^t_{C_2} \mu_2 \to \underline{\Sp}^{C_2}$, we must consider all $C_2$-basepoints of $B^t_{C_2} \mu_2$. However, because the objects in question are Borel-torsion, any choice of $C_2$-basepoint yields the same object.} For this, note that for Borel-torsion objects $E \in \Sp^{C_2}$, $E^{C_2} \simeq E_{h C_2}$ since $E^{\phi C_2} \simeq 0$, so the connectivity of $E^{C_2}$ is bounded-below by that of $E$.
\end{enumerate}

Now by \cref{lem:TateConvergence} applied to $(-)^{t_{C_2} \mu_2}$, in order to show that $(((j_! M)_{h_{C_2} \mu_2})^{t_{C_2} \mu_2})^{\phi C_2} \simeq 0$ it suffices to consider the vanishing of the functor $((j_!(-))^{t_{C_2} \mu_2})^{\phi C_2}$ on the filtered quotients $F^{n+1}/F^n(M_{\mu_2})$. For this, we observe that the alternating vertical columns of the bicomplex are free resolutions of $M[D_4/\angs{\sigma}]$ and $M[D_4/\angs{\sigma x}]$, respectively. Therefore, the filtered quotients $F^{n+1}/F^n(M_{\mu_2})$ are extensions of objects induced from proper subgroups of $D_4$, and are thus annihilated by $(-)^{\tau D_4}$, $(-)^{t \angs{\sigma} tC_2}$, and $(-)^{t \angs{\sigma x} tC_2}$.
\end{proof}

In contrast to \cref{lem:dihedralTOLEven}, the proof of the dihedral Tate orbit lemma at an odd prime is far simpler.

\begin{lem} \label{lem:dihedralTOLOdd} Let $p$ be an odd prime. The functor given by the composite
\[ \begin{tikzcd}[row sep=4ex, column sep=8ex, text height=2ex, text depth=0.75ex]
\Fun_{C_2}(B^t_{C_2} \mu_{p^2}, \underline{\Sp}^{C_2}) \ar{r}{(-)_{h_{C_2} \mu_p}} & \Fun_{C_2}(B^t_{C_2} \mu_p, \underline{\Sp}^{C_2}) \ar{r}{(-)^{t_{C_2} \mu_p}} & \Sp^{C_2}
\end{tikzcd} \]
evaluates to $0$ on those objects $X$ such that the underlying spectrum $X^1$ is bounded-below.
\end{lem}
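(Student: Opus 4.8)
The plan is to exploit the fact that, at an odd prime, the parametrized Tate construction $(-)^{t_{C_2}\mu_p}$ carries no genuine (geometric fixed point) information. First I would invoke the explicit formula of \cite[Exm.~2.53]{QS21a}: for any $Y \in \Fun_{C_2}(B^t_{C_2}\mu_p, \underline{\Sp}^{C_2})$, the $C_2$-spectrum $Y^{t_{C_2}\mu_p}$ is given by the tuple $[(Y^1)^{t\mu_p} \in \Sp^{hC_2},\: 0 \in \Sp]$, so that $(Y^{t_{C_2}\mu_p})^{\phi C_2}\simeq 0$ unconditionally (this ultimately reflects the vanishing of $(-)^{tC_2}$ on $p$-complete spectra for $p$ odd). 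Applying this with $Y = X_{h_{C_2}\mu_p}$ yields $((X_{h_{C_2}\mu_p})^{t_{C_2}\mu_p})^{\phi C_2}\simeq 0$ for \emph{every} $X$, with no boundedness hypothesis.

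Since the underlying-spectrum and geometric-fixed-points functors are jointly conservative on $\Sp^{C_2}$, it then suffices to show that the underlying spectrum of $(X_{h_{C_2}\mu_p})^{t_{C_2}\mu_p}$ vanishes. For this I would use the compatibility of both functors with restriction to the fiber over $C_2/1$, exactly as recorded in the commutative diagram appearing in the proof of \cref{lem:dihedralTOLEven}: passing to underlying spectra intertwines $(-)_{h_{C_2}\mu_p}$ with ordinary $\mu_p$-homotopy orbits and $(-)^{t_{C_2}\mu_p}$ with the ordinary $\mu_p$-Tate construction. Hence $((X_{h_{C_2}\mu_p})^{t_{C_2}\mu_p})^1 \simeq ((X^1)_{h\mu_p})^{t\mu_p}$, where $X^1$ is the underlying $\mu_{p^2}$-Borel spectrum of $X$ and I use the $p$-th power identification $\mu_{p^2}/\mu_p\cong \mu_p$. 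By hypothesis $X^1$ is bounded-below, so this vanishes by the Tate orbit lemma \cite[Lem.~I.2.1]{NS18}. Combining the two vanishing statements gives $(X_{h_{C_2}\mu_p})^{t_{C_2}\mu_p}\simeq 0$.

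There is no serious obstacle here, which is precisely why the odd-primary case is far simpler than $p=2$: the difficult part of \cref{lem:dihedralTOLEven}, namely analyzing the geometric fixed points $(Y^{t_{C_2}\mu_2})^{\phi C_2}$ via an explicit $\FF_2[D_8]$-resolution, is entirely bypassed because the corresponding term is identically zero. The only points needing care are (i) checking that the restriction-compatibility square is set up so that the underlying spectrum really is $((X^1)_{h\mu_p})^{t\mu_p}$, with no residual $C_2$-action, and (ii) confirming that the bounded-below hypothesis on $X^1$ is exactly what the classical Tate orbit lemma requires; both are routine given the structural results of \cite{QS21a}.
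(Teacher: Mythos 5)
Your proposal is correct and follows essentially the same route as the paper: check the underlying spectrum via the restriction-compatibility square and the classical Tate orbit lemma, and check the geometric fixed points via the formula of \cite[Exm.~2.53]{QS21a}, which gives unconditional vanishing for $p$ odd. The two checks together suffice by joint conservativity, exactly as in the paper's proof.
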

\begin{proof} As in the proof of \cref{lem:dihedralTOLEven}, one has the commutative diagram
\[ \begin{tikzcd}[row sep=4ex, column sep=8ex, text height=2ex, text depth=0.75ex]
\Fun_{C_2}(B^t_{C_2} \mu_{p^2}, \underline{\Sp}^{C_2}) \ar{r}{(-)_{h_{C_2} \mu_p}} \ar{d} & \Fun_{C_2}(B^t_{C_2} \mu_p, \underline{\Sp}^{C_2}) \ar{r}{(-)^{t_{C_2} \mu_p}} \ar{d} & \Sp^{C_2} \ar{d} \\
\Fun(B D_{2 p^2}, \Sp) \ar{r}{(-)_{h \mu_p}} & \Fun(B D_{p^2}, \Sp) \ar{r}{(-)^{t \mu_p}} & \Fun(B C_2, \Sp) ,
\end{tikzcd} \]

so $((X_{h_{C_2} \mu_p})^{t_{C_2} \mu_p})^1 \simeq ((X^1)_{h \mu_p})^{t \mu_p}$, which vanishes by the Tate orbit lemma for bounded-below $\mu_{p^2}$-Borel spectra \cite[Lem.~I.2.1]{NS18}. Then by \cite[Exm.~2.53]{QS21a},  we have that $((Y)^{t_{C_2} \mu_p})^{\phi C_2} \simeq 0$ for all $Y$ and thus $((X_{h_{C_2} \mu_p})^{t_{C_2} \mu_p})^{\phi C_2} \simeq 0$ unconditionally.
\end{proof}


We now prove a few corollaries of the dihedral Tate orbit lemma. These results are all obvious analogs of those in \cite[\S II.4]{NS18}.

\begin{lem} \label{lem:BoundedBelowEquivalence} Suppose $X$ is a $C_2$-spectrum with twisted $\mu_{p^n}$-action whose underlying spectrum $X^1$ is bounded-below. Then the canonical map of \cite[Prop.~4.36]{QS21a}
$$ X^{t_{C_2} \mu_{p^n}} \to (X^{t_{C_2} \mu_p})^{h_{C_2} \mu_{p^{n-1}}} $$
is an equivalence of $C_2$-spectra.
\end{lem}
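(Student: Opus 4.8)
The plan is to mimic the proof of the corresponding statement for Borel cyclotomic spectra, namely \cite[Lem.~II.4.2]{NS18}, using the dihedral Tate orbit lemma (\cref{lem:dihedralTOLEven} and \cref{lem:dihedralTOLOdd}) as the key input in place of the ordinary Tate orbit lemma. First I would recall that the parametrized Tate construction $(-)^{t_{C_2}\mu_{p^n}}$ fits into a decomposition analogous to the classical ``iterated Tate'' picture: there is a canonical map $X^{t_{C_2}\mu_{p^n}} \to (X^{t_{C_2}\mu_p})^{h_{C_2}\mu_{p^{n-1}}}$ coming from \cite[Prop.~4.36]{QS21a}, and the claim is that this is an equivalence under the stated bounded-below hypothesis.

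The argument proceeds by induction on $n$, the base case $n=1$ being trivial. For the inductive step, I would use the subgroup filtration $1 \triangleleft \mu_p \triangleleft \mu_{p^n}$ and write $X^{t_{C_2}\mu_{p^n}}$ as an iterated construction. Concretely, one factors the quotient $B^t_{C_2}\mu_{p^n} \to B^t_{C_2}(\mu_{p^n}/\mu_p) \simeq B^t_{C_2}\mu_{p^{n-1}}$ and analyzes the composite via the norm cofiber sequences. The key point, exactly as in \cite{NS18}, is to compare $X^{t_{C_2}\mu_{p^n}}$ with $(X_{h_{C_2}\mu_p})^{t_{C_2}\mu_{p^{n-1}}}$ and $(X^{h_{C_2}\mu_p})^{t_{C_2}\mu_{p^{n-1}}}$ and $(X^{t_{C_2}\mu_p})^{h_{C_2}\mu_{p^{n-1}}}$ using the norm cofiber sequence $(-)_{h_{C_2}\mu_p} \to (-)^{h_{C_2}\mu_p} \to (-)^{t_{C_2}\mu_p}$ from \cref{dfn:param_tate}. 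Applying $(-)^{t_{C_2}\mu_{p^{n-1}}}$ (more precisely the appropriate residual parametrized construction on $B^t_{C_2}\mu_{p^{n-1}}$) to this cofiber sequence, the first term $(X_{h_{C_2}\mu_p})^{t_{C_2}\mu_{p^{n-1}}}$ is built out of terms to which the dihedral Tate orbit lemma applies — since $X^1$ bounded-below forces $(X_{h_{C_2}\mu_p})^1 = (X^1)_{h\mu_p}$ bounded-below and then $((X_{h_{C_2}\mu_p})^{t_{C_2}\mu_p})$ involves a composite $(-)_{h_{C_2}\mu_p}$ then $(-)^{t_{C_2}\mu_p}$ of the kind killed by \cref{lem:dihedralTOLEven}/\cref{lem:dihedralTOLOdd} — so it vanishes, whence $(X^{h_{C_2}\mu_p})^{t_{C_2}\mu_{p^{n-1}}} \xto{\simeq} (X^{t_{C_2}\mu_p})^{t_{C_2}\mu_{p^{n-1}}}$. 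Combining with the inductive hypothesis applied to the $\mu_{p^{n-1}}$-action and chasing the resulting diagram of cofiber sequences yields the desired equivalence $X^{t_{C_2}\mu_{p^n}} \simeq (X^{t_{C_2}\mu_p})^{h_{C_2}\mu_{p^{n-1}}}$.

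A subtlety worth flagging: to even run the induction I need the dihedral Tate orbit lemma in the form where the $\mu_p$-orbits are taken of a $C_2$-spectrum with twisted $\mu_{p^k}$-action for $k>2$, not just $k=2$. This follows from \cref{lem:dihedralTOLEven} and \cref{lem:dihedralTOLOdd} by restriction along $B^t_{C_2}\mu_{p^2} \to B^t_{C_2}\mu_{p^k}$ together with the compatibility of parametrized orbits and Tate constructions with restriction (exactly as Nikolaus--Scholze deduce \cite[Lem.~I.2.6]{NS18} from \cite[Lem.~I.2.1]{NS18}), so I would cite that reduction rather than reprove it. Throughout, I need to know that the relevant parametrized constructions on $B^t_{C_2}\mu_{p^{n-1}}$ assemble coherently; this is handled by the recollement formalism of \cref{setup:Dihedral} and the residual-action structure on $(-)^{t_{C_2}\mu_p}$ recorded there, so the bookkeeping, while somewhat involved, introduces nothing genuinely new beyond \cite{QS21a}.

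The main obstacle I expect is not the conceptual structure — which is a faithful transcription of \cite[Lem.~II.4.2]{NS18} — but rather making precise the identification of the various residual parametrized fixed-point and Tate functors on $B^t_{C_2}\mu_{p^{n-1}}$ and ensuring the norm cofiber sequences and canonical maps are compatible across the whole tower $1 \leq \mu_p \leq \cdots \leq \mu_{p^n}$. In other words, the hard part is verifying the parametrized ``iterated Tate'' formalism rather than deploying the orbit lemma. Since much of this is already packaged in \cite[Prop.~4.36]{QS21a} and the recollement setup of \cref{setup:Dihedral}, I anticipate the proof can be kept short by leaning on those results and the explicit formulas \eqref{eq:geometric_fixed_points_formula} for geometric fixed points when a hands-on check is needed.
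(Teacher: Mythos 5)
Your overall strategy---induct on $n$, feed the dihedral Tate orbit lemma (\cref{lem:dihedralTOLEven}, \cref{lem:dihedralTOLOdd}) into the norm cofiber sequence of \cref{dfn:param_tate}, and compare across the tower of subgroups---is exactly the paper's strategy (the relevant model in \cite{NS18} is Lem.~II.4.1 rather than II.4.2). The extension of the orbit lemma beyond $\mu_{p^2}$ that you flag is genuinely needed, but it is not a bare restriction along $B^t_{C_2}\mu_{p^2} \to B^t_{C_2}\mu_{p^k}$: the paper packages it as the inductive claim that the iterated norm map $X_{h_{C_2}\mu_{p^n}} \to (X_{h_{C_2}\mu_p})^{h_{C_2}\mu_{p^{n-1}}}$ is an equivalence, the point being that the successive cofibers have the form $((X_{h_{C_2}\mu_{p^{k}}})_{h_{C_2}\mu_p})^{t_{C_2}\mu_p}$ with $X_{h_{C_2}\mu_{p^{k}}}$ still underlying bounded-below, so the orbit lemma for $\mu_{p^2}$ applies at each stage.

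The genuine problem is that the one concrete deduction you write down points in the wrong direction. Applying $(-)^{t_{C_2}\mu_{p^{n-1}}}$ to the norm cofiber sequence and killing $(X_{h_{C_2}\mu_p})^{t_{C_2}\mu_{p^{n-1}}}$ yields $(X^{h_{C_2}\mu_p})^{t_{C_2}\mu_{p^{n-1}}} \simeq (X^{t_{C_2}\mu_p})^{t_{C_2}\mu_{p^{n-1}}}$, but neither of these objects is the source or the target of the canonical map: the target of the lemma carries \emph{homotopy fixed points}, not the Tate construction, on the outside, and $X^{t_{C_2}\mu_{p^n}}$ is not equivalent to $(X^{h_{C_2}\mu_p})^{t_{C_2}\mu_{p^{n-1}}}$ in general (comparing the two relevant norm sequences, their difference is controlled by $(X^{t_{C_2}\mu_p})_{h_{C_2}\mu_{p^{n-1}}}$, which does not vanish). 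So no diagram chase starting from your displayed equivalence can reach the statement, and your inductive hypothesis is never actually applied to anything. The correct move is to apply $(-)^{h_{C_2}\mu_{p^{n-1}}}$ to the $\mu_p$-norm cofiber sequence and map the $\mu_{p^n}$-norm cofiber sequence into it: the middle vertical map $X^{h_{C_2}\mu_{p^n}} \to (X^{h_{C_2}\mu_p})^{h_{C_2}\mu_{p^{n-1}}}$ is an equivalence by transitivity of parametrized fixed points, the left vertical map is the iterated norm map above (an equivalence by the orbit lemma argument), and hence the right vertical map---the canonical map of the lemma---is an equivalence. With that substitution, and with the iteration of the orbit lemma made explicit as above, your outline becomes the paper's proof.
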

\begin{proof} We mimic the proof of \cite[Lem.~II.4.1]{NS18}. Note that $X_{h_{C_2} \mu_{p^{n-1}}}$ has bounded-below underlying spectrum $(X^1)_{h \mu_{p^{n-1}}}$. By the dihedral Tate orbit lemma, we see that the norm map
\[ X_{h_{C_2} \mu_{p^n}} \simeq (X_{h_{C_2} \mu_{p^{n-1}}})_{h_{C_2} \mu_p} \to (X_{h_{C_2} \mu_{p^{n-1}}})^{h_{C_2} \mu_p} \]
is an equivalence. By induction, it follows that the norm map
\[ X_{h_{C_2} \mu_{p^n}} \to (X_{h_{C_2} \mu_p})^{h_{C_2} \mu_{p^{n-1}}} \]
is an equivalence. Therefore, the left and middle vertical maps in the commutative diagram
\[ \begin{tikzcd}[row sep=4ex, column sep=4ex, text height=1.5ex, text depth=0.25ex]
X_{h_{C_2} \mu_{p^n}} \ar{r} \ar{d} & X^{h_{C_2} \mu_{p^n}} \ar{r} \ar{d} & X^{t_{C_2} \mu_{p^n}} \ar{d} \\
(X_{h_{C_2} \mu_p})^{h_{C_2} \mu_{p^{n-1}}} \ar{r} & (X^{h_{C_2} \mu_p})^{h_{C_2} \mu_{p^{n-1}}} \ar{r} & (X^{t_{C_2} \mu_p})^{h_{C_2} \mu_{p^{n-1}}}
\end{tikzcd} \]
are equivalences, so the right vertical map is also an equivalence.
\end{proof}

\begin{dfn} For $X \in \Fun_{C_2}(B^t_{C_2} \mu_{p^{\infty}}, \ul{\Sp}^{C_2})$, let
$$ X^{t_{C_2} \mu_{p^{\infty}}} := \lim_n X^{t_{C_2} \mu_{p^{n}}} $$
where the inverse limit is taken along the maps
$$ X^{t_{C_2} \mu_{p^{n}}} \to (X^{t_{C_2} \mu_{p^{n-1}}})^{h_{C_2} \mu_p} \to X^{t_{C_2} \mu_{p^{n-1}}}. $$
\end{dfn}


\begin{cor} \label{cor:TateToFixedPointsEquivalence} Suppose that $X$ is a $C_2$-spectrum with twisted $\mu_{p^{\infty}}$-action whose underlying spectrum is bounded-below. Then the canonical map $$X^{t_{C_2} \mu_{p^{\infty}}} \to (X^{t_{C_2} \mu_p})^{h_{C_2} \mu_{p^{\infty}}}$$ is an equivalence.
\end{cor}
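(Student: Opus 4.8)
The plan is to imitate the proof of \cite[Lem.~II.4.2]{NS18}, passing to the limit from the finite-stage comparison furnished by \cref{lem:BoundedBelowEquivalence}. Recall that $X^{t_{C_2} \mu_{p^\infty}} = \lim_n X^{t_{C_2} \mu_{p^n}}$ where the structure maps factor as $X^{t_{C_2}\mu_{p^n}} \to (X^{t_{C_2}\mu_{p^{n-1}}})^{h_{C_2}\mu_p} \to X^{t_{C_2}\mu_{p^{n-1}}}$, the second map being the parametrized forgetful map to the fiber over the basepoint.

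First I would observe that since the underlying spectrum of $X$ is bounded-below, so is the underlying spectrum of $\res^{D_{2p^\infty}}_{D_{2p^n}} X$ for each $n$, so \cref{lem:BoundedBelowEquivalence} applies to identify $X^{t_{C_2}\mu_{p^n}} \xto{\simeq} (X^{t_{C_2}\mu_p})^{h_{C_2}\mu_{p^{n-1}}}$ compatibly in $n$. Under this identification, the transition map $X^{t_{C_2}\mu_{p^n}} \to X^{t_{C_2}\mu_{p^{n-1}}}$ in the tower corresponds to the map
\[ (X^{t_{C_2}\mu_p})^{h_{C_2}\mu_{p^{n-1}}} \simeq \big((X^{t_{C_2}\mu_p})^{h_{C_2}\mu_p}\big)^{h_{C_2}\mu_{p^{n-2}}} \to (X^{t_{C_2}\mu_p})^{h_{C_2}\mu_{p^{n-2}}} \]
induced by the parametrized forgetful functor $\mu_{p^{n-1}}/\mu_p \simeq \mu_{p^{n-2}}$ on the residual action, i.e., the canonical map in the tower computing $(X^{t_{C_2}\mu_p})^{h_{C_2}\mu_{p^\infty}}$ (recall from \cref{setup:Dihedral} that $B^t_{C_2}(\mu_{p^\infty}/\mu_p) \simeq B^t_{C_2}\mu_{p^\infty}$, so the colimit of the $B^t_{C_2}\mu_{p^n}$ computes $\mu_{p^\infty}$-parametrized fixed points). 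I would check this compatibility by unwinding the construction of the map in \cite[Prop.~4.36]{QS21a} as the $\mu_{p^{n-1}}$-parametrized fixed points of the canonical map $X^{t_{C_2}\mu_{p^n}} \to (X^{t_{C_2}\mu_p})$; this is the same bookkeeping as in \cite{NS18} but parametrized. Taking $\lim_n$ of both towers then yields
\[ X^{t_{C_2}\mu_{p^\infty}} = \lim_n X^{t_{C_2}\mu_{p^n}} \xto{\simeq} \lim_n (X^{t_{C_2}\mu_p})^{h_{C_2}\mu_{p^{n-1}}} \simeq (X^{t_{C_2}\mu_p})^{h_{C_2}\mu_{p^\infty}}, \]
where the last equivalence uses that parametrized fixed points commute with the relevant limit (the $C_2$-limit functor $(-)^{h_{C_2}\mu_{p^\infty}}$ is computed as a limit over the tower of the $(-)^{h_{C_2}\mu_{p^n}}$, since $B^t_{C_2}\mu_{p^\infty} \simeq \colim_n B^t_{C_2}\mu_{p^n}$ as in \cref{setup:Dihedral}).

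The main obstacle is the bookkeeping in the second paragraph: verifying that the finite-stage equivalences of \cref{lem:BoundedBelowEquivalence} are genuinely compatible with the two towers, i.e., that the square relating the transition map of the $(-)^{t_{C_2}\mu_{p^n}}$-tower to the transition map of the $(-)^{h_{C_2}\mu_{p^n}}$-tower commutes coherently in $n$. This requires identifying the canonical map of \cite[Prop.~4.36]{QS21a} precisely enough to see it is the $\mu_{p^{n-1}}$-parametrized fixed points of the $n=1$ case, and that the $p$th-power reindexing $\mu_{p^n}/\mu_p \cong \mu_{p^{n-1}}$ is applied consistently. Everything else is formal: boundedness below passes to restrictions, and limits commute with limits. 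Since the argument is a direct parametrized transcription of \cite[Lem.~II.4.2]{NS18}, I expect no genuinely new difficulty beyond this coherence check.
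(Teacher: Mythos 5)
Your proposal is correct and is essentially the paper's own proof: the canonical map is identified as the inverse limit of the finite-stage maps $X^{t_{C_2}\mu_{p^n}} \to (X^{t_{C_2}\mu_p})^{h_{C_2}\mu_{p^{n-1}}}$, each of which is an equivalence by \cref{lem:BoundedBelowEquivalence}. The coherence bookkeeping you flag is implicit in the paper's (one-sentence) argument, and your treatment of it is consistent with how the towers are set up there.
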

\begin{proof} The map in question is the inverse limit of the maps $X^{t_{C_2} \mu_{p^n}} \to (X^{t_{C_2} \mu_p})^{h_{C_2} \mu_{p^{n-1}}}$, which by \cref{lem:BoundedBelowEquivalence} are equivalences under our assumption on $X$.
\end{proof}

\begin{rem} \label{rem:Tate_pcomplete} Using \cref{cor:ParamTatePcomplete}(1), if we further suppose that the underlying $C_2$-spectrum of $X$ is bounded-below, then $(X^{t_{C_2} \mu_p})^{h_{C_2} \mu_{p^n}}$ is $p$-complete for all $0 \leq n \leq \infty$, and hence $X^{t_{C_2} \mu_{p^n}}$ is also $p$-complete for all $1 \leq n \leq \infty$.
\end{rem}

The following lemma extends \cite[II.4.5-7]{NS18}.

\begin{lem} \label{lm:SameLemmaNS} Suppose $X$ is a $D_{2p^n}$-spectrum.
\begin{enumerate} \item We have a natural pullback square of $C_2$-spectra
\[ \begin{tikzcd}[row sep=4ex, column sep=4ex, text height=1.5ex, text depth=0.25ex]
\Psi^{\mu_{p^n}} X \ar{r} \ar{d} & \Psi^{\mu_{p^{n-1}}} (\Phi^{\mu_p} X) \ar{d} \\
X^{h_{C_2} \mu_{p^n}} \ar{r} & X^{t_{C_2} \mu_{p^n}}.
\end{tikzcd} \]
\item Suppose in addition that the underlying spectrum of $X$ is bounded-below. Then we have a natural pullback square of $C_2$-spectra
\[ \begin{tikzcd}[row sep=4ex, column sep=4ex, text height=1.5ex, text depth=0.25ex]
\Psi^{\mu_{p^n}} X \ar{r} \ar{d} & \Psi^{\mu_{p^{n-1}}} (\Phi^{\mu_p} X) \ar{d} \\
X^{h_{C_2} \mu_{p^n}} \ar{r} & (X^{t_{C_2} \mu_{p}})^{h_{C_2} \mu_{p^{n-1}}}.
\end{tikzcd} \] 
\item Suppose in addition that the underlying spectra of
\[ X, \Phi^{\mu_p} X, \Phi^{\mu_{p^2}} X, \cdots, \Phi^{\mu_{p^{n-1}}} X \]
are all bounded-below. Then we have a natural limit diagram of $C_2$-spectra
\[ \begin{tikzcd}[row sep=4ex, column sep=4ex, text height=1.5ex, text depth=0.25ex]
\Psi^{\mu_{p^n}} X \ar{rrrr} \ar{dddd} & & & & \Phi^{\mu_{p^n}} X \ar{d} \\
 & & & (\Phi^{\mu_{p^{n-1}}} X)^{h_{C_2} \mu_p} \ar{r} \ar{d}  &  (\Phi^{\mu_{p^{n-1}}} X)^{t_{C_2} \mu_p} \\
 & & (\Phi^{\mu_{p^{2}}} X)^{h_{C_2} \mu_{p^{n-2}}} \ar{r} \ar{d} & \cdots \\
 & (\Phi^{\mu_p} X)^{h_{C_2} \mu_{p^{n-1}}} \ar{r} \ar{d} & ((\Phi^{\mu_p} X)^{t_{C_2} \mu_p})^{h_{C_2} \mu_{p^{n-2}}} \\
X^{h_{C_2} \mu_{p^n}} \ar{r} & (X^{t_{C_2} \mu_p})^{h_{C_2} \mu_{p^{n-1}}}
\end{tikzcd} \]
\end{enumerate}
\end{lem}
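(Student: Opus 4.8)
The plan is to obtain all three parts from the dihedral recollement \eqref{eq:dihedral_recollement} at finite level $n$ — with $j^{*}=\sU_b[\mu_{p^n}]$, $i^{*}=\Phi^{\mu_p}$, and $i_{*}$ the inclusion of $\Sp^{D_{2p^{n-1}}}$ — together with the functor $\Psi^{\mu_{p^n}}\colon\Sp^{D_{2p^n}}\to\Sp^{C_2}$. For part (1), I would apply $\Psi^{\mu_{p^n}}$, which preserves limits as a right adjoint, to the recollement fracture square, which for any $X\in\Sp^{D_{2p^n}}$ is the natural pullback square
\[
\begin{tikzcd}
X\ar{r}\ar{d} & j_{*}j^{*}X\ar{d}\\ i_{*}i^{*}X\ar{r} & i_{*}i^{*}j_{*}j^{*}X
\end{tikzcd}
\]
whose two vertical fibers are both $i_{*}i^{!}X$, by the recollement fiber sequence $i^{!}\to i^{*}\to i^{*}j_{*}j^{*}$ of \cite[Obs.~2.18]{Sha21}. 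Its four corners are identified as follows: $\Psi^{\mu_{p^n}}X$ is the top-left corner; $\Psi^{\mu_{p^n}}(j_{*}j^{*}X)\simeq X^{h_{C_2}\mu_{p^n}}$, since parametrized homotopy fixed points are by definition $\Psi^{\mu_{p^n}}$ applied to the $\Gamma_{\mu_{p^n}}$-cofree completion $j_{*}(-)$ (\cref{dfn:param_tate}); $\Psi^{\mu_{p^n}}(i_{*}i^{*}X)\simeq\Psi^{\mu_{p^{n-1}}}(\Phi^{\mu_p}X)$, using the relation $\Psi^{\mu_{p^n}}\circ i_{*}\simeq\Psi^{\mu_{p^{n-1}}}$ obtained by iterating the identity $\Psi^{\mu_p}\circ i_{*}\simeq\id$ of \cref{setup:Dihedral}; and $\Psi^{\mu_{p^n}}(i_{*}i^{*}j_{*}j^{*}X)\simeq X^{t_{C_2}\mu_{p^n}}$.

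The last identification is the one point requiring care. Because $\mu_p$ is the unique minimal nontrivial subgroup of $\mu_{p^n}$, the $\mu_{p^n}$-free family $\Gamma_{\mu_{p^n}}=\{H:H\cap\mu_{p^n}=1\}$ coincides with the family $\mathcal F[\mu_p]=\{H\le D_{2p^n}:\mu_p\not\le H\}$ of subgroups not containing $\mu_p$. Hence \eqref{eq:dihedral_recollement} is precisely the recollement associated to this family, so the closed-localization endofunctor $i_{*}i^{*}$ is given by smashing with $\widetilde{E\Gamma_{\mu_{p^n}}}$; comparing with \cref{dfn:param_tate}, which expresses $(-)^{t_{C_2}\mu_{p^n}}$ as the composite $\Psi^{\mu_{p^n}}\circ(\widetilde{E\Gamma_{\mu_{p^n}}}\wedge-)\circ j_{*}$, we obtain $\Psi^{\mu_{p^n}}(i_{*}i^{*}j_{*}j^{*}X)\simeq X^{t_{C_2}\mu_{p^n}}$. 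Unwinding which edges of the fracture square become which maps under $\Psi^{\mu_{p^n}}$ then shows that its image is (the transpose of) the square asserted in part (1), with the bottom horizontal map the canonical map $X^{h_{C_2}\mu_{p^n}}\to X^{t_{C_2}\mu_{p^n}}$; naturality in $X$ is automatic.

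For part (2), I would post-compose the pullback square of part (1) along the equivalence $X^{t_{C_2}\mu_{p^n}}\xrightarrow{\ \simeq\ }(X^{t_{C_2}\mu_p})^{h_{C_2}\mu_{p^{n-1}}}$ of \cref{lem:BoundedBelowEquivalence} (valid since $X^{1}$ is bounded-below), and then check — using naturality and \cite[Prop.~4.36]{QS21a} — that under this equivalence the bottom and right maps of the square become the ones displayed in the statement. Part (3) is then a formal consequence of part (2) by induction on $n$. Assuming the underlying spectra of $X,\Phi^{\mu_p}X,\dots,\Phi^{\mu_{p^{n-1}}}X$ are all bounded-below, part (2) applied to $\Phi^{\mu_{p^k}}X\in\Sp^{D_{2p^{n-k}}}$ for each $0\le k\le n-1$ exhibits $\Psi^{\mu_{p^{n-k}}}(\Phi^{\mu_{p^k}}X)$ as the pullback of $\Psi^{\mu_{p^{n-k-1}}}(\Phi^{\mu_{p^{k+1}}}X)$ and $(\Phi^{\mu_{p^k}}X)^{h_{C_2}\mu_{p^{n-k}}}$ over $\big((\Phi^{\mu_{p^k}}X)^{t_{C_2}\mu_p}\big)^{h_{C_2}\mu_{p^{n-k-1}}}$; splicing these $n$ pullback squares together along their shared corners, starting at $k=0$ and ending at $k=n-1$ where $\Psi^{\mu_{p^0}}(\Phi^{\mu_{p^n}}X)=\Phi^{\mu_{p^n}}X$, produces exactly the staircase limit diagram in the statement. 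The main obstacle I anticipate is the fourth-corner identification in part (1) — matching the gluing functor of the dihedral recollement at level $n$ with the parametrized Tate construction $(-)^{t_{C_2}\mu_{p^n}}$ and correctly tracking the maps of the fracture square; parts (2) and (3) are then routine bookkeeping with pullback squares.
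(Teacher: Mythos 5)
Your proposal is correct and follows essentially the same route as the paper: part (1) is obtained by applying $\Psi^{\mu_{p^n}}$ to the fracture square of the $\Gamma_{\mu_{p^n}}$-recollement (the paper cites \cite[Thm.~4.28]{QS21a} for the corner identifications you spell out from \cref{dfn:param_tate}), part (2) follows by composing with the equivalence of \cref{lem:BoundedBelowEquivalence}, and part (3) by splicing the resulting squares inductively. (One cosmetic slip: in your fracture square it is the \emph{horizontal} fibers that are $i_*i^!X$ and the vertical ones that are $j_!j^*X$, but either pair suffices to see the square is cartesian.)
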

\begin{proof} In view of \cite[Thm.~4.28]{QS21a}, the first pullback square arises from applying $\Psi^{\mu_{p^n}}$ to the fracture square for the $\Gamma_{\mu_{p^n}}$-recollement on $\Sp^{D_{2p^n}}$. The second pullback square then follows by \cref{lem:BoundedBelowEquivalence}, and the last limit diagram follows by induction on $n$.
\end{proof}

We may now equate the fiber sequences for $\TCR(-,p)$ (\cref{prp:TCRfiberSequence}) and $\TCR^{\mr{gen}}(-,p)$ (\cref{prp:fiberSequenceGenuineRealCyc}) in the bounded-below situation, giving a direct proof of \cref{cor:TCRFormulasEquivalent}.

\begin{cor} \label{cor:decategorifiedEasy} Let $X$ be a genuine real $p$-cyclotomic spectrum whose underlying spectrum is bounded-below. Then there is a canonical and natural fiber sequence
\[ \TCR^{\mr{gen}}(X,p) \to X^{h_{C_2} \mu_{p^{\infty}}} \xtolong{\varphi^{h_{C_2} \mu_{p^{\infty}}} - \can}{2} (X^{t_{C_2} \mu_p})^{h_{C_2} \mu_{p^{\infty}}} \]
and thus an equivalence $\TCR^{\mr{gen}}(X,p) \simeq \TCR(\sU_{\RR} X,p)$.
\end{cor}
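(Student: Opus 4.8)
The plan is to identify $\TCR^{\mr{gen}}(X,p)$ — which by \cref{prp:fiberSequenceGenuineRealCyc} is $\lim_{J_{\infty}}\Psi^{\mu_{p^n}}(X)\simeq\mr{eq}(\TRR(X,p)\rightrightarrows\TRR(X,p))$ with the two maps $\id$ and $F$ — with the fiber of $\varphi^{h_{C_2}\mu_{p^{\infty}}}-\can_p$, by first resolving each $\Psi^{\mu_{p^n}}(X)$ into a ``fence'' of parametrized homotopy fixed points and Tate constructions, in imitation of \cite[\S II.4]{NS18}. First I would observe that the cyclotomic structure $\alpha\colon\Phi^{\mu_p}X\xrightarrow{\simeq}X$ iterates (since $\Phi^{\mu_p}$ is an endofunctor of $\Sp^{D_{2p^{\infty}}}$) to equivalences $\Phi^{\mu_{p^k}}X\simeq X$; in particular the underlying spectra of all the $\Phi^{\mu_{p^k}}X$ are bounded-below, so \cref{lm:SameLemmaNS}(3) applies to $X$ restricted to each finite level $n$. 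Feeding the equivalences $\alpha$ into that limit diagram identifies $\Psi^{\mu_{p^n}}(X)$, naturally in $X$, with the limit of the fence
\begin{multline*}
X^{h_{C_2}\mu_{p^n}}\xrightarrow{\can_p}(X^{t_{C_2}\mu_p})^{h_{C_2}\mu_{p^{n-1}}}\xleftarrow{\varphi^{h_{C_2}\mu_{p^{n-1}}}}X^{h_{C_2}\mu_{p^{n-1}}}\\
\xrightarrow{\can_p}\cdots\xleftarrow{\varphi^{h_{C_2}\mu_p}}X^{h_{C_2}\mu_p}\xrightarrow{\can_p}X^{t_{C_2}\mu_p}\xleftarrow{\varphi}X,
\end{multline*}
where $\varphi\colon X\to X^{t_{C_2}\mu_p}$ is the Borel real $p$-cyclotomic structure map of $\sU_{\RR}X$; under this identification $R$ is induced by the inclusion of the fence for $\Psi^{\mu_{p^{n-1}}}X$ as the sub-fence obtained by dropping the two leftmost terms, and everything is compatible with the maps $F$.

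The core input here is that, after inserting the identifications $\Phi^{\mu_{p^k}}X\simeq X$, the right-hand vertical map of the pullback square of \cref{lm:SameLemmaNS}(2) becomes $\varphi^{h_{C_2}\mu_{p^{n-1}}}\circ g_{n-1}$, where $g_{n-1}\colon\Psi^{\mu_{p^{n-1}}}X\to X^{h_{C_2}\mu_{p^{n-1}}}$ is the canonical comparison map of that square; I would verify this by tracing through the definition $\varphi=\beta\circ\sU_b(\alpha^{-1})$ from the preamble of \cref{section:ComparisonSection} and the naturality of the gluing transformation $\beta\colon\sU_b\Phi^{\mu_p}\Rightarrow(-)^{t_{C_2}\mu_p}\circ\sU_b$ against the iterated $\alpha$'s. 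One also needs that the bottom map of the square is the honest canonical map $\can_p$, which is \cref{lem:BoundedBelowEquivalence} together with the description of $(-)^{t_{C_2}\mu_p}$ as the gluing functor of the dihedral recollement (\cref{setup:Dihedral}).

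Granting the fence description, I would pass to the limit over $R$: since the fence for $\Psi^{\mu_{p^n}}X$ contains that for $\Psi^{\mu_{p^{n-1}}}X$ as a subdiagram, $\TRR(X,p)=\lim_{n,R}\Psi^{\mu_{p^n}}(X)$ is the limit of the corresponding infinite fence, i.e.\ of systems $(a_k\in X^{h_{C_2}\mu_{p^k}})_{k\ge0}$ with $\can_p(a_k)=\varphi^{h_{C_2}\mu_{p^{k-1}}}(a_{k-1})$ in $(X^{t_{C_2}\mu_p})^{h_{C_2}\mu_{p^{k-1}}}$ for all $k\ge1$; on this limit $F$ acts by $(a_k)_k\mapsto(\res^{\mu_{p^{k+1}}}_{\mu_{p^k}}a_{k+1})_k$, which respects the fence relations because $\can_p$ and the $\varphi^{h_{C_2}\mu_{p^\bullet}}$ commute with restriction. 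Taking $\mr{eq}(\id,F)$ on $\TRR(X,p)$ then forces $a_k=\res\,a_{k+1}$ for all $k$; using $B^t_{C_2}\mu_{p^{\infty}}\simeq\colim_n B^t_{C_2}\mu_{p^n}$ (\cref{setup:Dihedral}) to write $X^{h_{C_2}\mu_{p^{\infty}}}\simeq\lim_k X^{h_{C_2}\mu_{p^k}}$ and $(X^{t_{C_2}\mu_p})^{h_{C_2}\mu_{p^{\infty}}}\simeq\lim_k(X^{t_{C_2}\mu_p})^{h_{C_2}\mu_{p^{k-1}}}$ along the restriction maps, the data collapses to a single $a_\infty\in X^{h_{C_2}\mu_{p^{\infty}}}$, and the surviving relations assemble to $\can_p(a_\infty)=\varphi^{h_{C_2}\mu_{p^{\infty}}}(a_\infty)$. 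This gives
\[
\TCR^{\mr{gen}}(X,p)\simeq\fib\left(X^{h_{C_2}\mu_{p^{\infty}}}\xrightarrow{\varphi^{h_{C_2}\mu_{p^{\infty}}}-\can_p}(X^{t_{C_2}\mu_p})^{h_{C_2}\mu_{p^{\infty}}}\right),
\]
which is the claimed fiber sequence, and comparison with \cref{prp:TCRfiberSequence} applied to $\sU_{\RR}X=[X,\varphi]$ yields $\TCR^{\mr{gen}}(X,p)\simeq\TCR(\sU_{\RR}X,p)$.

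I expect the main obstacle to lie entirely in the second paragraph: pinning down that the connecting maps of the iterated pullback really are $\can_p$ and the $\varphi^{h_{C_2}\mu_{p^\bullet}}$ once the cyclotomic equivalences are inserted, and that $R$ and $F$ intertwine with these fences as stated — everything downstream is a formal manipulation of limits of $C_2$-spectra. An alternative route would be to first establish the reconstruction-theoretic ``finite level'' decomposition \cref{cor:iterated_pullback_descr} and extract \cref{cor:decategorifiedEasy} from it; I would nevertheless record the hands-on argument above, as it is the direct real-equivariant analog of \cite[II.4.2 and II.4.5--II.4.8]{NS18}.
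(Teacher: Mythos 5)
Your proposal is correct and is essentially the paper's own proof: the paper's argument for this corollary is precisely "using \cref{lm:SameLemmaNS}, transcribe the proof of \cite[Thm.~II.4.10]{NS18} into the $C_2$-parametrized setting," and your fence decomposition, the identification of the connecting maps with $\can_p$ and $\varphi^{h_{C_2}\mu_{p^\bullet}}$, and the passage to limits over $R$ and $F$ are exactly that transcription. The alternative route you mention via \cref{cor:iterated_pullback_descr} is the one the paper reserves for the categorified statement, not for this corollary.
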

\begin{proof} Using \cref{lm:SameLemmaNS}, we may transcribe the proof of \cite[Thm.~II.4.10]{NS18} into the $C_2$-parametrized setting to prove the claim, with no change of detail.
\end{proof}

\subsection{The comparison at finite level}

\begin{dfn} Let $\Sp^{C_{p^n}}_{bb} \subset \Sp^{C_{p^n}}$ be the full subcategory on those $C_{p^n}$-spectra that are bounded-below, or equivalently, slice bounded-below (\cref{lem:BddBelowEqualsSliceBddBelow}).

Let $\Sp^{D_{2p^n}}_{ubb} \subset \Sp^{D_{2p^n}}$ be the full subcategory on those $D_{2p^n}$-spectra whose underlying $\mu_{p^n}$-spectrum is bounded-below.
\end{dfn}

In this subsection, we give an iterated pullback decomposition of $\Sp^{D_{2p^n}}_{ubb}$ (\cref{cor:iterated_pullback_descr}) that categorifies the staircase limit diagram in \cref{lm:SameLemmaNS}. We also take the opportunity to give a similar iterated pullback decomposition of $\Sp^{C_{p^n}}_{bb}$, along the lines described in \cite[Rem.~II.4.8]{NS18}.\footnote{We write $C_{p^n}$ instead of $\mu_{p^n}$ here in adherence to \cite{NS18}.} Our main tool in achieving this will be \cite[Thm.~4.15]{Sha21} in conjunction with \cite[Thm.~2.46]{QS21a}. We first recall the theorem of Ayala, Mazel-Gee, and Rozenblyum on reconstructing a $G$-spectrum from its geometric fixed points, referring the reader to \cite[\S 2.3]{QS21a} or \cite{AMGRb} for a detailed treatment.

\begin{rec}[Reconstruction theorem for {$G$}-spectra] \label{rec:reconstruction}
Let $G$ be a finite group and let $\fS[G]$ denote its subconjugacy category (\cite[Def.~2.14]{QS21a}), which is the nerve of a preordered set. We write $\fS$ if $G$ is understood. For a subgroup $H \leq G$, let $\sF_{\leq H}$ and $\sF_{< H}$ denote the $G$-family of subgroups that are subconjugate, resp. properly subconjugate to $H$. The \emph{$G$-geometric locus} (\cite[Def.~2.37]{QS21a})
\[ \Sp^{G}_{\locus{\phi}} \subset \Sp^G \times \fS \]
is the full subcategory on objects $(X,H)$ such that $X$ is both $\sF_{\leq H}$-complete and $(\sF_{<H})^{-1}$-local.\footnote{See \cite[Constr.~2.17]{QS21a} or \cite{MATHEW2017994} for this terminology.} We then have that the projection $p: \Sp^{G}_{\locus{\phi}} \to \fS$ is a locally cocartesian fibration such that $(\Sp^{G}_{\locus{\phi}})_H \simeq \Fun(B W_G H, \Sp)$ and over a subconjugacy relation $H \leq K$, the associated pushforward functor is the \emph{generalized Tate construction} $\tau^K_H$. Consider now the barycentric subdivision $\sd(\fS)$ as a locally cocartesian fibration over $\fS$ via the functor that takes the maximum. By \cite[Thm.~2.46]{QS21a}, we then have that the functor
\begin{equation} \label{eqn:reconstruction}
\Theta: \Fun^{\cocart}_{/\fS}(\sd(\fS),  \Sp^{G}_{\locus{\phi}}) \xto{\simeq} \Sp^G,
\end{equation}
given by postcomposing with the projection $\Sp^{G}_{\locus{\phi}} \to \Sp^G$ and taking the limit, is an equivalence.
\end{rec}

For our intended application, we will need a \emph{relative} version of the geometric locus construction. For a $G$-family $\cF$, let $\Sp^{h \sF}$ and $\Sp^{\Phi \sF}$ denote the full subcategories of $\Sp^G$ on the $\cF$-complete and $\cF^{-1}$-local objects, respectively (\cite[Notn.~2.18]{QS21a}).

\begin{dfn} \label{dfn:relativeGeometricLocus} Suppose $\pi: \fS \to \Delta^n$ is a surjective functor. For $0 \leq k \leq n$, let $\fS_{\pi \leq k}, \fS_{\pi<k} \subset \fS$ be the sieves containing subgroups $H$ such that $\pi(H) \leq k$, resp. $\pi(H) < k$ (by convention, $\fS_{\pi<0} = \emptyset$). Let
$$L[\pi]_k: \Sp^G \to \Sp^{h \fS_{\pi \leq k}} \cap \Sp^{\Phi \fS_{\pi<k}}$$
denote the localization functors.

Define the \emph{$\pi$-relative geometric locus} $\Sp^G_{\locus{\phi,\pi}} \subset \Sp^G \times \Delta^n$ be the full subcategory on $(X,k)$ such that $X$ is $\fS_{\pi \leq k}$-complete and $\fS_{\pi<k}^{-1}$-local. For $0\leq i \leq j \leq n$, define the \emph{$\pi$-relative generalized Tate construction} to be the composite
\[ \tau[\pi]^j_i: \Sp^{h \fS_{\pi \leq i}} \cap \Sp^{\Phi \fS_{\pi<i}} \into \Sp^G \to \Sp^{h \fS_{\pi \leq j}} \cap \Sp^{\Phi \fS_{\pi<j}} \]
of the inclusion and localization functors.

Let $[i:j] \subset \Delta^n$ denote the full subcategory on the vertices $i$ through $j$, so $\Delta^{j-i} \cong [i:j]$. Define the comparison functor
\[ \Theta'[\pi]_{i,j}: \Fun^{\cocart}_{/[i:j]}(\sd([i:j]), [i:j] \times_{\Delta^n} \Sp^G_{\locus{\phi,\pi}} ) \to \Fun(\sd([i:j]), \Sp^G) \xto{\lim} \Sp^G. \]
As in \cite[Constr.~2.43]{QS21a}, the essential image of $\Theta'[\pi]_{i,j}$ lies in $\Sp^{h \fS_{\pi \leq j}} \cap \Sp^{\Phi \fS_{\pi < i}}$. Let $\Theta[\pi]_{i,j}$ denote the comparison functor with this codomain, and also write $\Theta[\pi] = \Theta[\pi]_{0,n}$.
\end{dfn}

\begin{vrn} \label{vrn:ReconstructionEquivalence} We have the following variants of the results in \cite[\S 2.3]{QS21a}. 
\begin{enumerate}
\item $\Sp^G_{\locus{\phi,\pi}} \to \Delta^n$ is a locally cocartesian fibration such that the pushforward functors are given by $\tau[\pi]^j_i$.
\item For all $0 \leq i \leq j \leq n$, 
$$\Theta[\pi]_{i,j}: \Fun^{\cocart}_{/[i:j]}(\sd([i:j]), [i:j] \times_{\Delta^n} \Sp^G_{\locus{\phi,\pi}} ) \xto{\simeq} \Sp^{h \fS_{\pi \leq j}} \cap \Sp^{\Phi \fS_{\pi < i}}$$
is an equivalence of $\infty$-categories.
\item Let $0 \leq i \leq j <k \leq n$, so $[i:j]$, $[j+1:k]$ is a sieve-cosieve decomposition of $[i:k]$. Then we have a strict morphism of stable recollements through equivalences
\[ \begin{tikzcd}[row sep=4ex, column sep=8ex, text height=1.5ex, text depth=0.5ex]
\Fun^{\cocart}_{/[i:j]}(\sd([i:j]), [i:j] \times_{\Delta^n} \Sp^G_{\locus{\phi,\pi}}) \ar[shift left=4, left hook->]{d} \ar[shift right=4, left hook->]{d} \ar{r}{\Theta[\pi]_{i,j}}[swap]{\simeq} & \Sp^{h \fS_{\pi \leq j}} \cap \Sp^{\Phi \fS_{\pi < i}} \ar[shift left=4, left hook->]{d} \ar[shift right=4, left hook->]{d} \\
\Fun^{\cocart}_{/[i:k]}(\sd([i:k]), [i:k] \times_{\Delta^n} \Sp^G_{\locus{\phi,\pi}}) \ar{u} \ar{d} \ar{r}{\Theta[\pi]_{i,k}}[swap]{\simeq} & \Sp^{h \fS_{\pi \leq k}} \cap \Sp^{\Phi \fS_{\pi < i}}  \ar{u} \ar{d} \\
\Fun^{\cocart}_{/[j+1:k]}(\sd([j+1:k]), [j+1:k] \times_{\Delta^n} \Sp^G_{\locus{\phi,\pi}} )  \ar{r}{\Theta[\pi]_{j+1,k}}[swap]{\simeq} \ar[shift right=4, right hook->]{u} & \Sp^{h \fS_{\pi \leq k}} \cap \Sp^{\Phi \fS_{\pi \leq j}} \ar[shift right=4, right hook->]{u}
\end{tikzcd} \]
In particular, for any $i \leq l \leq j$, the composite
\[ \Fun^{\cocart}_{/[i:j]}(\sd([i:j]), [i:j] \times_{\Delta^n} \Sp^G_{\locus{\phi,\pi}}) \to \Sp^{h \fS_{\pi \leq j}} \cap \Sp^{\Phi \fS_{\pi < i}} \to \Sp^{h \fS_{\pi \leq l}} \cap \Sp^{\Phi \fS_{\pi < l}} \]
is homotopic to evaluation at $l \in [i:j] \subset \sd[i:j]$.
\end{enumerate}
\end{vrn}



Specializing to the situation of interest with $G = D_{2p^n}$, we have:

\begin{dfn} \label{def_zeta_map}
Let $\zeta: \fS[D_{2 p^n}] \to \fS[\mu_{p^n}] \cong \Delta^n$ be given by $\zeta(H) = H \cap \mu_{p^n}$.\footnote{Under the isomorphism $\fS[\mu_{p^n}] \cong \Delta^n$, $\zeta(H)$ is the unique integer such that $H \cap \mu_{p^n} = \mu_{p^{\zeta(H)}}$.}
\end{dfn}

Note that for all $0 \leq k \leq n$, the cosieve $\fS_{\zeta \geq k}$ equals $\fS[D_{2p^n}]_{\geq \mu_{p^k}}$, so
$$\Sp^{\Phi \fS_{\zeta<k}} \simeq \Sp^{D_{2p^n}/\mu_{p^k}} \simeq \Sp^{D_{2p^{n-k}}}$$
and then
\begin{equation} \label{eq:stratum_equivalence}
\Fun_{C_2}(B^t_{C_2} \mu_{p^{n-k}}, \underline{\Sp}^{C_2}) \simeq  \Sp^{h \fS_{\zeta \leq k}} \cap \Sp^{\Phi \fS_{\zeta < k}},
\end{equation}
in view of the recollement \eqref{eq:dihedral_recollement}.
\begin{ntn} \label{eq:dihedral_localization}
We write
\begin{equation*} 
 L[\zeta]_k: \Sp^{D_{2 p^n}} \to \Fun_{C_2}(B^t_{C_2} \mu_{p^{n-k}}, \underline{\Sp}^{C_2})
 \end{equation*}
for the composite of $\Phi^{\mu_{p^k}}: \Sp^{D_{2 p^n}} \to \Sp^{D_{2 p^{n-k}}}$ and $\sU_b[\mu_{p^{n-k}}]$.\footnote{This identifies with the functor $L[\zeta]_k$ of \cref{dfn:relativeGeometricLocus} under the equivalence \eqref{eq:stratum_equivalence}.}
\end{ntn}

\begin{dfn} The \emph{$C_2$-generalized Tate functors}
\[ \tau_{C_2} \mu_{p^k}: \Fun_{C_2}(B^t_{C_2} \mu_{p^n}, \underline{\Sp}^{C_2}) \to \Fun_{C_2}(B^t_{C_2} \mu_{p^{n-k}}, \underline{\Sp}^{C_2}) \]
are defined as in \cref{dfn:relativeGeometricLocus} with respect to $\zeta$ under the equivalence \eqref{eq:stratum_equivalence}.
\end{dfn}

\begin{rem}
The $C_2$-generalized Tate functors are compatible with restriction in the sense that for all $k \leq m \leq n$, the diagram
\[ \begin{tikzcd}
\Fun_{C_2}(B^t_{C_2} \mu_{p^n}, \underline{\Sp}^{C_2}) \ar{r}{\tau_{C_2} \mu_{p^k}} \ar{d}{\res} & \Fun_{C_2}(B^t_{C_2} \mu_{p^{n-k}}, \underline{\Sp}^{C_2}) \ar{d}{\res} \\
\Fun_{C_2}(B^t_{C_2} \mu_{p^m}, \underline{\Sp}^{C_2}) \ar{r}{\tau_{C_2} \mu_{p^k}} & \Fun_{C_2}(B^t_{C_2} \mu_{p^{m-k}}, \underline{\Sp}^{C_2})
\end{tikzcd} \]
commutes. See \cite[Rem.~2.39]{QS21a} for the proof of the same property for the ordinary generalized Tate functors; the reasoning in this case is the same.
\end{rem}

To apply the (dihedral) Tate orbit lemma, we need to re-express the functors $\tau C_{p^n}$ and $\tau_{C_2} \mu_{p^n}$ in terms of more familiar functors. For expositional purposes, we deal with these cases separately, although the statement for $\tau_{C_2} \mu_{p^n}$ logically implies that for $\tau C_{p^n}$. We note at the outset that we already know $\tau C_p \simeq t C_p$ and $\tau_{C_2} \mu_p \simeq t_{C_2} \mu_p$.

\begin{lem} \label{lm:identifyGenTate} Suppose $X$ is a Borel $C_{p^n}$-spectrum. For $1 < k \leq n$, we have $C_{p^{n-k}}$-equivariant equivalences
\[ X^{\tau C_{p^k}} \simeq X^{h C_p \tau C_{p^{k-1}}} \simeq X^{h C_{p^2} \tau C_{p^{k-2}}} \simeq \cdots \simeq X^{h C_{p^{k-1}} t C_p} \]
with respect to which the canonical map $X^{\tau C_{p^k}} \to X^{t C_p \tau C_{p^{k-1}}}$ fits into the fiber sequence
\[ (X_{h C_p})^{\tau C_{p^{k-1}}} \to X^{h C_p \tau C_{p^{k-1}}} \to X^{t C_p \tau C_{p^{k-1}}}.  \]
\end{lem}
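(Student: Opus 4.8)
The plan is to induct on $k$, using the recollement structure on $\Sp^{C_{p^n}}$ restricted appropriately and the fact that the generalized Tate construction $\tau C_{p^k}$ is realized as a composite of inclusion and localization functors between the strata of the geometric locus. The base case $k=2$ is the heart of the matter and the place where the Tate orbit lemma enters.

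First I would set up the base case $k=2$. By \cref{vrn:ReconstructionEquivalence}(3) applied to the stratification $\zeta$ (here just the subconjugacy poset $\fS[C_{p^n}] \cong \Delta^n$), the functor $\tau C_{p^2}: \Sp^{h\fS_{\leq 0}} \to \Sp^{h\fS_{\leq 2}}\cap \Sp^{\Phi\fS_{<2}}$ factors through evaluation at the intermediate vertex $1$, i.e.\ through $\tau C_p = tC_p$. So $X^{\tau C_{p^2}}$ is computed by first applying $tC_p$ to land in $\Sp^{h\fS_{\leq 1}}\cap\Sp^{\Phi\fS_{<1}} = \Fun(BC_{p^{n-1}},\Sp)$ and then applying the localization $L[\zeta]_2$, which on this subcategory is $\tau C_p$ again (now for the residual $C_{p^{n-1}}$-action), giving the equivalence $X^{\tau C_{p^2}} \simeq X^{tC_p\, \tau C_p} = X^{tC_p\, tC_p}$. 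To identify this with $X^{hC_p\, tC_p}$, I would use the fiber sequence associated to the recollement on $\Sp^{C_p}$ with closed part $\Sp$: for any bounded-below Borel $C_p$-spectrum $Y$ (here $Y = X$ with its residual structure, whose underlying spectrum is bounded-below since $X$ is), one has $(Y_{hC_p})^{tC_p} \to Y^{hC_p\, tC_p}\to Y^{tC_p\, tC_p}$, but actually the cleaner route is: the Tate orbit lemma \cite[Lem.~I.2.1]{NS18} says $(X_{hC_p})^{tC_p} \simeq 0$ when $X^1$ is bounded-below, hence the norm map $X_{hC_p}\to X^{hC_p}$ becomes an equivalence after applying $tC_p$ (as $\mu_p$-spectra), so $X^{tC_p\, tC_p}\simeq X^{hC_p\, tC_p}$. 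The stated fiber sequence $(X_{hC_p})^{\tau C_{p^{k-1}}} \to X^{hC_p\, \tau C_{p^{k-1}}}\to X^{tC_p\, \tau C_{p^{k-1}}}$ is then just the image under $\tau C_{p^{k-1}}$ of the recollement fiber sequence $(-)_{hC_p}\to (-)^{hC_p}\to (-)^{tC_p}$ on $\Sp^{C_p}$, noting $\tau C_{p^{k-1}}$ is exact.

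For the inductive step, suppose the claim holds for $k-1$ (applied to the residual $C_{p^{n-1}}$-spectrum obtained from $X$). Again by \cref{vrn:ReconstructionEquivalence}(3), $\tau C_{p^k}$ factors through evaluation at vertex $1$, so $X^{\tau C_{p^k}}\simeq (X)^{tC_p}{}^{\tau C_{p^{k-1}}}$ where now $\tau C_{p^{k-1}}$ is taken for the residual $C_{p^{n-1}}$-action on $X^{tC_p}$. But $X^{tC_p}$ has bounded-below underlying spectrum (since the Tate construction of a bounded-below Borel spectrum over a $p$-group is bounded-below, cf.\ \cref{lem:TateNilpotent}), so the inductive hypothesis applies to give the chain of equivalences $X^{tC_p\,\tau C_{p^{k-1}}} \simeq X^{tC_p\, hC_p\,\tau C_{p^{k-2}}}\simeq\cdots\simeq X^{tC_p\, hC_{p^{k-2}}\, tC_p}$. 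The only subtlety is commuting the leading $tC_p$ past the homotopy fixed points: one wants $X^{tC_p\, hC_{p^{j}}\,\cdots}\simeq X^{hC_{p^j}\, tC_p\,\cdots}$, which follows because $tC_p$ on the appropriate subcategory is itself the gluing functor of a recollement and $(-)^{hC_{p^j}}$ is computed fiberwise in the recollement (equivalently, one repeatedly invokes the Tate orbit lemma to replace the innermost orbits by fixed points before taking the outer $tC_p$, exactly as in the $k=2$ case).

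The main obstacle I anticipate is bookkeeping the residual group actions correctly at each stage — tracking which subquotient $C_{p^{n-j}}$ acts, and verifying that the various "canonical maps" asserted to agree really do agree coherently — rather than any deep new input. The only genuinely nontrivial mathematical ingredient is the Tate orbit lemma \cite[Lem.~I.2.1]{NS18}, used (possibly several times) to turn homotopy orbits into homotopy fixed points after a Tate construction; everything else is a formal consequence of the reconstruction theorem and the exactness of the functors involved. One should be careful that the hypothesis needed to invoke the Tate orbit lemma at each inductive stage — bounded-belowness of the relevant underlying spectrum — is preserved, which it is since Tate constructions and homotopy fixed points over $p$-groups preserve bounded-belowness (using \cref{lem:TateNilpotent} and standard convergence of the homotopy fixed point spectral sequence).
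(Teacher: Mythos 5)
There is a genuine gap, and it sits in the very first step of your base case. You assert that ``$\tau C_{p^2}$ factors through evaluation at the intermediate vertex $1$,'' i.e.\ that $X^{\tau C_{p^2}} \simeq X^{t C_p\, \tau C_p} = (X^{tC_p})^{tC_p}$, citing \cref{vrn:ReconstructionEquivalence}(3). That item is a statement about evaluating objects of $\Fun^{\cocart}_{/[i:j]}(\sd([i:j]),-)$ at intermediate vertices; it says nothing about the pushforward functors of the geometric locus composing. Indeed $\Sp^{G}_{\locus{\phi}} \to \fS$ is only a \emph{locally} cocartesian fibration, and the failure of $\tau C_{p^2}$ to equal $tC_p \circ tC_p$ is precisely what is at stake: the canonical map $X^{\tau C_{p^k}} \to X^{tC_p\,\tau C_{p^{k-1}}}$ has fiber $(X_{hC_p})^{\tau C_{p^{k-1}}}$, which is exactly the content of the second half of the lemma, and this fiber is nonzero in general. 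If your factorization claim were correct, \cref{cor:CanonicalFunctorsEquivalences} would hold with no bounded-below hypothesis, contradicting the known failure of the Tate orbit lemma for unbounded spectra. Your argument thus assumes (a stronger, false version of) the corollary that this lemma is designed to feed into, and the logic runs backwards.

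A second, related problem: the lemma carries no bounded-below hypothesis, but your proof invokes the Tate orbit lemma at every stage and therefore only addresses bounded-below $X$. The correct, unconditional identity is $X^{\tau C_{p^{i+1}}} \simeq X^{hC_p\,\tau C_{p^i}}$ (with homotopy \emph{fixed points} inside, not a Tate construction), and it is purely formal: apply $\Psi^{C_p}$ to the recollement fiber sequence $j_! \to j_* \to i_*\Phi^{C_p}j_*$ on $\Sp^{C_{p^n}}$, note that $\Psi^{C_p}(j_!X)$ is Borel-torsion with underlying Borel spectrum $X_{hC_p}$ while $\Psi^{C_p}(j_*X) \simeq j'_*(X^{hC_p})$ is Borel-complete (this is \cite[Lem.~4.35]{QS21a}), and then apply $\phi^{C_{p^i}}$ for $i>0$: the torsion term dies, giving the equivalence, and the same fiber sequence mapped to its Borel completion identifies the fiber of the canonical map as $(X_{hC_p})^{\tau C_{p^{k-1}}}$. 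No Tate orbit lemma is needed anywhere in this argument; it only enters afterwards, in \cref{cor:CanonicalFunctorsEquivalences}, to show that this fiber vanishes when $X$ is bounded-below.
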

\begin{proof} Consider the recollement
\[ \begin{tikzcd}[row sep=4ex, column sep=6ex, text height=1.5ex, text depth=0.25ex]
\Sp^{h C_{p^n}} \ar[shift right=1,right hook->]{r}[swap]{j_{\ast}} & \Sp^{C_{p^n}} \ar[shift right=2]{l}[swap]{j^{\ast}} \ar[shift left=2]{r}{i^{\ast} \simeq \Phi^{C_p}} & \Sp^{C_{p^{n-1}}} \ar[shift left=1,left hook->]{l}{i_{\ast}}
\end{tikzcd} \]
and the associated fiber sequence $j_! \to j_{\ast} \to i_{\ast} i^{\ast} j_{\ast}$. By \cite[Lem.~4.35]{QS21a} applied to $C_p \trianglelefteq C_{p^n} \trianglelefteq C_{p^n}$, we see that $\Psi^{C_p}(j_! X)$ is $C_{p^{n-1}}$-Borel torsion and $\Psi^{C_p}(j_{\ast} X) \simeq j'_{\ast}( X^{h C_p})$ for $j'_{\ast}: \Sp^{h C_{p^{n-1}}} \to \Sp^{C_{p^{n-1}}}$. Therefore, the fiber sequence of $C_{p^{n-1}}$-spectra
\[ \Psi^{C_p}(j_! X) \to \Psi^{C_p}(j_{\ast} X) \to \Psi^{C_p}(i_{\ast} i^{\ast} j_{\ast} X) \simeq \Phi^{C_p} (j_{\ast} X) \]
yields the fiber sequence of underlying Borel $C_{p^{n-1}}$-spectra
\[ X_{h C_p} \to X^{h C_p} \to X^{t C_p} \]
and, applying $\phi^{C_{p^i}}:\Sp^{C_{p^{n-1}}} \to \Fun(B C_{p^{n-1-i}}, \Sp)$ for $0 < i \leq n-1$, the fiber sequence of Borel $(C_{p^{n-1-i}})$-spectra
\[ 0 \to X^{h C_p \tau C_{p^i}} \to X^{\tau C_{p^{i+1}}}. \]
We thereby deduce the equivalence $X^{h C_p \tau C_{p^i}} \simeq X^{\tau C_{p^{i+1}}}$, and the remaining equivalences follow by replacing $X$ by $X^{h C_{p^k}}$.

Next, we map the fiber sequence in $\Sp^{C_{p^{n-1}}}$ to its Borel completion
\[ \begin{tikzcd}[row sep=4ex, column sep=4ex, text height=1.5ex, text depth=0.25ex]
\Psi^{C_p}(j_! X) \ar{r} \ar{d} & \Psi^{C_p}(j_{\ast} X) \ar{r} \ar{d}{\simeq} &  \Phi^{C_p} (j_{\ast} X) \ar{d} \\
(j'_{\ast} {j'}^{\ast}) (\Psi^{C_p}(j_! X))  \ar{r} & (j'_{\ast} {j'}^{\ast}) (\Psi^{C_p}(j_{\ast} X)) \ar{r} & (j'_{\ast} {j'}^{\ast}) (\Phi^{C_p} (j_{\ast} X)).
\end{tikzcd} \]
We note that by definition, the canonical map $X^{\tau C_{p^k}} \to X^{t C_p \tau C_{p^{k-1}}}$ is obtained as $\phi^{C_{p^{k-1}}}$ of the unit map $\Phi^{C_p} (j_{\ast} X) \to (j'_{\ast} {j'}^{\ast}) (\Phi^{C_p} (j_{\ast} X))$. Because the middle map is an equivalence, the fiber of the righthand map is canonically equivalent to the cofiber of the lefthand map. But because $(j_! X)^{C_p}$ is Borel-torsion with $X_{h C_p}$ as its underlying Borel $C_{p^{n-1}}$-spectrum, $\phi^{C_{p^{k-1}}}$ of that cofiber is definitionally $(X_{ hC_p})^{\tau C_{p^{k-1}}}$.
\end{proof}

\begin{cor} \label{cor:CanonicalFunctorsEquivalences} Suppose that $X$ is a bounded-below Borel $C_{p^n}$-spectrum. Then the canonical map
\[ X^{\tau C_{p^k}} \to X^{t C_p \tau C_{p^{k-1}}} \]
is an equivalence for all $1 < k \leq n$.
\end{cor}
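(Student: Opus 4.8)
The plan is to reduce the statement to a purely $C_{p^m}$-Borel assertion and prove that by induction. By \cref{lm:identifyGenTate}, for $1<k\le n$ the fiber of the canonical map $X^{\tau C_{p^k}}\to X^{tC_p\tau C_{p^{k-1}}}$ is $(X_{hC_p})^{\tau C_{p^{k-1}}}$, so it suffices to show this object vanishes. Since $X_{hC_p}$ is again a bounded-below Borel spectrum for the quotient group, I would establish the following more flexible statement $(\star)$: for every $j\ge 1$, every $m>j$, and every bounded-below Borel $C_{p^m}$-spectrum $W$, one has $(W_{hC_p})^{\tau C_{p^j}}\simeq 0$. Here $W_{hC_p}$ denotes homotopy orbits along $C_p\trianglelefteq C_{p^m}$ (a Borel $C_{p^{m-1}}$-spectrum), and $\tau C_{p^j}$ lands in Borel $C_{p^{m-1-j}}$-spectra; the corollary is the case $W=X$, $m=n$, $j=k-1$.

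I would prove $(\star)$ by induction on $j$. The base case $j=1$ is exactly the Tate orbit lemma of Nikolaus--Scholze: since $\tau C_p\simeq tC_p$, since equivalences of Borel spectra are detected on underlying spectra, and since homotopy orbits commute with restriction, the underlying spectrum of $(W_{hC_p})^{tC_p}$ is $\bigl((\res^{C_{p^m}}_{C_{p^2}}W)_{hC_p}\bigr)^{tC_p}$, which vanishes by \cite[Lem.~I.2.1]{NS18} applied to the bounded-below Borel $C_{p^2}$-spectrum $\res^{C_{p^m}}_{C_{p^2}}W$. For the inductive step ($j\ge 2$), I would apply \cref{lm:identifyGenTate} to $Y:=W_{hC_p}$, a bounded-below Borel $C_{p^{m-1}}$-spectrum for which $1<j\le m-1$, producing a fiber sequence
\[ (Y_{hC_p})^{\tau C_{p^{j-1}}}\to Y^{\tau C_{p^j}}\to (Y^{tC_p})^{\tau C_{p^{j-1}}}. \]
Its left term equals $\bigl((W_{hC_p})_{hC_p}\bigr)^{\tau C_{p^{j-1}}}$ by transitivity of homotopy orbits, hence is $0$ by the inductive hypothesis for $j-1$ applied to $W_{hC_p}$; its right term is $\tau C_{p^{j-1}}$ applied to $Y^{tC_p}=(W_{hC_p})^{tC_p}$, which is already $0$ by the base case applied to $W$ (valid since $m>j\ge 2$, so $m>1$). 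Therefore $Y^{\tau C_{p^j}}=(W_{hC_p})^{\tau C_{p^j}}\simeq 0$, completing the induction.

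I do not expect a genuine obstacle here: \cref{lm:identifyGenTate} has already done the structural work, and the recursion is short. The only point that needs care is the index bookkeeping — keeping track of which subgroup each of $tC_p$, $hC_p$, $\tau C_{p^i}$ is taken along (a $C_p\le C_{p^{m-1}}$ corresponds under $C_{p^m}\twoheadrightarrow C_{p^{m-1}}$ to $C_{p^2}\le C_{p^m}$, which is what lets the Tate orbit lemma enter), and checking at each step that the inequalities $m>j$, $m>1$, $1<j\le m-1$ hold so that \cref{lm:identifyGenTate} and the Tate orbit lemma apply. Phrasing the inductive hypothesis uniformly over all $m$ and all bounded-below $W$ (rather than only for $W=X$) is precisely what makes the recursion close up.
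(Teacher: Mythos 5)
Your proof is correct and follows essentially the same route as the paper's: reduce via \cref{lm:identifyGenTate} to the vanishing of $(X_{hC_p})^{\tau C_{p^{k-1}}}$, then induct (the paper on $k$, you on $j=k-1$) with the inductive hypothesis stated uniformly over all bounded-below Borel spectra, using the Tate orbit lemma for the base case and for the right-hand term of the fiber sequence at each step. The only difference is expository — your explicit bookkeeping of which group each construction is taken along — and nothing of substance changes.
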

\begin{proof} By \cref{lm:identifyGenTate}, we may equivalently show that $(X_{h C_p})^{\tau C_{p^{k-1}}} \simeq 0$. We proceed by induction on $k$ (and prove the claim for all $n \geq k$ with $k$ fixed since we may ignore residual action for vanishing). For the base case $k=2$, the fiber of the canonical map is $(X_{h C_p})^{t C_p}$, which vanishes by the Tate orbit lemma. Now suppose $k>2$ and $(Y_{h C_p})^{\tau C_{p^i}} \simeq 0$ for all bounded-below $Y \in \Sp^{h C_{p^m}}$, $m \geq i+1$ and $1 \leq i<k-1$. By \cref{lm:identifyGenTate}, we have a fiber sequence
\[ ((X_{h C_{p}})_{h C_p})^{\tau C_{p^{k-2}}} \to (X_{h C_p})^{\tau C_{p^{k-1}}} \to (X_{h C_p})^{t C_p \tau C_{p^{k-2}}}. \]
Since $X_{h C_{p}}$ remains bounded-below, the inductive hypothesis ensures that the left term vanishes, and the right term vanishes by the Tate orbit lemma again. We conclude that the middle term vanishes.
\end{proof}

\begin{lem} \label{lem:dihedralIdentifyGenTate} Suppose $X$ is a $C_2$-spectrum with twisted $\mu_{p^n}$-action. For $1 < k \leq n$, we have twisted $\mu_{p^{n-k}}$-equivariant equivalences
\[ X^{\tau_{C_2} \mu_{p^k}} \simeq X^{h_{C_2} \mu_p \tau_{C_2} \mu_{p^{k-1}}} \simeq X^{h_{C_2} \mu_{p^2} \tau_{C_2} \mu_{p^{k-2}}} \simeq \cdots \simeq X^{h_{C_2} \mu_{p^{k-1}} t_{C_2} \mu_p}, \]
with respect to which the canonical map $$X^{\tau_{C_2} \mu_{p^k}} \to X^{t_{C_2} \mu_p \tau_{C_2} \mu_{p^{k-1}}}$$ fits into the fiber sequence
\[ (X_{h_{C_2} \mu_p})^{\tau_{C_2} \mu_{p^{k-1}}} \to X^{h_{C_2} \mu_p \tau_{C_2} \mu_{p^{k-1}}} \to X^{t_{C_2} \mu_p \tau_{C_2} \mu_{p^{k-1}}}.  \]
\end{lem}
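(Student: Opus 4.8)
The plan is to transcribe the proof of \cref{lm:identifyGenTate} into the $C_2$-parametrized dihedral setting, replacing the recollement $(\Sp^{h C_{p^n}}, \Sp^{C_{p^{n-1}}})$ on $\Sp^{C_{p^n}}$ by the recollement \eqref{eq:dihedral_recollement} on $\Sp^{D_{2p^n}}$ with open part $\Fun_{C_2}(B^t_{C_2} \mu_{p^n}, \underline{\Sp}^{C_2})$ (adjunctions $j_!$, $j^* = \sU_b[\mu_{p^n}]$, $j_* = \sF^{\vee}_b[\mu_{p^n}]$) and closed part $\Sp^{D_{2p^{n-1}}}$ ($i^* = \Phi^{\mu_p}$, $i_*$). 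First I would apply the categorical $\mu_p$-fixed points functor $\Psi^{\mu_p}\colon \Sp^{D_{2p^n}}\to \Sp^{D_{2p^{n-1}}}$ to the recollement fiber sequence $j_! \to j_* \to i_* i^* j_*$; since $\Psi^{\mu_p} i_* \simeq \id$ (categorical and geometric $\mu_p$-fixed points agree on $\Gamma_{\mu_{p^n}}$-local objects, as in \cref{setup:Dihedral}), this produces a fiber sequence of $D_{2p^{n-1}}$-spectra
\[ \Psi^{\mu_p}(j_! X) \longrightarrow \Psi^{\mu_p}(j_* X) \longrightarrow \Phi^{\mu_p}(j_* X). \]

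The key input is the dihedral instance of \cite[Lem.~4.35]{QS21a}, applied to the normal subgroup $\mu_p$ of $D_{2p^n}$ and the family $\Gamma_{\mu_{p^n}}$ of $\mu_{p^n}$-free subgroups: it identifies $\Psi^{\mu_p}(j_!X)$ as a $\Gamma_{\mu_{p^{n-1}}}$-torsion $D_{2p^{n-1}}$-spectrum whose underlying $C_2$-spectrum with twisted $\mu_{p^{n-1}}$-action is $X_{h_{C_2}\mu_p}$, and identifies $\Psi^{\mu_p}(j_*X)\simeq \sF^{\vee}_b[\mu_{p^{n-1}}](X^{h_{C_2}\mu_p})$. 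Applying $\sU_b[\mu_{p^{n-1}}]$ to the displayed fiber sequence then recovers the parametrized norm cofiber sequence $X_{h_{C_2}\mu_p}\to X^{h_{C_2}\mu_p}\to X^{t_{C_2}\mu_p}$ of \cref{setup:Dihedral}. I would then apply, for each $1\le i\le n-1$, the localization $L[\zeta]_i = \sU_b[\mu_{p^{n-1-i}}]\circ\Phi^{\mu_{p^i}}$ on $\Sp^{D_{2p^{n-1}}}$ to the displayed fiber sequence: the first term dies since $\Phi^{\mu_{p^i}}$ annihilates $\Gamma_{\mu_{p^{n-1}}}$-torsion objects for $i\ge 1$; the second becomes $(X^{h_{C_2}\mu_p})^{\tau_{C_2}\mu_{p^i}}$ by the stratum equivalence \eqref{eq:stratum_equivalence} and the definition of $\tau_{C_2}\mu_{p^i}$; and the third becomes $\sU_b[\mu_{p^{n-1-i}}]\Phi^{\mu_{p^{i+1}}}(j_*X) = X^{\tau_{C_2}\mu_{p^{i+1}}}$, using $\Phi^{\mu_{p^i}}\circ\Phi^{\mu_p}\simeq\Phi^{\mu_{p^{i+1}}}$. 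With $k = i+1$ this is the fiber sequence $0\to X^{h_{C_2}\mu_p\,\tau_{C_2}\mu_{p^{k-1}}}\to X^{\tau_{C_2}\mu_{p^k}}$, hence the first equivalence; the remaining ones then follow by replacing $X$ by $X^{h_{C_2}\mu_{p^j}}$ and inducting on $j$, using $\tau_{C_2}\mu_p\simeq t_{C_2}\mu_p$ at the last step.

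For the statement about the canonical map, I would map the displayed fiber sequence to its $\Gamma_{\mu_{p^{n-1}}}$-completion $\sF^{\vee}_b[\mu_{p^{n-1}}]\,\sU_b[\mu_{p^{n-1}}](-)$. Since $\Psi^{\mu_p}(j_*X)$ is already $\Gamma_{\mu_{p^{n-1}}}$-complete, the middle comparison map is an equivalence, so the fiber of the right-hand comparison map $\Phi^{\mu_p}(j_*X)\to \sF^{\vee}_b[\mu_{p^{n-1}}]\sU_b[\mu_{p^{n-1}}]\Phi^{\mu_p}(j_*X)$ is canonically the cofiber of the left-hand one. Applying $L[\zeta]_{k-1}$, the right-hand comparison map becomes, by construction of $\tau_{C_2}\mu_{p^{k-1}}$, the canonical map $X^{\tau_{C_2}\mu_{p^k}}\to X^{t_{C_2}\mu_p\,\tau_{C_2}\mu_{p^{k-1}}}$; and $L[\zeta]_{k-1}$ of the cofiber of the left-hand map is $(X_{h_{C_2}\mu_p})^{\tau_{C_2}\mu_{p^{k-1}}}$, because $\Phi^{\mu_{p^{k-1}}}$ kills the $\Gamma_{\mu_{p^{n-1}}}$-torsion object $\Psi^{\mu_p}(j_!X)$ and hence computes the generalized Tate construction $\tau_{C_2}\mu_{p^{k-1}}$ of its underlying twisted-equivariant spectrum $X_{h_{C_2}\mu_p}$ on the completion. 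This yields the asserted fiber sequence $(X_{h_{C_2}\mu_p})^{\tau_{C_2}\mu_{p^{k-1}}}\to X^{h_{C_2}\mu_p\,\tau_{C_2}\mu_{p^{k-1}}}\to X^{t_{C_2}\mu_p\,\tau_{C_2}\mu_{p^{k-1}}}$.

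The main obstacle I anticipate is organizational rather than conceptual: verifying that \cite[Lem.~4.35]{QS21a} applies verbatim to produce the two identifications of $\Psi^{\mu_p}\circ j_!$ and $\Psi^{\mu_p}\circ j_*$ in terms of parametrized homotopy orbits and fixed points, and carrying the residual twisted $\mu_{p^{n-k}}$-actions through all the recollement manipulations (these are automatic once one works with the parametrized functors rather than evaluating on a $C_2$-basepoint). The one identification requiring genuine care is the last one — that geometric fixed points of the cofiber of a $\Gamma$-torsion object mapping to its $\Gamma$-completion compute the generalized Tate construction of the underlying Borel-type object — but this is exactly the $C_2$-parametrized shadow of the corresponding step in \cref{lm:identifyGenTate}. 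Notably, unlike the dihedral Tate orbit lemma, this argument does not bifurcate according to the parity of $p$.
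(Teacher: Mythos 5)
Your proposal is correct and follows essentially the same route as the paper: apply $\Psi^{\mu_p}$ to the recollement fiber sequence $j_!\to j_*\to i_*i^*j_*$, invoke \cite[Lem.~4.35]{QS21a} to identify $\Psi^{\mu_p}j_!X$ as $\Gamma_{\mu_{p^{n-1}}}$-torsion with underlying object $X_{h_{C_2}\mu_p}$ and $\Psi^{\mu_p}j_*X\simeq \sF^{\vee}_b[\mu_{p^{n-1}}](X^{h_{C_2}\mu_p})$, apply the localizations $L[\zeta]_i$ to extract the string of equivalences, and compare with the $\Gamma$-completion to identify the canonical map. The paper's proof is exactly this transcription of \cref{lm:identifyGenTate}, so there is nothing to add.
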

\begin{proof} The strategy of the proof is the same as that of \cref{lm:identifyGenTate}, where we instead consider the recollement \eqref{eq:dihedral_recollement}
\[ \begin{tikzcd}[row sep=4ex, column sep=6ex, text height=1.5ex, text depth=0.25ex]
\Fun_{C_2}(B^t_{C_2} \mu_{p^n}, \underline{\Sp}^{C_2}) \ar[shift right=1,right hook->]{r}[swap]{j_{\ast}} & \Sp^{D_{2 p^n}} \ar[shift right=2]{l}[swap]{j^{\ast}} \ar[shift left=2]{r}{i^{\ast} \simeq \Phi^{{\mu}_p}} & \Sp^{D_{2p^{n-1}}} \ar[shift left=1,left hook->]{l}{i_{\ast}}.
\end{tikzcd} \]
Let $\Gamma = \Gamma_{\mu_{p^{n-1}}}$ be the $\mu_{p^{n-1}}$-free $D_{2p^{n-1}}$-family, and let 
\[ j'_{\ast} = \sF_b^{\vee}[\mu_{p^{n-1}}]: \Fun_{C_2}(B^t_{C_2} \mu_{p^{n-1}}, \underline{\Sp}^{C_2}) \to \Sp^{D_{2 p^{n-1}}}. \]
Applying \cite[Lem.~4.35]{QS21a} to the functor $\Psi^{\mu_p}: \Sp^{D_{2 p^n}} \to \Sp^{D_{2 p^{n-1}}}$, we see that in the fiber sequence
\[ \Psi^{\mu_p}(j_! X) \to \Psi^{\mu_p}(j_{\ast} X) \to \Phi^{\mu_p}(j_{\ast} X), \]
$\Psi^{\mu_p}(j_! X)$ is $\Gamma$-torsion and $\Psi^{\mu_p}(j_{\ast} X) \simeq j'_{\ast}(X^{h_{C_2} \mu_p})$. As before, for $0 \leq i \leq n-1$ let
\[ L[\zeta]_i: \Sp^{D_{2 p^{n-1}}} \to \Fun_{C_2}(B^t_{C_2} \mu_{p^{n-1-i}}, \underline{\Sp}^{C_2}) \]
be the localization functor. Then $L[\zeta]_0$ of the fiber sequence yields
\[ X_{h_{C_2} \mu_p} \to X^{h_{C_2} \mu_p} \to X^{t_{C_2} \mu_p} \]
whereas for $0< i\leq n-1$, $L[\zeta]_i$ of the fiber sequence yields
\[ 0 \to (X^{h_{C_2} \mu_p})^{\tau_{C_2} \mu_{p^i}} \xto{\simeq} X^{\tau_{C_2} \mu_{p^{i+1}}}  \]
from which we deduce the string of equivalences in the statement. Finally, if we map the fiber sequence of $D_{2p^{n-1}}$-spectra into its $\Gamma$-completion, then as in the proof of \cref{lm:identifyGenTate} we obtain the fiber sequence as in the statement.
\end{proof}

\begin{cor} \label{cor:dihedralTOLyieldsEquivs} Suppose $X$ is a $C_2$-spectrum with twisted $\mu_{p^n}$-action whose underlying spectrum is bounded-below. Then the canonical map
\[ X^{\tau_{C_2} \mu_{p^k}} \to (X^{t_{C_2} \mu_p})^{\tau_{C_2} \mu_{p^{k-1}}} \]
is an equivalence for all $1 < k \leq n$.
\end{cor}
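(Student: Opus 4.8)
The plan is to run the argument of \cref{cor:CanonicalFunctorsEquivalences} essentially verbatim, feeding in the dihedral Tate orbit lemma (\cref{lem:dihedralTOLEven} and \cref{lem:dihedralTOLOdd}) in place of the Tate orbit lemma \cite[Lem.~I.2.1]{NS18}, and \cref{lem:dihedralIdentifyGenTate} in place of \cref{lm:identifyGenTate}. First, by \cref{lem:dihedralIdentifyGenTate} the canonical map in the statement identifies with the right-hand map in the fiber sequence
\[ (X_{h_{C_2}\mu_p})^{\tau_{C_2}\mu_{p^{k-1}}} \longrightarrow X^{h_{C_2}\mu_p\,\tau_{C_2}\mu_{p^{k-1}}} \longrightarrow X^{t_{C_2}\mu_p\,\tau_{C_2}\mu_{p^{k-1}}}, \]
in which the middle term is canonically $X^{\tau_{C_2}\mu_{p^k}}$ and the right-hand term is $(X^{t_{C_2}\mu_p})^{\tau_{C_2}\mu_{p^{k-1}}}$. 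Hence it suffices to prove that $(X_{h_{C_2}\mu_p})^{\tau_{C_2}\mu_{p^{k-1}}}\simeq 0$ whenever $X$ is a $C_2$-spectrum with twisted $\mu_{p^n}$-action whose underlying spectrum is bounded-below and $1<k\le n$.

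I would prove this vanishing by induction on $k$, leaving the amount $n$ of ambient twisted action free (it is irrelevant for a vanishing statement, exactly as in \cref{cor:CanonicalFunctorsEquivalences}). The base case $k=2$ asks that $(X_{h_{C_2}\mu_p})^{t_{C_2}\mu_p}\simeq 0$ (using $\tau_{C_2}\mu_p\simeq t_{C_2}\mu_p$); after restricting $X$ to a $\mu_{p^2}$-twisted action — which keeps the underlying spectrum bounded-below — this is precisely the dihedral Tate orbit lemma. For the inductive step $k>2$, I would apply \cref{lem:dihedralIdentifyGenTate} to $X_{h_{C_2}\mu_p}$ (whose underlying spectrum $(X^1)_{h\mu_p}$ is again bounded-below) with $k$ replaced by $k-1$, obtaining a fiber sequence
\[ \bigl((X_{h_{C_2}\mu_p})_{h_{C_2}\mu_p}\bigr)^{\tau_{C_2}\mu_{p^{k-2}}} \longrightarrow (X_{h_{C_2}\mu_p})^{\tau_{C_2}\mu_{p^{k-1}}} \longrightarrow \bigl((X_{h_{C_2}\mu_p})^{t_{C_2}\mu_p}\bigr)^{\tau_{C_2}\mu_{p^{k-2}}}. \]
The left-hand term vanishes by the inductive hypothesis applied to $X_{h_{C_2}\mu_p}$, while the right-hand term vanishes because $(X_{h_{C_2}\mu_p})^{t_{C_2}\mu_p}\simeq 0$ by the base case and $\tau_{C_2}\mu_{p^{k-2}}$ is exact. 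So the middle term vanishes, completing the induction; taking $n$ equal to the original then gives the corollary.

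I do not expect a real obstacle here: the corollary is a purely formal consequence of results already established, with all of the analytic content concentrated in the proofs of \cref{lem:dihedralTOLEven} and \cref{lem:dihedralTOLOdd} (the $p=2$ case being the difficult one). The only points needing a little care are the bookkeeping of residual twisted $\mu$-actions and the observation that ``$X^{t_{C_2}\mu_p\,\tau_{C_2}\mu_{p^{k-1}}}$'' is really $\tau_{C_2}\mu_{p^{k-1}}$ applied to $X^{t_{C_2}\mu_p}$, both of which are immediate from \cref{lem:dihedralIdentifyGenTate} and the compatibility of the $C_2$-generalized Tate functors with restriction.
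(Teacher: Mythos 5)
Your proof is correct and is essentially identical to the paper's: the paper simply says the argument of \cref{cor:CanonicalFunctorsEquivalences} carries over using the dihedral Tate orbit lemma and \cref{lem:dihedralIdentifyGenTate}, which is exactly the reduction-to-vanishing-of-$(X_{h_{C_2}\mu_p})^{\tau_{C_2}\mu_{p^{k-1}}}$ and induction on $k$ that you carry out. Your explicit bookkeeping of the residual twisted actions and of the base case via restriction to a twisted $\mu_{p^2}$-action matches the intended argument.
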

\begin{proof} Using the dihedral Tate orbit lemma and \cref{lem:dihedralIdentifyGenTate}, the proof is the same as that of \cref{cor:CanonicalFunctorsEquivalences}.
\end{proof}

To make effective use of \cref{cor:CanonicalFunctorsEquivalences} and \cref{cor:dihedralTOLyieldsEquivs} in analyzing $\Sp^{C_{p^n}}_{bb}$ and $\Sp^{D_{2p^n}}_{ubb}$, we need to introduce some more notation.

\begin{ntn} Let 
\begin{align*} \Fun^{\cocart}_{/\Delta^n}(\sd(\Delta^n), \Sp^{C_{p^n}}_{\locus{\phi}})_{bb} \subset \Fun^{\cocart}_{/\Delta^n}(\sd(\Delta^n), \Sp^{C_{p^n}}_{\locus{\phi}}) \\
\Fun^{\cocart}_{/\Delta^n}(\sd(\Delta^n), \Sp^{D_{2 p^n}}_{\locus{\phi,\zeta}})_{ubb} \subset \Fun^{\cocart}_{/\Delta^n}(\sd(\Delta^n), \Sp^{D_{2 p^n}}_{\locus{\phi,\zeta}})
\end{align*}
be the full subcategories on functors that evaluate on all singleton strings to bounded-below spectra, resp. underlying bounded-below $C_2$-spectra.
\end{ntn}

\begin{lem} We have equivalences
\begin{align*} \Theta: & \Fun^{\cocart}_{/\Delta^n}(\sd(\Delta^n), \Sp^{C_{p^n}}_{\locus{\phi}})_{bb} \xto{\simeq} \Sp^{C_{p^n}}_{bb}, \\
\Theta[\zeta]: & \Fun^{\cocart}_{/\Delta^n}(\sd(\Delta^n), \Sp^{D_{2 p^n}}_{\locus{\phi,\zeta}})_{ubb} \xto{\simeq} \Sp^{D_{2 p^n}}_{ubb}
\end{align*}
obtained by restriction from the equivalences of \cite[Thm.~2.46]{QS21a} and \cref{vrn:ReconstructionEquivalence}.
\end{lem}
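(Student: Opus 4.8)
The plan is to deduce both restricted equivalences purely formally from the fact, already established in \cite[Thm.~2.46]{QS21a} and \cref{vrn:ReconstructionEquivalence}(2), that $\Theta$ and $\Theta[\zeta]$ are equivalences of $\infty$-categories. Since on each line of the statement the source and the target are full subcategories of the $\infty$-categories related by $\Theta$ (resp.\ $\Theta[\zeta]$), it suffices to check that these equivalences \emph{match} the two full subcategories; concretely, for a cocartesian section $F$ of $\Sp^{C_{p^n}}_{\locus{\phi}} \to \Delta^n$ one shows $\Theta(F) \in \Sp^{C_{p^n}}_{bb}$ if and only if $F$ sends every singleton string to a bounded-below (Borel) spectrum, and similarly in the dihedral case for $\Theta[\zeta]$ and $\Sp^{D_{2p^n}}_{ubb}$. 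Given such a matching, the restricted functor is automatically fully faithful, and it is essentially surjective by running the ``only if'' direction through the inverse equivalence.

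The core of the argument is the identification of the value of $F$ on a singleton string. Here I would invoke \cref{vrn:ReconstructionEquivalence}(3) — specifically its final assertion with $l = k$ — which says that the composite of $\Theta[\zeta]^{-1}$ with evaluation at the vertex $\{k\} \in \sd(\Delta^n)$ is the localization functor $L[\zeta]_k$; the same holds in the cyclic case via \cref{rec:reconstruction}. Thus, writing $X = \Theta[\zeta](F)$, we get $F(\{k\}) \simeq L[\zeta]_k(X) = \sU_b[\mu_{p^{n-k}}]\,\Phi^{\mu_{p^k}}(X)$ by \cref{eq:dihedral_localization} (and $F(\{k\}) \simeq \sU_b\,\Phi^{C_{p^k}}(X)$ in the cyclic case). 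Passing to underlying spectra via \cref{cnv:basepoint}, the underlying spectrum of $F(\{k\})$ is precisely $X^{\phi\mu_{p^k}}$, resp.\ $X^{\phi C_{p^k}}$. Next I would record the needed description of $\Sp^{D_{2p^n}}_{ubb}$: using the $t$-exactness of restriction functors together with the base-change equivalence $\res^{D_{2p^{n-j}}}_{\mu_{p^{n-j}}} \circ \Phi^{\mu_{p^j}} \simeq \Phi^{\mu_{p^j}} \circ \res^{D_{2p^n}}_{\mu_{p^n}}$, the $\phi\mu_{p^j}$-geometric fixed points of $\res^{D_{2p^n}}_{\mu_{p^n}} X$ agree with the underlying spectrum of $\Phi^{\mu_{p^j}}(X)$; combined with \cref{lem:BddBelowEqualsSliceBddBelow} applied to the group $\mu_{p^n}$, this gives $X \in \Sp^{D_{2p^n}}_{ubb}$ if and only if the underlying spectrum of $\Phi^{\mu_{p^j}}(X)$ is bounded-below for all $0 \le j \le n$.

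With these two facts the matching is immediate: $X = \Theta[\zeta](F)$ is underlying bounded-below $\iff$ the underlying spectrum of $\Phi^{\mu_{p^j}}(X)$ is bounded-below for all $j$ $\iff$ the underlying spectrum of $F(\{j\})$ is bounded-below for all $j$ $\iff$ $F$ lies in $\Fun^{\cocart}_{/\Delta^n}(\sd(\Delta^n), \Sp^{D_{2p^n}}_{\locus{\phi,\zeta}})_{ubb}$. The cyclic case is the same, applying \cref{lem:BddBelowEqualsSliceBddBelow} directly to $C_{p^n}$ and using the trivial observation that a Borel $C_{p^{n-k}}$-spectrum is bounded-below exactly when its underlying spectrum is. The only step that requires genuine care is the identification of the inverse reconstruction section on singleton strings with the geometric-fixed-points localization $L[\zeta]_k$ — one must unwind the barycentric-subdivision section and the recollement-compatibility of the comparison functors in \cref{vrn:ReconstructionEquivalence}(3); everything else is formal bookkeeping with $t$-structures and standard compatibilities of geometric fixed points with restriction.
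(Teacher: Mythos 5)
Your proposal is correct and is essentially the paper's own argument: the paper's proof consists precisely of observing that $\Phi^{C_{p^k}}\circ\Theta$ and $L[\zeta]_k\circ\Theta[\zeta]$ are homotopic to evaluation at the singleton strings, so the two bounded-below conditions match by definition (together with \cref{lem:BddBelowEqualsSliceBddBelow}). You have simply written out the bookkeeping that the paper leaves implicit.
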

\begin{proof} This follows from the definitions after recalling that $\Phi^{H} \circ \Theta$ and $L[\zeta]_k \circ \Theta[\zeta]$ are homotopic to evaluation at $H$ and $k$, respectively.
\end{proof}

\begin{ntn} Let $(t C_p)_{\bullet}: \Delta^n \to \Cat_{\infty}$ be the functor defined by
\[ \Sp^{h C_{p^n}} \xto{t^{C_p}} \Sp^{h C_{p^{n-1}}} \xto{t^{C_p}} \cdots \xto{t^{C_p}} \Sp^{h C_p} \xto{t^{C_p}} \Sp \]
and let $\Sp^{h C_{p^n}}_{\Tate} \to (\Delta^n)^{\op}$ be the cartesian fibration classified by $(t C_p)_{\bullet}$.

Similarly, let $(t_{C_2} \mu_p)_{\bullet}: \Delta^n \to \Cat_{\infty}$ be the functor defined by
\[ \Fun_{C_2}(B^t_{C_2} \mu_{p^n}, \underline{\Sp}^{C_2}) \xtolong{t_{C_2} {\mu_p}}{1.2} \Fun_{C_2}(B^t_{C_2} \mu_{p^{n-1}}, \underline{\Sp}^{C_2}) \xtolong{t_{C_2} {\mu_p}}{1.2} \cdots \xtolong{t_{C_2} {\mu_p}}{1.2} \Sp^{C_2} \]
and let $\Sp^{h_{C_2} \mu_{p^n}}_{\Tate} \to (\Delta^n)^{\op}$ be the cartesian fibration classified by $(t_{C_2} \mu_p)_{\bullet}$.
\end{ntn}

\begin{dfn} Given a section $X: (\Delta^n)^{\op} \to \Sp^{h C_{p^n}}_{\Tate}$, we say that $X$ is \emph{bounded-below} if $X(k)$ is bounded-below for all $k$. Let
$\Fun_{/(\Delta^n)^{\op}}((\Delta^n)^{\op},\Sp^{h C_{p^n}}_{\Tate})_{bb}$ denote the full subcategory on the bounded-below objects.

Similarly, we say that a section $X:(\Delta^n)^{\op} \to \Sp^{h_{C_2} \mu_{p^n}}_{\Tate}$ is \emph{underlying bounded-below} if $X(k)$ is underlying bounded-below for all $k$. Let $\Fun_{/(\Delta^n)^{\op}}((\Delta^n)^{\op},\Sp^{h_{C_2} \mu_{p^n}}_{\Tate})_{ubb}$ denote the full subcategory on the underlying bounded-below objects.
\end{dfn}


We are now ready to prove the main result of this subsection. Its statement involves the notions of $1$-generated and extendable objects in an $[n]$-stratified stable $\infty$-category introduced in \cite[\S 4]{Sha21} as \cite[Def.~4.5]{Sha21} and \cite[Def.~4.12]{Sha21}, respectively. There, we proved the following abstract results:
\begin{enumerate}[leftmargin=*]
\item \cite[Thm.~4.15]{Sha21} Let $C \to \Delta^n$ be a locally cocartesian fibration whose fibers are stable $\infty$-categories and whose pushforward functors are exact. Let $\sd_1(\Delta^n) \subset \sd(\Delta^n)$ be the subposet on strings $\{ k \}$ and $\{ k < k+1 \}$. Then the restriction functor
\[ \gamma_n^*: \Fun^{\cocart}_{/\Delta^n}(\sd(\Delta^n), C) \to \Fun^{\cocart}_{/\Delta^n}(\sd_1(\Delta^n), C) \]
restricts to an equivalence between the full subcategories of $1$-generated objects on the left and extendable objects on the right.
\item \cite[Prop.~4.17]{Sha21} Let $C^{\vee} \to (\Delta^n)^{\op}$ denote the cartesian fibration classified by the composition $C_0 \to C_1 \to ... \to C_n$ of adjacent pushforward functors. Then we have an equivalence
\[ \Fun^{\cocart}_{/\Delta^n}(\sd_1(\Delta^n), C) \simeq \Fun_{/(\Delta^n)^{\op}}((\Delta^n)^{\op}, C^{\vee}), \]
so $\Fun^{\cocart}_{/\Delta^n}(\sd_1(\Delta^n), C)$ decomposes as an iterated pullback
\[ \Ar(C_n) \times_{C_n} \Ar(C_{n-1}) \times_{C_{n-1}} ... \times_{C_2} \Ar(C_1) \times_{C_1} C_0 \]
where the structure maps to the right are given by evaluation at the target and those to the left are given by pushforward functors applied to the source.
\end{enumerate}

\begin{prp} \label{prp:EquivalenceOnBoundedBelowAtFiniteLevel} We have inclusions of full subcategories
\begin{align*} \Fun^{\cocart}_{/\Delta^n}(\sd(\Delta^n), \Sp^{C_{p^n}}_{\locus{\phi}})_{bb} &\subset \Fun^{\cocart}_{/\Delta^n}(\sd(\Delta^n), \Sp^{C_{p^n}}_{\locus{\phi}})_{\gen{1}},  \\
\Fun_{/(\Delta^n)^{\op}}((\Delta^n)^{\op},\Sp^{h C_{p^n}}_{\Tate})_{bb} &\subset \Fun_{/(\Delta^n)^{\op}}((\Delta^n)^{\op},\Sp^{h C_{p^n}}_{\Tate})_{\ext}, \\
\Fun^{\cocart}_{/\Delta^n}(\sd(\Delta^n), \Sp^{D_{2 p^n}}_{\locus{\phi,\zeta}})_{ubb} &\subset \Fun^{\cocart}_{/\Delta^n}(\sd(\Delta^n), \Sp^{D_{2 p^n}}_{\locus{\phi,\zeta}})_{\gen{1}}, \\
\Fun_{/(\Delta^n)^{\op}}((\Delta^n)^{\op},\Sp^{h_{C_2} \mu_{p^n}}_{\Tate})_{ubb} &\subset \Fun_{/(\Delta^n)^{\op}}((\Delta^n)^{\op},\Sp^{h_{C_2} \mu_{p^n}}_{\Tate})_{\ext},
\end{align*}
such that the equivalences of \cite[Thm.~4.15]{Sha21} between $1$-generated and extendable objects restrict to
\begin{align*} \Fun^{\cocart}_{/\Delta^n}(\sd(\Delta^n), \Sp^{C_{p^n}}_{\locus{\phi}})_{bb} & \xto{\simeq} \Fun_{/(\Delta^n)^{\op}}((\Delta^n)^{\op},\Sp^{h C_{p^n}}_{\Tate})_{bb}, \\
\Fun^{\cocart}_{/\Delta^n}(\sd(\Delta^n), \Sp^{D_{2 p^n}}_{\locus{\phi,\zeta}})_{ubb} & \xto{\simeq} \Fun_{/(\Delta^n)^{\op}}((\Delta^n)^{\op},\Sp^{h_{C_2} \mu_{p^n}}_{\Tate})_{ubb}.
\end{align*}
\end{prp}
\begin{proof} The inclusions follow from \cref{cor:CanonicalFunctorsEquivalences}, \cref{cor:dihedralTOLyieldsEquivs}, and \cite[Lem.~4.6]{Sha21}. Matching the (underlying) bounded-below conditions then implies that the equivalence of \cite[Thm.~4.15]{Sha21} restricts as claimed.
\end{proof}

\begin{cor} \label{cor:iterated_pullback_descr}
We have iterated pullback decompositions
\begin{align*}
\Sp^{C_{p^n}}_{bb} & \simeq \Sp^{h C_{p^n}}_{bb} \times_{\Sp^{h C_{p^{n-1}}}} \Ar'(\Sp^{h C_{p^{n-1}}}) \times \cdots \times_{\Sp} \Ar'(\Sp), \\ 
\Sp^{D_{2p^n}}_{ubb} & \simeq \Sp^{h_{C_2} \mu_{p^n}}_{ubb} \times_{\Sp^{h_{C_2} \mu_{p^{n-1}}}} \Ar'(\Sp^{h_{C_2} \mu_{p^{n-1}}}) \times \cdots \times_{\Sp^{C_2}} \Ar'(\Sp^{C_2}),
\end{align*}
where:
\begin{enumerate}
\item[(i)] We take the full subcategory $\Ar'$ of arrows in which the source is (underlying) bounded-below.
\item[(ii)] The structure maps in the fiber product going to the right are $t C_p$, resp. $t_{C_2} \mu_p$.
\item[(iii)] The structure maps in the fiber product going to the left are given by evaluation at the target.
\end{enumerate}
These equivalences are implemented by the functors that forget to geometric fixed points
\begin{align*}
X \in \Sp^{C_{p^n}} & \mapsto [X^{\phi 1}, \: X^{\phi C_p}, \: ..., \: X^{\phi C_{p^n}}], \\ 
X \in \Sp^{D_{2p^n}} & \mapsto [L[\zeta]_0(X), \: L[\zeta]_1(X) , \: ..., \: L[\zeta]_n(X) ]
\end{align*}
and adjacent gluing data $\{ X^{\phi C_{p^k}} \to (X^{\phi C_{p^{k-1}}})^{tC_p} \}$ and $\{ L[\zeta]_k(X) \to L[\zeta]_{k-1}(X)^{t_{C_2} \mu_p} \}$ thereof.
\end{cor}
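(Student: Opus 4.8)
The plan is to obtain both decompositions by composing three equivalences already in place: the reconstruction equivalences restricted to (underlying) bounded-below objects, the passage from cocartesian sections over $\sd(\Delta^n)$ to sections over $(\Delta^n)^{\op}$ via \cite[Thm.~4.15]{Sha21} and \cite[Prop.~4.17]{Sha21}, and the resulting iterated pullback description of the latter.

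First I would invoke the lemma preceding \cref{prp:EquivalenceOnBoundedBelowAtFiniteLevel}: the functors $\Theta$ and $\Theta[\zeta]$ restrict to equivalences
\[ \Fun^{\cocart}_{/\Delta^n}(\sd(\Delta^n), \Sp^{C_{p^n}}_{\locus{\phi}})_{bb} \xto{\simeq} \Sp^{C_{p^n}}_{bb}, \qquad \Fun^{\cocart}_{/\Delta^n}(\sd(\Delta^n), \Sp^{D_{2p^n}}_{\locus{\phi,\zeta}})_{ubb} \xto{\simeq} \Sp^{D_{2p^n}}_{ubb}, \]
and, crucially, $\Phi^{C_{p^k}}\circ\Theta$ (resp. $L[\zeta]_k\circ\Theta[\zeta]$) is homotopic to evaluation at the singleton string $\{k\}\in\sd(\Delta^n)$. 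This last point is what will, at the end, identify the composite equivalence with the functor $X\mapsto[X^{\phi 1},\dots,X^{\phi C_{p^n}}]$ (resp. $X\mapsto[L[\zeta]_0(X),\dots,L[\zeta]_n(X)]$) together with its adjacent gluing data. Next I would apply \cref{prp:EquivalenceOnBoundedBelowAtFiniteLevel}, which says precisely that restriction along $\sd_1(\Delta^n)\subset\sd(\Delta^n)$ carries the (underlying) bounded-below cocartesian sections onto the (underlying) bounded-below \emph{extendable} objects, and that the equivalence of \cite[Thm.~4.15]{Sha21} between $1$-generated and extendable objects restricts to $\Fun^{\cocart}_{/\Delta^n}(\sd(\Delta^n),\Sp^{C_{p^n}}_{\locus{\phi}})_{bb}\xto{\simeq}\Fun_{/(\Delta^n)^{\op}}((\Delta^n)^{\op},\Sp^{h C_{p^n}}_{\Tate})_{bb}$ and similarly in the dihedral case. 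Here one uses that the adjacent pushforward functors of $\Sp^{C_{p^n}}_{\locus{\phi}}\to\Delta^n$ (resp. $\Sp^{D_{2p^n}}_{\locus{\phi,\zeta}}\to\Delta^n$) are the generalized Tate functors $\tau C_p$ (resp. $\tau_{C_2}\mu_p$), which \emph{are} $t C_p$ (resp. $t_{C_2}\mu_p$) by definition, so the cartesian fibration over $(\Delta^n)^{\op}$ classified by the composition of adjacent pushforwards is exactly $\Sp^{h C_{p^n}}_{\Tate}\to(\Delta^n)^{\op}$ (resp. $\Sp^{h_{C_2}\mu_{p^n}}_{\Tate}\to(\Delta^n)^{\op}$).

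Finally I would feed in \cite[Prop.~4.17]{Sha21}, which identifies $\Fun_{/(\Delta^n)^{\op}}((\Delta^n)^{\op},\Sp^{h C_{p^n}}_{\Tate})$ with the iterated pullback $\Ar(\Sp^{h C_{p^n}})\times_{\Sp^{h C_{p^n}}}\cdots\times_{\Sp}\Sp$ whose rightward maps are evaluation at the target and whose leftward maps are the Tate functors applied to the source. Intersecting with the bounded-below (resp. underlying bounded-below) sections just imposes that the source of each arrow be (underlying) bounded-below, i.e. replaces each $\Ar$ by the full subcategory $\Ar'$ and replaces the final factor by $\Sp^{h C_{p^n}}_{bb}$ (resp. $\Sp^{h_{C_2}\mu_{p^n}}_{ubb}$), which records the highest stratum $X^{\phi 1}$ (resp. $L[\zeta]_0 X$). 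Composing the three equivalences gives the stated decompositions, and tracing through the identifications of the pushforwards with geometric-fixed-points localizations and of the structure maps with evaluation at a vertex yields the explicit description of the equivalence.

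The only delicate point is bookkeeping: one must check that the three ``bounded-below'' conditions — on reconstruction sections, on extendable objects, and on sections over $(\Delta^n)^{\op}$ — all track the same underlying data (the strata $X^{\phi C_{p^k}}$, resp. $L[\zeta]_k X$) under the successive equivalences. This is exactly what is guaranteed by \cref{cor:CanonicalFunctorsEquivalences}, \cref{cor:dihedralTOLyieldsEquivs}, and \cref{prp:EquivalenceOnBoundedBelowAtFiniteLevel}, so no further computation is needed; the argument is essentially a matter of assembling these pieces in order.
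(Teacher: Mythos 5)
Your proposal is correct and follows the same route as the paper, whose proof is simply to combine \cref{prp:EquivalenceOnBoundedBelowAtFiniteLevel} with \cite[Prop.~4.17]{Sha21}; you have just unpacked the three constituent equivalences and the bookkeeping of the bounded-below conditions explicitly. No gaps.
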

\begin{proof}
Combine \cref{prp:EquivalenceOnBoundedBelowAtFiniteLevel} and \cite[Prop.~4.17]{Sha21}.
\end{proof}


\subsection{Exchanging a lax equalizer for an equalizer}

In this subsection, we record an abstract lemma regarding the lax equalizer of the identity and an endofunctor $F:C \to C$ that we will need for the proof of \cref{thm:MainTheoremEquivalenceBddBelow}.

\begin{ntn} \label{ntn:nonnegativeintegers} Let $\ZZ_{\geq 0}$ denote the totally ordered set of non-negative integers regarded as a category, and let $\NN$ denote the monoid of non-negative integers under addition. Let $s: \ZZ_{\geq 0} \to \ZZ_{\geq 0}$ denote the successor functor that sends $n$ to $n+1$.
\end{ntn}

\begin{dfn} \label{dfn:spines} The \emph{spine} $\spine(\Delta^n) \subset \Delta^n$ is the subsimplicial set $\bigcup_{k=0}^{n-1} \{k<k+1\}$. Likewise, the \emph{spine} $\spine(\ZZ_{\geq 0}) \subset \ZZ_{\geq 0}$ is the subsimplicial set $\bigcup_{k=0}^{\infty} \{k<k+1\}$.
\end{dfn}

\begin{rem} \label{rem:innerAnodyneSpineInclusion} Recall that the spine inclusions of \cref{dfn:spines} are inner anodyne; indeed, a simple inductive argument with inner horn inclusions shows the maps $\spine(\Delta^n) \subset \Delta^n$ are inner anodyne, and it follows that $\spine(\ZZ_{\geq 0}) \subset \ZZ_{\geq 0}$ is inner anodyne by the stability of inner anodyne maps under filtered colimits.
\end{rem}

\begin{cnstr} \label{cnstr:SetupForEndofunctorFibration} Let $C$ be an $\infty$-category and $F: C \to C$ an endofunctor. Let $$\widehat{C} \to B \NN \cong B \NN^{\op}$$ be the cartesian fibration classified by the functor $B \NN \to \Cat_{\infty}$ that deloops the map of monoids $\NN \to \Fun(C,C)$ uniquely specified by $1 \mapsto F$.\footnote{This is the operadic left Kan extension of the functor $\ast \to \Fun(C,C)$ selecting $F$ for the monoidal structure on $\Fun(C,C)$ defined by composition of endofunctors.} Define a structure map $$p: \ZZ_{\geq 0}^{\op} \times \ZZ_{\geq 0} \to B \NN$$ by $p[(n+k,m) \rightarrow (n,m+l)] = k$ and note that $p$ is a cartesian fibration. We will regard any subcategory of $\ZZ_{\geq 0}^{\op} \times \ZZ_{\geq 0}$ as over $B \NN$ via $p$, so $\ZZ_{\geq 0}^{\op} \to B \NN$ is a cartesian fibration whereas $\ZZ_{\geq 0} \to B \NN$ is the constant functor at $\ast$. Precomposition by the successor functor $s$ defines two `shift' functors
\begin{align*} \sh = s^{\ast} &: \Fun(\ZZ_{\geq 0},C) \to \Fun(\ZZ_{\geq 0},C), \\ 
\sh = (s^{\op})^{\ast} &: \Fun_{/B \NN}(\ZZ_{\geq 0}^{\op}, \widehat{C}) \to \Fun_{/B \NN}(\ZZ_{\geq 0}^{\op}, \widehat{C}).
\end{align*}

Let $F_{\ast}$ be the endofunctor of $\Fun(\ZZ_{\geq 0},C)$ defined by postcomposition by $F$. Observe that under the straightening correspondence, $\Fun(\ZZ_{\geq 0}, C) \simeq \Fun_{/ B \NN}(\ZZ_{\geq 0}, \widehat{C})$ (since $C \simeq \ast \times_{B \NN} \widehat{C}$) and $F_{\ast}$ is encoded by the exponentiated cartesian fibration $(\widehat{C})^{\ZZ_{\geq 0}} \to B \NN$. Elaborating upon this, it is easily seen that the lax equalizer $\LEq_{\sh:F_{\ast}}(\Fun(\ZZ_{\geq 0}, C))$ is equivalent to the pullback of the diagram
\[ \begin{tikzcd}[row sep=4ex, column sep=4ex, text height=1.5ex, text depth=0.25ex]
& \Fun_{/B \NN}(\{1 < 0\} \times \ZZ_{\geq 0} , \widehat{C}) \ar{d}{(\ev_1, \ev_0)} \\
\Fun_{/ B \NN}(\ZZ_{\geq 0}, \widehat{C}) \ar{r}{(s^{\ast},\id)} & \Fun_{/B \NN}(\ZZ_{\geq 0}, \widehat{C}) \times \Fun_{/B \NN}(\ZZ_{\geq 0}, \widehat{C}),
\end{tikzcd} \]
since objects of that pullback are equivalent to diagrams
\[ \begin{tikzcd}[row sep=4ex, column sep=4ex, text height=1.5ex, text depth=0.25ex]
X_1 \ar{r}{\beta_0} \ar{d}{\alpha_1} & F(X_0) \ar{d}{F(\alpha_0)} \ar{r} & X_0 \ar{d}{\alpha_0} \\
X_2 \ar{r}{\beta_1} \ar{d}{\alpha_2} & F(X_1) \ar{r} \ar{d}{F(\alpha_1)} & X_1 \ar{d}{\alpha_1} \\
\vdots & \vdots & \vdots
\end{tikzcd} \]
where the labeled arrows are in $C$ and the right horizontal edges are cartesian in $\widehat{C}$ over $1 \in \NN$. Rather than give a complete account of the details, for the subsequent lemma let us abuse notation and instead \emph{define} the expression $\LEq_{\sh:F_{\ast}}(\Fun(\ZZ_{\geq 0}, C))$ to refer to this pullback.
\end{cnstr}

\begin{lem} \label{lem:LaxEqualizerGenericEquivalence} There is an equivalence of $\infty$-categories
\[ \chi: \LEq_{\id:\sh}(\Fun_{/B \NN}(\ZZ_{\geq 0}^{\op}, \widehat{C})) \simeq \LEq_{\sh:F_{\ast}}(\Fun(\ZZ_{\geq 0}, C)) \]
that restricts to an equivalence of $\infty$-categories
\[  \chi_0: \Eq_{\id:\sh}(\Fun_{/B \NN}(\ZZ_{\geq 0}^{\op}, \widehat{C})) \simeq \LEq_{\id:F}(C).  \]
\end{lem}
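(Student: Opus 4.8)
The plan is to rewrite both sides as $\infty$-categories of $B\NN$-relative sections of $\widehat{C}$ indexed by a common diagram, and then to recognize the passage from the lax equalizer to the equalizer as the inversion of a distinguished set of edges in that diagram. Two elementary observations will be used repeatedly: for any $T \to B\NN$ one has $\Fun(\Delta^1, \Fun_{/B\NN}(T,\widehat{C})) \simeq \Fun_{/B\NN}(\Delta^1 \times T, \widehat{C})$, where $\Delta^1 \times T \to B\NN$ is the projection followed by $T \to B\NN$; and the functor $\Fun_{/B\NN}(-,\widehat{C})\colon ((\Cat_{\infty})_{/B\NN})^{\op}\to\Cat_{\infty}$ carries pushouts of indexing categories to pullbacks (this follows from the corresponding fact for $\Fun(-,\widehat{C})$ and $\Fun(-,B\NN)$ together with the description $\Fun_{/B\NN}(T,\widehat{C}) = \Fun(T,\widehat{C})\times_{\Fun(T,B\NN)}\{p_T\}$, and needs no hypothesis on $\widehat{C}\to B\NN$). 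Granting these, $\LEq_{\id:\sh}(\Fun_{/B\NN}(\ZZ_{\geq 0}^{\op},\widehat{C}))$, being by definition the fiber product of $(\id,\sh)$ and $(\ev_0,\ev_1)$ and since $(\id,\sh)$ is restriction along $\ZZ_{\geq 0}^{\op}\sqcup\ZZ_{\geq 0}^{\op}\xto{\id\sqcup s^{\op}}\ZZ_{\geq 0}^{\op}$ while $(\ev_0,\ev_1)$ is restriction along $\ZZ_{\geq 0}^{\op}\sqcup\ZZ_{\geq 0}^{\op}\hookrightarrow\Delta^1\times\ZZ_{\geq 0}^{\op}$, is identified with $\Fun_{/B\NN}(\cE,\widehat{C})$ where
\[ \cE \;=\; \ZZ_{\geq 0}^{\op}\;\bigsqcup_{\ZZ_{\geq 0}^{\op}\sqcup\ZZ_{\geq 0}^{\op}}\;(\Delta^1\times\ZZ_{\geq 0}^{\op}) \]
is the pushout formed along $\id\sqcup s^{\op}$ and $\{0\}\sqcup\{1\}$, equipped with its evident map to $B\NN$. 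Concretely, $\cE$ is obtained from $\ZZ_{\geq 0}^{\op}$ — whose generating morphisms $n+1\to n$ lie over the generator $1\in\NN$ — by adjoining morphisms $\alpha_n\colon n\to n+1$ lying over $\id$, subject to the commutativity relations recording the functoriality of $\Delta^1\times\ZZ_{\geq 0}^{\op}$.

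I would then run the identical manipulation on the explicit pullback that \emph{defines} $\LEq_{\sh:F_\ast}(\Fun(\ZZ_{\geq 0},C))$ in \cref{cnstr:SetupForEndofunctorFibration}, using $\Fun_{/B\NN}(\ZZ_{\geq 0},\widehat{C})\simeq\Fun(\ZZ_{\geq 0},C)$; this produces $\Fun_{/B\NN}(\cE',\widehat{C})$ for the analogous pushout $\cE'=\ZZ_{\geq 0}\sqcup_{\ZZ_{\geq 0}\sqcup\ZZ_{\geq 0}}(\{1<0\}\times\ZZ_{\geq 0})$. Unwinding $\cE'$ exactly as the ladder description is unwound in \cref{cnstr:SetupForEndofunctorFibration} shows there is a canonical isomorphism $\cE\cong\cE'$ over $B\NN$, and I take $\chi$ to be the resulting composite equivalence $\LEq_{\id:\sh}(\Fun_{/B\NN}(\ZZ_{\geq 0}^{\op},\widehat{C}))\simeq\Fun_{/B\NN}(\cE,\widehat{C})\simeq\Fun_{/B\NN}(\cE',\widehat{C})\simeq\LEq_{\sh:F_\ast}(\Fun(\ZZ_{\geq 0},C))$.

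For the equalizer statement, I would argue as follows. Since the forgetful functor from $\Fun_{/B\NN}(\ZZ_{\geq 0}^{\op},\widehat{C})$ detects equivalences objectwise, and a morphism of $\widehat{C}$ lying over $\id\in B\NN$ is an equivalence precisely when it is an equivalence in $C$, an object of $\LEq_{\id:\sh}$ lies in $\Eq_{\id:\sh}$ if and only if the corresponding functor $\cE\to\widehat{C}$ inverts each adjoined morphism $\alpha_n$. Writing $W=\{\alpha_n:n\geq 0\}$, the universal property of the localization $\cE\to\cE[W^{-1}]$ (again by the pullback description of $\Fun_{/B\NN}$, with $p_{\cE}$ automatically $W$-inverting) then yields a fully faithful embedding $\Fun_{/B\NN}(\cE[W^{-1}],\widehat{C})\hookrightarrow\Fun_{/B\NN}(\cE,\widehat{C})$ whose essential image is exactly the $W$-inverting functors, i.e.\ $\Eq_{\id:\sh}(\Fun_{/B\NN}(\ZZ_{\geq 0}^{\op},\widehat{C}))\simeq\Fun_{/B\NN}(\cE[W^{-1}],\widehat{C})$, compatibly with the full-subcategory inclusions into the lax equalizers. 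The crucial claim is that the structure map $\cE\to B\NN$ exhibits $B\NN$ as $\cE[W^{-1}]$: inverting the chain $\alpha_n$ collapses the over-$\id$ part of $\cE$ — which is just the poset $\ZZ_{\geq 0}$, hence has contractible classifying space — and identifies all objects, so $\cE[W^{-1}]$ has a single object up to equivalence whose endomorphism monoid is freely generated over $\NN$ by the image of $1\to 0$, with the projection to $B\NN$ an equivalence. Feeding this in gives $\Eq_{\id:\sh}(\Fun_{/B\NN}(\ZZ_{\geq 0}^{\op},\widehat{C}))\simeq\Fun_{/B\NN}(B\NN,\widehat{C})$, and by the construction of $\widehat{C}$ in \cref{cnstr:SetupForEndofunctorFibration} as the cartesian fibration classified by the delooping of $\NN\xto{1\mapsto F}\Fun(C,C)$, a section of $\widehat{C}\to B\NN$ is precisely an object $c\in C$ together with a map $c\to F(c)$ and coherences, i.e.\ $\Fun_{/B\NN}(B\NN,\widehat{C})\simeq\LEq_{\id:F}(C)$. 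Defining $\chi_0$ as this composite — which by construction is the restriction of $\chi$ — completes the proof.

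The step I expect to be the main obstacle is the localization computation $\cE[W^{-1}]\simeq B\NN$ over $B\NN$. The point is that $\cE$ is a genuine pushout of $\infty$-categories and need not be the nerve of a $1$-category, so this requires a careful check that inverting the $\alpha_n$ introduces no higher morphisms. One clean way to organize it is to recognize $\cE$ as the relative mapping torus of the successor endofunctor $s^{\op}$ of $\ZZ_{\geq 0}^{\op}$ over $B\NN$, so that $\cE[W^{-1}]\simeq\colim_{B\NN}(\ZZ_{\geq 0}^{\op},\,s^{\op})$, and then to compute this colimit using that $\ZZ_{\geq 0}^{\op}$ has contractible classifying space together with the "freeness" of the monodromy; alternatively one verifies the universal property $\Fun(B\NN,\cF)\xto{\simeq}\Fun_W(\cE,\cF)$ directly by a further pushout manipulation that reduces to the collapse $\Delta^1\times K\to K$ upon inverting $\{0\}\to\{1\}$. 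The remaining points — that $\Fun_{/B\NN}(-,\widehat{C})$ sends the pushouts in question to pullbacks, and that localizations remain localizations after pairing with $\widehat{C}\to B\NN$ — are routine.
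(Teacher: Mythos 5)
Your route is genuinely different from the paper's. The paper proves both equivalences by an explicit chain of trivial fibrations: it inflates both lax equalizers to ladder diagrams on $\ZZ_{\geq 0}^{\op}\times\ZZ_{\geq 0}$, restricts along inner anodyne spine inclusions into a common category $\sB$ of diagonally periodic ladders, and then, for $\chi_0$, contracts the redundant data via a relative left Kan extension along $P_0\subset P$ plus the free cartesian fibration on $\Delta^1$. You instead push all combinatorics into the indexing categories: both lax equalizers become $\Fun_{/B\NN}(-,\widehat{C})$ of pushouts $\mathcal{E}\simeq\mathcal{E}'$, and the passage from $\LEq$ to $\Eq$ becomes a localization $\mathcal{E}[W^{-1}]\simeq B\NN$. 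This is an attractive coefficient-free reformulation: the paper's $\sB$ is exactly ``sections over a common model of $\mathcal{E}$,'' and its Kan-extension step is your localization computed with coefficients in $\widehat{C}$.

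That reformulation is also where the gap sits, because the two load-bearing claims are asserted rather than proved. First, $\mathcal{E}\cong\mathcal{E}'$: the cylinders $\Delta^1\times\ZZ_{\geq 0}^{\op}$ (new direction over $0\in\NN$) and $\{1<0\}\times\ZZ_{\geq 0}$ (new direction over $1\in\NN$) are transposes of one another and the strict pushouts contain different nondegenerate simplices, so there is no literal isomorphism; you must construct an equivalence, and the natural way is to compare both to a common refinement --- which is the paper's spine argument in disguise. Second, and more seriously, $\mathcal{E}[W^{-1}]\simeq B\NN$: your heuristic (``the over-$\id$ part has contractible classifying space, so collapse it; the endomorphism monoid is freely generated'') is not a proof, and the freeness is exactly the dangerous point. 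Note that $\mathcal{E}[W^{-1}]\simeq\colim_{B\NN}(\ZZ_{\geq 0}^{\op},s^{\op})$, and the same ``collapse the contractible fiber'' reasoning applied to the degenerate case gives the wrong answer: $\colim_{B\NN}(\ast,\id)\simeq|B\NN|\simeq B\ZZ$, since inverting everything group-completes the monoid. The equivalence with $B\NN$ holds only because the monodromy $s^{\op}$ is free, and verifying that the endomorphism space of the localization is the discrete monoid $\NN$ (no higher cells, no group completion) is real work. The viable path is the universal-property check you mention in passing --- show $\Eq_{\id:(s^{\op})^*}(\Fun(\ZZ_{\geq 0}^{\op},\mathcal{D}))\simeq\LEq_{\id:\id}(\mathcal{D})$ naturally in $\mathcal{D}$ --- but carried out with coefficients this is essentially the paper's relative-Kan-extension argument, so it must actually be written down. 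Until those two steps are supplied, you have a correct reorganization of the problem rather than a proof.
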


\begin{proof} Intuitively, the first equivalence $\chi$ exchanges diagrams
\[ \begin{tikzcd}[row sep=4ex, column sep=4ex, text height=1.5ex, text depth=0.25ex]
\cdots \ar{r}{\phi_2} & X_2 \ar{r}{\phi_1} \ar{d}{\alpha_2} & X_1 \ar{r}{\phi_0} \ar{d}{\alpha_1} & X_0 \ar{d}{\alpha_0} \\
\cdots \ar{r}{\phi_3} & X_3 \ar{r}{\phi_2} & X_2 \ar{r}{\phi_1} & X_1
\end{tikzcd} \]
with diagrams
\[ \begin{tikzcd}[row sep=4ex, column sep=4ex, text height=1.5ex, text depth=0.25ex]
X_1 \ar{r}{\phi_0} \ar{d}{\alpha_1} & X_0 \ar{d}{\alpha_0} \\
X_2 \ar{r}{\phi_1} \ar{d}{\alpha_2} & X_1 \ar{d}{\alpha_1} \\
\vdots & \vdots
\end{tikzcd} \]
with one such diagram uniquely determining the other. To make this idea precise, we need to introduce some auxiliary constructions. Given $n \geq 0$, define
\[ \sh^n: \LEq_{\id:\sh}(\Fun_{/B \NN}(\ZZ_{\geq 0}^{\op}, \widehat{C}))  \to \Fun_{/B \NN}(\ZZ_{\geq 0}^{\op} \times \{n<n+1\}, \widehat{C}) \] to be the composite of the projection to $\Fun_{/B \NN}(\ZZ_{\geq 0}^{\op} \times \{0<1\}, \widehat{C})$ and precomposition by the $n$-fold successor functor $\ZZ_{\geq 0}^{\op} \times \{n < n+1 \} \to \ZZ_{\geq 0}^{\op} \times \{0 <1 \} $, $(i,n+j) \mapsto (i+n,j)$. Then form the pullback
\[ \begin{tikzcd}[row sep=4ex, column sep=6ex, text height=1.5ex, text depth=0.25ex]
\LEq^{\infty}_{\id:\sh}(\Fun_{/B \NN}(\ZZ_{\geq 0}^{\op}, \widehat{C})) \ar{r} \ar{d}{\pi'} & \Fun_{/B \NN}(\ZZ_{\geq 0}^{\op} \times \ZZ_{\geq 0}, \widehat{C}) \ar{d} \\
\LEq_{\id:\sh}(\Fun_{/B \NN}(\ZZ_{\geq 0}^{\op}, \widehat{C})) \ar{r}{(\sh^k)} & \prod_{k=0}^{\infty} \Fun_{/B \NN}(\ZZ_{\geq 0}^{\op} \times \{k < k+1 \}, \widehat{C}).
\end{tikzcd} \]
where the lower right object is the iterated fiber product. The righthand vertical map is obtained via precomposition by the inclusion $\ZZ_{\geq 0}^{\op} \times \spine(\ZZ_{\geq 0}) \to \ZZ_{\geq 0}^{\op} \times \ZZ_{\geq 0}$, which is inner anodyne by \cref{rem:innerAnodyneSpineInclusion} and \cite[Prop.~2.3.2.4]{HTT}. Therefore, the vertical maps are trivial fibrations. Similarly, define 
\[ \sh^n: \LEq_{\sh:F_{\ast}}(\Fun(\ZZ_{\geq 0}, C)) \to \Fun_{/B \NN}( \{ n+1<n \} \times \ZZ_{\geq 0}, \widehat{C}) \]
as the composite of the projection to $ \Fun_{/B \NN}( \{ 1<0 \} \times \ZZ_{\geq 0}, \widehat{C})$ and precomposition by the $n$-fold successor functor $\{ n+1<n \} \times \ZZ_{\geq 0} \to \{ 1<0 \} \times \ZZ_{\geq 0}$, $(n+j,i) \mapsto (j,i+n)$. Form the pullback square
\[ \begin{tikzcd}[row sep=4ex, column sep=6ex, text height=1.5ex, text depth=0.25ex]
\LEq^{\infty}_{\sh:F_{\ast}}(\Fun(\ZZ_{\geq 0}, C)) \ar{r} \ar{d}{\pi''} & \Fun_{/B \NN}(\ZZ_{\geq 0}^{\op} \times \ZZ_{\geq 0}, \widehat{C}) \ar{d} \\
\LEq_{\sh:F_{\ast}}(\Fun(\ZZ_{\geq 0}, C)) \ar{r}{(\sh^k)} & \prod_{k=0}^{\infty} \Fun_{/B \NN}(\{k+1<k\} \times \ZZ_{\geq 0}, \widehat{C}).
\end{tikzcd} \]
The righthand vertical map is precomposition by the inner anodyne map $\spine(\ZZ_{\geq 0}^{\op}) \times \ZZ_{\geq 0} \to \ZZ_{\geq 0}^{\op} \times \ZZ_{\geq 0}$, so both vertical maps are trivial fibrations. Next, the product map $\spine(\ZZ_{\geq 0}^{\op}) \times \spine(\ZZ_{\geq 0}) \subset \ZZ_{\geq 0}^{\op} \times \ZZ_{\geq 0}$ is also inner anodyne, and via precomposition we get a trivial fibration
\[ \rho: \Fun_{/B \NN}(\ZZ_{\geq 0}^{\op} \times \ZZ_{\geq 0}, \widehat{C}) \to \Fun_{/B \NN}(\spine(\ZZ_{\geq 0}^{\op}) \times \spine(\ZZ_{\geq 0}), \widehat{C}). \]
Let $\sB$ be the full subcategory of $\Fun_{/B \NN}(\spine(\ZZ_{\geq 0}^{\op}) \times \spine(\ZZ_{\geq 0}), \widehat{C})$ on objects $X_{\bullet,\bullet}$ such that for all $m \geq 0$, $n >0$ we have that
\[ X|_{ \{n+1<n\} \times \{m<m+1\} } = X|_{ \{n<n-1\} \times \{m+1,m+2\} }. \]
By definition, objects of $\LEq^{\infty}_{\id:\sh}(\Fun_{/B \NN}(\ZZ_{\geq 0}^{\op}, \widehat{C}))$ are diagrams $X_{\bullet, \bullet}: \ZZ_{\geq 0}^{\op} \times \ZZ_{\geq 0} \to \widehat{C}$ such that for every $m \geq 0$,
\[ X_{\bullet,\bullet}|_{\ZZ_{\geq 0}^{\op} \times \{m\} } = X_{\bullet,\bullet}|_{\ZZ_{\geq 0}^{\op} \times \{0\}} \circ s^m \text{ and } X_{\bullet,\bullet}|_{\ZZ_{\geq 0}^{\op} \times \{m<m+1\} } = X_{\bullet,\bullet}|_{\ZZ_{\geq 0}^{\op} \times \{0<1\}} \circ s^m,\]
and similarly for $\LEq^{\infty}_{\sh:F_{\ast}}(\Fun(\ZZ_{\geq 0}, C))$. The conditions on edges are implied by those for squares in $\cB$, and the functor $\rho$ thereby restricts to trivial fibrations
\[ \rho', \rho'': \LEq^{\infty}_{\id:\sh}(\Fun_{/B \NN}(\ZZ_{\geq 0}^{\op}, \widehat{C})), \LEq^{\infty}_{\sh:F_{\ast}}(\Fun(\ZZ_{\geq 0}, C)) \to \sB.  \]
Choosing sections of the trivial fibrations $\pi', \rho''$ or $\pi'', \rho'$ then furnishes the equivalence $\chi$. 

For the second equivalence $\chi_0$, let $\Fun^{\simeq}_{/B \NN}(\ZZ_{\geq 0}, \widehat{C}) \subset \Fun_{/B \NN}(\ZZ_{\geq 0}, \widehat{C})$ be the full subcategory on those objects $X_{\bullet}: \ZZ_{\geq 0} \to C$ that send every edge to an equivalence, and form the pullback
\[ \begin{tikzcd}[row sep=4ex, column sep=4ex, text height=1.5ex, text depth=0.25ex]
\LEq_{\sh:F_{\ast}}(\Fun^{\simeq}(\ZZ_{\geq 0}, C)) \ar{r} \ar{d} & \Fun_{/B \NN}(\{1 < 0\} \times \ZZ_{\geq 0} , \widehat{C}) \ar{d}{(\ev_1, \ev_0)} \\
\Fun^{\simeq}_{/ B \NN}(\ZZ_{\geq 0}, \widehat{C}) \ar{r}{(s^{\ast},\id)} & \Fun_{/B \NN}(\ZZ_{\geq 0}, \widehat{C}) \times \Fun_{/B \NN}(\ZZ_{\geq 0}, \widehat{C}),
\end{tikzcd} \]
which defines $\LEq_{\sh:F_{\ast}}(\Fun^{\simeq}(\ZZ_{\geq 0}, C))$ as a full subcategory of $\LEq_{\sh:F_{\ast}}(\Fun(\ZZ_{\geq 0}, C))$. It follows from the definitions that $\chi$ restricts to an equivalence
\[ \chi_0': \Eq_{\id:\sh}(\Fun_{/B \NN}(\ZZ_{\geq 0}^{\op}, \widehat{C})) \simeq \LEq_{\sh:F_{\ast}}(\Fun^{\simeq}(\ZZ_{\geq 0}, C)).  \]

Let $P =( \{ 1 < 0\} \times \ZZ_{\geq -1}) \setminus \{ (0,-1)\} $ and regard it as over $B \NN$ via the projection to $\{1<0\}$. Note by \cite[Lem.~3.18]{Sha21} that the cofibration $\{ (1,-1) \rightarrow (1,0) \} \cup_{(1,0)} (\{1<0\} \times \ZZ_{\geq 0}) \to P$ is a categorical equivalence. Therefore, if we form the pullback
\[ \begin{tikzcd}[row sep=4ex, column sep=6ex, text height=1.5ex, text depth=0.25ex]
\LEq^+_{\sh:F_{\ast}}(\Fun(\ZZ_{\geq 0}, C)) \ar{r} \ar{d} & \Fun_{/B \NN}(P, \widehat{C}) \ar{d}{(\ev_1,\ev_0)} \\
\Fun_{/B \NN}(\ZZ_{\geq 0}, \widehat{C}) \ar{r}{(s^{\ast},\id)} & \Fun_{/B \NN}(\ZZ_{\geq -1}, \widehat{C}) \times \Fun_{/B \NN}(\ZZ_{\geq 0}, \widehat{C}),
\end{tikzcd} \]
precomposition by $\{ 1 < 0\} \times \ZZ_{\geq 0} \subset P$ induces a trivial fibration
\[ \xi: \LEq^+_{\sh:F_{\ast}}(\Fun(\ZZ_{\geq 0}, C)) \to \LEq_{\sh:F_{\ast}}(\Fun(\ZZ_{\geq 0}, C)). \]
Defining $\LEq^+_{\sh:F_{\ast}}(\Fun^{\simeq}(\ZZ_{\geq 0}, C))$ in a similar fashion, we also obtain a trivial fibration
\[ \xi_0: \LEq^+_{\sh:F_{\ast}}(\Fun^{\simeq}(\ZZ_{\geq 0}, C)) \to \LEq_{\sh:F_{\ast}}(\Fun^{\simeq}(\ZZ_{\geq 0}, C)), \]
which is obtained by restricting $\xi$.

We now observe that a functor $f: P \to \widehat{C}$ over $B \NN$ is a relative left Kan extension of its restriction to $P_0 = \{(1,-1) \to (0,0)\}$ if and only if it sends the edges $\{ (1,m) \to (1,m+k) \}$, $m \geq -1$ and $\{0,m) \to (0,m+k) \}$, $m \geq 0$ to equivalences, since each slice category $P_0 \times_P P_{/(i,m)}$ has $(i,-i)$ as a terminal object. Therefore, if we form the pullback
\[ \begin{tikzcd}[row sep=4ex, column sep=6ex, text height=1.5ex, text depth=0.25ex]
\LEq'_{\id:F}(C) \ar{r} \ar{d} & \Fun_{/P_0}(P_0, P_0 \times_{B \NN} \widehat{C}) \ar{d}{(\ev_1, \ev_0)} \\
C \ar{r}{(\id,\id)} & C \times C
\end{tikzcd} \]
the restriction functor induced by $P_0 \subset P$
\[ \LEq^+_{\sh:F_{\ast}}(\Fun^{\simeq}(\ZZ_{\geq 0}, C)) \to \LEq'_{\id:F}(C)\]
is a trivial fibration. Let us now write $\Delta^1 = P_0$ and $\sM = \Delta^1 \times_{B \NN} \widehat{C}$ for clarity. Since the source functor $\Ar(\Delta^1) \to \Delta^1$ is the free cartesian fibration (\cite[Exm.~2.6 or Def.~7.5]{Exp2}) on the identity, we obtain a trivial fibration
\[ \Fun^{\cart}_{/\Delta^1}(\Ar(\Delta^1), \sM) \to \Fun_{/\Delta^1}(\Delta^1, \sM). \]
Moreover, writing $\Ar(\Delta^1) = [00<01<11]$, the square
\[ \begin{tikzcd}[row sep=4ex, column sep=8ex, text height=1.5ex, text depth=0.5ex]
\Fun^{\cart}_{/\Delta^1}(\Ar(\Delta^1), \sM) \ar{r}{\ev|_{[00 <01]}} \ar{d}[swap]{(\ev_{00}, \ev_{11})} & \Ar(C) \ar{d}{(\ev_0, \ev_1)} \\
C \times C \ar{r}{(\id,F)} & C \times C
\end{tikzcd} \]
is homotopy commutative. We thereby obtain an equivalence $\LEq'_{\id:F}(C) \simeq \LEq_{\id:F}(C)$. Chaining together the various equivalences above then produces the desired equivalence $\chi_0$.
\end{proof}

\begin{rem} The equivalence $\chi_0$ of \cref{lem:LaxEqualizerGenericEquivalence} sends an object 
\[ \begin{tikzcd}[row sep=4ex, column sep=4ex, text height=1.5ex, text depth=0.25ex]
\cdots \ar{r}{\phi_2} & X_2 \ar{r}{\phi_1} \ar{d}{\alpha_2}[swap]{\simeq} & X_1 \ar{r}{\phi_0} \ar{d}{\alpha_1}[swap]{\simeq} & X_0 \ar{d}{\alpha_0}[swap]{\simeq} \\
\cdots \ar{r}{\phi_3} & X_3 \ar{r}{\phi_2} & X_2 \ar{r}{\phi_1} & X_1
\end{tikzcd} \]
 to the composite $X_0 \xto{\alpha_0} X_1 \xto{\beta_0} F(X_0)$, where we factor the edge $\phi_0$ through $\beta_0$ and a cartesian edge $F(X_0) \to X_0$.
\end{rem}


\subsection{Proof of the main theorem} \label{sec:main_theorem}

We have almost assembled all of the ingredients needed to prove \cref{thm:MainTheoremEquivalenceBddBelow}. In fact, we will also reprove \cite[Thm.~II.6.3]{NS18} by way of illustrating the formal nature of our proof. In order to make effective use of \cref{prp:EquivalenceOnBoundedBelowAtFiniteLevel} in the dihedral situation, we first establish the compatibility of the relative geometric locus construction with restriction and geometric fixed points (cf. \cite[Constrs.~2.54 and 2.55]{QS21a} for the absolute compatibility assertions). 

\begin{ntn} \label{ntn:ConciseRestrictionNotation} For $0 \leq k \leq n$, we have the inclusions $C_{p^k} \subset C_{p^n}$ and $D_{2 p^k} \subset D_{2 p^n}$. Let
\begin{align*} \res^n_k: \Sp^{C_{p^n}} \to \Sp^{C_{p^k}}, \quad & \res^n_k: \Fun(B C_{p^n} ,\Sp) \to \Fun(B C_{p^k} ,\Sp) \\
\res^n_k: \Sp^{D_{2 p^n}} \to \Sp^{D_{2 p^k}}, \quad & \res^n_k: \Fun_{C_2}(B^t_{C_2} \mu_{p^{n}}, \underline{\Sp}^{C_2}) \to \Fun_{C_2}(B^t_{C_2} \mu_{p^{k}}, \underline{\Sp}^{C_2})
\end{align*}
be alternative notation for the restriction functors.
\end{ntn}

\begin{vrn} \label{vrn:DihedralRestrictionGeometricLoci} For $0 \leq k \leq n$ and the inclusion $D_{2 p^k} \subset D_{2 p^n}$ given by $\mu_{p^k} \subset \mu_{p^n}$, we have a commutative diagram
\[ \begin{tikzcd}[row sep=4ex, column sep=4ex, text height=1.5ex, text depth=0.25ex]
\fS[D_{2p^k}] \ar{r}{i} \ar{d}{\zeta} & \fS[D_{2p^n}] \ar{d}{\zeta} \\
\Delta^k \ar{r}{i} & \Delta^n
\end{tikzcd} \]
where the bottom functor is the inclusion of $\Delta^k$ as a sieve. As in \cite[Constr.~2.54]{QS21a}, the restriction functor $\res^n_k: \Sp^{D_{2 p^n}} \to \Sp^{D_{2 p^k}}$ induces a functor 
\[ \res^n_k: \Sp^{D_{2p^n}}_{\locus{\phi,\zeta}} \times_{\Delta^n} \Delta^k \to \Sp^{D_{2p^k}}_{\locus{\phi,\zeta}} \]
that on the fiber over $i \in \Delta^k$ is equivalent to the functor
\[ \res^n_k: \Fun_{C_2}(B^t_{C_2} \mu_{p^{n-i}}, \underline{\Sp}^{C_2}) \to \Fun_{C_2}(B^t_{C_2} \mu_{p^{k-i}}, \underline{\Sp}^{C_2}). \]
Precomposition by $i: \sd(\Delta^k) \to \sd(\Delta^n)$ and postcomposition by $\res^n_k$ yields the functor
\[ \res^n_k: \Fun^{\cocart}_{/\Delta^n}(\sd(\Delta^n), \Sp^{D_{2p^n}}_{\locus{\phi,\zeta}}) \to \Fun^{\cocart}_{/\Delta^k}(\sd(\Delta^k), \Sp^{D_{2p^k}}_{\locus{\phi,\zeta}}). \]
Furthermore, by the same argument as in \emph{loc. cit.} we have a commutative diagram
\[ \begin{tikzcd}[row sep=4ex, column sep=6ex, text height=1.5ex, text depth=0.5ex]
\Fun^{\cocart}_{/\Delta^n}(\sd(\Delta^n), \Sp^{D_{2p^n}}_{\locus{\phi,\zeta}}) \ar{r}{\Theta[\zeta]} \ar{d}[swap]{\res^n_k} & \Sp^{D_{2p^n}} \ar{d}{\res^n_k} \\
\Fun^{\cocart}_{/\Delta^k}(\sd(\Delta^{k}), \Sp^{D_{2p^k}}_{\locus{\phi,\zeta}}) \ar{r}{\Theta[\zeta]} & \Sp^{D_{2p^k}}.
\end{tikzcd} \]
\end{vrn}

\begin{vrn} \label{vrn:DihedralGeometricFixedPoints} For $0 \leq k \leq n$ and the quotient homomorphism $D_{2p^n} \to D_{2p^n}/\mu_{p^k} \cong D_{2p^{n-k}}$, we have a commutative diagram of cosieve inclusions
\[ \begin{tikzcd}[row sep=4ex, column sep=4ex, text height=1.5ex, text depth=0.25ex]
\fS[D_{2p^{n-k}}] \ar{r}{i} \ar{d}{\zeta} & \fS[D_{2p^n}] \ar{d}{\zeta} \\
\Delta^{n-k} \ar{r}{i} & \Delta^n.
\end{tikzcd} \]
As in \cite[Constr.~2.55]{QS21a}, $\Phi^{\mu_{p^k}}$ implements an equivalence
\[ \Sp^{D_{2p^n}}_{\locus{\phi,\zeta}} \times_{\Delta^n} \Delta^{n-k} \simeq \Sp^{D_{2p^{n-k}}}_{\locus{\phi,\zeta}} \]
with respect to which we write $i^{\ast}$ as
\[ \Phi^{\mu_{p^k}}: \Fun^{\cocart}_{/\Delta^n}(\sd(\Delta^n), \Sp^{D_{2p^n}}_{\locus{\phi,\zeta}}) \to \Fun^{\cocart}_{/\Delta^{n-k}}(\sd(\Delta^{n-k}), \Sp^{D_{2p^{n-k}}}_{\locus{\phi,\zeta}}). \]
We then obtain a commutative diagram
\[ \begin{tikzcd}[row sep=4ex, column sep=6ex, text height=1.5ex, text depth=0.5ex]
\Fun^{\cocart}_{/\Delta^n}(\sd(\Delta^n), \Sp^{D_{2p^n}}_{\locus{\phi,\zeta}}) \ar{r}{\Theta[\zeta]} \ar{d}[swap]{\Phi^{\mu_{p^k}}} & \Sp^{D_{2p^{n}}} \ar{d}{\Phi^{\mu_{p^k}}} \\
\Fun^{\cocart}_{/\Delta^{n-k}}(\sd(\Delta^{n-k}), \Sp^{D_{2p^{n-k}}}_{\locus{\phi,\zeta}}) \ar{r}{\Theta[\zeta]} & \Sp^{D_{2p^{n-k}}}.
\end{tikzcd} \]
\end{vrn}

We now consider an axiomatic setup that will handle the $C_{p^{\infty}}$ and $D_{2 p^{\infty}}$-situations simultaneously.

\begin{lem} \label{lm:joinLocallyCocartesian} Suppose $p: C \to S$ is a locally cocartesian fibration.
\begin{enumerate}
\item For any $\infty$-category $T$, $p': C \star T \to S \star T$ is a locally cocartesian fibration.
\item For any locally cocartesian fibration $D \to S$, the restriction functor implements an equivalence
\[ \Fun^{\cocart}_{/S \star T}(C \star T, D \star T) \xto{\simeq} \Fun^{\cocart}_{/S}(C, D). \]
\item Suppose $S \cong S_0 \star S_1$ and let $C_0 = S_0 \times_S C$. Then for any locally cocartesian fibration $D \to S_0$, the restriction functor implements an equivalence 
\[ \Fun^{\cocart}_{/S}(C,D \star S_1) \xto{\simeq} \Fun^{\cocart}_{/S_0}(C_0,D). \]
\item For any $\infty$-category $T$, the restriction functor
\[ j^{\ast}: \Fun^{\cocart}_{/S \star T}(\sd(S \star T), C \star T) \to \Fun^{\cocart}_{/S}(\sd(S), C) \]
is an equivalence of $\infty$-categories.
\end{enumerate}
\end{lem}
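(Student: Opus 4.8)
\textbf{Proof proposal for \cref{lm:joinLocallyCocartesian}.}

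The plan is to prove the four parts in order, since each subsequent part uses the previous ones. For part (1), I would check the defining lifting property directly: a locally cocartesian fibration is characterized by the existence of locally $p$-cocartesian lifts of edges, and one verifies that over an edge $e$ of $S \star T$ the fiber of $C \star T \to S \star T$ is as expected. If $e$ lies in $S$, lifts come from $p$; if $e$ lies in $T$, the fiber over its source in $C \star T$ maps to a point over $e$ and locally cocartesian lifts are identities on the appropriate cone coordinate; if $e$ goes from $S$ to $T$, the target fiber is a single object of $T$ and again the lift is essentially forced. The key point is that $(C\star T)_t \simeq C$ for $t \in T$ and $(C \star T)_{s} \simeq C_s$ for $s \in S$, so local cocartesianness is inherited stratum by stratum.

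For part (2), the restriction functor $\Fun^{\cocart}_{/S\star T}(C \star T, D \star T) \to \Fun^{\cocart}_{/S}(C,D)$ has an evident inverse: given a cocartesian-edge-preserving functor $F: C \to D$ over $S$, extend it to $C \star T \to D \star T$ by the identity on $T$. One must check this extension preserves locally cocartesian edges (it does, by the analysis of the three types of edges in part (1) — edges into $T$ are sent to edges into $T$, and there is essentially no choice), and that the two constructions are mutually inverse up to the natural contractibility of the relevant mapping spaces, which follows because a functor out of a join is determined by its restrictions to the two ends together with the "linking" data, and here the linking data over $D \star T$ is uniquely pinned down. Part (3) is a variant of the same bookkeeping: writing $S = S_0 \star S_1$ and $C = C_0 \star (S_1 \times_S C)$, a cocartesian section over $S$ valued in $D \star S_1$ must send the part of $C$ over $S_1$ into $S_1 \subseteq D \star S_1$ (no other choice respecting cocartesian edges), so it is determined by its restriction to $C_0$ landing in $D$; conversely any such restriction extends uniquely. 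I would phrase both (2) and (3) using the universal property of the join of simplicial sets (mapping out of $X \star Y$) rather than manipulating simplices by hand.

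Part (4) is where the main work lies, and I expect it to be the principal obstacle. The issue is that $\sd(S \star T)$ is \emph{not} simply $\sd(S) \star \sd(T)$ — the barycentric subdivision of a join has extra simplices corresponding to chains of subsets that meet both $S$ and $T$, i.e. strings $\{k_0 < \cdots < k_a\}$ where the $k_i$ range over vertices on both sides of the join. So I cannot directly invoke part (2). Instead, the plan is: first observe that $\sd(S \star T) \to S \star T$ (via the max-vertex functor, as in \cref{rec:reconstruction}) is a locally cocartesian fibration by the join-stability established in part (1) applied iteratively, or directly; then use part (1) again to see $\sd(S)\star \sd(T) \to S \star T$ is locally cocartesian; then show the natural inclusion $\sd(S) \star \sd(T) \hookrightarrow \sd(S \star T)$ over $S \star T$ induces an equivalence on cocartesian sections into $C \star T$. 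The mixed strings of $\sd(S \star T)$ all have their maximum vertex in $T$ (whenever the string meets $T$ at all), so over such a string the fiber of $C \star T$ is a single object of $T$; hence a cocartesian section is \emph{forced} on all mixed simplices once it is known on $\sd(S)$, because the locally cocartesian pushforwards along the edges $\{s\text{-string}\} \to \{\text{mixed string with that }s\text{-prefix}\}$ land in $T \subseteq C \star T$ and are determined. This reduces restriction along $\sd(S)\star\sd(T) \hookrightarrow \sd(S\star T)$, then along $\sd(S) \hookrightarrow \sd(S) \star \sd(T)$, to the desired equivalence, where the latter step is exactly part (2) applied with $C \rightsquigarrow \sd(S) \times_S C$... more precisely I would first use part (3)-style reasoning to strip off the $\sd(T)$ factor. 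I would make the "forced on mixed simplices" argument precise using a relative left Kan extension / cofinality statement analogous to \cite[Lem.~3.18]{Sha21}, showing the inclusion of $\sd(S)$ (or $\sd(S) \star \sd(T)$) into $\sd(S\star T)$ is "cocartesian-section--cofinal" in the appropriate sense.
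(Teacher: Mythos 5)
Your overall strategy matches the paper's: parts (1)--(3) via the universal property of the join (the paper records that $\Fun_{/S\star T}(C\star T,D\star T)\cong \Fun_{/S}(C,D)$ is literally an isomorphism of simplicial sets, under which preservation of locally cocartesian edges corresponds), and part (4) via the observation that a cocartesian section is forced on every string that meets $T$. Two corrections, though. In part (1), your summary claim that $(C\star T)_t\simeq C$ for $t\in T$ is false: the fiber of $p\star\mathrm{id}_T$ over a vertex of $T$ is a point. This is not cosmetic -- the triviality of these fibers is precisely what makes parts (2) and (4) work, and your own argument elsewhere relies on the correct statement, so the slip is isolated but should be removed. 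You should also first record that $p\star\mathrm{id}_T$ is a categorical fibration (the join arises from a right Quillen functor on slices of simplicial sets) before checking locally cocartesian lifts edge by edge; over an edge from $s\in S$ to $t\in T$ the pullback is $(C_s)^{\rhd}\to\Delta^1$, which is cocartesian.

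In part (4), your detour through $\sd(S)\star\sd(T)$ is both unnecessary and suspect: with morphisms in $\sd$ given by inclusions of strings, there is no morphism in $\sd(S\star T)$ from a pure-$S$ string to a pure-$T$ string, so the ``natural inclusion'' $\sd(S)\star\sd(T)\hookrightarrow\sd(S\star T)$ over $S\star T$ need not exist as a functor, and the first step of your two-step reduction does not get off the ground as stated. The paper argues directly: since the fibers of $C\star T$ over $T$ are singletons (hence admit all limits), \cite[Thm.~3.29]{Sha21} shows that every $F\in\Fun^{\cocart}_{/S\star T}(\sd(S\star T),C\star T)$ is a relative right Kan extension of its restriction to the sieve $\sd(S)$, whence $j^{\ast}$ is an equivalence. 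This is exactly your ``forcing on mixed simplices'' idea made precise, without the intermediate category; if you drop the detour and apply the relative Kan extension criterion to the inclusion $\sd(S)\subseteq\sd(S\star T)$ directly, your argument is complete.
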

\begin{proof} For (1), first recall that the join is defined by the right Quillen functor $j_{\ast}: s\Set_{/ \partial \Delta^1} \to s\Set_{/\Delta^1}$ for the inclusion $j: \partial \Delta^1 \to \Delta^1$ (cf. \cite[Def~4.1]{Exp2}). Therefore, given two categorical fibrations $X \to A$ and $Y \to B$, $X \star Y \to A \star B$ is a categorical fibration, so in particular $p'$ is a categorical fibration. It is clear that for any edge $e: \Delta^1 \to S \star T$ with image in $S$ or $T$ that the pullback over $e$ of $p'$ is a cocartesian fibration. Suppose $e$ is specified by $e(0) = s \in S$ and $e(1) = t \in T$. Then the pullback over $e$ equals $(C_s)^{\rhd} \to \Delta^1$, which is obviously cocartesian. Thus, $p'$ is locally cocartesian.

For (2), by definition of the join we actually have an isomorphism of simplicial sets
\[ \Fun_{/S \star T}(C \star T, D \star T) \cong \Fun_{/S}(C, D), \]
under which functors preserving locally cocartesian edges are identified with each other. (3) follows by the same argument.

For (4), note that the hypotheses of \cite[Thm.~3.29]{Sha21} are satisfied because the zero category admits all limits, so any functor $F \in \Fun^{\cocart}_{/S \star T}(\sd(S \star T), C \star T)$ is necessarily a $(p \star \id_T)$-right Kan extension of its restriction to $\sd(S \star T)_0$. It follows that $j^{\ast}$ is an equivalence.
\end{proof}

\begin{cnstr} \label{cnstr:FamilyOfLocallyCocartesianFibrations} Suppose given a set $\{ p_n: C^n \to \Delta^n: n \geq 0\}$ of locally cocartesian fibrations, together with structure maps
\[ r_n: [0:n] \times_{\Delta^{n+1}} C^{n+1} \to C^n \]
over $\Delta^n \cong [0:n]$, where $r_n$ preserves locally cocartesian edges. Then, viewing $\Delta^n \subset \ZZ_{\geq 0}$ as the subcategory $[0:n]$, let
\[ r_n : C^{n+1} \star \ZZ_{>n+1} \to C^{n} \star \ZZ_{>n} \]
also denote the functor over $\ZZ_{\geq 0}$ obtained as in \cref{lm:joinLocallyCocartesian}, and let
\[ C^{\infty} \coloneq \lim_n \left( C^{n} \star \ZZ_{>n} \right) \]
be the locally cocartesian fibration over $\ZZ_{\geq 0}$, with the inverse limit taken over the maps $r_n$.

Suppose further that for all $n \geq 0$, we have functors $i_n: C^{n} \to C^{n+1}$ over the cosieve inclusion $\Delta^{n} \cong [1:n+1] \sub \Delta^{n+1}$ that preserve locally cocartesian edges, such that the commutative square
\[ \begin{tikzcd}[row sep=4ex, column sep=4ex, text height=1.5ex, text depth=0.25ex]
C^{n} \ar{r}{i_n} \ar{d} & C^{n+1} \ar{d} \\
\Delta^{n} \ar{r} & \Delta^{n+1}
\end{tikzcd} \]
is a homotopy pullback, and for all $n>0$, the diagram
\[ \begin{tikzcd}[row sep=4ex, column sep=6ex, text height=1.5ex, text depth=0.5ex]
C^n \ar{r}{i_{n}} \ar{d}[swap]{r_{n-1}} & C^{n+1} \ar{d}{r_n} \\
C^{n-1} \star \{n\} \ar{r}{i_{n-1}} & C^n \star \{n\}
\end{tikzcd} \]
is homotopy commutative (where we denote the various extensions of maps $i_n$ and $r_n$ by the same symbols). By \cref{lm:joinLocallyCocartesian}(4),
\[ \Fun^{\cocart}_{/ \ZZ_{\geq 0}}(\sd(\ZZ_{\geq 0}), C^n \star \ZZ_{>n}) \simeq \Fun^{\cocart}_{/\Delta^n}(\sd(\Delta^n), C^n) \]
under which the maps induced by postcomposing by $r_n$ are identified. Thus, we get that
\[ \Fun^{\cocart}_{/ \ZZ_{\geq 0}}(\sd(\ZZ_{\geq 0}), C^{\infty} ) \simeq \lim_n \Fun^{\cocart}_{/\Delta^n}(\sd(\Delta^n), C^n). \]

Under our assumptions, the diagram
\[ \begin{tikzcd}[row sep=4ex, column sep=4ex, text height=1.5ex, text depth=0.25ex]
\Fun^{\cocart}_{/\Delta^{n+1}}(\sd(\Delta^{n+1}),C^{n+1}) \ar{r}{(i_n)^{\ast}} \ar{d}{(r_n)_{\ast}} & \Fun^{\cocart}_{/\Delta^{n}}(\sd(\Delta^{n}),C^{n}) \ar{d}{(r_{n-1})_{\ast}} \\
\Fun^{\cocart}_{/\Delta^{n}}(\sd(\Delta^{n}),C^{n}) \ar{r}{(i_{n-1})^{\ast}} & \Fun^{\cocart}_{/\Delta^{n-1}}(\sd(\Delta^{n-1}),C^{n-1})
\end{tikzcd} \]
is homotopy commutative, so the maps $(i_n)^{\ast}$ assemble into a natural transformation
\[ i_{\bullet}^{\ast}: \Fun^{\cocart}_{/\Delta^{\bullet+1}}(\sd(\Delta^{\bullet+1}),C^{\bullet+1}) \to \Fun^{\cocart}_{/\Delta^{\bullet}}(\sd(\Delta^{\bullet}),C^{\bullet}). \]
Taking the inverse limit, we obtain an endofunctor $i_{\infty}^{\ast}$ of $\Fun^{\cocart}_{/\ZZ_{\geq 0}}(\sd(\ZZ_{\geq 0}), C^{\infty})$. On the other hand, the successor functor $s: \ZZ_{\geq 0} \to \ZZ_{\geq 0}$ induces a endofunctor $\sd(s)$ of $\sd(\ZZ_{\geq 0})$ that preserves locally cocartesian edges, and thus a `shift' endofunctor $\sh = \sd(s)^{\ast}$ of $\Fun^{\cocart}_{/\ZZ_{\geq 0}}(\sd(\ZZ_{\geq 0}), C^{\infty})$.
\end{cnstr}

\begin{lem} \label{lm:GeometricFixedPointsAsShift} We have an equivalence $\sh \simeq i^{\ast}_{\infty}$.
\end{lem}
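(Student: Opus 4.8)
The plan is to verify the equivalence $\sh \simeq i^\ast_\infty$ by comparing how the two endofunctors act on the defining limit presentation $\Fun^{\cocart}_{/\ZZ_{\geq 0}}(\sd(\ZZ_{\geq 0}), C^\infty) \simeq \lim_n \Fun^{\cocart}_{/\Delta^n}(\sd(\Delta^n), C^n)$ established in \cref{cnstr:FamilyOfLocallyCocartesianFibrations}. The key observation is that both functors are defined ``levelwise'' with respect to this inverse limit, so it suffices to produce a compatible family of equivalences at each finite level $n$ and check that they commute with the transition maps $(r_n)_\ast$.

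First I would unwind $\sh = \sd(s)^\ast$ at finite level. The successor functor $s \colon \ZZ_{\geq 0} \to \ZZ_{\geq 0}$ restricts, on the subcategory $\sd_{[0:n+1]}(\ZZ_{\geq 0})$ of strings with entries in $[0:n+1]$, to the inclusion $\sd(\Delta^n) \cong \sd([1:n+1]) \hookrightarrow \sd(\Delta^{n+1})$ induced by the cosieve $[1:n+1] \subset \Delta^{n+1}$. Thus, under the identification of \cref{lm:joinLocallyCocartesian}(4), the shift $\sh$ is computed at level $n$ precisely as precomposition by $\sd(i_n^{\mathrm{pos}})$, where $i_n^{\mathrm{pos}} \colon \Delta^n \cong [1:n+1] \hookrightarrow \Delta^{n+1}$ is the cosieve inclusion indexing the functor $i_n$. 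Meanwhile, $i^\ast_\infty$ is by construction the inverse limit of the maps $(i_n)^\ast$, which are given by precomposing by that same $\sd(i_n)$ and postcomposing by $i_n$; but since the square relating $C^n$ and $C^{n+1}$ over $\Delta^n \hookrightarrow \Delta^{n+1}$ is a homotopy pullback, the functor $i_n \colon C^n \to C^{n+1}$ is (up to equivalence over $\Delta^n$) just the inclusion of the fiber over the cosieve, so postcomposition by $i_n$ followed by restriction to $\sd(\Delta^n)$ contributes nothing beyond the reindexing already accounted for by $\sd(i_n)$. Chasing this through, both $\sh$ and $(i_n)^\ast$ are identified with the single restriction functor $\sd(i_n)^\ast \colon \Fun^{\cocart}_{/\Delta^{n+1}}(\sd(\Delta^{n+1}),C^{n+1}) \to \Fun^{\cocart}_{/\Delta^n}(\sd(\Delta^n),C^n)$.

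Next I would check compatibility of these level-$n$ identifications with the transition maps $(r_n)_\ast$, so that they assemble to an equivalence of endofunctors on the inverse limit. For $\sh$ this is automatic since $\sh$ is defined directly on $C^\infty$ and the identification with levelwise $\sd(i_n^{\mathrm{pos}})^\ast$ is natural in the tower, using that $r_n$ commutes with the successor-induced maps (which follows from the second homotopy-commutative square in \cref{cnstr:FamilyOfLocallyCocartesianFibrations}). For $i^\ast_\infty$ this is exactly the homotopy commutativity of the square relating $(i_n)^\ast$, $(i_{n-1})^\ast$, $(r_n)_\ast$, and $(r_{n-1})_\ast$ asserted in \cref{cnstr:FamilyOfLocallyCocartesianFibrations}, which was one of the standing hypotheses. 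Passing to the limit over the tower then yields the desired natural equivalence $\sh \simeq i^\ast_\infty$ of endofunctors of $\Fun^{\cocart}_{/\ZZ_{\geq 0}}(\sd(\ZZ_{\geq 0}), C^\infty)$.

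The main obstacle I anticipate is bookkeeping the two different cosieve inclusions $\Delta^n \hookrightarrow \Delta^{n+1}$ that are in play: the one used to define $i_n$ (indexing $C^n \subset C^{n+1}$ as the fiber over $[1:n+1]$) and the one arising from the successor functor on $\sd(\ZZ_{\geq 0})$. These must be shown to agree, and one must be careful that the identification $\Fun^{\cocart}_{/\ZZ_{\geq 0}}(\sd(\ZZ_{\geq 0}), C^n \star \ZZ_{>n}) \simeq \Fun^{\cocart}_{/\Delta^n}(\sd(\Delta^n), C^n)$ of \cref{lm:joinLocallyCocartesian}(4) is compatible with both the shift and the $i_n^\ast$ maps simultaneously — i.e., that the right Kan extension property used there transports $\sd(s)^\ast$ to $\sd(i_n^{\mathrm{pos}})^\ast$ correctly. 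Once that reindexing is pinned down, the argument is purely formal.
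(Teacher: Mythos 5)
Your proposal is correct and follows essentially the same route as the paper: both reduce to checking, for each $n$, that the projection of $\sh$ to level $n$ agrees with $i_n^{\ast}$ applied to the level-$(n+1)$ projection, and both ground this in the observation that the successor functor restricted to $[0:n]$ coincides with the cosieve inclusion $\Delta^n \cong [1:n+1] \subset \Delta^{n+1}$ used to define $i_n$. The only quibble is minor indexing slippage (e.g.\ "$\sd_{[0:n+1]}(\ZZ_{\geq 0})$" where the relevant restriction is to strings in $[0:n]$), which does not affect the argument.
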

\begin{proof} It suffices to check that for all $n \geq 0$, the diagram
\[ \begin{tikzcd}[row sep=4ex, column sep=4ex, text height=1.5ex, text depth=0.25ex]
\Fun^{\cocart}_{/\ZZ_{\geq 0}}(\sd(\ZZ_{\geq 0}), C^{\infty}) \ar{r}{\sh} \ar{d} & \Fun^{\cocart}_{/\ZZ_{\geq 0}}(\sd(\ZZ_{\geq 0}), C^{\infty}) \ar{d} \\
\Fun^{\cocart}_{/\Delta^{n+1}}(\sd(\Delta^{n+1}), C^{n+1}) \ar{r}{i_n^{\ast}} & \Fun^{\cocart}_{/\Delta^n}(\sd(\Delta^n), C^n)
\end{tikzcd} \]
is homotopy commutative. But this follows from the commutativity of the diagram
\[ \begin{tikzcd}[row sep=4ex, column sep=4ex, text height=1.5ex, text depth=0.25ex]
\Delta^n \ar{r} \ar{d} & \Delta^{n+1} \ar{d} \\
\ZZ_{\geq 0} \ar{r}{s} & \ZZ_{\geq 0}
\end{tikzcd} \]
where the upper map is the inclusion of $\Delta^n$ as the cosieve $[1:n+1] \subset \Delta^{n+1}$.
\end{proof}

Next, let $\sd_1(\ZZ_{\geq 0}) \subset \sd(\ZZ_{\geq 0})$ be the subposet on strings $\{ k \}$ and $\{ k< k+1 \}$, and let $$ \gamma_{\infty}^{\ast}: \Fun^{\cocart}_{/\ZZ_{\geq 0}}(\sd(\ZZ_{\geq 0}), C^{\infty}) \to \Fun^{\cocart}_{/\ZZ_{\geq 0}}(\sd_1(\ZZ_{\geq 0}), C^{\infty}) $$ be the functor given by restriction along the inclusion. Parallel to the setup in \cite[Obs.~4.16]{Sha21}, let $t_{\bullet}: \ZZ_{\geq 0} \to \Cat_{\infty}$ be the functor that sends $n$ to the fiber $C^{\infty}_n$ and $[n \to n+1]$ to the pushforward functor $t^{n+1}_n: C^{\infty}_n \to C^{\infty}_{n+1}$, and let $(C^{\infty})^{\vee} \to \ZZ_{\geq 0}^{\op}$ be the cartesian fibration classified by $t_{\bullet}$. Then we may replace the codomain of $\gamma_{\infty}^{\ast}$ as in \cite[Prop.~4.17]{Sha21} to instead write
$$ \gamma_{\infty}^{\ast}: \Fun^{\cocart}_{/\ZZ_{\geq 0}}(\sd(\ZZ_{\geq 0}), C^{\infty}) \to \Fun_{/\ZZ_{\geq 0}^{\op}}(\ZZ_{\geq 0}^{\op}, (C^{\infty})^{\vee}). $$

The functor $\gamma^{\ast}_{\infty}$ clearly commutes with the shift functor $\sh$ defined as $\sd(s)^{\ast}$ on the left and $(s^{\op})^{\ast}$ on the right, so we obtain a functor between the equalizers
\begin{equation} \label{eq:abstract_precomparison_functor}
 \Eq_{\id:\sh}(\Fun^{\cocart}_{/\ZZ_{\geq 0}}(\sd(\ZZ_{\geq 0}), C^{\infty})) \to \Eq_{\id:\sh}(\Fun_{/\ZZ_{\geq 0}^{\op}}(\ZZ_{\geq 0}^{\op}, (C^{\infty})^{\vee})).
\end{equation}

Note that under our assumptions, for all $0 \leq k \leq n$ we have equivalences
\[ \begin{tikzcd}[row sep=4ex, column sep=6ex, text height=1.5ex, text depth=0.5ex]
C^{n+1}_k \ar{d}{r_{n,k}} \ar{r}{\simeq} & C^{n}_{k-1} \ar{r}{\simeq} \ar{d}{r_{n-1,k-1}} & \cdots \ar{r}{\simeq} & C^{n-k+2}_{1} \ar{r}{\simeq} \ar{d}{r_{n-k+1,1}} & C^{n-k+1}_0 \ar{d}{r_{n-k,0}} \\
C^n_k \ar{r}{\simeq} &  C^{n-1}_{k-1} \ar{r}{\simeq} & \cdots \ar{r}{\simeq} & C^{n-k+1}_{1} \ar{r}{\simeq} & C^{n-k}_0,
\end{tikzcd} \]
hence we have equivalences $C^{\infty}_{n+1} \simeq C^{\infty}_{n}$ for all $n \geq 0$, under which $t^{n+2}_{n+1} \simeq t^{n+1}_n$. Therefore, if we let $C = C^{\infty}_0$ and $F = t^1_0: C \to C^{\infty}_1 \simeq C$ as an endofunctor of $C$, then with $\widehat{C} \to B \NN$ defined as in \cref{cnstr:SetupForEndofunctorFibration}, we have a homotopy pullback square
\[ \begin{tikzcd}[row sep=4ex, column sep=4ex, text height=1.5ex, text depth=0.25ex]
(C^{\infty})^{\vee} \ar{r} \ar{d} & \widehat{C} \ar{d} \\
\ZZ_{\geq 0}^{\op} \ar{r} & B \NN
\end{tikzcd} \]
and hence $\Fun_{/\ZZ_{\geq 0}^{\op}}(\ZZ_{\geq 0}^{\op}, (C^{\infty})^{\vee}) \simeq \Fun_{/B \NN}(\ZZ_{\geq 0}^{\op}, \widehat{C})$. \cref{lem:LaxEqualizerGenericEquivalence} then implies the equivalence
\begin{equation} \label{eq:laxequalizer_and_equalizer}
\Eq_{\id:\sh}(\Fun_{/\ZZ_{\geq 0}^{\op}}(\ZZ_{\geq 0}^{\op}, (C^{\infty})^{\vee})) \simeq \LEq_{\id:F}(C).
\end{equation}
Changing the target of the functor \eqref{eq:abstract_precomparison_functor} by the equivalence \eqref{eq:laxequalizer_and_equalizer}, we thereby obtain the generic comparison functor
\begin{equation} \label{eqn:AbstractComparisonFunctor} \Eq_{\id:\sh}(\Fun^{\cocart}_{/\ZZ_{\geq 0}}(\sd(\ZZ_{\geq 0}), C^{\infty})) \to \LEq_{\id:F}(C).
\end{equation}

Let us now return to our two situations of interest. In \cref{cnstr:FamilyOfLocallyCocartesianFibrations}, we may take either
\begin{enumerate}
\item $C^n = \Sp^{C_{p^n}}_{\locus{\phi}}$, the maps $r_n$ as in \cite[Constr.~2.54]{QS21a}, and the maps $i_n$ as in \cite[Constr.~2.55]{QS21a}.
\item $C^n = \Sp^{D_{2p^n}}_{\locus{\phi,\zeta}}$, the maps $r_n$ as in \cref{vrn:DihedralRestrictionGeometricLoci}, and the maps $i_n$ as in \cref{vrn:DihedralGeometricFixedPoints}.
\end{enumerate}

Let $\Sp^{C_{p^{\infty}}}_{\locus{\phi}}$ and $\Sp^{D_{2p^{\infty}}}_{\locus{\phi,\zeta}}$ be the resulting inverse limits as locally cocartesian fibrations over $\ZZ_{\geq 0}$, so we have equivalences
\begin{align*} \Theta: \Fun^{\cocart}_{/\ZZ_{\geq 0}}(\sd(\ZZ_{\geq 0}), \Sp^{C_{p^{\infty}}}_{\locus{\phi}}) \xto{\simeq} \Sp^{C_{p^{\infty}}}, \\
\Theta[\zeta]: \Fun^{\cocart}_{/\ZZ_{\geq 0}}(\sd(\ZZ_{\geq 0}), \Sp^{D_{2p^{\infty}}}_{\locus{\phi,\zeta}}) \xto{\simeq} \Sp^{D_{2p^{\infty}}}.
\end{align*}

By the identification of $\Phi^{C_p}$, resp. $\Phi^{\mu_p}$ as $i_n^{\ast}$ as observed in \cite[Constr.~2.54]{QS21a} and \cref{vrn:DihedralGeometricFixedPoints}, together with \cref{lm:GeometricFixedPointsAsShift}, we may identify the endofunctors $\Phi^{C_p}$ of $\Sp^{C_{p^{\infty}}}$ and $\Phi^{\mu_p}$ of $\Sp^{D_{2p^{\infty}}}$ with the shift endofunctors under the equivalences $\Theta$ and $\Theta[\zeta]$. Consequently, we obtain equivalences\footnote{Here, we implicitly use the equivalence of \cref{rem:EqualizerSwap}.}
\begin{align*} \widehat{\Theta}: \Eq_{\id:\sh}(\Fun^{\cocart}_{/\ZZ_{\geq 0}}(\sd(\ZZ_{\geq 0}), \Sp^{C_{p^{\infty}}}_{\locus{\phi}})) &\xto{\simeq} \CycSp_p^{\mr{gen}}, \\
\widehat{\Theta}[\zeta]: \Eq_{\id:\sh}(\Fun^{\cocart}_{/\ZZ_{\geq 0}}(\sd(\ZZ_{\geq 0}), \Sp^{D_{2p^{\infty}}}_{\locus{\phi,\zeta}})) &\xto{\simeq} \RCycSp_p^{\mr{gen}}.
\end{align*}

Moreover, being defined as the inverse limit over the restriction functors, the fibers of $\Sp^{C_{p^{\infty}}}_{\locus{\phi}}$ and $\Sp^{D_{2p^{\infty}}}_{\locus{\phi,\zeta}}$ are $\Fun(B C_{p^{\infty}}, \Sp)$ and $\Fun_{C_2}(B^t_{C_2} \mu_{p^{\infty}}, \underline{\Sp}^{C_2})$, and the adjacent pushforward endofunctors are $t^{C_p}$ and $t_{C_2} \mu_p$. Choosing inverses to $\widehat{\Theta}$ and $\widehat{\Theta}[\zeta]$, the functor \eqref{eqn:AbstractComparisonFunctor} then yields comparison functors
\begin{align*} \sU: \CycSp^{\mr{gen}}_p \to \CycSp_p, \\
\sU_{\RR}: \RCycSp^{\mr{gen}}_p \to \RCycSp_p.
\end{align*}

In general, for a functor $X: \sd(\ZZ_{\geq 0}) \to C^{\infty}$ over $\ZZ_{\geq 0}$, let $X_n \in C^{\infty}_n$ be the object given by evaluating $X$ at $n$. Then we may describe $\sU$ and $\sU_{\RR}$ on objects by the formulas
\begin{align*} \sU(X, X \simeq \Phi^{C_p} X) = (X_0, X_0 \simeq X_1 \to (X_0)^{t C_p}), \\
\sU_{\RR}(X,X \simeq \Phi^{\mu_p} X) = (X_0, X_0 \simeq X_1 \to (X_0)^{t_{C_2} \mu_p}).
\end{align*}

It is then clear that $\sU$ is equivalent to the functor of \cite[Prop.~II.3.2]{NS18} considered by Nikolaus and Scholze, and $\sU_{\RR}$ is equivalent to the functor defined at the beginning of this section. The main motivation behind our somewhat roundabout reformulation of the comparison functors is to leverage \cref{prp:EquivalenceOnBoundedBelowAtFiniteLevel} to prove analogous statements for $\sU$ and $\sU_{\RR}$. 

\begin{thm} \label{thm:MainTheoremRestated} $\sU$ and $\sU_{\RR}$ restrict to equivalences on the full subcategories of bounded-below, resp. underlying bounded-below objects.
\end{thm}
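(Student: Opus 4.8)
The strategy is to reduce the infinite-level statement to the finite-level comparison (\cref{prp:EquivalenceOnBoundedBelowAtFiniteLevel}) via the reformulation of $\sU$ and $\sU_{\RR}$ as the generic comparison functor \eqref{eqn:AbstractComparisonFunctor}, combined with a passage-to-the-limit argument. The key observation is that all the functors in play—the generic comparison functor, the shift endofunctor, restriction to finite level, and the (underlying) bounded-below conditions—are compatible with the inverse limit presentations $\Sp^{C_{p^{\infty}}}_{\locus{\phi}} = \lim_n (\Sp^{C_{p^n}}_{\locus{\phi}} \star \ZZ_{>n})$ and $\Sp^{D_{2p^{\infty}}}_{\locus{\phi,\zeta}} = \lim_n (\Sp^{D_{2p^n}}_{\locus{\phi,\zeta}} \star \ZZ_{>n})$.

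First I would spell out the target of the comparison functor after the identifications: under $\widehat{\Theta}[\zeta]$ and the equivalence \eqref{eq:laxequalizer_and_equalizer}, the source of $\sU_{\RR}$ becomes $\Eq_{\id:\sh}(\Fun^{\cocart}_{/\ZZ_{\geq 0}}(\sd(\ZZ_{\geq 0}), \Sp^{D_{2p^{\infty}}}_{\locus{\phi,\zeta}}))$, and by \cref{cnstr:FamilyOfLocallyCocartesianFibrations} (using \cref{lm:joinLocallyCocartesian}(4)) this is $\Eq_{\id:\sh}$ of $\lim_n \Fun^{\cocart}_{/\Delta^n}(\sd(\Delta^n), \Sp^{D_{2p^n}}_{\locus{\phi,\zeta}})$, while the target is $\LEq_{\id:t_{C_2}\mu_p}(\Fun_{C_2}(B^t_{C_2}\mu_{p^{\infty}}, \ul{\Sp}^{C_2})) = \RCycSp_p$, which by the analogous reasoning (\cref{lem:LaxEqualizerGenericEquivalence}) and the colimit presentation $B^t_{C_2}\mu_{p^{\infty}} = \colim_n B^t_{C_2}\mu_{p^n}$ also fits into an inverse-limit/lax-equalizer picture. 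Next, I would restrict attention to the full subcategories of underlying bounded-below objects on both sides. On the source, an object is underlying bounded-below exactly when its image in $\Sp^{D_{2p^{\infty}}}$ lies in the full subcategory $\lim_n \Sp^{D_{2p^n}}_{ubb}$, i.e. when each finite-level restriction is underlying bounded-below; since the bounded-below conditions on $D_{2p^n}$-spectra are compatible under restriction $\res^{n+1}_n$ (as $\res^{n+1}_n$ is $t$-exact for the homotopy $t$-structure, cf. the remark before \cref{dfn:relativeGeometricLocus}), this truncates the inverse limit compatibly. On the target, similarly, underlying bounded-below real $p$-cyclotomic spectra are those whose finite-level data is underlying bounded-below.

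Then the argument runs as follows: apply $\Eq_{\id:\sh}(-)$ to the statement that, at each finite level $n$, the comparison functor $\Fun^{\cocart}_{/\Delta^n}(\sd(\Delta^n), \Sp^{D_{2p^n}}_{\locus{\phi,\zeta}})_{ubb} \to \Fun_{/(\Delta^n)^{\op}}((\Delta^n)^{\op}, \Sp^{h_{C_2}\mu_{p^n}}_{\Tate})_{ubb}$ from \cref{prp:EquivalenceOnBoundedBelowAtFiniteLevel} is an equivalence, and that these equivalences are compatible with the transition functors (the $i_n^{\ast}$, which by \cref{lm:GeometricFixedPointsAsShift} realize $\sh$, and the $r_n$) and with the shift $\sh$. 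Passing to the inverse limit over $n$ and then applying $\Eq_{\id:\sh}$ (which preserves equivalences and commutes with the relevant limits) yields that the generic comparison functor \eqref{eqn:AbstractComparisonFunctor} restricts to an equivalence on underlying bounded-below objects; translating back through $\widehat{\Theta}[\zeta]$ and \cref{lem:LaxEqualizerGenericEquivalence} gives the claim for $\sU_{\RR}$. The argument for $\sU$ is identical, using the $C_{p^n}$-column of \cref{prp:EquivalenceOnBoundedBelowAtFiniteLevel} and \cref{cor:CanonicalFunctorsEquivalences} in place of the dihedral input.

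\textbf{The main obstacle.} The hard part is not any single deep input—those are already in place (\cref{prp:EquivalenceOnBoundedBelowAtFiniteLevel}, itself powered by the dihedral Tate orbit lemma via \cref{cor:dihedralTOLyieldsEquivs})—but rather the \emph{bookkeeping of compatibility}: one must check that the finite-level equivalences of \cref{prp:EquivalenceOnBoundedBelowAtFiniteLevel} assemble into an equivalence of towers, i.e. that the squares relating level $n+1$ to level $n$ (via $i_n^{\ast}$ on one side and the Nikolaus--Scholze-type transition on the other, plus the $r_n$ restriction maps) commute coherently, and that the shift endofunctor $\sh$ acts compatibly across all of this. Concretely one needs the homotopy-commutative diagrams of \cref{cnstr:FamilyOfLocallyCocartesianFibrations} together with \cref{vrn:DihedralRestrictionGeometricLoci} and \cref{vrn:DihedralGeometricFixedPoints}, and one must verify that the truncations to (underlying) bounded-below objects are preserved by all transition functors so that the limit is taken within the correct full subcategories. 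Once these compatibilities are in hand, the passage to the limit and the final application of $\Eq_{\id:\sh}$ are formal.
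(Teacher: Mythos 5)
Your proposal is correct and follows essentially the same route as the paper: the paper's proof of \cref{thm:MainTheoremRestated} likewise takes the inverse limit of the finite-level equivalences of \cref{prp:EquivalenceOnBoundedBelowAtFiniteLevel} (with the compatibilities supplied by \cref{cnstr:FamilyOfLocallyCocartesianFibrations}, \cref{vrn:DihedralRestrictionGeometricLoci}, and \cref{vrn:DihedralGeometricFixedPoints}) and then observes that $\sU$ and $\sU_{\RR}$ are induced by applying $\Eq_{\id:\sh}$ to both sides, using \cref{lm:GeometricFixedPointsAsShift} and \cref{lem:LaxEqualizerGenericEquivalence}. The compatibility bookkeeping you flag as the main obstacle is exactly the content the paper front-loads into \cref{sec:main_theorem} before the (very short) formal conclusion.
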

\begin{proof} Let $\widehat \Sp{}^{h C_{p^{\infty}}} \to B \NN$ and $\widehat \Sp {}^{h_{C_2} \mu_{p^{\infty}}} \to B \NN$ be the cartesian fibrations classified by the endofunctors $(-)^{t C_p}$ on $\Fun(B C_{p^{\infty}}, \Sp)$ and $(-)^{t_{C_2} \mu_p}$ on $\Fun_{C_2}(B_{C_2} \mu_{p^{\infty}}, \underline{\Sp}^{C_2})$. By taking the inverse limit of the equivalences of \cref{prp:EquivalenceOnBoundedBelowAtFiniteLevel}, we obtain equivalences
\begin{align*} \Fun^{\cocart}_{/\ZZ_{\geq 0}}(\sd(\ZZ_{\geq 0}), \Sp^{C_{p^{\infty}}}_{\locus{\phi}})_{bb} & \xto{\simeq} \Fun_{/B \NN}(\ZZ_{\geq 0}^{\op},\widehat \Sp{}^{h C_{p^{\infty}}})_{bb}, \\
\Fun^{\cocart}_{/\ZZ_{\geq 0}}(\sd(\ZZ_{\geq 0}), \Sp^{D_{2p^{\infty}}}_{\locus{\phi,\zeta}})_{ubb} & \xto{\simeq} \Fun_{/B \NN}(\ZZ_{\geq 0}^{\op},\widehat \Sp {}^{h_{C_2} \mu_{p^{\infty}}})_{ubb}.
\end{align*}
The functors $\sU$ and $\sU_{\RR}$ are induced by these functors through taking equalizers of the identity and shift functors on both sides, so the theorem follows.
\end{proof}

\subsection{Variant: \texorpdfstring{$\mathrm{O}(2)$}{O(2)}-spectra at the prime \texorpdfstring{$p$}{p}} \label{sec:variant}

We explain a variant of \cref{thm:MainTheoremRestated} involving $S^1$ resp. $\mathrm{O}(2)$ ``at the prime $p$'' rather than $C_{p^{\infty}}$ resp. $D_{2p^{\infty}}$.

\begin{ntn} \label{ntn:pfinite_family}
Let $\sF_p$ be the family of finite subgroups of $S^1$ contained in $C_{p^{\infty}}$, so $\sF_p \cong \ZZ_{\geq 0}$. Let $\sF'_p$ be the family of subgroups of $\mathrm{O}(2)$ contained in $D_{2p^{\infty}}$ and let $\zeta: \sF'_p \to \ZZ_{\geq 0}$ be as in \cref{def_zeta_map}. 
\end{ntn}

Let $\Sp^{S^1}_{\sF_p}$ be the $\infty$-category of $\sF_p$-complete $S^1$-spectra and let $\Sp^{\mathrm{O}(2)}_{\sF'_p}$ be the $\infty$-category of $\sF'_p$-complete $\mathrm{O}(2)$-spectra. To understand $\Sp^{S^1}_{\sF_p}$ and $\Sp^{\mathrm{O}(2)}_{\sF'_p}$, we may use the extension of the Ayala--Mazel-Gee--Rozenblyum reconstruction theorem to compact Lie groups \cite[Thm.~E]{AMGRb}. For any compact Lie group $G$, we again have a locally cocartesian fibration $\Sp^G_{\locus{\phi}} \to \fS[G]$ (where $\fS[G]$ is the nerve of the preordered set of \emph{closed} subgroups of $G$ ordered by subconjugacy) and a comparison functor
\[ \Theta: \Fun^{\cocart}_{/\fS[G]}(\sd(\fS[G]), \Sp^G_{\locus{\phi}}) \to \Sp^G \]
which is \emph{not} an equivalence unless $\fS[G]$ is down-finite. However, restricting to a down-finite sieve $\sF \subset \fS[G]$ yields the reconstruction equivalence
\begin{equation} \label{eqn:reconstruction_equivalence_LieGroup} \Theta: \Fun^{\cocart}_{/\sF}(\sd(\sF), \Sp^G_{\locus{\phi}}|_{\sF}) \xto{\simeq} \Sp^G_{\sF}. \end{equation}
Indeed, $\Sp^G_{\locus{\phi}}|_{\sF} \simeq (\Sp^G_{\sF})_{\locus{\phi}} \to \sF$ is the locally cocartesian fibration associated to the restricted stratification of $\Sp^G$ over $\sF$ in the sense of \cite[Thm.~B(1)]{AMGRb}. In particular, this applies to $\sF_p$ and $\sF'_p$. Moreover, using the pushforward stratification of $\Sp^{\mathrm{O}(2)}_{\sF'_p}$ along $\zeta$ in the sense of \cite[Thm.~B(4)]{AMGRb}, we obtain a locally cocartesian fibration $(\Sp^{\mathrm{O}(2)}_{\sF'_p})_{\locus{\phi,\zeta}} \to \ZZ_{\geq 0}$ whose right-lax limit again reconstructs $\Sp^{\mathrm{O}(2)}_{\sF'_p}$.

Unwinding the definitions, we see that $(\Sp^{S^1}_{\sF_p})_{\locus{\phi}}$ is almost the same as $\Sp^{C_{p^{\infty}}}_{\locus{\phi}}$, but where the fibers $\Fun(B C_{p^{\infty}}, \Sp)$ have been replaced by $\Fun(B S^1, \Sp)$. In particular, the cocartesian pushforward functors remain $(-)^{\tau C_{p^k}}$ over each $[n < n+k]$ in $\ZZ_{\geq 0}$, so the Tate orbit lemma still applies to simplify the subcategory of fiberwise bounded-below objects as in \cref{prp:EquivalenceOnBoundedBelowAtFiniteLevel}. Furthermore, the geometric fixed points endofunctor $\Phi^{C_p}$ of $\Sp^{S^1}_{\sF_p}$ again transports to the shift endofunctor of $\Fun^{\cocart}_{/\ZZ_{\geq 0}}(\sd(\ZZ_{\geq 0}), (\Sp^{S^1}_{\sF_p})_{\locus{\phi}})$ over the reconstruction equivalence. Running the same argument as above, we see that the forgetful functor
$$\sU: \Eq_{\id:\Phi^{C_p}}(\Sp^{S^1}_{\sF_p}) \to \LEq_{\id:t C_p}(\Sp^{h S^1})$$
restricts to an equivalence on bounded-below objects. Now, a similar unwinding of the definitions shows likewise that $(\Sp^{\mathrm{O}(2)}_{\sF'_p})_{\locus{\phi,\zeta}} \to \ZZ_{\geq 0}$ is given by replacing the fibers of $\Sp^{D_{2p^{\infty}}}_{\locus{\phi,\zeta}}$ with $\Fun_{C_2}(B^t_{C_2} S^1, \underline{\Sp}^{C_2})$ but preserving the cocartesian pushforward functors. Therefore, the same argument shows that the forgetful functor
$$\sU_{\RR}: \Eq_{\id:\Phi^{\mu_p}}(\Sp^{\mathrm{O}(2)}_{\sF'_p}) \to \LEq_{\id:t_{C_2} \mu_p}(\Sp^{h_{C_2} S^1})$$
restricts to an equivalence on underlying bounded-below objects.

\section{Extension to integral theories}\label{SS:Integral}

In this section, we define integral versions of genuine and Borel real $p$-cyclotomic spectra and extend \cref{thm:MainTheoremRestated} to a comparison of integral theories (\cref{thm:integral_comparison}).

\subsection{Borel real cyclotomic spectra}

\begin{ntn}
Given a $G$-$\infty$-category $C$ and a collection $\{ F_i \}_{i \in I}$ of $G$-endofunctors of $C$, let
$$\underline{\LEq}_{\id: (F_i)_{i\in I}}(C) \coloneq C \times_{(\id, F_i), {\prod}_{G,i \in I} (C \times_{\sO^{\op}_G} C), (\ev_0,\ev_1)} {\prod}_{G,i \in I} \Ar_{G}(C).$$
where ${\prod}_{G,i \in I}$ denote the $I$-indexed product of $G$-$\infty$-categories.\footnote{This is the $I$-indexed fiber product over $\sO^{\op}_G$.}
\end{ntn}

Let $\PP$ denote the set of prime numbers.

\begin{dfn}\label{dfn:realintegral}
The \emph{$C_2$-$\infty$-category of real cyclotomic spectra} is 
$$\underline{\RCycSp} := \underline{\LEq}_{\id: (\underline{t}_{C_2}\mu_p)_{p \in \PP}}(\underline{\Sp}^{h_{C_2}S^1}).$$
\end{dfn}

As usual, we write $\RCycSp$ for the fiber of $\underline{\RCycSp}$ over $C_2/C_2$ and observe that $\underline{\RCycSp}_{C_2/1}$ identifies with the Nikolaus--Scholze $\infty$-category $\CycSp$ of integral cyclotomic spectra. Note that by definition, $\RCycSp$ identifies with the fiber product
\begin{equation} \label{eq:pullback_integralrcyc}
\Sp^{h_{C_2} S^1} \times_{\prod_{p \in \PP} \Sp^{h_{C_2} S^1}} \prod_{p \in \PP} \LEq_{\id:t_{C_2} \mu_p}(\Sp^{h_{C_2} S^1}).
\end{equation}

\begin{rem} \label{rem:padic_equivalence}
Recall that for an inclusion $H \subset G$ of groups that is a $p$-adic equivalence, the restriction functor $\Fun(B G, \Sp)_p^{\wedge} \to \Fun(B H, \Sp)_p^{\wedge}$ is fully faithful. Since $D_{2p^\infty} \subset \mathrm{O}(2)$ is a $p$-adic equivalence and the induced map on Weyl groups of the subgroup $C_2$ is also a $p$-adic equivalence, this in particular implies that $(\Sp^{h_{C_2}S^1})_p^{\wedge} \to (\Sp^{h_{C_2}\mu_{p^\infty}})^{\wedge}_p$ is fully faithful. It follows that the forgetful functor
\[ \RCycSp \to \Sp^{h_{C_2}S^1} \times_{\prod_{p \in \PP} \Sp^{h_{C_2}\mu_{p^\infty}}} \prod_{p \in \PP} \RCycSp_p \]
restricts to an equivalence on the full subcategories of underlying bounded-below objects (using that for $p$-complete $X \in \Sp^{h_{C_2}S^1}$, the underlying spectrum of $X^{t_{C_2} \mu_p}$ is $p$-complete).\footnote{Beware that the statement of \cite[Prop.~II.3.4]{NS18} contains the erroneous assertion that the analogous statement for $\CycSp$ holds unconditionally.}
\end{rem}

We next enumerate the basic properties of $\underline{\RCycSp}$. By the same reasoning as for the $C_2$-$\infty$-category $\underline{\RCycSp}_p$, we see that:

\begin{enumerate}[leftmargin=*]
\item $\underline{\RCycSp}$ is $C_2$-stable and fiberwise presentable (hence $C_2$-bicomplete).
\item The forgetful $C_2$-functor $\underline{\RCycSp} \to \underline{\Sp}^{C_2}$ is fiberwise conservative, $C_2$-exact, and creates $C_2$-colimits and finite $C_2$-limits.
\item As a $C_2$-lax equalizer of the identity and the collection of $C_2$-symmetric monoidal endofunctors $\{ (-)^{t_{C_2} \mu_p}\}_{p \in \PP}$ (where we endow $\Fun_{C_2} (B^t_{C_2} S^1, \underline{\Sp}^{C_2})$ with the pointwise $C_2$-symmetric monoidal structure), $\underline{\RCycSp}$ acquires a $C_2$-symmetric monoidal structure such that the forgetful $C_2$-functor to $\underline{\Sp}^{C_2}$ is $C_2$-symmetric monoidal. By (2) it then follows that the $C_2$-symmetric monoidal structure on $\underline{\RCycSp}$ is $C_2$-distributive.
\item We have a $C_2$-adjunction
\[ \adjunct{\underline{\mr{triv}}_{\RR}}{\underline{\Sp}^{C_2}}{\underline{\RCycSp}}{\underline{\TCR}} \]
in which the $C_2$-right adjoint $\underline{\TCR}$ is $C_2$-corepresentable by the unit. Moreover, $\underline{\mr{triv}}_{\RR}$ uniquely acquires the structure of a $C_2$-symmetric monoidal functor as the unit map in $\CAlg_{C_2}(\underline{\Pr}^{L, \st}_{C_2})$, and its $C_2$-right adjoint $\underline{\TCR}$ is then lax $C_2$-symmetric monoidal.
\item For a real cyclotomic spectrum $[X \in \Sp^{h_{C_2}S^1}, \{ \varphi_p: X \to X^{t_{C_2} \mu_p} \}_{p \in \PP} ]$, we have a fiber sequence
\begin{equation} \label{eq:fundamental_fiber_sequence}
\TCR(X) \to X^{h_{C_2}S^1} \xtolong{\prod_{p \in \PP} (\varphi_p^{h_{C_2} S^1} - \can_p)}{2.8} \prod_{p \in \PP} (X^{t_{C_2}\mu_p})^{h_{C_2}S^1}.
\end{equation}
\end{enumerate}

When $X$ is $C_2$-bounded-below, we further claim that the final term in our fiber sequence can be identified with the profinite completion of $X^{t_{C_2} S^1}$. This is an immediate consequence of the following proposition (cf. \cite[Rem. II.4.3]{NS18} for the analogous statement regarding cyclotomic spectra):

\begin{prp}\label{Prp:ProfiniteCompletion}
If $X \in \Sp^{h_{C_2}S^1}$ is $C_2$-bounded-below, then $(X^{t_{C_2}\mu_p})^{h_{C_2}S^1}$ is $p$-complete and the canonical morphism $X^{t_{C_2}S^1} \to (X^{t_{C_2}\mu_p})^{h_{C_2}S^1}$ exhibits it as the $p$-completion of $X^{t_{C_2}S^1}$. 
\end{prp}

\begin{proof}
By \cref{rem:Tate_pcomplete}, $X^{t_{C_2} \mu_{p^n}}$ is $p$-complete for all $n$. By replacing homotopy fixed points and Tate constructions with their parametrized analogs in the proof of \cite[Lem. II.4.2]{NS18}, it is straightforward to reduce to showing the canonical map $X^{t_{C_2}S^1} \to \lim_n X^{t_{C_2}\mu_{p^n}}$ is a $p$-adic equivalence. This is the content of \cref{prp:S1vsLimit}. 
\end{proof}

\begin{prp}\label{prp:S1vsLimit}
If $X \in \Sp^{h_{C_2}S^1}$ is underlying bounded-below, then
\[
    X^{t_{C_2}S^1} \to \lim_n X^{t_{C_2}\mu_{p^n}}
\]
is a $p$-adic equivalence. 
\end{prp}

Before proving the proposition in general, we prove a special case:

\begin{lem}\label{lem:LimitUnderlyingZero}
\cref{prp:S1vsLimit} holds when the underlying spectrum of $X \in \Sp^{h_{C_2}S^1}$ is trivial.
\end{lem}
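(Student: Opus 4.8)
The hypothesis that the underlying spectrum $X^1$ is trivial means that $X$ lies in the image of $i_* \colon \Sp^{\Phi C_2} \to \Sp^{C_2}$ at the level of $C_2$-spectra with twisted $S^1$-action; concretely, $X$ is determined by its geometric fixed points $X^{\phi C_2} \in \Sp^{h\mu_2}$ together with the residual $\mu_2$-action, and all the parametrized Tate and homotopy fixed point constructions on $X$ are computed ``$\mu_2$-equivariantly'' on $X^{\phi C_2}$. The plan is to reduce the statement to a purely non-equivariant comparison of $S^1$-Tate and the inverse limit of $\mu_{p^n}$-Tate constructions, which is exactly \cite[Lem.~II.4.2]{NS18} / \cite[Rem.~II.4.3]{NS18} applied to $X^{\phi C_2}$ with its $\mu_2$-action.

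First I would make precise the identification of the relevant functors on the ``geometric'' stratum. Using \cref{rem:UpperShriekPreservesParametrizedLimits} (and the fact that $X^1 = 0$ forces $i^! \simeq i^* = \Phi^{C_2}$ on $X$ and on all of its parametrized homotopy fixed points and Tate constructions, since these are again built from $X$ by $C_2$-colimits and $C_2$-limits that commute with $i^!$), I would show that
\[
    (X^{t_{C_2} \mu_{p^n}})^{\phi C_2} \simeq (X^{\phi C_2})^{t \mu_{p^n}}, \qquad (X^{t_{C_2} S^1})^{\phi C_2} \simeq (X^{\phi C_2})^{t S^1},
\]
compatibly as $n$ varies, where on the right-hand side $X^{\phi C_2} \in \Sp^{h\mu_2}$ is regarded via the faithful embedding into the relevant Borel category and the Tate constructions are the ordinary (non-parametrized) ones, with residual $\mu_2$-action. (For the $\mu_{p^n}$ case one can also invoke the explicit formulas in \cite[Exm.~2.51 and 2.53]{QS21a} as in the proof of \cref{lem:dihedralTOLEven}, after noting that the $X^1$-dependent terms drop out; for the $S^1$ case one passes to the limit over $n$.) Since $X^1 = 0$, the canonical map $X^{t_{C_2} S^1} \to \lim_n X^{t_{C_2}\mu_{p^n}}$ is detected on geometric fixed points alone (its restriction to the underlying spectrum is the trivial map $0 \to 0$), so it suffices to check that
\[
    (X^{\phi C_2})^{t S^1} \to \lim_n (X^{\phi C_2})^{t \mu_{p^n}}
\]
is a $p$-adic equivalence.

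This last statement is exactly the non-equivariant comparison established in \cite[Lem.~II.4.2]{NS18}: for a bounded-below spectrum $Y$ with $S^1$-action (here $Y = X^{\phi C_2}$, bounded-below because $X^1 = 0$ forces $X$ itself to be bounded-below once $X^{\phi C_2}$ is, and conversely the underlying-bounded-below hypothesis plus $X^1 = 0$ gives that $X^{\phi C_2}$ is bounded-below — cf.\ \cref{lem:BddBelowEqualsSliceBddBelow} and \cref{lem:pcompletion}), the map $Y^{tS^1} \to \lim_n Y^{t\mu_{p^n}}$ is a $p$-adic equivalence. The residual $\mu_2$-action plays no role here since a $p$-adic equivalence of Borel $\mu_2$-spectra is detected on underlying spectra. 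I would therefore simply invoke \cite[Lem.~II.4.2]{NS18} to conclude.

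The main obstacle I anticipate is purely bookkeeping: carefully justifying that $i^!$ agrees with $\Phi^{C_2}$ throughout the relevant diagram (so that the ``geometric fixed points of parametrized fixed points/Tate'' identifications hold with no error terms), and that these identifications are natural in $n$ and compatible with the transition maps $X^{t_{C_2}\mu_{p^n}} \to (X^{t_{C_2}\mu_{p^{n-1}}})^{h_{C_2}\mu_p} \to X^{t_{C_2}\mu_{p^{n-1}}}$ defining the inverse limit. Once that compatibility is in place, the argument is a clean reduction to the Nikolaus--Scholze result, with no genuinely new computation required.
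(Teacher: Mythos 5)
There is a genuine gap: the two identifications at the heart of your reduction,
\[
(X^{t_{C_2}\mu_{p^n}})^{\phi C_2} \simeq (X^{\phi C_2})^{t\mu_{p^n}}
\qquad\text{and}\qquad
(X^{t_{C_2}S^1})^{\phi C_2} \simeq (X^{\phi C_2})^{tS^1},
\]
are false. The geometric fixed points of a parametrized Tate construction are not computed by the Tate construction for the \emph{same} group acting on $X^{\phi C_2}$; they are computed by a Tate construction for the residual \emph{Weyl group} of $C_2=\angs{\sigma}$, which has order at most $2$. Indeed $X^{\phi C_2}$ only carries a $\mu_2\cong W_{\mathrm{O}(2)}C_2$-action, so the expression $(X^{\phi C_2})^{tS^1}$ does not even typecheck. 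The correct statements, which the paper extracts from \cite[Exm.~2.51 and 2.53]{QS21a} and from applying $\Phi^{C_2}$ to the norm cofiber sequence for $S^1$, are: $(X^{t_{C_2}S^1})^{\phi C_2}\simeq (X^{\phi C_2})^{t\mu_2}$; for $p$ odd, $(X^{t_{C_2}\mu_{p^n}})^{\phi C_2}\simeq 0$ (the Weyl group of $\angs{\sigma}$ in $D_{2p^n}$ is trivial, so both sides of the comparison vanish $p$-adically, using that $(X^{\phi C_2})^{t\mu_2}$ is $2$-local); and for $p=2$ with $X^1=0$, $(X^{t_{C_2}\mu_{2^n}})^{\phi C_2}$ is a $\mu_2$-induced \emph{direct sum of two copies} of $(X^{\phi C_2})^{t\mu_2}$, one for each conjugacy class of reflection subgroups $\angs{\sigma}$, $\angs{\sigma x}$ in $D_{2^{n+1}}$. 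The subtle point your argument cannot see is that the transition maps $B^t_{C_2}\mu_{2^n}\to B^t_{C_2}\mu_{2^{n+1}}$ merge these two conjugacy classes, so the inverse limit of the two-copy sums collapses to a single copy of $(X^{\phi C_2})^{t\mu_2}$, matching the $S^1$ side on the nose. Nothing resembling the Nikolaus--Scholze comparison $Y^{tS^1}\to\lim_n Y^{t\mu_{p^n}}$ enters; the reduction to \cite[Lem.~II.4.2]{NS18} is not available.

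A secondary problem: even if your identifications were correct, the appeal to \cite[Lem.~II.4.2]{NS18} would require $X^{\phi C_2}$ to be bounded below, and the lemma makes no such hypothesis (only $X^1=0$ is assumed); your parenthetical claim that $X^1=0$ forces $X^{\phi C_2}$ to be bounded below is unjustified. The paper's actual argument needs no connectivity assumption on $X^{\phi C_2}$ because both sides of the comparison are identified \emph{exactly} with $(X^{\phi C_2})^{t\mu_2}$ (for $p=2$) or vanish (for $p$ odd).
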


\begin{proof}
The $C_2$-spectra in question are trivial at the underlying level, so by \cref{lem:pcompletion} it suffices to check on $C_2$-geometric fixed points. We begin by analyzing the lefthand side. Applying $\Phi^{C_2}$ to the cofiber sequence of $C_2$-spectra (\cite[Exm.~5.57]{QS21a})
\[
    \Sigma^{\sigma} X_{h_{C_2}S^1} \to X^{h_{C_2}S^1} \to X^{t_{C_2}S^1}
\]
yields
\[
    (X^{\phi C_2})_{h\mu_2} \to (X^{\phi C_2})^{h\mu_2} \to (X^{\phi C_2})^{t\mu_2},
\]
using $\Phi^{C_2}(S^{\sigma}) \simeq S^0$, \cref{exm:geometric_fixed_points_and_parametrized_colimits}, and \cref{rem:UpperShriekPreservesParametrizedLimits}.

To complete the proof, we consider separately the cases of when $p$ is odd or even:
\begin{enumerate}
\item $p$ is an odd prime: Note that $(X^{\phi C_2})^{t\mu_2}$ is $2$-local, so if $p$ is odd, its $p$-completion is zero. On the other hand, if $p$ is odd then by \cite[Exm.~2.53]{QS21a} and \cref{lem:BoundedBelowEquivalence} we have that $X^{t_{C_2}\mu_{p^n}} \simeq 0$ for all $n$.
\item $p=2$: Note that for a tower $\{ X_n \}$ of spectra,
$$i^*(\lim_n i_* X_n) \simeq i^* i_* \lim_n X_n \simeq \lim_n X_n$$
since $i^*i_* = \id$. We then have
$$(\lim_n X^{t_{C_2}\mu_{2^n}})^{\phi C_2} \simeq \lim_n (X^{\phi C_2 t \mu_2})^{\oplus 2}.$$ 
The maps $B_{C_2}^t \mu_{2^n} \to B_{C_2}^t \mu_{2^{n+1}}$ send the distinct orbits $D_{2^{n+1}}/\mu_{2^n}$ and $D_{2^{n+1}}/\Delta$ to the same orbit $D_{2^{n+2}}/\mu_{2^{n+1}}$. Therefore, in the inverse limit we have
$$\lim_n (X^{\phi C_2 t \mu_2})^{\oplus 2} \simeq X^{\phi C_2 t \mu_2}.$$
\end{enumerate}
\end{proof}


\begin{proof}[Proof of \cref{prp:S1vsLimit}]
We begin with a series of reductions. First, consider the recollement
\[ \begin{tikzcd}
\Fun(B C_2, \Sp) \ar[shift left=3, hookrightarrow]{r}{j_!}  \ar[shift right=3, hookrightarrow]{r}[swap]{j_*} & \Sp^{C_2} \ar[shift left=3]{r}{i^*} \ar[shift right=3]{r}[swap]{i^!} \ar{l}[description]{j^*} \ar[hookleftarrow]{r}[description]{i_*} & \Sp. \end{tikzcd} \]
and the associated fiber sequence of $C_2$-spectra
$$j_!j^* X \to X \to i_* i^* X.$$
The proposition holds for $i_*i^*X$ by \cref{lem:LimitUnderlyingZero}, so it suffices to prove the proposition for $j_!j^*X$. This $C_2$-spectrum is $C_2$-bounded-below, so we may reduce to the case $X = H\underline{M}$ for some $C_2$-Mackey functor $\underline{M}$  by \cref{lem:TateConvergence}. 

We may further assume that $H\underline{M}$ has a trivial $C_2$-parametrized $S^1$-action.\footnote{By this, we mean that the $C_2$-functor $H\underline{M}: B^t_{C_2} S^1 \to \underline{\Mack}^{C_2}$ factors through $\sO^{\op}_{C_2}$, so the $\mathrm{O}(2)$-action on $\uM^e$ factors through the determinant map $\mathrm{O}(2) \to C_2$ and the $\mu_2$-action on $\uM^{C_2}$ is trivial. We don't mean to assert that the $C_2$-action on $\uM^e$ is trivial.} Indeed, consider the fiber sequence of $C_2$-spectra
$$j_! j^* H \uM \to H \uM \to i_*i^* H \uM.$$
Invoking \cref{lem:LimitUnderlyingZero} once more, it suffices to consider $j_! j^* H \uM$. But since the $S^1$-action on $\uM^e$ is necessarily trivial, the $C_2$-parametrized $S^1$-action on $j_! j^* H \uM$ is also trivial. We then apply \cref{lem:TateConvergence} to reduce the claim for $j_! j^* \uM$ to its Postnikov slices.

Any $C_2$-Mackey functor $\underline{M}$ with trivial $C_2$-parametrized $S^1$-action can be expressed as the cokernel of a map between free Mackey functors with trivial $C_2$-parametrized $S^1$-action $\uA_S \to \uA_T$ for some (possibly infinite) $C_2$-sets $S$ and $T$, so it suffices to prove the proposition for $X = H\uA_S$. 

The short exact sequence of $C_2$-Mackey functors
$$0 \to \uI_S \to \uA_S \to \uZ_S \to 0$$
implies that if the proposition holds for $H\uZ_S$ and $H\uI_S$, then it holds for $H\uA_S$. The proposition holds for $H\uI_S$ since $(H\uI_S)^e = 0$ by \cref{lem:LimitUnderlyingZero}. The proposition holds for $H\uZ_I$ by \cref{Lem:EquivForTorsionFree}. 
\end{proof}

\begin{lem}\label{Lem:EquivForTorsionFree}
Let $\uM = \underline{\ZZ}_S$, where $S$ is a $C_2$-set, equipped with trivial $C_2$-parametrized $S^1$-action. Then the map
$$\uM^{t_{C_2}S^1} \to \lim_n \uM^{t_{C_2}\mu_{p^n}}$$
is a $p$-adic equivalence. 
\end{lem}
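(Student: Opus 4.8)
The plan is to reduce the statement to a concrete calculation with the constant Mackey functor $\underline{\ZZ}_S$ and its trivial $C_2$-parametrized $S^1$-action, exploiting that everything in sight is an Eilenberg--MacLane $C_2$-spectrum on which we can compute parametrized fixed points and Tate constructions explicitly. First I would reduce to the case $S = C_2/C_2$ (the point), since $\underline{\ZZ}_S$ is a finite direct sum of Mackey functors of the form $\underline{\ZZ}_{C_2/H}$ and, for $H = 1$, the underlying spectrum of $H\underline{\ZZ}_{C_2/1}$ is nonzero but $H\underline{\ZZ}_{C_2/1} = i_*(\ldots)$-free, i.e. $C_2$-induced; for induced objects the $C_2$-Tate constructions $(-)^{t_{C_2}\mu_p}$ vanish on geometric fixed points and the underlying statement is \cref{prp:S1vsLimit} applied to a trivial-$C_2$-action spectrum with known $S^1$-Tate behavior. (Alternatively one can simply note $\underline{\ZZ}_{C_2/1}$ has $(H\underline{\ZZ}_{C_2/1})^e \neq 0$ but is handled by the already-completed inductive machinery, so only $\underline{\ZZ} = \underline{\ZZ}_{C_2/C_2}$ remains genuinely new.)

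For $\uM = \underline{\ZZ}$ with trivial $C_2$-parametrized $S^1$-action, I would use \cref{lem:pcompletion}: since the underlying spectrum of $\uM^{t_{C_2}S^1}$ and of $\lim_n \uM^{t_{C_2}\mu_{p^n}}$ is $(H\ZZ)^{tS^1}$ resp. $\lim_n (H\ZZ)^{t\mu_{p^n}}$ with trivial action, and these agree $p$-adically by the classical computation (both have $p$-completed homotopy $\ZZ_p$ in each even negative degree; this is the $S^1$-analog of \cite[Rem.~II.4.3]{NS18}), it suffices to compare $C_2$-geometric fixed points. For the left-hand side, applying $\Phi^{C_2}$ to the cofiber sequence $\Sigma^\sigma \uM_{h_{C_2}S^1} \to \uM^{h_{C_2}S^1} \to \uM^{t_{C_2}S^1}$ and using \cref{exm:geometric_fixed_points_and_parametrized_colimits} and \cref{rem:UpperShriekPreservesParametrizedLimits} (the latter applies since here the relevant $i^!$ computations reduce to $\Phi^{C_2}$ up to the underlying term, which for $\underline{\ZZ}$ is $H\ZZ$ with trivial action) gives $(\uM^{t_{C_2}S^1})^{\phi C_2}$ as the total cofiber of $(H\ZZ/2)_{h\mu_2} \to (H\ZZ/2)^{h\mu_2}$, i.e. $(H\FF_2)^{t\mu_2} = (H\ZZ^{\phi C_2})^{t\mu_2}$ (the geometric fixed points $\underline{\ZZ}^{\phi C_2} \simeq H\FF_2$ here). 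For the right-hand side, following the argument of \cref{lem:LimitUnderlyingZero} case (2), I would use $i^*(\lim_n i_* X_n) \simeq \lim_n X_n$ together with \cite[Exm.~2.51]{QS21a} to write $(\lim_n \uM^{t_{C_2}\mu_{2^n}})^{\phi C_2} \simeq \lim_n (\uM^{\phi C_2}{}^{t\mu_2})^{\oplus 2}$, where the doubling comes from the two conjugacy classes of reflection subgroups at finite level; the transition maps identify the two summands, so the limit collapses to $(\uM^{\phi C_2})^{t\mu_2}$, matching the left-hand side. For $p$ odd the right-hand side vanishes by \cite[Exm.~2.53]{QS21a} and \cref{lem:BoundedBelowEquivalence}, while the left-hand side $(\uM^{t_{C_2}S^1})^{\phi C_2}$ is $2$-local hence $p$-adically trivial, so the map is again a $p$-adic equivalence.

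The main obstacle I anticipate is bookkeeping the identification of transition maps in the inverse system $\{ \uM^{t_{C_2}\mu_{2^n}} \}$ on geometric fixed points: one must check that under the maps $B^t_{C_2}\mu_{2^n} \to B^t_{C_2}\mu_{2^{n+1}}$ the two reflection-orbit components $D_{2^{n+1}}/\mu_{2^n}$ and $D_{2^{n+1}}/\Delta$ are genuinely sent to the \emph{same} orbit upstairs (so that the $\oplus 2$ is collapsed by the limit rather than reindexed), and that the residual $\mu_2$-Weyl actions are compatible. This is exactly the $n \in \{0,1,\infty\}$ versus $1 < n < \infty$ dichotomy flagged in \cref{rem:DihedralBasepoints}, and the proof hinges on it; once that connectivity/diagonal analysis is in place the rest is formal manipulation with the recollement and \cref{lem:TateConvergence}. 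The secondary point requiring care is verifying $\uM^{\phi C_2} \simeq H\FF_2$ for $\uM = \underline{\ZZ}$ (with trivial action), which is standard but must be stated, and confirming that $(H\FF_2)^{t\mu_2}$ as a spectrum has the same $p$-completed homotopy as the geometric fixed points computed on the $S^1$-side.
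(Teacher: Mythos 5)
There is a genuine gap in your computation of the $C_2$-geometric fixed points, which is the heart of your argument. You propose to identify $(\uM^{t_{C_2}S^1})^{\phi C_2}$ with $(\uM^{\phi C_2})^{t\mu_2}$ by applying $\Phi^{C_2}$ to the norm cofiber sequence and invoking \cref{exm:geometric_fixed_points_and_parametrized_colimits} together with \cref{rem:UpperShriekPreservesParametrizedLimits}. But the simplification $(X^{h_{C_2}S^1})^{\phi C_2} \simeq (X^{\phi C_2})^{h\mu_2}$ in that remark is available only when $X^e = 0$ (that hypothesis is exactly what makes $i^! \simeq i^* = \Phi^{C_2}$); for $X = H\underline{\ZZ}$ the underlying spectrum is $H\ZZ$, and the recollement fiber sequence $i^! \to i^* \to i^* j_* j^*$ produces a correction term $((H\ZZ)^{hS^1})^{t C_2}$ whose Tate spectral sequence has nonzero $E_2$-page, so it cannot simply be discarded. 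The same issue afflicts the right-hand side: the formula of \cite[Exm.~2.51]{QS21a} for $(Y^{t_{C_2}\mu_2})^{\phi C_2}$ is a pullback involving $(Y^1)^{\tau D_4}$, which does not vanish when $Y^1 = H\ZZ$. So neither geometric-fixed-point identification is justified as written. Relatedly, the assertion $\Phi^{C_2} H\underline{\ZZ} \simeq H\FF_2$ is false: $\pi_* \Phi^{C_2} H\underline{\ZZ} \cong \FF_2[u]$ with $|u| = 2$. Your opening reduction to orbits also needs care for infinite $S$, since the parametrized Tate constructions and the inverse limit over $n$ need not commute with infinite direct sums.

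The underlying strategy --- check a $p$-adic equivalence on underlying spectra and on geometric fixed points --- is sound in principle and is exactly how \cref{lem:LimitUnderlyingZero} proceeds, where the hypothesis $X^e = 0$ makes the geometric-fixed-point side painless. For $\underline{\ZZ}_S$ that hypothesis fails, and the paper takes a different route entirely: it computes everything at once in $RO(C_2)$-graded homotopy. Using the trivial action one has $\uM^{h_{C_2}S^1} \simeq F(\CC\PP^\infty_+, H\uM)$; real orientability of $H\underline{\ZZ}$ splits $\CC\PP^n_+ \otimes H\underline{\ZZ}$ into shifts by $(2i,i)$; the norm maps vanish for degree reasons (except on the unshifted summand in the $\mu_{p^n}$ case, where the norm is multiplication by $p^n$); and the Gysin sequence for $S^{\sigma} \to B^t_{C_2}\mu_{p^n} \to B^t_{C_2}S^1$, which uses torsion-freeness of $\uM$, exhibits $\uM^{t_{C_2}\mu_{p^n}}$ as the mod-$p^n$ reduction of $\uM^{t_{C_2}S^1}$ summand by summand. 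The $p$-adic equivalence is then immediate. To rescue your route you would have to carry the $((H\ZZ)^{hS^1})^{tC_2}$-type gluing terms through both sides and match them, which amounts to redoing that computation.
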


\begin{proof}
The cofiber sequences
$$H\uM_{h_{C_2}\mu_{p^n}} \to H\uM^{h_{C_2}\mu_{p^n}} \to H\uM^{t_{C_2}\mu_{p^n}},$$
$$\Sigma^{\sigma} H\uM_{h_{C_2}S^1} \to H\uM^{h_{C_2}S^1} \to H\uM^{t_{C_2}S^1}$$
allow us to analyze the parametrized Tate construction in terms of parametrized homotopy fixed points and orbits. 

We begin by analyzing the parametrized homotopy fixed points and orbits with respect to $S^1$. Since the $C_2$-parametrized $S^1$-action on $H\uM$ is trivial, we have 
$$H\uM^{h_{C_2}S^1} \simeq F(B^t_{C_2}S^1_+, H\uM) \simeq F(\CC\PP^\infty_+, H\uM),$$
where we have used that $B^t_{C_2}S^1 \simeq \CC\PP^\infty$ with $C_2$-action given by complex conjugation in the last equivalence. The cofiber sequences
$$\CC\PP^{n-1}_+ \to \CC\PP^n_+ \to S^{2n,n}$$
split after tensoring with the real oriented spectrum $H\underline{\ZZ}$. Since $\CC\PP^\infty \simeq \colim_n \CC\PP^n$, we find that 
\begin{align*}
H\uM^{h_{C_2}S^1} & \simeq F(\CC\PP^\infty_+, H\uM) \simeq F_{H\underline{\ZZ}}(\CC\PP^\infty_+ \otimes H\underline{\ZZ}, H\uM) \\ 
& \simeq F_{H\underline{\ZZ}} (\colim_n \bigoplus_{i=0}^n \Sigma^{2i,i}H\underline{\ZZ},H\uM) \simeq \lim_n \bigoplus_{i=0}^n \Sigma^{-2i,-i} H\uM.
\end{align*}
Similarly, we find that
$$H\uM_{h_{C_2}S^1} \simeq H\uM \otimes \CC\PP^\infty_+ \simeq  \colim_n \bigoplus_{i=0}^n \Sigma^{2i,i} H\uM.$$
Now observe that the $C_2$-parametrized $S^1$-norm is trivial for degree reasons. More precisely, the norm is trivial for $\uM = \underline{\ZZ}$ since $\pi_{**}^{C_2}(H\underline{\ZZ})$ vanishes in the relevant bidegrees \cite{Greenlees_FourApproaches}.\footnote{One can also read this off from the computation for $\pi_{**}^{C_2}(H \ZZ_2)$ presented in \cite[\S 9]{BehrensShah} by replacing all instances of $\ZZ_2$ with $\ZZ$ in \cite[Fig.~9.2]{BehrensShah}.} Since $\uM$ is a $C_2$-indexed coproduct of copies of $\uZ$, the same vanishing holds in $\pi_{**}^{C_2}(H\underline{M})$. Thus
$$H\uM^{t_{C_2}S^1} \simeq \colim_{m \rightarrow -\infty} \lim_{n \rightarrow \infty} \bigoplus_{i=m}^n \Sigma^{-2i,-i} H\uM.$$

We now analyze the parametrized homotopy fixed points and orbits with respect to the $\mu_{p^n}$-action. The short exact sequence of abelian groups (with $C_2$-action given by inversion)
$$0 \to \mu_{p^n} \to S^1 \xrightarrow{\cdot p^n} S^1 \to 0$$
gives rise to a fibration of $C_2$-spaces
\[ S^{\sigma} \to B^t_{C_2}\mu_{p^n} \to B^t_{C_2}S^1. \]
Since $\uM$ is torsion-free, the associated Gysin sequence\footnote{The Gysin sequence for real oriented theories is discussed, for example, in the paragraph after \cite[Lem.~3.1]{KW08}. The discussion there generalizes to our situation by replacing $2$ by $p^n$ everywhere.} implies that 
$$H\uM^{h_{C_2}\mu_{p^n}} \simeq \uM \oplus \lim_n \bigoplus_{i=1}^n \Sigma^{-2i,-i} H\uM/p^n,$$
and similarly,
$$H\uM_{h_{C_2}\mu_{p^n}} \simeq \uM \oplus \colim_n \bigoplus_{i=1}^n \Sigma^{2i-1,i} H\uM/p^n.$$
As in the $S^1$-case, the norm is trivial for degree reasons except on the unshifted copy of $\uM$ in each expression, where it is given by multiplication by $p^n$. Therefore
$$H\uM^{t_{C_2}\mu_{p^n}} \simeq \colim_{m \rightarrow -\infty} \lim_{n \rightarrow \infty} \bigoplus_{i=m}^n \Sigma^{-2i,-i} H\uM/p^n.$$

The map $H\uM^{t_{C_2}S^1} \to H\uM^{t_{C_2}\mu_{p^n}}$ is the evident quotient map on each summand. Therefore, the map $H\uM^{t_{C_2}S^1} \to \lim_n H\uM^{t_{C_2}\mu_{p^n}}$ is a $p$-adic equivalence. 
\end{proof}

\begin{rem}
If $\uM = \underline{\ZZ}$, then we may identify the homotopy groups of $H\uM^{t_{C_2}\mu_{p^n}}$ as a ring: 
$$\pi_{**}^{C_2}(H\underline{\ZZ}^{t_{C_2}\mu_{p^n}}) \simeq H\underline{\ZZ}_{**}^{C_2}((x))/(p^n x).$$
The case $n=1$ can be proven using the Gysin sequence and the fact that $H\underline{\ZZ}$ is real orientable; see, for instance, \cite[Thm. 5.7]{LLQ19}. 
\end{rem}

\subsection{\texorpdfstring{$\cF$}{F}-genuine real cyclotomic spectra}


Let $\cF$ generically denote the family of finite subgroups.

\begin{dfn} \label{dfn:O2_spectra}
Let $\Sp^{\mathrm{O}(2)}_{C_2} \coloneq [\res: \Sp^{\mathrm{O}(2)} \to \Sp^{S^1}]$ denote the $C_2$-$\infty$-category defined by the $C_2$-equivariant restriction functor $\res$ (with respect to the trivial action on the source and the residual $C_2$-conjugation action on the target). Let $\Sp^{\mathrm{O}(2)}_{C_2,\cF}$ be the fiberwise $\cF$-completion of $\Sp^{\mathrm{O}(2)}_{C_2}$.\footnote{In \cite[Def.~II.2.15]{NS18}, Nikolaus and Scholze define $\mathbb{T}\Sp_{\cF}$ as the localization of the $\infty$-category of $S^1$-equivariant spectra at those maps $f: X \to Y$ such that the restriction of $f$ along any finite subgroup $C_n \subset S^1$ is an equivalence. Note that this yields precisely the $\infty$-category $\Sp^{S^1}_{\cF}$ since this condition on the map $f$ is equivalent to demanding that $f^{\phi C_n}: X^{\phi C_n} \xto{\simeq} Y^{\phi C_n}$ for all $C_n \subset S^1$.}
\end{dfn}

In \cref{dfn:O2_spectra}, we implicitly used that if $f:X \to Y \in \Sp^{\mathrm{O}(2)}$ is such that for all finite subgroups $H \leq \mathrm{O}(2)$, $f^{\phi H}: X^{\phi H} \xto{\simeq} Y^{\phi H}$, we have that $\res(f)^{\phi K}$ is an equivalence for all finite subgroups $K \leq S^1$, hence $r$ descends to the $\cF$-completion. Similarly, the geometric fixed points $C_2$-endofunctors $\{ \underline{\Phi}^{\mu_p} \}_{p \in \PP}$ of $\Sp^{\mathrm{O}(2)}_{C_2}$ descend to the $\cF$-completion. They also pairwise commute as $C_2$-endofunctors of $\Sp^{\mathrm{O}(2)}_{C_2,\cF}$ and so define an action of the multiplicative monoid $\NN_{>0}$ on $\Sp^{\mathrm{O}(2)}_{C_2,\cF}$ in $\Cat_{\infty, C_2}$.

\begin{dfn}\label{dfn:realintegralgen}
The \emph{$C_2$-$\infty$-category of $\cF$-genuine real cyclotomic spectra} is
$$\underline{\RCycSp}^{\mr{gen}} \coloneq (\Sp^{\mathrm{O}(2)}_{C_2,\cF})^{h \NN_{> 0}}.$$
\end{dfn}

We enumerate the basic properties of $\underline{\RCycSp}^{\mr{gen}}$ that are established by parallel reasoning to that used for $\underline{\RCycSp}^{\mr{gen}}_p$:

\begin{enumerate}[leftmargin=*]
\item $\Sp^{\mathrm{O}(2)}_{C_2}$, $\Sp^{\mathrm{O}(2)}_{C_2, \cF}$, and $\underline{\RCycSp}^{\mr{gen}}$ are $C_2$-stable and fiberwise presentable (hence $C_2$-bicomplete).
\item The forgetful $C_2$-functor $\underline{\RCycSp}^{\mr{gen}} \to \Sp^{\mathrm{O}(2)}_{C_2,\cF}$ is fiberwise conservative, $C_2$-exact, and creates $C_2$-colimits and finite $C_2$-limits.
\item The endofunctors $\{ \Phi^{\mu_p} \}_{p \in \PP}$ of $\Sp^{\mathrm{O}(2)}_{\cF}$ are symmetric monoidal and colimit preserving, so the action of $\NN_{>0}$ on $\Sp^{\mathrm{O}(2)}_{\cF}$ lifts to $\CAlg(\Pr^{L, \st})$. Therefore, $\RCycSp^{\mr{gen}}$ acquires a distributive symmetric monoidal structure such that the forgetful functor to $\Sp^{\mathrm{O}(2)}_{\cF}$ is symmetric monoidal.
\item We have a $C_2$-adjunction
\[ \adjunct{\underline{\mr{triv}}^{\mr{gen}}_{\RR}}{\underline{\Sp}^{C_2}}{\underline{\RCycSp}^{\mr{gen}}}{\underline{\TCR}^{\mr{gen}}} \]
in which $\underline{\TCR}^{\mr{gen}}$ is $C_2$-corepresentable by the unit. Here, $\underline{\mr{triv}}^{\mr{gen}}_{\RR}$ is defined to be the unique $C_2$-colimit preserving $C_2$-exact functor that selects the unit in $\RCycSp^{\mr{gen}}$. Furthermore, $\mr{triv}^{\mr{gen}}_{\RR}$ canonically acquires a symmetric monoidal structure such that the composite functor to $\Sp^{\mathrm{O}(2)}_{\cF}$ is the $\cF$-completion of the symmetric monoidal inflation functor $\inf^{\mathrm{O}(2)}_{C_2}: \Sp^{C_2} \to \Sp^{\mathrm{O}(2)}$.
\end{enumerate}

\noindent To improve upon (4) and define the $C_2$-symmetric monoidal structure on $\underline{\RCycSp}^{\mr{gen}}$, we need to introduce a few more constructions.

\begin{cnstr} \label{con:C2_smc_O2_spectra}
We equip $\Sp^{\mathrm{O}(2)}_{C_2}$ with a $C_2$-distributive $C_2$-symmetric monoidal structure by imitating the construction in \cite[\S 9.2]{BachmannHoyoisNorms} of the $G$-symmetric monoidal structure on $\underline{\Sp}^G$ for a finite group $G$. First, recall that for a compact Lie group $G$, by Elmendorf's theorem the $\infty$-category $\Spc^G$ of $G$-spaces is described by the $\infty$-category of $\Spc$-valued presheaves on the (topological) orbit category $\sO_G$. Furthermore, the $\infty$-category $\Sp^G$ of $G$-spectra is obtained by inverting the set of real representation spheres $\{ S^V \}_{V \in \cU}$ in $(\Spc^G)_{\ast}$, where $\cU$ denotes a complete $G$-universe; more precisely, if we let $\mr{Sub}_{\cU}$ be the poset of finite-dimensional subrepresentations of $\cU$, then one forms $\Sp^G$ as the (filtered) colimit of the functor $\mr{Sub}_{\cU} \to \Cat^{\omega}_{\infty}$ (valued in the $\infty$-category of compactly generated $\infty$-categories and compact-object preserving left adjoints) that sends each $V$ to $\Spc^G_{\ast}$ and each inclusion $V \subset W$ to $\Sigma^{W - V}: \Spc^G_{\ast} \to \Spc^G_{\ast}$ for $W-V$ the orthogonal complement of $V$ in $W$. See \cite[App.~C]{gepnermeier} for a comparison of this $\infty$-category to that which underlies orthogonal $G$-spectra.

Returning to our situation, let $\Spc^{\mathrm{O}(2)}_{C_2} \coloneq [\res: \Spc^{\mathrm{O}(2)} \to \Spc^{S^1}]$ denote the $C_2$-$\infty$-category defined by the unstable $C_2$-equivariant restriction functor $\res$, and consider the right Kan extension of $\Spc^{\mathrm{O}(2)}_{C_2}$ to the domain $\FF_{C_2}^{\op}$ (equating $\Cat_{\infty}$-valued functors with cocartesian fibrations). Since $r$ admits a right adjoint given by the coinduction functor $\Coind$ such that the Beck--Chevalley property is satisfied, via Barwick's unfurling construction \cite[\S 11]{M1} $\Spc^{\mathrm{O}(2)}_{C_2}$ extends to a product-preserving functor
$$(\Spc^{\mathrm{O}(2)}_{C_2})^{\times} : \Span(\FF_{C_2}) \to \Cat_{\infty}$$
that encodes the $C_2$-cartesian $C_2$-symmetric monoidal structure on $\Spc^{\mathrm{O}(2)}_{C_2}$. Moreover, this $C_2$-symmetric monoidal structure is $C_2$-distributive. Indeed, since $\Spc^{\mathrm{O}(2)}_{C_2}$ is fiberwise cartesian closed, it suffices to check that $\Coind$ preserves sifted colimits and satisfies the binomial formula
$$\Coind(X \sqcup Y) \simeq \Coind(X) \sqcup \Coind(Y) \sqcup \Ind(X \sqcup Y).$$
But under the equivalence $\Spc^G \simeq \categ{P}_{\Sigma}(\FF^G)$,\footnote{Here $\FF^G$ denotes the finite coproduct completion of $\sO_G$ and we refer to its objects as \emph{orbit-finite $G$-spaces}.} note that the restriction, induction, and coinduction functors are obtained by left Kan extension of the same functors defined at the level of orbit-finite $\mathrm{O}(2)$ and $S^1$-spaces. Therefore, $\Coind$ preserves sifted colimits and the verification of the binomial formula reduces to that for coproducts of orbit-finite $S^1$-spaces, which is clear.

Note also that $(\Spc^{\mathrm{O}(2)}_{C_2})^{\times}$ may be defined as the composite of the functor
$$(\FF^{\mathrm{O}(2)}_{C_2})^{\times}: \Span(\FF_{C_2}) \to \Cat_{\infty}$$
with $\categ{P}_{\Sigma}$, where $(\FF^{\mathrm{O}(2)}_{C_2})^{\times} \subset (\Spc^{\mathrm{O}(2)}_{C_2})^{\times}$ is the product-preserving subfunctor that sends $C_2/1$ to $\FF^{S^1}$ and $C_2/C_2$ to $\FF^{\mathrm{O}(2)}$. To now define the ``smash product'' $C_2$-symmetric monoidal structure on $\Spc^{\mathrm{O}(2)}_{C_2, \ast}$, we may proceed as in \cite[\S 9.2]{BachmannHoyoisNorms}. Namely, observe that for any compact Lie group $G$, $\FF^G_*$ embeds as a wide subcategory of $\Span(\FF^G)$ under the functor that sends a pointed orbit-finite $G$-space $X_+$ to $X$ and a pointed $G$-equivariant map $f: X_+ \to Y_+$ to the span $[X \ot f^{-1}(Y) \to Y]$. Now define $(\FF^{\mathrm{O}(2)}_{C_2, \ast})^{\otimes}$ to be the product-preserving subfunctor of $\Span \circ (\FF^{\mathrm{O}(2)}_{C_2})^{\times}$ that sends $C_2/1$ to $\FF^{S^1}_{\ast}$ and $C_2/C_2$ to $\FF^{\mathrm{O}(2)}_{\ast}$,\footnote{As in the proof of \cite[Lem.~9.5(4)]{BachmannHoyoisNorms}, for $p: C_2/1 \to C_2/C_2$, $p_{\otimes} = \Span(p_*): \Span(\FF^{S^1}) \to \Span(\FF^{\mathrm{O}(2)})$ restricts to a functor $p_{\otimes}: \FF^{S^1}_* \to \FF^{\mathrm{O}(2)}_*$ since $p_*$ preserves monomorphisms as a left-exact functor.} and let $(\Spc^{\mathrm{O}(2)}_{C_2, \ast})^{\otimes} \coloneq \categ{P}_{\Sigma} \circ (\FF^{\mathrm{O}(2)}_{C_2, \ast})^{\otimes}$. Using the same reasoning as with $(\FF^{\mathrm{O}(2)}_{C_2})^{\times}$ but applied to orbit-finite pointed $S^1$ and $\mathrm{O}(2)$-spaces, we see that $(\Spc^{\mathrm{O}(2)}_{C_2, \ast})^{\otimes}$ is $C_2$-distributive.

For any compact Lie group $G$, since for every orthogonal real $G$-representation $V$ the associated representation sphere $S^V$ is a symmetric object in pointed $G$-spaces \cite[Lem.~C.5]{gepnermeier}, the stabilization functor $\Sigma^{\infty}: \Spc^G_{\ast} \to \Sp^G$ refines to a symmetric monoidal functor of presentably symmetric monoidal $\infty$-categories (cf. \cite[Cor.~C.7]{gepnermeier} or \cite[Lem.~4.1]{BachmannHoyoisNorms}). To see that $(\Spc^{\mathrm{O}(2)}_{C_2, \ast})^{\otimes}$ in addition $C_2$-stabilizes to define a product-preserving functor
\[ (\Sp^{\mathrm{O}(2)}_{C_2})^{\otimes}: \Span(\FF_{C_2}) \to \Cat_{\infty} \]
endowing $\Sp^{\mathrm{O}(2)}_{C_2}$ with its $C_2$-symmetric monoidal structure, it suffices to show that the unstable pointed norm $\bigwedge_{C_2}: \Spc^{S^1}_* \to \Spc^{\mathrm{O}(2)}_*$ prolongs to a functor $N^{C_2}: \Sp^{S^1} \to \Sp^{\mathrm{O}(2)}$.\footnote{Once this is shown, we will have a specific collection of stable norm functors $f_{\otimes}$ defined for all maps $f$ of finite $C_2$-sets. The coherence problem of assembling these norm functors to define a functor out of $\Span(\FF_{C_2})$ is then solved via the methods of \cite[\S 6.1]{BachmannHoyoisNorms}; cf. the discussion immediately prior to \cite[Rem.~6.3]{BachmannHoyoisNorms} or prior to \cite[Rem.~9.10]{BachmannHoyoisNorms}.} Now by \cite[Rem.~4.2]{BachmannHoyoisNorms}, $\Sp^G \simeq \Spc^G_*[(S^V)^{-1}]_{V \in \cU}$ in the larger $\infty$-category of symmetric monoidal $\infty$-categories that admit sifted colimits and whose tensor product distributes over sifted colimits, together with those symmetric monoidal functors that preserve sifted colimits. Therefore, it suffices to show that for every finite-dimensional real representation $V$ of $S^1$, the composite symmetric monoidal and sifted-colimit preserving functor $\Sigma^{\infty} \circ \bigwedge_{C_2}: \Spc^{S^1}_* \to \Sp^{\mathrm{O}(2)}$ sends $S^V$ to an invertible object of $\Sp^{\mathrm{O}(2)}$. As with any finite-index subgroup $H$ of a compact Lie group $G$, this follows from the formula $\bigwedge_{C_2} (S^V) \simeq S^{\Ind V}$.

Finally, observe that $\Sigma^{\infty}: \Spc^{\mathrm{O}(2)}_{C_2,*} \to \Sp^{\mathrm{O}(2)}_{C_2}$ is $C_2$-symmetric monoidal and $C_2$-colimit preserving (being fiberwise a left adjoint and intertwining with the induction functors). Therefore, since $\Sp^{\mathrm{O}(2)}_{C_2}$ is generated fiberwise under sifted colimits by desuspensions of $\Sigma^{\infty} X$, the binomial formula for the norm functor $N^{C_2}: \Sp^{S^1} \to \Sp^{\mathrm{O}(2)}$ holds by virtue of $(\Spc^{\mathrm{O}(2)}_{C_2,*})^{\otimes}$ being $C_2$-distributive, and it follows that $(\Sp^{\mathrm{O}(2)}_{C_2})^{\otimes}$ is $C_2$-distributive.
\end{cnstr}

\begin{rem} \label{rem:topstacks}
Let $G$ be a compact Lie group and $H \leq G$ a finite-index normal subgroup. We then have the $G/H$-stable $G/H$-$\infty$-category $\Sp^G_{G/H}$ whose fiber over the orbit $G/K$, $H \leq K$ is given by $\Sp^K$, with the functoriality that of restriction and conjugation. An elaboration of \cref{con:C2_smc_O2_spectra} endows $\Sp^G_{G/H}$ with its canonical $G/H$-distributive $G/H$-symmetric monoidal structure.

More generally, these structures should all come packaged into a single functor
\[ \SH^{\otimes}: \Span(\categ{TopStk}, \mr{all}, \mr{fcov}) \to \CAlg(\Cat^{\sift}_{\infty}) \]
where $\categ{TopStk}$ is an $\infty$-category of ``nice'' topological stacks and the covariant maps are restricted to be $0$-truncated finite covering maps; cf. \cite[Rem.~11.8]{BachmannHoyoisNorms}. We note that the various coherence problems encountered below (e.g., in \cref{con:geometric_fixedpoints_C2sm}) are also solvable as instances of the compatibilities recorded by the putative functor $\SH^{\otimes}$.
\end{rem}

\begin{cnstr} \label{con:pass_to_completion}
The $C_2$-symmetric monoidal structure $\Sp^{\mathrm{O}(2)}_{C_2}$ descends to $\Sp^{\mathrm{O}(2)}_{C_2, \cF}$ such that the $\cF$-completion $C_2$-functor is $C_2$-symmetric monoidal. We can use the multiplicative theory of the parametrized Verdier quotient \cite[\S 5.2]{QS21a} to show this. Let $I \subset \Sp^{\mathrm{O}(2)}_{C_2}$ be the full $C_2$-subcategory given fiberwise by those $X$ whose $\cF$-completion is $0$, and note that $X \in I$ if and only if $\res^H X \simeq 0 \in \Sp^H$ for all finite subgroups $H$ (interpreted in $\Sp^{\mathrm{O}(2)}$ or $\Sp^{S^1}$). Then $I$ is $C_2$-stable and is a fiberwise $\otimes$-ideal such that $\Sp^{\mathrm{O}(2)}_{C_2, \cF}$ is the $C_2$-Verdier quotient $(\Sp^{\mathrm{O}(2)}_{C_2})/I$ \cite[Def.~5.21]{QS21a}. We claim moreover that $I$ is a $C_2$-$\otimes$-ideal \cite[Def.~5.24]{QS21a}, for which it suffices to show that for every $X \in I_{C_2/1}$, we have that $N^{C_2} X \in I_{C_2/C_2}$. But to see that $\res^{H} N^{C_2} X \simeq 0$ for all finite $H \leq \mathrm{O}(2)$, we can use the base-change formula
\begin{equation} \label{eqn:base_change_restriction_norm} \res^H N^{C_2} \simeq \bigotimes_{S^1 \setminus \mathrm{O}(2) / H} N^{H}_{H \cap S^1} \res^{H \cap S^1}
\end{equation}
associated to the pullback square of spaces
\[ \begin{tikzcd}
\coprod_{S^1 \setminus \mathrm{O}(2) / H} B(H \cap S^1) \ar{r} \ar{d} & B H \ar{d} \\ 
B S^1 \ar{r} & B \mathrm{O}(2),
\end{tikzcd} \]
where the formula is proven as the genuine stabilization of the evident unstable base-change formula involving the restriction and coinduction functors. Then by \cite[Thm.~5.28]{QS21a} (and noting fiberwise accessibility of the inclusion $I \subset \Sp^{\mathrm{O}(2)}_{C_2}$), the claim follows.
\end{cnstr}

\begin{cnstr} \label{con:geometric_fixedpoints_C2sm}
For every prime $p$, we may promote the $C_2$-endofunctor $\underline{\Phi}^{\mu_p}$ of $\Sp^{\mathrm{O}(2)}_{C_2}$ to a $C_2$-symmetric monoidal endofunctor as follows. Consider the commutative square of compact Lie groups
\[ \begin{tikzcd}
S^1 \ar[->>]{r} \ar[hookrightarrow]{d} & S^1/\mu_p \cong S^1 \ar[hookrightarrow]{d} \\ 
\mathrm{O}(2) \ar[->>]{r} & \mathrm{O}(2)/\mu_p \cong \mathrm{O}(2).
\end{tikzcd} \]
Associated to this one has the commutative square of restriction functors
\[ \begin{tikzcd}
\Spc^{S^1} & \Spc^{S^1} \ar{l} \\ 
\Spc^{\mathrm{O}(2)} \ar{u} & \Spc^{\mathrm{O}(2)}. \ar{u} \ar{l}
\end{tikzcd} \]
Passing to right adjoints and then genuine stabilizations, one obtains a commutative square
\[ \begin{tikzcd}
\Sp^{S^1} \ar{r}{\Phi^{\mu_p}} \ar{d}{N^{C_2}} & \Sp^{S^1} \ar{d}{N^{C_2}} \\ 
\Sp^{\mathrm{O}(2)} \ar{r}{\Phi^{\mu_p}} & \Sp^{\mathrm{O}(2)}.
\end{tikzcd} \]
By similar methods to those used to construct $(\Sp^{\mathrm{O}(2)}_{C_2})^{\otimes}$ above, we may elaborate this commutative diagram to define $(\underline{\Phi}^{\mu_p})^{\otimes}$ as a natural transformation $(\Sp^{\mathrm{O}(2)}_{C_2})^{\otimes} \Rightarrow (\Sp^{\mathrm{O}(2)}_{C_2})^{\otimes}$ over $\Span(\FF_{C_2})$.

Thereafter, $\underline{\Phi}^{\mu_p}$ descends to a $C_2$-symmetric monoidal endofunctor of $\Sp^{\mathrm{O}(2)}_{C_2, \cF}$ in view of the universal property of the $C_2$-Verdier quotient \cite[Thm.~5.28]{QS21a}.
\end{cnstr}

Under \cref{con:geometric_fixedpoints_C2sm}, we may form $\underline{\RCycSp}^{\mr{gen}} = (\Sp^{\mathrm{O}(2)}_{C_2, \cF})^{h \NN_{>0}}$ as the limit in the $\infty$-category $\CAlg_{C_2}(\underline{\Pr}^{L, \st}_{C_2})$ of $C_2$-stable $C_2$-presentably symmetric monoidal $\infty$-categories, and thereby endow $\underline{\RCycSp}^{\mr{gen}}$ with its $C_2$-distributive $C_2$-symmetric monoidal structure. As the unit map in $\CAlg_{C_2}(\underline{\Pr}^{L, \st}_{C_2})$, $\underline{\mr{triv}}^{\mr{gen}}_{\RR}$ then uniquely acquires the structure of a $C_2$-symmetric monoidal functor, and thus its $C_2$-right adjoint $\underline{\TCR}^{\mr{gen}}$ obtains the structure of a lax $C_2$-symmetric monoidal functor.

\subsection{Integral comparison} \label{sec:integral_comparison}

We have a forgetful $C_2$-symmetric monoidal functor
\begin{equation}\label{Eqn:ForgetIntegral}
\underline{\sU}_{\RR}: \underline{\RCycSp}^{\mr{gen}} \to \underline{\RCycSp} \simeq \Sp^{\underline{h}_{C_2} S^1} \times_{\prod_{p \in \PP} \Sp^{\underline{h}_{C_2} S^1}} \prod_{p \in \PP} \underline{\LEq}_{\id:\underline{t}_{C_2} \mu_p}(\Sp^{\underline{h}_{C_2} S^1})
\end{equation}
(where $\Sp^{\underline{h}_{C_2} S^1} \coloneq \underline{\Fun}_{C_2}(B^t_{C_2} S^1, \underline{\Sp}^{C_2})$) that is defined as follows:

\begin{itemize}[leftmargin=4ex]
\item[($\ast$)] For every prime $p$, let $\Sp^{\mathrm{O}(2)}_{C_2, \cF_p} \coloneq [\res: \Sp^{\mathrm{O}(2)}_{\cF'_p} \to \Sp^{S^1}_{\cF_p}]$ (cf. \cref{ntn:pfinite_family}). By the same reasoning as in \cref{con:pass_to_completion}, the $C_2$-symmetric monoidal structure on $\Sp^{\mathrm{O}(2)}_{C_2}$ along with its $C_2$-symmetric monoidal endofunctor $\underline{\Phi}^{\mu_p}$ descends to $\Sp^{\mathrm{O}(2)}_{C_2, \cF_p}$, so we obtain a $C_2$-symmetric monoidal structure on $\underline{\Eq}_{\id: \underline{\Phi}^{\mu_p}}(\Sp^{\mathrm{O}(2)}_{C_2, \cF_p})$. Now consider the setup of \cref{rem:C2symm_forgetful} but with the $C_2$-recollement
\[ \begin{tikzcd}[row sep=4ex, column sep=8ex, text height=1.5ex, text depth=0.5ex]
\underline{\Fun}_{C_2}(B^t_{C_2} S^1, \ul{\Sp}^{C_2}) \ar[shift right=1,right hook->]{r}[swap]{\underline{\sF}^{\vee}_b } & \Sp^{\mathrm{O}(2)}_{C_2, \cF_p} \ar[shift right=2]{l}[swap]{\underline{\sU}_b} \ar[shift left=2]{r}{\underline{\Phi}^{\mu_p}} & \Sp^{\mathrm{O}(2)}_{C_2, \cF_p}. \ar[shift left=1,left hook->]{l}{i_{\ast}}
\end{tikzcd} \]
To see that the forgetful $C_2$-functor\footnote{We now disambiguate among the different primes by writing the subscript $p$ in $\underline{\sU}_{\RR,p}$, reserving $\underline{\sU}_{\RR}$ for the integral comparison $C_2$-functor.}
$$\underline{\sU}_{\RR,p}: \underline{\Eq}_{\id: \underline{\Phi}^{\mu_p}}(\Sp^{\mathrm{O}(2)}_{C_2, \cF_p}) \to \underline{\LEq}_{\id: \underline{t}_{C_2} \mu_p}(\Sp^{\underline{h}_{C_2} \mu_p})$$
is $C_2$-symmetric monoidal, it suffices to show that $i_*$ is the inclusion of a $C_2$-$\otimes$-ideal. For this, since we already know $i_*$ to be fiberwise the inclusion of a $\otimes$-ideal, it remains to observe using the formula \eqref{eqn:base_change_restriction_norm} that for all $X \in \Sp^{S^1}_{C_2}$ such that the underlying spectrum $X^e$ is trivial, $\res^{C_2} N^{C_2} X \simeq 0$.

By functoriality of limits, the restriction $C_2$-functor $\underline{\RCycSp}^{\mr{gen}} \to \underline{\Eq}_{\id: \underline{\Phi}^{\mu_p}}(\Sp^{\mathrm{O}(2)}_{C_2, \cF_p})$ is $C_2$-symmetric monoidal. Postcomposing this with $\underline{\sU}_{\RR,p}$ and taking the product over the set of primes $\PP$, we define the projection of $\underline{\sU}_{\RR}$ to the second factor. The projection of $\underline{\sU}_{\RR}$ to the first factor is then taken to be the composite $\underline{\RCycSp}^{\mr{gen}} \to \Sp^{\mathrm{O}(2)}_{C_2, \cF} \to \Sp^{\underline{h}_{C_2} S^1}$, and this defines $\underline{\sU}_{\RR}$ itself as a $C_2$-symmetric monoidal functor.
\end{itemize}

\noindent By \cite[Thm.~II.6.9]{NS18}, $\sU \coloneq (\underline{\sU}_{\RR})_{C_2/1}$ restricts to an equivalence on the full subcategories of bounded-below objects. Over the fiber $C_2/C_2$, we have:

\begin{thm} \label{thm:integral_comparison}
$\sU_{\RR}$ restricts to an equivalence on the full subcategories of underlying bounded-below objects.
\end{thm}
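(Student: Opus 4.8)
The plan is to reduce \cref{thm:integral_comparison} to the $p$-typical comparison \cref{thm:MainTheoremRestated} (and its $\sF_p$-variant from \cref{sec:variant}), exactly mirroring how Nikolaus--Scholze deduce \cite[Thm.~II.6.9]{NS18} from \cite[Thm.~II.6.3]{NS18}. First I would record that the diagram
\[
\begin{tikzcd}[row sep=4ex, column sep=4ex]
\RCycSp^{\mr{gen}} \ar{r}{\sU_{\RR}} \ar{d} & \RCycSp \ar{d} \\
\prod_{p} \Eq_{\id:\Phi^{\mu_p}}(\Sp^{\mathrm{O}(2)}_{\cF'_p}) \times_{\prod_p \Sp^{S^1}_{\cF_p}} \Sp^{\mathrm{O}(2)}_{\cF} \ar{r}{(\prod_p \sU_{\RR,p}, \: \res)} & \prod_{p} \RCycSp_p \times_{\prod_p \Sp^{h_{C_2}S^1}} \Sp^{h_{C_2}S^1}
\end{tikzcd}
\]
commutes, where the left vertical functor is assembled from the restriction $C_2$-functors $\Sp^{\mathrm{O}(2)}_{C_2,\cF} \to \Sp^{\mathrm{O}(2)}_{C_2,\cF_p}$ along with the underlying-$\cF$-complete forgetful functor, and the right vertical functor is the forgetful functor of \cref{rem:padic_equivalence}. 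By \cref{rem:padic_equivalence}, the right vertical functor restricts to an equivalence on underlying bounded-below objects. So it suffices to prove the analogous statement for the left vertical functor and for the bottom horizontal functor on underlying bounded-below objects.

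The bottom horizontal functor restricts to an equivalence on underlying bounded-below objects by the $\sF_p$-variant of \cref{thm:MainTheoremRestated} proven in \cref{sec:variant} (which gives that $\sU_{\RR}: \Eq_{\id:\Phi^{\mu_p}}(\Sp^{\mathrm{O}(2)}_{\cF'_p}) \to \LEq_{\id:t_{C_2}\mu_p}(\Sp^{h_{C_2}S^1})$ is an equivalence on underlying bounded-below objects) together with the fact that the identity functor on $\Sp^{h_{C_2}S^1}$ is trivially such an equivalence; one then takes the induced functor on the fiber products, using that the functors are compatible with the structure maps $X \mapsto X^{t_{C_2}\mu_p}$ defining the pullbacks and that the underlying-bounded-below condition is detected on the $\Sp^{h_{C_2}S^1}$-factor. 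For the left vertical functor, I would argue that the fiberwise $\cF$-completion $\Sp^{\mathrm{O}(2)}_{C_2,\cF}$ decomposes, on the subcategory of objects whose underlying $S^1$-spectrum is bounded-below, as the pullback of the $\Sp^{\mathrm{O}(2)}_{C_2,\cF_p}$ over $\Sp^{S^1}_{C_2,\cF}$ — this is the $C_2$-parametrized arithmetic-fracture statement for $\cF$-complete compact-Lie-equivariant spectra (compatible with the $\underline{\Phi}^{\mu_p}$ and hence passing to the $h\NN_{>0}$-fixed points and then to the equalizers), which is what makes the outer square a pullback-of-equivalences. Taking $h\NN_{>0}$-fixed points and then forming the (lax) equalizers of the identity and the collection $\{\Phi^{\mu_p}\}$, resp. $\{t_{C_2}\mu_p\}$, preserves these pullback decompositions because limits commute with limits.

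The main obstacle, I expect, is the bookkeeping around the two different ambient families: on the genuine side one works with $\cF$-genuine $\mathrm{O}(2)$-spectra (all finite subgroups), while the $p$-typical theory lives over $\cF'_p$ (finite subgroups of $D_{2p^\infty}$). One must check that the restriction functor $\Sp^{\mathrm{O}(2)}_{\cF} \to \prod_p \Sp^{\mathrm{O}(2)}_{\cF'_p}$ is fully faithful onto (and identifies the bounded-below part with) the appropriate fiber product over $\Sp^{S^1}_{\cF}$ — the compact-Lie analog of \cite[Prop.~II.3.4]{NS18}, which requires the corrected bounded-below hypothesis exactly as in \cref{rem:padic_equivalence}. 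Concretely, one reduces via the reconstruction theorem \eqref{eqn:reconstruction_equivalence_LieGroup} and the fact that $D_{2p^\infty} \hookrightarrow \mathrm{O}(2)$ is a $p$-adic equivalence (with the induced map on Weyl groups of $C_2$ also a $p$-adic equivalence) to the statement that a bounded-below $\mathrm{O}(2)$-spectrum in the $\cF$-complete category is determined by its geometric-fixed-point strata, each of which is a Borel spectrum that fractures arithmetically; the boundedness is used to commute $p$-completion past the relevant Tate constructions, invoking \cref{cor:ParamTatePcomplete} and \cref{lem:pcompletion}. Once this fracture square is in place, everything else is formal manipulation of limits and the already-established $p$-typical equivalence, so I would present the proof as: (i) set up the commuting square above; (ii) invoke \cref{rem:padic_equivalence} for the right edge; (iii) invoke the $\sF_p$-variant of \cref{thm:MainTheoremRestated} for the bottom edge; (iv) prove the $C_2$-parametrized compact-Lie fracture square to handle the left edge; (v) conclude by two-out-of-three.
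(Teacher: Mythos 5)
Your steps (i)--(iii) and (v) are sound as far as they go --- \cref{rem:padic_equivalence} does handle the Borel side, and the $\sF_p$-variant of \cref{thm:MainTheoremRestated} from \cref{sec:variant} is the right $p$-typical input --- but step (iv) is where all the content lives, and as stated it fails. The fiberwise $\cF$-completion $\Sp^{\mathrm{O}(2)}_{C_2,\cF}$ does \emph{not} decompose as a pullback of the categories $\Sp^{\mathrm{O}(2)}_{\cF'_p}$ over any Borel base, even on underlying bounded-below objects: the $\cF'_p$-completion only retains the geometric fixed points $\Phi^{H}$ for $H \leq D_{2p^{\infty}}$, so the strata at subgroups $\mu_n$ (and $D_{2n}$) with $n$ divisible by at least two primes --- e.g.\ $\Phi^{\mu_6}$ --- are invisible to every factor of your proposed fiber product. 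One can freely modify a bounded-below $\cF$-complete $\mathrm{O}(2)$-spectrum at the $\mu_6$-stratum without changing any $\cF'_p$-completion or the underlying Borel object, so the comparison functor cannot be an equivalence. Passing to $h\NN_{>0}$-fixed points afterwards does not rescue this by ``limits commute with limits'': the cyclotomic structure forces $\Phi^{\mu_6}X \simeq X$ on the source, but the fiber product on the target imposes no such condition, and the gluing maps into composite strata are governed by the full divisibility poset $\NN_{>0}$ rather than the totally ordered chains $\{1<p<p^2<\cdots\}$ handled by \cref{prp:EquivalenceOnBoundedBelowAtFiniteLevel}. If you instead assert the fracture only after forming the (lax) equalizers, the assertion becomes essentially the theorem you are trying to prove, so nothing has been reduced.

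The paper circumvents this by fracturing the \emph{functor} rather than the categories: one $p$-localizes, observes that the generalized Tate functors linking strata whose indices differ by a factor coprime to $p$ vanish on $p$-local objects, so that the $\NN_{>0}$-stratification of $(\Sp^{\mathrm{O}(2)}_{\cF})_{(p)}$ splits as a product over $\NN^{\setminus p}_{>0}$ of copies of the $p$-typical stratification; the residual translation action of $\NN^{\setminus p}_{>0}$ then collapses this product via \cref{lem:poset_action}, yielding $(\RCycSp^{\mr{gen}})_{(p)} \simeq \Eq_{\id:\Phi^{\mu_p}}(\Sp^{\mathrm{O}(2)}_{\cF'_p})_{(p)}$. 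This is how the composite strata are absorbed rather than dropped. Combined with the corresponding identification on the Borel side (where the $\varphi_q$, $q \neq p$, become trivial after $p$-localization) and the $\sF_p$-variant comparison, this gives \cref{Prop:p-complete-p-typical}, and the arithmetic recollement criterion \cref{prp:p_localization_equivalence_criterion} assembles the $p$-local equivalences into the integral statement. To keep your architecture you would have to supply precisely this missing mechanism --- or extend the $1$-generated/extendable analysis from $\Delta^n$ to the full divisibility poset --- to control the non-prime-power strata.
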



\begin{cor}
Let $X$ be a genuine real cyclotomic spectrum that is underlying bounded-below. Then we have a canonical equivalence
$$\TCR^{\mr{gen}}(X) \simeq \TCR(\mathscr{U}_{\RR}X)$$
that moreover respects $\cO$-algebra structures for any (reduced) $C_2$-$\infty$-operad $\sO^{\otimes}$.
\end{cor}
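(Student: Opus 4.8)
The plan is to deduce this corollary from \cref{thm:integral_comparison} in exactly the same way \cref{cor:TCRFormulasEquivalent} was deduced from \cref{thm:MainTheoremEquivalenceBddBelow}, now working with the integral theories. First I would record that the $C_2$-symmetric monoidal forgetful $C_2$-functor $\underline{\sU}_{\RR}$ of \eqref{Eqn:ForgetIntegral} preserves the unit: indeed $\underline{\mr{triv}}^{\mr{gen}}_{\RR}$ and $\underline{\mr{triv}}_{\RR}$ are both the unit maps in $\CAlg_{C_2}(\underline{\Pr}^{L,\st}_{C_2})$, so $\underline{\sU}_{\RR} \circ \underline{\mr{triv}}^{\mr{gen}}_{\RR} \simeq \underline{\mr{triv}}_{\RR}$ as $C_2$-symmetric monoidal functors, and in particular $\sU_{\RR}(\mathbf{1}) \simeq \mathbf{1}$. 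Since the unit of $\RCycSp^{\mr{gen}}$ is the $\cF$-completion of the inflation $\inf^{\mathrm{O}(2)}_{C_2}(S^0)$, whose underlying spectrum is the sphere, the unit is underlying bounded-below, so \cref{thm:integral_comparison} applies to it.

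Next, combining the $C_2$-corepresentability of $\underline{\TCR}$ and $\underline{\TCR}^{\mr{gen}}$ by the unit (established for the integral theories in item (4) of the basic-properties lists for $\underline{\RCycSp}$ and $\underline{\RCycSp}^{\mr{gen}}$ in \cref{SS:Integral}) with \cref{prp:GenericRepresentabilityByUnit} and \cref{lem:adjunctionMappingSpacesEquivalence}, I would write
\[ \TCR^{\mr{gen}}(X) \simeq \ul{\map}_{\underline{\RCycSp}^{\mr{gen}}}(\mathbf{1}, X) \quad\text{and}\quad \TCR(\sU_{\RR} X) \simeq \ul{\map}_{\underline{\RCycSp}}(\mathbf{1}, \sU_{\RR} X). \]
Because \cref{thm:integral_comparison} says $\sU_{\RR}$ is fully faithful on underlying bounded-below objects and $\mathbf{1}$ is such an object (and $\sU_{\RR}(\mathbf{1}) \simeq \mathbf{1}$), the induced map on $C_2$-mapping spectra $\ul{\map}_{\underline{\RCycSp}^{\mr{gen}}}(\mathbf{1}, X) \to \ul{\map}_{\underline{\RCycSp}}(\sU_{\RR}\mathbf{1}, \sU_{\RR} X)$ is an equivalence; chaining the two displays gives the canonical equivalence $\TCR^{\mr{gen}}(X) \simeq \TCR(\sU_{\RR} X)$ of $C_2$-spectra. (One should be slightly careful that the comparison map is the one induced by $\underline{\sU}_{\RR}$ on mapping spectra, which is automatic from naturality of the corepresentability equivalences.)

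Finally, for the multiplicative refinement: $\underline{\sU}_{\RR}$ is $C_2$-symmetric monoidal, hence carries $\sO$-algebras to $\sO$-algebras for any reduced $C_2$-$\infty$-operad $\sO^{\otimes}$, and the lax $C_2$-symmetric monoidal functors $\underline{\TCR}^{\mr{gen}}$ and $\underline{\TCR}$ (again from item (4) of the respective property lists) likewise preserve $\sO$-algebras. The equivalence $\TCR^{\mr{gen}}(X) \simeq \TCR(\sU_{\RR}X)$ is then obtained from the commuting triangle of lax $C_2$-symmetric monoidal functors $\underline{\TCR}^{\mr{gen}} \simeq \underline{\TCR} \circ \underline{\sU}_{\RR}$ (valid on the full $C_2$-subcategories of underlying bounded-below objects by the fully-faithfulness just invoked), so it refines to an equivalence of $\sO$-algebras in $\underline{\Sp}^{C_2}$. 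I expect the only mildly delicate point — and thus the ``main obstacle,'' such as it is — to be checking that the equivalence produced by corepresentability genuinely agrees with the map induced by the $C_2$-symmetric monoidal functor $\underline{\sU}_{\RR}$, so that the $\sO$-algebra structures match on the nose; this is handled exactly as in the proof of \cref{cor:TCRFormulasEquivalent} via \cref{rem:C2symm_forgetful} (here its integral analogue, the construction ($\ast$) in \cref{sec:integral_comparison}) together with \cref{lem:adjunctionMappingSpacesEquivalence}. Everything else is a formal transcription of the $p$-typical argument.
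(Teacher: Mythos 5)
Your proposal is correct and is exactly the argument the paper intends: it is the verbatim integral analogue of the proof of \cref{cor:TCRFormulasEquivalent}, using that $\underline{\sU}_{\RR}$ preserves the unit, the $C_2$-corepresentability of $\underline{\TCR}$ and $\underline{\TCR}^{\mr{gen}}$ by the unit, \cref{thm:integral_comparison} for fully-faithfulness on underlying bounded-below objects, and the (lax) $C_2$-symmetric monoidality of the functors involved for the $\sO$-algebra refinement. The only point worth noting is that full faithfulness of the $C_2$-functor $\underline{\sU}_{\RR}$ on $C_2$-mapping spectra also uses the fiber over $C_2/1$, i.e.\ the Nikolaus--Scholze integral comparison cited just before \cref{thm:integral_comparison}, but this matches the level of detail in the paper's own $p$-typical argument.
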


One may prove \cref{thm:integral_comparison} by a straightforward adaptation of the arguments of \cite[II.5-6]{NS18}. To illustrate a different technique, we will again present an alternative argument that leverages the reconstruction theorem of Ayala--Mazel-Gee--Rozenblyum. We first discuss arithmetic recollements of stable presentable symmetric monoidal $\infty$-categories. Recall that the arithmetic fracture square for spectra arises from a stable symmetric monoidal recollement
\[ \begin{tikzcd}
\prod_{p \in \PP} \Sp_p^\wedge \simeq \Sp^{\wedge} \arrow[r,shift right=1] & \Sp \arrow[r,shift left=1] \arrow[l, shift right=1] & \Sp_{\QQ}. \ar[l,shift left=1]
\end{tikzcd} \]
More generally, if $C$ is a stable presentable symmetric monoidal $\infty$-category, the unique colimit-preserving symmetric monoidal functor $\Sp \to C$ gives rise to a stable symmetric monoidal recollement
\[ \begin{tikzcd}
\prod_{p \in \PP} C_p^\wedge \simeq C^\wedge \arrow[r,shift right=1] & C \arrow[r,shift left=1] \arrow[l, shift right=1] & C_\QQ \ar[l,shift left=1]
\end{tikzcd} \]
in view of the correspondence between such data and idempotent $E_{\infty}$-algebras \cite[Obs.~2.36]{Sha21}. Here, we have the splitting $C^\wedge \simeq \prod_{p \in \PP} C_p^\wedge$ in view of the following lemma:

\begin{lem}
Let $B$ be a stable presentable $\infty$-category and let $\{ L_n: B \to B \}_{n \in \NN}$ be a collection of localization functors such that the $L_n$ are jointly conservative and for all $n \neq m$, $L_n L_m \simeq 0$. Let $B_n \subset D$ denote the essential image of $L_n$. Then the functor $L = \prod_n L_n: B \to \prod_{n} B_n$ is an equivalence.
\end{lem}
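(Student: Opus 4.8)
The plan is to check that $L = \prod_n L_n$ admits a fully faithful right adjoint which is also essentially surjective. Since each $L_n$ is a localization, it has a fully faithful right adjoint $i_n : B_n \hookrightarrow B$, and the product functor $\prod_n B_n \to B$ has a right adjoint given by $X \mapsto (L_n X)_n$; the unit and counit of $L \dashv R$ unwind into the families of unit/counit maps for $L_n \dashv i_n$. First I would record that a map $f$ in $B$ is inverted by $L$ if and only if it is inverted by every $L_n$, which is exactly joint conservativity applied after passing to the localization — more precisely, $L$ exhibits $\prod_n B_n$ as the localization of $B$ at the class $S = \bigcap_n W_n$, where $W_n$ is the class of $L_n$-equivalences. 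So the content is to identify $S$-local objects with objects that lie in the ``intersection'' and to see that this intersection is all of $\prod_n B_n$.

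The key input is the orthogonality hypothesis $L_n L_m \simeq 0$ for $n \neq m$. I would first use this to show that for $X \in B$, the canonical map $X \to \prod_n L_n X$ (assembled from the localization units) becomes an equivalence after applying any $L_m$: indeed $L_m(\prod_n L_n X) \simeq \prod_n L_m L_n X \simeq L_m L_m X \simeq L_m X$, using that $L_m$ preserves the (small) product — here I would either assume the $L_n$ are smashing/compatible with products in the relevant sense, or more robustly argue that the product is finite on each ``component'' after noting $L_m L_n X \simeq 0$ for $n \neq m$ kills all but one factor, so the product is really computed termwise and $L_m$ passes through. Joint conservativity then forces $X \to \prod_n L_n X$ to be an equivalence for every $X$, which says precisely that $R$ (the right adjoint to $L$) is essentially surjective, equivalently that $B \to \prod_n B_n$ is an equivalence once we also know $R$ is fully faithful.

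For full faithfulness of $R$, equivalently that the counit $L R \to \id_{\prod_n B_n}$ is an equivalence: given $(Y_n)_n \in \prod_n B_n$ with $Y_n \in B_n$, I must show $L_m(\prod_n i_n Y_n) \simeq Y_m$. Again $L_m$ commutes with the product (finite after killing cross terms, since $L_m i_n Y_n \simeq L_m L_n i_n Y_n \simeq 0$ for $n \neq m$ as $i_n Y_n$ is $L_n$-local hence fixed by $L_n$, and then $L_m L_n \simeq 0$), leaving $L_m i_m Y_m \simeq Y_m$ because $i_m Y_m$ is already $L_m$-local. Combining: $R$ is fully faithful and essentially surjective, hence an equivalence, hence so is its left adjoint $L$.

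\textbf{Main obstacle.} The one genuinely delicate point is the interchange of the localization functors $L_n$ with the (possibly infinite) product $\prod_n$; a priori localizations need not preserve infinite products. The resolution is that the orthogonality relations $L_n L_m \simeq 0$ make the relevant products effectively finite after applying any single $L_m$ (all but one factor is killed), so the interchange is automatic and no smashing hypothesis is actually needed. I would make this precise by working one $L_m$ at a time and using that $L_m$, being a left adjoint onto $B_m$ followed by inclusion — or simply exact and accessible — preserves the finite products and zero objects that survive. Everything else is formal manipulation of adjunctions.
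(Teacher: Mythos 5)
Your argument follows the same route as the paper's: identify the right adjoint $R(x_\bullet) = \prod_n i_n x_n$ of $L$, use the orthogonality $L_m L_n \simeq 0$ to compute the counit $LR \to \id$, and invoke joint conservativity of the $L_n$. The paper is slightly more economical at the end: it only checks that the counit is an equivalence (so $R$ is fully faithful) and that $L$ is conservative, and then concludes, since a conservative left adjoint with fully faithful right adjoint is automatically an equivalence. You verify the unit $X \to RLX$ separately, which costs you a second instance of the same interchange of $L_m$ with an infinite product; that extra check is harmless but unnecessary.

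The point to push back on is the step you yourself single out as delicate. Your resolution --- that since $L_m$ kills each factor $i_n x_n$ for $n \neq m$, ``the product is really computed termwise and $L_m$ passes through,'' so ``the interchange is automatic'' --- is not a proof. The subcategory of $L_m$-acyclic objects is closed under colimits (it is the kernel of the colimit-preserving functor $B \to B_m$) but not under infinite products: for rationalization on $D(\ZZ)$, each $\ZZ/p$ is acyclic while $\prod_p \ZZ/p$ is not. So $L_m i_n x_n \simeq 0$ for all $n \neq m$ does not by itself yield $L_m\bigl(\prod_{n \neq m} i_n x_n\bigr) \simeq 0$, which is exactly what both the counit and (in your version) the unit computation require. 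To be fair, the paper's own proof asserts $(Li)(x_\bullet) \simeq (x_\bullet)$ with no more justification than ``$L_m i_n \simeq 0$ if $n \neq m$,'' so this elision is shared with the source; in the intended application the $L_n$ are the $p$-completions acting on profinite-complete objects, where the interchange does hold because $(-)^{\wedge}_p \simeq \lim_k (-) \otimes \SS/p^k$ preserves products. An airtight version of the lemma should either add the hypothesis that each $L_m$ preserves products of objects lying in the various $B_n$ with $n \neq m$ (equivalently, that the $L_m$-acyclics relevant here are closed under products), or verify that condition in the case at hand rather than declaring the interchange automatic.
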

\begin{proof}
Let $i_n$ denote the right adjoint of $L_n: B \to B_n$. Note that $L$ admits a right adjoint $i$ that sends an object $(x_n)$ to $\prod_n i_n x_n$. Since $L_m i_n \simeq 0$ if $n \neq m$, we get that $(L i)(x_n) \simeq (L_n(\prod_n i_n x_n)) \simeq (x_n)$ and hence $i$ is fully faithful. Since the $L_n$ are jointly conservative, we also get that $L$ is conservative. The conclusion follows.
\end{proof}


Suppose now that $F: C \to D$ is a colimit-preserving functor that preserves the unit. Then $F$ preserves the defining idempotent $E_{\infty}$-algebra $1_{\QQ}$ of the arithmetic recollement, so we obtain a morphism of recollements
\[ \begin{tikzcd}
\prod_{p \in \PP} C_p^{\wedge} \arrow{d}[swap]{\prod_p F^{\wedge}_{p}} & C \ar[shift right=1]{l} \arrow{d}{F} \ar[shift left=1]{r} & C_{\QQ} \arrow{d}{F_{\QQ}} \\
\prod_{p \in \PP} D_p^{\wedge} & D \ar[shift right=1]{l} \ar[shift left=1]{r} & D_{\QQ}
\end{tikzcd} \]
where we have retained in the diagram those horizontal functors that necessarily commute with the vertical functors. To show that $F$ is an equivalence, by \cite[Rem.~2.7]{Sha21} it then suffices to show that
\begin{enumerate}
\item For all primes $p$, $F^{\wedge}_p$ is an equivalence.
\item $F_{\QQ}$ is an equivalence.
\item For all primes $p$ and $p$-complete $x \in C$, the canonical map $F(x) \to F(x)^{\wedge}_p$ is an equivalence.
\end{enumerate}

In particular, this shows:
\begin{prp} \label{prp:p_localization_equivalence_criterion}
Let $C$ and $D$ be stable presentable symmetric monoidal $\infty$-categories, and let $F: C \to D$ be a colimit-preserving functor that preserves the unit. Suppose that for all primes $p$, the $p$-localization $F_{(p)}$ is an equivalence. Then $F$ is an equivalence.
\end{prp}
\begin{proof}
By assumption, $F_{\QQ}$ is an equivalence and $F^{\wedge}_p$ is an equivalence for all primes $p$. Moreover, since $F_{(p)}$ is an equivalence, $F_{(p)}$ in particular preserves limits. Therefore, if $x \in C$ is $p$-complete, then $F(x) \simeq F(x)_{(p)}$ is $p$-complete, which shows that $F$ is a \emph{strict} morphism of arithmetic recollements. The claim now follows from \cite[Rem.~2.7]{Sha21}.
\end{proof}

We now turn to the proof of \cref{thm:integral_comparison}.

\begin{rem}
Let $\cF[G]$ denote the family of finite subgroups of a compact Lie group $G$ and let $\zeta: \cF = \cF[\mathrm{O}(2)] \to \cF[S^1] \cong \NN_{>0}$ be the map of posets given by $H \mapsto H \cap S^1$. In the following, we will consider the pushforward stratification of $\Sp^{\mathrm{O}(2)}_{\cF}$ along $\zeta$ and the associated equivalence
\begin{equation} \label{eqn:rel_stratification}
\Theta[\zeta]: \Fun^{\cocart}_{/\NN_{>0}}(\sd(\NN_{>0}), (\Sp^{\mathrm{O}(2)}_{\cF})_{\locus{\phi,\zeta}}) \xto{\simeq} \Sp^{\mathrm{O}(2)}_{\cF} 
\end{equation}
in which $(\Sp^{\mathrm{O}(2)}_{\cF})_{\locus{\phi,\zeta}}$ is a locally cocartesian fibration over $\NN_{>0}$ whose fiber over every object identifies with $\Sp^{h_{C_2}S^1}$. We also will use the formula for the generalized Tate functors given in \cite[Rem.~5.2.5]{AMGRb}: for every pair of finite subgroups $H,K \leq \mathrm{O}(2)$ with $H$ subconjugate to $K$, if we let $C \coloneq \{ g \in \mathrm{O}(2): H \leq g K g^{-1} \leq N_{\mathrm{O}(2)} H \}$, then the generalized Tate functor
$$\tau^K_H: \Sp^{h W_{\mathrm{O}(2)} H} \to \Sp^{h W_{\mathrm{O}(2)} K}$$
is given in terms of proper Tate constructions by the formula
\begin{equation} \label{eqn:generalized_Tate_formula} E \mapsto \bigoplus_{[g] \in N(H) \setminus C / N(K)} \Ind^{W(K)}_{(N(H) \cap N(g K g^{-1}))/(g K g^{-1})} E^{\tau((g K g^{-1})/H)}
\end{equation}
where we have suppressed subscripts on the normalizers and Weyl groups for brevity. In particular, if $K \not\leq N_{\mathrm{O}(2)} H$, then $\tau^K_H = 0$.
\end{rem}

\begin{prp}\label{Prop:p-complete-p-typical}
For every prime $p$, the $p$-localized functor $(\sU_{\RR})_{(p)}$ participates in a commutative square
\[ \begin{tikzcd}
(\RCycSp^{\mr{gen}})_{(p)} \ar{r}{(U^{\mr{gen}}_p)_{(p)}} \ar{d}{(\sU_{\RR})_{(p)}} & \Eq_{\id: \Phi^{\mu_p}} (\Sp^{\mathrm{O}(2)}_{\sF'_p})_{(p)} \ar{d}{(\sU_{\RR,p})_{(p)}} \\
\RCycSp_{(p)} \ar{r}{(U_p)_{(p)}} & \LEq_{\id: t_{C_2} \mu_p} (\Sp^{h_{C_2} S^1})_{(p)}
\end{tikzcd} \]
in which the horizontal functors are equivalences and the vertical functors restrict to equivalences on full subcategories of underlying bounded-below objects.
\end{prp}
\begin{proof} Here $U^{\mr{gen}}_p$ and $U_p$ denote the evident forgetful functors. We have proven the claim for $(\sU_{\RR, p})_{(p)}$ (in fact, prior to $p$-localization) at the end of \cref{sec:main_theorem} as a variant of \cref{thm:MainTheoremRestated}. To see that $(U_p)_{(p)}$ is an equivalence, first note that for every $[X, \{ \varphi_q: X \to X^{t_{C_2} \mu_q} \}] \in \RCycSp_{(p)}$, $X$ is $p$-local as a $C_2$-spectrum by \cref{prp:RCycSpPresentable}. Therefore, for all primes $q \neq p$ we have that the real cyclotomic structure maps $\varphi_q$ are trivial by \cref{cor:ParamTatePcomplete}(3), so $(U_p)_{(p)}$ is an equivalence.

It remains to show $(U^{\mr{gen}}_p)_{(p)}$ is an equivalence. Let $\NN^{\setminus p}_{>0}$ be the subposet of the divisibility poset $\NN_{>0}$ on $m$ coprime to $p$ and let $\lambda'_p: \NN_{>0} \to \NN^{\setminus p}_{>0}$ be the map of posets $n = p^k m \mapsto m$ where $(m,p)=1$. Let $\lambda_p = \lambda'_p \circ \zeta$ and consider the pushforward stratification of $\Sp^{\mathrm{O}(2)}_{\cF}$ along $\lambda_p$. Then  under the isomorphism $\mathrm{O}(2)/C_m \cong \mathrm{O}(2)$ we have that
$$((\Sp^{\mathrm{O}(2)}_{\cF})_{\locus{\phi, \lambda_p}})_m \simeq \Sp^{\mathrm{O}(2)}_{\cF'_p}$$
for all $m \in \NN^{\setminus p}_{>0}$. Furthermore, for every $m|m'$, we have that the cocartesian pushforward functor $\tau^{m'}_{m}: \Sp^{\mathrm{O}(2)}_{\cF'_p} \to \Sp^{\mathrm{O}(2)}_{\cF'_p}$ vanishes on $p$-local objects. Indeed, first note that for every pair of finite subgroups $H,K$ of $\mathrm{O}(2)$ lying over $m, m'$ respectively, we have that $\tau^{K}_{H}$ vanishes on $p$-local spectra by the formula \eqref{eqn:generalized_Tate_formula} and the fact that for any prime $q \neq p$ and finite $q$-group $G$, the proper Tate construction $(-)^{\tau G}$ annihilates $p$-local spectra. Furthermore by \cite[Thm.~A]{Sha21}, for $X \in ((\Sp^{\mathrm{O}(2)}_{\cF})_{\locus{\phi, \lambda_p}})_m$ the geometric fixed points of $\tau^{m'}_{m} X$ at every $K \leq \mathrm{O}(2)$ over $m'$ is given by the limit of $\tau^{K}_{H_k} \tau^{H_k}_{H_{k-1}} ... \tau^{H_2}_{H_1} X^{\phi H_1}$ indexed over the finite subposet of strings $\{ H_1 < ... < H_k < K \}$ in $\sd(\cF)$ where the $H_i$ lie over $m$. Therefore, $\tau^{m'}_{m}$ vanishes on $p$-local objects.

Since the geometric fixed points functors commute with $p$-localization, it follows that in the equivalence
$$ \Theta[\lambda_p]: \Fun^{\cocart}_{/ \NN^{\setminus p}_{>0}}(\sd(\NN^{\setminus p}_{>0}), (\Sp^{\mathrm{O}(2)}_{\cF})_{\locus{\phi, \lambda_p}}) \xto{\simeq} \Sp^{\mathrm{O}(2)}_{\cF},$$
$p$-localization on the lefthand side is computed by the fiberwise $p$-localization. In view of the above vanishing result, we thus get an equivalence
\[ \Theta[\lambda_p]_{(p)}: \prod_{m \in \NN^{\setminus p}_{>0}} (\Sp^{\mathrm{O}(2)}_{\cF'_p})_{(p)} \xto{\simeq} (\Sp^{\mathrm{O}(2)}_{\cF})_{(p)} \]
under which the action of $\NN_{>0}$ factors as the translation action of $\NN^{\setminus p}_{>0}$ on the indexing set $\NN^{\setminus p}_{>0}$ and the termwise action of the submonoid $\underline{p} \coloneq \{1 < p < p^2 < ... \}$ on each component $(\Sp^{\mathrm{O}(2)}_{\cF'_p})_{(p)}$. Therefore, we obtain an equivalence
\[ (\Theta[\lambda_p]^{h \underline{p}})_{(p)}: \prod_{m \in \NN^{\setminus p}_{>0}} \Eq_{\id: \Phi^{\mu_p}}(\Sp^{\mathrm{O}(2)}_{\cF'_p})_{(p)} \xto{\simeq} ((\Sp^{\mathrm{O}(2)}_{\cF})^{h \underline{p}})_{(p)} \]
where the residual $\NN^{\setminus p}_{>0}$-action on the lefthand side is still the translation action. Applying \cref{lem:poset_action}, we deduce that
\[ (((\Sp^{\mathrm{O}(2)}_{\cF})^{h \underline{p}})_{(p)})^{h \NN^{\setminus p}_{>0}} \simeq \Eq_{\id: \Phi^{\mu_p}}(\Sp^{\mathrm{O}(2)}_{\cF'_p})_{(p)}. \]
Since the endofunctors $\Phi^{\mu_q}$ commute with $p$-localization, we get that
\[ (\RCycSp^{\mr{gen}})_{(p)} \simeq (((\Sp^{\mathrm{O}(2)}_{\cF})^{h \underline{p}})_{(p)})^{h \NN^{\setminus p}_{>0}}.  \]
Putting these equivalences together yields the claim.
\end{proof}

\begin{lem} \label{lem:poset_action}
Let $P$ be a poset with an initial object $x_0$ equipped with compatible monoid structure, and consider the action of $P$ on the set $P^{\delta}$ by translation. Suppose that for every $x \leq y \in P$, there exists a unique $z$ such that $y = x \cdot z$. Then for any $\infty$-category $C$, the diagonal functor $\delta: C \to \prod_P C$ is $P$-equivariant and induces an equivalence $C \xto{\simeq} (\prod_P C)^{h P}$, with inverse given by evaluation at $x_0$.
\end{lem}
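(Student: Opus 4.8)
The statement is an instance of a general principle about $P$-equivariant objects when $P$ is a poset which is also a monoid in a suitable sense: the ``free'' translation action of $P$ on $P^\delta$ should be ``induced'' from the trivial action, so taking homotopy fixed points recovers the original object, just as $(\mathrm{Ind}_e^G X)^{hG} \simeq X$ when everything is appropriately coinduced. The plan is to make this precise by exhibiting $\delta$ as a right adjoint whose right adjoint is evaluation at $x_0$, and then checking that the unit and counit are equivalences.

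First I would observe that $\delta \colon C \to \prod_P C$ is $P$-equivariant: $\prod_P C$ carries the $P$-action permuting factors via $(p \cdot f)(x) = f(p \cdot x)$ for $f \colon P^\delta \to C$, and since $\delta(c)$ is the constant functor, $p \cdot \delta(c) = \delta(c)$ canonically, with the coherence data coming from the fact that $\delta$ factors through the functor category $\mathrm{Fun}(BP, C) \to \mathrm{Fun}(BP, \mathrm{Fun}(P^\delta, C))$. Thus $\delta$ induces $C \simeq C^{hP} \to (\prod_P C)^{hP}$, where the first equivalence is because $P$ acts trivially on $C$ and $BP$ is the classifying space of a monoid whose underlying space is contractible (as $P$ has an initial object $x_0$, hence $BP \simeq \ast$; alternatively one argues directly that $\mathrm{Fun}(BP, C) \to C$ is an equivalence since the unique object of $BP$ together with the idempotent-completeness is harmless here — more carefully, a poset with initial object is sifted and in fact $|P| \simeq \ast$, so $\mathrm{Fun}(BP,C) \xto{\simeq} C$). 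Next I would identify the right adjoint to $\delta \colon C \to \prod_P C$ in $\mathrm{Cat}_\infty$: the right adjoint of the diagonal into a product indexed by a set is the product functor $(c_x)_{x} \mapsto \prod_x c_x$, but that is not what we want; rather, the relevant adjunction is between the $\infty$-categories of $P$-objects, where I want to show $(\prod_P C)^{hP} \to C$, evaluation at $x_0$, is inverse to the map above.

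The heart of the argument is the computation of the homotopy fixed points $(\prod_P C)^{hP}$. Here I would use the hypothesis that for every $x \le y$ there is a \emph{unique} $z$ with $y = x \cdot z$: this says exactly that the action groupoid/translation category of $P$ acting on $P^\delta$, restricted appropriately, is equivalent to $P$ itself, i.e. the map $P \ltimes P^\delta \to P^\delta$ of categories (objects $P^\delta$, morphisms $x \to x \cdot p$) is equivalent to $P$ via $x \mapsto$ (the unique factorization). Concretely, an object of $(\prod_P C)^{hP}$ is a $P$-equivariant functor $P^\delta \to C$ where $P^\delta$ has the translation action; by the standard formula, this is a functor out of the homotopy quotient $(P^\delta)_{hP}$, and the uniqueness hypothesis identifies $(P^\delta)_{hP}$ with $P$ (the translation category of $P$ acting on itself, with $x_0$ initial, is equivalent to $P$). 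Hence $(\prod_P C)^{hP} \simeq \mathrm{Fun}(P, C)$; but then I need to further restrict: not every functor $P \to C$ arises — we need the original factors to have been \emph{constant along the action}, which forces the functor $P \to C$ to send all morphisms to equivalences, i.e. to factor through $|P| \simeq \ast$, giving $\mathrm{Fun}(P,C)^{\simeq\text{-inverting}} \simeq C$. Under this chain, evaluation at $x_0 \in P$ corresponds to the original evaluation at $x_0 \in P^\delta$, and the composite $C \to (\prod_P C)^{hP} \to C$ is the identity; conversely the other composite is the identity by inspection on the description above. The main obstacle I anticipate is setting up the homotopy-fixed-points computation rigorously — specifically, presenting $(P^\delta)_{hP}$ correctly as a colimit over $BP$ and using the uniqueness-of-factorization hypothesis to identify it with $P$ — since this requires being careful about the monoid-vs-poset structure and the directionality of the translation action; once that identification is in hand, everything else is formal.
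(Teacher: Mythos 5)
Your proof follows the same route as the paper's: rewrite $(\prod_P C)^{hP} = \Fun(P^{\delta},C)^{hP}$ as functors out of the homotopy quotient $(P^{\delta})_{hP}$, use the unique-factorization hypothesis to identify the translation category $P^{\delta}//P$ with $P$, and contract using the initial object $x_0$. Two steps need repair. First, the claim that $C \simeq C^{hP}$ ``because $BP \simeq \ast$'' is false: for the trivial action one has $C^{hP} = \Fun(BP,C)$ with $BP$ the delooping of the \emph{monoid} $P$ (one object, endomorphism monoid $P$), which is not the nerve of the poset and is not contractible in general ($B\NN \simeq S^1$, and in the paper's application $P = \NN^{\setminus p}_{>0}$, whose delooping is far from contractible). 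You do not actually need this claim --- the map $C \to (\prod_P C)^{hP}$ is simply the cone furnished by the $P$-equivariance data of $\delta$, i.e.\ the composite $C \to \Fun(BP,C) \to (\prod_P C)^{hP}$, and its invertibility is the conclusion, not an input --- but as written it is an error. Second, the homotopy quotient is the \emph{groupoid completion} of the action category, not the action category itself: since $P^{\delta}$ is a set, $(P^{\delta})_{hP} = \colim_{BP} P^{\delta}$ is computed in spaces and equals $|P^{\delta}//P|$. With $P^{\delta}//P \cong P$ (unique divisibility) and $|P| \simeq \ast$ (since $x_0$ is initial), this gives $(\prod_P C)^{hP} \simeq \Fun(\ast,C) \simeq C$ in one step. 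Your detour through ``$(\prod_P C)^{hP} \simeq \Fun(P,C)$'' followed by a restriction to morphism-inverting functors lands in the right place, namely $\Fun(|P|,C)$, but the stated justification for that restriction (that the factors ``must be constant along the action'') is a heuristic, not an argument; the correct statement is that the first identification is off by exactly this localization. With these two points fixed, your proof coincides with the paper's.
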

\begin{proof}
We have $(\prod_P C)^{h P} = \Fun(P^{\delta}, C)^{h P} \simeq \Fun((P^{\delta})_{h P}, C)$ and $(P^{\delta})_{h P} \simeq |P^{\delta} // P|$ where $P^{\delta} // P$ is the action category for the monoid action. But in view of the divisibility assumption on $P$ we have an isomorphism of categories $P \cong P^{\delta} // P$, and since $x_0 \in P$ is an initial object the inclusion $\{ x_0\} \subset P$ is a weak homotopy equivalence.
\end{proof}

\begin{proof}[Proof of \cref{thm:integral_comparison}]
Combine \cref{Prop:p-complete-p-typical} and \cref{prp:p_localization_equivalence_criterion}.
\end{proof}

Finally, for the sake of completeness we record the observation that $(\sU_{\RR})_{\QQ}$ is an equivalence unconditionally.

\begin{prp}\label{Prop:rational}
The functor $(\sU_{\RR})_{\QQ}$ participates in a commutative triangle of equivalences
\[ \begin{tikzcd}
(\RCycSp^{\mr{gen}})_{\QQ} \ar{d}[swap]{(\sU_{\RR})_{\QQ}} \ar{r}{U^{\mr{gen}}_{\QQ}} & \Fun_{C_2}(B^t_{C_2}S^1, \underline{\Sp}^{C_2})_{\QQ}. \\
(\RCycSp)_{\QQ} \ar{ru}[swap]{U_{\QQ}}
\end{tikzcd} \]
\end{prp}
\begin{proof} Here we let $U^{\mr{gen}}$ and $U$ denote the functors that take the underlying $C_2$-spectrum with parametrized $S^1$-action. It suffices to show that that $U^{\mr{gen}}_{\QQ}$ and $U_{\QQ}$ are equivalences. To see that $U_{\QQ}$ is an equivalence, note that since $U$ preserves colimits, given a rational real cyclotomic spectrum $[X, \{\varphi_p: X \to X^{t_{C_2} \mu_p} \}_{p \in \PP}]$, $X$ is rational as a $C_2$-spectrum. Therefore, all of the real cyclotomic structure maps $\varphi_p$ are trivial since $(-)^{t_{C_2} \mu_p}$ vanishes on rational $C_2$-spectra, so $U_{\QQ}$ is an equivalence.
 
To see that $U^{\mr{gen}}_{\QQ}$ is an equivalence, note first that for every pair of finite subgroups $H,K \leq \mathrm{O}(2)$ with $H$ subconjugate to $K$, the generalized Tate functor $\tau^K_H$ vanishes on rational spectra in view of the formula \eqref{eqn:generalized_Tate_formula} since the proper Tate constructions do. Consequently, after rationalization the locally cocartesian fibration $\Sp^{\mathrm{O}(2)}_{\locus{\phi}}|_{\sF} \to \sF$ becomes a cocartesian fibration with all pushforward functors identically zero. Taking the pushforward stratification along $\zeta$, we get the same for the rationalization of $(\Sp^{\mathrm{O}(2)}_{\cF})_{\locus{\phi,\zeta}} \to \NN_{>0}$. Therefore, the equivalence $\Theta[\zeta]$ of \eqref{eqn:rel_stratification} yields after rationalization an equivalence
\[ \prod_{n \in \NN_{>0}} (\Sp^{h_{C_2}S^1})_{\QQ} \xto{\simeq} (\Sp^{\mathrm{O}(2)}_{\cF})_{\QQ} \]
under which the $\NN_{>0}$-action on $(\Sp^{\mathrm{O}(2)}_{\cF})_{\QQ}$ identifies with the translation action of $\NN_{>0}$ on the indexing set $\NN_{>0}$. The claim now follows from \cref{lem:poset_action}.
\end{proof}

\section{Real cyclotomic structure on \texorpdfstring{$\THR$}{THR} of \texorpdfstring{$C_2$}{C2}-\texorpdfstring{$E_{\infty}$}{E infinity}-algebras} \label{sec:thr}

In this section, we define the real cyclotomic structure present on the real topological Hochschild homology of a $C_2$-$E_{\infty}$-algebra $A$ in $\underline{\Sp}^{C_2}$. This amounts to a straightforward extension of the construction presented in \cite[\S IV.2]{NS18} for $\THH$ of $E_{\infty}$-algebras.

\begin{rem}
All unattributed results in this section on the $C_2$-$\infty$-category of $C_2$-commutative algebras (e.g., that it is $C_2$-cocomplete such that $C_2$-indexed coproducts are computed as $C_2$-indexed tensor products) may be found in forthcoming work of Nardin and the second author \cite{paramalg} or by specializing and slightly adapting the results of Bachmann-Hoyois on normed $E_{\infty}$-algebras in \cite[\S 7]{BachmannHoyoisNorms}.
\end{rem}

\begin{dfn} \label{dfn:THR}
The \emph{real topological Hochschild homology} $\THR(A)$ is defined to be the $C_2$-tensor\footnote{For a $C_2$-category $C$ and an object $E \in C_{C_2/C_2}$, the $C_2$-tensor of $E$ with a $C_2$-space $X$ is defined to be the $C_2$-colimit of the $C_2$-functor $X \xto{\pi} \sO_{C_2}^{\op} \xto{E} C$, where $\pi$ is the structure map and $E$ also denotes the cocartesian section that it determines.} of $A$ with the $C_2$-space $S^{\sigma}$ in the $C_2$-$\infty$-category $\underline{\CAlg}_{C_2}(\underline{\Sp}^{C_2})$ of $C_2$-$E_{\infty}$-algebras in $\underline{\Sp}^{C_2}$. In symbols,
\[ \THR(A) \coloneq S^{\sigma} \odot A. \]
\end{dfn}

\begin{rem}
Consider the pushout square of $C_2$-spaces
\[ \begin{tikzcd}
C_2 = \{ \pm i \} \ar[hookrightarrow]{r} \ar[hookrightarrow]{d} & S^{\sigma} - \{ 1 \} \ar[hookrightarrow]{d} \\
S^{\sigma} - \{ -1 \} \ar[hookrightarrow]{r} & S^{\sigma}.
\end{tikzcd} \]
Using that $S^{\sigma} - \{ 1 \}$ and $S^{\sigma} - \{ -1 \}$ are $C_2$-equivariantly contractible to $\{ -1 \}$ and $\{ +1 \}$, respectively, we get a pushout square of $C_2$-spaces
\[ \begin{tikzcd}
C_2 = \{ \pm i \} \ar[hookrightarrow]{r} \ar[hookrightarrow]{d} & \{ -1 \} \ar[hookrightarrow]{d} \\
\{ +1 \} \ar[hookrightarrow]{r} & S^{\sigma}
\end{tikzcd} \]
and thus a pushout square of $C_2$-$E_{\infty}$-algebras
\[ \begin{tikzcd}
C_2 \odot A \ar{r} \ar{d} & A \ar{d} \\ 
A \ar{r} & S^{\sigma} \odot A.
\end{tikzcd} \]
Since $C_2$-indexed coproducts are computed as $C_2$-indexed tensor products in $\underline{\CAlg}_{C_2}(\underline{\Sp}^{C_2})$, we have that $C_2 \odot A \simeq A^{\otimes C_2} \coloneq N^{C_2} A^e$ where $A^e \coloneq \res^{C_2} A$. Moreover, pushouts are computed as relative tensor products.\footnote{To see this, one can e.g. replace the pushout diagram by a bar construction and use that coproducts are tensor products and the forgetful functor $U: \CAlg_{C_2}(\underline{\Sp}^{C_2}) \to \Sp^{C_2}$ creates sifted colimits.} Thus, we get that
\begin{equation} \label{eq:THR}
\THR(A) \simeq A \otimes_{A^{\otimes C_2}} A,
\end{equation}
which refines the equivalence $\THH(A^e) \simeq A^e \otimes_{A^e \otimes A^e} A^e$ of spectra. Moreover, upon taking $C_2$-geometric fixed points, one gets
\begin{equation} \label{eq:THR_geometric_fixed_points}
\THR(A)^{\phi C_2} \simeq A^{\phi C_2} \otimes_{A^e} A^{\phi C_2}.
\end{equation}
\end{rem}

\begin{rem}
In view of \cite[Cor.~4.5]{DMPR17}, the formula \eqref{eq:THR} shows that \cref{dfn:THR} agrees with the definition of $\THR$ in terms of the dihedral Bar construction as associative algebras in $\Sp^{C_2}$. This equivalence suffices for all of the calculational input regarding $\THR$ from \cite{DMPR17,DMP21} that we will use in \cref{Sec:TCRFp}.
\end{rem}

The rotation action of $S^{\sigma}$ on itself then extends $\THR(A)$ to a $C_2$-functor\footnote{Note that the delooping of $S^{\sigma}$ as an associative algebra in $C_2$-spaces is $B^t_{C_2} S^1$.}
$$\THR(A): B^t_{C_2} S^1 \to \underline{\CAlg}_{C_2}(\underline{\Sp}^{C_2}).$$
It remains to construct the real cyclotomic structure on $\THR(A)$. First, we need to introduce a dihedral refinement of the Tate diagonal $\Delta_p: X \to (X^{\otimes p})^{t C_p}$ for a spectrum $X$.

\begin{cnstr} Consider the functor $N^{D_{2p}}_{C_2}: \Sp^{C_2} \to \Sp^{D_{2p}}$. Given the recollement
\[ \begin{tikzcd}
\Fun_{C_2}(B^t_{C_2} \mu_p, \underline{\Sp}^{C_2}) \ar[shift left=3, hookrightarrow]{r}{j_!}  \ar[shift right=3, hookrightarrow]{r}[swap]{j_*} & \Sp^{D_{2p}} \ar[shift left=3]{r}{i^*} \ar[shift right=3]{r}[swap]{i^!} \ar{l}[description]{j^*} \ar[hookleftarrow]{r}[description]{i_*} & \Sp^{C_2} ,\end{tikzcd} \]
postcomposing $N^{D_{2p}}_{C_2}$ with the natural transformation $\eta: i^* \Rightarrow i^* j_* j^* = (-)^{t_{C_2} \mu_p} j^*$ yields the dihedral Tate diagonal
\[ \Delta_p: X \rightarrow (X^{\otimes \mu_p})^{t_{C_2} \mu_p} \] 
for all $X \in \Sp^{C_2}$. Here, we write $X^{\otimes \mu_p}$ for the $C_2$-indexed tensor product with $C_2$-action on $\mu_p$ given by complex conjugation (i.e., inversion). Indeed, a diagram chase shows that $\res^{D_{2p}}_{C_2} N^{D_{2p}}_{C_2} X \simeq X^{\otimes \mu_p}$.

Furthermore, using the functoriality of the Hill--Hopkins--Ravenel norms we have that $N^{D_{2p}}_{C_2}$ refines to a $C_2$-symmetric monoidal functor from $\underline{\Sp}^{C_2}$ to $\Sp^{D_{2p}}_{C_2} \coloneq [\Sp^{D_{2p}} \xto{\res} \Sp^{\mu_p}]$ endowed with the $C_2$-symmetric monoidal structure defined in \cref{sec:C2_smc}. By the proof of \cref{rem:C2symm_forgetful}, we also get that $\eta$ refines to a lax $C_2$-symmetric monoidal natural transformation. We conclude that for a $C_2$-$E_{\infty}$-algebra $A$, $(A^{\otimes \mu_p})^{t_{C_2} \mu_p}$ is a $C_2$-$E_{\infty}$-algebra and the dihedral Tate diagonal is a morphism of $C_2$-$E_{\infty}$-algebras.
\end{cnstr}

The construction of the real $p$-cyclotomic Frobenius
$$\varphi_p: \THR(A) \to \THR(A)^{t_{C_2} \mu_p}$$
now proceeds as follows: first note that $S^{\sigma} \odot -$ computes the left adjoint to the forgetful functor
\[ \Fun_{C_2}(B^t_{C_2} S^1, \underline{\CAlg}_{C_2}(\underline{\Sp}^{C_2})) \to \CAlg_{C_2}(\underline{\Sp}^{C_2}) \]
in view of the formula for $C_2$-left Kan extension. The unit map $A \to S^{\sigma} \odot A$ then exhibits $\THR(A)$ as the initial $C_2$-$E_{\infty}$-algebra with an action by $S^{\sigma}$ under $A$. Now recall that $\mu_p \odot - \simeq (-)^{\otimes \mu_p}$ in $\CAlg_{C_2}(\underline{\Sp}^{C_2})$, so the $C_2$-equivariant inclusion $\mu_p \subset S^{\sigma}$ induces a map
\[ (A^{\otimes \mu_p})^{t_{C_2} \mu_p} \to (S^{\sigma} \odot A)^{t_{C_2} \mu_p}. \]
Precomposing this with $\Delta_p$ and using the residual $S^{\sigma} \cong S^{\sigma}/\mu_p$-action on the target, we then obtain the $S^{\sigma}$-equivariant map $\varphi_p$ by universal property. Moreover, since $\varphi_p$ is a map of $C_2$-$E_{\infty}$-algebras, the resulting real $p$-cyclotomic spectrum is a $C_2$-$E_{\infty}$-algebra in $\underline{\RCycSp}_p$. Finally, the set $\{ \varphi_p \}_{p \in \PP}$ defines $\THR(A)$ as a $C_2$-$E_{\infty}$-algebra in $\underline{\RCycSp}$.

\begin{rem}
We have used that the equivalence $\mu_p \odot A \simeq A^{\otimes \mu_p}$ is compatible with the $\mu_p$-action: to see this, note that the comparison map $\chi$ is induced by the universal property of $\mu_p \odot (-)$ as the left adjoint to the forgetful functor
\[ \Fun_{C_2}(B^t_{C_2} \mu_p, \underline{\CAlg}_{C_2}(\underline{\Sp}^{C_2})) \to \CAlg_{C_2}(\underline{\Sp}^{C_2}) \]
and one may check that $\chi$ is an equivalence after forgetting action.
\end{rem}

\begin{rem}
By definition, we have a commutative diagram of $C_2$-$E_{\infty}$-algebras
\[ \begin{tikzcd}
A \ar{r}{\eta} \ar{d}{\Delta_p} & \THR(A) \ar{d}{\varphi_p} \\ 
(A^{\otimes \mu_p})^{t_{C_2} \mu_p} \ar{r} & \THR(A)^{t_{C_2} \mu_p}.
\end{tikzcd} \]
Moreover, the map of $C_2$-spaces $S^{\sigma} \to \ast$ induces a $S^{\sigma}$-equivariant map $\epsilon: \THR(A) \to A$ (where the target has trivial $S^{\sigma}$-action) such that the composite
\[ A \xto{\eta} \THR(A) \xto{\varphi_p} \THR(A)^{t_{C_2} \mu_p} \xto{\epsilon'} A^{t_{C_2} \mu_p} \]
is the dihedral Tate-valued Frobenius.
\end{rem}

\section{\texorpdfstring{$\TCR$}{TCR} of the constant mod \texorpdfstring{$p$}{p} Mackey functor}\label{Sec:TCRFp}

In this section, we compute the real topological cyclic homology of the constant $C_2$-Mackey functor on $\FF_p$. We split our analysis into the cases of $p$ odd (\cref{SS:HFpOdd}) and $p=2$ (\cref{SS:HFp2}). We also establish an extension to constant $C_2$-Mackey functors on perfect $\FF_p$-algebras in \cref{SS:Perf}. All spectral sequences will be indexed and displayed using homological Serre grading. 

When $p$ is odd, the computation follows immediately from the known computation of $\TC(H\FF_p)$ given by Nikolaus--Scholze in \cite[\S IV.4]{NS18}. When $p=2$, this is no longer the case and we need to employ parametrized versions of the $S^1$-homotopy fixed points and Tate spectral sequences to compute. The reader should recall that in the case of $\TC^{-}(H \FF_p)$ and $\TP(H \FF_p)$, these spectral sequences all collapse on the $E_2$-page for degree reasons. In stark contrast, in our situation we will find nontrivial $d_3$-differentials (\cref{Prop:Differentials}) that serve to complicate our analysis.

We will use the following grading convention for $RO(C_2)$-graded homotopy groups: 

\begin{cvn}\label{Cvn:Grading} Let $S^{s,t} = S^{t\sigma} \otimes S^{s-t}$, and for a $C_2$-spectrum $X$, let $\pi_{s,t}^{C_2}(X) = [S^{s,t},X]_{C_2}$. For a $C_2$-Mackey functor $H \underline{M}$, we also write $\underline{M}_{**} = \pi^{C_2}_{**}(H \underline{M})$.
\end{cvn}

We will also rely heavily on the following identification of the $C_2$-equivariant homotopy type of $\THR(H\mfp)$ by work of Dotto--Moi--Patchkoria--Reeh.

\begin{thm}[{\cite[Thm.~5.18]{DMPR17}}] \label{Thm:THRFp}
There is an equivalence of associative ring $C_2$-spectra
$$T_{H\FF_p}(S^{2,1}) \overset{\simeq}{\to} \THR(H\mfp)$$
where $T_{H\FF_p}(S^{2,1}) = \bigoplus_{n=0}^\infty \Sigma^{2n,n} H \mfp$ is the free associative $H \mfp$-algebra on $S^{2,1}$. In particular, there is an isomorphism of bigraded rings 
$$H\mfp_{{**}}^{C_2}[{x}] \cong \pi_{{**}}^{C_2}\THR(H\mfp)$$
where $|{x}| = (2,1)$. 
\end{thm}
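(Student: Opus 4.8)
The statement to prove is Theorem~\ref{Thm:THRFp}, the Dotto--Moi--Patchkoria--Reeh identification of $\THR(H\underline{\FF_p})$ as the free associative $H\underline{\FF_p}$-algebra $T_{H\FF_p}(S^{2,1})$ on a generator in bidegree $(2,1)$. Since the excerpt attributes this to \cite[Thm.~5.18]{DMPR17}, the natural plan is to reprove it from the formula $\THR(A) \simeq A \otimes_{A^{\otimes C_2}} A$ of \eqref{eq:THR} specialized to $A = H\underline{\FF_p}$, i.e.\ $\THR(H\underline{\FF_p}) \simeq H\underline{\FF_p} \otimes_{N^{C_2} H\FF_p} H\underline{\FF_p}$. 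First I would record the structure of $N^{C_2} H\FF_p$: by the norm-restriction formula its underlying spectrum is $H\FF_p \otimes H\FF_p$, and one computes $\pi_{**}^{C_2}(N^{C_2} H\FF_p)$ using that the norm refines the classical multiplication $H\FF_p \otimes H\FF_p \to H\FF_p$ dual to the coproduct. The key input is the dual Steenrod algebra: $\pi_*(H\FF_p \otimes H\FF_p) = \mathcal{A}_*$, and the multiplication map $A^{\otimes C_2} \to A$ on underlying spectra is the augmentation $\mathcal{A}_* \to \FF_p$.

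The main computation is then the (relative) tensor product, which I would access via the two-sided bar spectral sequence
\[ \Tor^{\pi_{**}^{C_2}(N^{C_2} H\FF_p)}_{**}\bigl(\underline{\FF_p}_{**}, \underline{\FF_p}_{**}\bigr) \Longrightarrow \pi_{**}^{C_2}\THR(H\underline{\FF_p}). \]
On underlying spectra this recovers the classical computation $\THH_*(H\FF_p) = \FF_p[\mu]$ with $|\mu| = 2$, coming from $\Tor^{\mathcal{A}_*}(\FF_p,\FF_p)$ and the fact that $\mathcal{A}_*$ is (after the relevant truncation, or working with $\pi_0$) an exterior/polynomial algebra so that the $\Tor$ is a divided power / polynomial algebra concentrated so as to force collapse. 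The $C_2$-equivariant refinement requires knowing $\underline{\FF_p}_{**} = \pi_{**}^{C_2}(H\underline{\FF_p})$ and the module structure over $\pi_{**}^{C_2}(N^{C_2}H\FF_p)$; for $p$ odd this is relatively clean, while for $p=2$ one must contend with the more intricate $RO(C_2)$-graded homotopy of $H\underline{\FF_2}$. The upshot should be that the spectral sequence collapses and assembles $\pi_{**}^{C_2}\THR(H\underline{\FF_p})$ as a free module over $\underline{\FF_p}_{**}$ on powers of a class $x$ in bidegree $(2,1)$, where the bidegree $(2,1)$ (rather than $(2,0)$) is dictated by the fact that the bar construction is built from $N^{C_2} H\FF_p \simeq \mu_2 \odot H\underline{\FF_p}$ and the generating cell of $S^\sigma$ contributes one copy of the sign representation.

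Finally, to promote this from a homotopy-group statement to the asserted equivalence of associative ring $C_2$-spectra, I would construct the ring map $T_{H\FF_p}(S^{2,1}) \to \THR(H\underline{\FF_p})$ explicitly: by freeness it is determined by a single $C_2$-equivariant map $S^{2,1} \to \THR(H\underline{\FF_p})$ hitting a generator in $\pi_{2,1}^{C_2}$, which exists by the computation above (or, following \cite[\S IV.4]{NS18}, can be produced from the $S^\sigma$-action together with a chosen generator of $\pi_2 \THH(H\FF_p)$). One then checks it is an equivalence by checking on underlying spectra (the classical Bökstedt result $\THH_*(H\FF_p) = \FF_p[\mu]$) and on geometric fixed points, using \eqref{eq:THR_geometric_fixed_points}, $\THR(H\underline{\FF_p})^{\phi C_2} \simeq (H\FF_p)^{\phi C_2} \otimes_{H\FF_p} (H\FF_p)^{\phi C_2}$, together with the known computation of $(H\underline{\FF_p})^{\phi C_2}$ (for $p=2$ this is $H\FF_2[t]$ with $|t|=1$, for $p$ odd it is $H\FF_p$). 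Matching both fixed-point descriptions with those of $T_{H\FF_p}(S^{2,1})$ and invoking that a map of $C_2$-spectra inducing equivalences on all geometric fixed points is an equivalence completes the argument. \emph{The main obstacle} I anticipate is the $p=2$ bookkeeping: controlling $\underline{\FF_2}_{**}$ and the module structure well enough to see the bar spectral sequence collapse and to rule out hidden extensions in the $RO(C_2)$-grading; for $p$ odd the argument is essentially a formal transcription of the classical $\THH(H\FF_p)$ computation with the sign representation inserted. Given that the result is already available in \cite{DMPR17} via the dihedral bar construction, an acceptable alternative is simply to cite \emph{loc.\ cit.}\ and use \cite[Cor.~4.5]{DMPR17} to identify their model with \cref{dfn:THR}, as the excerpt indeed suggests.
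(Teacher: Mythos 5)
Your fallback --- cite \cite[Thm.~5.18]{DMPR17} and use \cite[Cor.~4.5]{DMPR17} to identify the dihedral bar model with \cref{dfn:THR} --- is exactly what the paper does: this theorem is imported, not proved, so in its ultimate recommendation your proposal coincides with the paper.

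Your independent sketch is broadly viable but contains one concrete error and one glossed step. The error: for $p$ odd, $\Phi^{C_2} H\mfp \simeq 0$, not $H\FF_p$. For the constant Mackey functor the transfer is multiplication by $2$, so $\pi_0\bigl(\Phi^{C_2} H\mfp\bigr) = \FF_p/2\FF_p = 0$; the paper invokes exactly this vanishing at the start of \cref{SS:HFpOdd}. Your geometric-fixed-points check does not thereby fail --- since $\Phi^{C_2}(S^{2,1}) = S^1$, both $\THR(H\mfp)^{\phi C_2}$ and $T_{H\FF_p}(S^{2,1})^{\phi C_2}$ vanish at odd primes --- but it degenerates to $0 \simeq 0$ and carries no information; the odd-primary comparison lives entirely in the underlying spectra together with the fact that both sides are Borel torsion/complete. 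The glossed step: $\mathrm{Tor}^{\mathcal{A}_*}(\FF_p,\FF_p)$ is an exterior algebra on the suspended Milnor generators (tensored with a divided power algebra at odd $p$), not a polynomial algebra on the nose; recovering $\FF_p[\mu]$ with $|\mu|=2$ requires the classical multiplicative-extension argument of B\"okstedt, so ``concentrated so as to force collapse'' undersells the work even in the nonequivariant case. Neither issue is fatal, and at $p=2$ your outline --- bar spectral sequence plus a check on underlying spectra and on geometric fixed points, where $\pi_*\Phi^{C_2}H\mft \cong \FF_2[\tau]$ with $|\tau|=1$ so that both sides have geometric fixed points with homotopy $\FF_2[w_1,w_2]$ --- is essentially the shape of the argument in \cite{DMPR17}.
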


Finally, we will use the following terminology.

\begin{dfn}
Let $X \in \Sp^{h_{C_2} S^1}$. The \emph{negative real topological cyclic homology} is
\[ \TCR^-(X) \coloneq X^{h_{C_2}S^1}.  \]
The \emph{periodic real topological cyclic homology} is
\[ \TPR(X) \coloneq X^{t_{C_2}S^1}. \]
\end{dfn}

We then have the fiber sequence \eqref{eq:fundamental_fiber_sequence} of $C_2$-spectra in the form
\begin{equation}\label{Eqn:FiberSeq}
\TCR(H\underline{\FF_p}) \to \TCR^{-}(H\underline{\FF_p}) \xrightarrow{\varphi_p^{h_{C_2}S^1} - \can} \TPR(H\underline{\FF_p}),
\end{equation}
where use that $\THR(H\underline{\FF_p})^{t_{C_2}S^1}$ is $p$-complete\footnote{This will be explicitly shown in all cases.} and \cref{prp:S1vsLimit} to replace the righthand term in \eqref{eq:fundamental_fiber_sequence} with the $C_2$-parametrized $S^1$-Tate construction.

\subsection{Computation for \texorpdfstring{$p$}{p} odd}\label{SS:HFpOdd}


Fix an odd prime $p$. Recall that both the $C_2$-geometric fixed points $\Phi^{C_2} \mfp$ and the $C_2$-Tate construction $(H \FF_p)^{t C_2}$ are trivial, so the same holds for $\THR(H \mfp)$ (using that the Tate construction commutes with Postnikov limits \cite[Lem.~I.2.6]{NS18}). In other words, $\THR(H \mfp)$  is both Borel torsion and Borel complete.

We will also use as input the computation of $\TC$, $\TC^{-}$, and $\TP$ of $H \FF_p$ as recorded in \cite[Prop.~IV.4.6, Cor.~IV.4.8, and Cor.~IV.4.10]{NS18}.

\begin{lem}
$\TCR^{-}(H\mfp)$, $\TPR(H\mfp)$, and $\TCR(H\mfp)$ are Borel torsion and Borel complete.
\end{lem}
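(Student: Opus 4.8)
The plan is to leverage the fact that ``Borel torsion'' and ``Borel complete'' are conditions preserved by the constructions in play, starting from the known fact (recalled just above the lemma) that $\THR(H\mfp)$ is both Borel torsion and Borel complete. Recall that a $C_2$-spectrum $X$ is Borel torsion iff $\Phi^{C_2} X \simeq 0$ (equivalently $X$ lies in the image of $j_!$), and Borel complete iff the canonical map $X \to j_* j^* X$ is an equivalence. More generally, for the twisted parametrized setting: a $C_2$-spectrum with twisted $S^1$-action $X \in \Sp^{h_{C_2} S^1}$ should be called Borel torsion (resp. complete) if its underlying $C_2$-spectrum, evaluated at any $C_2$-basepoint of $B^t_{C_2} S^1$, is Borel torsion (resp. complete); since $\THR(H\mfp)$ has trivial geometric fixed points, it is a module over $j_! \mathbb S$ and hence its parametrized homotopy fixed points, orbits, and Tate constructions all remain Borel torsion by \cref{lem:GeometricFixedPointsPreservesParametrizedColimits} and \cref{rem:UpperShriekPreservesParametrizedLimits}.

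First I would show $\TCR^-(H\mfp) = \THR(H\mfp)^{h_{C_2}S^1}$ and $\TPR(H\mfp) = \THR(H\mfp)^{t_{C_2}S^1}$ are Borel torsion. The key point is that Borel-torsion objects in $\Sp^{h_{C_2}S^1}$ form a $C_2$-stable subcategory closed under $C_2$-limits and $C_2$-colimits: indeed it is the essential image of $j_!$ on the parametrized recollement, and $j_!$ preserves $C_2$-colimits by \cref{lem:GeometricFixedPointsPreservesParametrizedColimits}, while Borel-torsion objects are also closed under $C_2$-limits because for a Borel-torsion $E \in \Sp^{C_2}$ one has $E^{\phi C_2} \simeq 0$ and $i^! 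E \simeq i^* E \simeq 0$ as in \cref{rem:UpperShriekPreservesParametrizedLimits}, so geometric fixed points commute with the relevant limits and the limit of Borel-torsion objects is again Borel torsion. Since $\THR(H\mfp)$ is Borel torsion, so are $\THR(H\mfp)^{h_{C_2}S^1}$, $\THR(H\mfp)^{h_{C_2}\mu_{p^n}}$, and (via the norm cofiber sequence $(-)_{h_{C_2}\mu_{p^n}} \to (-)^{h_{C_2}\mu_{p^n}} \to (-)^{t_{C_2}\mu_{p^n}}$) also $\THR(H\mfp)^{t_{C_2}\mu_{p^n}}$; passing to the limit over $n$ and using \cref{prp:S1vsLimit} (whose hypothesis is satisfied since the underlying spectrum of $\THR(H\mfp)$ is $\THH(H\FF_p)$, which is bounded-below), we get that $\TPR(H\mfp)$ is Borel torsion as well. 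For $\TCR(H\mfp)$, it is the fiber of the map in \eqref{Eqn:FiberSeq} between two Borel-torsion $C_2$-spectra, hence Borel torsion.

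Next I would handle Borel completeness. Here the cleanest route is to note that a $C_2$-spectrum which is Borel torsion is automatically Borel complete provided its underlying spectrum is, or more simply: a Borel-torsion object $X$ is Borel complete iff the underlying Borel $C_2$-spectrum $j^* X$ recovers $X$ via $j_!$, i.e. iff $X \simeq j_! j^* X$; but for any $C_2$-spectrum, $j_! j^* X \to X \to j_* j^* X$, and when $X$ is Borel torsion the first map is an equivalence, so $X$ is Borel complete iff the counit-to-unit composite $j_! j^* X \to j_* j^* X$ is an equivalence, which holds iff $j^* X$ is bounded-below (so that the relevant Tate term vanishes appropriately) — in fact, since $\THR(H\mfp)$ is already asserted to be Borel complete, and $\TCR^-$, $\TPR$, $\TCR$ are built from it by parametrized (co)limits and fibers, one argues directly that Borel-complete objects in $\Sp^{h_{C_2}S^1}$ are closed under $C_2$-limits, finite $C_2$-colimits, and hence (using bounded-belowness of the underlying spectrum, so that the passage-to-the-limit in \cref{prp:S1vsLimit} behaves) under the formation of $(-)^{h_{C_2}S^1}$, $(-)^{t_{C_2}S^1}$, and fibers. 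Concretely: being Borel complete is the condition $X \simeq j_* j^* X$, and since $j_*$ is a right adjoint it commutes with $C_2$-limits while $j^*$ commutes with everything, so the subcategory of Borel-complete objects is closed under $C_2$-limits; it is closed under finite colimits by stability; thus $\TCR^-(H\mfp)$ is Borel complete as a $C_2$-limit, $\TPR(H\mfp)$ is Borel complete because $(X^{t_{C_2}\mu_{p^n}})$ is Borel complete (each being a finite colimit / cofiber applied to Borel-complete $X^{h_{C_2}\mu_{p^n}}$ and $X_{h_{C_2}\mu_{p^n}}$, and then $\lim_n$) and \cref{prp:S1vsLimit} identifies $X^{t_{C_2}S^1}$ with that limit up to $p$-completion — and $p$-completion preserves Borel completeness — and finally $\TCR(H\mfp)$ is Borel complete as the fiber of a map between Borel-complete $C_2$-spectra.

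The main obstacle I anticipate is bookkeeping around the parametrized $S^1$-Tate construction: one must be careful that $\TPR(H\mfp) = \THR(H\mfp)^{t_{C_2}S^1}$ genuinely agrees (after $p$-completion, which is harmless here since everything in sight is already $p$-complete) with $\lim_n \THR(H\mfp)^{t_{C_2}\mu_{p^n}}$, which is exactly the content of \cref{prp:S1vsLimit} applied to the underlying-bounded-below $C_2$-spectrum $\THR(H\mfp)$; and one must know that the subcategories of Borel-torsion and Borel-complete $C_2$-spectra with twisted $S^1$-action are stable under the relevant operations. Once the closure properties are set up (a short formal argument using that $j_!$ preserves colimits and $j_*$ preserves limits, combined with stability), the lemma follows immediately; I would not expect any genuinely hard computation here — the substantive input is entirely the cited fact that $\THR(H\mfp)$ is Borel torsion and Borel complete together with \cref{prp:S1vsLimit}.
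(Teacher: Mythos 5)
Your overall strategy---deduce everything from closure properties of the Borel-torsion and Borel-complete subcategories---has a genuine gap, and it is exactly the point where the paper's proof imports outside input that your proposal never mentions: the $p$-completeness of $\TC^{-}(H\FF_p)$ and $\TP(H\FF_p)$ (from Nikolaus--Scholze), which for $p$ odd forces $\TC^{-}(H\FF_p)^{tC_2} = 0$ and $\TP(H\FF_p)^{tC_2} = 0$. The specific error is your claim that Borel-torsion objects are closed under $C_2$-limits "because for a Borel-torsion $E$ one has $i^!E \simeq i^*E \simeq 0$." This confuses two conditions: \cref{rem:UpperShriekPreservesParametrizedLimits} gives $i^! \simeq i^*$ when the \emph{underlying spectrum} $E^e$ vanishes, not when $E^{\phi C_2}$ vanishes. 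For a Borel-torsion $E$, the recollement fiber sequence $i^! \to i^* \to i^*j_*j^*$ gives $i^!E \simeq \Sigma^{-1}(E^e)^{tC_2}$, which is generally nonzero, and since $\Phi^{C_2} = i^*$ is a left adjoint it does not commute with the $C_2$-limit $(-)^{h_{C_2}S^1}$; Borel-torsion objects are simply not closed under limits. What is true, and what the paper uses, is that $i^!$ commutes with parametrized limits, so $i^!(\TCR^{-}(H\mfp)) \simeq (i^!\THR(H\mfp))^{hS^1} = 0$ because $\THR(H\mfp)$ is both torsion and complete; this yields Borel \emph{completeness} of $\TCR^{-}$. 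To then get Borel \emph{torsion}, the same fiber sequence gives $\Phi^{C_2}(\TCR^{-}(H\mfp)) \simeq (\TC^{-}(H\FF_p))^{tC_2}$, and killing this term is precisely where $p$-completeness and oddness of $p$ are needed.

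There is a second gap in your completeness argument for $\TPR$: Borel-complete objects are closed under $C_2$-limits and \emph{finite} $C_2$-colimits, but the parametrized homotopy orbits $X_{h_{C_2}\mu_{p^n}}$ and $X_{h_{C_2}S^1}$ are infinite $C_2$-colimits, so "each being a finite colimit / cofiber applied to Borel-complete" objects does not establish that they are Borel complete. The paper's route for $\TPR$ is different: it uses that $\Phi^{C_2}$ \emph{does} commute with parametrized colimits (\cref{lem:GeometricFixedPointsPreservesParametrizedColimits}, \cref{exm:geometric_fixed_points_and_parametrized_colimits}) to get $\Phi^{C_2}(\THR(H\mfp)_{h_{C_2}S^1}) \simeq (\THR(H\mfp)^{\phi C_2})_{h\mu_2} = 0$, combines this with $\Phi^{C_2}(\TCR^{-}(H\mfp)) = 0$ via the norm cofiber sequence to conclude $\Phi^{C_2}(\TPR(H\mfp)) = 0$, and separately uses $\TP(H\FF_p)^{tC_2} = 0$ for completeness. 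Your final step (closure of both properties under fibers, applied to \eqref{Eqn:FiberSeq}) is fine and agrees with the paper. To repair the proposal you must supply the vanishing of $(\TC^{-}(H\FF_p))^{tC_2}$ and $(\TP(H\FF_p))^{tC_2}$; closure properties alone cannot do the job.
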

\begin{proof}
Since $\TC^{-}(H \FF_p)$ is $p$-complete, $\TC^{-}(H \FF_p)^{t C_2} = 0$. $\TCR^{-}(H\mfp)^{\phi C_2} = 0$ then follows from \cref{rem:UpperShriekPreservesParametrizedLimits}.

Similarly, $\TP^{-}(H \FF_p)$ is $p$-complete, so $\TP(H \FF_p)^{t C_2} = 0$. By \cref{exm:geometric_fixed_points_and_parametrized_colimits} and using that $\THR(H\mfp)$ is Borel torsion, we get that $(\THR(H\mfp)_{h_{C_2} S^1})^{\phi C_2} = 0$. It follows that $\TPR(H\mfp)^{\phi C_2} = 0$.

Now by the fiber sequence \eqref{Eqn:FiberSeq}, we deduce that $\TCR(H\mfp)$ is Borel torsion and Borel complete.
\end{proof}

\begin{thm} \label{thm:odd_primary_computation}
There is an equivalence of $C_2$-spectra
$$\TCR(H\mfp) \simeq H\uzp \oplus \Sigma^{-1} H\uzp.$$
\end{thm}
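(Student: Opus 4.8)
The plan is to exploit the fact, just established, that $\TCR(H\underline{\FF_p})$ is both Borel torsion and Borel complete, so it is determined by its underlying Borel $C_2$-spectrum together with its $C_2$-geometric fixed points, which vanish; hence $\TCR(H\underline{\FF_p}) \simeq i_*i^{!}(\TCR(H\underline{\FF_p}))$ is the right-Borel-completion of its underlying spectrum with its residual $C_2$-action. By \cref{exm:unpacking_C2_functor} and the compatibility of $\underline{\sU}_\RR$ with restriction, the underlying spectrum of $\TCR(H\underline{\FF_p})$ is $\TC(H\FF_p)$, which by \cite[Cor.~IV.4.10]{NS18} is $\ZZ_p \oplus \Sigma^{-1}\ZZ_p$. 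So the only remaining task is to (a) identify the residual $C_2$-action on $\TC(H\FF_p)$, and (b) check that the Borel completion of $H\ZZ_p \oplus \Sigma^{-1}H\ZZ_p$ with that action is the stated constant-Mackey-functor $C_2$-spectrum $H\underline{\ZZ_p} \oplus \Sigma^{-1}H\underline{\ZZ_p}$. For (b), once we know the $C_2$-action on homotopy groups is trivial, the Borel completion of $H\ZZ_p$ with trivial action is $H\underline{\ZZ_p}$ precisely because $H\underline{\ZZ_p}$ is Borel complete (as $\FF_p$ is $p$-torsion-free in the relevant sense) — this needs a short justification via the recollement $\Fun(BC_2,\Sp)\hookrightarrow \Sp^{C_2}$ and the vanishing of $(H\ZZ_p)^{\phi C_2}$ after Borel completion; alternatively one cites that constant Mackey functors on $p$-complete torsion-free groups are Borel complete.

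The main work is therefore step (a): pinning down the residual $C_2$-action on $\TC(H\FF_p)$. The cleanest route is to compute $\TCR^{-}(H\underline{\FF_p})$ and $\TPR(H\underline{\FF_p})$ directly as $C_2$-spectra using the parametrized homotopy-fixed-point and Tate spectral sequences over $B^t_{C_2}S^1$, with input $\pi^{C_2}_{**}\THR(H\underline{\FF_p}) = H\underline{\FF_p}_{**}[x]$ from \cref{Thm:THRFp}. Since everything in sight is Borel torsion and Borel complete, these parametrized spectral sequences are controlled by their underlying (non-equivariant) counterparts — the $S^1$-homotopy fixed point and Tate spectral sequences for $\THH(H\FF_p)$ — which are the standard ones computing $\TC^{-}(H\FF_p) = \ZZ_p[[u]][\sigma]/(u\sigma - p)$ ... wait, rather $\ZZ_p[v]$-type answers: concretely $\pi_*\TC^{-}(H\FF_p) = \ZZ_p[u,v]/(uv-p)$ with $|u|=2,|v|=-2$ and $\pi_*\TP(H\FF_p)=\ZZ_p[u^{\pm}]$. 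The residual $C_2$-action is then read off from the $C_2$-action on the generator $x \in \pi_{2,1}$ (which is the sign representation shift, so carries the nontrivial action detected in the bidegree $(2,1)$ rather than $(2,0)$) together with the $C_2$-action on the polynomial generator of $H^*(BS^1)$ coming from complex conjugation on $BS^1 \simeq \CC\PP^\infty$. Tracking these, and using that $\TC$ sits in the fiber sequence \eqref{Eqn:FiberSeq} with the maps $\varphi_p^{h_{C_2}S^1}$ and $\can$ both being $C_2$-equivariant, we conclude the $C_2$-action on $\pi_*\TC(H\FF_p) = \ZZ_p \oplus \Sigma^{-1}\ZZ_p$ must be trivial (there is simply no room for a nontrivial action on a free rank-one $\ZZ_p$-module in each relevant degree once one checks signs).

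Assembling: \eqref{Eqn:FiberSeq} gives $\TCR(H\underline{\FF_p})$ as the fiber of a map of Borel torsion, Borel complete $C_2$-spectra; its underlying spectrum is $\ZZ_p\oplus\Sigma^{-1}\ZZ_p$ with trivial $C_2$-action, and it is Borel complete; hence $\TCR(H\underline{\FF_p}) \simeq i_*i^! \simeq$ the Borel completion of $H\ZZ_p \oplus \Sigma^{-1}H\ZZ_p$ with trivial action $\simeq H\underline{\ZZ_p} \oplus \Sigma^{-1}H\underline{\ZZ_p}$. The main obstacle is the bookkeeping in step (a): one must be careful that in the parametrized setting the ``extra'' information beyond the underlying $S^1$-equivariant spectral sequence is exactly the $\mu_2 = W_{\mathrm{O}(2)}C_2$-action, and confirm that after passing to $\TC$ this residual action is trivial — this is where the $RO(C_2)$-grading conventions of \cref{Cvn:Grading} and the precise form of \cref{Thm:THRFp} (the generator living in weight-one $\sigma$-degree) do the real work. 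A secondary subtlety is verifying Borel completeness of the answer, i.e. that no nontrivial geometric-fixed-point contribution sneaks in; but this follows formally from the lemma preceding the theorem statement, which already records that $\TCR(H\underline{\FF_p})$ is Borel torsion and Borel complete.
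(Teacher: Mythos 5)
Your overall skeleton agrees with the paper's: reduce to Borel data using that both sides are Borel torsion and Borel complete, identify the underlying spectrum with $\TC(H\FF_p)\simeq H\ZZ_p\oplus\Sigma^{-1}H\ZZ_p$ via \cite[Cor.~IV.4.10]{NS18}, pin down the residual $C_2$-action, and conclude. Your step (b) is also fine for $p$ odd, since for odd $p$ both $(H\ZZ_p)^{tC_2}$ and $(H\uzp)^{\phi C_2}$ vanish, so $H\uzp\oplus\Sigma^{-1}H\uzp$ is Borel torsion and Borel complete and is the Borel completion of its underlying spectrum with trivial action.

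The genuine gap is in step (a), and it is exactly the step that carries the content of the theorem. Your closing claim that ``there is simply no room for a nontrivial action on a free rank-one $\ZZ_p$-module in each relevant degree'' is false: $\ZZ_p$ admits the sign involution, and while the action on $\pi_0$ is forced to be trivial because $C_2$ acts by ring maps (it fixes the unit), the action on $\pi_{-1}\TC(H\FF_p)\cong\ZZ_p$ could a priori be $-1$. Ruling that out is the actual work. The paper does it by observing that the two cases are distinguished by $\pi_{-1}$ of the homotopy fixed points (for $p$ odd, $(\ZZ_p)^{C_2}$ is $\ZZ_p$ for the trivial action and $0$ for the sign action, and higher $C_2$-cohomology with $\ZZ_p$-coefficients vanishes), and then computing $\pi_{-1}\bigl(\TC(H\FF_p)^{hC_2}\bigr)=\ZZ_p$ from the fiber sequence $\TC^{hC_2}\to(\TC^-)^{hC_2}\to\TP^{hC_2}$, using that $\TC^-(H\FF_p)$ and $\TP(H\FF_p)$ are even and that $\can-\varphi$ vanishes on $\pi_0$ of fixed points because both are ring maps. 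Your alternative route — running the parametrized homotopy-fixed-point and Tate spectral sequences for $\THR(H\mfp)$ and ``reading off'' the action from the bidegree of $x$ — is not only much heavier than needed for $p$ odd (those spectral sequences collapse, but the residual $\mu_2=W_{\mathrm{O}(2)}C_2$-action on the abutment is not determined by degree considerations alone; there are genuine sign ambiguities on $E_\infty$ and in the extension to $\pi_*$), it also never actually resolves the $\pm1$ ambiguity on $\pi_{-1}$. As written, the argument would equally well ``prove'' the (false) answer with the sign action, so you need to supply a computation along the lines of the homotopy-fixed-point argument above, or some other invariant that distinguishes the two actions.
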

\begin{proof}
Since both sides are Borel torsion and Borel complete, it suffices to prove an equivalence of underlying spectra with $C_2$-action. By \cite[Cor.~IV.4.10]{NS18}, $\TC(H \FF_p) \simeq H \ZZ_p \oplus \Sigma^{-1} H \ZZ_p$ as spectra. Moreover, since the $C_2$-action on $\TC(H \FF_p)$ is via a map of ring spectra, it must act trivially on $\pi_0 \TC(H \FF_p)$ and by $1$ or $-1$ on $\pi_{-1} \TC(H \FF_p)$.

These two cases are distinguished by their effect on homotopy $C_2$-fixed points, and to conclude it suffices to show that $\pi_{-1} \TC(H \FF_p)^{h C_2} = \ZZ_p$. Consider now the fiber sequence
\[ \TC(H \FF_p)^{h C_2} \to \TC^{-}(H \FF_p)^{h C_2} \to \TP (H \FF_p)^{h C_2}. \]
Since $\TC^{-}(H \FF_p)$ and $\TP (H \FF_p)$ are concentrated in even degrees, and $C_2$-cohomology of $\ZZ_p$ vanishes in positive degrees irrespective of the action, we see that $\pi_{-1}(\TC(H \FF_p)^{h C_2})$ is the cokernel of the map $\can^{h C_2} - \varphi^{h C_2}$. But since $C_2$ acts via ring maps and both $\can$ and $\varphi$ are ring maps, we must have that $\pi_0 \TC^{-}(H \FF_p)^{h C_2} = \ZZ_p$, $\pi_0 \TP (H \FF_p)^{h C_2} = \ZZ_p$, and $\can^{h C_2} - \varphi^{h C_2}$ is the zero map on $\pi_0$. We conclude that $\pi_{-1}(\TC(H \FF_p)^{h C_2}) = \ZZ_p$, as desired.
\end{proof}

\subsection{Computation for \texorpdfstring{$p=2$}{p=2}}\label{SS:HFp2}

Our goal in this subsection is to prove:

\begin{thm}\label{Thm:TCRF2}
There is an equivalence of $C_2$-spectra
$$\TCR(H\underline{\FF_2}) \simeq H\underline{\ZZ_2} \oplus \Sigma^{-1} H\underline{\ZZ_2}.$$
\end{thm}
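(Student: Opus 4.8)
The strategy is to run the fiber sequence \eqref{Eqn:FiberSeq} in the case $p=2$, using the identification of $\THR(H\underline{\FF_2})$ from \cref{Thm:THRFp} as input, together with $C_2$-parametrized versions of the $S^1$-homotopy fixed point and Tate spectral sequences. First I would compute $\TCR^-(H\underline{\FF_2}) = \THR(H\underline{\FF_2})^{h_{C_2}S^1}$ and $\TPR(H\underline{\FF_2}) = \THR(H\underline{\FF_2})^{t_{C_2}S^1}$ as $C_2$-spectra by means of the conditionally convergent spectral sequences associated to the $C_2$-parametrized Postnikov-type filtration of $B^t_{C_2}S^1 \simeq \CC\PP^\infty$; the $E_2$-pages are computed from $\pi^{C_2}_{**}\THR(H\underline{\FF_2}) \cong H\underline{\FF_2}_{**}^{C_2}[x]$ with $|x|=(2,1)$ (\cref{Thm:THRFp}) and the known $RO(C_2)$-graded structure of $H\underline{\FF_2}_{**}^{C_2}$, adjoining a polynomial resp.\ Laurent class in the ``$S^1$-direction'' of appropriate bidegree. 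Along the way I would check that $\TPR(H\underline{\FF_2})$ is $2$-complete, which is needed to justify replacing the right-hand term of \eqref{eq:fundamental_fiber_sequence} by $\THR(H\underline{\FF_2})^{t_{C_2}S^1}$ via \cref{prp:S1vsLimit}; this follows because the underlying spectrum $\THR(H\FF_2)$ is bounded below, so $\THR(H\underline{\FF_2})^{t_{C_2}\mu_2}$ is $2$-complete by \cref{cor:ParamTatePcomplete}(1) and \cref{rem:Tate_pcomplete}, and one concludes by a limit argument.

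Next I would analyze the map $\varphi_2^{h_{C_2}S^1} - \can \colon \TCR^-(H\underline{\FF_2}) \to \TPR(H\underline{\FF_2})$. The map $\can$ is induced by the canonical natural transformation $(-)^{h_{C_2}S^1}\to (-)^{t_{C_2}S^1}$, so on associated graded it is the evident localization/inclusion in the $S^1$-direction. The Frobenius $\varphi_2$ is, by the discussion in \cref{sec:thr}, induced by the dihedral Tate diagonal, and its effect on homotopy groups can be pinned down using \cref{Thm:THRFp} and the identification of $\THR(H\underline{\FF_2})^{t_{C_2}\mu_2}$ from \cref{cor:TateToFixedPointsEquivalence} together with the explicit geometric-fixed-points formulas \eqref{eq:geometric_fixed_points_formula} (and the formula for $X^{t_{C_2}\mu_p}$ from \cite[Exm.~2.51]{QS21a}). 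The key subtlety flagged in the section introduction is that, unlike the nonequivariant case, the parametrized homotopy fixed point and Tate spectral sequences do \emph{not} degenerate at $E_2$: there are nontrivial $d_3$-differentials (this is \cref{Prop:Differentials}, to be established just before). I would therefore run these differentials, using multiplicativity and compatibility with restriction to the underlying $S^1$-spectral sequences (which recover the collapsing Nikolaus--Scholze computation of $\TC^-(H\FF_2)$ and $\TP(H\FF_2)$), to identify the $E_\infty$-pages and hence $\pi^{C_2}_{**}$ of both $\TCR^-$ and $\TPR$.

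Having both endpoints and the map, I would compute the fiber. The expected outcome is that $\varphi_2^{h_{C_2}S^1} - \can$ is surjective on homotopy $C_2$-Mackey functors in each degree with kernel and cokernel assembling to $H\underline{\ZZ_2}$ in degree $0$ and $H\underline{\ZZ_2}$ in degree $-1$; concretely one expects $\TCR(H\underline{\FF_2})$ to be an extension of Eilenberg--MacLane $C_2$-spectra of constant Mackey functors, and then the final step is to resolve the extension and show it splits, giving $\TCR(H\underline{\FF_2}) \simeq H\underline{\ZZ_2}\oplus \Sigma^{-1}H\underline{\ZZ_2}$. The splitting can be argued as in \cref{thm:odd_primary_computation}: reduce to checking the underlying spectrum with $C_2$-action (which is $H\ZZ_2\oplus\Sigma^{-1}H\ZZ_2$ by \cite[Cor.~IV.4.10]{NS18}), observe the $C_2$-action on $\pi_0$ is trivial and on $\pi_{-1}$ is $\pm 1$, and distinguish the two cases by computing $\pi_{-1}$ of homotopy $C_2$-fixed points via the ring-map structure of $\can$ and $\varphi_2$ on $\pi_0$; this forces the trivial action and hence the desired splitting.

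\textbf{Main obstacle.} The hard part will be the second step: correctly computing the parametrized $S^1$-homotopy fixed point and Tate spectral sequences for $\THR(H\underline{\FF_2})$ — in particular establishing and applying the nontrivial $d_3$-differentials of \cref{Prop:Differentials} — and then pinning down the Frobenius map on these $E_\infty$-pages precisely enough to identify the fiber. Unlike the odd-primary case, where everything reduces cleanly to the nonequivariant computation because all geometric fixed points vanish, here the geometric fixed points of $\THR(H\underline{\FF_2})$ are nonzero (cf.\ \eqref{eq:THR_geometric_fixed_points}), the $RO(C_2)$-graded coefficients $H\underline{\FF_2}_{**}^{C_2}$ are genuinely complicated, and the spectral sequences interact nontrivially; managing the bookkeeping of the bigraded differentials and multiplicative extensions is where the real work lies.
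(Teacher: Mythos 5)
Your overall strategy --- the fiber sequence \eqref{Eqn:FiberSeq} at $p=2$, a filtration-based spectral sequence for $\pi^{C_2}_{**}\TCR^-(H\mft)$ and $\pi^{C_2}_{**}\TPR(H\mft)$ with a single family of nontrivial differentials generated by $d(\tau)=u\rho x$, identification of $\can$ and $\varphi_2^{h_{C_2}S^1}$, and the long exact sequence --- is the route the paper takes. The one technical difference is the choice of filtration: you filter by the skeleta of $B^t_{C_2}S^1\simeq\CC\PP^\infty$, whereas the paper filters $\THR(H\mft)$ itself $x$-adically, so that the filtration quotients are $\Sigma^{2k,k}H\mft$, whose parametrized $S^1$-action is provably trivial (\cref{prp:trivial_action}) and whose parametrized fixed points and Tate constructions are therefore computed directly rather than by a further spectral sequence. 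Both filtrations produce the same $E_2$-page up to reindexing, so this is bookkeeping rather than substance.

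The genuine gap is in your final splitting step. You propose to argue ``as in \cref{thm:odd_primary_computation}: reduce to checking the underlying spectrum with $C_2$-action \dots and distinguish the two cases by computing $\pi_{-1}$ of homotopy $C_2$-fixed points.'' That reduction is legitimate only when the $C_2$-spectra involved are Borel torsion and Borel complete, which is exactly what the preliminary lemma establishes for $p$ odd and which fails at $p=2$: as you note yourself, $\THR(H\mft)^{\phi C_2}\neq 0$, and in fact $\TCR(H\mft)^{\phi C_2}$ is nonzero (its homotopy is $\FF_2[w]\oplus\Sigma^{-1}\FF_2[w]$), so the underlying spectrum with $C_2$-action and its homotopy fixed points cannot determine the genuine equivariant homotopy type. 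The paper instead reads off $\pi_*^{C_2}\TCR(H\mft)\cong\ZZ_2\oplus\Sigma^{-1}\ZZ_2$ directly from the long exact sequence of genuine homotopy groups, checks that restriction to $\pi_*^{e}\TCR(H\mft)\cong\pi_*\TC(H\FF_2)$ is an isomorphism in degrees $0$ and $-1$ (which forces the transfer to be multiplication by $2$ and hence the homotopy Mackey functors to be the constant functors $\underline{\ZZ_2}$ and $\Sigma^{-1}\underline{\ZZ_2}$), and then splits the resulting two-stage tower by observing that the map from the connective cover $\tau_{\geq 0}\TCR(H\mft)\simeq H\underline{\ZZ_2}$ endows $\TCR(H\mft)$ with an $H\underline{\ZZ_2}$-module structure. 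You will need an argument of this shape in place of the Borel reduction.
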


\begin{rem}
The result is already known at the underlying level $\TCR(H\underline{\FF_2})^e \simeq \TC(\FF_2)$ by \cite[Thm.~B]{HM97}. It also follows from \cite[Thm. D]{DMP21}, which was proven using different methods. 
\end{rem}

To compute $\TCR^{-}(H\underline{\FF_2})$ and $\TPR(H\underline{\FF_2})$, we will introduce a new spectral sequence arising from the `$x$-adic' filtration of $\THR(H\mft)$, where $x$ is as in \cref{Thm:THRFp}.


\begin{dfn}
We define the \emph{$x$-adic filtration} of $\THR(H\mft)$ by 
$$F_{2k} := H\mft[x]\{x^k\} \simeq \bigoplus_{n \geq k} \Sigma^{2n,n} H\mft,$$
$$F_{2k+1} := F_{2k},$$
where $|x| = (2,1)$. 

The canonical inclusions define a decreasing filtration
$$\cdots \to F_2 \to F_1 \to F_0 = \THR(H\mft)$$
with filtration quotients
$$F_{2k}/F_{2k-1} \simeq H\mft\{x^k\} \simeq \Sigma^{2k,k} H\mft,$$
$$F_{2k+1}/F_{2k} \simeq *.$$
\end{dfn}

The filtration quotients $\Sigma^{2k,k} H\mft$ each have a $C_2$-parametrized $S^1$-action. This action can be understood using the following two results.

\begin{lem}\label{Lem:MapsEM}
Let $G$ be a finite group. Let $A$ and $B$ be Mackey functors for $G$, and let $HA$ and $HB$ be their associated Eilenberg--MacLane $G$-spectra. The map
$${\pi}_0^{(-)} : \Map_{\Sp^G}(HA,HB) \to \Map_{\Mack^G}(A,B)$$
is an isomorphism.
\end{lem}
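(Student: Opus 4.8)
The statement asserts that for Eilenberg--MacLane $G$-spectra $HA, HB$, the map of mapping spaces induced by $\pi_0^{(-)}$ (taking categorical fixed points and then $\pi_0$, yielding a Mackey functor map) is an equivalence. The plan is to reduce to a statement about homotopy groups of the mapping spectrum $\map_{\Sp^G}(HA, HB)$ and then exploit the fact that $HB$ is an Eilenberg--MacLane object, i.e.\ $\pi_n(HB) = 0$ for $n \neq 0$ in the homotopy $t$-structure on $\Sp^G$ (where $\pi_n$ here means the Mackey-functor-valued homotopy groups). Concretely, since both source and target are grouplike $E_\infty$-spaces (indeed infinite loop spaces), it suffices to show the map is an isomorphism on all homotopy groups, and since $\Map_{\Mack^G}(A,B)$ is discrete (a set, as $\Mack^G$ is a $1$-category), this amounts to showing $\pi_0 \Map_{\Sp^G}(HA, HB) \cong \Hom_{\Mack^G}(A,B)$ and $\pi_i \Map_{\Sp^G}(HA,HB) = 0$ for $i > 0$.

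First I would recall that the heart of the homotopy $t$-structure on $\Sp^G$ is equivalent to $\Mack^G$, the abelian category of $G$-Mackey functors, via $X \mapsto \underline{\pi}_0(X)$, and that $H(-): \Mack^G \to \Sp^G$ is a fully faithful embedding onto the heart with essential image the discrete (i.e.\ $0$-truncated) objects; this is standard equivariant stable homotopy theory (e.g.\ \cite[Exm.~6.3]{BarwickGlasmanShah} gives the identification of the heart). Full faithfulness of $H$ on the level of mapping \emph{spectra} is not quite immediate because we need vanishing of negative homotopy groups of $\map(HA, HB)$ as well as the $\pi_0$ computation; the key input is that $HB$ is coconnective, so $\map_{\Sp^G}(HA, HB)$ is coconnective whenever $HA$ is connective, giving $\pi_i \map(HA,HB) = 0$ for $i > 0$. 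For $\pi_0$, one uses that $HA$ is connective and $HB$ is in the heart: the truncation $\tau_{\leq 0}$ adjunction and the equivalence $\heartsuit(\Sp^G) \simeq \Mack^G$ yield $\pi_0 \map_{\Sp^G}(HA, HB) \cong \Hom_{\Sp^G}(\tau_{\leq 0} HA, HB) \cong \Hom_{\heartsuit}(\underline{\pi}_0 HA, \underline{\pi}_0 HB) = \Hom_{\Mack^G}(A, B)$, and one checks this identification is the one induced by $\pi_0^{(-)}$.

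For the statement about mapping \emph{spaces} rather than spectra: $\Map_{\Sp^G}(HA, HB) = \Omega^\infty \map_{\Sp^G}(HA, HB)$, so $\pi_i \Map = \pi_i \map$ for $i \geq 0$, which by the above is $\Hom_{\Mack^G}(A,B)$ for $i = 0$ and $0$ for $i > 0$; thus $\Map_{\Sp^G}(HA, HB)$ is a discrete space (a set), and the comparison map to the discrete set $\Map_{\Mack^G}(A, B) = \Hom_{\Mack^G}(A,B)$ is a bijection on $\pi_0$, hence an equivalence. The main obstacle I anticipate is being careful about the identification of the \emph{specific} map $\pi_0^{(-)}$ with the abstract isomorphism produced by the $t$-structure adjunction — one should verify that taking categorical fixed points $\Psi^H$ for all $H \leq G$ and then $\pi_0$ assembles into the Mackey functor map that matches $\underline{\pi}_0$ applied to a morphism of spectra, which is essentially the definition of the equivalence $\heartsuit(\Sp^G) \simeq \Mack^G$ but deserves an explicit sentence. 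Everything else is a routine consequence of the homotopy $t$-structure being well-behaved (accessible, left and right complete, with heart the Mackey functor category), all of which is already invoked in the excerpt.
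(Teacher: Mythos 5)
Your proof is correct, but it takes a genuinely different route from the paper's. The paper works in the spectral Mackey functor model: using $\Sp^G \simeq \Fun^{\oplus}(\Span(\FF_G),\Sp)$ and Glasman's end formula, it writes $\Map_{\Sp^G}(HA,HB)$ as an end over $\Span(\FF_G)$ of the nonequivariant mapping spaces $\Map_{\Sp}(H(A(U)),H(B(U)))$, applies the classical fact that each of these is the discrete set $\Hom_{\Ab}(A(U),B(U))$, and identifies the resulting end of sets with $\Hom_{\Mack^G}(A,B)$. (Implicit there is the observation that the levelwise values of the $G$-spectrum $HA$ are themselves nonequivariant Eilenberg--MacLane spectra, which is exactly the statement that $\underline{\pi}_n HA = 0$ for $n \neq 0$.) Your argument instead stays entirely inside the formalism of the homotopy $t$-structure: $HA$ and $HB$ lie in the heart, so $\map(HA,HB)$ is coconnective with $\pi_0 = \Hom_{\heartsuit}(A,B)$, whence the mapping space is discrete and equal to $\Hom_{\Mack^G}(A,B)$. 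Both are complete; yours is model-independent and works verbatim for any stable $\infty$-category equipped with a $t$-structure and an identification of its heart, while the paper's is more hands-on and reduces directly to the classical nonequivariant statement. The one point you rightly flag — that the abstract $t$-structure isomorphism agrees with the concrete map $\pi_0^{(-)}$ given by categorical fixed points — is exactly the content of the equivalence $\heartsuit(\Sp^G) \simeq \Mack^G$, so a sentence to that effect suffices.
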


\begin{proof}
Since
$$\Sp^G \simeq \Fun^{\oplus}(\Span(\FF_G), \Sp),$$
we can write
$$\Map_{\Sp^G}(HA,HB) \simeq \int_{U \in \Span(\FF_G)} \Map_{\Sp}(HA(U),HB(U))$$
using \cite[Prop. 2.3]{Gla16}. We then have the classical equivalence
$$\pi_0: \Map_{\Sp}(HA(U),HB(U)) \xrightarrow{\simeq} \Map_{\Ab}(A(U),B(U)).$$
Inserting this into the end formula above, we have
$$\Map_{\Sp^G}(HA,HB) \simeq \int_{U \in \Span(\FF_G)} \Map_{\Ab}(A(U),B(U)) \simeq \Map_{\Mack^G}(A,B).$$
\end{proof}

\begin{prp} \label{prp:trivial_action}
Let $\underline{M}$ be any constant $C_2$-Mackey functor and $V \in RO(C_2)$. Then any $C_2$-parametrized $S^1$-action on $\Sigma^V H \underline{M}$ is necessarily trivial. 
\end{prp}
\begin{proof}
First, we note that for any $C_2$-spectrum $E$, we have
$$\Aut(\Sigma^V E) \simeq \Aut(E).$$
Therefore, it suffices to show the $C_2$-parametrized $S^1$-action is trivial on $H\underline{M}$. 

We will show that any $C_2$-functor
$$B_{C_2}^t S^1 \to \underline{\Sp}^{C_2}$$
with value $H \underline{M}$ necessarily factors through $\ast_{C_2}$. Unpacking definitions, it suffices to show that the pair of horizontal morphisms
\[
\begin{tikzcd}
B\mu_2 \arrow{d} \arrow{r} & \Sp^{C_2} \arrow{d} \\
B\mathrm{O}(2) \arrow{r} & \Sp
\end{tikzcd}
\]
factor through $*$ whenever the value of the top map is $H \underline{M}$ and the bottom map is $H M$.\footnote{This is stronger than trivializing the $C_2$-parametrized $S^1$-action since we also have that the $C_2$-action on the underlying spectrum is trivial.}

By \cref{Lem:MapsEM}, the $\mu_2$-action on $H \underline{M}$ is through maps of Mackey functors. But the $\mu_2$-action on the underlying spectrum $H M$ is restricted from the necessarily trivial $S^1$-action, so is trivial, and hence the $\mu_2$-action on $H \underline{M}$ is trivial since the restriction map is the identity. This also forces the $C_2$-action on $H M$ to be trivial, so the entire $\mathrm{O}(2)$-action is trivial.
\end{proof}

Using the $x$-adic filtration, we obtain spectral sequences for computing the $RO(C_2)$-graded homotopy groups of $\TCR^-(H\mft)$ and $\TPR(H\mft)$. 

\begin{prp}
There are strongly convergent multiplicative spectral sequences
\begin{equation}\label{Eqn:HFPSS}
E_2^{s,2t,w} = \pi_{2t+s,w}^{C_2}(\Sigma^{2t,t} H\mft^{h_{C_2}S^1}) \cong \pi_{s,w-t}^{C_2}(H\mft^{h_{C_2}S^1}) \Rightarrow \pi_{2t+s,w-t}^{C_2}\TCR^-(H\mft),
\end{equation}
\begin{equation}\label{Eqn:TSS}
E_2^{s,2t,w} = \pi_{2t+s,w}^{C_2}(\Sigma^{2t,t} H\mft^{t_{C_2}S^1}) \cong \pi_{s,w-t}^{C_2}(H\mft^{t_{C_2}S^1}) \Rightarrow \pi_{2t+s,w-t}^{C_2}\TPR(H\mft).
\end{equation}
In both $E_2$-terms, we assume that $t \geq 0$ and that the $C_2$-parametrized $S^1$-action on the lefthand side is trivial. The differentials in these spectral sequence have the form
$$d_r: E_r^{s,t,w} \to E_r^{s-r,t+r-1,w}.$$
\end{prp}

\begin{proof}
These are the spectral sequences which arise from applying $\pi_{**}^{C_2}(-)$ to the $x$-adic filtrations of $\TCR^-(H\mft)$ and $\TPR(H\mft)$. 

Multiplicativity follows from the observation that the product on $\THR(H\mft)$, the free associative $H\mft$-algebra on $x$, descends to a pairing on the $x$-adic filtrations. This pairing then descends to a pairing on the resulting spectral sequences, for instance via the discussion in \cite[\S II.3.2]{Hed20}. 

Strong convergence follows from \cite[Thm. 7.1]{Boa99}. In more detail, for each fixed $w \in \ZZ$, this is a half-plane spectral sequence with entering differentials. Conditional convergence (\cite[Def. 5.10]{Boa99}) follows from the increasing connectivity of the filtration. Finally, we will show in \cref{Prop:Differentials} that both spectral sequences collapse at $E_4$, so $RE_\infty$ vanishes (cf. \cite[Remark on Page 20]{Boa99}). 
\end{proof}

\begin{rem}
The pairing in the spectral sequences above has the form
$$E_r^{s,2t,w} \otimes E_r^{s', 2t', w'} \longrightarrow E_r^{s+s', 2(t+t'), w+w'}.$$
In particular, if $x \in E_r^{s,2t,0}$ and $y \in E_r^{s',2t',0}$, then $xy \in E_r^{s+s',2(t+t'),0}$, so the products of elements in the figures in this section (which are always for tridegrees with $w=0$) appear in the same figures. 
\end{rem}

We can identify the $E_2$-terms of these spectral sequences more explicitly using the following two propositions:

\begin{prp}\cite[Prop. 6.2]{HK01}
The $RO(C_2)$-graded homotopy groups of $H\mft$ are 
$$\pi_{**}^{C_2}(H\mft) \cong \FF_2[\tau, \rho] \oplus \dfrac{\FF_2[\tau,\rho]}{(\tau^\infty, \rho^\infty)} \{ \theta \},$$
where $|\tau| = (0,-1)$, $|\rho| = (-1,-1)$, and $|\theta| = (0,2)$. 
\end{prp}

\begin{prp} \label{prp:identify_E2_page}
There are isomorphisms
$$\pi_{**}^{C_2}(H\mft^{h_{C_2}S^1}) \cong H\mft^{**}(B_{C_2}S^1) \cong H\mft_{**}[[u]],$$
$$\pi_{**}^{C_2}(H\mft^{t_{C_2}S^1}) \cong H\mft^{**}(B_{C_2}S^1)[u^{-1}] \cong H\mft_{**}((u)),$$
where $|u| = (-2,-1).$
\end{prp}

\begin{proof}
The first line of isomorphisms follows from the identification 
$$H\mft^{h_{C_2}S^1} \simeq F((B_{C_2}S^1)_+, H\mft) \simeq \bigoplus_{i \geq 0} \Sigma^{-2i,-i} H\mft$$
along with \cite[Thm. 2.10]{HK01}. The second line follows from the cofiber sequence
$$\Sigma^{1,1} (H\mft)_{h_{C_2}S^1} \xrightarrow{\operatorname{Nm}} (H\mft)^{h_{C_2}S^1} \to (H\mft)^{t_{C_2}S^1}$$
by the same argument used to analyze $H\underline{M}^{t_{C_2}S^1}$ for $\uM$ a $C_2$-indexed coproduct of copies of $\underline{\ZZ}$ in the proof of \cref{prp:S1vsLimit}. More precisely, we have that
$$\Sigma^{1,1} (H\mft)_{h_{C_2}S^1} \simeq \bigoplus_{i \geq 0} \Sigma^{2i+1,i+1} H\mft,$$
so $\Nm$ is determined by a collection of maps
$$\Sigma^{2i+1,i+1} H\mft \to \Sigma^{-2j,-j} H\mft$$
with $i,j \geq 0$. Any such map is adjoint to a map
$$\Sigma^{2i+2j+1, i+j+1} \SS \to H\mft,$$
since the relevant norm map $\Nm$ is an $H\mft$-module map and $H\mft$-module maps out of $H\mft$ correspond to $\SS$-module maps out of $\SS$. Inspection of $\pi_{**}^{C_2}H\mft$ (cf. \cite[Thm. 6.41]{HK01}) shows that any such map is nullhomotopic. Therefore $\Nm$ is nullhomotopic and we obtain 
\begin{align*}
(H\mft)^{t_{C_2}S^1} & \simeq H\mft^{h_{C_2}S^1} \oplus \Sigma^{2,1} H\mft_{h_{C_2}S^1}  \\
				&\simeq \lim_{N \rightarrow -\infty} \left( \bigoplus_{n = N}^\infty \Sigma^{2n,n}H\mft \right) \\
				&\simeq \prod_{n=-\infty}^\infty \Sigma^{2n,n}H\mft.
\end{align*}
Let 
\[ \{\ldots, t^2, t, 1, x, x^2, \ldots\}, \: |t| = (-2,-1) \text{ and } |x| = (2,1) \]
be an additive $H\mft_{**}$-basis for $\pi_{**}^{C_2}H\mft^{t_{C_2}S^1}$. We deduce that $\pi_{**}^{C_2}H\mft^{t_{C_2}S^1}$ has the claimed ring structure as follows. In this basis, we have
$$tx = \sum_{i \geq 0} a_i t^i + a_0 \cdot 1 + \sum_{j \geq 0} b_j x^j$$
with $a_i \in H\mft_{2i,i}$ and $b_j \in H\mft_{-2j,-j}$. Again, inspection of $H\mft_{**}$ shows that $a_i = 0$ for $i >0$, $b_j=0$ for $j>0$, and $a_0 \in \FF_2$. Therefore $tx=a_0 \cdot 1$ with $a_0$ either zero or one. Classical results (e.g. \cite[Thm. 16.3]{GM95}) imply that the claimed ring structure holds on the underlying level, i.e. $\res_e^{C_2}(t)\res_e^{C_2}(x) = \res_e^{C_2}(1)$ in $\pi_* H\FF_2^{tS^1}$. Since restriction is a ring homomorphism, we must have that $a_0 = 1$, so the claimed ring structure holds. 
\end{proof}

\begin{cor}
The $E_2$-term of the spectral sequence \eqref{Eqn:HFPSS} is given by
$$E_2^{***} \cong \bigoplus_{i \geq 0} \left( \FF_2[\tau, \rho] \oplus \dfrac{\FF_2[\tau,\rho]}{(\tau^\infty, \rho^\infty)} \{ \theta \} \right)[[u]] \{x^i\}.$$
The $E_2$-term of the spectral sequence \eqref{Eqn:TSS} is given by
$$E_2^{***} \cong \bigoplus_{i \geq 0} \left( \FF_2[\tau, \rho] \oplus \dfrac{\FF_2[\tau,\rho]}{(\tau^\infty, \rho^\infty)} \{ \theta \} \right)((u)) \{ x^i\}.$$
The tridegrees of the generators are $|\tau| = (0,0,-1)$, $|\rho| = (-1, 0, -1)$, $|\theta| = (0,0,2)$, $|u| = (-2,0,-1)$, and $|x| = (0,2,1)$. 
\end{cor}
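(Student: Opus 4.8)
The statement is a direct corollary of the preceding identifications, so the plan is to bootstrap from the spectral sequences \eqref{Eqn:HFPSS} and \eqref{Eqn:TSS} together with \cref{prp:identify_E2_page} and \cref{Thm:THRFp}. First, recall that the $E_2$-term of the $x$-adic spectral sequence is, by construction, a direct sum over the filtration quotients $F_{2k}/F_{2k-1} \simeq \Sigma^{2k,k} H\mft$; since we pass to $(-)^{h_{C_2}S^1}$ resp. $(-)^{t_{C_2}S^1}$ levelwise, the $k$th summand of the $E_2$-page is $\pi^{C_2}_{**}\left((\Sigma^{2k,k} H\mft)^{h_{C_2}S^1}\right)$ resp. $\pi^{C_2}_{**}\left((\Sigma^{2k,k}H\mft)^{t_{C_2}S^1}\right)$. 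By \cref{prp:trivial_action}, the residual $C_2$-parametrized $S^1$-action on each $\Sigma^{2k,k}H\mft$ is trivial, so this suspension commutes out of the parametrized fixed-point/Tate functor: $(\Sigma^{2k,k} H\mft)^{h_{C_2}S^1} \simeq \Sigma^{2k,k}\left(H\mft^{h_{C_2}S^1}\right)$, and likewise for the Tate construction. This is the step where I would be most careful, since it is the only place the specific coefficient Mackey functor (constant, so that \cref{prp:trivial_action} applies) and the specific form of the filtration quotients enter; everything downstream is bookkeeping.

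Next, I would substitute the computations of \cref{prp:identify_E2_page}, namely $\pi^{C_2}_{**}(H\mft^{h_{C_2}S^1}) \cong H\mft_{**}[[u]]$ and $\pi^{C_2}_{**}(H\mft^{t_{C_2}S^1}) \cong H\mft_{**}((u))$ with $|u| = (-2,-1)$, into each summand. The shift by $\Sigma^{2k,k}$ re-indexes the $k$th summand; writing $x^k$ for a generator of this summand over the appropriate base ring (so $|x| = (0,2,1)$, consistent with its origin as the polynomial generator of \cref{Thm:THRFp} re-graded by the Serre filtration degree), the total $E_2$-term becomes the claimed direct sum $\bigoplus_{k\geq 0}\left(H\mft_{**}[[u]]\right)\{x^k\}$ in the homotopy-fixed-points case and $\bigoplus_{k\geq 0}\left(H\mft_{**}((u))\right)\{x^k\}$ in the Tate case. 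Plugging in the explicit formula $\pi^{C_2}_{**}(H\mft) \cong \FF_2[\tau,\rho] \oplus \frac{\FF_2[\tau,\rho]}{(\tau^\infty,\rho^\infty)}\{\theta\}$ from \cite{HK01} then yields exactly the displayed expressions, and I would simply record the tridegrees of $\tau,\rho,\theta,u,x$ by tracking the grading conventions (\cref{Cvn:Grading}): $|\tau| = (0,0,-1)$, $|\rho| = (-1,0,-1)$, $|\theta| = (0,0,2)$, $|u| = (-2,0,-1)$, $|x| = (0,2,1)$, where the first coordinate is total degree, the second is the $x$-adic filtration degree, and the third is the weight.

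I would present this as a short paragraph: invoke \cref{prp:trivial_action} to strip the suspension off the parametrized construction, invoke \cref{prp:identify_E2_page} to identify each summand, and then observe that the $E_2$-page of \eqref{Eqn:HFPSS} (resp. \eqref{Eqn:TSS}) is by definition the associated graded of the $x$-adic filtration applied to $\pi^{C_2}_{**}$, hence the asserted direct sum. The main obstacle, such as it is, is purely notational: keeping the three gradings (Serre total degree $s+2t$, filtration degree $2t$ from the power of $x$, and $RO(C_2)$-weight $w$) straight, and making sure the re-indexing caused by $\Sigma^{2k,k}$ lands $u$ in degree $(-2,0,-1)$ rather than absorbing filtration degree. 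There is no genuine homotopy-theoretic content beyond what \cref{prp:trivial_action} and \cref{prp:identify_E2_page} already supply.
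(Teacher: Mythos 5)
Your proposal is correct and matches the paper's (implicit) argument: the corollary carries no separate proof precisely because it follows by summing \cref{prp:identify_E2_page} over the filtration quotients $\Sigma^{2k,k}H\mft$, with \cref{prp:trivial_action} justifying that the suspension passes through the parametrized fixed-point and Tate constructions, and then substituting the Hu--Kriz description of $H\mft_{**}$. One small wording slip: the first coordinate of the tridegree is $s$, not the total degree (which is $s+2t$, as you correctly note later), but the tridegrees you record for $\tau,\rho,\theta,u,x$ are the right ones.
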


The $E_2$-terms of the spectral sequences \eqref{Eqn:HFPSS} and \eqref{Eqn:TSS} are depicted in \cref{Fig:HFPSSE2} and \cref{Fig:TSSE2}, respectively, in the tridegrees where $w=0$.

\begin{figure}
\includegraphics{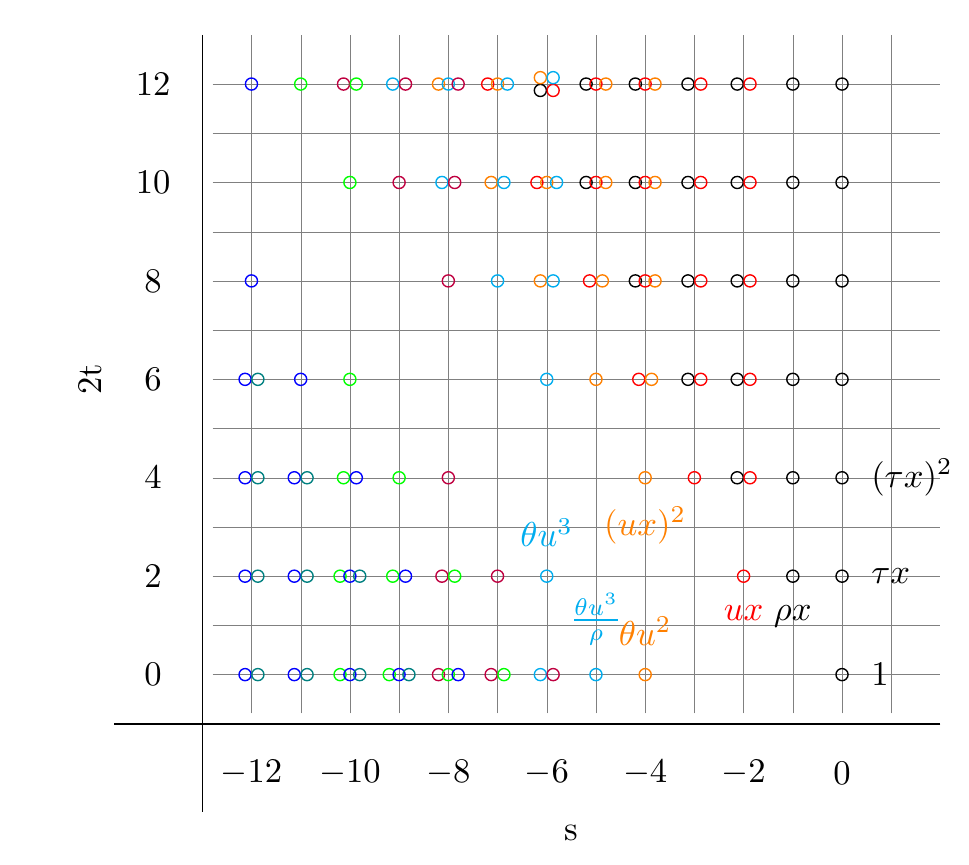}
\caption{The $E_2$-term of the spectral sequence \eqref{Eqn:HFPSS} when $w=0$.}\label{Fig:HFPSSE2}
\end{figure}

\begin{figure}
\includegraphics{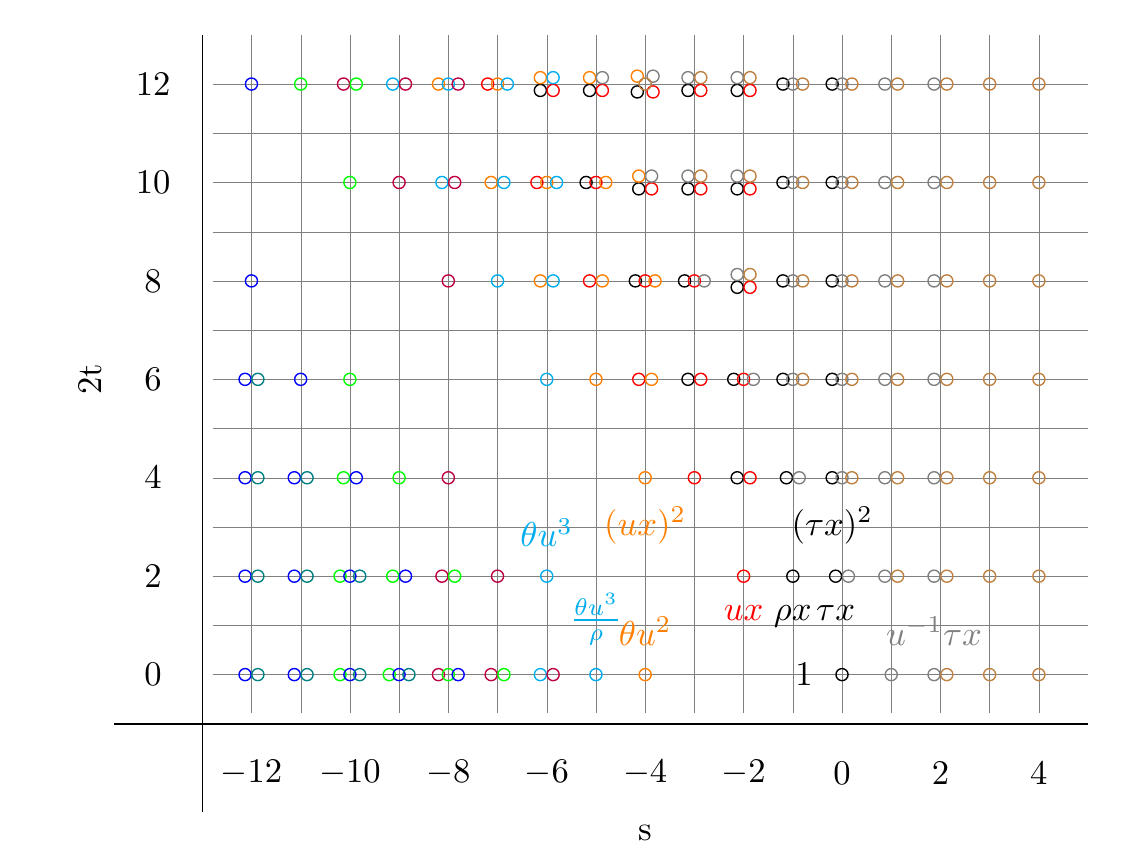}
\caption{The $E_2$-term of the spectral sequence \eqref{Eqn:TSS} when $w=0$.}\label{Fig:TSSE2}
\end{figure}

\begin{lem}
The $d_2$-differentials in the spectral sequences \eqref{Eqn:HFPSS} and \eqref{Eqn:TSS} are all trivial. 
\end{lem}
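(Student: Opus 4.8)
The plan is to show that there is no room for a nonzero $d_2$ on the $w = 0$ page, and then to promote this to all weights by the module structure over $\pi^{C_2}_{**}(H\mft)$. Recall that a $d_2$ has the form $d_2 \colon E_2^{s,t,w} \to E_2^{s-2,t+1,w}$. First I would observe that each $E_2$-term is generated, as a module over the (tri)graded ring $\pi^{C_2}_{**}(H\mft)[[u]]$ (respectively $\pi^{C_2}_{**}(H\mft)((u))$), by the powers $x^i$, $i \ge 0$, which all lie in filtration-weight $t = 2i$ and internal weight $w = 0$. Since the differentials are $\pi^{C_2}_{**}(H\mft)$-linear (they are derivations for the multiplicative structure and kill the ``permanent'' classes $\tau,\rho,\theta,u$ coming from the base $H\mft^{h_{C_2}S^1}$ or $H\mft^{t_{C_2}S^1}$, which are infinite cycles because they are detected in filtration $0$), it suffices to compute $d_2(x^i)$, and by the Leibniz rule it suffices to compute $d_2(x)$.

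Next I would pin down the target of $d_2(x)$. We have $x \in E_2^{2,2,1}$ in the Serre grading $(s,t,w)$ used in the proposition (so that the total degree is $2t + s = 6$ and the weight is $w - t$ with the convention of the displayed $E_2$); the precise bookkeeping is that $x$ corresponds to the bottom cell $\Sigma^{2,1}H\mft \subset \THR(H\mft)$ in the $x$-adic filtration. Then $d_2(x)$ lands in the group $E_2$ coming from the associated graded piece two filtration steps down, i.e. in $\pi_{**}^{C_2}$ of (a shift of) $H\mft^{h_{C_2}S^1}$ respectively $H\mft^{t_{C_2}S^1}$ in the appropriate tridegree. Using \cref{prp:identify_E2_page} together with the explicit form of $\pi^{C_2}_{**}(H\mft)$, I would check that the relevant tridegree group is zero — or, where it is nonzero, that the only candidate class is already a permanent cycle detecting an element of the abutment that survives, forcing $d_2(x) = 0$. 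Concretely, the homotopy of $\THR(H\mft)$ is $H\mft^{C_2}_{**}[x]$ by \cref{Thm:THRFp}, and this is too large (in particular $x$ itself is a nonzero permanent class of the abutment of the unfiltered spectral sequence only insofar as it survives to $\TCR^-$/$\TPR$) to support a $d_2$; the cleanest route is to use the edge homomorphism / comparison with the known collapse of the classical $\TC^-(H\FF_2)$ and $\TP(H\FF_2)$ homotopy fixed point and Tate spectral sequences under $\res^{C_2}_e$. Since $\res^{C_2}_e$ is compatible with the $x$-adic filtrations and the underlying spectral sequences collapse at $E_2$ for degree reasons (everything concentrated in even total degree), any $d_2$ in our equivariant spectral sequence must be $\rho$-power torsion, hence supported on the ``negative cone'' $\frac{\FF_2[\tau,\rho]}{(\tau^\infty,\rho^\infty)}\{\theta\}$-part; a direct degree count then rules out a $d_2$ into or out of that summand as well.

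The main obstacle I anticipate is the bookkeeping of tridegrees: one must carefully track the shift by $(2,1)$ per unit of $x$-adic filtration against the $RO(C_2)$-grading $(s,w)$ and against the Serre indexing $d_r \colon E_r^{s,t,w} \to E_r^{s-r,t+r-1,w}$, and then verify that the target groups of $d_2(x^i)$ genuinely vanish (or are permanent) after substituting the explicit description of $\pi^{C_2}_{**}(H\mft)((u))$. Because the paper defers the actual nontrivial differentials to \cref{Prop:Differentials} (which identifies $d_3$), the present lemma should be a pure ``no room'' argument, and I would present it as: reduce to $d_2(x)$ by multiplicativity and $\pi^{C_2}_{**}(H\mft)$-linearity; identify the target tridegree; observe it is either zero or spanned by a class restricting nontrivially to the collapsing underlying spectral sequence (hence a permanent cycle); conclude $d_2 \equiv 0$.
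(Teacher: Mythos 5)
The paper's proof is a one-line parity observation that your proposal misses entirely: the $x$-adic filtration was set up with $F_{2k+1}=F_{2k}$, so the odd associated graded pieces vanish and the $E_2$-page is concentrated in tridegrees $(*,2t,*)$. Since $d_2\colon E_2^{s,t,w}\to E_2^{s-2,t+1,w}$ shifts the second index by $1$, every $d_2$ lands in an odd filtration degree, where the $E_2$-term is zero. No multiplicativity, no comparison with the underlying spectral sequence, and no inspection of $\pi_{**}^{C_2}(H\mft)$ is needed. (The same parity argument kills all $d_{2r}$, which is why the paper only ever has to analyze $d_3$.)

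Beyond being far longer than necessary, your argument contains a genuine error in its first reduction. You justify $\pi_{**}^{C_2}(H\mft)[[u]]$-linearity of the differentials by asserting that $\tau,\rho,\theta,u$ ``are infinite cycles because they are detected in filtration $0$.'' That inference is backwards: in this spectral sequence the differentials \emph{increase} the filtration index, so a class in filtration $0$ cannot be \emph{hit} by a differential, but it can perfectly well \emph{support} one. Indeed $\tau$ does: \cref{Prop:Differentials} shows $d_3(\tau)=u\rho x\neq 0$, so $\tau$ is not an infinite cycle and the differentials are not linear over the full coefficient ring. Your conclusion for $d_2$ happens to be true, but only because of the parity argument above, not because of the linearity you invoke; as written, the same reasoning would ``prove'' that $d_3$ vanishes on $\tau$-multiples, which is false. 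The remaining steps (``I would check that the relevant tridegree group is zero --- or \dots is already a permanent cycle'') are also left unverified, so even setting aside the error the proposal is a plan rather than a proof.
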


\begin{proof}
This happens for degree reasons: the $E_2$-term of each spectral sequence is concentrated in tridegrees $(*,2t,*)$ and $d_2$ changes the parity of the second index. 
\end{proof}

\begin{lem}
We have
$$d_r(u) = 0, \quad d_r(x)=0, \quad  d_r(\rho) = 0, \quad \text{ and } d_r(\theta)=0$$
for all $r \geq 2$. 
\end{lem}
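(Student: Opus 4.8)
The plan is to exhibit permanent cycles via multiplicative structure and low-degree/edge-map arguments, reducing everything to a small set of generators. The four classes $u$, $x$, $\rho$, and $\theta$ all live in tridegrees with second coordinate even; since $d_r$ shifts the first coordinate by $-r$ and the second by $+r-1$, and since (by the previous lemma) $d_2 = 0$, the first possibly nonzero differential is $d_3$, which sends $E_3^{s,2t,w} \to E_3^{s-3,2t+2,w}$ — i.e.\ it preserves the parity of the second coordinate but moves off the even columns? No: $2t + 2$ is still even, so $d_3$ can be nonzero. Thus the parity argument only kills $d_2$ (and more generally every $d_{2k}$), and we genuinely need to argue $d_3$ (and higher odd differentials) vanish on these four classes. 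First I would handle $u$ and $x$: both are pulled back from the non-parametrized $S^1$-homotopy fixed point / Tate spectral sequences for $H\FF_2$ (under the restriction map to underlying spectra, $u$ and $x$ map to the corresponding periodicity classes, and the $x$-adic filtration on $\THR$ restricts to the analogous filtration on $\THH$), and in those classical spectral sequences — the $\TC^-(H\FF_2)$ and $\TP(H\FF_2)$ spectral sequences of Nikolaus--Scholze — the relevant classes are permanent cycles since those spectral sequences collapse at $E_2$. More directly: $u$ and $x$ are detected on the underlying spectral sequence, and a class that survives in the underlying spectral sequence and whose target of every differential injects into the underlying spectral sequence must itself be a permanent cycle; alternatively one checks the target groups vanish.

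Next I would handle $\rho$. The class $\rho \in \pi_{-1,-1}^{C_2}(H\FF_2)$ is the Euler class of the sign representation $\sigma$, i.e.\ it comes from the $C_2$-equivariant sphere: $\rho$ is the image of a genuine class $\SS \to S^\sigma$ under the unit map, and more to the point $\rho$ lives in filtration $0$ of the $x$-adic filtration and is a permanent cycle already because it is in the image of $\pi_{**}^{C_2}(H\FF_2^{h_{C_2}S^1})$ pulled back along the augmentation $H\FF_2^{h_{C_2}S^1} \to H\FF_2$ composed with the unit, hence an infinite cycle detecting an actual homotopy class. More cleanly: $\rho$ is a permanent cycle in the $E_2$-term because it is already present on the bottom filtration $F_0/F_{-1}$ (the $i=0$ summand) and is the restriction of the class $\rho$ in $\pi_{**}^{C_2}(H\FF_2)$ which survives to $\pi_{**}^{C_2}\TCR^-(H\FF_2)$ via the unit map $\TCR^-(H\FF_2) \to H\FF_2$... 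I would make this precise by noting the edge homomorphism $\pi_{**}^{C_2}\TCR^-(H\FF_2) \to \pi_{**}^{C_2}(\THR(H\FF_2))$ (projection to the associated graded of filtration $0$) and that $\rho$ is hit; more robustly, $d_r(\rho)$ would be $\rho$-torsion on $u$-type elements, and one shows by the Hu--Kriz computation of $\pi_{**}^{C_2}(H\FF_2)$ that the potential targets $E_3^{-4,2,-1}$ etc.\ either vanish or consist of $\rho$-divisible classes on which the known module structure forces the differential to be zero. The class $\theta$ is the subtlest: it lies in $\pi_{0,2}^{C_2}(H\FF_2)$, the ``negative cone'' part of $\pi_{**}^{C_2}(H\FF_2)$, and is not detected on underlying homotopy. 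I would argue that $\theta$ is a permanent cycle because it is the restriction (along the unit $\TCR^-(H\FF_2) \to \THR(H\FF_2) \to H\FF_2$? — rather, along the augmentation of $\THR$ down to $H\FF_2$, which is $S^1$-equivariant with trivial target) of the class $\theta \in \pi_{**}^{C_2}(H\FF_2)$, which is certainly a permanent cycle for the trivial-action spectral sequence, so $d_r(\theta)$ maps to $0$ under a map of spectral sequences; combined with injectivity of this map in the relevant tridegree (which follows from the explicit $E_2$-descriptions) we conclude $d_r(\theta) = 0$.

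The main obstacle will be the $\theta$ case: because $\theta$ generates a divisible submodule $\FF_2[\tau,\rho]/(\tau^\infty,\rho^\infty)\{\theta\}$ and is invisible to restriction to underlying spectra, the usual ``detect on underlying'' trick fails, and I will need to exploit the map of spectral sequences induced by the augmentation $\THR(H\underline{\FF_2}) \to H\underline{\FF_2}$ of $C_2$-$E_\infty$-algebras together with an injectivity check in the precise tridegree where $d_r(\theta)$ could land. A secondary subtlety is bookkeeping the multiplicativity: once $u, x, \rho, \theta$ are known to be permanent cycles, multiplicativity of the spectral sequence propagates this to all monomials in them, but the $E_2$-term also contains the divisible classes $\theta/(\tau^i\rho^j)$, and for those I will invoke the module structure over the permanent cycles and the fact that any differential on $\theta/(\tau^i\rho^j)$ multiplied up by $\tau^i\rho^j$ must give $d_r(\theta) = 0$, hence (since multiplication by $\tau^i\rho^j$ is injective on the relevant target, by the explicit form of the $E_2$-page) the differential vanishes. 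I expect the total argument to be short once the augmentation-map-of-spectral-sequences framework is set up, so the real work is verifying the handful of injectivity/vanishing statements for target groups, all of which are immediate from the displayed $E_2$-terms and the Hu--Kriz computation.
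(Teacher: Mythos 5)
Your general toolbox (degree checks, restriction to the underlying $\TC^-(H\FF_2)$/$\TP(H\FF_2)$ spectral sequences, multiplicativity) is the right one, and your treatment of $u$ matches the paper's (one checks the target tridegrees all vanish). For $x$ the paper argues slightly differently: it observes that $ux$ restricts to a permanent cycle in the underlying spectral sequences, applies the Leibniz rule to get $u\,d_r(x)=0$, and uses that the $E_2$-page is $u$-torsion-free. This sidesteps the injectivity issue in your ``detected on underlying'' argument: the restriction map kills $\rho$-multiples and the entire negative cone, so it is not injective on arbitrary target tridegrees, and you would still have to verify vanishing or detection in the specific tridegrees of $d_r(x)$.

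The genuine gaps are at $\rho$ and $\theta$. For $\rho$, the claim that a class ``already present on the bottom filtration'' is automatically a permanent cycle is false for this spectral sequence: the differentials raise the $x$-filtration, so filtration-zero classes are precisely the ones that can support differentials, and indeed the very next proposition establishes $d_3(\tau)=u\rho x$ with $\tau$ in the bottom filtration. Your fallback (``the potential targets vanish or are $\rho$-divisible'') also fails: the target tridegree of $d_3(\rho)$ contains the nonzero, non-$\rho$-divisible class $u^2x$. The paper instead shows $d_r(\rho x)=0$ because its only possible targets are the classes $(ux)^i$, which restrict to surviving classes in the underlying spectral sequences and hence cannot be boundaries; the Leibniz rule and injectivity of multiplication by $x$ then give $d_r(\rho)=0$. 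For $\theta$, the map of spectral sequences induced by the augmentation $\THR(H\mft)\to H\mft$ gives no information: every differential lands in positive $x$-filtration, which the augmentation kills on associated graded, so the image of $d_r(\theta)$ in the target spectral sequence is automatically zero and the ``injectivity in the relevant tridegree'' you need fails identically. (The unit $H\mft\to\THR(H\mft)$ is not $S^1$-equivariant for the trivial action on the source, so it does not induce a map of these spectral sequences in the other direction either.) The paper's argument for $\theta$ is genuinely different: after reducing by degree reasons to $d_3(\theta u^2)$, whose only possible target is $u^4\theta/\rho$, one multiplies by $\rho$ and uses $d_3(\rho)=0$ together with the relation $\theta\rho=0$ in $\pi^{C_2}_{**}H\mft$ to obtain the contradiction $0=d_3(\theta u^2\rho)=u^4\theta\neq 0$. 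Some such exploitation of the negative-cone module structure appears unavoidable, precisely because $\theta$ is invisible both to restriction and to the augmentation.
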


\begin{proof}
We begin with $u$. We have $|u| = (-2,0,-1)$, so $|d_r(u)| = (-2-r,r-1,-1)$. Inspection of the $E_2$-term reveals that the target tridegrees are all zero, so $d_r(u)=0$ for all $r \geq 2$. 

Next, we consider $x$. Observe that $|ux| = (-2,2,0)$ restricts to a permanent cycle in the analogous spectral sequences for $\TC^-(H\FF_2)$ and $\TP(H\FF_2)$, so $d_r(ux) = 0$ for all $r \geq 2$. But then
$$d_r(ux) = d_r(u) x + u d_r(x) = u d_r(x) = 0,$$
and since the targets of $d_r(x)$ are not $u$-torsion, we have $d_r(x)=0$. 

We now turn to $\rho$. We have $|\rho x| = (-1,2,0)$, so $|d_r(\rho)| = (-1-r,2+r-1,0)$. If $r$ is even, the target tridegree is zero, but if $r$ is odd, the target tridegree is a copy of $\FF_2$ generated by $(ux)^i$ for some $i \geq 2$. The restrictions of $(ux)^i$ in the analogous spectral sequences for $\TC^-(H\FF_2)$ and $\TP(H\FF_2)$ survive, so $(ux)^i$ must survive and thus $d_r(\rho x) = 0$. Using the Leibniz rule and the fact that $d_r(x)=0$ proven above, we then conclude that $d_r(\rho)=0$. 

Finally, we consider $\theta$. By the Leibniz rule, it suffices to show that $d_r(\theta x^2)=0$. In fact, $d_r(\theta) = 0$ for $r = 2$ and $r \geq 4$ for degree reasons, so it suffices to show that $d_3(\theta u^2) =0$. The only possible target of this differential is $u^4 \frac{\theta}{\rho}$, so suppose that $d_3(\theta u^2) = u^4 \frac{\theta}{\rho}$. Then the Leibniz rule implies that
$$d_3(\theta u^2 \rho) = d_3(\theta u^2) \rho + \theta u^2 d_3(\rho) = u^4 \theta,$$
but on the other hand, $\theta u^2 \rho = 0$ since $\theta \rho = 0$, and thus
$$d_3(\theta u^2 \rho) = d_3(0) = 0,$$
which is a contradiction. Therefore $d_3(\theta u^2)=0$, and thus $d_3(\theta) = 0$. 
\end{proof}

\begin{prp}\label{Prop:Differentials}
In both spectral sequences, we have
$$d_3(\tau) = u\rho x.$$ 
This differential, along with the fact that $u$, $\rho$, $\theta$, and $x^i$ are permanent cycles, determine all remaining $d_3$-differentials via the Leibniz rule. 
\end{prp}

\begin{proof}
The map from categorical to geometric fixed points induces a map
$$\pi_*^{C_2}(\THR(H\mft)^{h_{C_2}S^1}) \to \pi_*(\THR(H\mft)^{h_{C_2}S^1 \phi C_2}).$$
Composing this with the canonical map interchanging parametrized homotopy fixed points and geometric fixed points, we get
\begin{equation}\label{Eqn:Comparison}
\pi^{C_2}_*(\THR(H\mft)^{h_{C_2}S^1}) \to \pi_*(\THR(H\mft)^{\phi C_2 h \mu_2}).
\end{equation}

By \cref{prp:THRF2_geometric_fixed_points}, we have an isomorphism
$$\pi_{*}^{C_2}(\THR(H\mft)^{\phi C_2}) \cong \FF_2[w_1,w_2],$$
where $|w_1| = |w_2| = 1$ and $\mu_2$ acts by swapping $w_1$ and $w_2$.  

Let $x_1 := w_1 + w_2$ and $x_2 := w_2$. Note that $x$ is fixed under the $\mu_2$-action since $\pi_{2,1}^{C_2}(\THR(H\mft)) \cong \FF_2\{x\}$. Since $x_1$ is the only nontrivial $\mu_2$-fixed element in our presentation of $\pi_1(\THR(H\mft)^{\phi C_2})$, the $x$-adic filtration on $\THR(H\mft)$ induces the $x_1$-adic filtration on $\THR(H\mft)^{\phi C_2}$: 
\[
\begin{tikzcd}
 \FF_2[x_1, x_2]\{1\} \arrow{d}{p} & \FF_2[x_1,x_2]\{x_1\} \arrow[swap,l,"i"] \arrow{d}{p} & \FF_2[x_1,x_2]\{x_1^2\} \arrow[swap,l,"i"] \arrow{d}{p} & \cdots \arrow[swap,l,"i"] \\
\FF_2[\bar{x}_2]\{1\} \arrow[swap, dashed,ur,"\partial"] & \Sigma \FF_2[\bar{x}_2]\{x_1\} \arrow[swap,dashed,ur,"\partial"] & \Sigma^2 \FF_2[\bar{x}_2]\{x_1^2\}
\end{tikzcd}
\]
Applying $\pi_*(-)^{h \mu_2}$, we obtain a strongly convergent spectral sequence of the form
\begin{equation}\label{Eqn:GFPHmu2}
E_2^{s,t} = \pi_{s+t}((\FF_2[\bar{x}_2]\{x_1^t\})^{h\mu_2}) \cong \pi_s ( \FF_2[\bar{x}_2][[v]]) \Rightarrow \pi_{s+t} \THR(H\mft)^{\phi C_2 h \mu_2},
\end{equation}
where $|\bar{x}_2| = (1,0)$ and $|v| = (0,-1)$.\footnote{We emphasize that this spectral sequence is \emph{not} the usual $\mu_2$-homotopy fixed points spectral sequence.}

The map \eqref{Eqn:Comparison} induces a map from the $w=-1$ piece of the spectral sequence \eqref{Eqn:HFPSS} to the spectral sequence \eqref{Eqn:GFPHmu2} sending $\tau$ to $\bar{x}_2$. Indeed, the $\mu_2$-action on $\pi_{**}^{C_2} \THR(H\mft)$ is given by the right unit of the Hopf algebroid structure of $(H\mft_{**}, \THR(H\mft)_{**})$ described in \cite[Thm. 2.3]{HW21}, so it sends $\tau$ to $\tau + \rho^2 x$. In $\pi_{*}\THR(H\mft)^{\phi C_2}$, the elements which are not fixed by $\mu_2$ are $x_2$ and $x_1+x_2$. Therefore we have that $\tau$ maps to $\bar{x}_2$, as claimed. We will show that 
\begin{equation}\label{Eqn:KeyDiff}
d_2(\bar{x}_2) \neq 0,
\end{equation}
from which it will follow that $d_3(\tau) \neq 0$. Inspection of the $E_3$-term of the spectral sequence \eqref{Eqn:HFPSS} reveals that $u \rho x^2$ is the only nontrivial target of a $d_3$-differential on $\tau x$, so we will have proven the proposition. 

It remains to establish the differential \eqref{Eqn:KeyDiff}, i.e., to show that
$$p^h(\partial^h(\bar{x}_2)) \neq 0,$$
where $p^h$, $\partial^h$, and $i^h$ are the induced maps on $\pi_*(-)^{h\mu_2}$. 

We first show that $\partial^h(\bar{x}_2) \neq 0$. By exactness, we have $\ker(\partial) = \im(p)$. Since we will compute group cohomology, we change basis again to
$$\FF_2[x_1,x_2] \cong \FF_2[w_1,w_2].$$
In this basis, we have 
$$\FF_2[w_1,w_2]^{h\mu_2} \cong \FF_2\{w_1^iw_2^j + w_1^j w_2^i\}_{j>i\geq 0} \oplus \FF_2[[v]]\{(w_1w_2)^i\}_{i \geq 0}$$
where $|v| = -1$. Now, note that $p^h(w_1^i w_2^j) = \bar{x}_2^{i+j}$ since $p^h(x_1)=0$ and $p^h(x_2)=\bar{x}_2$. In particular, we see that $\bar{x}_2 \notin \im(p^h)$ and thus $\partial^h(\bar{x}_2) \neq 0$. 

We now consider the possible values of $\partial^h(\bar{x}_2)$. Again by exactness, we have $\im(\partial^h) = \ker(i^h)$. Since $x_1$ is fixed by the $\mu_2$-action, we have
$$(\FF_2[w_1,w_2]\{x_1\})^{h\mu_2} \cong \FF_2\{(w_1^iw_2^j + w_1^j w_2^i)x_1\}_{j>i\geq 0} \oplus \FF_2[[v]]\{(w_1w_2)^i x_1\}_{i \geq 0}.$$
The first summand is mapped injectively into the corresponding piece of $\FF_2[w_1,w_2]^{h\mu_2}$ under $i^h$. On the other hand, we have
$$i^h( (w_1w_2)^i x_1) = w_1^{i+1}w_2^i + w_1^i w_2^{i+1},$$ 
which is an induced class. Therefore
$$i^h( (w_1 w_2)^i x_1 v^j) = 0$$
for $j > 0$. Restricting to terms in total degree $1 = |\bar{x}_2|$, we conclude
$$\partial^h(\bar{x}_2) \neq 0\in \ker(i^h|_{\pi_0}) \cong \FF_2\{(w_1w_2)^i x_1 v^{2i+1}\}_{i \geq 0}.$$

Finally, we check that $p^h(\partial^h(\bar{x}_2)) \neq 0$. Recalling that $(w_1w_2)^i = (x_2(x_1+x_2))^i$, we have
$$p^h((w_1 w_2)^i x_1 v^{2i+1}) = x_1 \bar{x}_2^{i+1} v^{2i+1} \neq 0.$$
Therefore $p^h(\partial^h(\bar{x}_2)) \neq 0$, i.e. $d_2(\bar{x}_2) \neq 0$ as claimed.
\end{proof}

\begin{figure}
\includegraphics{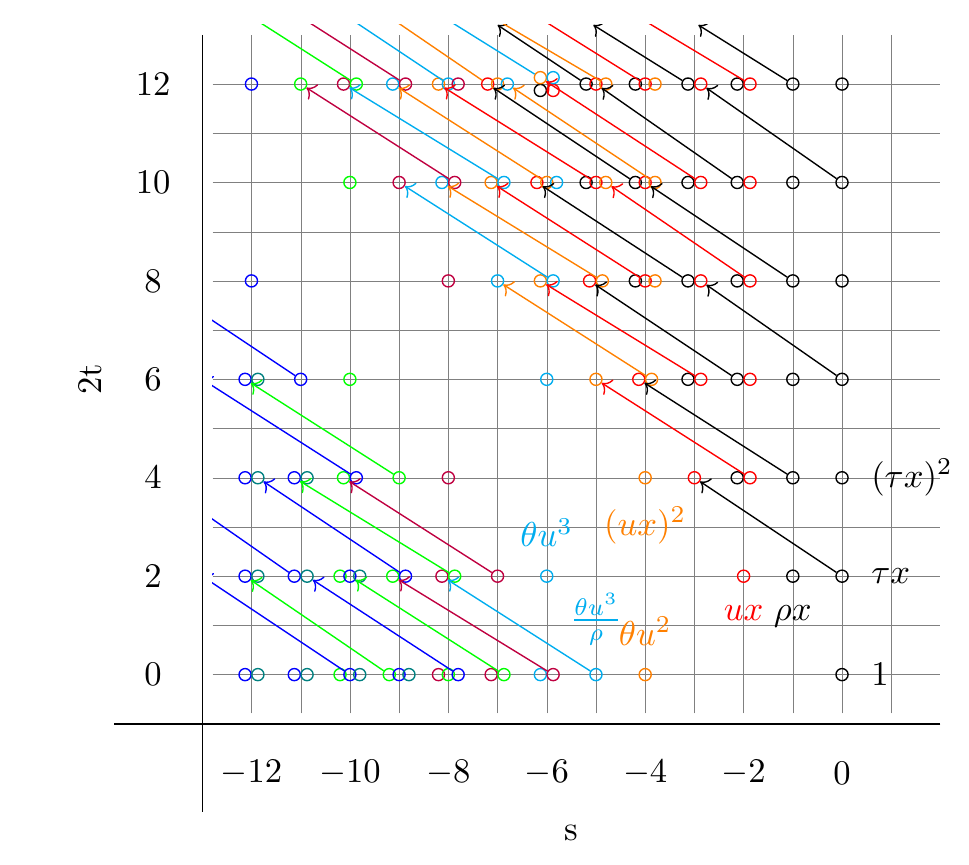}
\caption{The $E_3$-term of the spectral sequence \eqref{Eqn:HFPSS} when $w=0$. The color of a differential matches the color of its source. }\label{Fig:HFPSSE3}
\end{figure}

\begin{figure}
\includegraphics{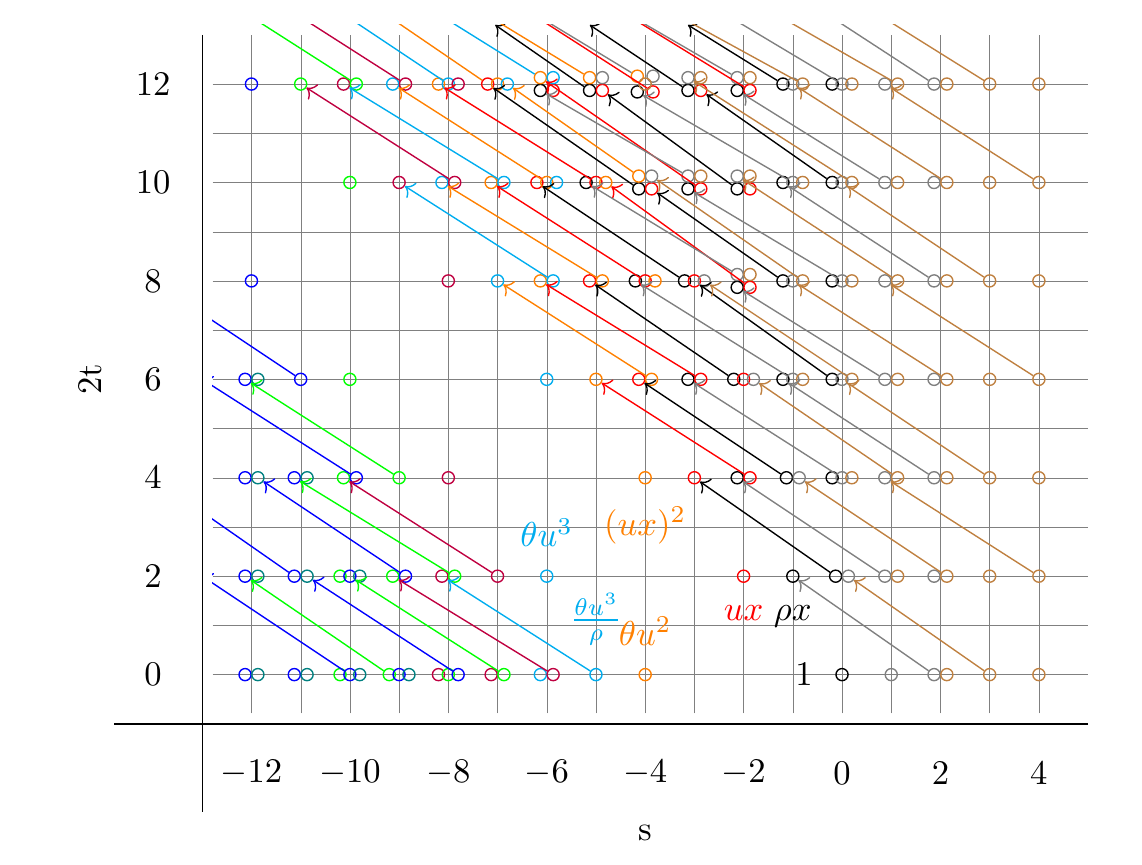}
\caption{The $E_3$-term of the spectral sequence \eqref{Eqn:TSS} when $w=0$. The color of a differential matches the color of its source. }\label{Fig:TSSE3}
\end{figure}

\begin{cor}
The spectral sequences \eqref{Eqn:HFPSS} and \eqref{Eqn:TSS} collapse at $E_4$. 
\end{cor}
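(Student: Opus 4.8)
The plan is to show that all $d_r$-differentials for $r \geq 4$ vanish, so that $E_4 = E_\infty$ in both spectral sequences \eqref{Eqn:HFPSS} and \eqref{Eqn:TSS}. The key point is that after running the $d_3$-differentials determined by \cref{Prop:Differentials}, the $E_4$-term becomes concentrated in a sufficiently restricted range of tridegrees that no further differentials are possible. First I would compute $E_4$ explicitly: starting from the $E_3 = E_2$-term identified above, quotient by the images and kernels of the $d_3$-differentials generated by $d_3(\tau) = u\rho x$ via the Leibniz rule. The surviving classes come from the kernel of multiplication-like operators coming from $u\rho x$ together with the permanent cycles $u$, $\rho$, $\theta$, and the $x^i$; in the figures (\cref{Fig:HFPSSE3} and \cref{Fig:TSSE3}) this is the homology of the depicted $d_3$-pattern.

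The second step is the differential-degree count. A $d_r$-differential has the form $d_r: E_r^{s,t,w} \to E_r^{s-r,t+r-1,w}$, so for $r \geq 4$ it moves at least $3$ units to the right (increasing $t$) and at least $4$ down (decreasing $s$). After taking $d_3$-homology, one checks that whenever a tridegree $(s,t,w)$ supports a potentially nonzero class, the target tridegree $(s-r, t+r-1, w)$ contains only the zero group (or vice versa). This is the kind of bookkeeping that is transparent from the charts: the $E_4$-term in each relevant $w$-slice is supported in a band that is too thin vertically to admit $d_r$ for $r \geq 4$. Since the spectral sequences are multiplicative and $E_4$ is generated over the permanent cycles by a finite list of classes, it suffices to verify the vanishing on these generators, exactly as was done for $d_3$ on $u$, $x$, $\rho$, and $\theta$.

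The main obstacle is making the $E_4$ chart computation precise enough to rule out \emph{all} higher differentials uniformly in the weight $w$, rather than just in the displayed slice $w=0$; the $RO(C_2)$-grading means one must track the $\tau$-, $\rho$-, and $\theta$-towers (including the divided-power part $\FF_2[\tau,\rho]/(\tau^\infty,\rho^\infty)\{\theta\}$) and confirm that the $d_3$-homology in every slice is confined to a range where the $(-r, r-1)$-shift has no room to land. Once that is in hand, strong convergence (already invoked for the construction of the spectral sequences) gives $E_4 = E_\infty$, completing the proof. I would also remark that this collapse is precisely what was promised in the proof of strong convergence via \cite[Thm. 7.1]{Boa99}, so no circularity arises: the $RE_\infty = 0$ claim there is a consequence of, not an input to, the present corollary.
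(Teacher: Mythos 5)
Your proposal is correct and follows essentially the same route as the paper: the paper's proof is exactly "degree reasons" combined with the fact that $u$, $x$, $\rho$, and $\theta$ are permanent cycles, with the $E_4$-charts (Figures \ref{Fig:HFPSSE4} and \ref{Fig:TSSE4}) making the degree count visible. The only additional observation worth keeping in mind is that for even $r \geq 4$ the vanishing is immediate from the parity argument already used for $d_2$ (the pages remain concentrated in even second index), so the chart inspection is only needed for odd $r \geq 5$.
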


\begin{proof}
This happens for degree reasons, along with the fact that $u$, $x$, $\rho$, and $\theta$ are permanent cycles. The situation is depicted in \cref{Fig:HFPSSE4} and \cref{Fig:TSSE4}. 
\end{proof}

\begin{figure}
\includegraphics{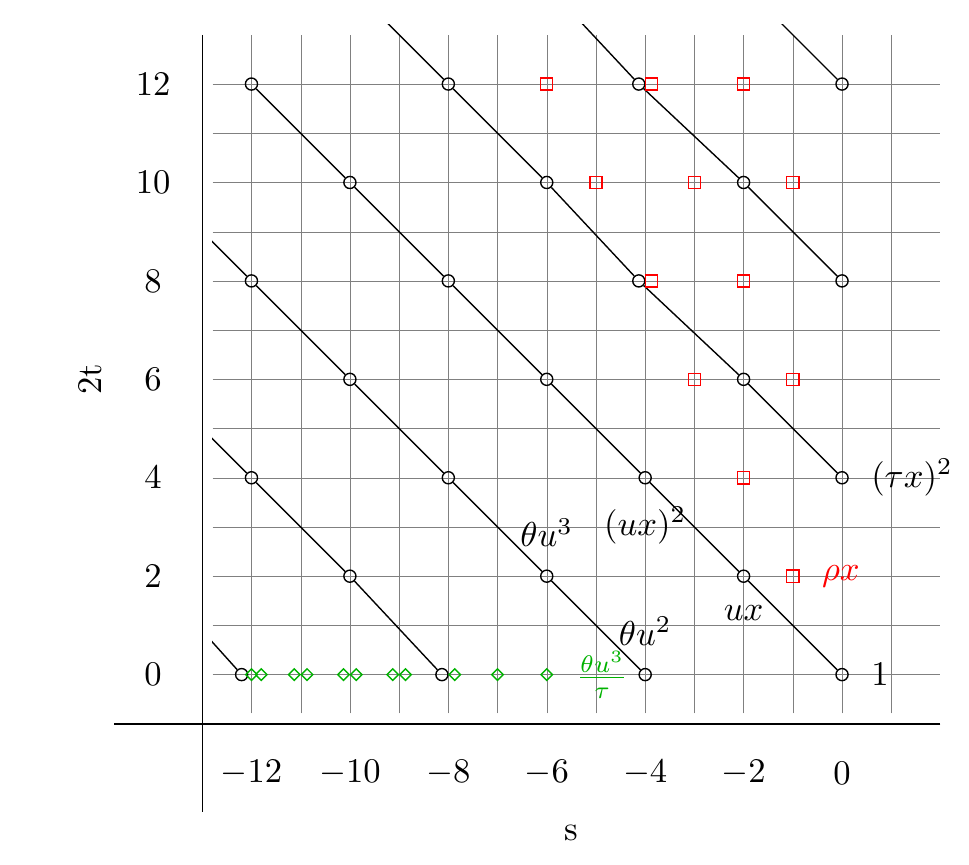}
\caption{The $E_4=E_\infty$-term of the spectral sequence \eqref{Eqn:HFPSS} when $w=0$.}\label{Fig:HFPSSE4}
\end{figure}

\begin{figure}
\includegraphics{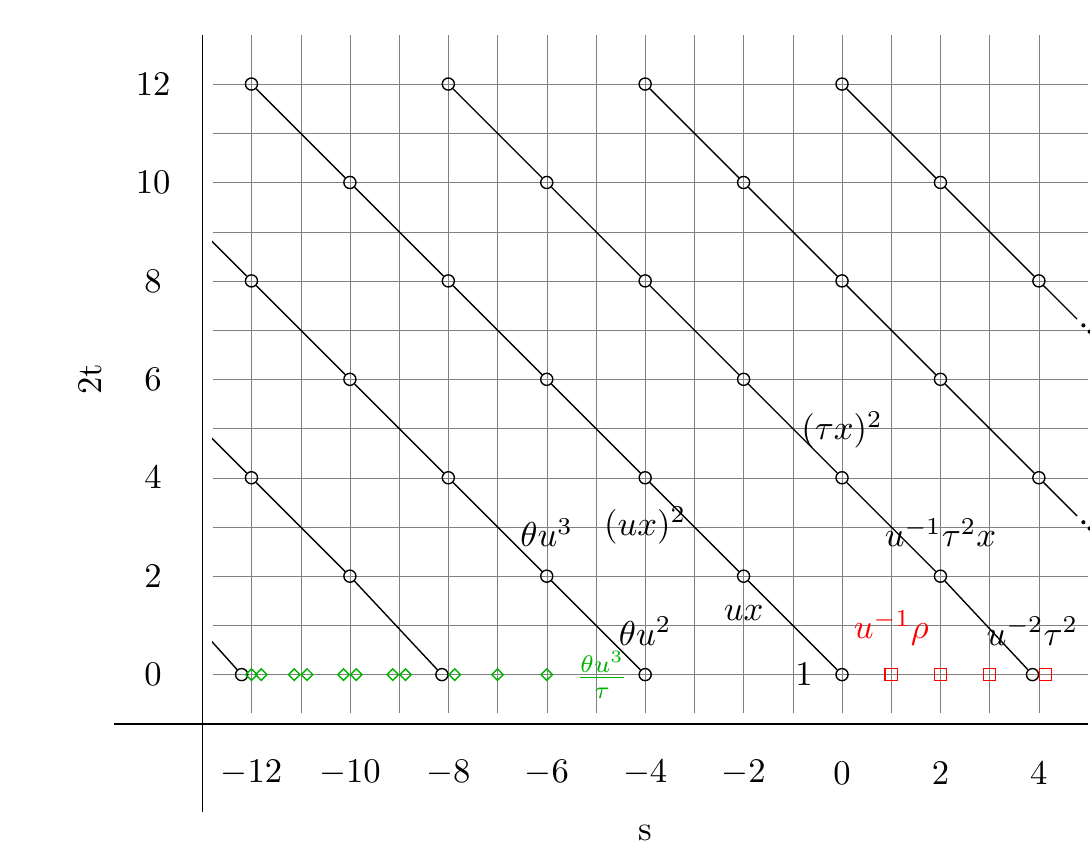}
\caption{The $E_4=E_\infty$-term of the spectral sequence \eqref{Eqn:TSS} when $w=0$. \newline Observe how the red classes in \cref{Fig:HFPSSE4} get ``squashed'' onto the 0-line.}\label{Fig:TSSE4}
\end{figure}



\begin{thm}\label{Thm:TCR-TPR}
There are isomorphisms of graded abelian groups
$$\pi_*^{C_2} \TCR^-(H\mft) \cong \dfrac{\ZZ_2[\tau^2 x^2, \rho x]}{2 \cdot \rho x}\left\{1\right\} \oplus \ZZ_2\left[\frac{u^2}{\tau^2}\right]\{\theta u^2\} \oplus \FF_2\left\{ \dfrac{\theta}{\rho^{i}\tau^{2j+1}} u^{i+2j+3} : i,j \geq 0 \right\},$$
$$\pi_*^{C_2} \TPR(H\mft) \cong \dfrac{\ZZ_2[\tau^2 u^{-2}, \rho u^{-1}]}{2 \cdot \rho u^{-1}}\{1\} \oplus \ZZ_2\left[\frac{u^2}{\tau^2}\right]\{\theta u^2\} \oplus \FF_2\left\{ \dfrac{\theta}{\rho^{i}\tau^{2j+1}} u^{i+2j+3} : i,j \geq 0 \right\}.$$
\end{thm}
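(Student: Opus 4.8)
The plan is to read off $\pi^{C_2}_*\TCR^-(H\mft)$ and $\pi^{C_2}_*\TPR(H\mft)$ from the $E_4 = E_\infty$-pages of the spectral sequences \eqref{Eqn:HFPSS} and \eqref{Eqn:TSS} established above, which were computed in \cref{Prop:Differentials} and its corollary. The $E_4$-pages are displayed in \cref{Fig:HFPSSE4} and \cref{Fig:TSSE4}; the first task is simply to name the surviving classes. For \eqref{Eqn:HFPSS}, the classes surviving the $d_3$ generated by $d_3(\tau) = u\rho x$ are: the subring generated by $\rho x$ and $\tau^2 x^2$ (the kernel of $d_3$ restricted to the $\FF_2[\tau,\rho]$-part of the $E_2$-page, modulo the image of $d_3$), the polynomial family $(u^2/\tau^2)^i \theta u^2$ coming from the ``divided by $\tau$'' torsion part, and the remaining divided classes $\theta u^{i+2j+3}/(\rho^i\tau^{2j+1})$ for $i,j\ge 0$. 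For \eqref{Eqn:TSS} the picture is identical after inverting $u$, which (because $d_3(\tau) = u\rho x$ and $ux$ is invertible on the Tate page — recall $tx = 1$ from \cref{prp:identify_E2_page}) has the effect of replacing $x$ by $u^{-1}$ in the bookkeeping, i.e. $\rho x \rightsquigarrow \rho u^{-1}$ and $\tau^2 x^2 \rightsquigarrow \tau^2 u^{-2}$, while the $\theta$-classes are the same. This is the ``squashing onto the $0$-line'' remarked upon in the caption of \cref{Fig:TSSE4}.

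Next I would resolve the extension problems. The $E_\infty$-page is a priori only the associated graded of $\pi^{C_2}_*$, so I must check two things. First, the hidden multiplicative/additive extensions that produce $\ZZ_2$ out of towers of $\FF_2$'s: the class $\rho x$ (resp. $\rho u^{-1}$) generates a $u$-power tower — more precisely powers of $\rho x$ together with $1$ assemble, via the standard $\FF_2 \to \ZZ/4 \to \cdots$ lifting along the filtration, into a copy of $\ZZ_2[\tau^2x^2,\rho x]/(2\cdot \rho x)$, i.e. $\rho x$ is $2$-divisible-obstructed exactly as in the classical computation of $\pi_*\TC^-(\FF_2)$ and $\pi_*\TP(\FF_2)$ in \cite[\S IV.4]{NS18}. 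I would pin this down by comparison with the underlying spectral sequences for $\TC^-(\FF_2)$, $\TP(\FF_2)$ under the restriction map $\res^{C_2}_e$, which is a ring map and on $E_\infty$ is surjective onto the classical answer; since $\pi_*\TC^-(\FF_2) = \ZZ_2[u,v]/(uv-p)$ with the $2$-adic extension already present there, the extension on the $w=0$ (and neighboring) lines is forced. Similarly the $\theta u^2$-tower assembles into $\ZZ_2[u^2/\tau^2]\{\theta u^2\}$: here the relevant subquotient is $2$-torsion-free because $\theta$ is a ``Tate'' class with $\pi^{C_2}_{**}(H\mft)$-torsion that lifts the $\ZZ_2$ in degree $0$ of $(H\mft)^{h C_2}$ — again visible after restriction and from the $\rho$-localization (geometric fixed points) where $\theta$ dies, so the extension is ``as large as possible.'' The last family $\theta u^{i+2j+3}/(\rho^i\tau^{2j+1})$ is genuinely $\FF_2$: these classes are annihilated by $2$ because they are already $2$-torsion on $E_2$ (they live in the divided-power summand $\FF_2[\tau,\rho]/(\tau^\infty,\rho^\infty)\{\theta\}$, which is all $2$-torsion), and no nontrivial additive extension among them is possible by a degree/filtration count.

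The second, and I expect \emph{main}, obstacle is verifying that no further hidden extensions link the three listed summands to each other — i.e. that the direct-sum decomposition is correct and not merely an associated-graded statement. Here the key tools are: (a) the $\rho$-localization map to geometric fixed points, $\pi^{C_2}_*(-) \to \pi_*((-)^{\phi C_2})$, which kills $\theta$ and all its divided relatives and sends the $\tau^2x^2,\rho x$ subring isomorphically onto its image — this separates the first summand from the rest; (b) the restriction map $\res^{C_2}_e$ to the underlying (classical) homotopy, which detects the $\ZZ_2$'s and shows the $\theta u^2$-tower is disjoint from the $\rho x$-tower after base change; and (c) multiplication by $\rho$, which is zero on the $\theta$-divided classes but nonzero on $\rho x$-powers, ruling out mixing. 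One must be slightly careful that $\rho x$ is not itself a multiple of $2$ of some other permanent cycle — but inspection of the $E_\infty$ chart shows its filtration is minimal in its degree, so it is indecomposable in the filtered sense and the extension it participates in is exactly the displayed quotient $/(2\cdot\rho x)$. Assembling (a)--(c) with the identification of generators from the charts yields the two stated isomorphisms of graded abelian groups; the ring structure is not claimed here (only an isomorphism of graded abelian groups), so no further multiplicative bookkeeping is needed beyond what is used to resolve the extensions. I would end by noting that $\TCR^-$ and $\TPR$ differ precisely by inverting $u$ in the $\rho x \leftrightarrow \rho u^{-1}$ and $\tau^2x^2 \leftrightarrow \tau^2u^{-2}$ generators, with the $\theta$-summands unchanged, exactly as the comparison of Figures~\ref{Fig:HFPSSE4} and~\ref{Fig:TSSE4} predicts.
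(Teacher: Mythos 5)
Your proposal is correct and follows essentially the same route as the paper: read the answer off the $E_4=E_\infty$-pages of \eqref{Eqn:HFPSS} and \eqref{Eqn:TSS}, then resolve the multiplication-by-$2$ extensions by comparing with $\pi_*\TC^-(H\FF_2)$ and $\pi_*\TP(H\FF_2)$ under $\res^{C_2}_e$ (equivalently, observing that $ux$ detects $2$). The additional checks you propose for ruling out hidden extensions between the three summands (via $\rho$-localization and $\rho$-multiplication) are not spelled out in the paper's proof but are consistent with it.
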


\begin{proof}
This follows from inspection of the $E_\infty$-pages above, except for the claimed multiplication by $2$ extensions. For these, we observe that restriction from $\TCR^-(H\mft)$ to $\TC^-(H\FF_2)$ implies that $ux$ detects $2$ above. The remaining extensions for $\TCR^-(H\mft)$, as well as $\TPR(H\mft)$, follow similarly. 
\end{proof}

\begin{prp}\label{Prop:can}
The canonical map
$$\can: \pi_{*}^{C_2} \TCR^-(H\mft) \to \pi_{*}^{C_2} \TPR(H\mft)$$
is as follows\footnote{In this statement, we identify classes in $\pi_*^{C_2}$ with the class detecting them in the spectral sequences above.}:
\begin{enumerate}
\item The black class $\circ$ in $(s,2t)$ maps to the corresponding black class $\circ$ in $(s,2t)$.
\item Each green class \textcolor{green}{$\diamond$} in $(s,2t)$ maps to the corresponding green class \textcolor{green}{$\diamond$} in $(s,2t)$. 
\item Each red class \textcolor{red}{{\scriptsize $\square$}} in $(s,2t)$ maps to zero. 
\end{enumerate}
\end{prp}

\begin{proof}
The canonical map induces an inclusion from the $E_2$-page of the spectral sequence \eqref{Eqn:HFPSS} into the $E_2$-page of the spectral sequence \eqref{Eqn:TSS}. The proposition then follows by examining the induced map on $E_4 = E_\infty$. 
\end{proof}

\begin{prp}\label{Prop:phi}
The real cyclotomic Frobenius 
$$\varphi := \varphi_2^{h_{C_2}S^1} : \pi_{**}^{C_2} \TCR^-(H\mft) \to \pi_{**}^{C_2} \TPR(H\mft)$$
is determined by
$$\varphi(x) = u^{-1}, \quad \varphi(u) = 2u, \quad \varphi(\tau^2) \equiv \tau^2 \mod \rho, \quad \varphi(\rho) = \rho, \quad  \varphi(\theta) = \theta.$$
In particular, for $w=0$:
\begin{enumerate}
\item The black class $\circ$ in $(s,2t)$ maps to the black class $\circ$ in $(2s+2t,-s)$ when $2t \geq -s$; otherwise it maps to the corresponding black class $\circ$ in $(s,2t)$.
\item Each green class \textcolor{green}{$\diamond$} in $(s,2t)$ maps to zero.
\item Each red class \textcolor{red}{{\scriptsize $\square$}} in $(s,2t)$ maps to a corresponding red class \textcolor{red}{{\scriptsize $\square$}} in $(s+2t,0)$. 
\end{enumerate}
\end{prp}

\begin{proof}
We begin by showing $\varphi(x) = u^{-1}$. Since $\varphi$ is a map of ring $C_2$-spectra, we obtain a map of $C_2$-Green functors 
\begin{equation}\label{Eqn:PhiRes}
\begin{tikzcd}
\pi_{i,j}^{C_2}\TCR^-(H\mft) \arrow{r}{\varphi} \arrow[d, swap, "\res"] & \pi_{i,j}^{C_2}\TPR(H\mft) \arrow{d}{\res} \\
\pi_i \TC^-(H\FF_2) \arrow{r}{\varphi} & \pi_i \TP(H\FF_2)
\end{tikzcd}
\end{equation}
with
\[
\begin{tikzcd}
x \arrow[r, mapsto, "\varphi"] \arrow[d, mapsto, swap, "\res"] & z \arrow[d, mapsto, "\res"] \\
x \arrow[r, mapsto, "\varphi"] & t^{-1}.
\end{tikzcd}
\]
We \emph{define} $u^{-1} := z \in \pi_{**}^{C_2}\TPR(H\mft)$ to be the element making this diagram commute. 

Note that our choice of $u^{-1}$ determines $u^k \in \pi_{**}^{C_2}\TPR(H\mft)$ for all $k \in \ZZ$. Chasing $u$ around the diagram \eqref{Eqn:PhiRes}, along with the fact that $\varphi(t) = 2t$ in $\TP(H\FF_2)$ (cf. \cite[Prop. IV.4.9]{NS18}), proves that $\varphi(u) = 2u$ in $\TPR(H\mft)$. 

We now show that $\varphi(\tau^2) \equiv \tau^2 \mod \rho$. We first consider the map of $C_2$-Green functors
\[
\begin{tikzcd}
\pi_{i,j}^{C_2} \TCR^-(H\mft) \arrow{r} \arrow[d, swap, "\res"] & \pi_{i,j}^{C_2} \THR(H\mft) \arrow{d}{\res} \\
\pi_i \TC^-(H\FF_2) \arrow{r} & \pi_i \THH(H\FF_2).
\end{tikzcd}
\]
When $i=0$ and $j=-2$, we have
\[
\begin{tikzcd}
\tau^2 \arrow[r, mapsto] \arrow[d, mapsto, swap, "\res"] & \tau^2 \arrow[d, mapsto, "\res"] \\
\alpha \arrow[r, mapsto] & 1
\end{tikzcd}
\]
since $\res(\tau^2) = 1 \in \pi_* H\FF_2$ and thus also in $\pi_* \THH(H\FF_2)$. Since the map from $\TC^-(H\FF_2)$ to $\THH(H\FF_2)$ is a ring map, we have that $\alpha \in \ZZ_2^\times$. 

Now, we apply this in the diagram \eqref{Eqn:PhiRes} to obtain
\[
\begin{tikzcd}
\tau^2 \arrow[r, mapsto, "\varphi"] \arrow[d, mapsto, swap, "\res"] & w \arrow[d, mapsto, "\res"] \\
\alpha \arrow[r, mapsto, "\varphi"] & \alpha.
\end{tikzcd}
\]
Inspection of the relevant bidegrees reveals that the righthand map can be identified with 
$$\res: \ZZ_2 \{ \tau^2 \} \oplus \FF_2 \{ \rho^2 t^{-1}\} \to \ZZ_2\{\alpha\}.$$
In particular, this map is nonzero, so we must have that $w \equiv \tau^2 \mod \rho$. 

Finally, we have $\varphi(\rho) = \rho$ and $\varphi(\theta) = \theta$ since $\rho$ and $\theta$ are spherical. 
\end{proof}


\begin{proof}[Proof of \cref{Thm:TCRF2}]
Applying $\pi_{*}^{C_2}$ to the fiber sequence \eqref{Eqn:FiberSeq} gives rise to a long exact sequence
$$\cdots \to \pi_{*}^{C_2} \TCR(H\mft) \to \pi_{*}^{C_2} \TCR^-(H\mft) \xrightarrow{\varphi - \can} \pi_{*}^{C_2} \TPR(H\mft) \xrightarrow{\partial} \pi_{*-1}^{C_2} \TCR(H\mft) \to \cdots.$$
\cref{Thm:TCR-TPR} describes the homotopy groups of the middle two terms, and  \cref{Prop:can} and \cref{Prop:phi} describe the map between them. A relatively straightforward computation of kernels and cokernels then shows that
$$\pi_{*}^{C_2}\TCR(H\mft) \cong \ZZ_2 \oplus \Sigma^{-1} \ZZ_2,$$
where $2 \in \ZZ_2$ is detected by $ux$ and $\Sigma^{-1} 2 \in \Sigma^{-1} \ZZ_2$ is detected by $\partial( ux)$. 

Now, since $\TCR(H\mft)^e \simeq \TC(H\FF_2)$, we have that 
$$\pi_*^e \TCR(H\mft) \cong \ZZ_2 \oplus \Sigma^{-1} \ZZ_2,$$
where $\ZZ_2$ and $\Sigma^{-1} \ZZ_2$ are generated by the restrictions of the $C_2$-equivariant generators above. In particular, this implies the restriction
$$\res_e^{C_2}: \pi_*^{C_2} \TCR(H\mft) \to \pi_*^e \TCR(H\mft)$$
is the identity map $\ZZ_2 \to \ZZ_2$ for $* = 0,-1$. This forces the $C_2$-action on $\TC(H \FF_2)$ to be trivial, and we must then have $\res_e^{C_2} \circ \tr_e^{C_2} = \cdot 2$. The transfer 
$$\tr_e^{C_2} : \pi_*^{e}(\TCR(H\mft)) \to \pi_*^{C_2}(\TCR(H\mft))$$
is then forced to be multiplication by $2$. Altogether, this implies that we have an isomorphism of Mackey functors
$$\pi_*^{(-)} (\TCR(H\mft)) \cong \underline{\ZZ_2} \oplus \Sigma^{-1} \underline{\ZZ_2}.$$

To obtain a spectrum-level splitting, we observe that the inclusion of the connective cover
$$H\underline{\ZZ_2} \simeq \tau_{\geq 0} \TCR(H\mft) \to \TCR(H\mft)$$
equips $\TCR(H\mft)$ with the structure of an $H\underline{\ZZ_2}$-module. It therefore splits as claimed into a wedge of Eilenberg--MacLane spectra. 
\end{proof}




\begin{rem}
By inspection of the $RO(C_2)$-graded homotopy of $H \underline{\ZZ_2}$ (cf. \cite[\S 9]{BehrensShah}) and running the spectral sequences for all weights, we observe abstract isomorphisms
\begin{subequations}
\begin{align}
\pi_{**}(\TCR^-(H\mft)) & \cong (H\underline{\ZZ_2})_{**}[x,u]/(x u-2) \label{eq:tcr_minus} \\ 
\pi_{**}(\TPR(H\mft)) & \cong (H\underline{\ZZ_2})_{**}[u^{\pm 1}] \label{eq:tpr}
\end{align}
\end{subequations}
of $RO(C_2)$-graded abelian groups. In fact, a finer analysis of the multiplicative structure should show that the map $H\underline{\ZZ_2} \simeq \tau_{\geq 0} \TCR(H\mft) \to \TCR^-(H\mft)$ of $C_2$-$E_{\infty}$-algebras extends on homotopy groups to define \eqref{eq:tcr_minus} as an isomorphism of $RO(C_2)$-graded rings, and likewise for $\TPR(H\mft)$. As we do not need this to establish \cref{Thm:TCRF2}, we will leave the details to the interested reader.
\end{rem}

\begin{rem}
Our computation of $\TCR(H\underline{\mathbb{F}_2})$ using the $x$-adic filtration can be modified to give an alternative computation of $\TCR(H\underline{\mathbb{F}_p})$ for $p$ odd. The relevant changes are as follows:
\begin{enumerate}
\item We use \cref{prp:trivial_action} with respect to the filtration quotients $\Sigma^{2k,k} H \mfp$ and then the odd primary version of \cref{prp:identify_E2_page}.

\item The $RO(C_2)$-graded homotopy groups of $H\mfp$ for $p$ odd are substantially sparser than for $H\mft$. In particular, we have 
$$\pi_{**}^{C_2}(H\mfp) \cong \FF_p[\tau^{\pm 2}]$$
where $|\tau| = (0,-2)$. This follows, for instance, from \cite{Sta16} and the techniques in \cite{BehrensShah}. This implies that the $E_2$-term of the $x$-adic spectral sequence for $\TCR^-(H\mfp)$ has the form
$$E_2^{***} \cong \bigoplus_{i \geq 0} \left( \FF_p[\tau^2][[u]] \right) \{x^i\},$$
and similarly for the spectral sequence for $\TPR(H\mfp)$. 

\item For degree reasons, the spectral sequences for $\TCR^-(H\mfp)$ and $\TPR(H\mfp)$ both collapse at $E_2$. This drastically simplifies the computation. Extensions can be handled as in the $2$-primary case. 

\item The canonical and Frobenius maps can both be identified as in the $2$-primary case to show that $\TCR(H\mfp)$ has the desired form. 

\end{enumerate}
\end{rem}

\subsection{Extension to perfect \texorpdfstring{$\FF_p$}{Fp}-algebras}\label{SS:Perf}

When $k$ is a perfect field of characteristic $p>0$, Dotto--Moi--Patchkoria \cite{DMP21} have computed $\TCR(H\underline{k})$ using a genuine real cyclotomic approach. We now discuss the extension of our computations above using the Borel real cyclotomic approach to perfect $\FF_p$-algebras. Throughout this section, we fix a prime $p$ and let $k$ be a perfect $\FF_p$-algebra. Our arguments will closely follow the computation of $\TC(Hk)$ presented by Nikolaus in \cite{KNYouTube}. 

We begin by recalling the $RO(C_2)$-graded coefficients of the relevant Eilenberg--MacLane spectra. 

\begin{lem}
There are isomorphisms of $RO(C_2)$-graded rings
$$\pi_{**}^{C_2} H\underline{k} \cong k \otimes_{\FF_p} \pi_{**}^{C_2} H\underline{\FF_p},$$
$$\pi_{**}^{C_2} H\underline{W(k)} \cong W(k) \otimes_{\ZZ_p} \pi_{**}^{C_2} H\underline{\ZZ_p}.$$
\end{lem}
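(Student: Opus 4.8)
The plan is to reduce both isomorphisms to the known $RO(C_2)$-graded coefficients of $H\underline{\FF_p}$ and $H\underline{\ZZ_p}$ by exploiting that $k$ is a filtered colimit of finitely generated subalgebras, each of which (being perfect over $\FF_p$) is finite free as an $\FF_p$-module, together with the fact that Eilenberg--MacLane $C_2$-spectra commute with the relevant algebraic operations. First I would handle the torsion case. Writing $k = \colim_i k_i$ as a filtered colimit of free $\FF_p$-submodules $k_i$ (equivalently, choosing an $\FF_p$-basis of $k$ and filtering by finite subsets), one has $\underline{k} \cong \bigoplus \underline{\FF_p}$ as constant Mackey functors, where the sum ranges over a basis, and hence $H\underline{k} \simeq \bigoplus H\underline{\FF_p}$, a filtered colimit of finite wedges. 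Since homotopy groups commute with filtered colimits and finite wedges, $\pi_{**}^{C_2} H\underline{k} \cong k \otimes_{\FF_p} \pi_{**}^{C_2} H\underline{\FF_p}$ as $RO(C_2)$-graded abelian groups. The ring structure is then forced by the multiplicativity of the equivalence $H\underline{k} \simeq H\underline{\FF_p} \otimes_{H\underline{\FF_p}}(\text{-})$, or more directly by observing that both the source and target are $\pi_{**}^{C_2} H\underline{\FF_p}$-algebras and the ring map $k \to \pi_{0,0}^{C_2} H\underline{k}$ together with $\pi_{**}^{C_2} H\underline{\FF_p} \to \pi_{**}^{C_2} H\underline{k}$ induces an isomorphism from the tensor product; one checks this is an isomorphism of rings rather than merely abelian groups by noting the natural map $k \otimes_{\FF_p} \pi_{**}^{C_2} H\underline{\FF_p} \to \pi_{**}^{C_2} H\underline{k}$ is a map of $RO(C_2)$-graded rings which is an isomorphism on underlying groups.

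For the second isomorphism, recall $W(k)$ is the ring of $p$-typical Witt vectors; since $k$ is perfect, $W(k)$ is the unique (up to isomorphism) $p$-complete, $p$-torsion-free ring with $W(k)/p \cong k$, and $W(k) \cong \lim_n W_n(k)$ with each $W_n(k)$ free as a $\ZZ/p^n$-module (indeed $W_n(k) \cong k^{\oplus \dim k}$ additively when $\dim_{\FF_p} k < \infty$). I would first establish the statement for $W(k) = \ZZ_p$ when $k = \FF_p$ — but this is just $\pi_{**}^{C_2} H\underline{\ZZ_p}$, so it is tautological, and the content is the tensor-product formula. Writing $H\underline{W(k)} \simeq \lim_n H\underline{W_n(k)}$ (using that the constant Mackey functor functor and $H$ preserve the relevant limits, and that $H\underline{W_n(k)}$ is bounded so the limit is mild), and $H\underline{W_n(k)} \simeq \bigoplus H\underline{\ZZ/p^n}$ over an additive basis of $W_n(k)$, one reduces to computing $\pi_{**}^{C_2} H\underline{\ZZ/p^n}$ and $\pi_{**}^{C_2} H\underline{\ZZ_p}$ and checking the limit behaves well. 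The cofiber sequences $H\underline{\ZZ} \xrightarrow{p^n} H\underline{\ZZ} \to H\underline{\ZZ/p^n}$ and compatibility across $n$ give $H\underline{\ZZ_p} \simeq \lim_n H\underline{\ZZ/p^n}$, and tensoring the free-module presentations of $W_n(k)$ over $\ZZ/p^n$ with $\pi_{**}^{C_2} H\underline{\ZZ/p^n}$ and passing to the limit yields $W(k) \otimes_{\ZZ_p} \pi_{**}^{C_2} H\underline{\ZZ_p}$. The ring structure is again pinned down by the universal property of the tensor product together with the ring maps $W(k) \to \pi_{0,0}^{C_2} H\underline{W(k)}$ and $\pi_{**}^{C_2} H\underline{\ZZ_p} \to \pi_{**}^{C_2} H\underline{W(k)}$.

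The main obstacle I anticipate is the interchange of the inverse limit over $n$ (defining $W(k)$ and $H\underline{W(k)}$) with the $RO(C_2)$-graded homotopy groups: a priori $\pi_{**}^{C_2}$ does not commute with limits, and one must control the relevant $\lim^1$ terms. The key observation that resolves this is that $\pi_{**}^{C_2} H\underline{\ZZ_p}$ is concentrated in cones (it is supported in the ``positive'' and ``negative'' cones of the $RO(C_2)$-plane, see \cite[\S 9]{BehrensShah}), and in each fixed bidegree the tower $\{\pi_{**}^{C_2} H\underline{\ZZ/p^n}\}_n$ is Mittag-Leffler — indeed it is either eventually constant or a tower of surjections with $p$-power transition maps — so the $\lim^1$ vanishes and $\pi_{**}^{C_2} H\underline{W(k)} \cong \lim_n \pi_{**}^{C_2} H\underline{W_n(k)}$. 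A secondary, more routine point to be careful about is verifying that the comparison map $W(k)\otimes_{\ZZ_p}\pi_{**}^{C_2}H\underline{\ZZ_p}\to\pi_{**}^{C_2}H\underline{W(k)}$ is a ring isomorphism and not merely additive; since both sides are $p$-complete and $p$-torsion-free in the appropriate sense, reducing mod $p$ recovers the already-established torsion statement for $k$, and a five-lemma argument along the cofiber sequences $\cdot p$ closes the gap.
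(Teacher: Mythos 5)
Your argument is correct, but it takes a genuinely different route from the paper. The paper disposes of the lemma in three sentences: it cites Sikora's general computation of the $RO(C_2)$-graded coefficients of Eilenberg--MacLane $C_2$-spectra, and offers as an alternative the one-line remark that base change along $\FF_p \to k$ and $\ZZ_p \to W(k)$ ``can be proven using the universal coefficient theorem and the Tate diagram.'' You instead give a self-contained argument: for the torsion case, choose an $\FF_p$-basis of $k$ so that $\underline{k}$ splits as a direct sum of copies of $\underline{\FF_p}$, use compactness of the representation spheres to commute $\pi_{**}^{C_2}$ past the wedge, and then identify the resulting additive isomorphism with the natural ring map out of the tensor product; for the Witt-vector case, run the tower $W(k) = \lim_n W_n(k)$ with a $\lim^1$/Mittag--Leffler analysis, or alternatively the $\cdot p$ cofiber sequence plus flatness of $W(k)$ over $\ZZ_p$ and $p$-completeness. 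What your approach buys is explicitness and independence from the cited reference; what the paper's approach buys is brevity and the fact that the UCT/Tate-diagram method also yields the multiplicative structure directly rather than having to pin it down a posteriori.

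Two small points to tighten. First, the parenthetical ``$W_n(k) \cong k^{\oplus \dim k}$ additively'' is wrong as stated: $W_n(k)$ is free of rank $\dim_{\FF_p} k$ over $\ZZ/p^n$, i.e.\ $W_n(k) \cong (\ZZ/p^n)^{\oplus \dim_{\FF_p} k}$ as an abelian group (this is what your freeness claim actually uses, so nothing downstream breaks). Second, when $k$ is infinite-dimensional over $\FF_p$ the groups $\pi_V H\underline{W_n(k)}$ are no longer finite, so ``eventually constant or a tower of surjections'' needs a word of justification: the right statement is that the tower $\{\pi_V H\underline{\ZZ/p^n}\}_n$ consists of finite groups and is therefore Mittag--Leffler, and tensoring over $\ZZ/p^n$ with the free modules $W_n(k)$ along the surjections $W_{n+1}(k) \twoheadrightarrow W_n(k)$ preserves the stabilization of images; alternatively, your closing five-lemma argument along the $\cdot p$ cofiber sequences avoids the $\lim^1$ issue entirely and is the cleaner way to finish.
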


\begin{proof}
The $RO(C_2)$-graded coefficients of any Eilenberg--MacLane spectrum, along with a recipe for analyzing their multiplicative structure, can be found in \cite{Sik21}. Alternatively, the coefficients of $H\underline{k}$ (resp. $H\underline{W(k)}$) can be obtained from the coefficients of $H\underline{\FF_p}$ (resp. $H\underline{\ZZ_p}$) by base-changing along the map $\FF_p \to k$ (resp. $\ZZ_p \to W(k)$). This can be proven using the universal coefficient theorem and the Tate diagram. 
\end{proof}

We have the following identification of $\THR(H\underline{k})$:

\begin{prp}[{cf. \cite[Rem. 5.14]{DMP21}}]
There is an equivalence of $C_2$-spectra
$$\THR(H\underline{k}) \simeq H\underline{k} \otimes_{H \underline{\FF_p}} \THR(H\underline{\FF_p}) \simeq \bigoplus_{n \geq 0} \Sigma^{2n,n}H\underline{k}.$$
\end{prp}

\begin{proof}
Dotto--Moi--Patchkoria justify the equivalence for $k$ a perfect field of characteristic $2$ in \cite[Rem. 5.14]{DMP21}, but their proof holds under the more general hypothesis that $k$ is a perfect $\FF_p$-algebra. 
\end{proof}

The fact that $\THR(H\underline{k})$ is obtained via base-change from $\THR(H\underline{\FF_p})$ extends to $\TCR^-(H\underline{k})$ and $\TPR(H\underline{k})$ as follows:

\begin{prp} \label{prp:tcrminus_and_tpr}
There are isomorphisms of graded abelian groups
$$\pi_*^{C_2}\TCR^-(H\underline{k}) \cong W(k) \otimes_{\ZZ_p} \pi_*^{C_2}\TCR^-(H\underline{\FF_p}),$$
$$\pi_*^{C_2}\TPR(H\underline{k}) \cong W(k) \otimes_{\ZZ_p} \pi_*^{C_2} \TPR(H\underline{\FF_p}).$$
\end{prp}

\begin{proof}
In fact, one can compute the $RO(C_2)$-graded homotopy groups of $\TCR^-(H\underline{k})$ and $\TPR(H\underline{k})$ using the spectral sequences arising from the $x$-adic filtration. 

The $E_2$-terms of all of the relevant spectral sequences can be obtained by base-changing in each degree from $\FF_p$ to $k$. Indeed, this follows from the fact that $\THR(H\underline{k}) \simeq H\underline{k} \otimes_{H\underline{\FF_p}} \THR(H\underline{\FF_p})$ and the $RO(C_2)$-graded coefficients of $H\underline{k}$ are obtained by base-change of the coefficients of $H\underline{\FF_p}$. 

We claim that the differentials over $H\underline{k}$ can be obtained by extending the differentials over $H\underline{\FF_p}$ $k$-linearly. For $p$ odd, this is clear since all the differentials are zero for degree reasons. For $p=2$, we argue as follows. First, naturality implies that $d_3(\tau) = u\rho x$ in the $x$-adic spectral sequence for $H\underline{k}$. The Leibniz rule then implies that the differentials are $k$-linear, since each element in the copy of $k$ in tridegree $(0,0,0)$ of these spectral sequence is a permanent cycle for degree reasons, so $d_3(z \tau) = z u \rho x$ for any $z \in k$. This gives us the $E_3$-page for $H\underline{k}$ when $p=2$, and the spectral sequence collapses at this point for degree reasons as in the case $k=\FF_p$.

Finally, we claim that the extensions which give the copies of $\ZZ_p$ for $k = \FF_p$ give copies of $W(k)$ more generally. This can be justified as in the nonequivariant setting: inspection of the relevant spectral sequences shows that $\pi_0^{C_2}\TCR^-(H\underline{k})$ and $\pi_0^{C_2}\TPR(H\underline{k})$ are $p$-torsion free, $p$-adically complete, and that their mod $p$ reduction is isomorphic to $k$. Thus $\pi_0^{C_2} \TCR^-(H\underline{k}) \cong W(k)$ and $\pi_0^{C_2} \TPR(H\underline{k}) \cong W(k)$.\footnote{For further discussion of this point, we refer the reader to \cite{KNYouTube}.}
\end{proof}

To pass from $\TCR^-$ and $\TPR$ to $\TCR$, we need to identify the maps
$$\can, \ \varphi \coloneq \varphi_p^{h_{C_2}S^1} : \TCR^-(H\underline{k}) \to \TPR(H\underline{k}).$$

\begin{prp} \label{prp:effect_of_can_and_phi}
The maps $\can$ and $\varphi$ are as follows:
\begin{enumerate}
\item The canonical map for $H\underline{k}$ is given by precisely the same formulas as it is for $H\underline{\FF_p}$. It is $W(k)$-linear. 
\item For 
$$a \otimes b \in W(k) \otimes_{\ZZ_p} \pi_*^{C_2}\TCR^-(H\mfp),$$
the map $\varphi$ is given by 
$$\varphi(x \otimes y) = F(a) \otimes \varphi(b) \in W(k) \otimes_{\ZZ_p} \pi_*^{C_2}\TPR(H\mfp),$$
where $F: W(k) \to W(k)$ is the Witt vector Frobenius. 
\end{enumerate}
\end{prp}

\begin{proof}
\begin{enumerate}[leftmargin=*]
\item  As for $H\underline{\FF_p}$, this follows by inspection of the relevant spectral sequences. 

\item The claim is clear for $a=1$ since the horizontal maps in the commutative diagram
\[
\begin{tikzcd}
\TCR^-(H\mfp) \ar{r} \ar{d}{\varphi} & \TCR^-(H\underline{k}) \ar{d}{\varphi} \\
\TPR(H\mfp) \ar{r} & \TPR(H\underline{k})
\end{tikzcd}
\]
are given by $b \mapsto 1 \otimes b$. More generally, the structure of $\pi_{*}^{C_2}\TCR^-(H\underline{k})$ as a module over $\pi_0^{C_2} \TCR^-(H\underline{k})$ allows us to reduce the study of $\varphi$ to its behavior on $\pi_0^{C_2}$. Indeed, an element $a \otimes b \in W(k) \otimes_{\ZZ_p} \pi_{*}^{C_2}\TCR^-(H\underline{k})$ can be expressed as $a \cdot (1 \otimes b)$, where $a \in W(k) \cong \pi_0^{C_2}\TCR^-(H\underline{k})$. 

Note that the values of $\pi_0^{(-)}\TCR^-(H\underline{k})$ on $C_2/C_2$ and $C_2/e$ are both $W(k)$. Moreover, the restriction map is the identity; this can be seen by considering the composite
$$\TCR^-(H\underline{k}) \to \THR(H\underline{k}) \to H\underline{k}.$$
Altogether, this implies that $\pi_0^{(-)}\TCR^-(H\underline{k}) \cong \underline{W(k)}$ as a Green functor. Similarly, we have that $\pi_0^{(-)}\TPR(H\underline{k}) \cong \underline{W(k)}$. 

We are thus reduced to considering the map $\pi^{(-)}_0 \varphi : \underline{W(k)} \to \underline{W(k)}$. This is the Witt vector Frobenius at the underlying level, as shown in \cite{KNYouTube}. Since the restriction map is the identity, it follows that $\pi^{C_2}_0 \varphi$ is also the Witt vector Frobenius, which completes the proof.
\end{enumerate}
\end{proof}


\begin{prp}
There is an isomorphism of Mackey functors
$$\pi_*^{(-)}\TCR(H\underline{k}) \cong \pi_*^{(-)}(H\underline{\ker(1-F)}) \oplus \pi_*^{(-)}(\Sigma^{-1}H\underline{\coker(1-F)}).$$
\end{prp}

\begin{proof}
We use Propositions~\ref{prp:tcrminus_and_tpr} and \ref{prp:effect_of_can_and_phi} together with the long exact sequence in Mackey functor homotopy groups associated to the fiber sequence
$$\TCR(H\underline{k}) \to \TCR^-(H\underline{k}) \xrightarrow{\can - \varphi} \TPR(H\underline{k}).$$
The argument is analogous to the case $k=\FF_p$, with the key point being that the middle map in 
$$\cdots \to \pi_0^{(-)}\TCR(H\underline{k}) \to \pi_0^{(-)} \TCR^-(H\underline{k}) \xrightarrow{\can - \varphi} \pi_0^{(-)} \TPR(H\underline{k}) \to \pi_{-1}^{(-)} \TCR(H\underline{k}) \to \cdots$$
is now identified with
$$1-F : \underline{W(k)} \to \underline{W(k)}.$$
Also note that when $p=2$, $\varphi$ now acts by the Frobenius on the analogous red classes (which are now copies of $k$), and since the Frobenius map is an isomorphism by the perfectness assumption on $k$, the red classes continue to not contribute to the homotopy of $\TCR(H \underline{k})$.
\end{proof}

As for $\FF_p$, this isomorphism on homotopy Mackey functors promotes to an equivalence of $C_2$-spectra by observing that the composite
$$H\underline{\ZZ_p} \to H\underline{\ker(1-F)} \simeq \tau_{\geq 0} \TCR(H\underline{k}) \to \TCR(H\underline{k})$$
equips $\TCR(H\underline{k})$ with an $H\underline{\ZZ_p}$-module structure. It therefore splits as follows:

\begin{thm} \label{thm:extn_perfect_algebras}
Let $p$ be a prime and $k$ a perfect $\FF_p$-algebra. There is an equivalence of $C_2$-spectra
$$\TCR(H\underline{k}) \simeq H\underline{\ker(1-F)} \oplus \Sigma^{-1} H\underline{\coker(1-F)}.$$
\end{thm}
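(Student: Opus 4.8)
The plan is to mimic the structure of the computation for $H\underline{\FF_p}$ in \cref{SS:HFp2} (and its odd-primary variant), upgrading each ingredient from $\FF_p$-coefficients to $k$-coefficients by flat base change along $\FF_p \to k$ (respectively $\ZZ_p \to W(k)$). The essential observation, already recorded in the excerpt, is that $\THR(H\underline{k}) \simeq H\underline{k} \otimes_{H\underline{\FF_p}} \THR(H\underline{\FF_p}) \simeq \bigoplus_{n \geq 0} \Sigma^{2n,n} H\underline{k}$, so the $x$-adic filtration of $\THR(H\underline{\FF_p})$ base-changes to an $x$-adic filtration of $\THR(H\underline{k})$ whose associated graded pieces are $\Sigma^{2n,n} H\underline{k}$, each with trivial $C_2$-parametrized $S^1$-action by \cref{prp:trivial_action}. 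This yields strongly convergent multiplicative spectral sequences computing $\pi^{C_2}_{**} \TCR^-(H\underline{k})$ and $\pi^{C_2}_{**} \TPR(H\underline{k})$, whose $E_2$-pages are the $k$-linear (resp. $W(k)$-after-extension) base change of the ones in \cref{Fig:HFPSSE2} and \cref{Fig:TSSE2}.

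The first main step is \cref{prp:tcrminus_and_tpr}: identifying $\pi^{C_2}_* \TCR^-(H\underline{k}) \cong W(k) \otimes_{\ZZ_p} \pi^{C_2}_* \TCR^-(H\underline{\FF_p})$ and similarly for $\TPR$. For $p$ odd this is immediate since the spectral sequences collapse at $E_2$ for degree reasons, as noted in the remark closing \cref{SS:HFp2}. For $p = 2$ one must check that the differential $d_3(\tau) = u\rho x$ of \cref{Prop:Differentials} persists and extends $k$-linearly: naturality in the coefficient ring gives $d_3(\tau) = u \rho x$ in the $H\underline{k}$-spectral sequence, and since the copy of $k$ in tridegree $(0,0,0)$ consists of permanent cycles for degree reasons, the Leibniz rule forces $d_3(z\tau) = z u \rho x$ for all $z \in k$; the spectral sequence then collapses at $E_4$ exactly as before. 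The multiplicative extensions (copies of $\ZZ_p$ becoming copies of $W(k)$) are handled by the usual argument: $\pi^{C_2}_0$ of each is $p$-torsion-free, $p$-complete, with mod-$p$ reduction $k$, hence is $W(k)$.

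The second main step is the identification of the canonical and Frobenius maps $\can, \varphi \colon \TCR^-(H\underline{k}) \to \TPR(H\underline{k})$ in \cref{prp:effect_of_can_and_phi}. The canonical map is $W(k)$-linear and given by the same formulas as over $\FF_p$ (inspection of the spectral sequences as in \cref{Prop:can}). For $\varphi = \varphi_p^{h_{C_2} S^1}$, one reduces to its behavior on $\pi^{C_2}_0 \cong W(k)$ using the module structure and the commuting square with the $H\underline{\FF_p}$-level Frobenius; there $\pi^{(-)}_0 \TCR^-(H\underline{k}) \cong \underline{W(k)}$ with restriction the identity (seen via $\TCR^- \to \THR \to H\underline{k}$), and by \cite{KNYouTube} the underlying map $\pi_0 \varphi$ is the Witt vector Frobenius, so $\pi^{C_2}_0 \varphi$ is too. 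Combined with the known behavior on the higher (black/green/red) classes from \cref{Prop:phi} — in particular that for $p=2$ the red classes are now copies of $k$ on which $\varphi$ acts by the Frobenius, which is an isomorphism since $k$ is perfect — this determines $\can - \varphi$ completely. Running the long exact sequence in Mackey-functor homotopy groups associated to \eqref{Eqn:FiberSeq}, the middle map on $\pi^{(-)}_0$ becomes $1 - F \colon \underline{W(k)} \to \underline{W(k)}$, and a routine kernel/cokernel computation (identical in form to the $k = \FF_p$ case, noting the red classes do not contribute by perfectness) yields $\pi^{(-)}_*\TCR(H\underline{k}) \cong \pi^{(-)}_*(H\underline{\ker(1-F)}) \oplus \pi^{(-)}_*(\Sigma^{-1} H\underline{\coker(1-F)})$. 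Finally, the composite $H\underline{\ZZ_p} \to H\underline{\ker(1-F)} \simeq \tau_{\geq 0}\TCR(H\underline{k}) \to \TCR(H\underline{k})$ makes $\TCR(H\underline{k})$ an $H\underline{\ZZ_p}$-module, which therefore splits as a wedge of Eilenberg--MacLane $C_2$-spectra, giving the stated equivalence.

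I expect the main obstacle to be the $p=2$ bookkeeping in the second step: tracking that $\can - \varphi$ on the full $RO(C_2)$-graded (or at least integer-graded) homotopy has the same kernel and cokernel structure as over $\FF_2$ after the $W(k)$-base change, including the vanishing contribution of the red classes. This is where the perfectness hypothesis on $k$ is genuinely used (to invert the Frobenius on the red classes) and where one must be careful that the base change along $\ZZ_2 \to W(k)$ interacts correctly with the multiplication-by-$2$ extensions detected by $ux$; everything else is a formal consequence of flat base change plus naturality.
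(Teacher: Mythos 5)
Your proposal is correct and follows essentially the same route as the paper's proof: base-changing the $x$-adic spectral sequences from $\FF_p$ to $k$, checking the $d_3$-differential extends $k$-linearly for $p=2$, identifying the $W(k)$-extensions via the $p$-torsion-free/$p$-complete/mod-$p$-reduction argument, reducing $\varphi$ to the Witt vector Frobenius on $\pi_0^{(-)}$, killing the red classes by perfectness, and splitting via the $H\underline{\ZZ_p}$-module structure. No substantive differences to report.
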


\appendix

\section{On the geometric fixed points of real topological cyclic homology}\label{SS:GFPF2}

We determined the $C_2$-equivariant homotopy type of $\TCR(H\underline{\FF_2})$ above using a fiber sequence formula involving parametrized homotopy fixed points and Tate constructions. From this identification, one can then determine the geometric fixed points. In this appendix, we present an alternative computation of the $C_2$-geometric fixed points of $\TCR(H\underline{\FF_2})$. Our computation recovers \cite[Thm.~C]{DMP21} in the case $k=\FF_2$, which was computed using genuine real cyclotomic methods. The computations in this section are not necessary for any results in the previous sections. 

We begin by recalling our expected answer, cf. \cite{BehrensShah}:

\begin{prp}
We have
$$\pi_*(H\underline{\ZZ_2} \oplus \Sigma^{-1} H\underline{\ZZ_2})^{\phi C_2} \cong \FF_2[\tau^2] \oplus \Sigma^{-1} \FF_2[\tau^2],$$
where $|\tau^2| = 2$. 
\end{prp}

Given a real cyclotomic spectrum $X$, let $\psi: X^{\phi C_2} \to \bigoplus_{\mu_2} X^{\phi C_2 t \mu_2}$ be the $\mu_2$-equivariant map given by the projection of $(\varphi_2)^{\phi C_2}$ with respect to the description \eqref{eq:geometric_fixed_points_formula} of $(X^{t_{C_2} \mu_2})^{\phi C_2}$. We will compute the $C_2$-geometric fixed points of $\TCR(H\underline{\FF_2})$ using the following theorem, which will appear in forthcoming work of Harpaz, Nikolaus, and the second author \cite{HarpazNikolausShah}:

\begin{thm} \label{thm:HNS_equalizer_formula}
Let $X$ be a real cyclotomic spectrum that is underlying bounded-below. There is an equivalence of spectra
$$\TCR(X)^{\phi \mu_2} \simeq \fib ( X^{\phi C_2 h\mu_2} \xrightarrow{\psi^{h\mu_2} - \can} X^{\phi C_2 t\mu_2}).$$
\end{thm}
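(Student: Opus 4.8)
\textbf{Proof proposal for \cref{thm:HNS_equalizer_formula}.}

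The plan is to reduce the formula for $\TCR(X)^{\phi \mu_2}$ to a statement about the $C_2$-mapping spectrum out of the unit in $\RCycSp$ (or $\RCycSp_2$, since for $X$ underlying bounded-below we may work $2$-typically by \cref{rem:padic_equivalence} together with \cref{cor:ParamTatePcomplete}). By \cref{prp:C2representabilityOfTC}, $\underline{\TCR}(-,2)$ is $C_2$-corepresentable by the unit $\mathbf{1} = \mr{triv}_{\RR,2}(S^0)$ of $\RCycSp_2$, so $\TCR(X)$ as a $C_2$-spectrum is $\ul{\map}_{\RCycSp_2}(\mathbf{1}, X)$, and hence $\TCR(X)^{\phi \mu_2} = \TCR(X)^{\phi C_2}$ is computed by applying $\Phi^{C_2}$ to this mapping $C_2$-spectrum. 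Using the $C_2$-lax equalizer presentation of $\underline{\RCycSp}_2$ (\cref{dfn:C2CategoryOfCyclotomicSpectra}) and the formula for $C_2$-mapping spectra in a $C_2$-lax equalizer (\cref{lem:EqualizerMappingSpaces}) applied with $F = \id$, $F' = \ul{t}_{C_2}\mu_2$, we obtain $\TCR(X)$ as the fiber of a map of $C_2$-spectra $X^{\underline{h}_{C_2}\mu_{2^\infty}} \to (X^{\underline{t}_{C_2}\mu_2})^{\underline{h}_{C_2}\mu_{2^\infty}}$ (this is precisely the parametrized refinement of \cref{prp:TCRfiberSequence}). Applying $\Phi^{C_2}$ and using that $\Phi^{C_2}$ is exact, it then suffices to (i) identify $\Phi^{C_2}$ of the parametrized homotopy fixed points $X^{\underline{h}_{C_2}\mu_{2^\infty}}$, and (ii) identify $\Phi^{C_2}$ of $(X^{\underline{t}_{C_2}\mu_2})^{\underline{h}_{C_2}\mu_{2^\infty}}$ together with the induced map.

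For step (i), the key input is \cref{rem:UpperShriekPreservesParametrizedLimits}: when the underlying spectrum of $X$ vanishes the $C_2$-geometric fixed points commute with parametrized $\mu_{2^\infty}$-homotopy fixed points, giving $\Phi^{C_2}(X^{\underline{h}_{C_2}\mu_{2^\infty}}) \simeq (X^{\phi C_2})^{h\mu_{2^\infty}}$. To handle general underlying bounded-below $X$ one splits off the Borel-torsion part via the recollement $(\underline{\Sp}^{hC_2}, \underline{\Sp}^{\Phi C_2})$ and notes that the Borel part contributes nothing to geometric fixed points; more precisely, one uses the recollement fiber sequence $i^! \to i^* \to i^* j_* j^*$ and that $X^{\underline{h}_{C_2}\mu_{2^\infty}}$ has $i^! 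X = \fib(X^{\phi C_2} \to (X^e)^{tC_2})$, combined with \cref{lem:TateConvergence} to reduce to the Eilenberg--MacLane case where everything is explicit. Since $\mu_{2^\infty}$ has $\mu_2$ as its unique subgroup of order $2$ and $B\mu_{2^\infty} \simeq \colim_n B\mu_{2^n}$, the residual structure on $X^{\phi C_2}$ is a $\mu_2$-action and the computation yields $(X^{\phi C_2})^{h\mu_{2^\infty}} \simeq X^{\phi C_2 h\mu_2}$; the collapse from $\mu_{2^\infty}$ to $\mu_2$ here is exactly the phenomenon already exploited in case (2) of the proof of \cref{lem:LimitUnderlyingZero}, where the maps $B^t_{C_2}\mu_{2^n} \to B^t_{C_2}\mu_{2^{n+1}}$ identify the two orbits $D_{2^{n+1}}/\mu_{2^n}$ and $D_{2^{n+1}}/\Delta$.

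For step (ii), one combines the explicit formula for $(X^{t_{C_2}\mu_2})^{\phi C_2}$ from \cite[Exm.~2.51]{QS21a} (recorded in the excerpt as \eqref{eq:geometric_fixed_points_formula}) with \cref{lem:BoundedBelowEquivalence}, which gives $X^{t_{C_2}\mu_{2^\infty}} \simeq (X^{t_{C_2}\mu_2})^{h_{C_2}\mu_{2^\infty}}$ for underlying bounded-below $X$, and then takes $\Phi^{C_2}$ as in step (i). The map $\psi$ in the statement is, by definition, the component of $(\varphi_2)^{\phi C_2}$ landing in the $\bigoplus_{\mu_2} X^{\phi C_2 t\mu_2}$ summand of \eqref{eq:geometric_fixed_points_formula}; one checks that after passing to $\mu_{2^\infty}$-homotopy fixed points the other summands of \eqref{eq:geometric_fixed_points_formula} (the pieces involving $(X^1)^{\tau D_4}$, which are built from the Borel-torsion part) become irrelevant, so that the map $\varphi_2^{\underline{h}_{C_2}\mu_{2^\infty}} - \can$ on $C_2$-geometric fixed points becomes precisely $\psi^{h\mu_2} - \can\colon X^{\phi C_2 h\mu_2} \to X^{\phi C_2 t\mu_2}$. \textbf{The main obstacle} is precisely this last bookkeeping: carefully tracking which summands of the formula \eqref{eq:geometric_fixed_points_formula} for $(X^{\underline t_{C_2}\mu_2})^{\phi C_2}$ survive after taking $\mu_{2^\infty}$-homotopy fixed points and verifying that the surviving part of the map agrees on the nose with $\psi^{h\mu_2} - \can$ — this requires unwinding the definition of the parametrized Tate construction as the gluing functor of the $\Gamma_{\mu_2}$-recollement and the identification of the canonical map $\can$ through the chain of equivalences $B^t_{C_2}(\mu_{2^\infty}/\mu_2) \simeq B^t_{C_2}\mu_{2^\infty}$. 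Everything else is a formal consequence of corepresentability, the lax equalizer mapping-spectrum formula, and the dihedral Tate orbit lemma (\cref{lem:dihedralTOLEven}) which underpins \cref{lem:BoundedBelowEquivalence}.
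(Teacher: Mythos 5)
First, a point of order: the paper does not actually prove \cref{thm:HNS_equalizer_formula}. It is stated in the appendix as a quotation from the forthcoming work \cite{HarpazNikolausShah}, so there is no internal proof to compare your argument against, and your proposal has to stand on its own. As written, it has a genuine gap in step (i), which then propagates to step (ii).

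You assert that for underlying bounded-below $X$ one has $\Phi^{C_2}(X^{h_{C_2}\mu_{2^\infty}}) \simeq X^{\phi C_2 h\mu_2}$, justified by splitting off the Borel-torsion part of $X$ and claiming it ``contributes nothing to geometric fixed points.'' This is false. What \cref{rem:UpperShriekPreservesParametrizedLimits} gives is that $i^!$ --- not $i^* = \Phi^{C_2}$ --- commutes with parametrized limits; the two differ by the recollement fiber sequence $i^! \to i^* \to i^*j_*j^*$, whose third term applied to $X^{h_{C_2}\mu_{2^\infty}}$ is $((X^e)^{h\mu_{2^\infty}})^{tC_2}$. Concretely, for $Y = j_!E$ Borel-torsion one has $Y^{\phi C_2 h\mu_2} = 0$, while $\Phi^{C_2}(Y^{h_{C_2}\mu_{2^\infty}})$ sits in a fiber sequence $(\Sigma^{-1}E^{tC_2})^{h\mu_2} \to \Phi^{C_2}(Y^{h_{C_2}\mu_{2^\infty}}) \to (E^{h\mu_{2^\infty}})^{tC_2}$ with generically nonvanishing outer terms; reducing to Eilenberg--MacLane objects does not help, since the failure already occurs there. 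The paper itself is careful on exactly this point: in the proof of \cref{Prop:Differentials} the map $\pi_*^{C_2}(\THR(H\mft)^{h_{C_2}S^1}) \to \pi_*(\THR(H\mft)^{\phi C_2 h\mu_2})$ is only a canonical comparison map, not an equivalence. The theorem is nonetheless expected to hold, because the ``Borel'' correction terms appear on both sides: $\Phi^{C_2}\TCR(X,2)$ sits over $(\TC(X^e,2))^{tC_2}$ via $i^!\TCR(X,2) \to \Phi^{C_2}\TCR(X,2) \to (\TCR(X,2)^e)^{tC_2}$, while $\fib(X^{\phi C_2 h\mu_2} \to X^{\phi C_2 t\mu_2})$ carries a matching decomposition coming from $(i^!X)^{h\mu_2} \to X^{\phi C_2 h\mu_2} \to ((X^e)^{tC_2})^{h\mu_2}$ together with a Tate/homotopy-fixed-point interchange. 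Matching these two three-term decompositions (and verifying the interchange equivalences, which is where bounded-belowness and the Tate orbit lemma genuinely enter) is the actual content of the proof, and it is precisely what your proposal elides by asserting the term-by-term identifications outright. A secondary issue of the same flavor: $\Phi^{C_2}$ is not known to commute with the infinite product over primes in the integral fiber sequence, so the reduction to the $2$-typical theory also needs justification; the pullback square \eqref{eq:pullback_integralrcyc} is a finite limit and so survives $\Phi^{C_2}$, but its lower-left and lower-right corners are still infinite products.
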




Using the formula \eqref{eq:THR_geometric_fixed_points} for $\THR(R)^{\phi C_2}$, we may identify the $\mu_2$-action on $\THR(R)^{\phi C_2}$ as follows:

\begin{prp}\label{Prop:Swap}
The $\mu_2$-action on $\THR(R)^{\phi C_2} \simeq R^{\phi C_2} \otimes_{R^e} R^{\phi C_2}$ is given by swapping factors. 
\end{prp}
\begin{proof}
This holds since the residual $\mu_2$-action is determined by the rotation action of $\mu_2$ on $S^\sigma$ that swaps the points of $S^0 = \{\pm 1\}$.
\end{proof}

The geometric fixed points of $\THR(H\underline{\FF_2})$ were analyzed by Dotto--Moi--Patchkoria--Reeh:

\begin{prp} \label{prp:THRF2_geometric_fixed_points}
There is an isomorphism
$$\pi_*\THR(H\mft)^{\phi C_2} \cong \FF_2[w_1,w_2]$$
where $|w_1| = |w_2| = 1$. The residual $\mu_2$-action on $\pi_*\THR(H\mft)^{\phi C_2} \cong \FF_2[w_1,w_2]$ swaps $w_1$ and $w_2$. 
\end{prp}

\begin{proof}
The equivalence of ring spectra
$$\THR(H\underline{\FF}_2)^{\phi C_2} \simeq H\underline{\FF}_2^{\phi C_2} \otimes_{H\FF_2} H\underline{\FF}_2^{\phi C_2}$$
gives the claimed computation of homotopy groups \cite[Prop. 5.19]{DMPR17}. The identification of the residual action follows from inspection of the proof of \cite[Prop. 5.19]{DMPR17} along with \cref{Prop:Swap}. 
\end{proof}

We are now ready to compute the $\mu_2$-homotopy fixed points and Tate construction. 

\begin{prp} \label{prp:superfluous_homotopy_fixed_points_computation}
There is an isomorphism
$$
\pi_*\THR(H\mft)^{\phi C_2 h \mu_2} \cong \FF_2[w,y] \oplus \bigoplus_{0 \leq a < b < \infty} \FF_2\{w_1^aw_2^b + w_1^bw_2^a\},
$$
where $|w| = 2$, $|y| = -1$, and $|w_1^a w_2^b + w_1^b w_2^a| = a+b$. 
\end{prp}

\begin{proof}
We consider the homotopy fixed points spectral sequence
\begin{equation}\label{Eqn:HFPSSTHR}
E_2^{s,t} = H^s(\mu_2; \pi_t \THR(H\mft)^{\phi C_2}) \Rightarrow \pi_{t-s} \THR(H\mft)^{\phi C_2 h \mu_2}.
\end{equation}
For each $t \geq 0$, we may decompose the $\ZZ[\mu_2]$-module $\pi_t \THR(H\mft)^{\phi C_2}$ as follows. We have
$$\pi_t \THR(H\mft)^{\phi C_2} \cong \bigoplus_{a+b=t} \FF_2\{w_1^a w_2^b\}$$
where $\mu_2$ sends $w_1^a w_2^b$ to $w_1^b w_2^a$. Therefore as a $\ZZ[\mu_2]$-module, we have
\begin{equation}\label{Eqn:THRmu2Mod}
\pi_t \THR(H\mft)^{\phi C_2} \cong 
\begin{cases}
\bigoplus_{a+b=t, \ 0 \leq a< b} \FF_2[\mu_2]\{w_1^aw_2^b + w_1^b w_2^a\} \quad & \text{ if } t \text{ is odd,} \\
\bigoplus_{a+b=t, \ 0 \leq a < b} \FF_2[\mu_2]\{w_1^aw_2^b + w_1^b w_2^a\} \oplus \FF_2\{w^{t/2}\} \quad & \text{ if } t \text{ is even.}
\end{cases}
\end{equation}
In the above expression, $w^{t/2} = w_1^{t/2}w_2^{t/2}$ is the $\mu_2$-fixed generator when $t$ is even. Therefore
$$H^*(\mu_2, \pi_* \THR(H\mft)^{\phi C_2}) \cong \FF_2[y,w] \oplus \bigoplus_{0 \leq a < b < \infty} \FF_2\{w_1^a w_2^b + w_1^b w_2^a\},$$
where $|y| = (1,0)$, $|w| = (0,2)$, and $|w_1^a w_2^b + w_1^b w_2^a| = (0,a+b)$. 

We claim the spectral sequence collapses at $E_2$. To see this, we use the augmentation map $\THR(H\mft) \to H\mft$. The homotopy fixed points spectral sequence for $H\mft^{\phi C_2}$ has the form
$$E_2^{s,t} = H^s(\mu_2; \pi_t H\mft^{\phi C_2}) \Rightarrow \pi_{t-s} H\mft^{\phi C_2 h\mu_2}.$$
There is an isomorphism
$$\pi_*H\mft^{\phi C_2} \cong \FF_2[\tau]$$
where $|\tau| = 1$. This is the shadow of a $\mu_2$-equivariant equivalence of spectra
$$H\mft^{\phi C_2} \simeq \bigoplus_{i \geq 0} \Sigma^i H\FF_2,$$
where $\mu_2$ acts trivially on the righthand side. We thus have
$$H^*(\mu_2; \pi_* H\mft^{\phi C_2}) \cong \FF_2[z,\tau]$$
where $|z| = (1,0)$ and $|\tau| = (0,1)$. The homotopy fixed points spectral sequence collapses since 
$$\pi_* HM^{h\mu_2} \cong H^{-*}(\mu_2; M)$$
for any $\ZZ[\mu_2]$-module $M$.

The augmentation map
$$\THR(H\mft)^{\phi C_2} \to H\mft^{\phi C_2}$$
is $\mu_2$-equivariant, where $\mu_2$ acts trivially on $H\mft^{\phi C_2}$, so it induces a map between homotopy fixed points spectral sequences. This map sends the class $y^i \in H^i(\mu_2; \pi_0 \THR(H\mft)^{\phi C_2})$ to $z^i \in H^i(\mu_2; \pi_0 H\mft^{\phi C_2})$. Since $z^i$ survives for all $i \geq 0$, $y^i$ must as well. 

The homotopy fixed points spectral sequence is multiplicative since $\THR(H\mft)^{\phi C_2}$ is a ring spectrum. It follows that $w$ is a permanent cycle, since the only potential target for a nontrivial differential on $w$ is $y$. By the Leibniz rule, we then have that $w^i y^j$ is a permanent cycle for all $i,j \geq 0$. Finally, the class $w_1^a w_2^b + w_1^b w_2^a$ is a permanent cycle for all $0 \leq a < b < \infty$ because the class is simple $y$-torsion while the possible targets of differentials on it are $y$-torsion free. 
\end{proof}

\begin{prp}
There is an isomorphism
$$\pi_* \THR(H\mft)^{\phi C_2 t \mu_2} \cong \FF_2[w, y^{\pm 1}].$$
\end{prp}

\begin{proof}
Consider the Tate spectral sequence
$$E_2^{s,t} = \widehat{H}^s(\mu_2;\pi_t \THR(H\mft)^{\phi C_2}) \Rightarrow \pi_{t-s} \THR(H\mft)^{\phi C_2 t \mu_2}.$$
In our description of $\pi_t \THR(H\mft)^{\phi C_2}$ as a $\ZZ[\mu_2]$-module \eqref{Eqn:THRmu2Mod}, the generators $w_1^a w_2^b + w_1^b w_2^a$ are in the image of the additive norm map from $\mu_2$-coinvariants to $\mu_2$-invariants. Therefore in Tate cohomology, we have
$$E_2^{**} = \widehat{H}^s(\mu_2;\pi_t \THR(H\mft)^{\phi C_2}) \cong \FF_2[w,y^{\pm 1}]$$
where $|w| = (0,2)$ and $|y| = (1,0)$. 

As in the homotopy fixed points computation, the spectral sequence can be shown to collapse by naturality with respect to the augmentation $\THR(H\mft) \to H\mft$. The result follows. 
\end{proof}

We have therefore described the relevant homotopy fixed points and Tate construction. It remains to study the maps $\can$ and $\psi^{h\mu_2}$. We begin with the canonical map. 

\begin{lem}\label{Lem:can2}
On homotopy groups, the canonical map
$$\can: \pi_* \THR(H\mft)^{\phi C_2 h \mu_2} \to \pi_* \THR(H\mft)^{\phi C_2 t\mu_2}$$
is determined by multiplicativity and
$$w \mapsto w, \quad y \mapsto y, \quad w_1^a w_2^b + w_1^b w_2^a \mapsto 0.$$
\end{lem}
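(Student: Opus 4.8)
The plan is to use naturality with respect to the augmentation map $\THR(H\mft) \to H\mft$ together with the explicit descriptions of the relevant homotopy fixed points and Tate construction established above. Recall that the canonical map is, by definition, induced by the composite $\THR(H\mft)^{\phi C_2} \xto{\simeq} (\THR(H\mft)^{\phi C_2})^{h(\mu_2/\mu_2)} \to \THR(H\mft)^{\phi C_2 t \mu_2}$, i.e.\ it is the natural map from homotopy fixed points to the Tate construction (after identifying $\mu_2/\mu_2 \cong \mu_2$ via the square map); in particular $\can$ is a ring homomorphism, so it is determined by its values on the multiplicative generators $w$, $y$, and the classes $w_1^a w_2^b + w_1^b w_2^a$.

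First I would observe that both $\pi_* \THR(H\mft)^{\phi C_2 h \mu_2}$ and $\pi_*\THR(H\mft)^{\phi C_2 t\mu_2}$ were computed via spectral sequences whose $E_2$-pages are, respectively, $H^*(\mu_2; \pi_* \THR(H\mft)^{\phi C_2})$ and $\widehat{H}^*(\mu_2; \pi_* \THR(H\mft)^{\phi C_2})$, and that the natural map $H^* \to \widehat{H}^*$ of group cohomology induces a map of spectral sequences converging to $\can$. Since both spectral sequences collapse at $E_2$, it suffices to identify the map $H^*(\mu_2; -) \to \widehat{H}^*(\mu_2; -)$ on the coefficient $\ZZ[\mu_2]$-module $\pi_*\THR(H\mft)^{\phi C_2}$. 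This is a purely algebraic question: the map $H^0 \to \widehat{H}^0$ is the projection of the invariants onto the Tate quotient (invariants modulo the image of the norm), while in positive cohomological degree $H^s \xto{\cong} \widehat{H}^s$ for $s \geq 1$ (and the Tate groups are periodic). Concretely, writing the $\ZZ[\mu_2]$-module as in \eqref{Eqn:THRmu2Mod}, the free summands $\FF_2[\mu_2]\{w_1^aw_2^b + w_1^bw_2^a\}$ contribute only to $\widehat{H}^0$ and $H^0$, and their $H^0$-classes (the norm images $w_1^a w_2^b + w_1^b w_2^a$) lie precisely in the image of the norm, hence map to $0$ in $\widehat{H}^0$. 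The trivial summands $\FF_2\{w^{t/2}\}$ contribute $\FF_2[y,w]$ to $H^*$ and $\FF_2[y^{\pm 1}, w]$ to $\widehat{H}^*$, with the map $H^* \to \widehat{H}^*$ the evident inclusion sending $w \mapsto w$ and $y \mapsto y$. This gives exactly the asserted formula.

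To make the identification of $y \mapsto y$ and $w \mapsto w$ rigorous (rather than just up to reindexing), I would invoke naturality with respect to the augmentation $\THR(H\mft) \to H\mft$: as in the proofs of \cref{prp:superfluous_homotopy_fixed_points_computation} and the Tate computation, the induced maps on homotopy fixed points and Tate constructions carry $y$ to the class $z \in \pi_* H\mft^{\phi C_2 h\mu_2}$ and $w$ to the $\mu_2$-invariant class $\tau^2$, and these are compatible with the canonical maps on both sides; since $\can$ for $H\mft^{\phi C_2}$ (with trivial residual action) is the standard localization map $H^*(\mu_2; \FF_2[\tau]) \to \widehat{H}^*(\mu_2; \FF_2[\tau])$, which sends $z \mapsto z$ and $\tau \mapsto \tau$, the corresponding behavior $y \mapsto y$, $w \mapsto w$ follows for $\THR(H\mft)$. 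The classes $w_1^a w_2^b + w_1^b w_2^a$ are $y$-torsion (indeed simple $y$-torsion) in $\pi_*\THR(H\mft)^{\phi C_2 h\mu_2}$ while $\pi_*\THR(H\mft)^{\phi C_2 t\mu_2} = \FF_2[w, y^{\pm 1}]$ is $y$-torsion free, so they must map to $0$; this reproves the last clause cleanly without reference to the norm filtration.

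The main obstacle is bookkeeping rather than anything conceptual: one must be careful that the multiplicative structure on the Tate spectral sequence genuinely recovers the ring structure on $\pi_*\THR(H\mft)^{\phi C_2 t\mu_2}$ used to phrase ``determined by multiplicativity,'' and that the $E_2 = E_\infty$ collapse on both sides means the map on associated graded pieces literally is $\can$ with no hidden extensions interfering. Since all the groups in sight are $\FF_2$-vector spaces, there are no additive extension problems, and the multiplicativity of both spectral sequences (each $\THR(H\mft)^{\phi C_2}$-module/ring spectral sequence) ensures that the map of spectral sequences is multiplicative; hence determining $\can$ on $w$, $y$, and the norm classes determines it completely.
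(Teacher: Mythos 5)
Your proposal is correct and takes essentially the same approach as the paper: the paper's proof is the one-line assertion that the lemma "follows immediately from our analysis of the homotopy fixed points and Tate spectral sequences above," and your argument simply spells this out — the canonical map induces the natural map $H^*(\mu_2;-) \to \widehat{H}^*(\mu_2;-)$ on the collapsing $E_2$-pages, the induced summands die since their invariants are norm images, the trivial summands give $w \mapsto w$, $y \mapsto y$, and there are no extension issues over $\FF_2$. Your supplementary checks (naturality under the augmentation to pin down $w$ and $y$, and the $y$-torsion argument for the vanishing on the classes $w_1^a w_2^b + w_1^b w_2^a$) are consistent refinements of the same computation rather than a different route.
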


\begin{proof}
This follows immediately from our analysis of the homotopy fixed points and Tate spectral sequences above. 
\end{proof}

The map $\psi^{h\mu_2}$ is more delicate. We need the following lemma:

\begin{lem}\label{Lem:PsiFactorization}
Let $X$ be a real cyclotomic spectrum. The map $\psi^{h\mu_2} : X^{\phi C_2 h\mu_2} \to X^{\phi C_2 t\mu_2}$ factors as
\[
X^{\phi C_2 h\mu_2} \to X^{\phi C_2} \xrightarrow{\bar{\psi}} X^{\phi C_2 t \mu_2}.
\]
\end{lem}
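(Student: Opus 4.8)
\textbf{Plan for the proof of \cref{Lem:PsiFactorization}.}

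The claim is that the parametrized cyclotomic Frobenius, after taking $\mu_2$-geometric fixed points and $\mu_2$-homotopy fixed points, factors through the ``underlying'' geometric fixed points $X^{\phi C_2}$ via a map $\bar\psi$ to $X^{\phi C_2 t\mu_2}$ — in other words, $\psi^{h\mu_2}$ only depends on $X^{\phi C_2}$ as an object with residual $\mu_2$-action, not on the full $\mu_2$-homotopy fixed point data. The plan is to exhibit $\psi$ itself as arising from a $\mu_2$-equivariant map $\bar\psi \colon X^{\phi C_2} \to X^{\phi C_2 t\mu_2}$ at the level of $C_2$-functors on $B^t_{C_2}\mu_2$ (or rather on the relevant residual $\mu_2$-equivariant objects), and then simply apply $(-)^{h\mu_2}$, using that the unit map $X^{\phi C_2 h\mu_2} \to X^{\phi C_2}$ of the recollement (or rather the composite through homotopy fixed points) is the canonical comparison.

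First I would recall how $\psi$ is defined: by construction (cf. the discussion around \cref{thm:HNS_equalizer_formula} and the formula \eqref{eq:geometric_fixed_points_formula}), $\psi \colon X^{\phi C_2}\to \bigoplus_{\mu_2} X^{\phi C_2 t\mu_2}$ is obtained by taking $C_2$-geometric fixed points of the real cyclotomic structure map $\varphi_2 \colon X \to X^{t_{C_2}\mu_2}$ and projecting onto the summand of $(X^{t_{C_2}\mu_2})^{\phi C_2}$ described in \cref{eq:geometric_fixed_points_formula} — here I use the identification from \cite[Exm.~2.51]{QS21a} with $X$ restricted from $B^t_{C_2}\mu_{p^2}$, so that $X^{\phi C_2}\simeq X^{\phi\Delta}$ and the relevant piece is $\bigoplus_{\mu_2}(X^{\phi C_2})^{t\mu_2}$. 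The map $\bar\psi$ is then the adjoint: the $\mu_2$-induction $\bigoplus_{\mu_2}$ is left adjoint to restriction, so a $\mu_2$-equivariant map $X^{\phi C_2}\to \bigoplus_{\mu_2}(X^{\phi C_2})^{t\mu_2}$ is the same datum as a (nonequivariant, but $\mu_2$-compatible) map $X^{\phi C_2}\to (X^{\phi C_2})^{t\mu_2}$, and since $X^{\phi C_2}$ already carries its residual $\mu_2$-action this adjoint $\bar\psi$ is a map of spectra that is moreover compatible with the diagonal $\mu_2$-action on source and on the Tate target. Concretely $\bar\psi$ should be identified with the composite $X^{\phi C_2}\xrightarrow{\Delta} (X^{\phi C_2 \otimes \mu_2})^{t\mu_2}\to (X^{\phi C_2})^{t\mu_2}$ coming from the (ordinary, nonparametrized) Tate diagonal together with the multiplication, but I would not need this explicit description — only that $\psi$ is induced from $\bar\psi$ under the induction–restriction adjunction.

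Granting that $\psi \simeq \mathrm{ind}_{\{e\}}^{\mu_2}\circ$ (something) — more precisely that the composite $X^{\phi C_2}\xrightarrow{\psi}\bigoplus_{\mu_2}X^{\phi C_2 t\mu_2}$ is $\mu_2$-equivariantly the unit-of-induction applied to $\bar\psi\colon X^{\phi C_2}\to X^{\phi C_2 t\mu_2}$ — one takes $(-)^{h\mu_2}$. Using $(\bigoplus_{\mu_2}E)^{h\mu_2}\simeq E$ naturally for any spectrum $E$ (the induced object has trivial Tate-type homotopy fixed points correcting it to $E$; equivalently $\mathrm{ind}_e^{\mu_2}$ is both left and right adjoint to $\mathrm{res}$, so $(\mathrm{ind}\, E)^{h\mu_2}\simeq \mathrm{res}(\mathrm{ind}\, E)\simeq$ — wait, more carefully: $(\mathrm{ind}_e^{\mu_2} E)^{h\mu_2} = \mathrm{Map}^{\mu_2}(S^0, \mathrm{ind}\,E)\simeq \mathrm{Map}(S^0, E) = E$ by the induction-coinduction adjunction, since $\mu_2$ is finite), we see that $\psi^{h\mu_2}$ is identified with $\bar\psi$ precomposed with the natural map $X^{\phi C_2 h\mu_2}\to X^{\phi C_2}$ obtained from the unit $X^{\phi C_2 h\mu_2} = \mathrm{Map}^{\mu_2}(S^0, X^{\phi C_2})\to X^{\phi C_2}$. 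This exhibits the desired factorization $X^{\phi C_2 h\mu_2}\to X^{\phi C_2}\xrightarrow{\bar\psi} X^{\phi C_2 t\mu_2}$.

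\textbf{Main obstacle.} The one genuinely nontrivial point is the claim that $\psi$ is induced (as a $\mu_2$-equivariant map) from a single map $\bar\psi$ out of $X^{\phi C_2}$; equivalently, that the two summands of $(X^{t_{C_2}\mu_2})^{\phi C_2}$ receiving $\varphi_2^{\phi C_2}$ assemble into a $\mu_2$-induced object and the map into them is $\mu_2$-equivariantly induced. This requires unwinding the construction of $\varphi_2$ in \cref{sec:thr} (or rather, in the general real cyclotomic setting of \cite{HarpazNikolausShah}) together with the formula \eqref{eq:geometric_fixed_points_formula} and keeping careful track of the residual $\mu_2\cong S^\sigma/\mu_2$-action, using \cref{Prop:Swap} to identify the $\mu_2$-action on $X^{\phi C_2}$ with the swap when $X = \THR(R)$ — though for the general statement one argues purely formally from the universal property of $(-)^{t_{C_2}\mu_2}$ as a gluing functor and the compatibility of the dihedral Tate diagonal with induction recorded in \cref{sec:thr}. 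I expect this bookkeeping, rather than any conceptual difficulty, to be where the real work lies; everything after the factorization of $\psi$ is a formal consequence of adjunction and the finiteness of $\mu_2$.
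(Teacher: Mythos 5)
Your argument is correct and is essentially the paper's own proof: $\psi$ is by construction a $\mu_2$-equivariant map into the (co)induced object $\bigoplus_{\mu_2} X^{\phi C_2 t\mu_2}$, hence adjoint to a nonequivariant map $\bar\psi$, and $\psi$ therefore factors as the unit $X^{\phi C_2}\to\bigoplus_{\mu_2}X^{\phi C_2}$ followed by $\bigoplus_{\mu_2}\bar\psi$, so applying $(-)^{h\mu_2}$ and the equivalence $(\bigoplus_{\mu_2}Y)^{h\mu_2}\simeq Y$ gives the claimed factorization. The ``main obstacle'' you flag is not actually an obstacle: that the relevant summand of $(X^{t_{C_2}\mu_2})^{\phi C_2}$ is $\mu_2$-induced is already part of the formula \eqref{eq:geometric_fixed_points_formula}, and the $\mu_2$-equivariance of $\psi$ is part of its definition, so the adjunction argument applies with no further unwinding of $\varphi_2$.
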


\begin{proof}
More generally, let $X \xrightarrow{f} Y$ be a nonequivariant map with $X \in \Sp^{h\mu_2}$. We may promote $f$ to a $\mu_2$-equivariant map $X \xrightarrow{\tilde{f}} \bigoplus_{\mu_2} Y$ which factors as 
$$X \xrightarrow{\Delta} \bigoplus_{\mu_2} X \xrightarrow{\bigoplus_{\mu_2} f} \bigoplus_{\mu_2} Y.$$
Applying $(-)^{h\mu_2}$, we obtain a factorization
$$X^{h\mu_2} \to X \to Y.$$
The lemma follows by considering the map $\psi: X^{\phi C_2} \to \bigoplus_{\mu_2} X^{\phi C_2 t\mu_2}$. 
\end{proof}

To compute $\psi^{h\mu_2}$, we will actually study the nonequivariant map 
$$\THR(H\mft)^{\phi C_2} \xrightarrow{\bar{\psi}} \THR(H\mft)^{\phi C_2 t \mu_2}$$
of \cref{Lem:PsiFactorization}. 

\begin{prp}
Up to swapping $w_1$ and $w_2$, the map
$$\bar{\psi}: \pi_* \THR(H\mft)^{\phi C_2} \cong \FF_2[w_1,w_2] \to \FF_2[w,y^{\pm 1}] \cong \pi_* \THR(H\mft)^{\phi C_2 t\mu_2}$$
is determined by multiplicativity and
$$w_1 \mapsto wy, \quad w_2 \mapsto y^{-1}.$$
\end{prp}

\begin{proof}
We have a commutative diagram
\[
\begin{tikzcd}
\THR(H\mft)^{\phi C_2} \ar{r}{\bar{\psi}} \ar{d} & \THR(H\mft)^{\phi C_2 t\mu_2} \ar{d} \\
H\mft^{\phi C_2} \ar{r} & H\mft^{\phi C_2 t\mu_2}
\end{tikzcd}
\]
where the vertical maps are induced by the geometric fixed points of the augmentation of $\THR(H\mft)$. Applying homotopy groups, we obtain the commutative diagram
\[
\begin{tikzcd}
\FF_2[w_1,w_2] \ar{r}{\bar{\psi}} \ar{d} & \FF_2[w,y^{\pm 1}] \ar{d} \\
\FF_2[\tau] \ar{r} & \FF_2[\tau,y^{\pm 1}].
\end{tikzcd}
\]
Evidently, the bottom map sends $\tau$ to $\tau$, and the righthand map sends $y$ to $y$. 

We claim that the lefthand map sends $w_1$ and $w_2$ to $\tau$. To see this, consider the diagram
\[
\begin{tikzcd}
(H\mft^{\phi C_2})^{\otimes 2} \ar{r} \ar{dr} & \THR(H\mft)^{\phi C_2} \ar{d} \\
										& H\mft^{\phi C_2}.
\end{tikzcd}
\]
obtained by applying $(- \otimes H\mft)^{\phi C_2}$ to the diagram of $C_2$-spaces
\[
\begin{tikzcd}
S^0 \ar{r} \ar{dr}   & S^\sigma \ar{d} \\
					& *.
\end{tikzcd}
\]
In homotopy groups, the classes $w_1, w_2 \in \pi_1 \THR(R)^{\phi C_2}$ are the image of the generators of $\pi_1H\mft^{\phi C_2}$ in the lefthand side, both of which map to $\tau \in \pi_1 H\mft^{\phi C_2}$. This proves the claim. 

It follows that $w_1w_2$ maps to $\tau^2$ along the lefthand composite. Since $y \mapsto y$, we must have $w \mapsto \tau^2$ in the righthand map and thus $\bar{\psi}(w_1 w_2) = \bar{\psi}(w_1) \bar{\psi}(w_2)= w$. Write
$$\bar{\psi}(w_1) = a_0 y^{-1} + a_1 w y + a_2 w^2 y^3 + a_3 w^3 y^5 + \cdots,$$
$$\bar{\psi}(w_2) = b_0 y^{-1} + b_1 w y + b_2 w^2 y^3 + b_3 w^3 y^5 + \cdots.$$
Since $w$ is not invertible, we must have $a_i = b_i = 0$ for $i \geq 2$, i.e.
$$\bar{\psi}(w_1) = a_0 y^{-1} + a_1 w y, \quad \bar{\psi}(w_2) = b_0 y^{-1} + b_1 w y.$$
Further, we must have
$$a_0 b_0 = 0, \quad a_1 b_1 = 0, \quad a_0 b_1 + a_1 b_0 = 1.$$
It follows that either 
$$a_0 = b_1 = 0 \ \text{ and } \ a_1 = b_0 = 1 \quad \text{ or } \quad  a_1 = b_0 = 0 \ \text{ and } \  a_0 = b_1 = 1.$$
This completes the proof. 
\end{proof}

\begin{cor}\label{Cor:psihmu2}
The map
$$\psi^{h \mu_2} : \pi_* \THR(H\mft)^{\phi C_2 h\mu_2} \to \pi_* \THR(H\mft)^{\phi C_2 t \mu_2}$$
is determined by
$$w \mapsto w, \quad y \mapsto 0, \quad w_1^a w_2^b + w_1^b w_2^a \mapsto w^ay^{a-b} + w^b y^{b-a}.$$
\end{cor}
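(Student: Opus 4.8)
The plan is to compute $\psi^{h\mu_2}$ by combining \cref{Lem:PsiFactorization} with the previous proposition identifying $\bar\psi$. By \cref{Lem:PsiFactorization}, $\psi^{h\mu_2}$ factors as the composite
\[ \THR(H\mft)^{\phi C_2 h\mu_2} \xrightarrow{\mr{ev}} \THR(H\mft)^{\phi C_2} \xrightarrow{\bar\psi} \THR(H\mft)^{\phi C_2 t\mu_2}, \]
where $\mr{ev}$ is the canonical map from homotopy fixed points to the underlying spectrum. So it suffices to understand the effect of $\mr{ev}$ on homotopy groups and then postcompose with the known formula for $\bar\psi$.

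First I would record the effect of $\mr{ev}_*$ using the homotopy fixed points spectral sequence of \cref{prp:superfluous_homotopy_fixed_points_computation}. The evaluation map corresponds to restriction along the filtration degree $s=0$, so on $E_\infty$ it kills all classes in positive cohomological filtration and is the identity on filtration zero. In our notation, the class $y$ has filtration $(1,0)$, hence $\mr{ev}_*(y) = 0$; the class $w$ has filtration $(0,2)$, hence $\mr{ev}_*(w) = w$ (the corresponding element $w = w_1w_2 \in \pi_2\THR(H\mft)^{\phi C_2}$); and each $w_1^a w_2^b + w_1^b w_2^a$ sits in filtration zero, hence $\mr{ev}_*(w_1^a w_2^b + w_1^b w_2^a) = w_1^a w_2^b + w_1^b w_2^a$. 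Then I would postcompose with $\bar\psi$: using multiplicativity and $\bar\psi(w_1) = wy$, $\bar\psi(w_2) = y^{-1}$, we get $\bar\psi(w_1^a w_2^b + w_1^b w_2^a) = (wy)^a (y^{-1})^b + (wy)^b (y^{-1})^a = w^a y^{a-b} + w^b y^{b-a}$. Composing the two gives $\psi^{h\mu_2}(w) = w$, $\psi^{h\mu_2}(y) = \bar\psi(\mr{ev}_*(y)) = \bar\psi(0) = 0$, and $\psi^{h\mu_2}(w_1^a w_2^b + w_1^b w_2^a) = w^a y^{a-b} + w^b y^{b-a}$, as claimed. Multiplicativity of $\psi^{h\mu_2}$ (since it is induced by a ring map) then determines it on all of $\pi_*\THR(H\mft)^{\phi C_2 h\mu_2}$, noting that $\FF_2[w,y]$ together with the $y$-torsion classes generates the target ring.

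The one subtlety — and the main thing to verify carefully rather than the main obstacle — is the consistency of the identifications: the element called $w$ in $\pi_*\THR(H\mft)^{\phi C_2 h\mu_2}$, the element called $w = w_1w_2$ in $\pi_*\THR(H\mft)^{\phi C_2}$, and the element $w \in \FF_2[w,y^{\pm1}] = \pi_*\THR(H\mft)^{\phi C_2 t\mu_2}$ must all be matched compatibly. This is forced by the multiplicative naturality of the homotopy fixed points and Tate spectral sequences with respect to $\mr{ev}$ and $\bar\psi$, together with the fact that in the proof of the previous proposition we already pinned down $\bar\psi(w_1w_2) = w$ on the Tate side. Since all spectral sequences in question collapse at $E_2$ with no multiplicative or additive extension ambiguity in the relevant degrees, there is no room for a discrepancy, and the formula follows.
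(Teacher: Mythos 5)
Your proposal is correct and follows essentially the same route as the paper: the paper's proof likewise factors $\psi^{h\mu_2}$ through $\THR(H\mft)^{\phi C_2}$ via \cref{Lem:PsiFactorization} (which kills $y$, as it lives in positive cohomological filtration) and reads off the images of $w$ and the classes $w_1^a w_2^b + w_1^b w_2^a$ from the formula for $\bar\psi$ in the preceding proposition. Your extra care about matching the three elements named $w$ is a reasonable thing to spell out, but as you note it is forced by multiplicativity and the collapse of the spectral sequences, and the paper leaves it implicit.
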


\begin{proof}
The images of $w$ and $w_1^a w_2^b + w_1^b w_2^a$ follow from the previous discussion. We have $y \mapsto 0$ since $\psi^{h\mu_2}$ factors through the inclusion of fixed points. 
\end{proof}

This allows us to compute the homotopy groups of the geometric fixed points:

\begin{prp}
There is an isomorphism
$$\pi_* \TCR(H\mft)^{\phi C_2} \cong \pi_*(H\underline{\ZZ_2} \oplus \Sigma^{-1} H\underline{\ZZ}_2)^{\phi C_2}.$$
\end{prp}

\begin{proof}
Consider the long exact sequence in homotopy groups associated to the fiber sequence of spectra
$$\TCR(H\mft)^{\phi C_2} \to \THR(H\mft)^{\phi C_2 h\mu_2} \xrightarrow{\psi^{h\mu_2} - \can} \THR(H\mft)^{\phi C_2 t\mu_2}.$$

It is straightforward to check that
$$\ker(\psi^{h\mu_2} - \can) \cong \FF_2[w]$$
using \cref{Lem:can2} and \cref{Cor:psihmu2}. 

We claim that
$$\coker(\psi^{h\mu_2}-\can) \cong \FF_2[w].$$
To see this, observe that each class $w^a y^b$ with $a, b \geq 0$ is in the image of $\can$, while the classes $w^a y^b$ with $b<0$ are not in the image of $\can$. For $b>0$, the class $w^a y^b$ is not in the image of $\psi^{h\mu_2}$, so each class $w^a y^b$, $b>0$, is in the image of $\psi^{h\mu_2} - \can$. For each $d>c \geq 0$, we have
$$\psi^{h\mu_2}(w_1^c w_2^d + w_1^d w_2^c) = w^c y^{c-d} + w^d y^{d-c},$$
with $c-d <0$ and $d-c > 0$. Therefore each class $w^a y^b$, $b<0$, lies in the image of $\psi^{h\mu_2} - \can$. The only classes not in the image of $\psi^{h\mu_2} - \can$ are $w^a$, $a \geq 0$, which gives the claimed cokernel.

We therefore have from the long exact sequence in homotopy groups that
$$\pi_*\TCR(H\mft)^{\phi C_2} \cong \FF_2[w] \oplus \Sigma^{-1} \FF_2[w] \cong \pi_* H\underline{\ZZ_2}^{\phi C_2} \oplus \pi_*  \Sigma^{-1} H\underline{\ZZ_2}^{\phi C_2}.$$
\end{proof}

\bibliographystyle{amsalpha}
\bibliography{master}

\end{document}